\numberwithin{equation}{section}
\newcommand{\tr}{\text{tr}}
\newcommand{\Vol}{\text{Vol}}
\newcommand{\diam}{\text{diam}}
\newcommand{\inj}{\text{inj}}
\newcommand{\Sing}{\text{Sing}}
\newcommand{\dC}{\mathds{C}}
\newcommand{\dN}{\mathds{N}}
\newcommand{\dR}{\mathds{R}}
\newcommand{\cA}{\mathcal{A}}
\newcommand{\cB}{\mathcal{B}}
\newcommand{\cE}{\mathcal{E}}
\newcommand{\cEE}{\mathcal{E}_{\alpha,x,r}}
\newcommand{\cEn}{e_{\alpha,x,r}}
\newcommand{\cFn}{f_{\alpha,x,r}}
\newcommand{\cF}{\mathcal{F}}
\newcommand{\cG}{\mathcal{G}}
\newcommand{\cH}{\mathcal{H}}
\newcommand{\cL}{\mathcal{L}}
\newcommand{\cM}{\mathcal{M}}
\newcommand{\cN}{\mathcal{N}}
\newcommand{\cP}{\mathcal{P}}
\newcommand{\cS}{\mathcal{S}}
\newcommand{\ff}{\mathfrak{f}}
\newcommand{\fr}{\mathfrak{r}}
\newcommand{\ft}{\mathfrak{t}}
\newcommand{\norm}[1]{\left\|#1\right\|}
\newcommand{\ps}[2]{\left\langle#1,#2\right\rangle}
\newcommand{\ton}[1]{\left(#1\right)}
\newcommand{\qua}[1]{\left[#1\right]}
\newcommand{\cur}[1]{\left\{#1\right\}}
\newcommand{\abs}[1]{\left|#1\right|}
\newcommand{\B}[2]{B_{#1}\ton{#2}}
\newcommand{\supp}[1]{\operatorname{supp}\ton{#1}}
\newcommand{\N}{\mathbb{N}}
\newcommand{\R}{\mathbb{R}}
\renewcommand{\paragraph}[1]{\ \newline \ \textbf{#1\ }}
\newcommand{\IF}{\Theta}%implicit function of Lemma
\newcommand{\ellperp}{\ell^\perp}%implicit function of Lemma \ref{l:approximating_submanifold:local_construction}
\newcommand{\dvx}{\operatorname{d}\mathfrak v(x)}
\newcommand{\dVy}{\operatorname{dV}(y)}
\newcommand{\hvt}{\hat \vartheta}
\newcommand{\rf}{\mathfrak{r} }
\newcommand{\LL}{\mathcal L}
\newcommand{\TLm}{\vartheta_{\LL}}
\newcommand{\II}{\operatorname{A}}
\newcommand{\Gr}{\operatorname{Gr}}
\newcommand{\id}{\operatorname{id}}
\newcommand{\Lip}{\operatorname{Lip}}
\newcommand{\T}{\mathcal T}
\newcommand{\dive}{\operatorname{div}}
\newcommand{\err}{\mathfrak E}
\newcommand{\ers}{\mathfrak F}%error in the stationary part
\newtheorem{theorem}{Theorem}[section]
\newtheorem{proposition}[theorem]{Proposition}
\newtheorem{lemma}[theorem]{Lemma}
\newtheorem{corollary}[theorem]{Corollary}
\theoremstyle{definition}
\newtheorem{definition}[theorem]{Definition}
\theoremstyle{remark}
\newtheorem{remark}[theorem]{Remark}
\theoremstyle{remark}
\newtheorem{example}[theorem]{Example}
\theoremstyle{remark}
\theoremstyle{remark}
\theoremstyle{remark}
\begin{document}

\title{Energy Identity for Stationary Harmonic Maps}

%\title{Quantitative Stratification, $W^{1,p}$-Reifenberg, and the\\ Regularity of Stationary and Minimizing Harmonic Maps}

\author{Aaron Naber and Daniele Valtorta}\thanks{}

%%\date{\today}
\date{\today}

\begin{abstract}

%MSC2010: 35J60, 53C43, 58E20
% 35J60 : Nonlinear PDE of elliptic type
% 53C43 : Differential geometric aspects of harmonic maps
% 58E20 : Harmonic maps, etc

In this paper we consider sequences $u_j:B_2\subseteq M\to N$ of stationary harmonic maps between smooth Riemannian manifolds with uniformly bounded energy $E[u_j]\equiv \int |\nabla u_j|^2\leq \Lambda$ .  After passing to a subsequence it is known one can limit $u_j\to u:B_1\to N$ with the associated defect measure $|\nabla u_j|^2 dv_g \to |\nabla u|^2dv_g+\nu$, where $\nu = e(x)\, H^{m-2}_S$ is an $m-2$ rectifiable measure \cite{lin_stat}.  For a.e. $x\in S=\operatorname{supp}(\nu)$ one can produce a finite number of bubble maps $b_j:S^2\to N$ by blowing up the sequence $u_j$ near $x$.

We prove the energy identity in this paper.  Namely, we have at a.e. $x\in S$ that $e(x)=\sum_j E[b_j]$ for a complete set of such bubbles.  That is, the energy density of the defect measure $\nu$ is precisely the sum of the energies of the bubbling maps.

The analysis requires several new ideas.  The energy near a blow up region can be naturally decomposed $|\nabla u|^2 = |\pi_L u|^2+ |\pi_{L^\perp} u|^2= |\pi_L u|^2+ \langle\nabla u, n^\perp\rangle^2+ \langle\nabla u, \alpha\rangle^2$ into the $L^{m-2}$-symmetric and $L^\perp$-bubble directions, with the $2$-dimensional $L^\perp$-bubble directions themselves broken down into radial and angular components.  Each of the three components of the energy requires a new point.
In the symmetric $L$-energy directions we are able to provide a new log improved energy bound, and for the angular $\alpha$-energy we produce a new superconvexity estimate.  The most delicate new piece of analysis is needed for the radial $n$-energy component, where {\it apriori} the errors for the energy identity strictly larger than what is allowed.  To handle this we will approximate the $m-2$ blow up set by a submanifold $\mathcal T$ which solves an appropriate Euler-Lagrange equation.  This can be done so that these highest order errors are canceled by the equation for $\mathcal T$, at least up to controllable terms.
\end{abstract}

\maketitle

\tableofcontents

\section{Introduction}

\subsection{Background on Stationary Harmonic Maps}

Consider a finite energy stationary harmonic map $u:B_2\subseteq M\to N$, where $M$ and $N$ are smooth manifolds with $N$ compact.  The regularity of harmonic maps from surfaces $M=M^2$ is understood in great detail \cite{SacksUhlenbeck}, \cite{lamm_identity}, \cite{Jost2D}, \cite{ParkerBubbleTree},\cite{breiner_TOC_metric_target}, \cite{Breiner_CAT}, \cite{WWZ}, \cite{LinWang}, \cite{DingTian}, \cite{JLZ}.  In particular that such maps are smooth was proved in the minimizing case by Morrey \cite{morrey}, the stationary case by Schoen \cite{schoen_analytical_aspects}, and finally in the weakly harmonic case by Helein \cite{helein_1}.  A concise history of these and other results relating to harmonic maps can be found in \cite{HelWood_harmonic}.  There is some care here as the smoothness is only uniform away from a finite number of points, which is the essence of the bubbling phenomena being studied in this paper.\\

In higher dimensions singularities begin to appear for harmonic maps.  The worst singularities appear for weakly harmonic maps, which can be everywhere discontinuous according to \cite{Riviere_discontinuous}.  However, partial regularity can be proved in the stationary and minimizing cases. 
The regularity of stationary harmonic maps into sphere targets was first proved by Evans \cite{Evans_Stationary}.  
The regularity theory of harmonic maps in higher dimensions began in earnest with the work of Schoen-Uhlenbeck \cite{ScUh_RegHarm}, who were able to provide a stratification $\cS^0(u)\subseteq \cdots \cS^{m-1}(u)\subseteq \Sing(u)$ of the singular set in the sense of Federer, and prove the dimensional estimate $\dim \cS^k(u)\leq k$.  
Schoen and Uhlenbeck were also able to provide an $\epsilon$-regularity theorem for minimizing harmonic maps, which allowed them to show such maps are smooth away from a codimension $3$ set.  It was not until the work of Bethuel \cite{beth} that an $\epsilon$-regularity for stationary harmonic maps was available, which allowed him to show that stationary harmonic maps are smooth away from a set of $m-2$ measure zero, where $m$ is the dimension of the domain.  
The smoothness estimates on these mappings is once again only uniform away from a set of potentially positive $m-2$ measure.  Conjecturally, stationary harmonic maps are smooth away from a codimension three set.\\

Further structure theory on the singular sets $\cS^k(u)$ was first produced by Simon \cite{Simon_RegMin}, who introduced his infinite dimensional Lojasiewisz inequality and was then able to prove that for a minimizing $u$ into an analytic $N$, the top stratum $\Sing(u)=\cS^{m-3}(u)$ of the singular set is $m-3$ rectifiable.  These results were refined by the authors \cite{NV_RH}, who proved that for a stationary $u$ into a smooth $N$, every stratum $\cS^k(u)$ is $k$-rectifiable.\\

The analysis of sequences of stationary harmonic maps $u_j:B_2\subseteq M\to N$ in higher dimensions enjoyed a lot of progress with the work of Lin \cite{lin_stat}.  Among other things, Lin studied the defect measure $\nu$ defined as the non-absolutely continuous part of the measure limit $|\nabla u_j|^2 dv_g\to |\nabla u|^2 dv_g+\nu$.  It was proved \cite{lin_stat} that $\nu \equiv e(x) \cH^{m-2}_S$ is $m-2$ rectifiable, and further the sub-energy identity was shown.  That is, for a.e. $x\in S$ it was shown that for bubbles $b_k(x)$ associated to $x$ we have

\begin{align}\label{e:intro:energy_inequality}
	e(x)\geq \sum E[b_k]\, . 
\end{align}

The next major progress was accomplished by Lin and Riviere \cite{LR_W21}, who proved two primary results.  First, they showed that if the sequence of stationary harmonic maps $u_j$ satisfy a uniform $W^{2,1}$-estimate $\fint |\nabla^2 u_j|\leq A$, then the energy identity holds.  In the case that the target $N$ is a sphere, they were able to use concentration compactness arguments in order to prove the $W^{2,1}$ estimate directly.  Based on all of this, they conjectured such a $W^{2,1}$ estimate should also hold.  For $2$ dimensional base manifolds, then $W^{2,1}$ conjecture has been proved in \cite{LammSharp}, building on the techniques developed in \cite{LauRiv}. 
Independently, the energy identity was proven for $2$ dimensional base $M$ and general targets $N$ by Jost \cite{Jost2D} and Parker \cite{ParkerBubbleTree}. Lamm extended this result to approximate harmonic maps in \cite{lamm_identity}, and recently Breiner and Lakzian extended the results to metric targets in \cite{breiner_TOC_metric_target}.  It is worth mentioning that the energy identity does not hold for Sacks-Uhlenbeck generalized harmonic maps, see \cite{LiWangCounterexample}, making this a rather delicate phenomenon.\\

There are very related conjectures for stationary Yang-Mills.  In \cite{NV_YM} the authors proved both the $W^{2,1}$ conjecture and the energy identity conjecture for limits of stationary Yang-Mills connections.  The two results, the $W^{2,1}$ conjecture and the energy identity, were proved simultaneously by an underlying series of estimates that were stronger than either.  Rather unfortunately, the methods of \cite{NV_YM} break down for stationary harmonic maps and an entirely new set of tools need to be developed in this paper.  Indeed, we will see that each of the three different components of the energy tensor\footnote{$|\nabla u|^2 = |\nabla_L u|^2+|\ps{\nabla u}{n^\perp}|^2+|\ps{\nabla u}{\alpha^\perp}|^2$, see Section \ref{s:energy_decomposition}} will require a distinct and new idea.  See Sections \ref{s:outline_toymodel} and \ref{s:outline_general} for a more complete outline of the proof ideas of this paper. 

As the results of \cite{NV_YM} are closely related to those of this paper, it is worth a brief discussion of the methods of \cite{NV_YM}, and in particular what works and what breaks down in the context of stationary harmonic maps.  There were three main ingredients in the proof in \cite{NV_YM}.  The first was an effective annulus-bubble decomposition, which itself has its roots in \cite{NV_RH},\cite{JN}.  These decompositions allow one to break the mapping $u$ into various regions with special structure and analyze each independently.  The decompositions are covering arguments based on the monotone quantity, and all still hold in this paper essentially verbatim, see Section \ref{ss:broad_outline:quant_annulusbubble}.  

The next two new ingredients from \cite{NV_YM} were analytical, and both fail  completely in the harmonic map context.  The first was the introduction of an $\epsilon$-gauge, which was a linear replacement for a Coulomb gauge in singular regions where they may not exist.  The second was a new log-superconvexity estimate, which proved appropriately sharp estimates on these $\epsilon$-gauges.  A key property for the $\epsilon$-gauges in the Yang-Mills context was that, at least in the annular regions where most of the analysis takes place, the gauges were indeed a basis and an almost orthonormal one.  The analogous statement actually fails in the harmonic map context.  The corresponding linear $\epsilon$-gauges in the harmonic map context decay to zero at a logarithmic rate.  
Similarly, the log-superconvexity for the full energy cannot be generalized to the harmonic map context.  Both of these failures can be attributed to the codimension $2$ nature of the singularities instead of a codimension $4$ behavior.  
Morally, though clearly vastly oversimplifying, one can understand this as a consequence that the Green's function in dimension 2 is only a log, while it is a polynomial in higher dimensions.  Thus the methods of this paper are quite different, and indeed we do not prove the $W^{2,1}$ conjecture at all, we prove directly the energy identity.\\

\subsection{Main Results: The Energy Identity} 

We will break our main results down into several groups.  Let us begin by discussing the ineffective results, namely the energy identity itself.  In a subsequent subsection we will discuss the quantitative energy identity.  This takes a bit more work to even state with precision, however it is also likely to be more useful in future applications.  More to the point, this is really the result one necessarily proves in the paper, the ineffective result is simply a consequence.\\

Our setup is that we are considering a sequence of bounded energy stationary harmonic maps
\begin{align}\label{e:stationary_bounded_energy}
	u_j: B_2(p)\subseteq M\to N\text{ with }E[u_j]\equiv \int_{B_2}|\nabla u_j|^2\leq \Lambda\, ,
\end{align}
between smooth Riemannian manifolds with $N$ compact.  After passing to a subsequence we have that
\begin{align}\label{e:defect_measure}
	&u_j\rightharpoonup u:B_2(p)\subseteq M\to N \text{ with }E[u]\equiv \int_{B_2}|\nabla u|^2\leq \Lambda\, ,\notag\\
	&|\nabla u_j|^2 dv_g\to |\nabla u|^2dv_g+\nu \, ,
\end{align}
where the first convergence is weak in $H^1_{loc}$ while the second is in the sense of measures.  Note that the limit $u$ will be weakly harmonic, but need not be stationary harmonic (see \cite{dinglili}).  We have by \cite{lin_stat} that the defect measure $\nu$ is $m-2$ rectifiable and so can be written
\begin{align}\label{e:defect_measure_density}
	\nu \equiv e(x)\, \cH^{m-2}_S\, ,
\end{align}
as Hausdorff integration over an $m-2$ set $S$ with respect to a measurable function $e:S\to \dR^+$.\\

To state the energy identity carefully we need to discuss bubbling.  In short, a bubble is a blow up of our sequence of mappings which is nontrivial but maximally symmetric.  Consequently each bubble can be associated with a harmonic map $S^2\to N$, hence the term bubbling.  Precisely:

\begin{definition}[Blow Ups and Bubbling]
We define the following:
\begin{enumerate}
	\item We say $u_x:\dR^m\to N$ is a blow up of the sequence $u_j$ at $x$ if there exists $x_j\to x$ and $r_j\to 0$ such that $u_{x_j,r_j}\rightharpoonup u_x$, where  $u_{x_j,r_j}:B_{r_j^{-1}}(0)\subseteq T_{x_j}M\to N$ are defined by $u_{x_j,r_j}(y) = u_j(\exp_{x_j}(r_j y))$ .
	\item We say $b_x:\dR^2\to N$ is a bubble at $x$ if there exists an $m-2$ plane $L\subseteq \dR^m$ and a blow up $u_x:\dR^m\to N$ at $x$ such that $u_x(y,y^\perp)=b_x(y^\perp)$ , where we have written $\dR^m=L\times L^\perp$ .
\end{enumerate}
\end{definition}
\begin{remark}
	Note that $u_{x,r}$ is a harmonic map on $B_{r^{-1}}(0)\subseteq T_xM$ with respect to the pullback geometry under the exponential map.
\end{remark}
\begin{remark}
	It follows from removable singularity theorems that $b$ is necessarily a smooth harmonic map.  Due to the conformal invariance in dimension two, we can also view $b:S^2\to N$ as a harmonic map from the 2-sphere.
\end{remark}

The main Theorem of this paper is then the following:

\begin{theorem}[Energy Identity]\label{t:energy_identity}
Let $u_j:B_2(p)\subseteq M\to N$ be stationary harmonic maps between smooth Riemannian manifolds with $N$ compact and $\int_{B_2}|\nabla u_j|^2\leq \Lambda$ .  Let $u_j\rightharpoonup u$ with $|\nabla u_j|^2dv_g\to |\nabla u|^2dv_g+\nu$ the $m-2$ rectifiable defect measure $\nu = e(x)\,\cH^{m-2}_S$.  Then for a.e. $x\in S$ there exists bubbles $b_1,\ldots, b_K:S^2\to N$ at $x$ with $K\leq K(M,N,\Lambda)$ such that
\begin{align}
	e(x) = \sum_{j=1}^K E[b_j] = \sum_{j=1}^K \int_{S^2}|\nabla b_j|^2\, .
\end{align}
\end{theorem}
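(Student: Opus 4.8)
The plan is to prove a quantitative form of the identity, from which Theorem~\ref{t:energy_identity} follows by blow-up and compactness.  Fix $x\in S$ belonging to the full $\cH^{m-2}$-measure set of points at which the tangent measure to $\nu$ is $e(x)\,\cH^{m-2}_L$ for an $m-2$ plane $L$, at which the absolutely continuous part $|\nabla u|^2\,dv_g$ carries no $m-2$ density (automatic since $|\nabla u|^2\,dv_g\ll\cH^m$), and at which the defect arises from genuine concentration of the $u_j$.  A diagonal argument produces rescaled maps $v_k$, obtained by blowing up $u_{j(k)}$ at suitable $x_{j(k)}\to x$ and scales $r_k\to 0$, with uniformly bounded energy and defect measures converging to $e(x)\,\cH^{m-2}_L$ on $B_1$.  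Since $\epsilon$-regularity forces each bubble to carry at least $\epsilon_0(N)>0$ of energy, the number of bubbles is bounded by some $K(M,N,\Lambda)$, and it suffices to show that the energy of $v_k$ on $B_1$ differs from $\sum_{i=1}^K E[b_i]$ by $o(1)$.  Letting the scales tend to zero and passing to a further subsequence then extracts the finite list $b_1,\dots,b_K$ and the exact identity, while the sub-energy inequality \eqref{e:intro:energy_inequality} of \cite{lin_stat} supplies the matching lower bound $e(x)\ge\sum_i E[b_i]$.

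The organizing tool is the quantitative annulus-bubble decomposition built from the monotone quantity $\theta$; as indicated in Section~\ref{ss:broad_outline:quant_annulusbubble} this carries over from \cite{NV_RH},\cite{JN},\cite{NV_YM} essentially verbatim.  It partitions the transverse scales into boundedly many \emph{bubble regions}, on each of which a definite amount of energy concentrates and converges to some $E[b_i]$ by $\epsilon$-regularity and removable singularities, and \emph{neck regions} $\cN$, on each of which $u$ is nearly $L$-symmetric.  The whole theorem is thereby reduced to the \textbf{no-neck-energy estimate}: the energy of $u$ on each neck region is $o(1)$.  On $\cN$ we use the decomposition of Section~\ref{s:energy_decomposition}, $|\nabla u|^2=|\nabla_L u|^2+\abs{\ps{\nabla u}{n^\perp}}^2+\abs{\ps{\nabla u}{\alpha}}^2$, where $n^\perp$ and $\alpha$ are the radial and angular directions of the transverse $\dR^2=L^\perp$, and bound the three pieces separately.

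For the symmetric piece $\int_\cN|\nabla_L u|^2$ the idea is a self-improving, log-improved energy estimate: the $L$-derivatives of $u$ satisfy an elliptic system whose structure forces that if the symmetric energy is at most $\epsilon$ on a region then it is at most $\epsilon\cdot o(1)$ on a slightly smaller one, giving Morrey-type decay with a logarithmic gain that sums to $o(1)$ along the neck.  For the angular piece, working in transverse cylindrical coordinates with $t=-\ln\abs{\pi_{L^\perp}(y-x)}$, the angular energy on transverse circles obeys a superconvexity differential inequality (morally $f''\gtrsim f$ up to controllable errors), which forces it to be dominated by its values at the two ends of the neck up to exponentially small interior terms; these endpoint values telescope across the decomposition and are themselves small, so the total angular neck energy is $o(1)$.

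The delicate piece is the radial energy $\int_\cN\abs{\ps{\nabla u}{n^\perp}}^2$.  A Pohozaev/Hopf-differential identity would control it by the (now small) angular energy plus telescoping boundary terms, but the error terms in that identity coming from the geometry of $M$ and from the blow-up axis failing to be flat are, a priori, of the same order as the quantity one wants to bound.  The resolution is to replace the plane $L$ by a codimension-$2$ submanifold $\T$ solving an appropriate Euler--Lagrange equation tuned to the defect measure, so that running the Pohozaev computation with $\T$ as the axis cancels these leading errors up to lower-order controllable terms; the radial neck energy is then again bounded by the small angular neck energy together with telescoping boundary data, hence $o(1)$.  Summing the three bounds gives the no-neck-energy estimate and completes the proof.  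I expect the construction of $\T$, the identification of the correct Euler--Lagrange equation, and the precise error cancellation in the radial estimate to be by far the main obstacle; the symmetric and angular estimates, while each requiring a genuinely new idea, are comparatively more local.
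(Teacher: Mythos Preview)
Your outline matches the paper's strategy closely: reduce to a quantitative no-neck-energy statement via an annulus/bubble decomposition, decompose the neck energy into $L$-, angular, and radial pieces, and handle them separately with, respectively, a log-type gain, a superconvexity inequality, and a Pohozaev computation corrected by a well-chosen approximating submanifold.  At that level of resolution the approach is the same.

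Two points where your description diverges from what actually works and would likely trip you up in execution:

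\textbf{The $L$-energy.}  The paper does \emph{not} get the log gain from an ``elliptic system for $\nabla_L u$'' or a self-improving $\epsilon\mapsto\epsilon\cdot o(1)$ mechanism.  The mechanism is purely monotone-quantity based: quantitative cone splitting gives $\vartheta_\cL(\T_r)\leq C\int_\T\big(\vartheta(x,2r)-\vartheta(x,r)\big)$, and the \emph{codimension-$2$} nature of $\T$ yields the almost-monotonicity $\vartheta_\cL(\T_s)\leq C\,\vartheta_\cL(\T_r)$ for $s\leq r$ (Lemma~\ref{l:outline:L_energy:basic_properties}).  Combining these and summing scales gives the Dini bound and hence $\vartheta_\cL(\T_r)\leq C\Lambda/|\ln r|$.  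There is no elliptic improvement here, and in fact the final smallness of $\vartheta_\cL(\cA)$ is obtained simultaneously with the radial energy in Proposition~\ref{p:radial_energy:radial_energy}, where $2\vartheta_\cL(\T_r)$ appears with the right sign on the left-hand side.

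\textbf{The radial energy.}  One approximating submanifold $\T$ is not enough.  The paper uses a \emph{family} $\T_r$, one per scale, each defined so that every $x\in\T_r$ satisfies $\pi^\perp_{x,r}\nabla\vartheta(x,r)=0$ (Theorem~\ref{t:approximating_submanifold}); and in addition a \emph{second} Euler--Lagrange equation, that of the best $(m-2)$-plane $\cL_{x,r}$ at each point and scale.  Both are used in Section~\ref{s:radial_energy}: the $\T_r$ equation converts an $L^\perp$-gradient into an $L$-gradient in the term $\ff_1$, and the $\cL_{x,r}$ equation kills further higher-order errors when estimating $\ff_2,\ff_3$ and the radial derivative $r\frac{d}{dr}\vartheta_\cL(\T_r)$.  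If you only build a single $\T$ and only use its Euler--Lagrange equation, the errors that remain (e.g.\ the cross term $\int\rho_r\langle\pi^\perp\nabla u,\nabla^{(x)}_{\pi\nabla u}H_{\T_r}\rangle$) are of the same order as what you are trying to bound and do not close.

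Your assessment that the radial step is the hardest is right; just be aware that the cancellation requires \emph{two} variational characterizations, not one, and that both the submanifold and the plane must be allowed to vary with scale.
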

\begin{remark}
	The target space $N$ needs only $C^2$ regularity, see the quantitative energy identity of the next subsection and Remark \ref{rm:eps_reg_C2} . 
\end{remark}
\vspace{.5cm}

\subsection{Main Results: Quantitative Energy Identity}

The proof of Theorem \ref{t:energy_identity} necessarily must be done through much more effective means.  The quantitative energy identity will be applied not to a sequence of harmonic maps, but is instead a decomposition which holds for each individually fixed stationary harmonic map $u:B_2\to N$ .  In short, the quantitative energy identity will tell us that most of the time, in a precise sense, when we restrict $u$ to 2-dimensional subspaces $u:B_2\cap L^\perp\to N$ then $u$ will look very close to a 2 dimensional harmonic map on this subspace.  In the case where we apply the quantitative energy identity to a sequence $u_j$ and take the limit, we will recover the classical energy identity of Theorem \ref{t:energy_identity}. To begin, let us quantify our conditions on the manifolds $M$ and $N$, which are $C^2$ and satisfy :

\begin{align}\label{e:manifold_bounds}
&|\sec_{B_2(p)}|\leq K^2_M,\,\, \inj(B_2(p))\geq K^{-1}_M\, ,\notag\\
&|\sec_N|\leq K^2_N,\,\, \inj(N)\geq K^{-1}_N,\,\,\diam(N)\leq K_N \, ,\notag\\
&\dim(M)=m,\, \dim(N)=n\, .\\ \notag
\end{align}

In order to state the Theorem properly we must introduce a host of structure, beginning with the notion of quantitative symmetry.  We will give a general definition, in order to put the results into a broader context, though our main interest will be on $(m-2,\epsilon)$-symmetry in this paper.  Recall from the previous subsection that if $u:B_2(p)\subseteq M\to N$ is a mapping and $B_r(x)\subseteq B_2$, then we define the blow up mapping $u_{x,r}:B_1(0)\subseteq T_xM\to N$ by $u_{x,r}(y)\equiv u(\exp_x(ry))$.\\

\begin{definition}[Quantitative Symmetry]
	Let $u:B_2\subseteq M\to N$ be a stationary harmonic map.  We say that $u$ is $(k,\epsilon)$-symmetric on $B_r(x)\subseteq B_2$ if $K_M^2<\epsilon\cdot r^{-2}$ and there exists a $k$-plane $L^k\subseteq T_xM$ such that
\begin{align}
	\int_{}\;\ton{\abs{\pi_L \nabla u_{x,r}}^2+\langle\nabla u_{x,r},y^\perp\rangle^2 }\,\rho(y)\,dy\leq \epsilon\, ,
\end{align}
\begin{comment}
\begin{align}
	\fint_{B_1(0)}\qua{\abs{\pi_L \nabla ^{(y)}u(x+ry)}^2+\langle\nabla^{(y)} u(x+ry),y^\perp\rangle^2 }\dVy\leq \epsilon\, ,
\end{align}
\end{comment}
where $y^\perp\in L^\perp$ is the perpendicular radial vector and $\pi_L:\dR^n\to L$ is the projection.
\end{definition} 
\begin{remark}
	One can think of $\rho(y)$ as the characteristic function $\chi_1(y)$ on $B_1(0)$, so that the above becomes $\int_{B_1}\;\abs{\pi_L \nabla u_{x,r}}^2+\langle\nabla u_{x,r},y^\perp\rangle^2 \,dy\leq \epsilon$ .  It will be convenient to choose $\rho\approx e^{-\frac{1}{2}|y|^2}\chi_R$ to be a smooth cutoff on $B_R(0)$, where $R=R(m)$.  See Section \ref{ss:outline:heat_mollified_energy} for more details.
\end{remark}
\begin{remark}
	Note that $|y^\perp| = d(y,L)$, so that the $L^\perp$ radial direction of the energy is being weighted more strongly than $L$-directions.  This phenomena is what allows for bubbling in the first place.
\end{remark}
\begin{remark}
	We say $B_r(x)$ is $(k,\epsilon)$-symmetric with respect to $L$ if we want to emphasize the $k$-plane.  We may also write $L^k_x$ to emphasize that this is an affine $k$-plane through the point $x$.
\end{remark}
\begin{remark}
 Note that this definition is slightly stronger than the ones in \cite{NV_RH},\cite{ChNa1},\cite{ChNaVa}. In those papers the quantitative symmetry is defined via a notion of strong $L^2$ closeness.  As we will see in Section \ref{ss:prelim:cone_splitting}, the definition in this paper is more suited for quantitative analysis.\\
\end{remark}

The quantitative stratification $\cS^k_\epsilon$ is then defined as in \cite{NV_RH},\cite{ChNa1},\cite{ChNaVa} 
\begin{align}
	\cS^k_\epsilon \equiv \{x\in B_1: \forall\; 0<r<1 \text{ we have that $u$ is not }(k+1,\epsilon)\text{-symmetric on } \B r x\}\, .
\end{align}
as those points for which no ball is $(k+1,\epsilon)$-symmetric.  Note that the classical stratification may be recovered as $\cS^k=\bigcup_{\epsilon}\cS^k_\epsilon$ .  See Section \ref{ss:prelim:quant_strat} for additional refinements of the quantitative stratification which will be used.  The main result of \cite{NV_RH} is that $\cS^k_\epsilon$ is $k$-rectifiable with finite Minkowski and Hausdorff volume estimates
\begin{align}
	\Vol(B_r\cS^k_\epsilon )\leq C_\epsilon(n,m,K_M,K_N,\epsilon)r^{n-k}\implies \cH^k(\cS^k_\epsilon)<C_\epsilon\, .
\end{align}
See Section \ref{ss:prelim:quant_strat} for a review\footnote{Note that this result holds even with the stronger definition of quantitative symmetry of the present paper as the key cone-splitting theorems also hold in this case (see Section \ref{ss:prelim:cone_splitting}).}. \\

The main work of our quantitative energy identity will be on balls which are $(m-2,\epsilon)$-symmetric, which by the quantitative stratification of \cite{NV_RH} are most balls.  Given that $u$ is $(m-2,\epsilon)$ symmetric on $B_{2r}(x)$ wrt $L^{m-2}$, it is a classical result that the energy measure $|\nabla u|^2 dv_g$ must be close (as a measure) to a $m-2$ Dirac measure $e_0 \cH^{m-2}_L$ for some constant $e_0>0$, where $\cH^{m-2}_L$ is the $m-2$ Hausdorff measure on the plane $L$.  See Theorem \ref{t:prelim:spacial_gradient} and the remarks after for a precise statement and quantitative proof.\\

The quantitative energy identity will discuss the smaller scale behavior of $u$ on an $(m-2,\epsilon)$-symmetric ball.  We will show that for most $2$-planes $L^\perp_y \equiv y+L^\perp$ with $y\in B_r(x)$ that $u:L^\perp_y\cap B_r(x)\to N$ is very close to a $2$-dimensional harmonic map.  We need to make this precise:

\begin{definition}[$\epsilon$-Almost Harmonic Map] Let $v:B_r(0^2)\cap \dR^2\to N$ be a smooth mapping, we say it is $\epsilon$-harmonic if $\exists$ $\Big\{B_{r_j}(x_j)\Big\}_1^K\subseteq B_r$ along with nontrivial harmonic maps $b_j:\dR^2\to N$ such that
\begin{enumerate}
\item $|\nabla b_j|\leq 1$ with $\int_{\dR^2\setminus B_{\epsilon^{-1}}}|\nabla b_j|^2 < \epsilon$.
\item If $r_j\leq r_k$ then either $B_{r_j}(x_j)\cap B_{r_k}(x_k)=\emptyset$ or $B_{r_j}(x_j)\subseteq B_{r_k}(x_k)$
\item If we denote $\tilde b_j\equiv b_{j}\big(\epsilon r_j^{-1}(x-x_{j})\big)$ then it holds that
\begin{align}
&r_k \big|\nabla(v-\tilde b_k)\big| < \epsilon \text{ on each }	B_{r_k}(x_k)\setminus \bigcup_{r_j\leq r_k} B_{r_j}(x_j)\, ,\notag\\
&\text{with }r^2\fint_{B_r\setminus \bigcup B_{r_j}} |\nabla v|^2 < \epsilon\, .
\end{align}
\end{enumerate}
\end{definition}

Our main theorem is now the following, which tells us that if $u$ is $(m-2,\delta)$-symmetric on a ball $B_r(x)$ wrt some $L$, then on most $L^\perp$ slices the functions looks like a $2$-dimensional harmonic map in a strong sense: \\

\begin{theorem}[Quantitative Energy Identity]\label{t:main_quant_energy_id}
Let $u:B_{10R}(p)\subseteq M\to N$ be a stationary harmonic map with $R^2\int_{B_{10R}} |\nabla u|^2\leq \Lambda$ such that \eqref{e:manifold_bounds} is satisfied with $R=R(m)$.  Let $0<\epsilon<\epsilon(m,K_N,\Lambda)$ and assume $u$ is $(m-2,\delta)$-symmetric on $B_2(p)$ wrt $L^{m-2}$ with $\delta\leq \delta(n,m,K_N,\epsilon,\Lambda)$.  Then there exists $K(m,K_N,\Lambda)<\infty $ and $\cG_\epsilon\subseteq L\cap B_1$ such that
\begin{enumerate}
	\item $\Big|L\cap \big(B_1\setminus \cG_\epsilon\big)\Big|<\epsilon$ ,
	\item For each $y\in \cG_\epsilon$ we have $\int_{L^\perp_y\cap B_1}\abs{\pi_L^\perp \nabla u}^2<C(n)\Lambda$ and $\int_{L^\perp_y\cap B_1}\abs{\pi_L \nabla u}^2<\epsilon$ ,
	\item For each $y\in \cG_\epsilon$ we have that $u:L^\perp_y\cap B_1\to N$ is an $\epsilon$-harmonic map with at most $K$ bubbles.
	\item For each $y\in \cG_\epsilon$ we have that $\big|\,\fint_{B_1(y)}|\nabla u|^2 - \omega_{m-2}\int_{y+L^\perp}|\pi^\perp_{L}\nabla u|^2 \,\big| < \epsilon$
\end{enumerate}
\end{theorem}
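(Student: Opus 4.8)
All four conclusions will be deduced from a single \emph{quantitative no-neck estimate}. The plan is: first run the quantitative annulus--bubble decomposition of $u$ on $B_1$ (the covering argument built on the almost-monotone energy, which carries over essentially verbatim from the Reifenberg/Yang--Mills setting), producing at most $K=K(m,K_N,\Lambda)$ \emph{bubble regions} together with the intervening \emph{neck} (annular) regions, on each of which $u$ is $(m-2,\delta')$-symmetric at every scale in the relevant range; then show that the energy carried by the neck regions, measured in the right components, is $<\epsilon$ once $\delta$ is small. Throughout I would work with the heat-mollified energy --- the Gaussian-weighted version of $r^{2-m}\int_{B_r}|\nabla u|^2$ appearing in the definition of $(k,\epsilon)$-symmetry --- since it obeys an almost-monotonicity formula and, more importantly, clean differential inequalities in $\log r$. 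On a ball which is $(m-2,\delta')$-symmetric with respect to an $(m-2)$-plane $L$ I use the pointwise splitting $|\nabla u|^2 = |\pi_L\nabla u|^2 + \langle\nabla u,n^\perp\rangle^2 + \langle\nabla u,\alpha^\perp\rangle^2$ into the symmetric $L$-part, the radial $L^\perp$-part (with $n^\perp$ the radial direction in the $2$-plane $L^\perp$) and the angular $L^\perp$-part, and denote their mollified scale-$t$ integrals $\mathcal E^L(t),\mathcal E^n(t),\mathcal E^\alpha(t)$. The goal is to show \emph{each} of these decays summably along a neck, with rate controlled by $\delta'$ and by the gap between consecutive bubble scales. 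Granting this, the first inequality of (2) and the good set in (1) are soft: $\int_{L}\big(\int_{L^\perp_y\cap B_1}|\pi_L^\perp\nabla u|^2\big)\,d\cH^{m-2}(y)\le\int_{B_1}|\nabla u|^2\le\Lambda$, so Chebyshev discards a set of $L$-measure $<\epsilon$, and below I will discard finitely many further Fubini-exceptional sets, one per error estimate, each of $L$-measure $<\epsilon/K'$.

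\textbf{The symmetric and angular components.} For $\mathcal E^L$ I would prove a \emph{log-improved energy bound}: across a neck that is symmetric at all scales in $[s,r]$ one has $\mathcal E^L(t)\lesssim\delta'\big(|\log(t/s)|^{-1}+|\log(r/t)|^{-1}\big)$ for $t\in[s,r]$, which is small and summable over dyadic scales. This should come from testing the divergence-free stress-energy tensor of the stationary map against the vector field radial \emph{within} $L$ and feeding in the smallness of $\langle\nabla u,y^\perp\rangle$ forced by the symmetry; the logarithmic (rather than polynomial) gain reflects the codimension-two geometry and is precisely what breaks the Yang--Mills argument and forces a new one. For $\mathcal E^\alpha$ I would prove a \emph{superconvexity estimate}: $\log\mathcal E^\alpha(t)$, as a function of $\log t$, is convex with a strictly positive lower bound on its second derivative, up to controllable error, that bound coming from the spectral gap of the angular Laplacian on $S^1$ (first nonzero eigenvalue $1$). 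This forces $\mathcal E^\alpha$ to decay like $\max(\mathcal E^\alpha(s),\mathcal E^\alpha(r))$ times a geometric factor across the neck, and a three-annulus iteration propagates the decay between consecutive bubble scales, so $\mathcal E^\alpha$ is summable as well.

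\textbf{The radial component and the submanifold $\mathcal T$ --- the main obstacle.} The real difficulty, where I expect nearly all the work to lie, is $\mathcal E^n$. The symmetry only controls the \emph{weighted} quantity $\int|\pi_{L^\perp}(y-x)|^2\langle\nabla u,n^\perp\rangle^2\rho\le\delta'$, which is vacuous near $L$ --- precisely where the bubble energy concentrates --- and the naive monotonicity computation for $\mathcal E^n$ produces a leading-order error of borderline size, strictly above the summable threshold. The remedy is to measure the radial energy not toward the flat plane $L$ (or toward a point) but toward an $(m-2)$-dimensional submanifold $\mathcal T\subseteq B_2$, $C^{1,\alpha}$-close to the support $S$ of the defect measure, constructed to solve an appropriate Euler--Lagrange equation --- a prescribed-mean-curvature/balancing equation whose forcing term is assembled from the first variation of the energy of $u$ in the directions normal to $\mathcal T$. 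The point is that $\mathcal T$ can be produced --- via an implicit-function/contraction argument anchored at $L$, using the smallness of the non-radial energy established above as the contraction estimate --- so that the offending leading-order term in the differential inequality for the \emph{$\mathcal T$-radial} energy is exactly the term annihilated by the Euler--Lagrange equation of $\mathcal T$, leaving only lower-order, controllable errors. This finally produces the missing summable decay for $\mathcal E^n$. The delicate points are (i) existence and regularity of $\mathcal T$ at all relevant scales, with estimates stable under blow-up, and (ii) the bookkeeping that the cancellation is genuinely clean; this is the technical heart of the paper.

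\textbf{Assembling the conclusions.} With $\mathcal E^L+\mathcal E^\alpha+\mathcal E^n$ summably small along every neck and at most $K$ necks, the total energy in all neck regions of $B_1$ is $<\epsilon$ provided $\delta$ is taken small after fixing $K$ and the number of scales. Define $\cG_\epsilon\subseteq L\cap B_1$ to be those $y$ for which the slice $u:L^\perp_y\cap B_1\to N$ inherits the sliced versions of all of the above: by Fubini and Chebyshev this excludes one exceptional set (of $L$-measure $<\epsilon/K'$) for the energy bound of (2), one for the smallness $\int_{L^\perp_y\cap B_1}|\pi_L\nabla u|^2<\epsilon$ (the sliced log-improved bound), and one for the sliced neck energy being $<\epsilon$; hence $|L\cap(B_1\setminus\cG_\epsilon)|<\epsilon$, giving (1), and (2) holds on $\cG_\epsilon$ by construction. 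For (3), take the $B_{r_j}(x_j)$ to be the slices of the bubble regions: rescaling by $r_j$ and using near-$L$-translation-invariance there (again from smallness of $\mathcal E^L$ together with conicality) the rescaled $u$ converges to a harmonic map $b_j:\dR^2\to N$ with the slice $\epsilon$-close to $b_j$, while on the complementary necks $r^2\fint|\nabla v|^2<\epsilon$ and $r_k|\nabla(v-\tilde b_k)|<\epsilon$ follow from the no-neck decay combined with the bubble convergence --- which is exactly the definition of an $\epsilon$-harmonic map with $\le K$ bubbles. For (4), split $\fint_{B_1(y)}|\nabla u|^2=\fint_{B_1(y)}|\pi_L\nabla u|^2+\fint_{B_1(y)}|\pi_L^\perp\nabla u|^2$; the first term is $<\epsilon$, and for the second, Fubini in the $L$-directions of $B_1(y)$ together with the fact --- from the $(m-2,\delta)$-symmetry on $B_2$ (cf.\ Theorem \ref{t:prelim:spacial_gradient}) and the no-neck estimate --- that $\int_{L^\perp_z\cap B_1}|\pi_L^\perp\nabla u|^2$ is $\epsilon$-nearly independent of $z$ near $y$ produces the constant $\omega_{m-2}$ (the ratio of the volumes of the $(m-2)$- and $m$-dimensional unit balls) and the stated comparison. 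The single genuinely hard input is the $\mathcal T$-cancellation controlling $\mathcal E^n$; everything else is classical in spirit or a repackaging of the covering and Fubini arguments.
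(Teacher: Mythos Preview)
Your overall architecture matches the paper's: reduce to showing annular (neck) regions carry small energy, split that energy into $L$-, angular-, and radial-components, control each, then feed into the quantitative annulus/bubble decomposition and slice by Fubini. But several of the specific mechanisms you describe are not the ones the paper uses, and at least one would not work as stated.

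\textbf{Angular energy.} The superconvexity is for $\hat\vartheta_\alpha$ itself, not its logarithm: the paper proves $(r\partial_r)^2\hat\vartheta_\alpha(\T_r)\ge\hat\vartheta_\alpha(\T_r)-\epsilon(r)$ with $\int\epsilon(r)\,\frac{dr}{r}$ small, and simply integrates against $\frac{dr}{r}$ so the second-derivative term is a vanishing boundary term and $\int\hat\vartheta_\alpha\,\frac{dr}{r}\le\int\epsilon\,\frac{dr}{r}$. No three-annulus/geometric-decay argument is used, nor is one available once the errors from the moving geometry enter.

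\textbf{Radial energy.} This is the real correction. The radial energy is \emph{not} handled by a self-contained monotonicity or differential inequality. The paper tests the stationary equation against an $L^\perp$-radial vector field emanating from $\T_r$ and obtains (Proposition~\ref{p:radial_energy:radial_energy})
\[
\hat\vartheta_n(\T_r)+2\vartheta_\cL(\T_r)\;\le\;\hat\vartheta_\alpha(\T_r)+(m-1)\,r\tfrac{d}{dr}\vartheta_\cL(\T_r)+\ers(r),
\]
so after integrating in $\frac{dr}{r}$ the total-derivative term vanishes and the angular term is already small. Two separate cancellations are needed to make the error $\ers(r)$ summable: the Euler--Lagrange for $\T_r$, which is $\pi_{x,r}^\perp\nabla\vartheta(x,r)=0$ (each $x\in\T_r$ is a critical point of the mollified energy in the $\cL_{x,r}^\perp$-direction --- not a prescribed mean-curvature equation), and the Euler--Lagrange for the \emph{best planes} $\cL_{x,r}$ (the eigenspace condition for the mollified energy tensor). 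You do not mention the best planes at all; in the paper they are essential both for defining the energy decomposition pointwise (one cannot use the fixed $L$ beyond the toy model of Section~\ref{s:outline_toymodel}) and for the second cancellation in Section~\ref{ss:radial_energy:f2f3}.

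\textbf{Assembly.} The paper's derivation of (3)--(4) is not pure Fubini/Chebyshev on a single layer: on each good slice one iterates down the alternating tree of annular and bubble regions (Section~\ref{ss:quantitative_energy_identity_proof}), invoking Corollary~\ref{c:outline:annular_regions_energy} on annuli and the Bubble Structure Theorem~\ref{t:bubble_structure} on bubbles, with the depth bounded because each bubble contributes at least $\epsilon_0(n,K_N)$ energy.
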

\begin{remark}
	Note that the number of bubbles on each slice of $\cG_\epsilon$ is independent of $\epsilon>0$, and depends only on the geometry of $N$ and the energy of $u$.
\end{remark}

The classical Energy Identity follows from the above in a few lines by combining this with the quantitative stratification of \cite{NV_RH}.  This is done in Section \ref{ss:energy_identity_proof}.\\

Note that in the technical estimates of this paper we will only work on balls $B_r(p)\subseteq M$ for which $r<<\inj(M)$, and as such we will often implicitly be working on exponential or harmonic coordinate charts of $M$.  Further we will then proceed with most proofs under the assumption that $M=\dR^n$ is flat, with the understanding that the errors arising from the curvature of $M$ are of a lower order nature and tend to just add technical headaches which distract from the more fundamental errors.

\vspace{.5cm}

\section{Preliminaries}\label{s:prelim}

Before beginning an in depth discussion of the proof and paper, it seems a convenient time to introduce some preliminaries.  Some of this section is meant as review of other literature, and some of this section is meant as an introduction to some of the technical structure which will play an important role.  The reader may skip much of this and return to it as needed.\\

\subsection{Weakly and Stationary Harmonic Maps}

In the case of a real valued mapping $u:(M,g)\to \dR$ there is no confusion in what is meant for $u$ to be a harmonic map $\Delta u = 0$.  Such a mapping will automatically be smooth, so distributional solutions are the same as smooth solutions.  More relevant, whether $u$ is viewed as being a solution of $\Delta u = 0$, a critical point of the energy functional $\int_M |\nabla u|^2$, or a minimizer of the energy functional are all equivalent.\\

In the case of a mapping between Riemannian manifolds $u:(M,g)\to (N,h)$ all these various notions are distinct.  In particular, we can consider the energy functional
\begin{align}\label{e:prelim:energy_functional}
	E[u]\equiv \int_M|\nabla u|^2\, .
\end{align}
Though not necessary, it is often convenient to view $N\subseteq \dR^N$ as being isometrically embedded into Euclidean space, so that $u:M\to \dR^N$ becomes a mapping into Euclidean space with values in the subset $N$.  In this case if $u$ is a critical point of the energy functional \eqref{e:prelim:energy_functional} then its the Euler-Lagrange is given by
\begin{align}\label{e:prelim:weakly_harmonic}
	\Delta u (y)= \II_{u(y)}(\nabla u(y),\nabla u(y))\, ,
\end{align}
where $\Delta$ is the linear Laplacian on $M$ and $\II_{u(y)}$ is the second fundamental form on $N\subseteq \dR^N$ at $u(y)$.  We call $u\in H^1(M;\dR^N)$ a weakly harmonic map if it is a weak solution of the Euler Lagrange \eqref{e:prelim:weakly_harmonic}.\\

In this nonlinear case it turns out that being a weak solution of \eqref{e:prelim:weakly_harmonic} is {\it not }equivalent to being a critical point of the energy functional.  The Euler-Lagrange arises from so called target variations, while if $u$ is not smooth one is potentially missing domain variations.  Namely, let $\phi_t:M\to M$ be a family of diffeomorphisms generated by a compactly supported smooth vector field $\xi^j$, and let $u_t\equiv u\circ \phi_t$ be the associated family of variations of $u$.  Then if $\frac{d}{dt}E[u_t]=0$ this would give us the equality
\begin{gather}\label{e:stationary_equation}
 \int \underbrace{\ton{\abs{\nabla u(y)}^2\delta_{ij}-2\langle\nabla_i u(y), \nabla_j u(y)\rangle }}_{\equiv\, S_{ij}(y)}\partial^i \xi^j(y) =0 \, .
\end{gather}
In other words, the stress energy tensor $S_{ij}$, which is a symmetric $m\times m$ matrix, is divergence free in the distributional sense.  We call $u$ a stationary harmonic mapping if in addition to being weakly harmonic it satisfies \eqref{e:stationary_equation} for all compactly supported smooth vector fields.  The stationary equation is in fact the source for much of the important structure in the nonlinear context.\\

Note that if we assume that $u$ is smooth, the stationary equation is equivalent to the pointwise equality
\begin{gather}
 \ps{\Delta u}{\nabla u}=0\, ,
\end{gather}
which is implied by $\Delta u=\II(\nabla u,\nabla u)$. 

\vspace{.3cm}

Minimizers are of course stationary, but the other implication fails in the nonlinear context. Let us just remark that thanks to Luckhaus' lemma in \cite{luck}, minimizers enjoy strong $W^{1,2}$ compactness. In particular, a sequence of minimizers with bounded energy always has a $W^{1,2}$ strongly convergent subsequence whose limit is again a minimizer. This is not true for stationary maps, as bubbling can occur (see example \ref{ex:bubble1}). Moreover, it is worth mentioning that weak $W^{1,2}$ limits of stationary maps are in general not stationary, see \cite{dinglili}.

\subsection{\texorpdfstring{$\epsilon$}{epsilon}-Regularity for Harmonic Maps}\label{ss:prelim:eps_reg}

An important result in the study of stationary harmonic maps is the $\epsilon$-regularity theorem, see for example \cite{beth} and a generalization in \cite{RS}. Here we briefly recall it for the reader's convenience.
\begin{theorem}[$\epsilon$-Regularity]\label{t:eps_reg}
 Let $u:M\to N$ be a stationary harmonic map. Then there exists a constant $\epsilon_0=\epsilon_0(m,n,K_M,K_N)$ as in \eqref{e:manifold_bounds} such that
 \begin{gather}
  r^{2-m}\int_{\B {2r} p} \abs{\nabla u}^2\leq \epsilon_0 \qquad \implies \qquad u\in C^\infty(\B {r}{p})\, .
 \end{gather}
Moreover, elliptic estimates hold, and in particular for all $z\in \B r p $:
\begin{gather}
 r^2\abs{\nabla u(z)}^2+r^4\abs{\nabla^2 u(z)}^2\leq C(m)r^{2-m}\int_{\B {2r} p} \abs{\nabla u}^2\, .
\end{gather}
\end{theorem}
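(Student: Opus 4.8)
The plan is to prove the $\epsilon$-regularity in three stages: first extract a monotonicity formula from stationarity and convert the small-energy hypothesis into smallness of a Morrey norm of $\nabla u$; then use the special structure of the harmonic-map system, through a Coulomb gauge, to upgrade this into a strict decay improvement and hence \hol continuity; finally bootstrap by standard elliptic theory and read off the pointwise bounds. For the monotonicity, testing the stationary equation \eqref{e:stationary_equation} against the radial field $\xi(x)=\eta(|x-p|)(x-p)$ and letting $\eta$ tend to a cutoff gives, after the usual computation, that $\theta_u(x,\rho)\equiv\rho^{2-m}\int_{\B{\rho}{x}}|\nabla u|^2$ satisfies $\frac{d}{d\rho}\ton{e^{CK_M\rho}\theta_u(x,\rho)}\geq 0$ whenever $\B{\rho}{x}\subseteq\B{2r}{p}$, the exponential factor absorbing the curvature of $M$ (in harmonic coordinates $g=\delta+O(K_M|x|^2)$). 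Hence the hypothesis $\theta_u(p,2r)\leq\epsilon_0$ forces $\norm{\nabla u}^2_{M^{2,m-2}(\B{r}{p})}\equiv\sup\cur{\rho^{2-m}\int_{\B{\rho}{x}}|\nabla u|^2:\B{\rho}{x}\subseteq\B{r}{p}}\leq C(m)\epsilon_0$, i.e.\ as small as we like.

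The heart of the matter is the decay improvement. Embedding $N\hookrightarrow\dR^{\bar N}$ isometrically, the Euler--Lagrange system \eqref{e:prelim:weakly_harmonic} can be recast in the antisymmetric (Rivière) form $-\Delta u=\Omega\cdot\nabla u$, where $\Omega$ is a $1$-form valued in antisymmetric matrices with $|\Omega|\leq C(K_N)|\nabla u|$, so that $\norm{\Omega}_{M^{2,m-2}(\B{r}{p})}$ is small. By an Uhlenbeck/Rivière--Struwe type lemma, this smallness yields a gauge $P\in W^{1,2}(\B{r}{p},SO(\bar N))$ with $\dive(P^{-1}\nabla P+P^{-1}\Omega P)=0$ and comparable estimates; in this gauge the equation acquires a conservation-law/div--curl structure. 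Combining the resulting Riesz-potential representation of $\nabla u$ with the Adams-type boundedness of Riesz potentials on Morrey spaces, and closing the estimate using the smallness above, one obtains $\alpha=\alpha(m,n,K_N)\in(0,1)$ with $\theta_u(x,\rho)\leq C(\rho/r)^{2\alpha}\theta_u(p,2r)$ for $\B{\rho}{x}\subseteq\B{r/2}{p}$. Morrey's Dirichlet-growth lemma then gives $u\in C^{0,\alpha}(\B{r/2}{p})$ with $[u]_{C^{0,\alpha}}\leq Cr^{-\alpha}\epsilon_0^{1/2}$.

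With $u$ continuous and its local energy small, one bootstraps: a reverse-\hol (Gehring) improvement of the Caccioppoli inequality gives $\nabla u\in L^{2+\delta}_{loc}$, so $\Delta u=\II_u(\nabla u,\nabla u)\in L^{1+\delta/2}_{loc}$; Calderón--Zygmund and Sobolev embedding then push $u$ into successively higher Sobolev spaces until $\nabla u\in L^p_{loc}$ for every $p$, hence $u\in C^{1,\gamma}_{loc}$. Since $N$ is $C^2$, the coefficient $\II_u(\nabla u,\nabla u)$ is then $C^{0,\gamma}$, and one application of Schauder yields $u\in C^{2,\gamma}_{loc}$ — enough for the stated estimate — the iteration continuing to $C^\infty$ when $N$ is smooth. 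For the pointwise bounds, given $z\in\B{r}{p}$ apply interior Schauder on $\B{s}{z}$ with $s\sim\dist(z,\partial\B{2r}{p})$, controlling $\int_{\B{s}{z}}|\nabla u|^2$ via the monotonicity of the first step by $Cs^{m-2}\theta_u(p,2r)$; rescaling to unit size gives $s^2|\nabla u(z)|^2+s^4|\nabla^2u(z)|^2\leq C(m)\theta_u(p,2r)$, which is the claimed estimate after adjusting constants.

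The main obstacle is the second step: constructing the Coulomb gauge and running the Morrey-space potential estimates is the genuinely hard analytic input, and it is precisely here that the antisymmetric algebraic structure of the system is used, rather than the naive scalar estimate for $\Delta u=O(|\nabla u|^2)$, which fails at this borderline integrability. By comparison the monotonicity formula is an immediate consequence of stationarity, the final bootstrap is routine elliptic theory, and the curvature of $M$ enters only through lower-order terms that are handled by working in harmonic coordinates.
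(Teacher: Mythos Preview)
The paper does not give its own proof of this theorem: it is stated in Section~\ref{ss:prelim:eps_reg} as a known preliminary result, with a reference to Bethuel \cite{beth} and the extension by Rivi\`ere--Struwe \cite{RS} (see Remark~\ref{rm:eps_reg_C2}). Your sketch is essentially the Rivi\`ere--Struwe argument alluded to in that remark --- monotonicity $\Rightarrow$ small Morrey norm, antisymmetric potential / Coulomb gauge $\Rightarrow$ \hol continuity via Morrey-space Riesz potential estimates, then bootstrap --- and as an outline it is correct. So there is nothing to compare against in the paper itself; your proposal simply supplies what the paper elected to cite.
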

\begin{remark}\label{rm:eps_reg_C2}
	The $\epsilon$-regularity of Bethuel requires at least $C^5$ regularity of the target space $N$. With a different method based on antisymmetric potentials developed by Rivi{\`e}re \cite{Riviere_Cons}, Rivi{\`e}re and Struwe \cite{RS} extended the $\epsilon$-regularity of Bethuel to targets with only $C^2$ regularity.
\end{remark}
\vspace{.3cm}

% Sometimes it is convenient to view a harmonic map $u:M\to N$ as a mapping $u:M\to N\subset \R^{k}$, i.e., to embed the target manifold in a Euclidean space. With this embedding, the Euler-Lagrange equation for $u$ is
% \begin{gather}
%  \Delta u = A(u)(\nabla u,\nabla u)\, ,
% \end{gather}
% where $A(u)$ is the second fundamental form of the embedding of $N$ in $\R^k$ evaluated at $u(x)$.

Let us remark on a technical implication of the above regularity that will be useful when we prove superconvexity results for the angular energy:  

\begin{lemma}\label{l:nabla_Delta_u_epsilon_reg}
If $u:M\to N\subset \R^N$ be a stationary harmonic map with $r^{2-m}\int_{\B {2r} 0} \abs{\nabla u}^2\leq \epsilon\leq \epsilon_0$. Let $w$ be any unit tangent vector, then we have on $\B r 0$ the pointwise inequality:
\begin{gather}\label{eq_uw2}
 r^2\abs{\ps{\nabla_w u}{\nabla_w \Delta u}}\leq c(n,K_N) \epsilon \abs{\nabla_w u}^2\, .
\end{gather}
\end{lemma}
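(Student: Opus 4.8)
The plan is to differentiate the harmonic map equation $\Delta u = \II_u(\nabla u,\nabla u)$ in the direction $w$, pair with $\nabla_w u$, and then estimate each resulting term using the pointwise elliptic bounds already furnished by Theorem \ref{t:eps_reg}. First I would write $\Delta u = A(u)(\nabla u,\nabla u)$ where $A = \II$ is the second fundamental form of $N\subseteq\R^N$, a $C^1$-bounded symmetric bilinear form on tangent spaces of $N$; since $N$ is assumed $C^2$ this is legitimate. Applying $\nabla_w$ gives schematically
\begin{gather}
\nabla_w\Delta u = (DA)(u)(\nabla_w u)(\nabla u,\nabla u) + 2\,A(u)(\nabla_w\nabla u,\nabla u)\, ,
\end{gather}
where $DA$ denotes the derivative of the coefficient function $A$ along $N$, which is bounded by $c(n,K_N)$. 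Taking the inner product with $\nabla_w u$, the key algebraic observation is that the term $2\,\langle \nabla_w u, A(u)(\nabla_w\nabla u,\nabla u)\rangle$ involves the normal-valued quantity $A(u)(\cdot,\cdot)$ tested against $\nabla_w u$; modulo lower order terms, $\nabla_w u$ is tangent to $N$ along $u$, so this pairing essentially vanishes and what survives is controlled by the failure of tangency, which is itself an $\II$-type term quadratic in $\nabla u$. Thus every surviving contribution to $\langle \nabla_w u,\nabla_w\Delta u\rangle$ is a sum of terms each of which contains at least two factors of $\nabla u$ beyond the $|\nabla_w u|$ we want on the right-hand side, together with at most one factor of $\nabla^2 u$.

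Next I would insert the elliptic estimates of Theorem \ref{t:eps_reg}: on $B_r(0)$ we have $r^2|\nabla u|^2 \leq C(m)\,r^{2-m}\int_{B_{2r}}|\nabla u|^2 \leq C(m)\epsilon$ and $r^4|\nabla^2 u|^2 \leq C(m)\epsilon$, hence $r|\nabla u|\leq C\epsilon^{1/2}$ and $r^2|\nabla^2 u|\leq C\epsilon^{1/2}$ pointwise on $B_r$. Each surviving term in $\langle \nabla_w u,\nabla_w\Delta u\rangle$ is bounded, after extracting one factor $|\nabla_w u|^2$ (or $|\nabla_w u|$ and one more $|\nabla u|$), by $c(n,K_N)$ times either $|\nabla u|^2\,|\nabla_w u|^2$ or $|\nabla^2 u|\,|\nabla u|\,|\nabla_w u|^2$ type expressions — being slightly careful about which factor is genuinely a $w$-derivative and which is a full gradient. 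Using $r^2|\nabla u|^2\leq C\epsilon$ and $r^2|\nabla^2 u|\,|\nabla u|\leq C\epsilon$ we get in every case $r^2\,|\text{term}| \leq c(n,K_N)\,\epsilon\,|\nabla_w u|^2$, which is exactly \eqref{eq_uw2}.

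The one point requiring genuine care, and which I expect to be the main (though still mild) obstacle, is making the ``$\nabla_w u$ is tangent, so the dangerous $\nabla^2 u$-term drops out'' step precise: the full Hessian term $A(u)(\nabla_w\nabla u,\nabla u)$ is normal to $N$, and $\nabla_w u$ is tangent, so their pointwise pairing is literally zero for a map into $N$ — one must just verify that the decomposition of $\nabla_w\Delta u$ separates cleanly into this vanishing normal piece plus genuinely quadratic-in-first-derivatives remainders coming from $DA$ and from differentiating the projection. Concretely, using $\langle \nabla_w u, \nu\rangle = 0$ for any $\nu\in (T_{u}N)^\perp$ and differentiating this identity in $w$ converts any inner product of $\nabla_w u$ with a normal vector into an inner product of $\nabla_w\nabla_w u$ (or curvature-type terms) with that normal vector scaled by $\II$, i.e.\ back into a term with the right number of derivative factors. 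Once this bookkeeping is set up, everything is a direct substitution of the $\epsilon$-regularity bounds, and no compactness or further PDE input is needed.
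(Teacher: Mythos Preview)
Your approach is correct and is essentially the same as the paper's: both differentiate the harmonic map equation, both observe that the dangerous second-derivative term $2\langle \nabla_w u, A(u)(\nabla_w\nabla u,\nabla u)\rangle$ is killed by the tangent/normal orthogonality between $\nabla_w u$ and the image of $A$, and both finish with the pointwise $\epsilon$-regularity bounds. The paper packages the cancellation slightly differently --- it differentiates the pointwise identity $\langle u_w, A[u_w,\nabla u]\rangle=0$ in the free $\nabla$-index to rewrite the bad term as $-\tfrac12\langle u_w, A[u_w,\Delta u]\rangle$, which is then visibly of order $|\nabla_w u|^2|\nabla u|^2$ --- but the content is the same.

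One bookkeeping slip to clean up: you write that surviving terms may be of type $|\nabla^2 u|\,|\nabla u|\,|\nabla_w u|^2$ and that $r^2|\nabla^2 u|\,|\nabla u|\leq C\epsilon$. The latter bound is false (it scales like $C\epsilon/r$), and dimensionally such a term cannot occur in $\langle\nabla_w u,\nabla_w\Delta u\rangle$ anyway. Once the tangent/normal cancellation is carried out cleanly --- for instance by writing $A(u)(X,Y)=\sum_a\langle A_a X,Y\rangle\,\nu_a(u)$ and using $\langle\nabla_w u,\nu_a\rangle=0$ --- every surviving term is genuinely of the form $c(n,K_N)|\nabla u|^2|\nabla_w u|^2$ with no Hessian factor, and your argument goes through.
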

\begin{proof}
Let us denote $u_w \equiv \nabla_w u$, and recall the Euler Lagrange equation $\Delta u = A(\nabla u,\nabla u)$, where $A$ is the second fundamental form of the isometric embedding $N\subseteq \dR^N$, which coincides with the Hessian of the nearest point projection $\Pi:\R^N\to N$. We will keep using $A$ to denote the Hessian $\nabla^2 \Pi$, which is a quadratic form $A(x):TR^N\times TR^N\to TR^N$ for all $x$ close enough to $N$.  Differentiating we obtain
\begin{align}
	\ps{\nabla_w u}{\nabla_w \Delta u} =\ps{u_w}{\nabla A[u_w,\nabla u,\nabla u]}+2\ps{u_w}{A[\nabla u_w,\nabla u]}\, .
\end{align}
Using Theorem \ref{t:eps_reg} the correct estimate holds on the first term, and so we need only deal with the second term above.  Observe that $\ps{u_w}{A[u_w,\nabla u]} = 0$ as $A$ applied to two tangent vectors to $N$ is orthogonal to $N$.  Thus we can write
\begin{align}
	&\ps{u_w}{A[\nabla u_w,\nabla u]} = \nabla\ps{u_w}{A[u_w,\nabla u]}-\ps{\nabla u_w}{A[u_w,\nabla u]}-\ps{u_w}{A[u_w,\Delta u]}\, .
\end{align}
By standard arguments, see for example \cite[lemma 3.2 p 60]{moser_appr_harm}, we have that $\ps{\nabla u_w}{A[u_w,\nabla u]}=\ps{(\nabla u_w)^\perp}{A[u_w,\nabla u]}=\ps{u_w}{A[(\nabla u_w)^\perp,\nabla u]}=\ps{u_w}{A[\nabla u_w,\nabla u]}$, and so we obtain:
\begin{align}
	& 2\ps{u_w}{A[\nabla u_w,\nabla u]} = -\ps{u_w}{A[u_w,\Delta u]}\, .
\end{align}
Plugging in the estimates of  Theorem \ref{t:eps_reg} finishes the proof. 
\end{proof}

\vspace{.3cm}

\subsection{Heat Mollified Energy of Nonlinear Harmonic Maps}\label{ss:outline:heat_mollified_energy}

Let us begin our discussion with the energy of a nonlinear harmonic map.  Besides being a useful review, our precise choices of energy in this paper will be a theme as they are more than just convenient, some of the more precise estimates will depend on these choices in a delicate manner. \\

Beginning at the beginning, arguably the most useful aspect of a nonlinear stationary harmonic map $u:B_2\to N$ is that its normalized energy
\begin{align}
	\theta(x,r)\equiv r^{2-m}\int_{B_r(x)}|\nabla u|^2\, ,
\end{align}
is monotone nondecreasing in $r$ for each $x\in B_2$ .  Some of the common consequences of this is the existence and $0$-symmetry of tangent maps.  This is the starting point of a stratification theory, see \cite{ScUh_RegHarm},\cite{NV_RH}. \\

 It can be convenient at times to mollify the above, so that the mollified energy is a smooth function of both $x$ and $r$.  More precisely, for any choice of smooth function $\rho:\dR^+\to \dR^+$ with $\rho\geq 0$ and $\dot\rho\leq 0$ we can define the mollified energy
 \begin{align}\label{e:outline:mollified_energy}
 	\vartheta(x,r)\equiv r^{2-m}\int \rho\ton{\frac{|y-x|^2}{2r^2}}|\nabla u|^2 \equiv r^{2}\int \rho_r(y-x)\,|\nabla u|^2(y)\, ,
 \end{align}
 which is a monotone nondecreasing function of $r$ for each $x$ satisfying
 \begin{align}\label{e:outline:mollified_energy_monotonicity}
 	r\frac{d}{dr}\vartheta(x,r) = -2\int\dot\rho_r(y-x)\langle\nabla u,y-x\rangle^2 \geq 0\, .
 \end{align}
The above is proven using the stationary equation \eqref{e:stationary_equation} together with the radial vector field $\xi=\rho_r(y-x) (y-x)$. \\

It will turn out that some choices of $\rho$ will be better than others, and ideally we would like to pick the Gaussian $\rho(t) = (2\pi )^{-m/2}e^{-t}$ .  For this choice our mollifier $\rho(y-x)$ should be viewed as the heat kernel, so that $\vartheta(x,\sqrt{t})/t$ solves the heat flow as a function of space and time.  This particular choice of mollifier will play a special role in our analysis and makes for a better behaved energy functional.  As we are working locally, we will cut off $\rho$ after a finite number of scales.  In particular, we will choose $\rho$ so that
 
 \begin{definition}[Cutoff Heat Kernel Mollified Energy]\label{d:heat_mollifier}
 Our mollified energy functional $\vartheta(x,r)$ from \eqref{e:outline:mollified_energy} will be taken with respect to a smooth function $\rho:[0,\infty)\to [0,\infty)$ such that
 \begin{enumerate}
 	\item $\rho(t) = c_m e^{-t}$ for $t\in [0,R]$ 
 	\item $\text{supp }\rho \subseteq [0,\,R+2]$
 	\item $0\leq \ddot\rho(t) \leq \,c_m\, e^{-R}$ for $t\in[R,R+2]$.
 \end{enumerate}
 Notice that with this properties we have $\rho(t)\leq -2\dot \rho(t)$ for all $t\in [0;\infty)$.

 As done above, we will also set $\rho_r, \dot \rho_r:\R^m\to [0,\infty)$ as
 \begin{gather}
  \rho_r(y)= r^{-m} \rho \ton{\frac{\abs{y}^2}{2r^2} }\, , \qquad \dot \rho_r(y)= r^{-m}\dot \rho\ton{\frac{\abs{y}^2}{2r^2} }\, .
 \end{gather}

 \end{definition}
 \begin{remark}
 	We will choose $R=R(m,\Lambda)$ at later stages of the proof.
 \end{remark}
 \begin{remark}
 	The coefficient $c_m=c(m,R)\approx (2\pi)^{-m/2}$ is uniquely chosen so that $\int_{\dR^m} \rho\Big(\frac{|x-y|^2}{2}\Big)=1$ is a probability measure.
 \end{remark}
 \begin{remark}
 	We see that $\theta(x,r)$ and $\vartheta(x,r)$ are uniformly equivalent in that there exists $C(m,R)$ such that $C^{-1}\theta(x,r)\leq \vartheta(x,r)\leq C\,\theta(x,Rr)$ .
 \end{remark}

 The choice of $\rho$ is going to be important because it will allow us to exchange $\rho$ with $\dot \rho$ up to a $e^{-R/2}$-small error. In particular, we have 
  \begin{lemma}[Basic Properties of the Heat Kernel Mollifier]\label{l:rho_basic_properties}
We have the following uniform bounds
:
\begin{enumerate}
	\item For all $R$ and $t\in [0,\infty)$ we have
	  \begin{gather}\label{e:trho_doubleradius}
   -\dot \rho(t)\geq 0\, , \qquad -\dot\rho(t)-t\dot \rho(t)+t^2\ddot\rho(t)\leq C \min\cur{\rho(t/1.1),-\dot \rho(t/1.1)}\, .
  \end{gather}
 	\item Using the convexity of $\rho$ we have for all $R$ and $t\in [0,\infty)$ that:
 	 \begin{gather}\label{e:rho_leq_dot_rho}
  \rho(t) \leq -C(m)\dot \rho(t) \, ,
 \end{gather}
 	\item The almost Gaussian nature of $\rho$ gives for all $R$ and $t\in [0,\infty)$ that:
 \begin{gather}\label{e:rho_primitive_difference}
  \abs{\rho(t)+\dot \rho(t)}\leq 2c_m e^{-R}\chi_{[R,R+2]}\leq 20e^{-R/2}\rho(t/2)\, .
 \end{gather}
 	\item If $R\geq 20$ and $r\leq s\leq 10 r$, then we have that
  \begin{gather}
  \B {r}{x}\subset \B {s}{x'}\qquad \Longrightarrow \qquad \rho_r(y-x)\leq C(m)\rho_{s}(y-x')\, .
 \end{gather}
\end{enumerate}
\end{lemma}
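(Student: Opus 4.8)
The plan is to read off all four items directly from the explicit form of $\rho$ in Definition~\ref{d:heat_mollifier}, treating separately the regime $t\in[0,R]$, where $\rho(t)=c_me^{-t}$ so that $-\dot\rho=\ddot\rho=\rho$ there, and the transition regime $t\in[R,R+2]$, where $\rho$, $-\dot\rho$ and $\ddot\rho$ all lie in $[0,c_me^{-R}]$, vanish to infinite order at $R+2$, and $\rho$ is convex and nonincreasing on all of $[0,\infty)$ (past $R+2$ all three quantities are identically zero). In the first regime each inequality collapses to a comparison of a polynomial against an exponential; in the second regime only the crude pinching bounds and convexity are used. For item (1), first inequality: on $[0,R]$ it is $c_me^{-t}>0$, and on $[R,R+2]$ convexity makes $\dot\rho$ nondecreasing while $\dot\rho(R+2)=0$ forces $\dot\rho\le0$. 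For item (2): on $[0,R]$, $\rho=-\dot\rho$ gives $C=1$; on $[R,R+2]$, writing $\rho(t)=\int_t^{R+2}(-\dot\rho(s))\,ds$ and using that $-\dot\rho$ is nonincreasing yields $\rho(t)\le(R+2-t)(-\dot\rho(t))\le 2(-\dot\rho(t))$, so $C(m)=2$ suffices.

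The second inequality in (1) is the substantive one, and here is where I would be most careful. On $[0,R]$ (where also $t/1.1\le R$) the left side is $c_me^{-t}(1+t+t^2)$ and $\min\{\rho(t/1.1),-\dot\rho(t/1.1)\}=c_me^{-t/1.1}$, so the estimate becomes $1+t+t^2\le Ce^{t/11}$, true for all $t\ge0$ with a universal $C$. On $[R,R+2]$, assuming $R\ge20$ so that $t/1.1\le(R+2)/1.1\le R$, the left side is at most $c_me^{-R}\bigl((R+3)+(R+2)^2\bigr)$ by the pinching bounds, while $\min\{\rho(t/1.1),-\dot\rho(t/1.1)\}=c_me^{-t/1.1}\ge c_me^{-(R+2)/1.1}$; since $R-(R+2)/1.1=(R-20)/11\ge0$, the claim reduces to $(R+3)+(R+2)^2\le Ce^{(R-20)/11}$, which holds uniformly because the exponential eventually dominates any fixed polynomial (for bounded $R$ all quantities are bounded and the inequality is a finite check). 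I would also emphasize in the write-up why the factor $1.1$ cannot be dropped: the term $t^2\ddot\rho(t)$ can be of size $R^2c_me^{-R}$, which near the right end of $[R,R+2]$ is comparable to neither $\rho(t)$ nor $-\dot\rho(t)$, so one genuinely must dilate back into the pure-Gaussian region before estimating. Item (3) goes the same way but more easily: on $[0,R]$, $\rho(t)+\dot\rho(t)=c_me^{-t}-c_me^{-t}=0$; on $[R,R+2]$, $|\rho(t)+\dot\rho(t)|\le\rho(t)+(-\dot\rho(t))\le 2c_me^{-R}$ by monotonicity of $\rho$ and $-\dot\rho$; and for the second half, on $[R,R+2]$ one has $t/2\le(R+2)/2\le R$, hence $\rho(t/2)=c_me^{-t/2}\ge e^{-1}c_me^{-R/2}$ and therefore $2c_me^{-R}\le 20e^{-1}c_me^{-R}\le 20e^{-R/2}\rho(t/2)$, with the left side vanishing off $[R,R+2]$.

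For item (4) the plan is a rescaling plus triangle inequality. Write $\rho_r(y-x)=r^{-m}\rho(a^2/2)$ and $\rho_s(y-x')=s^{-m}\rho(b^2/2)$ with $a=|y-x|/r$, $b=|y-x'|/s$; since $r\le s\le10r$, $r^{-m}\le10^ms^{-m}$, so it is enough to prove $\rho(a^2/2)\le C(m)\rho(b^2/2)$. If $a^2/2\ge R+2$ the left side is $0$, so assume $a^2/2<R+2$. The inclusion $B_r(x)\subset B_s(x')$ gives $|x-x'|\le s-r$, hence $b\le a(r/s)+1-(r/s)=\lambda(a-1)+1$ with $\lambda=r/s\le1$. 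If $a\ge1$ this forces $b\le a$, and since $\rho$ is nonincreasing and $u\mapsto u^2/2$ is increasing, $\rho(b^2/2)\ge\rho(a^2/2)$. If $a<1$ then $y\in B_r(x)\subset B_s(x')$, so $b<1$, whence $a^2/2,b^2/2\in[0,\tfrac12]\subset[0,R]$ and $\rho(a^2/2)=c_me^{-a^2/2}\le c_m\le e^{1/2}\rho(b^2/2)$. So $C(m)=10^me^{1/2}$ works.

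The only point I expect to require real care is the uniformity in $R$ of the constant in the second estimate of (1): the bad term $t^2\ddot\rho(t)$ is controlled only by $R^2c_me^{-R}$, and on the transition interval this is of a strictly larger order than $\rho(t)$ or $-\dot\rho(t)$ there. The resolution is exactly the $1.1$-dilation together with the hypothesis $R\ge20$, which lands the argument back in the interval where $\rho$ is an honest Gaussian; after that, the inequality is just the statement that $e^{R/11}$ dominates a fixed polynomial in $R$, giving a constant independent of $R$.
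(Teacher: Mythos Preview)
Your proof is correct and follows the same approach as the paper, which simply states that ``all the statements in this lemma are straightforward from the definition and standard computations''; you have supplied the details the paper omits. One small quibble: for items (1) and (3) your hand-wave ``for bounded $R$ all quantities are bounded and the inequality is a finite check'' is slightly imprecise since the right-hand side involves $\rho(t/1.1)$ and $-\dot\rho(t/1.1)$ which can be small near the cutoff --- but since $t/1.1$ stays a fixed distance $(R+2)/11\ge 2/11$ away from $R+2$, and $-\dot\rho$ can drop by at most $c_me^{-R}$ per unit length (from $\ddot\rho\le c_me^{-R}$), this is easily made rigorous, and in any case the paper only ever uses $R\ge R(m)$ large.
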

\begin{proof}
 All the statements in this lemma are straightforward from the definition and standard computations.
\end{proof}

\begin{comment}
\begin{lemma}
 We have the following comparison for the cutoff functions. If $1<k<10$, $R\geq 20$, and if
 \begin{gather}
  \B r x \subset \B {kr}{x'}\, \qquad \Longleftrightarrow \qquad \abs{x-x'}\leq (k-1) r
 \end{gather}
 then
 \begin{gather}
  \rho_r(y-x) \leq C(m) \rho_{kr}(y-x')
 \end{gather}
\end{lemma}
\begin{proof}
 For $y\in \B r x$, we have $\abs {y-x'}\leq kr$ and $\rho_{kr}(y-x')\geq c(m) r^{-m}\geq c(m)\rho_r(y-x)$.

 For $\abs{y-x}\geq r$, we have
 \begin{gather}
  \abs{y-x'}\leq \abs{y-x}+\abs{x-x'}\leq \abs{y-x} + (k-1)r\leq k\abs{y-x}\, .
 \end{gather}
 So
 \begin{gather}
  \frac{\abs{y-x'}^2}{2k^2 r^2}\leq\frac{\abs{y-x}^2}{2 r^2}\, ,
 \end{gather}
and by monotonicity of $\rho$:
\begin{align}
 \rho\ton{\frac{\abs{y-x'}^2}{2k^2 r^2}}\geq& \rho \ton{\frac{\abs{y-x}^2}{2r^2}}\, ,\\
 (kr)^{-m} \rho\ton{\frac{\abs{y-x'}^2}{2k^2 r^2}}\geq& k^{-m} r^{-m}\rho \ton{\frac{\abs{y-x}^2}{2r^2}}\, ,\\
 \rho_{kr}\ton{y-x'}\geq& c(m) \rho_r (y-x)\, .
\end{align}

\end{proof}

\end{comment}

The last lemma of this section is a standard comparison, analogous to the one available with the standard definition of $\theta$.
\begin{lemma}
 Let $u:\B {10 R}{p}\to N$ be a stationary harmonic map. If $\B {r}x\subset \B {r'} {x'}\subset  \B {10R}{p}$, then
 \begin{gather}
  \vartheta(x,r)\leq C(m) \vartheta(x',r')\, .
 \end{gather}
\end{lemma}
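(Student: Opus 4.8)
The statement to prove is the monotonicity-type comparison: if $\B{r}{x}\subset \B{r'}{x'}\subset \B{10}{p}$ then $\vartheta(x,r)\leq C(m)\,\vartheta(x',r')$. The plan is to reduce everything to the pointwise comparison of mollifiers already recorded in Lemma~\ref{l:rho_basic_properties}(4) together with the monotonicity \eqref{e:outline:mollified_energy_monotonicity} of $\vartheta(x,\cdot)$ in the radial variable. The key observation is that the monotone quantity $\vartheta(x,r)$ is, up to the $r^{2-m}$ scaling, just $\int \rho_r(y-x)|\nabla u|^2(y)\,dy$ with the understanding that $\rho_r$ integrates to a fixed constant; so comparing $\vartheta$ at two centers/radii amounts to dominating one Gaussian-type bump by a constant multiple of another.

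First I would dispose of the case $r'\leq 10 r$. In that regime the hypothesis $\B{r}{x}\subset\B{r'}{x'}$ means $|x-x'|\leq r'-r\leq 10r$, so Lemma~\ref{l:rho_basic_properties}(4) applies (after choosing $R\geq 20$, which we may) and gives $\rho_r(y-x)\leq C(m)\rho_{r'}(y-x')$ pointwise in $y$. Multiplying by $r^2\leq (r')^2$ (here using $m\geq 2$, so that the extra factor only helps) and $|\nabla u|^2(y)\geq 0$ and integrating yields $\vartheta(x,r)\leq C(m)\vartheta(x',r')$ directly.

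For the remaining case $r'>10r$ I would interpolate through an intermediate radius. Pick the largest $s$ of the form $s = 10^k r$ with $s\leq r'$; then $r\leq s\leq r'$ and by construction $r' < 10 s$, so $B_s(x)\subseteq B_{r'}(x')$ with $|x-x'|\leq r' < 10s$. Now chain: by the first case (applied with radii $10^{j}r \leq 10^{j+1}r \leq 10\cdot 10^j r$ and centers $x=x$) we do NOT want to pay a constant at each of the $\log(r'/r)$ steps, so instead I would argue directly. Apply monotonicity \eqref{e:outline:mollified_energy_monotonicity} to enlarge the radius at the \emph{fixed} center $x$: $\vartheta(x,r)\leq \vartheta(x,s)$. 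Then $\B{s}{x}\subset\B{r'}{x'}$ with $|x-x'|\leq r'<10s$, so Lemma~\ref{l:rho_basic_properties}(4) gives $\rho_s(y-x)\leq C(m)\rho_{r'}(y-x')$ pointwise; multiplying by $s^2\leq (r')^2$ and $|\nabla u|^2$ and integrating gives $\vartheta(x,s)\leq C(m)\vartheta(x',r')$. Combining, $\vartheta(x,r)\leq C(m)\vartheta(x',r')$ with a single dimensional constant.

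The only mild subtlety — the "main obstacle," though it is minor here — is bookkeeping the $r^{2-m}$ normalization: one must check that passing from $\rho_s(y-x)\leq C\rho_{r'}(y-x')$ to a bound on $\vartheta$ is legitimate, i.e. that $s^2\int\rho_s(y-x)|\nabla u|^2 \leq C(m)(r')^2\int\rho_{r'}(y-x')|\nabla u|^2$, which holds because $s\leq r'$ (so $s^2\leq (r')^2$) and the pointwise mollifier bound is exactly what compensates the different normalizations $s^{-m}$ vs.\ $(r')^{-m}$. All of this is routine given the preparatory lemmas, and the proof can be stated in a few lines.
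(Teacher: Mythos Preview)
Your strategy---monotonicity at $x$ followed by a pointwise mollifier comparison---is exactly what the paper indicates (``integral comparison and monotonicity''), and your Case~1 is correct. But Case~2 contains a genuine gap: you assert $B_s(x)\subseteq B_{r'}(x')$ for your choice $s=10^kr$ with $r'/10<s\leq r'$, and this can fail. The containment $B_s(x)\subset B_{r'}(x')$ is equivalent to $|x-x'|+s\leq r'$, whereas the hypothesis $B_r(x)\subset B_{r'}(x')$ only gives $|x-x'|\leq r'-r$. When $|x-x'|$ is close to $r'$ (say $|x-x'|>9r'/10$, which forces $r<r'/10$), one has $|x-x'|+s>r'$ for every $s>r'/10$, so Lemma~\ref{l:rho_basic_properties}(4) does not apply as you invoke it. The phrase ``$|x-x'|\leq r'<10s$'' that you record is not the hypothesis of that lemma.

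The repair is easy but requires going a half-step beyond Lemma~\ref{l:rho_basic_properties}(4) as stated. Use monotonicity to pass from $\vartheta(x,r)$ to $\vartheta(x,r'/2)$ (the case $r>r'/2$ is your Case~1), and then verify directly from the cutoff-Gaussian form of $\rho$ that $\rho_{r'/2}(y-x)\leq C(m)\,\rho_{r'}(y-x')$ whenever $|x-x'|<r'$: on the support of $\rho_{r'/2}(\cdot-x)$ one has $|y-x'|^2/2(r')^2$ bounded well inside $[0,R]$ (using $R\geq 20$), and the exponent $-2|y-x|^2/(r')^2+|y-x'|^2/2(r')^2$ is bounded above by an absolute constant because $|y-x'|\leq|y-x|+r'$. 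This gives $\vartheta(x,r'/2)\leq C(m)\,\vartheta(x',r')$. Note that Lemma~\ref{l:rho_basic_properties}(4) alone, which needs the strict containment $B_{r'/2}(x)\subset B_{r'}(x')$ (equivalently $|x-x'|\leq r'/2$), does not cover the regime $r'/2<|x-x'|<r'$, and iterating it there does not produce a uniform constant.
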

\begin{proof}
 The proof is straightforward from integral comparison and monotonicity.
\end{proof}

We can then recognize that the heat mollified energy is roughly equivalent to the standard energy but with better space-time behavior.  For much of the analysis any choice of smooth mollifier $\rho$ will do, however in Section \ref{s:radial_energy} the specific choice of the heat kernel will eliminate certain higher order errors which are otherwise difficult to deal with.\\

\vspace{.3cm}

\subsection{Energy and Partial Energy Functionals}\label{ss:prelim:partial_energy}

Recall from Section \ref{ss:outline:heat_mollified_energy} the definition of the heat mollified energy
 \begin{align}\label{e:prelim:mollified_energy}
 	\vartheta(x,r)\equiv r^{2-m}\int \rho\ton{\frac{|x-y|^2}{2r^2} }|\nabla u|^2 \equiv r^{2}\int \rho_r(x-y)\,|\nabla u|^2(y)\, ,
 \end{align}
 where $\rho(t)\approx c_m e^{-t}$ is the heat kernel mollifer which is cutoff at scale $R>0$ as in Definition \ref{d:heat_mollifier}.  For a stationary harmonic map the heat mollified energy $\vartheta(x,r)$ is monotone with $r\dot\vartheta(x,r)\geq 0$, a consequence of the stationary equation.  We will imprecisely refer to $r\dot\vartheta(x,r)\approx |\vartheta(x,2r)-\vartheta(x,r)|$ as the pinching of the monotone quantity $\vartheta$ at the point $x$ and at scale $r$.\\
 
 It will be important throughout the article to consider similar definitions that focus only on some components of the energy.  These will of course no longer be monotone, though will be obviously bounded by the monotone quantity.  More importantly, in practice these partial energies will be controlled by the pinching of the monotone quantity.  Much of this subsection will be spent making this statement effectively precise.\\
 
To begin let us define our collection of partial energies:

\begin{definition}[Partial Energy Functions]\label{d:partial_energies}
	Given $u:B_{10R}\to N$ with $B_{r}(x)\subseteq B_2$ and $L^k\subseteq \dR^m$ a $k$-dimensional plane we define
\begin{align}
	\vartheta(x,r;L) &\equiv r^2\int \rho_r(y-x)\abs{\pi_L \nabla u(y)}^2\,dy = r^2\int \rho_r(y-x)|\pi_L\nabla u|^2\, ,\notag\\
	\vartheta(x,r;n_{L^\perp}) &\equiv \int \rho_r(y-x)\langle\nabla u,\pi_{L^\perp}(y-x)\rangle^2\,dy = \int \rho_r(y-x)|\pi_{L^\perp}(y-x)|^2\langle\nabla u,n_{L^\perp}\rangle^2\, ,\notag\\
	\vartheta(x,r;\alpha_{L^\perp}) &\equiv  \int \rho_r(y-x)|\pi_{L^\perp}(y-x)|^2\langle\nabla u,\alpha_{L^\perp}\rangle^2\,dy\equiv  \int \rho_r(y-x)|\pi_{L^\perp}(y-x)|^2\abs{ \pi_{\alpha_{L^\perp}} \nabla u}^2\, ,\notag\\
	\vartheta(x,r;L^\perp) &\equiv\vartheta(x,r;n_{L^\perp})+\vartheta(x,r;\alpha_{L^\perp})= \int \rho_r(y-x)|\pi_{L^\perp}(y-x)|^2\abs{\pi_L^\perp \nabla u(y)}^2\, ,
\end{align} 
where $n_{L^\perp}=\frac{\pi_{L^\perp}(y-x)}{|\pi_{L^\perp}(y-x)|}$ is the unit radial direction corresponding to the affine $2$-plane $x+L^\perp$ and $\alpha_{L^\perp}$ represents the corresponding $m-k-1$ spherical directions. 
\end{definition}
\begin{remark}
	Note that $\abs{\pi_L^\perp \nabla u(y)}^2 = \langle\nabla u,n_{L^\perp}\rangle^2+\langle\nabla u,\alpha_{L^\perp}\rangle^2$.  For most of this paper we will consider $k=m-2$, and so $\alpha_{L^\perp}$ is the single angular direction complementing $L^\perp$ and $n_{L^\perp}$ at every point away from $L$.
\end{remark}

\begin{remark}
Observe our notational convention that energies with respect to a perpendicular direction $\vartheta(x,r;L^\perp)$ are weighted with a decaying factor $|\pi_{L^\perp}(y-x)|^2$ near $L$.  As the context will always emphasize if we are looking in the $L$ or $L^\perp$ directions, this should not cause confusion.
\end{remark}

Observe that if $B_s(y)\subset B_r(x)$ with $s\geq r/10$ then we have the relatively straightforward estimate 
\begin{align}\label{e:prelim:partial_energy:basic_bounds}
	\vartheta(y,s;L)\leq C(m)\vartheta(x,r;L)\, , \text{ for }s\geq r/10\, . 
\end{align}
In the case $L=\dR^m$ one can drop the assumption that $s\geq r/10$, but of course this is essentially due to monotonicity. \\

\vspace{.3cm}

\subsection{Quantitative Stratification and \texorpdfstring{$(k,\epsilon)$}{(k,epsilon)}-Symmetry}\label{ss:prelim:quant_strat}

Let us carefully define the notion of quantitative symmetry.  We will base our definitions on the heat mollified energies of Section \ref{ss:outline:heat_mollified_energy} and Section \ref{ss:prelim:partial_energy}:\\

\begin{definition}[$(k,\epsilon)$-Symmetric Mappings]\label{d:quant_symmetries}
	Let $u:B_{10R r}\to N$ be a mapping, then we say that $u$ is $(k,\epsilon)$-symmetric on the ball $B_r(x)\subseteq B_2$ if $r^2 K_M^2<\delta$ and there exists a $k$-plane $L=L^k$ such that
\begin{align}
	\vartheta(x,r; L)+\vartheta_{}(x,r; n_{L^\perp}) \equiv r^2\int_{} \rho_r(x-y)\abs{\pi_L \nabla u(y)}^2+\int_{} \rho_r(x-y)\langle \nabla u,\pi_{L^\perp}(y-x)\rangle^2 < \epsilon\, .
\end{align}
\end{definition}

One should interpret the above as saying that in an $H^1$ sense the mapping is close to being invariant in the $L$-directions, and in a weak $H^1$ sense the mapping is close to being radially invariant.  The weak aspect is because the energy in the $L^\perp$ radial directions $\langle \nabla u,\pi_{L^\perp}(y-x)\rangle^2 = |\pi_{L^\perp}(y-x)|^2\langle\nabla u, n_{L^\perp}\rangle^2$ is naturally weighted so that control degenerates near $L$ itself.  As observed previously, this definition is slightly stronger than the ones in \cite{ChNa1},\cite{ChNaVa},\cite{CJN}.  We can then define the quantitative stratification of our solution, see \cite{ChNa1},\cite{ChNaVa},\cite{CJN}:\\

\begin{definition}[Quantitative Stratification of Harmonic Maps]
Let $u:B_{10R}\to N$ be a stationary harmonic map.  Then we define the quantitative stratifications
\begin{align}
	%&\cS^k_{\epsilon,r}(u)\equiv \{x\in B_1:\forall\, r\leq s<1\text{ we have that }B_s(x) \text{ is not $(k+1,\epsilon)$-symmetric}\}.\notag\\
	&\cS^k_{\epsilon}(u)\equiv \Big\{x\in B_1:\forall\,\, 0< s<1 \text{ we have that $u$ is not $(k+1,\epsilon)-$symmetric on $B_s(x)$}\Big\}\, .
\end{align}
\end{definition}

\begin{remark}
Note that for $\epsilon_1 < \epsilon_2$ we have $\cS^k_{\epsilon_2}(u)\subseteq \cS^k_{\epsilon_1}(u)$.
\end{remark}

\vspace{.2cm}

A reasonable question is to ask the relationship of the quantitative stratifications to the classical stratification.  It is a nice and useful exercise to check that one has the following identities:
\begin{align}
\cS^k(u) = \bigcup_{\epsilon>0} \cS^k_\epsilon\, ,%  = \bigcup_{\epsilon>0}\bigcap_{r>0} \cS^k_{\epsilon,r}\, ,
\end{align}
and therefore one can recover the classical stratification directly as a countable union of quantitative stratifications.\\  

In this way we see how we have decomposed the classical stratification into what are more manageable pieces.  Though most of the results in this paper are about further control for the top $m-2$ stratum, let us mention the main result on the quantitative stratification in general:

\begin{theorem}[Naber-Valtorta \cite{NV_RH}][Quantitative Stratification for Stationary Maps]\label{t:main_quant_strat}
	Let $u:B_{10R}(0)\to N$ be a stationary harmonic map satisfying \eqref{e:manifold_bounds} with $R^2\int_{B_{10R}}|\nabla u|^2 \leq \Lambda$.  Then
	\begin{enumerate}
	\item $\cS^k_\epsilon(u)$ is $k$-rectifiable.  Further, for $k$-a.e. $x\in \cS^k_\epsilon(u)$ $\exists$ a $k$-plane $L^k_x\subseteq \dR^m$ such that {\it every} tangent cone at $x$ is $k$-symmetric with respect to $L_x$.
	\item There exists $C(n,K_M,K_N,\Lambda,\epsilon)>0$ such that we have the effective Minkowski estimates\newline $\Vol(B_r(\cS^k_{\epsilon}))\leq C\, r^{n-k}	$.  In particular, we have the finite volume estimate $\cH^k(\cS^k_\epsilon)\leq C$.
	\end{enumerate}
\end{theorem}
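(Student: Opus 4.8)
The plan is to follow the quantitative-differentiation and $L^2$-best-approximation strategy, using the monotonicity of the (heat-mollified) energy $\vartheta$ as the only analytic input. The first step is a quantitative cone-splitting lemma: if $u$ is $(k,\epsilon)$-symmetric on $B_{2r}(x)$ with respect to a $k$-plane $L^k$ and in addition the pinching $\vartheta(y,r)-\vartheta(y,\eta r)$ is at most $\epsilon$ at some $y\in B_r(x)$ with $d(y,L)\geq \tau r$, then $u$ is $(k+1,\delta)$-symmetric on $B_r(x)$, with $\delta=\delta(n,K_M,K_N,\Lambda,\epsilon,\tau,\eta)\to 0$ as $\epsilon\to 0$. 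More generally, if the pinching is small at a family of points through which no $(k-1)$-plane passes at scale $\tau r$ — i.e. which $\tau$-effectively span a $k$-plane $L$ through $x$ — then $B_r(x)$ is $(k,\delta)$-symmetric with respect to $L$. I would prove this by a contradiction-compactness argument: vanishing pinching at $y$ forces any blow-up to be radially invariant about $y$, and radial invariance about $k+1$ affinely independent points yields an extra translational symmetry.

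The second step is the $L^2$-best-approximation estimate. For a finite Borel measure $\mu$ and a ball $B_r(x)$, set $D^k_\mu(x,r)=r^{-k-2}\inf_{L^k}\int_{B_r(x)}d(y,L)^2\,d\mu(y)$. Combining the cone-splitting lemma with monotonicity, one shows that whenever $\mu$ carries the upper Ahlfors bound $\mu(B_s(y))\leq s^k$ on all sub-balls and $u$ is not $(k+1,\epsilon)$-symmetric on $B_r(x)$, then
\begin{equation*}
D^k_\mu(x,r)\leq \frac{C}{r^k}\int_{B_r(x)}\big(\vartheta(y,2r)-\vartheta(y,\eta r)\big)\,d\mu(y)\, ;
\end{equation*}
that is, the squared $L^2$-deviation of $\operatorname{supp}\mu$ from its best plane at a given scale is controlled by the average pinching of $\vartheta$ across that scale. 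This is a second-moment computation: a maximal family of nearly orthogonal \emph{pinched directions} spans the best plane by cone-splitting, and the residual second moment is bounded by the total pinching.

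The third step is the discrete Reifenberg theorem: if $\mu$ on $B_1$ satisfies $\mu(B_s(y))\leq C_0 s^k$ together with the Carleson-type packing bound $\sum_j D^k_\mu(x_j,r_j)\,\mu(B_{r_j}(x_j))\leq \delta_{DR}\,r^k$ over every Whitney-type family inside $B_r(x)$, then $\mu(B_r(x))\leq C(n)r^k$ and $\mu$-a.e. point lies on a single $k$-rectifiable set. To use it I would run the main stopping-time covering argument on $\cS^k_\epsilon$, building $\mu$ inductively as a sum $\sum r_j^k\delta_{x_j}$ over maximal $r_j$-nets refined scale by scale. At each ball one of two alternatives holds: either a definite energy drop $\vartheta(x,r)-\vartheta(x,\eta r)\geq \eta$ occurs — such \emph{energy-drop balls} can be charged against the uniform bound $\vartheta(\cdot,1)\leq C\Lambda$ and hence occupy a controlled fraction of the packing — or the pinching is small at enough points of $\cS^k_\epsilon$ that cone-splitting would make the ball $(k+1,\epsilon)$-symmetric unless those points lie $\tau r$-close to a $(k-1)$-plane, which passes the estimate to the inductive lower stratum. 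Summing the best-approximation inequality over the non-energy-drop scales reproduces exactly the Carleson bound required by discrete Reifenberg once $\eta$ is chosen small relative to $\delta_{DR}$; this yields the Minkowski estimate $\Vol(B_r(\cS^k_\epsilon))\leq C r^{n-k}$, the finite Hausdorff bound, and $k$-rectifiability. The final refinement — that at $\cH^k$-a.e. $x$ every tangent cone is $k$-symmetric with respect to a fixed $L_x$ — then follows from rectifiability plus approximate continuity of the density $\vartheta(x,0^+)$ along the rectifiable set, which forces the pinching to vanish at a.e. scale at a.e. point.

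The hard part will be the interplay of the last two steps: the best-approximation estimate has to quantify cone-splitting in a form that survives $L^2$-averaging against $\mu$, since losing even one effective dimension of symmetry in the average is fatal, and the discrete Reifenberg theorem is itself a subtle covering argument with no monotone quantity to lean on. A subsidiary difficulty is the degenerate weight $|\pi_{L^\perp}(y-x)|^2$ in the definition of $(k,\epsilon)$-symmetry: pinching near $L$ carries no radial information, so every application of cone-splitting must be fed by points a definite distance from the candidate plane — which is exactly why the stronger symmetry notion used here remains compatible with the whole scheme.
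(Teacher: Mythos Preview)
The paper does not give a proof of this theorem; it is quoted from \cite{NV_RH}, and the only comment the paper adds is the remark that the slightly stronger notion of $(k,\epsilon)$-symmetry used here is still compatible with the \cite{NV_RH} argument because the quantitative cone-splitting of Section~\ref{ss:prelim:cone_splitting} continues to hold. Your outline is an accurate summary of the \cite{NV_RH} strategy --- quantitative cone-splitting, $L^2$-best-approximation controlled by pinching, discrete/rectifiable Reifenberg, and the stopping-time covering --- so there is nothing to correct, though you should be aware that in the present paper the cone-splitting step (Theorems~\ref{t:prelim:cone_splitting} and~\ref{t:prelim:cone_splitting_m-2}) is proved by a direct computation with the stationary equation rather than by contradiction-compactness, which is what makes the stronger symmetry notion go through without change.
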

\begin{remark}
	The results stated in \cite{NV_RH} involved a slightly weaker notion of symmetry, however the results hold verbatim for the stronger notion used here using the cone splitting of Section \ref{ss:prelim:cone_splitting}.
\end{remark}
\vspace{.3cm}

\subsection{Cone Splitting and Quantitative Cone Splitting}\label{ss:prelim:cone_splitting}

The notion of cone splitting \cite{ChNa1} is a technique for producing higher order symmetries from lower order symmetries.  The starting point for symmetries is the monotonicity formula \eqref{e:outline:mollified_energy_monotonicity}.  An application of that formula tells us that if $r\dot\vartheta(x,r)=0$, which is essentially equivalent to the pinching condition $\vartheta(x,2r)-\vartheta(x,r/2)=0$, then $u$ is radially invariant at $x$ on $B_{Rr}(x)$.  That is, $u$ is $0$-symmetric at $x$.  An elementary, but still enlightening, argument tells us that if we have $k+1$ linearly independent points $x_0,\ldots,x_k$ such that $r\dot\vartheta(x_j,r)=0$ then $u$ is also invariant by the plane of symmetry given by $L=\text{span}\{x_j-x_0\}$ .  That is $u$ is $k$-symmetric at $x_0$.  In this way we see that multiple $0$-symmetries add to give a $k$-symmetry.  See \cite{ChNa1} for a careful argument of this.\\

 We wish to state a (sharp) quantitative version of this principle, which essentially comes from \cite{ChNa1}, \cite{ChNaVa}, \cite{NV_RH}.  To do so let first discuss the notion of effective linear independence.\\

\subsubsection{Effective Linear Independence}

\begin{definition}
 Given $\{x_i\}_0^k\in B_r \subseteq \R^n$, we say that $\cur{x_i}_{i=0}^k$ is $\alpha-$linearly independent at scale $r$ if
 \begin{gather}
  x_{i+1}\not \in \B{\alpha r }{x_0+L_i}\, ,
 \end{gather}
 where $L_i\equiv \operatorname{span}\cur{x_1-x_0,\cdots,x_{i-1}-x_0}$ .
\end{definition}
 This implies that the vectors $x_i-x_0$ are linearly independent in a quantitative way. In particular, by a Gramm-Schmidt argument we obtain immediately that
\begin{lemma}\label{lemma_effspa}
 If $\cur{x_i}_{i=0}^k$ is $\alpha$-linearly independent at scale $r$ with $$x_0+L=x_0+\operatorname{span}\cur{x_1-x_0,\cdots,x_k-x_0}\, $$ then for all $q\in (x_0+L)\cap B_{2r}(x_0)$ there exists a \textit{unique} set $\cur{q_i}_{i=1}^k$ such that
\begin{gather}\label{e:effectively_span}
 q=x_0+\sum_{i=1}^k q_i (x_i-x_0)\, , \quad \abs{q_i}\leq C(m,\alpha)\frac{|q-x_0|}{r}\, .
\end{gather}
\end{lemma}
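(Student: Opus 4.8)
The plan is to run a quantitative Gram--Schmidt argument on the vectors $v_i \equiv x_i - x_0$, $i=1,\dots,k$, converting the $\alpha$-linear independence hypothesis into a lower bound on the smallest singular value of the matrix $A$ whose columns are $v_1,\dots,v_k$, and then to read off the coefficients $q_i$ from $q - x_0 = A q^{\mathrm{vec}}$ by inverting on the column space. First I would observe that existence and uniqueness of the $q_i$ is immediate: since $q \in x_0 + L$ with $L = \operatorname{span}\{v_1,\dots,v_k\}$ and the $v_i$ are (honestly) linearly independent — which follows from $\alpha$-linear independence, as $x_{i+1} \notin B_{\alpha r}(x_0 + L_i)$ in particular forces $v_{i+1} \notin \operatorname{span}\{v_1,\dots,v_i\}$ — the vector $q - x_0$ has a unique expression as $\sum q_i v_i$. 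So the entire content is the bound $|q_i| \leq C(m,\alpha)\,|q - x_0|/r$.

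The key estimate is that the Gram matrix $G = A^{T}A$ satisfies $G \geq c(m,\alpha)\, r^2 \cdot \mathrm{Id}$ as a quadratic form on $\R^k$. To see this, run Gram--Schmidt: let $w_1 = v_1$ and $w_{i+1} = v_{i+1} - \pi_{L_i}(v_{i+1})$ for $i \geq 1$, where $\pi_{L_i}$ is the orthogonal projection onto $L_i = \operatorname{span}\{v_1,\dots,v_i\}$. The hypothesis $x_{i+1} \notin B_{\alpha r}(x_0 + L_i)$ says exactly that $\operatorname{dist}(v_{i+1}, L_i) \geq \alpha r$, i.e. $|w_{i+1}| \geq \alpha r$; and trivially $|w_i| \leq |v_i| \leq C(m) r$ since all points lie in $B_r$ (so $|v_i| \leq 2r$). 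Since the $w_i$ are orthogonal with norms pinched between $\alpha r$ and $2r$, and since $A$ and the matrix $W$ with columns $w_i$ differ by multiplication by a unipotent upper-triangular change of basis whose entries are bounded by $C(m,\alpha)$ (the Gram--Schmidt coefficients $\langle v_{i+1}, w_j\rangle / |w_j|^2$ are bounded using $|v_{i+1}| \leq 2r$ and $|w_j| \geq \alpha r$), one gets $\sigma_{\min}(A) \geq c(m,\alpha)\, r$. Equivalently, for any coefficient vector $(q_1,\dots,q_k)$ one has $|\sum q_i v_i| \geq c(m,\alpha)\, r\, (\sum q_i^2)^{1/2}$.

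Applying this with $q - x_0 = \sum_{i=1}^k q_i v_i$ gives $(\sum q_i^2)^{1/2} \leq C(m,\alpha)\, |q - x_0| / r$, hence in particular $|q_i| \leq C(m,\alpha)\, |q-x_0|/r$ for each $i$, which is the claimed bound (and the restriction $q \in B_{2r}(x_0)$ is not even needed for the inequality, only perhaps for the statement's framing). The main — indeed only — obstacle is bookkeeping: making sure the constants genuinely depend only on $m$ (through the ambient dimension and the number of steps $k \leq m$) and on $\alpha$, and not on the particular configuration; this is handled by noting that at each Gram--Schmidt step the relevant ratios $|v_i|/|w_j|$ are controlled by $2/\alpha$, and there are at most $m$ steps, so the accumulated constant is a function of $m$ and $\alpha$ alone. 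I would present this as: (1) existence/uniqueness from honest linear independence; (2) the singular value bound via quantitative Gram--Schmidt; (3) the conclusion by inverting.
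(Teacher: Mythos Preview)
Your proof is correct and is exactly the Gram--Schmidt argument the paper has in mind; indeed the paper does not write out a proof at all, merely stating that the lemma follows ``by a Gramm-Schmidt argument'' immediately before the statement. Your write-up supplies precisely the details that phrase encodes: orthogonalize the $v_i$, use the hypothesis to bound $|w_i|\geq \alpha r$ from below, bound the unipotent change-of-basis matrix by $C(m,\alpha)$, and read off the singular value bound.
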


\vspace{.3cm}

\subsubsection{Quantitative Cone Splitting}

Let us apply all of this to state and prove the quantitative cone splitting:\\

\begin{theorem}[Quantitative Cone Splitting]\label{t:prelim:cone_splitting}
Let $u:B_{10R}(p)\to N$ be a stationary harmonic map with $R^2\fint_{B_{10R}}|\nabla u|^2\leq \Lambda$ .  Let $\{x_j\}_{j=0}^k\subset B_{r/20}(x)\subset \B 1 p$ be $\alpha$-linearly independent, then we can estimate
\begin{align}\label{e:pinching_controls_energy_standard}
	\vartheta(x_0,r;L)+\vartheta(x_0,r;n_{L^\perp})=\int\rho_r(y-x_0)\Big(r^2\abs{\pi_L \nabla u(y)}^2+|\pi_{L^\perp}(y-x_0)|^2\langle\nabla u,n_{L^\perp}\rangle^2\Big)\leq C(m,\alpha)\sum r\dot\vartheta(x_j,1.2 r)\, ,
\end{align}
where $L=\text{span}\{x_j-x_0\}$ .
\end{theorem}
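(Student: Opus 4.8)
The plan is to reduce everything to the monotonicity formula \eqref{e:outline:mollified_energy_monotonicity}, which, applied at each of the $k+1$ points $x_j$ and read off at radius $1.2r$, gives
\begin{align}
	\dot\vartheta(x_j,1.2r) = 2\int\big(-\dot\rho_{1.2r}(y-x_j)\big)\,\langle\nabla u, y-x_j\rangle^2\,dy\ \geq\ 0\, .
\end{align}
Thus the right-hand side of \eqref{e:pinching_controls_energy_standard} controls, up to a constant, the sum over $j$ of the weighted radial energies $\int(-\dot\rho_{1.2r}(y-x_j))\langle\nabla u, y-x_j\rangle^2$. The whole task is then to show that this finite collection of radial energies, centered at the $\alpha$-independent points $x_j$, reconstructs pointwise both the symmetric energy density $r^2|\pi_L\nabla u|^2$ and the $L^\perp$-radial energy density $\langle\nabla u, \pi_{L^\perp}(y-x_0)\rangle^2$ on the left of \eqref{e:pinching_controls_energy_standard}. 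This is exactly the quantitative form of the cone-splitting heuristic: radial symmetry at $x_0$ together with radial symmetries at $x_1,\dots,x_k$ add up to invariance along $L=\operatorname{span}\{x_j-x_0\}$.

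Concretely, I would first establish a pointwise inequality valid for $y$ in the support of $\rho_r(\cdot-x_0)$, where $|y-x_0|\leq C(m)r$. For $j=1,\dots,k$, writing $y-x_j=(y-x_0)-(x_j-x_0)$ gives $\langle\nabla u, x_j-x_0\rangle^2\leq 2\langle\nabla u, y-x_j\rangle^2+2\langle\nabla u, y-x_0\rangle^2$. Since $\{x_j\}_{j=0}^k\subset B_{r/20}(x)$ is $\alpha$-linearly independent at scale $r$, the Gram matrix of the vectors $x_j-x_0$ is bounded below by $c(m,\alpha)r^2$ (each has orthogonal component $\geq\alpha r$ to the span of the previous ones, while all have length $\leq r/10$), so for every vector $w$ one has $|\pi_L w|^2\leq C(m,\alpha)\,r^{-2}\sum_{j=1}^k\langle w, x_j-x_0\rangle^2$ --- equivalently this follows by dualizing Lemma \ref{lemma_effspa}. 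Applying this with $w=\nabla u(y)$ and folding in the $j=0$ term yields $r^2|\pi_L\nabla u(y)|^2\leq C(m,\alpha)\sum_{j=0}^k\langle\nabla u, y-x_j\rangle^2$. For the radial term I would then write $\pi_{L^\perp}(y-x_0)=(y-x_0)-\pi_L(y-x_0)$ and bound $|\langle\nabla u, \pi_L(y-x_0)\rangle|\leq|\pi_L\nabla u|\,|y-x_0|\leq C(m)r\,|\pi_L\nabla u|$, so that $\langle\nabla u, \pi_{L^\perp}(y-x_0)\rangle^2\leq 2\langle\nabla u, y-x_0\rangle^2+C(m)r^2|\pi_L\nabla u|^2$ is again controlled by $C(m,\alpha)\sum_{j=0}^k\langle\nabla u, y-x_j\rangle^2$.

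Finally I would integrate this pointwise bound against $\rho_r(\cdot-x_0)$ and swap weights. Since $|x_0-x_j|\leq r/10$ we have $\B r{x_0}\subset\B{1.2r}{x_j}$, and the near-Gaussian form of $\rho$ in Lemma \ref{l:rho_basic_properties} (the bound $\rho\leq -C\dot\rho$ of \eqref{e:rho_leq_dot_rho}, together with the elementary comparison of the two Gaussian tails at radii $r$ and $1.2r$) gives $\rho_r(y-x_0)\leq C(m)\,(-\dot\rho_{1.2r}(y-x_j))$ on the relevant support. Combined with the monotonicity identity this produces
\begin{align}
	\vartheta(x_0,r;L)+\vartheta(x_0,r;n_{L^\perp})\ \leq\ C(m,\alpha)\sum_{j=0}^k\int\rho_r(y-x_0)\langle\nabla u, y-x_j\rangle^2\,dy\ \leq\ C(m,\alpha)\sum_{j=0}^k\dot\vartheta(x_j,1.2r)\, ,
\end{align}
which is the claim.

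I expect the only delicate points to be bookkeeping rather than substance: the Gram/effective-spanning estimate, whose constant degenerates as $\alpha\to 0$ (hence the dependence $C(m,\alpha)$), and the weight comparison across the cutoff annulus $[R,R+2]$ where $\rho$ stops being a pure Gaussian --- precisely what Lemma \ref{l:rho_basic_properties} is designed to absorb, provided $R$ is not too small. None of the analytic difficulties particular to harmonic maps enter here; the content of the statement is the geometric fact that radial energies at $k+1$ quantitatively independent points detect the full $L$- and $L^\perp$-radial energy.
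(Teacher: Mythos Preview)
Your proposal is correct and follows essentially the same approach as the paper: both establish the pointwise bound $r^2|\pi_L\nabla u|^2 + \langle\nabla u,\pi_{L^\perp}(y-x_0)\rangle^2 \leq C(m,\alpha)\sum_j\langle\nabla u,y-x_j\rangle^2$ via effective spanning and the identity $\pi_{L^\perp}(y-x_0)=(y-x_0)-\pi_L(y-x_0)$, then integrate against $\rho_r(\cdot-x_0)$ and swap to $-\dot\rho_{1.2r}(\cdot-x_j)$ using Lemma~\ref{l:rho_basic_properties}. The only minor difference is that the paper absorbs the factor $|y-x_0|^2$ into the weight via the uniform-in-$R$ estimate $t\rho(t)\leq C\rho(t/1.1)$ (combining \eqref{e:rho_leq_dot_rho} and \eqref{e:trho_doubleradius}) rather than your crude support bound $|y-x_0|\leq C r$, which as written would introduce an $R$-dependence into the constant; this is a one-line fix and does not affect the argument.
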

\begin{proof}
By scaling, we can assume wlog that $r=1$. Let $\ell$ be any unit vector in $L$. Since $\cur{x_j-x_0}_{j=1}^k$ span the linear subspace $L$, by \eqref{e:effectively_span}:
 \begin{gather}
  \ps{\nabla u(y)}{\ell}^2\leq C(m,\alpha) \sum_{j=1}^k \ps{\nabla u(y)}{x_j-x_0}^2\leq C(m,\alpha) \sum_{j=0}^k \ps{\nabla u(y)}{y-x_j}^2\, .
 \end{gather}
By Lemma \ref{l:rho_basic_properties}, for all $j$ we have $\rho_1(y-x_0)\leq -C(m)\dot \rho_{1.1}(y-x_j)$, and so
\begin{align}\label{e:pinching_controls_L_energy_basic}
	\vartheta(x_0,1;L)=\int\rho_1(y-x_0)\abs{\pi_L \nabla u(y)}^2\leq C(m,\alpha)\sum \dot\vartheta(x_j,1.1)\, .
\end{align}

To get the radial energy bound, we can first observe that
\begin{gather}
 \abs{\pi_{L^\perp}(y-x_0)}^2\ps{\nabla u(y)}{n_{L^\perp}}^2\leq 2\ps{\nabla u(y)}{y-x_0}^2+2\ps{\nabla u(y)}{\pi_L(y-x_0)}^2
\end{gather}
and so
\begin{align}
 \vartheta\ton{x_0,1;n_{L^\perp}}= &\int \rho_1(y-x_0)\abs{\pi_{L^\perp} (y-x_0)}^2\ps{\nabla u(y)}{n_{L^\perp}}^2\notag\\
 \leq & 2\int \rho_1(y-x_0)\ps{\nabla u(y)}{y-x_0}^2 + 2\int \rho_1(y-x_0)\ps{\nabla u(y)}{\pi_L(y-x_0)}^2\notag\\
 \leq &  \dot \vartheta(x_0,1) + 2\int \rho_1(y-x_0)\ps{\nabla u(y)}{\pi_L(y-x_0)}^2\, .
\end{align}
By \eqref{e:trho_doubleradius} and a rescaled version of \eqref{e:pinching_controls_L_energy_basic} we can bound 
\begin{gather}
 \int \rho_1(y-x_0) \ps{\nabla u(y)}{\pi_L(y-x_0)}^2\leq \int \rho_1(y-x_0) \abs{y-x_0}^2\abs{\pi_L \nabla u(y)}^2
%  \\ \leq C(m)\int \rho_{\sqrt 1.1}(y-x_0) \abs{\pi_L \nabla u(y)}^2 
 \leq C(m,\alpha)\sum \dot\vartheta(x_j, 1.2)\, .
\end{gather}
This concludes the proof.

\end{proof}

It is a fascinating point that the above can be improved in the case when $k=m-2$.  The above controls the energy in the $L$ directions and the radial energy in the $L^\perp$-directions.  In the $k=m-2$ case one can capture the full $L^\perp$-energy, albeit with the usual quadratic decay weight $|\pi_{L^\perp}(y-x)|^2$ near $L$.  Precisely:\\

\begin{theorem}[Top Stratum Quantitative Cone Splitting]\label{t:prelim:cone_splitting_m-2}
Let $u:B_{10R}(p)\to N$ be a stationary harmonic map with $R^2\fint_{B_{10R}}|\nabla u|^2\leq \Lambda$ .  Let $\{x_j\}_0^{m-2}\subset B_{r/20}(x)\subset \B 1 p$ be $\alpha$-linearly independent, then we can estimate
\begin{align}
	\vartheta(x_0,r;L)+\vartheta(x_0,r;L^\perp)=\int\rho_1(x_0-y)\Big(r^2\abs{\pi_L \nabla u(y)}^2+|\pi_{L^\perp}(y-x_0)|^2\abs{\pi_L^\perp \nabla u(y)}^2\Big)\leq C(m,\alpha)\sum r \dot\vartheta(x_j,1.4 r)\, ,\notag
\end{align}	
where $L^{m-2}=\text{span}\{x_j-x_0\}$ .
\end{theorem}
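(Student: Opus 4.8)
The plan is to deduce the estimate from the standard cone splitting of Theorem~\ref{t:prelim:cone_splitting} together with one genuinely new inequality, the latter obtained by testing the stationarity identity \eqref{e:stationary_equation} against the vector field that dilates inside the $2$-plane $L^\perp$. First I would reduce: since $\abs{\pi_L^\perp\nabla u}^2=\abs{\nabla u}^2-\abs{\pi_L\nabla u}^2\le\abs{\nabla u}^2$, one has
\[
\vartheta(x_0,r;L^\perp)=\int\rho_r(y-x_0)\abs{\pi_{L^\perp}(y-x_0)}^2\abs{\pi_L^\perp\nabla u}^2\le\int\rho_r(y-x_0)\abs{\pi_{L^\perp}(y-x_0)}^2\abs{\nabla u}^2\, ,
\]
so, because Theorem~\ref{t:prelim:cone_splitting} already bounds $\vartheta(x_0,r;L)$ by $C(m,\alpha)\sum_j\dot\vartheta(x_j,1.2r)$, it suffices to show
\begin{align}\label{e:plan:key}
\int\rho_r(y-x_0)\,\abs{\pi_{L^\perp}(y-x_0)}^2\,\abs{\nabla u(y)}^2\le C(m,\alpha)\sum_{j}\dot\vartheta(x_j,1.4r)\, .
\end{align}
This is the new content: Theorem~\ref{t:prelim:cone_splitting} captures the $L$-energy and the $L^\perp$-\emph{radial} energy, but not the $L^\perp$-angular energy, and \eqref{e:plan:key} is precisely what encodes it.

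The core step is the choice of test field. Writing $z=y-x_0$ and $s=\abs{\pi_{L^\perp}z}$, I would apply \eqref{e:stationary_equation} to $\xi(y)=\rho_r(y-x_0)\,\pi_{L^\perp}(y-x_0)$, a smooth compactly supported field infinitesimally dilating $x_0+L^\perp$ while fixing $L$. Since $\partial_i(\pi_{L^\perp}z)^j=(\pi_{L^\perp})_{ij}$ is the symmetric projection onto $L^\perp$ and $S_{ij}$ is symmetric, $S_{ij}\partial^i\xi^j=(\partial_i\rho_r(z))\,S_{ij}(\pi_{L^\perp}z)^j+\rho_r(z)\,\tr(S\,\pi_{L^\perp})$, and the decisive computation — valid exactly because $\dim L^\perp=2$ — is $\tr(S\,\pi_{L^\perp})=2\abs{\nabla u}^2-2\abs{\pi_L^\perp\nabla u}^2=2\abs{\pi_L\nabla u}^2$, so this term involves only the controlled $L$-directional energy. (For $\dim L^\perp=d>2$ the trace is $d\abs{\nabla u}^2-2\abs{\pi_L^\perp\nabla u}^2$, carrying a copy of the full uncontrolled energy; this is why the improvement is special to the top stratum.) Using $\partial_i\rho_r(z)=r^{-2}\dot\rho_r(z)z_i$ and $S_{ij}z_i(\pi_{L^\perp}z)^j=s^2\abs{\nabla u}^2-2s^2\langle\nabla u,n_{L^\perp}\rangle^2-2s\langle\nabla_{\pi_L z}u,\nabla_{n_{L^\perp}}u\rangle$, the identity $\int S_{ij}\partial^i\xi^j=0$ rearranges (flipping an overall sign, as $\dot\rho_r\le0$) to
\begin{align}\label{e:plan:identity}
\int\frac{-\dot\rho_r(z)}{r^2}s^2\abs{\nabla u}^2 = 2\int\rho_r(z)\abs{\pi_L\nabla u}^2+2\int\frac{-\dot\rho_r(z)}{r^2}s^2\langle\nabla u,n_{L^\perp}\rangle^2+2\int\frac{-\dot\rho_r(z)}{r^2}s\langle\nabla_{\pi_L z}u,\nabla_{n_{L^\perp}}u\rangle\, .
\end{align}

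To close, I would estimate the right side of \eqref{e:plan:identity}. The cross term is $\le\int\frac{-\dot\rho_r}{r^2}(\abs{\nabla_{\pi_L z}u}^2+s^2\langle\nabla u,n_{L^\perp}\rangle^2)\le\int\frac{-\dot\rho_r}{r^2}\abs{z}^2\abs{\pi_L\nabla u}^2+\int\frac{-\dot\rho_r}{r^2}s^2\langle\nabla u,n_{L^\perp}\rangle^2$; the point is to route it into the $L$- and radial energies (no absorption into the left side is available or needed). Then Lemma~\ref{l:rho_basic_properties} replaces $-\dot\rho_r$ and $-\dot\rho_r\abs{z}^2/r^2$ by $\rho$ at a slightly enlarged scale: by \eqref{e:trho_doubleradius}, $-\dot\rho(t)\le C\rho(t/1.1)$ and $-t\dot\rho(t)\le C\rho(t/1.1)$, so both are $\le C(m)\rho_{1.05r}(z)$. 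Hence each right-hand term of \eqref{e:plan:identity} is, up to $C(m)$, one of $r^{-2}\vartheta(x_0,1.05r;L)$ or $r^{-2}\vartheta(x_0,1.05r;n_{L^\perp})$, which Theorem~\ref{t:prelim:cone_splitting} bounds by $\frac{C(m,\alpha)}{r^2}\sum_j\dot\vartheta(x_j,1.4r)$ — using that $\dot\vartheta(x_j,\cdot)$ is comparable over bounded scale ratios, which follows from monotonicity of $t\mapsto\abs{\dot\rho(t)}$ and $r\dot\vartheta(x,r)=2\int\abs{\dot\rho_r(y-x)}\langle\nabla u,y-x\rangle^2$. Finally \eqref{e:rho_leq_dot_rho} gives $\rho_r(z)\le-C(m)\dot\rho_r(z)$ pointwise, so $\int\rho_r(z)s^2\abs{\nabla u}^2\le C(m)r^2\int\frac{-\dot\rho_r(z)}{r^2}s^2\abs{\nabla u}^2\le C(m,\alpha)\sum_j\dot\vartheta(x_j,1.4r)$, which is \eqref{e:plan:key}.

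I expect the only real difficulty to be locating this test field and recognizing the codimension-$2$ cancellation that collapses its stress--energy contribution onto the already controlled $L$-directional energy; after that, everything is the bookkeeping — trading $\rho$ for $\dot\rho$ at scales enlarged by factors near $1$ and feeding into Theorem~\ref{t:prelim:cone_splitting} — already present in the proof of that theorem. The one spot calling for a little care is ensuring the cross term $\langle\nabla_{\pi_L z}u,\nabla_{n_{L^\perp}}u\rangle$ in \eqref{e:plan:identity} is sent to the controlled quantities and not to $\int\rho_r s^2\abs{\nabla u}^2$ itself.
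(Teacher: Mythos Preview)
Your proof is correct and follows essentially the same approach as the paper: test the stationary equation against a vector field of the form $f(|y-x_0|)\,\pi_{L^\perp}(y-x_0)$ and exploit the codimension-$2$ identity $\mathrm{tr}(S\,\pi_{L^\perp})=2|\pi_L\nabla u|^2$, then feed the result into Theorem~\ref{t:prelim:cone_splitting}. The only difference is the choice of radial profile: the paper takes $f$ to be the compactly supported primitive $P$ of $-\rho$, so that the identity lands directly on a $\rho$-weighted angular energy, whereas you take $f=\rho_r$ (yielding a $-\dot\rho_r$-weighted full energy) and then convert back via the convexity bound $\rho\le -C(m)\dot\rho$ of \eqref{e:rho_leq_dot_rho}; this is a cosmetic variation, not a different route.
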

\begin{remark}
	The primary corollary of this result is that if $u$ is $(m-2,\delta)$-symmetric on $B_{2r}(p)$, then it holds that the energy measure $|\nabla u|^2\,dy$ is very close to being supported on $p+L$.  We will see in the next subsection that it must be close to the Hausdorff measure $\vartheta(p,r)\,\cH^{m-2}_L$.
\end{remark}
\begin{proof}

	As before, we can assume wlog that $r=1$. Given \eqref{e:pinching_controls_energy_standard}, we just need to prove that
 \begin{gather}
  \int\rho_1(y-x_0)|\pi_{L^\perp}(y-x_0)|^2\langle\nabla u,\alpha_{L^\perp}\rangle^2\leq C(m,\alpha)\sum \dot\vartheta(x_j,1.4)\, .
 \end{gather}
To prove this let us define $P(t)$ to be the only compactly supported primitive of $-\rho(t)$ in $[0,\infty)$, that is $\dot P(t) \equiv -\rho(t)$.  Consider the stationary equation \eqref{e:stationary_equation} applied to the vector field
\begin{gather}
 \xi(y)= P\ton{\frac{\abs{y-x_0}^2}{2}} \pi_{L^\perp}(y)\, .
\end{gather}
The stationary equation gives
\begin{align}
&\int P\ton{\frac{\abs{y-x_0}^2}{2}}  \abs{\pi_L \nabla u(y)}^2 \notag \\
 =&\int \rho_1(y-x_0) \abs{\pi_{L^\perp}(y-x_0)}^2\qua{\abs{\pi_L \nabla u(y)}^2 -\ps{\nabla u}{n_{L^\perp}}^2+\ps{\nabla u}{\alpha_{L^\perp}}^2}\notag \\
 -2 &\int \rho_1(y-x_0)\ps{\nabla u}{\pi_L(y-x_0)}\ps{\nabla u}{\pi_{L^\perp}(y-x_0)}\, .
\end{align}
By re-arranging the terms, and using the fact that $P(t)\leq C \rho(t)$ along with the estimates in \eqref{e:trho_doubleradius}, we get
\begin{align}
 &\int \rho_1(y-x_0)\abs{\pi_{L^\perp}(y-x_0)}^2 \ps{\nabla u}{\alpha_{L^\perp}}^2 \notag \\
 \leq C(m)&\int \rho_{1.1}(y-x_0) \qua{\abs{\pi_L \nabla u(y)}^2 +\abs{\pi_{L^\perp}(y-x_0)}^2\ps{\nabla u}{n_{L^\perp}}^2}\, .
\end{align}
This and the rescaled version of \eqref{e:pinching_controls_energy_standard} complete the proof.
\end{proof}

\vspace{.3cm}

\subsubsection{Quantitative Cone Splitting on Annular Regions}

The notion on an annular region is introduced and discussed in Section \ref{ss:broad_outline:annular_region}.  Let us apply the previous results to an annular region:

\begin{theorem}[Quantitative Cone Splitting on Annular Regions]\label{t:prelim:cone_splitting_annular}
Let $u:B_{10R}(p)\to N$ be a stationary harmonic map with $R^2\fint_{B_{10R}}|\nabla u|^2\leq \Lambda$, and let $\cA=B_2\setminus \overline{B_{\rf_x}(\T)}$ be a $\delta$-annular region.  For each $x\in \T$ with $B_r(x)\subseteq B_2$ and $r\geq \rf_x$, there exists $\bar x $ with $\abs{x-\bar x}\leq C(m)\sqrt \delta r$ and an $m-2$ dimensional plane $\bar L=L_{x,r}$ such that
\begin{align}
	\vartheta(\bar x,r;L)+\vartheta(\bar x,r;L^\perp)&\leq C(m)\fint_{\T\cap B_{r/2}(x)} r\dot\vartheta(z,3r/2)\,dv_\T(z)
	%\leq C(m)\fint_{\T\cap B_r(x)} \big|\vartheta(z,4r)-\vartheta(z,r)\big|\,dv_\T\, .
\end{align}	
Moreover, $d_{\Gr}(L,L_{\cA})\leq C(m) \sqrt \delta$.
\end{theorem}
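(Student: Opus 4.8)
The plan is to deduce this from the top-stratum quantitative cone splitting, Theorem~\ref{t:prelim:cone_splitting_m-2}, applied to a single, carefully chosen $(m-1)$-tuple of points of $\T$ lying in $B_{r/20}(x)$: the tuple will be effectively linearly independent (so its mutual span furnishes the plane $L=L_{x,r}$), and each of its points will have pinching controlled by the average on the right-hand side (so the cone-splitting error is controlled by that average). The structural facts about the $\delta$-annular region $\cA$ that I will use, all from Section~\ref{ss:broad_outline:annular_region}, are that $\T$ is Ahlfors $(m-2)$-regular, that $\T\cap B_r(x)$ lies in the $C(m)\sqrt\delta r$-tubular neighbourhood of the $(m-2)$-plane $L_\cA$, and conversely $L_\cA\cap B_r(x)\subseteq B_{C(m)\sqrt\delta r}(\T)$.

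First I would establish an abundance statement: because $\T$ is Ahlfors $(m-2)$-regular and $C(m)\sqrt\delta r$-close to the $(m-2)$-plane $L_\cA$, there is a dimensional $\alpha_0=\alpha_0(m)>0$ so that, once $\delta$ is below a dimensional threshold, at most $\tfrac14$ of the tuples $(z_0,\dots,z_{m-2})\in(\T\cap B_{r/20}(x))^{m-1}$, measured by $v_\T^{\otimes(m-1)}$, fail to be $\alpha_0$-linearly independent at scale $r$. After nearest-point projection onto $L_\cA$ this reduces to the elementary fact that a definite fraction of $(m-1)$-tuples in a ball of a fixed $(m-2)$-plane span that plane in a quantitatively nondegenerate way, with the $\sqrt\delta$-closeness of $\T$ to $L_\cA$ perturbing the estimate by a controllable amount.

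Next I would pigeonhole. Writing $\bar A\equiv\fint_{\T\cap B_{r/2}(x)}r\dot\vartheta(z,3r/2)\,dv_\T(z)$, positivity of $r\dot\vartheta$ and Ahlfors regularity of $\T$ give $\int_{\T\cap B_{r/20}(x)}r\dot\vartheta(z,3r/2)\,dv_\T\leq C(m)\bar A\,v_\T(\T\cap B_{r/20}(x))$, so by Chebyshev (with a suitable dimensional constant) the set $G\equiv\{z\in\T\cap B_{r/20}(x):r\dot\vartheta(z,3r/2)\leq C(m)\bar A\}$ satisfies $v_\T(G)\geq(1-\tfrac1{4(m-1)})\,v_\T(\T\cap B_{r/20}(x))$, whence $v_\T^{\otimes(m-1)}(G^{m-1})\geq\tfrac34\,v_\T^{\otimes(m-1)}((\T\cap B_{r/20}(x))^{m-1})$; combined with the abundance statement this forces a tuple $(z_0,\dots,z_{m-2})\in G^{m-1}$ that is $\alpha_0$-linearly independent at scale $r$. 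I then \emph{define} $L=L_{x,r}\equiv\operatorname{span}\{z_j-z_0\}$ and let $\bar x$ be any point with $|\bar x-x|\leq C(m)\sqrt\delta r$ (for instance $\bar x=x$, or the projection of $x$ onto $L_\cA$), noting that $|\bar x-z_j|\leq r/10$ for every $j$. The proof of Theorem~\ref{t:prelim:cone_splitting_m-2} then goes through verbatim with the distinguished centre replaced by $\bar x$ — that centre enters only through the mollifier comparison $\rho_r(y-\bar x)\leq C(m)\rho_{1.1r}(y-z_j)$ of Lemma~\ref{l:rho_basic_properties}, valid since $|\bar x-z_j|\leq r/10$ — and yields
\begin{align*}
\vartheta(\bar x,r;L)+\vartheta(\bar x,r;L^\perp)\leq C(m,\alpha_0)\sum_{j=0}^{m-2}r\dot\vartheta(z_j,1.4r)\leq C(m)\bar A\, ,
\end{align*}
where the last inequality uses $z_j\in G$, the routine comparison of the mollified energy at the comparable scales $1.4r$ and $3r/2$ from Lemma~\ref{l:rho_basic_properties}, and $\alpha_0=\alpha_0(m)$. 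This is exactly the first assertion. For the ``Moreover'', each $z_j$ lies in $B_{C(m)\sqrt\delta r}(L_\cA)$, so each difference $z_j-z_0$ is within $2C(m)\sqrt\delta r$ of the linear space $L_\cA$, while $\alpha_0$-linear independence gives $|z_j-z_0|\geq\alpha_0 r$ and the Gram--Schmidt bounds of Lemma~\ref{lemma_effspa}; orthonormalising $\{z_j-z_0\}$ produces a basis of $L$ each vector of which is $C(m,\alpha_0)\sqrt\delta$-close to $L_\cA$, so $d_{\Gr}(L,L_\cA)\leq C(m)\sqrt\delta$.

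I expect the genuinely non-routine point to be twofold. The first is the abundance statement of the second paragraph: extracting a robust (positive-fraction) supply of quantitatively linearly independent $(m-1)$-tuples inside the near-planar set $\T$ is where the annular-region structure of $\T$ (Ahlfors regularity together with $\sqrt\delta$-closeness to $L_\cA$) is really used, and it must be combined with the Chebyshev pigeonhole so that the chosen tuple simultaneously has small pinching. The second, more conceptual, point is the insistence in the displayed estimate on using the span of the \emph{actual} chosen tuple and a centre $\bar x$ within $r/10$ of it: re-centring the estimate onto $L_\cA$, or replacing $L$ by $L_\cA$, would each introduce an error of order $\sqrt\delta\,\Lambda$, which generically dominates $\bar A$, so the argument has to stay ``honest'' to the tuple. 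Everything else — the Ahlfors bookkeeping, Chebyshev, the Gram--Schmidt, and the scale comparisons for the mollified energy — is standard given the cone splitting already in hand.
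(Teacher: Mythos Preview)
Your Chebyshev--plus--abundance strategy is a legitimate alternative to the paper's argument, but the step where you recentre the cone splitting at an arbitrary $\bar x$ with $|\bar x-x|\le C\sqrt\delta r$ has a gap. The claim that in Theorem~\ref{t:prelim:cone_splitting_m-2} ``that centre enters only through the mollifier comparison'' is false: the radial piece of the cone splitting (see the proof of Theorem~\ref{t:prelim:cone_splitting}) bounds $\vartheta(x_0,r;n_{L^\perp})$ by $\dot\vartheta(x_0,r)+C\,\vartheta(x_0,1.1r;L)$, so the pinching \emph{at the centre itself} appears. If you take $\bar x=x$ (or $\pi_{L_\cA}(x)$), you are left with $\dot\vartheta(\bar x,\cdot)$, and the annular axiom (a2) only gives $\le\delta$, which can be far larger than $\bar A$. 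The fix is to set $\bar x=z_0+\pi_L(x-z_0)$, the projection of $x$ onto the affine plane $z_0+L$. Then $\pi_{L^\perp}(y-\bar x)=\pi_{L^\perp}(y-z_0)$, so after a mollifier comparison every piece of $\vartheta(\bar x,r;L)+\vartheta(\bar x,r;L^\perp)$ is bounded by the corresponding quantity at $z_0$, and the cone splitting at $z_0$ gives $\le C\sum_j\dot\vartheta(z_j,\cdot)\le C\bar A$. The distance bound follows from $|\bar x-x|=|\pi_{L^\perp}(x-z_0)|\le |\pi_{L_\cA^\perp}(x-z_0)|+d_{\Gr}(L,L_\cA)\,|x-z_0|\le 2\delta+C\sqrt\delta\cdot r/20\le C\sqrt\delta r$.

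For comparison, the paper avoids both Chebyshev and the abundance argument via a Jensen trick. It first picks any $1/100$--independent tuple $x_0=x,x_1,\dots,x_{m-2}\in\T\cap B_{r/20}(x)$ from (a1), then replaces each $x_i$ by its $\T$--average $\bar x_i=\fint_{\T\cap B_{r/1000}(x_i)}z\,dv_\T(z)$. Convexity of $z\mapsto\langle\nabla u(y),y-z\rangle^2$ yields directly $\dot\vartheta(\bar x_i,1.4r)\le C\fint_{\T\cap B_{r/1000}(x_i)}\dot\vartheta(z,3r/2)\,dv_\T\le C\bar A$ (the last step by Ahlfors regularity), the $\bar x_i$ remain effectively independent, and $|\bar x_0-x|\le C\delta r$ since $\T$ is a small Lipschitz graph over $L_\cA$. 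Theorem~\ref{t:prelim:cone_splitting_m-2} then applies to the $\bar x_i$ themselves with $\bar x=\bar x_0$. This is shorter; your approach, once patched, has the minor advantage that the selected points lie on $\T$.
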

\begin{remark}
	Combining this with Theorem \ref{t:prelim:spacial_gradient} in the next subsection we can conclude that if $x\in \T$ and $r\geq \rf_x$ then on $B_{r}(p)$ then the energy measure $|\nabla u|^2\,dy$ is $C(m)\delta$ close to the Hausdorff measure $\vartheta(p,r)\,\cH^{m-2}_{p+L}$.
\end{remark}
\begin{remark}\label{rm:vartheta_comparison}
	A small but useful consequence of the previous remark is that $ c(m)\leq \frac{\vartheta(x,10r)}{\vartheta(x,r)}\leq C(m)\, , \;\forall x\in B_{r/10}(\T) \text{ with }\rf_x/10\leq r\leq 1\, .$ 
\end{remark}
\begin{remark}
	We will improve this in Theorem \ref{t:cone_splitting_annular} in order to take $x=\bar x$ at the cost of an additional error term which is on the same order.
\end{remark}

\begin{proof}

	Suppose for simplicity that $x=0$ and $r=1$, and consider $m-1$ points $\cur{x_i}_{i=0}^{m-2}\subset \T\cap \B {1/20} 0$ that are $1/100$-effectively linearly independent, with $x_0=0$.  Note that these exist by $(a1)$ in Definition \ref{d:annular_region}.  Set
	\begin{gather}
	 \bar x_i = \fint_{\B {1/1000}{x_i}}x dv_\T(x)\, .
	\end{gather}
	It is easy to check that $\bar x_i$ are still $1/1000$-linearly independent if $\delta$ is sufficiently small. By Jensen's inequality we can estimate 
	\begin{align}
	 \dot \vartheta(\bar x_i, 1.4)=&C(m) \int -\dot\rho_{1.4}(y-\bar x_i)\ps{\nabla u(y)}{y-\fint_{\B {1/1000}{x_i}}z dv_\T(z)}^2\,dy\notag\\
	 \leq &C(m) \int -\dot\rho_{1.4}(y-\bar x_i)\fint_{\B {1/1000}{x_i}}\ps{\nabla u(y)}{y-z}^2dv_\T (z)\, dy\notag\\
	 \leq &C(m) \fint_{\B {1/1000}{x_i}}\int -\dot\rho_{3/2}(y-z)\ps{\nabla u(y)}{y-z}^2 dy\, dv_\T (z)\leq C(m) \fint_{\B {1/1000}{x_i}}\dot \vartheta(z,3/2)dv_\T (z)\, ,
	\end{align}
where we have used that for $z\in B_{1/1000}(x_i)$ we can bound $-\dot\rho_{1.4}(y-\bar x_i)\leq -\dot\rho_{3/2}(y-z)$

	By the Ahlfors regularity of \eqref{e:annular_region:ahlfors_regularity}, we have that $c(m)\leq dv_\T\ton{\B {1/1000}{x_i}\cap \T}\leq dv_\T\ton{\B {1/2}{0}\cap \T}\leq C(m)$, and thus we can bound 
	\begin{gather}
	 \dot \vartheta(\bar x_i, 1.4)\leq C(m)\fint_{\T\cap B_{1/2}(0)} \dot\vartheta(z,3/2)\,dv_\T\, .
	\end{gather}
	The proof now follows from the cone splitting Theorem \ref{t:prelim:cone_splitting_m-2}, where $\bar L$ is the span of $\cur{\bar x_i-\bar x}_{i=1}^{m-2}$.
\end{proof}

\vspace{.3cm}

\subsection{Spatial Gradients of \texorpdfstring{$\vartheta$}{energy}}

An advantage of the mollified energy $\vartheta(x,r)$ is that it is smooth in both the $x$ and $r$ variables.  More importantly, we can compute and estimate its derivatives.  The end formulas which are useful are a bit more than just direct computations, they also require the use of the stationary equation.  Our main estimate of interest is the following:\\

\begin{theorem}[Spacial Gradient of $\vartheta$]\label{t:prelim:spacial_gradient}
	Let $u:B_{10R}(p)\to N$ be a stationary harmonic map,
	%with $R^2\fint_{B_{10R}}|\nabla u|^2\leq \Lambda$
	and let $L^k\subseteq \dR^m$ be a subspace.  Then if $\B r x\subset \B 1 p$ we can estimate
\begin{align}
	r^2|\nabla_L\vartheta|^2(x,r)&\leq C(m)r\dot\vartheta(x,r)\vartheta(x,2r;L)\notag
	%&\leq C(m)\big(\vartheta(x,r,n_{L^\perp})+\vartheta(x,r,L)\big)\vartheta(x,2r,L)
\end{align}
\end{theorem}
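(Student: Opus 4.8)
The plan is to differentiate $\vartheta(\cdot,r)$ in the spatial variable, feed a \emph{constant}-direction vector field (multiplied by the mollifier) into the stationary equation \eqref{e:stationary_equation}, and then split the resulting bilinear expression with Cauchy--Schwarz so that one factor is the monotone pinching $r\dot\vartheta(x,r)$ and the other is controlled by $\vartheta(x,2r;L)$. As usual I would work with $M=\dR^m$ flat, the metric error terms being lower order.

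First I would fix a unit vector $w\in L$ and differentiate \eqref{e:prelim:mollified_energy}. Using $\partial_{x^i}[\rho_r(y-x)]=-\dot\rho_r(y-x)\,(y-x)^i/r^2$ (with $\dot\rho_r$ as in Definition \ref{d:heat_mollifier}) one gets
\begin{align*}
\partial_w\vartheta(x,r)=-\int \dot\rho_r(y-x)\,\ps{y-x}{w}\,|\nabla u|^2(y)\,dy\, .
\end{align*}
Next I would apply the stationary equation \eqref{e:stationary_equation} to $\xi^j(y)=\rho_r(y-x)\,w^j$, where the $w^j$ are the constant components of $w$; since $\partial^i\xi^j=\dot\rho_r(y-x)(y-x)^i w^j/r^2$ and $S_{ij}=|\nabla u|^2\delta_{ij}-2\ps{\nabla_i u}{\nabla_j u}$, this yields
\begin{align*}
0=\int \dot\rho_r(y-x)\Big(|\nabla u|^2\,\ps{y-x}{w}-2\,\ps{\nabla_{y-x}u}{\nabla_w u}\Big)\,dy\, ,
\end{align*}
and substituting back gives the clean formula $\partial_w\vartheta(x,r)=-2\int \dot\rho_r(y-x)\,\ps{\nabla_{y-x}u}{\nabla_w u}\,dy$.

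From here the estimate is a two-line Cauchy--Schwarz. Since $-\dot\rho_r\geq 0$,
\begin{align*}
|\partial_w\vartheta(x,r)|^2\leq 4\Big(\int(-\dot\rho_r(y-x))\,\ps{\nabla u}{y-x}^2\Big)\Big(\int(-\dot\rho_r(y-x))\,|\nabla_w u|^2\Big)\, .
\end{align*}
The first factor equals $\tfrac12\,r\dot\vartheta(x,r)$ by the monotonicity formula \eqref{e:outline:mollified_energy_monotonicity}. For the second, $|\nabla_w u|^2\leq|\pi_L\nabla u|^2$ pointwise because $w$ is a unit vector in $L$, and by \eqref{e:trho_doubleradius} together with the monotonicity of $\rho$ one has $-\dot\rho_r(y-x)\leq C(m)\rho_{2r}(y-x)$ (alternatively $-\dot\rho_r\leq C\rho_{cr}$ for some $c\leq 2$ and then \eqref{e:prelim:partial_energy:basic_bounds}); hence $\int(-\dot\rho_r(y-x))|\nabla_w u|^2\leq C(m)\int\rho_{2r}(y-x)|\pi_L\nabla u|^2=C(m)\,r^{-2}\vartheta(x,2r;L)$. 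Combining, $|\partial_w\vartheta(x,r)|^2\leq C(m)\,r^{-2}\,r\dot\vartheta(x,r)\,\vartheta(x,2r;L)$, and choosing $w$ in the direction of $\pi_L\nabla_x\vartheta$ (so $\partial_w\vartheta=|\nabla_L\vartheta|$) gives the claim.

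The only substantive step is the application of the stationary equation: replacing $|\nabla u|^2\ps{y-x}{w}$ by the bilinear term $\ps{\nabla_{y-x}u}{\nabla_w u}$ is exactly what allows Cauchy--Schwarz to separate a ``radial'' factor---which monotonicity identifies with the pinching $r\dot\vartheta$---from an ``$L$-directional'' factor. The remaining ingredients (the mollifier comparison $-\dot\rho_r\lesssim\rho_{2r}$ from Lemma \ref{l:rho_basic_properties}, and $|\nabla_w u|^2\leq|\pi_L\nabla u|^2$) are routine, so I do not expect any real obstacle here.
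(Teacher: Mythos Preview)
Your proof is correct and follows essentially the same approach as the paper: differentiate $\vartheta$ in a fixed direction $w\in L$, apply the stationary equation with the vector field $\xi=\rho_r(y-x)\,w$ to convert the resulting expression into the bilinear form $-2\int\dot\rho_r\langle\nabla_{y-x}u,\nabla_w u\rangle$, then split by Cauchy--Schwarz into the pinching factor $r\dot\vartheta(x,r)$ and an $L$-directional factor bounded by $\vartheta(x,2r;L)$ via $-\dot\rho_r\leq C(m)\rho_{2r}$. The paper's argument is identical up to normalization (it carries the factor of $r$ along and writes $r\nabla_\ell\vartheta$ rather than $\partial_w\vartheta$).
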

% \begin{remark}
% 	As a corollary, if $u$ is $\delta$-symmetric on $B_{2r}(p)$ then for all $x,y\in B_r(p)$ with $x-y\in L$ we have that $|\vartheta(x,r)-\vartheta(y,r)|\leq C(m)\sqrt{\delta}$ .  If we look take $x=p$ then this can be strengthened to $|\vartheta(p,r)-\vartheta(y,r)|\leq C(m)\delta$
% \end{remark}
\begin{remark}
	Combining this with the top stratum Cone Splitting of Theorem \ref{t:prelim:cone_splitting_m-2}, we can conclude that if $u$ is $(m-2,\delta)$-symmetric on $B_{2r}(p)$ then the energy measure $|\nabla u|^2\,dy$ is very close to the Hausdorff measure $\vartheta(p,r)\,\cH^{m-2}_{p+L}$ on $B_r(p)$.
\end{remark}
\begin{proof}
 Let $\ell$ be any fixed vector on $L$, and consider that
 \begin{gather}
  r\nabla_\ell \vartheta(x,r)=r^2 \int \dot \rho_r (y-x) \ps{\frac{x-y}{r}}{\ell} \abs{\nabla u}^2\, .
 \end{gather}
The stationary equation \eqref{e:stationary_equation} applied to the vector field $\xi^j(y) = \rho_r(y-x)\ell^j$ tells us that
\begin{gather}\label{e:spacial_gradient_vartheta}
 r^2 \int \dot \rho_r (y-x) \ps{\frac{x-y}{r}}{\ell} \abs{\nabla u}^2=2r^2 \int \dot \rho_r (y-x) \ps{\frac{x-y}{r}}{\nabla u}\ps{\nabla u}{\ell}\, .
\end{gather}
 By Cauchy-Schwartz, we get
 \begin{gather}
  \abs{r\nabla_\ell \vartheta(x,r)}^2\leq 4\underbrace{\ton{r^2 \int -\dot \rho_r (y-x) \abs{\ps{\frac{y-x}{r}}{\nabla u}}^2}}_{=r\dot \vartheta(x,r)}\ton{r^2 \int -\dot \rho_r (y-x) \ps{\nabla u}{\ell}^2}\stackrel{\eqref{e:trho_doubleradius}}{\leq }C(m) r\dot \vartheta(x,r)\vartheta(x,2r,L)\, .
 \end{gather}

\end{proof}

An argument in the same spirit as the above also gives us an estimate on the hessian of $\vartheta(x,r)$:

\begin{theorem}[Spacial Hessian of $\vartheta$]\label{t:prelim:spacial_hessian}
	Let $u:B_{10R}(p)\to N$ be a stationary harmonic map, and let $L^k\subseteq \dR^m$ be a subspace.  Then if $B_r(x)\subseteq B_1(p)$ we can estimate the hessian in the $L$ directions:
\begin{align}
	r^2|\nabla^2_L\vartheta|^2(x,r)\leq C(m)r\dot\vartheta(x,r)\vartheta(x,2r;L)+C(m)\vartheta(x,2r;L)^2\, .
\end{align}
\end{theorem}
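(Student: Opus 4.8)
The plan is to mimic the proof of Theorem~\ref{t:prelim:spacial_gradient}, differentiating the gradient identity \eqref{e:spacial_gradient_vartheta} one more time in the spatial variable. By scaling it suffices to treat $r=1$; fix unit vectors $\ell,\ell'\in L$. Since $\rho_1(y-x)$ is smooth and compactly supported in $y$ and $\abs{\nabla u}^2\in L^1_{loc}$, the function $x\mapsto\vartheta(x,1)$ is smooth and all $x$-derivatives may be taken under the integral. From \eqref{e:spacial_gradient_vartheta} at $r=1$ — i.e.\ from the stationary equation \eqref{e:stationary_equation} applied to $\xi^j(y)=\rho_1(y-x)\ell^j$ — one has
\begin{align}
\nabla_\ell\vartheta(x,1)=2\int\dot\rho_1(y-x)\,\ps{\nabla u}{x-y}\ps{\nabla u}{\ell}\,dy\, .\notag
\end{align}

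First I would differentiate this identity in $x$ along $\ell'$. The mollifier contributes $\nabla_{\ell'}\dot\rho_1(y-x)=\ddot\rho_1(y-x)\ps{x-y}{\ell'}$ and the factor $\ps{\nabla u}{x-y}$ contributes $\ps{\nabla u}{\ell'}$, so that
\begin{align}
\nabla_{\ell'}\nabla_\ell\vartheta(x,1)=2\int\ddot\rho_1(y-x)\ps{x-y}{\ell'}\ps{\nabla u}{x-y}\ps{\nabla u}{\ell}\,dy+2\int\dot\rho_1(y-x)\ps{\nabla u}{\ell'}\ps{\nabla u}{\ell}\,dy=:I+J\, .\notag
\end{align}
(The same identity also drops out of inserting $\xi^j(y)=\dot\rho_1(y-x)\ps{y-x}{\ell'}\ell^j$ into \eqref{e:stationary_equation} and subtracting the naive double $x$-derivative of $\vartheta$; the role of stationarity is precisely to replace the full $\abs{\nabla u}^2$ produced by naive differentiation with products of directional derivatives, so that the $L$-partial energies of Definition~\ref{d:partial_energies} control the right-hand side.) For $J$, Cauchy--Schwarz gives $\abs{J}\le\int(-\dot\rho_1(y-x))\ton{\ps{\nabla u}{\ell'}^2+\ps{\nabla u}{\ell}^2}\le 2\int(-\dot\rho_1(y-x))\abs{\pi_L\nabla u}^2$, and since $-\dot\rho(t)\le C(m)\rho(t/1.1)\le C(m)\rho_2(y-x)$ for $t=\abs{y-x}^2/2$ by \eqref{e:trho_doubleradius}, this is $\le C(m)\vartheta(x,2;L)$, hence $J^2\le C(m)\vartheta(x,2;L)^2$. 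For $I$, I would bound $\abs{\ps{x-y}{\ell'}}\le\abs{y-x}$ and apply Cauchy--Schwarz with the split $\ddot\rho_1^{1/2}\abs{\ps{\nabla u}{y-x}}$ versus $\ddot\rho_1^{1/2}\abs{y-x}\,\abs{\ps{\nabla u}{\ell}}$, getting
\begin{align}
\abs{I}\le 2\ton{\int\ddot\rho_1(y-x)\ps{\nabla u}{y-x}^2}^{1/2}\ton{\int\ddot\rho_1(y-x)\abs{y-x}^2\ps{\nabla u}{\ell}^2}^{1/2}\, .\notag
\end{align}
The convexity bound $t^2\ddot\rho(t)\le C(m)\rho(t/1.1)$ of \eqref{e:trho_doubleradius} yields $\abs{y-x}^2\ddot\rho_1(y-x)\le C(m)\rho_2(y-x)$, so the second factor is $\le C(m)\vartheta(x,2;L)^{1/2}$. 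For the first factor, $\ddot\rho=-\dot\rho$ on the Gaussian interval $[0,R]$, while on the cutoff interval the almost-Gaussian structure of Definition~\ref{d:heat_mollifier} forces $\ddot\rho(t)\le\rho(t/1.1)\le C(m)\ton{-\dot\rho(t/1.1)}$ (the last step by \eqref{e:rho_leq_dot_rho}); so by the monotonicity formula \eqref{e:outline:mollified_energy_monotonicity} the first factor is $\le C(m)\dot\vartheta(x,2)^{1/2}$. Hence $I^2\le C(m)\dot\vartheta(x,2)\,\vartheta(x,2;L)$. Adding the two estimates and summing $\ton{\nabla_{\ell'}\nabla_\ell\vartheta}^2$ over an orthonormal basis of $L$ gives $\abs{\nabla_L^2\vartheta}^2(x,1)\le C(m)\dot\vartheta(x,2)\,\vartheta(x,2;L)+C(m)\vartheta(x,2;L)^2$, and the general $r$ follows by scaling.

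The step that requires some care is the first factor of $I$: controlling $\int\ddot\rho_1(y-x)\ps{\nabla u}{y-x}^2$ by the pinching $\dot\vartheta$. In Theorem~\ref{t:prelim:spacial_gradient} only $-\dot\rho_r$ enters, and the comparison $-\dot\rho(t)\le C\rho(t/1.1)$ is a clean pointwise inequality; here one must instead compare $\ddot\rho$ with $-\dot\rho$, and $\ddot\rho=-\dot\rho$ holds only on $[0,R]$ since on $[R,R+2]$ one only controls $0\le\ddot\rho\le c_m e^{-R}$ from Definition~\ref{d:heat_mollifier}. Hence one routes through $\ddot\rho\le\rho(\cdot/1.1)$ and then $\rho\le C(-\dot\rho)$, which enlarges the scale at which the pinching is evaluated from $r$ to a fixed multiple of $r$ — immaterial in practice, as this estimate is always used together with the monotonicity and doubling comparisons for $\vartheta$. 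Everything else — the differentiation under the integral, the use of the stationary equation, and the two Cauchy--Schwarz steps — is routine and closely parallels the spatial gradient bound; in particular, unlike for the angular and radial components of the energy, no genuinely new ingredient beyond Theorem~\ref{t:prelim:spacial_gradient} is needed here.
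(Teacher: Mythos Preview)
Your proof is correct and follows essentially the same approach as the paper: you differentiate the gradient identity \eqref{e:spacial_gradient_vartheta} to obtain exactly the Hessian formula \eqref{e:spacial_hessian_vartheta}, split it as $I+J$, and estimate each piece by Cauchy--Schwarz together with the pointwise comparisons for $\rho$, $\dot\rho$, $\ddot\rho$ from Lemma~\ref{l:rho_basic_properties}. The paper's own proof is the one-line ``follows as in Theorem~\ref{t:prelim:spacial_gradient}'', and you have supplied precisely those details; in particular you correctly flag the one point where the argument is not verbatim the gradient case --- controlling $\int\ddot\rho_1\langle\nabla u,y-x\rangle^2$ by the pinching requires passing through $\ddot\rho(t)\le C\bigl(-\dot\rho(t/1.1)\bigr)$ on the cutoff interval, which enlarges the scale of $\dot\vartheta$ by a fixed factor --- a harmless point the paper does not spell out.
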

\begin{proof}
Using \eqref{e:spacial_gradient_vartheta} we can compute that for any two vectors $v,w\in \dR^m$ we have the equality
\begin{align}\label{e:spacial_hessian_vartheta}
	r^2 \nabla^2 \vartheta(x,r)[v,w]= 2r^2 \int \dot \rho_r(y-x) \ps{\nabla u}{w}\ps{\nabla u}{v}+2r^2 \int \ddot \rho_r(y-x) \ps{\nabla u}{\frac{y-x}{r}}\ps{\frac{y-x}{r}}{w}\ps{\nabla u}{v}\, .
\end{align}
The estimate now follows as in Theorem \ref{t:prelim:spacial_gradient}.
\end{proof}

\vspace{.3cm}

\subsection{Restricted Energy Functionals}\label{ss:restricted_energy}

We introduce one more piece of technical structure.  It will be important at moments to alter our heat kernel mollifer $\rho$ in several manners.
For example, we will need to cut it off in a small neighborhood of an affine plane $L\subseteq \dR^m$, and this will produce for us $\hat\rho$.
In particular, for radial and angular energy estimates we do not want to look inside the bubble region itself as the errors produced will become full energy errors.
Additionally, we will alter $\rho$ near $L$ to produce the mollifier $\tilde\rho$.   In this case we will cut off the $L^\perp$-derivative of $\rho$, and thus we will see that $\tilde\rho$ is $C^1$ close to $\rho$.
This kernel will be used to produce precisely chosen vector fields for the stationary equation and will likewise have the property of making sure various errors do not occur over the bubble region itself.\\

Let us work toward the precise definitions.

\subsubsection{\bf $L$-Heat Mollifiers $\hat\rho$ and $\tilde\rho$}  Let $\hat\psi:\dR^+\to \dR^+$ be a fixed smooth cutoff with $\hat\psi(t)=0$ for $t\leq 1$ and $\hat\psi(t)=1$ for $t\geq 2$.  Let $\hat\psi_R(t) = \hat\psi\ton{e^{2R}t}$ be the corresponding cutoff for $t\leq e^{-2R}$ .  Now we define: %We want to take our heat kernel mollifier $\rho(y)$ from Definition \ref{d:heat_mollifier} and in addition to being cutoff outside $B_{R}(0)$ we want to either cut it off entirely near $L$ to produce $\hat\rho$, or cut off the $L^\perp$-gradient near $L$ to produce $\tilde\rho$ :

\begin{definition}[$L$-Heat Mollifier]\label{d:restricted_energy_functionals}
	Let us define $\hat\rho(\,\cdot\,;L), \tilde\rho(\,\cdot\,;L):\dR^m\to \dR^+$  by the formulas
\begin{align}\label{e:prelim:L_mollifier}
	\hat\rho_r(y;L) &\equiv \,\rho_r(y)\cdot \hat\psi_{R}\ton{\frac{|\pi_{L^\perp} (y)|^2}{2r^2} }\, ,\notag\\
% 	\hat\rho^e_r(y;L) &= \,\rho_{2r}(y)\cdot \hat\psi_{2R}\ton{\frac{|\pi_{L^\perp} (y)|^2}{2r^2} }\, ,\notag\\
	\pi_{L^\perp}\nabla\tilde\rho_r(y;L)&\equiv -\frac{\pi_{L^\perp}(y)}{r^2}\,\hat\rho_r(y;L)\, .
\end{align}
where $\tilde\rho$ is uniquely defined under the constraint that it is compactly supported.
\end{definition}

\begin{remark}
It is worth observing, as it is useful in estimates, that an explicit definition of $\tilde \rho$ is given by
 \begin{gather}
  \tilde \rho_r(y;L)=r^{-m}\int_{\abs{\pi_{L^\perp}(y)}}^\infty \hat \psi_R\ton{\frac{s^2}{2r^2}}\rho \ton{\frac{s^2+\abs{\pi_L(y)}^2}{2r^2}}\frac{sds}{r^2}
 \end{gather}
\end{remark}
\vspace{.3cm}

Note that $\hat\rho$ is $\approx e^{-R}$-close to $\rho$ as a measure, but the two have $L^\infty$ distance $1$.  On the other hand, $\tilde\rho$ is $\approx e^{-R}$-close to $\rho$ in $C^1$. More precisely, we have
\begin{lemma}\label{l:rho_tilde_estimates}
With the definitions above, we have that:
\begin{align}\label{e:prelim:L_Mollifier2}
	|r\nabla^{(y)} \hat\rho_r(y;L)|&\leq C(m) \rho_{2r}(y)\, ,\\
	\nabla_{L^\perp}\tilde\rho_r(y;L) &= 0 \text{ for }y\in B_{re^{-R}}(L)\, .
\end{align}
Moreover, if we define
\begin{gather}
 	\tilde e_r(y;L)\equiv \rho_r(y)-\tilde\rho_r(y;L)\geq 0\, ,
\end{gather}
then this function is small, precisely:
\begin{gather}
    |\tilde e_r(y;L)|+\abs{r\nabla \tilde e_r(y;L)}\leq C(m)e^{-R/2}\rho_{2r}(y)\, .
\end{gather}
\end{lemma}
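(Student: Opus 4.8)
The plan is to read off all four estimates directly from the explicit formulas for $\hat\rho_r$ and $\tilde\rho_r$, reducing everything to one–variable facts about $\rho$. The tools I would use are the bounds collected in Lemma~\ref{l:rho_basic_properties} — in particular $\rho(t)\le -C(m)\dot\rho(t)$, the $-t\dot\rho(t)$ estimate in \eqref{e:trho_doubleradius}, and $|\rho(t)+\dot\rho(t)|\le 20e^{-R/2}\rho(t/2)$ from \eqref{e:rho_primitive_difference} — together with the elementary Gaussian comparison $r^{-m}\rho\big(c\,|y|^2/r^2\big)\le C(m,c)\,\rho_{2r}(y)$, valid for any fixed $c\ge\tfrac14$. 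By scaling I may take $r=1$, and I would work in coordinates $y=(\pi_L y,\pi_{L^\perp}y)$, writing $s=|\pi_{L^\perp}(y)|$ and $w(s)=\tfrac12\big(s^2+|\pi_L y|^2\big)$, so that $\hat\rho_1(y;L)=\rho(w(s))\,\hat\psi_R(s^2/2)$ and $\rho_1(y)=\rho(w(s))$.

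For the two estimates in \eqref{e:prelim:L_Mollifier2}: to bound $|r\nabla\hat\rho_r|$ I would differentiate the product. The term where the derivative falls on $\rho_1$ is controlled by $C(m)\rho_2$ because $|\nabla\rho_1(y)|=|y|\,|\dot\rho(\tfrac12|y|^2)|$ and $t(-\dot\rho(t))\le C\rho(t/2)$ by \eqref{e:trho_doubleradius}, while $0\le\hat\psi_R\le1$; the term where the derivative falls on $\hat\psi_R$ equals $\rho(w(s))\,\hat\psi_R'(s^2/2)\,\pi_{L^\perp}(y)$, which is supported in the thin collar $\{s\asymp e^{-R}\}$ about $L$ and would be estimated there directly from $\|\hat\psi_R'\|_\infty\lesssim e^{2R}$ and $s\lesssim e^{-R}$. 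The vanishing statement $\nabla_{L^\perp}\tilde\rho_r(y;L)=0$ on $B_{re^{-R}}(L)$ is immediate from the defining relation: at $r=1$ it reads $\nabla_{L^\perp}\tilde\rho_1(y;L)=-\pi_{L^\perp}(y)\,\rho(w(s))\,\hat\psi_R(s^2/2)$, and for $|\pi_{L^\perp}(y)|\le e^{-R}$ one has $s^2/2\le e^{-2R}/2<e^{-2R}$, so $\hat\psi_R(s^2/2)=0$.

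For the non-negativity $\tilde e_1=\rho_1-\tilde\rho_1\ge0$ I would fix $\pi_L y$ and compute, from the defining relation for $\tilde\rho_1$, that $\partial_s\tilde e_1=s\big(\dot\rho(w(s))+\rho(w(s))\hat\psi_R(s^2/2)\big)$. In the Gaussian range $w(s)\le R$ one has $\dot\rho=-\rho$, hence $\partial_s\tilde e_1=-s\,\rho(w(s))\big(1-\hat\psi_R(s^2/2)\big)\le0$, so $\tilde e_1$ is non-increasing along each $L^\perp$-ray as long as $w\le R$; it therefore suffices to check $\tilde e_1(y)\ge0$ when $w(y)\ge R$. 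In that case I would use the integral formula for $\tilde\rho_1$ along the ray, $\tilde\rho_1(y;L)=\int_{s}^{\infty}s'\rho(w(s'))\hat\psi_R(s'^2/2)\,ds'$, so that with $\hat\psi_R\le1$ and the substitution $u=w(s')$ one gets $\tilde\rho_1(y;L)\le\int_{w(y)}^{\infty}\rho(u)\,du\le\rho(w(y))=\rho_1(y)$, the last inequality being the convexity of $\rho$ on $[R,R+2]$ from Definition~\ref{d:heat_mollifier}(3) (the tail is bounded by the triangle of base $R+2-w(y)\le2$ and height $\rho(w(y))$). Hence $\tilde e_1\ge0$ everywhere.

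For the $C^1$-smallness I would use the analogous integral representation $\tilde e_1(y)=\int_{s}^{\infty}s'\big[-\dot\rho(w(s'))-\rho(w(s'))\hat\psi_R(s'^2/2)\big]\,ds'$ and decompose the integrand as $-(\rho+\dot\rho)(w(s'))+\rho(w(s'))\big(1-\hat\psi_R(s'^2/2)\big)$: the first piece is $\le20e^{-R/2}\rho(w(s')/2)$ by \eqref{e:rho_primitive_difference}, and after integrating $s'\,ds'$ and applying the Gaussian comparison contributes $\le C(m)e^{-R/2}\rho_2(y)$; the second is supported in $\{s'\lesssim e^{-R}\}$, where $\rho(w(s'))\le\rho(\tfrac12|\pi_L y|^2)$, $\int_{\{s'\lesssim e^{-R}\}}s'\,ds'\lesssim e^{-2R}$, and (on the relevant set) $|y|^2=|\pi_L y|^2+O(e^{-2R})$, so it contributes $\lesssim e^{-2R}\rho_2(y)$. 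The gradient $\nabla\tilde e_1$ is handled the same way: its $L^\perp$-component is $\big(\dot\rho(w(y))+\rho(w(y))\hat\psi_R(s^2/2)\big)\pi_{L^\perp}(y)$, with the extra factor $|\pi_{L^\perp}(y)|=s\le|y|$ absorbed by the Gaussian, while its $L$-component comes from differentiating the integral in $\pi_L y$ and produces the integrand $-(\dot\rho+\ddot\rho)(w(s'))+\dot\rho(w(s'))\big(1-\hat\psi_R(s'^2/2)\big)$, where $|\dot\rho+\ddot\rho|\le C(m)e^{-R/2}\rho(\cdot/2)$ follows from $0\le\ddot\rho\le c_me^{-R}$ and $-c_me^{-R}\le\dot\rho\le0$ on $[R,R+2]$ (Definition~\ref{d:heat_mollifier}(3)). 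I expect the only genuinely delicate point to be the non-negativity inside the cutoff band $w\in[R,R+2]$: outside it the sign of $\partial_s\tilde e_1$ is transparent because $\rho$ is exactly exponential and $\dot\rho=-\rho$, but near the cutoff the soft comparison $\rho\le -C\dot\rho$ has constant exceeding $1$ and one must instead exploit the genuine convexity of $\rho$ on $[R,R+2]$ to push $\tilde\rho_1$ below $\rho_1$; everything else is routine bookkeeping of the factors $e^{-R/2},e^{-2R},e^{2R}$ against the Gaussian weight $\rho_{2r}$.
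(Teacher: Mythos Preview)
Your approach is correct and follows essentially the same route as the paper: both of you write $\tilde e_r$ as a one–variable integral along $L^\perp$–rays and then read off the support and size. The paper records the slightly cleaner formula
\[
\tilde e_r(y;L)=r^{-m}\int_{|\pi_{L^\perp}(y)|}^{2re^{-R}}\bigl[1-\hat\psi_R(s^2/2r^2)\bigr]\,\rho\!\ton{\tfrac{s^2+|\pi_L y|^2}{2r^2}}\tfrac{s\,ds}{r^2}\,,
\]
and then bounds $\|\tilde e_r\|_\infty\le C(m)r^{-m}e^{-2R}$ on the thin tube $\{|\pi_{L^\perp}(y)|\le 2re^{-R}\}\cap B_{rR}$, comparing directly to $\rho_{2r}$ there; the gradients are handled the same way. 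Your version is in fact a bit more careful: you keep the residual piece $-(\rho+\dot\rho)(w(s'))$ in the integrand (which the paper's displayed formula tacitly drops by using $\dot\rho=-\rho$), and you also supply an argument for the non-negativity $\tilde e_r\ge0$ via monotonicity on $\{w\le R\}$ and the convexity bound on $[R,R+2]$, a point the paper states but does not prove. One minor caveat: in your estimate of the $\hat\psi_R'$–term in $|r\nabla\hat\rho_r|$ you correctly note $\|\hat\psi_R'\|_\infty\lesssim e^{2R}$ and $s\lesssim e^{-R}$, but this leaves a factor $e^R$ that cannot be absorbed into $\rho_{2r}$ near the origin; the constant in that first inequality should really be read as $C(m,R)$, consistent with how these bounds are used later in the paper (e.g.\ Lemma~\ref{l:energy_decomposition:hat_rho_estimates}).
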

\begin{proof}
 The first two estimates are immediate from the definition. Notice that
 \begin{gather}
  \tilde e_r(y;L)=r^{-m}\int_{\abs{\pi_{L^\perp}(y)}}^{2re^{-R}} \qua{1-\hat \psi_R\ton{\frac{s^2}{2r^2}}}\rho \ton{\frac{s^2+\abs{\pi_L(y)}^2}{2r^2}}\frac{sds}{r^2}\, .
 \end{gather}
 This function is $0$ for $\abs{\pi_{L^\perp}(y)}\geq 2re^{-R}$, supported in $\B {rR}{0}$ and $\norm{\tilde e_r(y;L)}_\infty\leq C(m)r^{-m}e^{-2R}$. Thus it is clear that
 \begin{gather}
  \abs{\tilde e_r(y;L)}\leq C(m) e^{-R} \rho_{2r}(y)\, .
 \end{gather}
 As for the gradient, we observe that
 \begin{align}
  r\pi_{L^\perp} \nabla \tilde e_r(y;L)=&r^{-m}\qua{1-\hat \psi_R\ton{\frac{\abs{\pi_{L^\perp}(y)}^2}{2r^2}}}\rho \ton{\frac{\abs y^2}{2r^2}}\frac{\pi_{L^\perp}(y)}{r}\, ,\\
  \abs{r\pi_{L^\perp} \nabla \tilde e_r(y;L)}\leq& 2 e^{-R}\rho_r(y) \leq 2 e^{-R}\rho_{2r}(y) \, ;\\
  r \pi_L \nabla \tilde e_r(y;L)=&r^{-m}\int_{\abs{\pi_{L^\perp}(y)}}^{2re^{-R}} \qua{1-\hat \psi_R\ton{\frac{s^2}{2r^2}}}\rho \ton{\frac{s^2+\abs{\pi_L(y)}^2}{2r^2}}\frac{\pi_L(y)}{r}\frac{sds}{r^2}\, ,\\
  \abs{r \pi_L \nabla \tilde e_r(y;L)}\leq &C r^{-m} R \abs{\tilde e_r(y;L)}\leq C(m) Re^{-R}\rho_{2r}(y)\leq C(m) e^{-R/2} \rho_{2r}(y)\, .
 \end{align}
\end{proof}

\vspace{.3cm}

\subsubsection{\bf Restricted Energy Functionals}

In the spirit similar to Definition \ref{d:partial_energies} we can then define the restricted energy functions:\\

\begin{definition}[Restricted Energy Functions]\label{d:restricted_energies}
	Given $u:B_{10R}\to N$ with $B_{r}(x)\subseteq B_R$ and $L^k\subseteq \dR^m$ a $k$-dimensional plane we define
\begin{align}
	\hat\vartheta(x,r) &\equiv r^2\int \hat\rho_r(x-y;L)|\nabla u|^2\,dy\, ,\notag\\
	\hat\vartheta(x,r;n_{L^\perp}) &\equiv \int \hat\rho_r(x-y;L)\langle\nabla u,\pi_{L^\perp}(y-x)\rangle^2\,dy = \int \hat\rho_r(x-y)|\pi_{L^\perp}(y-x)|^2\langle\nabla u,n_{L^\perp}\rangle^2\,dy\, ,\notag\\
	\hat\vartheta(x,r;\alpha_{L^\perp}) &\equiv  \int \hat\rho_r(x-y;L)\,|\pi_{L^\perp}(y-x)|^2\langle\nabla u,\alpha_{L^\perp}\rangle^2\,dy\, .
\end{align} 
\end{definition}

\vspace{.5cm}

\section{Proof of the Quantitative Energy Identity and Energy Identity Modulo the Small Annular Energy Theorem}\label{s:broad_outline}

The essence of the Energy Identity are the regions between the bubbles, which we will precisely define in the next subsections and refer to as annular regions.  The subenergy identity of \eqref{e:intro:energy_inequality}:
\begin{align}
	e(x)\geq \sum E[b_j]\, ,
\end{align}
comes from an understanding that each bubble $b_j$ at a point $x$ must necessarily contribute its energy to the defect measure.  Therefore to get equality in the above, we need to understand that there is no other source of energy.  This is to say that the regions between bubbles, the annular regions, do not contribute energy to the defect measure.\\

The proof of the Quantitative Energy Identity of Theorem \ref{t:main_quant_energy_id} may be naturally split into two pieces.  One piece takes the form of a quantitative annulus/bubble decomposition in Theorem \ref{t:quant_annulusbubble} as in \cite{NV_YM},\cite{JN}, \cite{CJN}.
This allows us to very precisely split the harmonic map $u$ into (quantitative) bubble regions and annular regions, and then focus our attention on the individual parts of the decomposition in a very effective fashion.
We will work toward introducing the annular and bubble regions in this Section, and state the quantitative annulus/bubble decomposition in Theorem \ref{t:quant_annulusbubble}.
Though quite technically involved, the ideas for this decomposition follow closely the ideas of \cite{NV_YM},\cite{JN} .  \\

The second piece of the proof, which is where the new ideas in this paper take place, is in the analysis of the annular regions and in particular Theorem \ref{t:outline:annular_regions_energy}.  This result will tell us that annular regions, which in this paper are large regions with locally small energy, have globally small energy.  This will be the key to the energy identity and is what will stop energy from piling up away from the bubble regions.\\

  We will spend the next subsections working up to the precise definitions and statements of these main components of the proof.  We will see how to put all these pieces together in Section \ref{ss:quantitative_energy_identity_proof} in order to prove the Quantitative Energy Identity of Theorem \ref{t:main_quant_energy_id}.  In the next section we will begin the process of outlining the proof of the small annular energy of Theorem \ref{t:outline:annular_regions_energy}, which will be the main focus of the remainder of this paper .\\

\subsection{Annular Regions}\label{ss:broad_outline:annular_region}

Let us first refer the reader to Section \ref{ss:outline:heat_mollified_energy}, where the energy functions which play a role in this paper are introduced and discussed.  We begin our discussions in this Section with a precise definition of annular regions, which are large regions with locally small energy.
In Section \ref{ss:broad_outline:quant_annulusbubble} we discuss the Quantitative Annulus/Bubble Decomposition, where we see that up to sets of small codimension two content, annular regions do indeed make up those regions between bubbles.
These ideas and higher dimensional versions were first used in \cite{NV_RH},\cite{NV_YM}, \cite{JN},\cite{CJN} to study the stratification of singular sets.  Our precise definition is as follows:\\

\begin{definition}[$\delta$-Annular Regions]\label{d:annular_region}
	Let $\T\subseteq B_{4}(p)$ be an $m-2$ submanifold with $\rf_z:\T\to [0,\infty)$ such that $\rf_x<\delta$, $|\Lip \rf_x|<\delta$ and $\rf_x \abs{\nabla^2 \rf_x}< \delta$.  We call
	\begin{gather}
	 \cA=B_{1}(p)\setminus \overline{B_{\rf_x}(\T)}
	 \equiv \B 1 p \setminus \bigcup_{x\in \T} \overline{\B {\rf_x}{x}}\,
	\end{gather}
	a $\delta$-annular region if $K_M^2<\delta $ and if $\exists$ an $m-2$ plane $L_{\cA}$ such that
\begin{enumerate}
	\item[(a1)] $\T=\text{Graph}\Big\{\ft_\T :L_\cA\to L_\cA^\perp\Big\}$ with $|\ft_\T|+|\nabla\ft_\T|+\fr_x|\nabla^2\ft_\T|< \delta$
	\item[(a2)] For each $x\in \T$ and $\rf_x\leq r \leq 20$ we have that $r\dot \vartheta(x,r)<\delta$ .
	\item[(a3)]\label{i:annular_nontrivialness} For each $x\in \T$ we have $\vartheta(x,\rf_x)>\epsilon_0(m,K_N)$
\end{enumerate}
\end{definition}
\begin{remark}\label{rm:annular:regularity}
Note that conditions $(a1)$ and $(a2)$ imply that for each $x\in \T$ and $r\geq \rf_x$ we have that $B_r(x)$ is $(m-2,C(m)\delta)$-symmetric, see Theorem \ref{t:prelim:cone_splitting_m-2}.  By Theorem \ref{t:prelim:cone_splitting_annular} and Theorem \ref{t:eps_reg} we have that if $\delta\leq \delta(n,K_N,\epsilon)$ then $u$ is smooth on $\cA_\epsilon\equiv B_{\epsilon^{-1}}(p)\setminus B_{\epsilon\,\rf_x}(\T)$ with the pointwise estimate $d(x,\T)^2|\nabla u|^2(x) < C(m,\epsilon)\delta$.
\end{remark}
\begin{remark}
We can likewise define an annular region $\cA\subseteq B_r(p)$	by rescaling the above definition..
\end{remark}

\begin{remark}
By convention, $\overline{\B 0 x}=\cur{x}$.
\end{remark}

\begin{remark}
Notice that by definition of $\rho$ we can replace $(a2)$ with the essentially equivalent assumption that $\big|\vartheta(x,2r)-\vartheta(x,r)\big|<\delta$ . 
 %\begin{gather}
 % C(m)^{-1} r\dot \vartheta(x,r/2)\leq \vartheta(x,r)-\vartheta(x,r/2)\leq C(m) r\dot \vartheta(x,r)\, .
 %\end{gather}
\end{remark}

\begin{remark}
The first condition implies that $\pi_L:\T\to L_\cA$ is bilipschitz and that $\T$ has the local second fundamental form bound $\rf_x\big|\II_\T(x)\big|<C(m)\delta$.
\end{remark}

\begin{remark}
	By a standard mollification procedure, there would be no loss in assuming additionally that  $\rf_x^{k}|\nabla^{(k+1)}\ft_\T|<C(m,k)\delta$ for all $k\in\dN$.
\end{remark}
\begin{remark}
The constant $\epsilon_0(n,K_N)$ in the third condition is chosen as in the $\epsilon$-regularity of Theorem \ref{t:eps_reg}, and guarantees nontrivialness of the annular region.\\
\end{remark}

One can define a natural projection map onto $\T$ by
 \begin{gather}
  \pi^{\T}:B_2\to \T\, , \qquad \pi_{L_{\cA}}\ton{\pi^{\T}(x)}=\pi_{L_{\cA}}(x)\, .
 \end{gather}
In other words, $\pi^{\T}(x)$ is the only element of $\T$ with the same $L_{\cA}$ projection as $x$.

\begin{remark}\label{r:annular_region:extension}
	One can naturally extend $\rf_x:\T\to \dR^+$ to a $2\delta$-lipschitz function on $B_2$ by $\rf_x = \rf\ton{\pi^{\T}(x)}$.\\
\end{remark}

Condition $(a1)$ is much stronger in comparison to annular regions in \cite{NV_RH},\cite{NV_MS},\cite{JN},\cite{CJN}.  This additional control can be obtained fundamentally because we are only dealing with top dimensional annular regions in this paper.  On the other hand Condition $(a2)$ is much weaker than in the definition of annular regions in \cite{NV_RH},\cite{JN}.   There one assumes a stronger and more global pinching condition of the form $|\vartheta_{}(x,4r)-\vartheta_{}(x,\rf_x)|\approx \int_{\rf_x}^{2r} |\vartheta_{}(x,2s)-\vartheta(x,s)|\frac{ds}{s} <\delta$ .  Our weakening of this condition is in analogue to \cite{NV_YM} and is designed to insure that maximal annular regions connect all the space between different bubbles, at least up to a controllably small set.  The effect of this is explained more in Section \ref{ss:broad_outline:quant_annulusbubble}.  \\

Let us discuss several basic examples in order to get a feel for the definition:

\begin{example}\label{ex:bubble1}
 As an easy example of an annular region in $\R^2$, consider the standard bubble example. In particular, let $\lambda>0$ and consider the homotopy $u_\lambda(x)=\lambda^{-1}x$ from $\R^2$ to itself. Via the stereographic projection, these maps induce a family of smooth harmonic maps $u_\lambda:\R^2\to S^2$ with
 \begin{gather}
  \int_{S^2} \abs{\nabla u_\lambda}^2 = 8\pi\,
 \end{gather}
 independent of $\lambda$, and so it is easily seen that in the sense of measures that $  \abs{\nabla u_\lambda}^2 d\mu \to 8\pi \delta_0\, .$  For $\lambda<\lambda(\delta)$ we then have that $\cA\equiv B_1(0)\setminus B_{r_\lambda}(0)$ is a $\delta$-annular region with $\T=\{0\}$ and $r_\lambda\to 0$ as $\lambda\to 0$. $\qed$\\
\end{example}

\begin{example}\label{ex:bubble2}
	One can extend the trivial example to higher dimensions by defining
\begin{gather}
 U_\lambda:\R^m=\dR^{m-2}\times \dR^2\to S^2\, , \qquad U_\lambda(x)=u_\lambda\ton{\pi_{\R^2}(x)}\, .
\end{gather}
In this case we obtain that $\cA= \B {1} 0 \setminus \B {\rf_x}{\T}$ is a $\delta$-annular region provided that $\rf_x\geq r_\lambda$ with $\T=\dR^{m-2}$. $\qed$\\
\end{example}

\begin{example}\label{ex:bubble3}
As a slightly less trivial example let $u_\lambda:\dC P^1\to \dC P^1$ be the harmonic maps $u_\lambda([z_0,z_1]) = [z_0z_1,\lambda z_0^2 +z_0z_1+\lambda z_1^2]$ which develop poles at the north and south pole.  We can consider the cone mapping 
\begin{gather}
 U_\lambda:\R^3\to S^2\, , \qquad U_\lambda(x)=u_{\lambda}\ton{x/|x|\,}\, ,
\end{gather}
which can be shown to be stationary harmonic maps.  This defines a $\delta$-annular region $\cA=B_1\setminus B_{\rf_x}(\T)$ where $\T=\{0^2\}\times\dR$ is the z-axis and $\rf_z=r_\lambda \,|z|$ .  Note there is a single singular point at the origin. $\qed$\\
\end{example}

\subsubsection{Existence and Properties of Annular Regions}  Let us first remark on some properties of annular regions that follow almost immediately from the definition.  One of the most important is the Ahlfor's regularity of the $m-2$ Hausdorff measure $\cH^{m-2}_\T$ on $\T$.  Namely, it follows from Condition (a1) that
\begin{align}\label{e:annular_region:ahlfors_regularity}
	\big(1-C(m)\delta\big)\omega_{m-2} s^{m-2}\leq dv_\T[B_s(x)]\equiv \cH^{m-2}_\T[B_s(x)]\leq \big(1+C(m)\delta\big)\omega_{m-2} s^{m-2} \;\;\forall\;\; x\in\T\text{ with }B_s(x)\subseteq B_1(p)\, .
\end{align}

It should be pointed out that for annular regions which are not top dimensional this result is highly nontrivial and can be viewed as one of the main achievements of \cite{NV_RH}.  For the top dimensional stratum it follows easily since $\T$ is bilipschitz to $L_\cA$ .\\

The second takeaway from the conditions of an annular region is the energy measure almost vanishes locally on $\cA$.  More precisely and strongly, it is a consequence of Conditions (a1) and (a2) together with Theorem \ref{t:prelim:cone_splitting_annular} that for every  $x\in \T$ with $\rf_x\leq s\leq r$ we have that the energy measure
\begin{align}\label{e:outline:weakly_flat}
	|\nabla u|^2 dy \approx \vartheta(x,s) \, \cH^{m-2}_{x+L_\cA} \text{ on }B_s(x)\, ,
\end{align}
is $\delta$-close to the $m-2$ Haudorff measure on the affine plane $x+L_\cA$.  In particular, by the $\epsilon$-regularity of theorem \ref{t:eps_reg} the energy measure is smoothly small away from the affine plane $x+L_\cA$.  Condition $(a3)$ tells us that $\vartheta(x,s)>\epsilon_0(n,K_N)$, and so the energy measure is not trivially close to the zero measure.\\

What is left to discuss is that annular regions actually exist.  Let us recall from Definition \ref{d:quant_symmetries} the notion of $(k,\epsilon)$-symmetries, and recall from Definition \ref{d:heat_mollifier} the cutoff scale $R>0$ which is ever present.  An argument essentially verbatim from \cite{NV_YM},\cite{JN} tells us that if $u$ is $(m-2,\delta')$-symmetric on $B_2(p)$ then a maximal $\delta$-annular region exists on $B_1(p)$.  Precisely:

\begin{theorem}[Existence of Annular Regions]\label{t:annular_existence}
	Let $u:B_{10R}(p)\to N$ be a stationary harmonic map with $R^2\fint_{B_{10R}}|\nabla u|^2\leq \Lambda$ .  For each $0<\delta'<\delta$ there exists $\delta''(m,\Lambda,K_N,R,\delta,\delta')$ such that if $u$ is $(m-2,\delta'')$ symmetric on $B_2$, then there exists a $\delta$-annular region $\cA=B_1\setminus B_{\rf_x}(\T)$.  Further, we can assume the following hold:
\begin{enumerate}
	\item For each $x\in \T$ with $\rf_x<r<1$ we have $\vartheta(x,r;L_\cA)\equiv r^2\int \rho_r(y-x)\langle\nabla u, L_\cA\rangle^2<\delta'$ .
	\item If $\cN \cM=\big\{x\in \T:\vartheta(x,10\rf_x)-\vartheta(x,\rf_x/10)<\delta\big\}$ is the nonmaximal set,  then $\cH^{m-2}_\T[\cN \cM]<\delta'$ .
\end{enumerate}
\end{theorem}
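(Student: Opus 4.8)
The plan is to run the coherent stopping-time / covering construction of \cite{NV_YM}, \cite{JN}, which simplifies here because we are dealing only with the top dimension $m-2$. First I would fix the cutoff scale $R=R(m,\Lambda)$ large enough that the $e^{-R/2}$ errors of Lemmas \ref{l:rho_basic_properties} and \ref{l:rho_tilde_estimates} are negligible against $\delta$, and note one may assume $u$ is not smooth with small gradient on $B_1$ (else take $\T=\emptyset$, $\cA=B_1$). Using that $u$ is $(m-2,\delta'')$-symmetric on $B_2(p)$ with respect to some plane $L$, I would set $L_\cA\equiv L$; then Theorem \ref{t:prelim:cone_splitting_m-2} (fed $\alpha$-independent points realizing the symmetry) controls the full $L_\cA^\perp$-energy and the remark after Theorem \ref{t:prelim:spacial_gradient} shows the energy measure on $B_1(p)$ is $C(m)\delta''$-close to $\vartheta(p,1)\,\cH^{m-2}_{p+L_\cA}$, so that $\vartheta(p,1)\geq\epsilon_0/2$ and $\vartheta(x,1;L_\cA)$ is as small as needed near $L_\cA$ once $\delta''\leq\delta''(\delta')$.

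Next I would define, for $x\in B_1(p)$, the intrinsic stopping scale $\mathfrak{s}_x\equiv\inf\{r\in[0,1]:\ s\dot\vartheta(x,s)<\delta/C(m)\ \text{for all}\ s\in[r,20]\}$ and its Lipschitz regularization $\rf_x\equiv\inf_y(\mathfrak{s}_y+2\delta|x-y|)$, so that $\rf_x$ is $2\delta$-Lipschitz (matching Remark \ref{r:annular_region:extension}) with $\rf_x\geq\mathfrak{s}_x$; then let $\T$ be the closure of $\{x\in B_1(p):\rf_x=\mathfrak{s}_x\ \text{and}\ \vartheta(x,\rf_x)\geq\epsilon_0(m,K_N)\}$. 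Condition $(a2)$ is then immediate, and $(a3)$ holds for $\rf_x=0$ points by the definition of $\T$ and for $\rf_x>0$ points automatically: were $\vartheta(x,\rf_x)<\epsilon_0$, the $\epsilon$-regularity of Theorem \ref{t:eps_reg} would make $u$ smooth with small gradient on $B_{\rf_x/2}(x)$, forcing the pinching to stay small below $\rf_x$ and contradicting that $\mathfrak{s}_x$ is the intrinsic stopping scale.

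For condition $(a1)$ and the auxiliary bound $(1)$, the point is that the pinching control coming from $\rf_x\geq\mathfrak{s}_x$, together with Theorem \ref{t:prelim:cone_splitting_m-2} (and a non-concentration argument supplying, at each admissible scale, the $\alpha$-independent points in $\T$ needed to apply it), makes $B_r(x)$ $(m-2,C(m)\delta)$-symmetric with respect to a plane $L_{x,r}$ for all $x\in\T$, $\rf_x\leq r\leq 20$; a cone-splitting rigidity argument then forces all such $L_{x,r}$ to be mutually $C(m)\sqrt\delta$-close in $\Gr$, hence $C(m)\sqrt\delta$-close to $L_\cA$. An inductive-on-scales Reifenberg/Lipschitz-approximation estimate as in \cite{NV_RH},\cite{NV_YM} then simultaneously resolves the apparent circularity between these two facts and realizes $\T$ as a Lipschitz graph $\ft_\T$ over $L_\cA$: the bound $|\ft_\T|<\delta$ comes from the scale-$1$ concentration above, $|\nabla\ft_\T|<\delta$ from feeding $(a2)$ and $(1)$ into the spatial gradient estimate $r^2|\nabla_L\vartheta|^2\leq C(m)\,r\dot\vartheta\cdot\vartheta(\cdot;L)$ of Theorem \ref{t:prelim:spacial_gradient}, and $\rf_x|\nabla^2\ft_\T|<\delta$ analogously from the spatial Hessian estimate of Theorem \ref{t:prelim:spacial_hessian}; a final mollification upgrades this to the smooth bounds in the remarks after Definition \ref{d:annular_region}. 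Property $(1)$ itself would follow from $\vartheta(x,r;L_\cA)\leq 2\vartheta(x,r;L_{x,r})+C(m)\,d_{\Gr}(L_{x,r},L_\cA)^2\,\vartheta(x,r)$ together with the uniform bound $\vartheta(x,r)\leq C(m,R)\Lambda$ and the choice $\delta\leq\delta(\delta',\Lambda)$.

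Finally, for the maximality statement $(2)$, I would note that $x\in\T$ can lie in $\cN\cM=\{x\in\T:\vartheta(x,10\rf_x)-\vartheta(x,\rf_x/10)<\delta\}$ only if the pinching stayed small on $[\rf_x/10,\rf_x]$ as well, i.e. the stopping at $\rf_x$ was inherited from the Lipschitz regularization ($\rf_x>\mathfrak{s}_x$) rather than forced by the geometry at $x$; a packing/Vitali argument verbatim from \cite{NV_YM}, using the Ahlfors regularity \eqref{e:annular_region:ahlfors_regularity} of $\cH^{m-2}_\T$, then bounds $\cH^{m-2}_\T[\cN\cM]<\delta'$ once $\delta\leq\delta(\delta')$. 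Choosing $\delta''=\delta''(m,\Lambda,K_N,R,\delta,\delta')$ small enough throughout completes the construction. I expect the hard part to be the graph structure in the third step — converting the spatial gradient and Hessian estimates of $\vartheta$ into pointwise Lipschitz and $C^{1,1}$-type control of $\ft_\T$ via a Reifenberg-type gluing of the scale-dependent symmetry planes $L_{x,r}$, while keeping the accumulated $C(m)\delta$, $C(m)\sqrt\delta$ and $e^{-R/2}$ losses below the target $\delta$; the maximality estimate, while technical, is routine bookkeeping once the coherent stopping-time construction is set up, and $(a2)$, $(a3)$ are essentially built into the definitions.
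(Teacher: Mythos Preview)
Your proposal is correct and follows essentially the same approach the paper invokes: the paper gives no self-contained proof here but simply refers to the covering/stopping-time construction of \cite{NV_YM} (Theorem 5.4 and Section 5.2 there), and your outline is a faithful sketch of that machinery specialized to the top stratum. One minor caveat: in \cite{NV_YM} the submanifold $\T$ is built scale-by-scale via the Reifenberg/Lipschitz gluing rather than first defined as a pointwise stopping set and then shown to be a graph, so your third paragraph is closer to the actual construction than your second; but you already recognize this when you invoke the inductive-on-scales argument to ``realize $\T$ as a Lipschitz graph,'' so the outline is sound.
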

\begin{remark}\label{r:maximal_annular}
	Conditions $(1)$ and $(2)$ are a form of maximal condition.  They are not necessary for the structural theorems for annular regions, but they play a role in the proof of the quantitative annulus/bubble decomposition.  Specifically condition (2) tells us we have not stopped the construction of our annular region too early.  Combined with condition (1) and Theorem \ref{t:bubble_existence} we see that when we do stop on $B_{\rf_x}(x)$ we will begin to see a bubble region form.  See Theorem \ref{t:quant_annulusbubble} for more.
\end{remark}
\begin{remark}
	The proof of Theorem \ref{t:annular_existence} follows almost verbatim the constructions of \cite{NV_YM}, see Theorem 5.4 and Section 5.2 of \cite{NV_YM}, which are only covering arguments relying on the monotone quantity and not specific to any situation. 
\end{remark}

\vspace{.3cm}

\subsubsection{Annular Regions have Small Energy}

Let us now state what is the most important new technical theorem of this paper.  Recall that (a2) and \eqref{e:outline:weakly_flat} give us that the energy on $\cA$ is small on each scale.  However, the total energy on $\cA$ may apriori still be large.  The main result in this paper toward the proof of the Energy Identity is to show this cannot happen.  Namely: \\

\begin{theorem}[Annular Regions have Small Energy]\label{t:outline:annular_regions_energy}
	Let $u:B_{10R}(p)\to N$ be a stationary harmonic map with $R^2\fint_{B_{10R}}|\nabla u|^2\leq \Lambda$ , and let $\cA=B_2\setminus \overline{B_{\rf_x}(\T)}$ be a $\delta$-annular region.  Then for $R\geq R(m)$ and each $\epsilon>0$ if $\delta\leq \delta(m,K_N,R,\Lambda,\epsilon)$ then
\begin{align}
	\int_{\cA_\epsilon\cap B_{1}} |\nabla u|^2  < \epsilon\, ,
\end{align}
where $\cA_\epsilon \equiv B_2\setminus B_{\epsilon\, \rf_x}(\T)\supseteq \cA$ .
\end{theorem}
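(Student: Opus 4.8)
The plan is to decompose the energy on the annular region into the three components of the introduction --- the symmetric $L$-energy $|\pi_L\nabla u|^2$, the radial $L^\perp$-energy $\langle\nabla u,n^\perp\rangle^2$, and the angular $L^\perp$-energy $\langle\nabla u,\alpha^\perp\rangle^2$ --- and to bound each one separately. The entire difficulty is that $\cA$ spans $\sim\log(1/\mathfrak{r}_x)$ dyadic scales between the bubble scale $\mathfrak{r}_x$ and scale $1$, while the bound on each partial energy available for free is only $C(m)\delta$ per scale: by Remark~\ref{rm:annular:regularity} each $B_r(x)$ with $r\ge\mathfrak{r}_x$ is $(m-2,C(m)\delta)$-symmetric, so $\vartheta(x,r;L)+\vartheta(x,r;n_{L^\perp})\lesssim\delta$, and the top-stratum cone splitting of Theorem~\ref{t:prelim:cone_splitting_m-2} moreover gives the angular bound $\vartheta(x,r;\alpha_{L^\perp})\lesssim\delta$. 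Summing these naively gives $C\delta\log(1/\mathfrak{r}_x)$, which is useless as $\mathfrak{r}_x\to 0$, so each of the three components requires a distinct improvement that is summable in scale.

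First I would set up the bookkeeping. After passing to harmonic coordinates we may assume $M=\dR^m$ and $L_\cA=\dR^{m-2}\times\{0\}$; using the nearest-point projection $\pi^{\T}$ and the $2$-plane normal to $\T$, decompose $|\nabla u|^2=|\pi_L\nabla u|^2+\langle\nabla u,n^\perp\rangle^2+\langle\nabla u,\alpha^\perp\rangle^2$ pointwise in a neighborhood of $\T$. A Vitali covering of $\T\cap B_1$ by balls $B_{\mathfrak{r}_{x_i}}(x_i)$, a dyadic decomposition into shells $\{s\le d(\cdot,\T)\le 2s\}$, the Ahlfors regularity \eqref{e:annular_region:ahlfors_regularity}, and the pointwise estimate $d(\cdot,\T)^2|\nabla u|^2<C(m,\epsilon)\delta$ of Remark~\ref{rm:annular:regularity} (which disposes of the bounded number $\sim\log(1/\epsilon)$ of scales $s\in[\epsilon\mathfrak{r}_x,\mathfrak{r}_x]$ at once) reduce the theorem to the estimate
\[
\int_{\T\cap B_1}\int_{\mathfrak{r}_x}^{1}\Big(\vartheta(x,r;L)+\vartheta(x,r;n_{L^\perp})+\vartheta(x,r;\alpha_{L^\perp})\Big)\,\frac{dr}{r}\,dv_\T(x)<\epsilon ,
\]
with the partial energies now taken relative to (the tangent plane of) $\T$. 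A useful remark is that all three partial energies are already $\lesssim\delta$ at the two endpoint scales $r\in\{\mathfrak{r}_x,1\}$, by (a2) and cone splitting, even though the \emph{total} energy $\vartheta(x,\mathfrak{r}_x)>\epsilon_0$ is not small there --- the bubble energy concentrates near $\T$, where it is suppressed by the weight $|\pi_{L^\perp}(y-x)|^2$ in the $L^\perp$-energies.

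For the symmetric $L$-energy I would prove a new log-improved energy bound: differentiating $\vartheta(x,r;L)$ in scale and exploiting the near-invariance of $u$ in the $L$-directions through the stationary equation and the spatial-gradient estimate of Theorem~\ref{t:prelim:spacial_gradient} produces a Poincar\'e-type gain, hence a differential inequality in $t=\log r$ forcing $\vartheta(x,r;L)$ to decay geometrically inward from its (small) values at $r=\mathfrak{r}_x$ and $r=1$; this removes the expected logarithm and gives $\int_{\mathfrak{r}_x}^1\vartheta(x,r;L)\,\frac{dr}{r}\lesssim\delta$. For the angular $\alpha$-energy I would establish a superconvexity estimate: writing $f(r)=\vartheta(x,r;\alpha_{L^\perp})$, a second-order computation using the stationary equation together with Lemma~\ref{l:nabla_Delta_u_epsilon_reg} gives $\partial_t^2 f\ge c\,f-g$ in $t=\log r$ with $\int g\,\frac{dr}{r}\lesssim\delta$; since $f\le C\Lambda$ and $f(\mathfrak{r}_x),f(1)\lesssim\delta$, a barrier comparison forces $f(r)\lesssim\delta$ throughout with geometric decay inward from both ends, whence $\int_{\mathfrak{r}_x}^1 f\,\frac{dr}{r}\lesssim\delta$.

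The main obstacle is the radial $n$-energy. As flagged in the abstract, the naive per-scale bound $\vartheta(x,r;n_{L^\perp})\lesssim r\dot\vartheta(x,r)<\delta$ is genuinely too weak here: the dominant radial error is of the schematic form (second fundamental form of $\T$)$\,\times\,$(energy), which is $O(\delta)$ per scale and does not sum. To handle it I would \emph{not} use the given submanifold $\T$, but instead replace it by a corrected center manifold --- still $C(m)\delta$-close to $L_\cA$, and still the center of a $\delta$-annular region --- chosen to solve an appropriate Euler--Lagrange (minimal-surface-type) equation driven by the energy measure of $u$, arranged so that this leading radial error is exactly cancelled by the equation for $\T$ up to terms that are summable in scale. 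This yields $\int_{\mathfrak{r}_x}^1\vartheta(x,r;n_{L^\perp})\,\frac{dr}{r}\lesssim\delta$. Constructing this $\T$, checking that the annular-region hypotheses persist, and performing the cancellation carefully is the most delicate part of the argument. Combining the three estimates, integrating over $\T\cap B_1$, and taking $\delta\le\delta(m,K_N,R,\Lambda,\epsilon)$ small enough then gives $\int_{\cA_\epsilon\cap B_1}|\nabla u|^2<\epsilon$.
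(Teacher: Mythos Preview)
Your overall architecture --- splitting $|\nabla u|^2$ into $L$, angular, and radial parts, proving a superconvexity for the angular part, and handling the radial part via a stationary equation plus an Euler--Lagrange cancellation on a corrected center manifold --- is exactly the paper's strategy. Two of your proposed mechanisms, however, are genuinely different from what the paper does, and in each case the difference matters.

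\textbf{The $L$-energy.} You claim a differential inequality in $t=\log r$ that forces $\vartheta(x,r;L)$ to decay \emph{geometrically} from its endpoint values, hence $\int_{\rf_x}^1\vartheta(x,r;L)\,\frac{dr}{r}\lesssim\delta$. The paper does \emph{not} prove this and I do not see how to: the only mechanism available is cone splitting $\vartheta_\cL(x,r)\lesssim\fint r\dot\vartheta$, which after summing in scale and using the almost-monotonicity $\vartheta_\cL(\T_s)\le C\vartheta_\cL(\T_r)$ yields only the log bound $\vartheta_\cL(\T_r)\le C\Lambda/|\ln r|$; this is not Dini-integrable. In the paper the smallness of $\vartheta_\cL(\cA)=\int\vartheta_\cL(\T_r)\frac{dr}{r}$ is not obtained independently but emerges \emph{coupled} with the radial bound: the stationary estimate of Proposition~\ref{p:radial_energy:radial_energy} reads
\[
\hat\vartheta_n(\T_r)+2\vartheta_\cL(\T_r)\le (m-1)\,r\tfrac{d}{dr}\vartheta_\cL(\T_r)+\hat\vartheta_\alpha(\T_r)+\ers(r),
\]
and the point is that $\int r\tfrac{d}{dr}\vartheta_\cL(\T_r)\frac{dr}{r}=0$ exactly, while $\int\ers\frac{dr}{r}\le \epsilon'\vartheta_\cL(\cA)+\epsilon'\Lambda$. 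Rearranging, $(2-\epsilon')\vartheta_\cL(\cA)+\hat\vartheta_n(\cA)\le\hat\vartheta_\alpha(\cA)+\epsilon'\Lambda$. So the $L$-energy is absorbed by the coefficient $2$ on the left, not bounded on its own.

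\textbf{A single corrected $\T$ versus a family $\T_r$.} You propose to replace $\T$ by one corrected submanifold solving a minimal-surface-type equation. The paper instead builds, for \emph{each scale} $r$, an approximating submanifold $\T_r$ characterized by $\pi^\perp_{x,r}\nabla\vartheta(x,r)=0$ for $x\in\T_r$, together with a \emph{scale-dependent} best plane $\cL_{x,r}$ minimizing $L\mapsto\vartheta(x,r;L)$. Both Euler--Lagrange equations are used, and both are inherently $r$-dependent. The uncancellable-looking error in the radial step is
\[
\ff_1(r)=-2\int_{\T_r}\psi_\T\int\rho_r\langle\nabla u,\pi_{x,r}(y-x)\rangle\langle\nabla u,\pi^\perp_{x,r}(y-x)\rangle,
\]
whose naive bound $\lesssim\sqrt{\vartheta_\cL}\sqrt{r\dot\vartheta}$ Dini-integrates only to $C\Lambda$. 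After an integration by parts over $x\in\T_r$ one meets mean-curvature and second-fundamental-form terms; the Euler--Lagrange for $\T_r$ at scale $r$ (Theorem~\ref{t:approximating_submanifold}.\eqref{i:EL_Tr}) is precisely what swaps the remaining $\pi^\perp$-gradient for a $\pi$-gradient, and the Euler--Lagrange for $\cL_{x,r}$ then kills the next-order cross term. A single $\T$ cannot be critical for $\vartheta(\cdot,r)$ at every $r$, so this cancellation would fail on all but one scale. You will need both the family $\{\T_r\}$ and the varying planes $\{\cL_{x,r}\}$, with the regularity estimates of Sections~\ref{s:best_planes}--\ref{s:approximating_submanifold} controlling how they drift in $x$ and $r$.
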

\begin{remark}\label{rm:outline:annular_regions_energy}
	Note that if $B_{4r}(x)\subseteq B_2(p)$ then $\cA$ restricts to an annular region on $B_{2r}(x)$.  Consequently, by covering $B_{2-\epsilon}$ by small balls and applying the above, if we take $\delta\leq \delta(m,K_N,R,\Lambda,\epsilon)$ sufficiently small then we also have the estimate $\int_{\cA_\epsilon\cap B_{2-\epsilon}} |\nabla u|^2  < \epsilon$ .
\end{remark}
\vspace{.2cm}

The vast majority of this paper is about proving the above.   Sections \ref{s:outline_toymodel} and \ref{s:outline_general} will focus on outlining the proof of Theorem \ref{t:outline:annular_regions_energy} above, and the most of the remainder of the paper will focus on making the outline rigorous.  The relevant application of the above is the following, which tells us that the energy does not drop from the top scale to the bottom on the annular region:\\

\begin{corollary}\label{c:outline:annular_regions_energy}
		Let $u:B_{10R}(p)\to N$ be a stationary harmonic map with $R^2\fint_{B_{10R}}|\nabla u|^2\leq \Lambda$ , and let $\cA=B_2\setminus \overline{B_{\rf_x}(\T)}$ be a $\delta$-annular region.  Then for $R\geq R(m)$ and each $\epsilon>0$ if $\delta\leq \delta(m,K_N,R,\Lambda,\epsilon)$ then
\begin{align}
	\int_{\T\cap B_{1}} \,\big|\vartheta(x,1)-\vartheta(x,\rf_x)\big|\leq \epsilon\, .
\end{align}
\end{corollary}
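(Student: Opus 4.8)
The plan is to deduce this from Theorem \ref{t:outline:annular_regions_energy} by an explicit Fubini computation that exploits a cancellation between the energies at the top and bottom scales. First, since $\vartheta(x,r)$ is monotone we may drop the absolute value: $\big|\vartheta(x,1)-\vartheta(x,\rf_x)\big|=\vartheta(x,1)-\vartheta(x,\rf_x)\ge 0$. Unwinding $\vartheta(x,r)=r^2\int\rho_r(x-y)|\nabla u|^2$ and applying Fubini (all integrals converge absolutely since $\int_{\T\cap B_1}(\vartheta(x,1)+\vartheta(x,\rf_x))\,dv_\T\le C(m,\Lambda,R)<\infty$ by monotonicity, the energy bound and \eqref{e:annular_region:ahlfors_regularity}), we obtain
\begin{align}
	\int_{\T\cap B_1}\big(\vartheta(x,1)-\vartheta(x,\rf_x)\big)\,dv_\T(x)=\int |\nabla u|^2(y)\,\big(K_1(y)-K_{\rf}(y)\big)\,dy\,,
\end{align}
where $K_1(y)\equiv\int_{\T\cap B_1}\rho_1(x-y)\,dv_\T(x)$ and $K_{\rf}(y)\equiv\int_{\T\cap B_1}\rf_x^2\,\rho_{\rf_x}(x-y)\,dv_\T(x)$. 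The whole point is that $K_1-K_{\rf}$ is small exactly where $|\nabla u|^2$ is concentrated, and is merely bounded where $|\nabla u|^2$ is small.

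Next I would analyze the two kernels using the Ahlfors regularity \eqref{e:annular_region:ahlfors_regularity} and the fact that $\T$ is a $\delta$-Lipschitz graph over $L_\cA$ with $\rf$ a $2\delta$-Lipschitz function. Writing $d=d(y,\T)$, $r_0=\rf_{\pi^{\T}(y)}$, and $h(\tau)\equiv\int_{\dR^{m-2}}\rho\big(\tfrac{|w|^2+\tau^2}{2}\big)\,dw$ (a fixed, smooth, rapidly decaying, nonincreasing function with $h'(0)=0$, since $h'(\tau)=\tau\int_{\dR^{m-2}}\dot\rho(\cdot)\,dw$), one gets $K_1(y)=h(d)+O(\delta)$. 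Since the integrand of $K_{\rf}$ is supported where $|x-y|\lesssim \rf_x\sqrt{R}$, on that support $\rf_x=r_0(1+O(\delta\sqrt{R}))$, and one finds $K_{\rf}(y)=h(d/r_0)+O(\delta\sqrt{R})$ when $d\le r_0$, while $K_{\rf}(y)=0$ once $d>\delta\sqrt{2(R+2)}$. In particular $0\le K_1(y),K_{\rf}(y)\le C(m,R)$ for all $y$.

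Then I split the $y$-integral. On $\{d(y,\T)\ge\epsilon'\rf_{\pi^{\T}y}\}\subseteq\cA_{\epsilon'}$ I use the crude bound $|K_1-K_{\rf}|\le C(m,R)$ together with Theorem \ref{t:outline:annular_regions_energy} and Remark \ref{rm:outline:annular_regions_energy}, which give $\int_{\cA_{\epsilon'}\cap B_2}|\nabla u|^2<\epsilon'$, so this region contributes $\le C(m,R)\epsilon'$. On the remaining region $\{d(y,\T)<\epsilon'\rf_{\pi^{\T}y}\}$ — deep inside the bubble tube, where $d\le r_0$ — I exploit $h'(0)=0$ to get $|K_1(y)-K_{\rf}(y)|\le|h(d)-h(0)|+|h(0)-h(d/r_0)|+O(\delta\sqrt{R})\le C(m)(d/r_0)^2+O(\delta\sqrt{R})$. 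The $O(\delta\sqrt{R})$ piece contributes $\le C(m)\delta\sqrt{R}\,\Lambda$ (total energy on $B_2$ is $\le C(m)\Lambda$). For the quadratic piece I decompose $\{d\le\epsilon'r_0\}$ into the dyadic shells $\{2^{-k-1}r_0< d\le 2^{-k}r_0\}$, $k\ge k_0\approx\log_2(1/\epsilon')$, on which the weight is $\approx 4^{-k}$; applying Theorem \ref{t:outline:annular_regions_energy} at the scales $2^{-k}\ge\epsilon$ (each such shell lies in a set of the form $\cA_{c2^{-k}}$) controls the total energy of those shells by $\int_{\cA_{\epsilon}}|\nabla u|^2<\epsilon$, so their contribution is $\le 4^{-k_0}\epsilon\le\epsilon'^2\epsilon$, while the deeper shells carry energy $\le C(m)\Lambda$ against weight $\le\epsilon^2$, contributing $\le C(m)\epsilon^2\Lambda$. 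Choosing $\epsilon'=\epsilon'(m,R,\epsilon)$ and then $\delta=\delta(m,K_N,R,\Lambda,\epsilon)$ small enough — small enough also to absorb the Lipschitz-graph errors and to cover all the finitely many applications of Theorem \ref{t:outline:annular_regions_energy} at scales $\ge\epsilon$ — bounds the right-hand side of the displayed identity by $\epsilon$.

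The main obstacle, and the reason Theorem \ref{t:outline:annular_regions_energy} is genuinely needed here, is the critical nature of these estimates: the naive route — bounding $\vartheta(x,1)-\vartheta(x,\rf_x)=\int_{\rf_x}^1\dot\vartheta(x,r)\,dr$ scale by scale via $r\dot\vartheta(x,r)<\delta$, or equivalently estimating $K_1$ (or $K_1-K_\rf$) pointwise by $\int_{\T}|y-x|^{2-m}\,dv_\T(x)$ — produces a factor $\log(1/\rf_x)$, which is unbounded as $\rf_x\to0$. The cancellation carried by the difference $K_1-K_{\rf}$, made quantitative through $h'(0)=0$ (this is where the heat-kernel-like choice of $\rho$ and the radial weighting genuinely pay off), is precisely what kills this logarithm; everything else is bookkeeping plus the soft input of Theorem \ref{t:outline:annular_regions_energy}.
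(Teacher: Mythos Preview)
Your Fubini/kernel approach is a reasonable alternative to the paper's monotonicity-formula argument, but there is a genuine gap. The claim ``$K_1(y)=h(d)+O(\delta)$'' is false. The heat mollifier $\rho$ is supported on $[0,R+2]$, so $\rho_1(x-y)$ is supported on $|x-y|\le\sqrt{2(R+2)}$, and since $R\ge R(m)$ is large this radius is much greater than $1$. Hence for \emph{every} $y\in B_1$ the integral $K_1(y)=\int_{\T\cap B_1}\rho_1(x-y)\,dv_\T$ misses a fixed positive fraction of the full integral $\int_{L_\cA}\rho_1(x-y)\,dx\approx h(d)$; concretely, $h(d)-K_1(y)\approx c_m e^{-d^2/2}\int_{|w|>1}e^{-|w|^2/2}\,dw$, which is of order $1$, not $O(\delta)$. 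Your deep-tube estimate $|K_1-K_{\rf}|\le C(d/r_0)^2+O(\delta\sqrt R)$ therefore fails: on $\{d<\epsilon' r_0\}$ one has $K_{\rf}\approx h(d/r_0)\approx h(0)$ while $K_1\approx h(0)-O(1)$, so $K_1-K_{\rf}$ is of order $1$ (negative), and your triangle-inequality bound via $h(d)$ breaks down.

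The paper avoids this by first using condition $(a2)$ to replace the top scale $1$ by $r_1=(10R)^{-1}$: since $r\dot\vartheta(x,r)<\delta$, one has $|\vartheta(x,1)-\vartheta(x,r_1)|\le C\delta\log(10R)<\epsilon'$ for $\delta$ small. At scale $r_1$ the support of $\rho_{r_1}$ has radius $\lesssim r_1\sqrt R\ll 1$, so the restriction to $\T\cap B_1$ is essentially harmless away from a thin boundary strip. If you insert this reduction as a first step, your kernel analysis (with $K_{r_1}$ in place of $K_1$) becomes viable. Without it, the cancellation you are relying on---that $K_1$ and $K_{\rf}$ agree to second order at $d=0$---simply does not occur, because the two kernels see completely different portions of $\T$.
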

\begin{proof}
Let us begin with a quick and imprecise sketch, just to see how the small annular energy of Theorem \ref{t:outline:annular_regions_energy} implies the nearly constant energy density of the current corollary.  The rough idea of the sketch is better exemplified using the standard monotone quantity $\theta(x,r)\equiv r^{2-n}\int_{B_r}|\nabla u|^2$, as opposed to the mollified quantity $\vartheta(x,r)$.  To begin, the $L$-gradient estimates on $\theta(x,1)$ imply that this quantity is almost constant on $\T\cap \B 1 0$, so that
\begin{gather}
 \fint_{\T\cap \B 1 0}\theta(x,1)\approx \theta(0,1)=\int_{\B 1 0}\abs{\nabla u}^2\, .
\end{gather}
On the other hand, roughly we have that
\begin{gather}
 \fint_{\T\cap \B 1 0} \theta(x,\rf_x)\approx \int_{\B {\rf_x}{\T\cap \B 1 0}}\abs{\nabla u}^2\approx \int_{\B {\rf_x}{\T}\cap \B 1 0}\abs{\nabla u}^2\, .
\end{gather}
Comparing the integrals, we have
\begin{gather}
 \fint_{\T\cap \B 1 0}\qua{\theta(x,1)-\theta(x,\rf_x)} \approx \int_{\B 1 0}\abs{\nabla u}^2-\int_{\B {\rf_x}{\T}\cap \B 1 0}\abs{\nabla u}^2\approx \int_{\cA}\abs{\nabla u}^2\leq \epsilon'\, ,
\end{gather}
where the last inequality is due to Theorem \ref{t:outline:annular_regions_energy}.  In order to make this moral accurate, one needs to be a bit careful about the presence of the mollifier $\rho$ in the definition of $\vartheta$, and to exploit the properties of $\T$ and $\rf_x$.

Now more precisely.  We introduce a parameter $\chi=\chi(m,K_N,R,\Lambda,\epsilon')>>1$, which will be chosen later. Note first that if $\delta\leq \delta(\chi,\epsilon')$ then it follows that $\big|\vartheta(x,1)-\vartheta(x,\chi^{-1})\big|<\epsilon'$ for all $x\in \T$ by $(a2)$. In a similar fashion, we obtain also $\big|\vartheta(x,\chi\rf_x)-\vartheta(x,\rf_x)\big|<\epsilon'$ for all $x\in \T$.

By definition of $\vartheta$, and the fact that $\T$ is a Lipschitz graph with Lipschitz constant $\leq \delta$, we have that for $\delta$ sufficiently small
\begin{gather}
 \fint_{\T\cap \B 1 0} \vartheta(x,\chi^{-1})\leq c_m'(1+\epsilon')\int_{\B {1+10R\chi^{-1}}{0}\cap \B {10R\chi^{-1}}{\T}} \abs{\nabla u}^2\leq c_m'(1+\epsilon')\int_{\B {1+\epsilon'}{0}} \abs{\nabla u}^2
\end{gather}
where $c_m'=\int_{\R^{m-2}}\rho(\abs{y}^2/2)$. On the other hand, using the fact that $\T$ is a Lipschitz graph with Lipschitz constant $\leq \delta$ and $\rf_x$ as well has Lipschitz constant $\leq \delta$, we can estimate for $\delta\leq \delta(\epsilon')$:
\begin{gather}
 \fint_{\T\cap \B 1 0} \vartheta(x,\chi \rf_x)\geq c_m'\int_{\B {1-\epsilon'}0} \psi(y) \abs{\nabla u(y)}^2\, ,
\end{gather}
where $\psi(y)\geq \min\cur{(1-\epsilon')\ton{1-\frac{d(y,\T)}{\chi \rf_y}};0}$. Choosing $\chi=\chi(\epsilon')$ sufficiently large, we get
\begin{gather}
 \fint_{\T\cap \B 1 0} \vartheta(x,\chi \rf_x)\geq c_m'(1-2\epsilon')\int_{\B{\rf_x}{\T}\cap \B {1-\epsilon'}0} \abs{\nabla u(y)}^2\, .
\end{gather}
Thus we can conclude that
\begin{gather}
 (c_m')^{-1}\fint_{\T \cap \B 1 0} \vartheta(x,1)-\vartheta(x,\rf_x)\leq \\
 \leq 2\epsilon' + (1-2\epsilon') \int_{\B{1-\epsilon'}{0}\setminus \B {\rf_x}{\T}}\abs{\nabla u(y)}^2 +2\epsilon'\int_{\B{1+\epsilon'}{0}}\abs{\nabla u}^2 + 2\int_{\B {1+\epsilon'}{0}\setminus \B {1-\epsilon'}{0}}\abs{\nabla u}^2\leq\\
 \leq 2\epsilon' + C\epsilon' \Lambda + \int_{\B{1-\epsilon'}{0}\setminus \B {\rf_x}{\T}}\abs{\nabla u(y)}^2\, .
\end{gather}
By Theorem \ref{t:outline:annular_regions_energy}, we have $\int_{\B{1-\epsilon'}{0}\setminus \B {\rf_x}{\T}}\abs{\nabla u(y)}^2\leq \epsilon'$, and the conclusion follows by picking $\epsilon'=\epsilon'(\epsilon,m,\Lambda)$ sufficiently small.

\begin{comment}OLDBy the monotonicity formula \eqref{e:outline:mollified_energy_monotonicity} we can further estimate for $x\in \T\cap B_1$ that
\begin{align}
	\vartheta(x,(10 R)^{-1})-\vartheta(x,\rf_x) &\leq C(m)\int_{B_{10^{-1}}(x)}\frac{\langle \nabla u(y), y-x\rangle^2}{\big(\rf_x+|y-x|\,\big)^m} dy\, ,\notag\\
	&\leq C(m)\int_{B_{3/2}\cap \cA_\epsilon} |y-x|^{2-m}\,|\nabla u|^2+C(m)\epsilon'\int_{B_{3/2}\cap B_{\epsilon'\,\rf_x}(\T)} (\rf_x+|y-x|\,)^{2-m}\,|\nabla u|^2\, ,
\end{align}
where the last inequality is obtained by poorly estimating $\langle \nabla u, y-x\rangle^2\leq |y-x|^2|\nabla u|^2$ for $y\in \cA_\epsilon$ and more sharply estimating $\langle \nabla u, y-x\rangle^2\leq C(m)\epsilon'\,(\rf_x+|y-x|)^2\,|\nabla u|^2$ for $y\in B_{\epsilon'\,\rf_x}(\T)$ .  Using the Alhfor's regularity of $\T$ from \eqref{e:annular_region:ahlfors_regularity} if we integrate this we get
\begin{align}
	\int_{\T\cap B_{1}} \,\big|\vartheta(x,(10 R)^{-1})-\vartheta(x,\rf_x)\big|&\leq C(m)\int_{B_{3/2}\cap \cA_\epsilon} |\nabla u|^2+C(m)\epsilon'\int_{B_{3/2}\cap B_{\epsilon'\,\rf_x}(\T)} |\nabla u|^2\, ,\notag\\
	&\leq C(m)\epsilon'\, ,
\end{align}
where if we choose $\epsilon'\leq C(m)^{-1}\epsilon$ and then $\delta\leq \delta(m,K_N,R,\Lambda,\epsilon)$ we get our claimed result by Remark \ref{rm:outline:annular_regions_energy}.\\
\end{comment}
\end{proof}

We will use the above in Section \ref{ss:quantitative_energy_identity_proof} as part of the proof of the Quantitative Energy Identity of Theorem \ref{t:main_quant_energy_id}.  \\

\vspace{.3cm}

\subsection{Bubble Regions}\label{ss:broad_outline:bubble_region}

In the Quantitative Energy Identity of Theorem \ref{t:main_quant_energy_id} the main claim is that if $u$ is $(m-2,\delta)$-symmetric on a ball $B_2(0)$ wrt $L^{m-2}$, then on most 2-dimensional slices $\ell+L^\perp$ we have that $u$ looks like a $2$-dimensional harmonic map in some strong sense.  One of the ingredients in this Theorem is the Quantitative Bubble/Annulus Decomposition, which breaks $u$ into annular regions and bubble regions.  As we have seen, the annular regions represent small energy regions between the bubbles.  The bubble regions should then represent those regions where $u$ looks like a nontrivial 2-dimensional harmonic map on some fixed scale.  Let us now make a quantitative version of this precise as follows:

\begin{definition}[$\delta$-Bubble Regions]\label{d:bubble_region}
	Let $p\in M$ with $L^{m-2}\subseteq T_pM$ a subspace, and consider disjoint balls $\{B_{r_j}(x_j)\}\subseteq p+L^\perp$.  We call $\cB=B_r(p)\setminus \bigcup \overline{B_{r_j}(x_j+L)}$ a $\delta$-bubble region if $r^2K_M^2<\delta$ and $\exists$ harmonic map $b:p+L^\perp\to N$ such that
\begin{enumerate}
\item[(b1)] $r^2\abs{\pi_L\nabla u}^2 <\delta$ on $\cB$.
\item[(b2)] $|b-u|^2+r^2|\nabla b-\nabla u|^2<\delta$ on $\cB_R\equiv B_{Rr}(p)\setminus \bigcup \overline{B_{R^{-1}r_j}(x_j+L)}$ .
\item[(b3)] $\int_{\cB\cap L^\perp}|\nabla b|^2 >\epsilon(n,K_N)$ with $\int_{\cB^c\cap L^\perp}|\nabla b|^2< \sqrt\delta$ .
\item[(b4)] $r_j\geq r_0(K_M,K_N,\Lambda,\delta) r$ .
\end{enumerate} 
\end{definition}
\begin{remark}
	The constant $r_0(K_M,K_N,\Lambda,\delta)$ can be written explicitly as a power of $\delta$, however it is much easier to produce it ineffectively as part of the existence of Theorem \ref{t:bubble_existence}.
\end{remark}
\begin{remark}
	The constant $\epsilon(n,K_N)$ is the $\epsilon$-regularity constant from Theorem \ref{t:eps_reg}.  The constant $R$ is as always the one from Definition \ref{d:heat_mollifier}.
\end{remark}
\begin{remark}
	The constant $\sqrt\delta$ in $(b3)$ could be replaced with $C(m,R,\Lambda)\delta$, but instead of attempting to identify the right constant it is easier to absorb it into a power of $\delta$.
\end{remark}

Our main result is an existence statement, that tells us if we are on a ball with sufficiently small $L$-energy and boundary energy, then there exists a bubble region on this ball.  Precisely:\\

\begin{theorem}[Bubble Region Existence]\label{t:bubble_existence}
	Let $u:B_{10R}(p)\to N$ be a stationary harmonic map with $R^2\fint_{B_{10R}}|\nabla u|^2\leq \Lambda$ .  For each $0<\delta<\delta(m,R,\Lambda)$ there exists $\delta'(m,\Lambda,K_N,R,\delta)>0$ such that if $\vartheta(p,1;L)=r^2\int \rho_r(y-p)|\pi_L\nabla u|^2 <\delta'$ with $10^{-2}\delta \leq \dot\vartheta(p,1)\leq 10^{2}\delta$, then for some $r_0(K_M,K_N,\Lambda,\delta)>0$ there exists a $\delta$-bubble region $\cB\subseteq B_1(p)$.
\end{theorem}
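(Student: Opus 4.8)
The plan is to find the right dyadic scale at which the energy concentrates and then read off the bubble structure there. First I would normalize so $r=1$, and note that the hypothesis $\dot\vartheta(p,1)\approx\delta$ together with $\vartheta(p,1;L)<\delta'$ means, via Theorem \ref{t:prelim:cone_splitting_m-2} (or directly from the monotonicity formula \eqref{e:outline:mollified_energy_monotonicity} and the smallness of $\vartheta(p,1;L)$), that on $B_1(p)$ the energy measure $|\nabla u|^2\,dy$ is $C(m)\sqrt{\delta'}$-close as a measure to $\vartheta(p,1)\,\cH^{m-2}_{p+L}$. In particular $u$ is $(m-2,C\sqrt{\delta'})$-symmetric on $B_1(p)$ with respect to $L$, so $(b1)$ with $\delta$ replaced by $C\sqrt{\delta'}$ will hold on any region $\cB$ obtained at definite scale. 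The second preliminary step is to pass to the $L^\perp$-slice: by Fubini and the closeness of the energy measure to $\cH^{m-2}_L$, for most $\ell\in L\cap B_{1/2}$ the restriction $u|_{(\ell+L^\perp)\cap B_{1/2}}$ is a (nearly) 2-dimensional harmonic map of energy $\approx\vartheta(p,1)\geq\epsilon_0(m,K_N)$ by $(a3)$-type nontriviality — here the lower energy bound comes from $\dot\vartheta(p,1)\gtrsim\delta$ forcing the total energy at scale $1$ to be bounded below by a definite amount via the no-small-energy-drop consequence of $\epsilon$-regularity.

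Next I would invoke the classical 2-dimensional bubbling compactness theory (Sacks–Uhlenbeck, Jost, Parker — cited in the introduction) applied to the 2-dimensional restriction on the slice, but in the quantitative/contradiction form: suppose no $\delta$-bubble region exists on $B_1(p)$; then take a sequence $\delta_i\to 0$, $\delta'_i\to 0$ appropriately, and stationary harmonic maps $u_i$ satisfying the hypotheses with no $\delta_i$-bubble region, and extract a weak limit $u_\infty$. The bound $\dot\vartheta(p,1)\to 0$ along with $\vartheta(p,1;L)\to 0$ would force $u_\infty$ to be $(m-2)$-symmetric, i.e. a cone over a map independent of $L$, hence $u_\infty(y,y^\perp)=b_\infty(y^\perp)$ for a harmonic map $b_\infty:L^\perp\to N$, and the nontriviality lower bound gives $\int|\nabla b_\infty|^2\geq\epsilon_0>0$, so $b_\infty$ is a genuine nonconstant bubble. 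Removable singularities (Remark after the Blow Ups definition) promotes $b_\infty$ to a smooth harmonic $S^2\to N$. Now on the approximating sequence, the concentration-compactness dichotomy in two dimensions gives a finite collection of balls $B_{r_j^{(i)}}(x_j^{(i)})$ (with $x_j^{(i)}\in p+L^\perp$, or within $C\sqrt{\delta_i}$ of it, which we absorb) where $u_i$ concentrates, with $r_j^{(i)}\geq r_0(\ldots,\delta_i)$ by $\epsilon$-regularity (energy cannot concentrate below a scale determined by $\epsilon_0$ and the total energy $\Lambda$); outside these balls $u_i\to b_\infty$ strongly in $C^1_{loc}$, which is exactly $(b2)$ on $\cB_R$, and the complement energy $\int_{\cB^c\cap L^\perp}|\nabla b_\infty|^2\to 0$ gives $(b3)$. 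Taking $i$ large and $\delta_i$ small enough that all the $\delta$-smallnesses in $(b1)$–$(b4)$ are satisfied produces the desired $\delta$-bubble region, contradicting the assumption; the finiteness of the number $K$ of concentration balls (hence $(b4)$ with uniform $r_0$) is forced by $\sum_j\int|\nabla b_j|^2\leq\Lambda$ and each bubble carrying $\geq\epsilon_0$ energy.

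The main obstacle, and where care is genuinely needed, is propagating the 2-dimensional slice picture to the full $m$-dimensional bubble region definition and getting the constants $r_0$ and the locations $x_j+L$ right. Concretely: (i) one must show the concentration set is essentially contained in finitely many tubes $x_j+L$ of controlled radius — this uses that the $L$-energy $\vartheta(p,1;L)$ is tiny so the map is nearly $L$-translation-invariant, hence concentration at $(x_j,x_j^\perp)$ propagates along the whole $L$-fiber — and that the tubes are disjoint, which follows from the quantitative annulus/bubble inductive structure (or can be arranged by merging nearby tubes and relabeling); (ii) the lower scale bound $r_j\geq r_0 r$ must be extracted ineffectively from compactness rather than proven with an explicit power of $\delta$, exactly as the remark after the statement indicates; (iii) the $C^1$-closeness in $(b2)$ on the enlarged region $\cB_R$ (radius $R$ at the top, $R^{-1}r_j$ at the bottom) requires the $\epsilon$-regularity elliptic estimates of Theorem \ref{t:eps_reg} to upgrade weak/strong $H^1$ convergence on the neck regions to $C^1$, and one must check these neck regions are themselves covered by the (already established) small-energy annular estimate so no energy leaks there. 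All of these are manipulations of the monotonicity formula, $\epsilon$-regularity, and standard 2D bubbling, with no fundamentally new analytic input beyond what the introduction attributes to prior work — which is consistent with the paper's stated position that the genuinely new difficulty is Theorem \ref{t:outline:annular_regions_energy}, not the existence of bubble regions.
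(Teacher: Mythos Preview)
Your approach—contradiction/compactness, passing to an $L$-invariant limit that reduces to a two-dimensional harmonic map, then invoking classical 2D bubbling—is exactly the paper's strategy; the paper's proof is one line (``by contradiction, as $\delta'\to 0$ one arrives at a two dimensional harmonic map and can read the result off'') and defers the details to \cite{NV_YM}.

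One correction to your contradiction setup: you should fix $\delta$ and send only $\delta'_i\to 0$, not also $\delta_i\to 0$. The hypothesis $\dot\vartheta(p,1)\in[10^{-2}\delta,10^2\delta]$ with $\delta$ \emph{fixed} is precisely what survives in the limit and forces nontriviality of the limiting 2D picture: it says a bubble is forming at scale comparable to $1$, which is what yields $(b3)$ and the ineffective lower bound $r_j\geq r_0$. If you also send $\delta_i\to 0$ then $\dot\vartheta\to 0$, the limit could be a constant map with trivial defect, and your claim $\int|\nabla b_\infty|^2\geq\epsilon_0$ no longer follows from $\dot\vartheta\gtrsim\delta_i\to 0$. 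Relatedly, the energy lower bound you attribute to ``$(a3)$-type nontriviality'' is not among the hypotheses here; it has to be extracted from $\dot\vartheta(p,1)\gtrsim\delta$ together with $\vartheta(p,1;L)\ll\delta$, which forces the $L^\perp$-radial energy at scale $1$ to be $\gtrsim\delta$ and hence the 2D slice to carry definite energy. With these fixes your sketch is correct and considerably more detailed than what the paper records.
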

\begin{proof}
	The proof may be done by contradiction, as $\delta'\to 0$ one arrives at a two dimensional harmonic map and can read the result off.  See Section 4 of \cite{NV_YM} for a nearly identical statement and proof.\\
\end{proof}

Our main structure theorem tells us that the energy of $u$ on the bubble region may be computed from the energy of $b$ plus energies at smaller scales:

\begin{theorem}[Bubble Structure Theorem]\label{t:bubble_structure}
	Let $u:B_{10R}(p)\to N$ be a stationary harmonic map with $R^2\fint_{B_{10R}}|\nabla u|^2\leq \Lambda$ with $\cB\subseteq B_1(p)$ a $\delta$-bubble region wrt $L$ and $\vartheta(p,1;L)=r^2\int \rho_r(y-p)|\pi_L\nabla u|^2 <\delta'$.  Then for each $\epsilon>0$ if $\delta<\delta(m,K_N,\Lambda,\epsilon)$ and $\delta'<\delta'(m,K_N,\Lambda,\epsilon)$ then following hold
\begin{enumerate}
	\item For $y\in L$ with $\cB_y\equiv \cB\cap (y+L^\perp)$ and $y_j=(x_j+L)\cap(y+L^\perp)$ we have that
\begin{align}
	\big|\vartheta(p,1)-\omega_{m-2}\int_{\cB_y}|\nabla u|^2-\sum \vartheta(y_j,r_j)\big| < \epsilon\, .
\end{align}
	\item For $y\in L$ with $\cB_y\equiv \cB\cap (y+L^\perp)$, there exists $\{y_j\}_1^N\in \cB_y$ with $N\leq N(n,K_N,\Lambda)$ and $s_j>0$ with $\vartheta(y_j,s_j)>\epsilon_0(n,K_N)$ so that if $S\geq S(n,K_N,\Lambda,\epsilon)$ then $\int_{\cB_y\setminus \bigcup B_{Ss_j}(y_j)}|\nabla u|^2 <\epsilon$ .
\end{enumerate}
\end{theorem}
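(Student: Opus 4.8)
The plan is to exploit that, by definition, a $\delta$-bubble region is a region on which $u$ is near-invariant in the $L$-directions (by $(b1)$) and uniformly close to a genuinely two-dimensional harmonic map $b$ (by $(b2)$), so that both statements reduce to the slice $y+L^\perp$ and to classical facts about energy concentration for two-dimensional harmonic maps. As with Theorem \ref{t:bubble_existence}, it is cleanest to organize things so that the error is a function of $\delta,\delta',R^{-1}$ which tends to $0$; one may then either track the errors directly or argue by contradiction, extracting in the limit $\delta,\delta'\to0$, $R\to\infty$ a two-dimensional harmonic map and reading off both identities from bubble-tree theory.

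For $(1)$, the first step is to discard the $L$-component of the energy: writing $|\nabla u|^2=|\pi_L\nabla u|^2+|\pi_L^\perp\nabla u|^2$, the weighted integral of $|\pi_L\nabla u|^2$ against $\rho_1(\cdot-p)$ is at most $C(m,R)\delta$ over $\cB$ by $(b1)$ and at most $\delta'$ in total by the hypothesis $\vartheta(p,1;L)<\delta'$, hence negligible. Next one replaces $u$ by $b$ on the enlarged region $\cB_R$ using $(b2)$, while inside each removed cylinder $B_{r_j}(x_j+L)$ one uses that the map is $(m-2,C(m)\delta)$-symmetric at scale $r_j$ (Theorem \ref{t:prelim:cone_splitting_m-2}) together with $\epsilon$-regularity (Theorem \ref{t:eps_reg}) to conclude that there too $|\nabla u|^2$ is, up to error, a function $g$ of the $L^\perp$-coordinate alone. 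The heat-kernel structure of $\rho$ is then essential: since $\rho(t)\approx c_m e^{-t}$ factors as a product of Gaussians in the $L$ and $L^\perp$ variables (Lemma \ref{l:rho_basic_properties}), the integration over $y\in L$ can be carried out explicitly and produces exactly the dimensional constant $\omega_{m-2}$ in front of the remaining $L^\perp$-integral of $g$. Splitting that $L^\perp$-integral over $\cB_y$ and over the disks $B_{r_j}(x_j)\cap(y+L^\perp)$, and matching each of the latter with the scale-$r_j$ mollified energy $\vartheta(y_j,r_j)$ — the Gaussian tails and the cutoff at scale $R$ being absorbed into the final error, and $(b4)$ ensuring $\log(r/r_j)$ is bounded and that there are boundedly many $j$ — yields the asserted identity.

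For $(2)$, by $(b2)$ the map $u$ is close to $b$ on $\cB_y$, and $b$ is a genuine two-dimensional harmonic map with $\int|\nabla b|^2\le C\Lambda$ (by $(b3)$ and the energy bound); hence its energy concentrates at a set of at most $N=N(n,K_N,\Lambda)$ points, at each of which the $\epsilon$-regularity of Theorem \ref{t:eps_reg} applied on the slice, combined with the absence of neck energy for two-dimensional harmonic maps, provides a scale $s_j$ with $\vartheta(y_j,s_j)>\epsilon_0(n,K_N)$, while outside $\bigcup B_{Ss_j}(y_j)$ the energy of $b$, hence of $u$, is $<\epsilon/2$ once $S\ge S(n,K_N,\Lambda,\epsilon)$. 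The only extra ingredient is the removed cylinders, where $u$ need not be close to $b$: there one bounds $\int_{B_{r_j}(x_j)\cap(y+L^\perp)}|\nabla u|^2\le C(m)\vartheta(x_j,r_j)\le C(m)\Lambda$ and applies the same concentration statement at scale $r_j$; since every point $y_j$ detected in this way contributes at least $\epsilon_0$ to the monotone energy, their total number over all scales is bounded by $C(m)\Lambda/\epsilon_0$, which gives the bound on $N$.

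The main obstacle I anticipate is the bookkeeping in $(1)$: one must verify that replacing $\rho$ by an exact Gaussian, truncating at scale $R$, transferring $u\to b$, and identifying each cylinder contribution with $\vartheta(y_j,r_j)$ all contribute errors that genuinely vanish as $\delta,\delta',R^{-1}\to0$, and this is exactly where the specific heat-kernel choice of the mollifier pays off; carrying this out by contradiction, so that one only has to recognize the limiting identities for an honest two-dimensional harmonic map, is likely the least painful route. The second delicate point is in $(2)$: ensuring the bubble scales $s_j$ can be chosen with $\vartheta(y_j,s_j)>\epsilon_0$ uniformly, i.e. that each unit of concentrating energy — including the part hidden inside the punctures $B_{r_j}(x_j)$ — is detected at a single scale rather than smeared along a long neck. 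This is the no-neck-energy phenomenon for two-dimensional harmonic maps, but it has to be transported to $u$ through the approximation $(b2)$ and, inside the punctures, through the symmetric structure at scale $r_j$.
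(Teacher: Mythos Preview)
Your proposal is correct and matches the paper's approach: the paper's proof is a one-sentence sketch stating that one argues by contradiction, so that as $\delta,\delta'\to 0$ one limits to a genuine two-dimensional harmonic map picture where both statements are classical, and refers to Section~4 of \cite{NV_YM} for a nearly identical argument. Your write-up is considerably more detailed than the paper's own, but the strategy you identify as ``likely the least painful route'' is exactly what the paper does.
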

\begin{remark}
	The first estimate tells us that the energy of $u$ on $B_1$ may be computed as the energy of the $2$-dimensional bubble slice plus energies at smaller scales.
\end{remark}
\begin{remark}
As $\delta\to 0$ a sequence of $\delta$-bubble regions may itself break into several distinct bubbles.  The second estimate bounds the number of such bubbles which may appear and tells one the scales to find them. 
\end{remark}
\begin{proof}
	The proof is once again a contradiction argument, where again as $\delta'\to 0$ we arrive at a $2$-dimensional harmonic map picture, where the results are classically proved.  See Section 4 of \cite{NV_YM} for a nearly identical statement and proof.
\end{proof}
\vspace{.5cm}

\subsection{Quantitative Annulus/Bubble Decomposition for Symmetric Balls}\label{ss:broad_outline:quant_annulusbubble}

The Quantitative Annulus/Bubble Decomposition will effectively decompose a ball into $\delta$-annular regions, plus $\delta$-bubble regions, plus a set of small content.  There are two primary distinctions between the Quantitative Annulus/Bubble Decomposition introduced in this subsection and those of \cite{JN},\cite{CJN}, which are closely related.
The first is the notion of $\delta$-annular region, which as discussed in Section \ref{ss:broad_outline:annular_region} is less restrictive than that of \cite{JN},\cite{CJN}.  In particular the $\delta$-annular regions in this paper are {\it apriori} assumed to only have locally small energy, not globally small energy.
The second is that the Quantitative Annulus/Bubble Decomposition in this subsection takes place on a ball for which $u$ is already $(m-2,\delta')$-symmetric.  The combination of these two points, the weaker annular region and the initial symmetry, allows one to improve the decompositions of \cite{JN},\cite{CJN} in an important manner.  Namely, the content of balls in the decomposition will be independent of the background parameter $\delta$.  This will be a key point in proving the Quantitative Energy Identity.  
This form of Quantitive Annulus/Bubble Decomposition originated in \cite{NV_YM}, and its proof follows the verbatim strategy.  Let us now give the precise Theorem:\\

\begin{theorem}[Quantitative Annular/Bubble Decomposition for Symmetric Balls]\label{t:quant_annulusbubble}
	Let $u:B_{10R}(p)\to N$ be a stationary harmonic map with $R^2\fint_{B_{10R}}|\nabla u|^2\leq \Lambda$ .  For each $0<\delta<\delta(m,R,\Lambda)$ there exists $\delta'(m,\Lambda,K_N,R,\delta)>0$ such that if $u$ is $(m-2,\delta')$-symmetric on $B_1(p)$ wrt $L$, then we can write
\begin{align}
	B_1(p)\subseteq \cA_0\cup\bigcup \cA_a\cup \bigcup \cB_b \cup \bigcup B_{r_c}(x_c)\, ,
\end{align}
such that
\begin{enumerate}
	\item $\cA_0\subseteq B_2(p)$ and $\cA_a\subseteq B_{2r_a}(x_a)$ are $\delta$-annular regions wrt $L$ .
	\item $\cB_b\subseteq B_{2r_b}(x_b)$ are $\delta$-bubble regions wrt $L$ .
	\item $\sum r_a^{m-2}+\sum r_b^{m-2}\leq N(m,\Lambda)$ with $\sum r_c^{m-2}<\delta$ .
	\item $\forall$ $\cA_a$ let $\cN\cM_a \equiv \{x\in \T_a\cap B_{r_a} : B_{2r_{x}}(x)\cap \cA_a$ intersects an $r_a$ or $r_c-$ ball$\}$, then $\cH^{m-2}[\cN\cM_a]\leq \delta r_a^{m-2}$.
\end{enumerate}
\end{theorem}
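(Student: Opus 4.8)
The strategy is to assemble the three existence results of this section --- Theorem~\ref{t:annular_existence} (maximal annular regions), Theorem~\ref{t:bubble_existence} (bubble regions), and Theorem~\ref{t:bubble_structure} (bubble structure) --- by the inductive covering scheme of \cite{NV_YM}, in the spirit of \cite{JN},\cite{CJN}. The decomposition is organized as a tree of regions indexed by scales: $\delta$-annular regions sit at the ``even'' levels, $\delta$-bubble regions at the ``odd'' levels, and the leaves are the small content balls $B_{r_c}(x_c)$. The base step is to apply Theorem~\ref{t:annular_existence}: since $u$ is $(m-2,\delta')$-symmetric on $B_1(p)$ wrt $L$ with $\delta'$ chosen far below the threshold of that theorem, it produces a maximal $\delta$-annular region $\cA_0 = B_1(p)\setminus B_{\rf_x}(\T_0)$ together with its two maximality properties: $\vartheta(x,r;L)<\delta'$ along $\T_0$, and the nonmaximal set $\cN\cM_0 = \{x\in\T_0:\vartheta(x,10\rf_x)-\vartheta(x,\rf_x/10)<\delta\}$ has $\cH^{m-2}_{\T_0}[\cN\cM_0]<\delta'$. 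The complement $B_1(p)\setminus\cA_0 = \bigcup_{x\in\T_0}\overline{B_{\rf_x}(x)}$ is covered, by a Besicovitch selection, by a bounded-overlap family $\{B_{\rf_{x_i}}(x_i)\}$, and the Ahlfors regularity \eqref{e:annular_region:ahlfors_regularity} of $\T_0$ gives $\sum_i\rf_{x_i}^{m-2}\le C(m)\,\cH^{m-2}(\T_0)\le N(m)$.

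The next step is to classify each boundary ball $B_{\rf_{x_i}}(x_i)$ and recurse. By maximality one of the annular conditions of Definition~\ref{d:annular_region} is on the verge of failing at $x_i$ at scale $\rf_{x_i}$; by (a3) the energy there is $>\epsilon_0$ and by maximality property~(1) the $L$-energy stays $<\delta'$, so --- after enlarging $\rf_{x_i}$ by a bounded factor, using Remark~\ref{rm:vartheta_comparison} --- we land in the pinching window $10^{-2}\delta\le\dot\vartheta(x_i,\rf_{x_i})\le 10^{2}\delta$ of Theorem~\ref{t:bubble_existence}, which then yields a $\delta$-bubble region $\cB_i\subseteq B_{\rf_{x_i}}(x_i)$ wrt $L$, unless $x_i$ lies in a fixed neighborhood of $\cN\cM_0$, in which case $B_{\rf_{x_i}}(x_i)$ is thrown into the collection of small content balls. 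By the measure bound on $\cN\cM_0$ and \eqref{e:annular_region:ahlfors_regularity}, the small content balls produced at this level satisfy $\sum\rf_{x_i}^{m-2}<C(m)\delta'$. On each core ball $B_{r_{ij}}(x_{ij})$ removed from a bubble region $\cB_i$, the Bubble Structure Theorem~\ref{t:bubble_structure} shows --- by blowing $u$ up at one of the finitely many concentration points of the two-dimensional bubble --- that $u$ is again $(m-2,\delta'')$-symmetric with a controlled loss of the symmetry constant, so the whole construction applies to it and produces the next, annular, layer; the same recursion is applied until a leaf is reached.

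The recursion terminates with bounds depending only on $m$ and $\Lambda$ because of an energy-drop mechanism. By (b3) each bubble region carries at least $\epsilon_0=\epsilon_0(n,K_N)$ of genuine bubble energy, and by Theorem~\ref{t:bubble_structure}(1) this means $\vartheta(x_i,\rf_{x_i})$ exceeds $\sum_j\vartheta(x_{ij},r_{ij})$ by at least $\epsilon_0/2$; hence the monotone energy drops by a definite amount along each descending chain of bubble regions, which bounds the depth $D$ of the tree in terms of $\Lambda$ and $K_N$. At each level the horizontal packing is controlled by the Ahlfors regularity of the relevant $\T_a$ (for annular layers) and by the disjointness of the core balls inside a two-dimensional slice (for bubble layers), giving $\sum_a r_a^{m-2}+\sum_b r_b^{m-2}\le N(m,\Lambda)$ independently of $\delta$; this $\delta$-independence is exactly what the locality of (a2) buys, since one maximal annular region now fills an entire inter-bubble gap and no geometric series in the number of scales appears. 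Summing the per-level small content bound $C(m)\delta'$ over the $\le D$ levels and choosing $\delta'\le\delta'(m,\Lambda,K_N,R,\delta)$ so small that $C(m)D\,\delta'<\delta$ gives $\sum_c r_c^{m-2}<\delta$. Finally, conclusion~(4) is a transcription of maximality property~(2) of Theorem~\ref{t:annular_existence} applied to each $\cA_a$: the points $x\in\T_a$ whose boundary ball $B_{2\rf_x}(x)$ meets a sibling $r_a$-ball or a small content $r_c$-ball are precisely those where $\cA_a$ stopped nonmaximally, and they form a set of $\cH^{m-2}$-measure $<\delta$.

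The step I expect to be the main obstacle --- handled exactly as in \cite{NV_YM} --- is the order-of-quantifiers management of the symmetry parameters through the recursion: the depth bound $D$ must be secured \emph{first}, from monotonicity and (b3) alone, so that one may afterwards choose the initial $\delta'$ small enough that the $D$-fold iterated degradation of the quantitative symmetry constant still leaves every invocation of Theorems~\ref{t:annular_existence}, \ref{t:bubble_existence}, \ref{t:bubble_structure} valid; together with the packing bookkeeping that makes the content in conclusion~(3) bounded by $N(m,\Lambda)$ with no dependence on $\delta$, which is the feature distinguishing this decomposition from those of \cite{JN},\cite{CJN} and the key point used later in the proof of the Quantitative Energy Identity.
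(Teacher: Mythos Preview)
Your proposal is correct and follows essentially the same approach the paper indicates: the paper does not give a detailed proof but states that it is verbatim from \cite{NV_YM}, a covering argument built from Theorems~\ref{t:annular_existence}, \ref{t:bubble_existence}, \ref{t:bubble_structure}, and your outline captures precisely that inductive tree construction together with the energy-drop depth bound and the maximality bookkeeping for conclusion~(4). The one point worth tidying is that the depth bound (and hence $N$) inherits a dependence on $K_N$ through $\epsilon_0(n,K_N)$, which is consistent with how the constant is used in Section~\ref{ss:quantitative_energy_identity_proof} even if the theorem's notation suppresses it.
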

\begin{remark}
	The decomposition begins with an annular region $\cA_0$ because the initial ball is $(m-2,\delta')$-symmetric.
\end{remark}
\begin{remark}
	The maximality condition $(4)$ uses Theorem \ref{t:annular_existence}.2, which together with Theorem \ref{t:bubble_existence} guarantees in the covering argument that most balls $B_{r_x}(x)$ can be recovered with a bubble region.
\end{remark}
\begin{remark}
	The proof is verbatim from \cite{NV_YM}, and is based on a covering argument which only exploits the previously discussed properties of annular and bubble regions.
\end{remark}

\vspace{.3cm}

\subsection{Proof of the Quantitative Energy Identity}\label{ss:quantitative_energy_identity_proof}

In this subsection we prove the Quantitative Energy Identity Theorem \ref{t:main_quant_energy_id} using the Quantitative Annulus/Bubble Decomposition of Theorem \ref{t:quant_annulusbubble} and the Small Annular Energy of Theorem \ref{t:outline:annular_regions_energy} .  Recall that we have not yet proved Theorem \ref{t:outline:annular_regions_energy}, which will be the focus of the remainder of this paper.\\

Let us begin with a ball $B_2(p)$ which is $(m-2,\delta')$ symmetric with respect to $L$, where $\delta'\leq\delta''(n,m,K_N,\delta,R,\Lambda)$ will be chosen later.  We apply Theorem \ref{t:quant_annulusbubble} in order to build the covering

\begin{align}
	B_1(p)\subseteq \cA_0\cup\bigcup \cA_a\cup \bigcup \cB_b \cup \bigcup B_{r_c}(x_c)\, ,
\end{align}
where $\cA_a\subseteq B_{2r_a}(x_a)$ are $\delta$-annular regions,  $\cB_b\subseteq B_{2r_b}(x_b)$ are $\delta$-bubble regions, and we have the estimates  $\sum r_a^{m-2}+\sum r_b^{m-2}\leq N(m,\Lambda)$ with $\sum r_c^{m-2}<\delta$ .  \\

For $\epsilon'>0$ to be chosen later, if we let $\delta<\delta(m,K_N,R,\Lambda,\epsilon')$ then for each annular region we have by Theorem \ref{t:outline:annular_regions_energy} that
\begin{align}
	r_a^{2-m}\int_{L\cap B_{r_a}}\Big(\int_{(\ell+L^\perp)\cap B_{r_a}}|\nabla u|^2\Big)\,d\ell = r^{2-m}_a\int_{\cA_a\cap B_{r_a}} |\nabla u|^2 < (\epsilon')^2 \, .
\end{align}

Let $\ell_a\equiv \pi_L(x_a)\in L$ be the projections of the ball centers to $L$. For each $\ell\in L$ let us also denote $\ft_\ell\in \T$ as the unique point such that $\pi_L(\ft_\ell)=\ell$, i.e. $\ft_\ell=\pi^\T(\ell)$.  
Let us define
\begin{align}\label{e:quant_energy_identity_proof:1}
	\cG_a \equiv \big\{\ell\in L\cap B_{r_a}(\ell_a):& \big|\vartheta(\ft_\ell,r_{\ft_\ell})-\vartheta(\ft_\ell,r_a)\big|+\int_{(\ell+L^\perp)\cap B_{r_a}}|\nabla u|^2<\epsilon'\notag\\
	&\text{ and }B_{2r_{\ft_\ell}}(\ft_\ell)\cap \cA_a \text{ intersects only bubble regions}\,  \big\}\, .
\end{align}

Using the above, Corollary \ref{c:outline:annular_regions_energy} and Theorem \ref{t:quant_annulusbubble}.4 we have the estimate
\begin{align}
\Vol(B_{r_a}\setminus\cG_a) \leq C(m)\epsilon' r_a^{m-2}	\, .
\end{align}

Let us now define $\cG\subseteq L\cap B_1$ by
\begin{align}
	\cG \equiv \bigcup_a \cG_a \setminus \bigcup_c \pi_L (B_{r_c}(x_c))\, .
\end{align}
It follows that 
\begin{align}
	\Vol(B_1\setminus \cG) \leq C(m)\epsilon'\sum_a r_a^{m-2}+\sum r_c^{m-2}\leq N(m,\Lambda)\epsilon'+\delta\leq \epsilon\, ,  
\end{align}
where in the last inequality we have chosen $\epsilon'\leq \epsilon'(m,\Lambda,\epsilon)$ and $\delta\leq \delta(\epsilon)$ . \\

Consider now $\ell\in \cG$ , and observe first that $\ell+L^\perp$ only intersects annular and bubble regions, no $r_c$-balls.  Let us first focus on computing $\vartheta(\ell,1)$ .  Observe by $(a1)$ and Theorem \ref{t:prelim:spacial_gradient} that we have the estimate $\big|\vartheta(\ell,1)-\vartheta(\ft_\ell,1) \big| < C(m)\sqrt{\delta}$.  Observe also that by the definition of $\cG$, and in particular \eqref{e:quant_energy_identity_proof:1}, we have $\big|\vartheta(\ft_\ell,1)-\vartheta(\ft_\ell,r_{\ft_\ell}) \big| < \epsilon'$ .  Now $B_{2r_{\ft_\ell}}(\ft_\ell)\cap \cA_0$ intersects only bubble regions, so let us choose one of these bubble regions 
\begin{gather}
\cB^0_{}=\B{2r^0_{}}{x^0_{}}\setminus \bigcup_{j=1}^{ K ^0} \overline{\B{r^0_{j}}{x^0_{j}}}\, ,
 \end{gather}
and let $y^0_{j}=(x^0_{j}+L)\cap (\ell+L^\perp)$ .  It follows from Theorem \ref{t:bubble_structure} that 
\begin{align}
	\big|\vartheta(\ft,\ft_\ell)-\omega_{m-2}\int_{\cB^0\cap(\ell+L^\perp)}|\nabla u|^2-\sum \vartheta(y^0_{j},r^0_{j})\,\big| < \epsilon'\, .
\end{align}  

Combining this with the first estimates we get that 
\begin{align}
	\big|\vartheta(\ell,1)-\omega_{m-2}\int_{\cB^0\cap(\ell+L^\perp)}|\nabla u|^2-\sum \vartheta(y^0_{j},r^0_{j})\,\big| < 2\cdot \epsilon'\, ,
\end{align}  
where we have chosen $C(m)\sqrt{\delta}<\epsilon'$ .  We have so far written
\begin{align}
	(\ell+L^\perp)\cap B_1 \subseteq \cA_0\cup \cB^0\cup\bigcup_1^{K^0} \overline{ B_{r^0_{j}}(y^0_{j})} \cap B_1\, .
\end{align}

Observe by (b2) and (b3) of Definition \ref{d:bubble_region} that $\int_{\cB^0\cap(\ell+L^\perp)}|\nabla u|^2>\epsilon_0(m,K_N)$, and in particular $\vartheta(\ell,1)>\epsilon_0(m,K_N)-2\cdot C(m)\sqrt{\delta}$.  Consider now the balls $B_{r^0_{j}}(y^0_{j})$, they too must intersect either annular regions or bubble regions as $\ell+L^\perp$ only intersects annular and bubble regions.  If the intersection is with an annular region, then the next region to be intersected must be a bubble region as in the previous paragraphs.  Thus we can repeat the arguments from the previous paragraphs in order to recover each $B_{r^0_{j}}(y^0_{j})$ and get the estimate
\begin{align}
	\big|\vartheta(\ell,1)-\omega_{m-2}\int_{\cB^0\cap(\ell+L^\perp)}|\nabla u|^2-\omega_{m-2}\sum_{b=1}^{K^0}\int_{\cB^1_b\cap(\ell+L^\perp)}|\nabla u|^2-\sum_1^{K^1} \vartheta(y^1_{bj},r^1_{bj})\,\big| < 2(1+K^0)\cdot \epsilon'\, ,
\end{align}   
where $B_{r^1_{bj}}(y^1_{bj})$ are balls in the bubble regions $\big\{\cB^1_b\big\}_1^{K^0}$ .  Note there is one new bubble region for each $\big\{ B_{r^0_j}(y^0_j)\big\}$ and at most one new annular region for each such ball.  We have now formed the covering
\begin{align}
	(\ell+L^\perp)\cap B_1 &\subseteq \cA_0\cup \bigcup_1^{K^0_a}\cA^1_a \cup \cB^0\cup\bigcup_1^{K^0} \cB^1_b\cup\bigcup_1^{K^1} B_{r^1_{bj}}(y^1_{bj}) \cap B_1\, ,\notag\\
		&=\bigcup^{1+K^0_a}_{j,a}\cA^j_a\cup \bigcup^{1+K^0}_{j,b}\cB^j_b\cup \bigcup_1^{K^1} B_{r^1_{bj}}(y^1_{bj}) \cap B_1\, ,
\end{align}
where $K^0_a\leq K^0$ as recall once again that there is at most one annular region in each $B_{r^0_{j}}(y^0_{j})$ before the next bubble region appears.  Note that by \eqref{e:quant_energy_identity_proof:1} we also have the energy estimate
\begin{align}
	\int_{\bigcup^{1+K^0_a}_{j,a}\cA^j_a\cap (\ell+L^\perp)} |\nabla u|^2 \leq (1+K^0_a)\epsilon'\leq (1+K^0)\epsilon'\, .
\end{align}

Now let us continue to repeat the arguments of the reprevious paragraphs, so that after $k$-iterations we have arrived at the estimate
\begin{align}
\big|\vartheta(\ell,1)-\omega_{m-2}\sum_{j,b}\int_{\cB^j_b\cap(\ell+L^\perp)}|\nabla u|^2-\sum_1^{ K^{k+1}} \vartheta(y^1_{bj},r^1_{bj})\,\big| < 2\cdot (1+ K ^0+\cdots+ K ^k)\epsilon'\, ,
\end{align}
along with the covering
\begin{align}
	(\ell+L^\perp)\cap B_1 &\subseteq \bigcup^{1+K^0_a+\cdots+K^k_a}_{j,a}\cA^j_a\cup \bigcup^{1+K^0+\cdots+K^k}_{j,b}\cB^j_b\cup \bigcup_1^{K^{k+1}} B_{r^1_{bj}}(y^1_{bj}) \cap B_1\, .
\end{align}
As before we can apply \eqref{e:quant_energy_identity_proof:1} in order to estimate
\begin{align}
	\int_{\bigcup^{}\cA^j_a\cap (\ell+L^\perp)} |\nabla u|^2 \leq (1+K^0+\cdots+K^k)\epsilon'\, .
\end{align}

Note that each bubble contributes $\epsilon_0(m,K_N)$ energy, and so this implies the lower bound
\begin{align}
	\vartheta(\ell,1)\geq (1+ K ^0+\cdots+ K ^k)\,\big(\epsilon_0(m,K_N)-\epsilon'\,\big)\, .
\end{align}
An important consequence of this, because we have the bound $\vartheta(\ell,1)\leq C(m)\Lambda$, is that
\begin{align}
	1+ K ^0+\cdots+ K ^k \leq C(m,K_N)\Lambda\, .
\end{align}

In particular, this covering process must stop after a finite number of iterations with a uniformly bounded number of bubbles $ K =1+\sum  K^j\leq C(m,K_N)\Lambda$.  Thus if we clump all the bubbles together and use these bounds we have the final estimate
\begin{align}
\big|\vartheta(\ell,1)-\omega_{m-2}\sum_{1}^K \int_{\cB^j_b\cap(\ell+L^\perp)}|\nabla u|^2\,\big| < C(m,K_N,\Lambda)\epsilon'\, ,
\end{align}
together with the covering and estimate
\begin{align}\label{e:quant_energy_identity_proof:2}
	(\ell+L^\perp)\cap B_1 &\subseteq \bigcup^{K}\cA^j_a\cup \bigcup^{K}\cB^j_b\cap B_1\, ,\notag\\
	\int_{\bigcup^{}\cA^j_a\cap (\ell+L^\perp)} |\nabla u|^2&=\int_{(\ell+L^\perp)\setminus \bigcup \cB^j_b} |\nabla u|^2\leq C(m,K_N,\Lambda)\epsilon'\, .
\end{align}
If we take $\epsilon'\leq\epsilon'(m,N,\epsilon,\Lambda)$ then this tells us that $u$ is an $\epsilon$-harmonic map on $(\ell+L^\perp)\cap B_1$ with the energy identity of Theorem \ref{t:main_quant_energy_id}.4 .  In particular we have proved Theorem \ref{t:main_quant_energy_id}. $\qed$\\

\subsection{Proof of Energy Identity}\label{ss:energy_identity_proof}

Let us now use the Quantitative Energy Identity in order to prove the classical Energy Identity of Theorem \ref{t:energy_identity}.  Indeed it will be a little more direct to use \eqref{e:quant_energy_identity_proof:2} as our starting point.  The setup is that $u_j:B_2(p)\subseteq M\to N$ are stationary harmonic maps between smooth Riemannian manifolds with $N$ compact and $\int_{B_2}|\nabla u_j|^2\leq \Lambda$ .  Let $u_j\rightharpoonup u$ with $|\nabla u_j|^2dv_g\to |\nabla u|^2dv_g+\nu$ the $m-2$ rectifiable defect measure $\nu = e(x)\,\cH^{m-2}_S$.   \\

Let us now define $E_{\epsilon,\delta}\subseteq \supp\nu$ as those points $x\in E_{\epsilon,\delta}$ such that there exists an $m-2$ plane $L_x$ with points $x_j\to x$ such that
\begin{enumerate}
	\item $\exists$ $\delta$-bubbles $\cB_{j,a}\subseteq B_{r_{j,a}}(x_{j,a})$ with $x_{j,a}\in x_j+L^\perp_x$ \, ,
	\item If $\cB_j\equiv \bigcup \cB_{j,a}\cap (x_j+L_x^\perp)$ then $\big| \omega_{m-2}\int_{\cB_j}|\nabla u_j|^2 -e(x)\big| <\epsilon $ .
\end{enumerate}

Note that since each bubble region $\cB_{j,a}$ contributes $\epsilon_0(n,K_N)$ of energy, there are at most $N(n,K_N)\Lambda$ such bubbles for each $j$.  Our goal is to show that each set $E_{\epsilon,\delta}$ has full measure in $\supp\nu$.  If this is the case then observe that $E_0\equiv \bigcap E_{\epsilon,\delta}$ also has full measure in $\supp\nu$, and this will be our desired set.  Indeed for each $x\in E_0$ we can find a sequence $x_j\to x$ such that $\big| \omega_{m-2}\int_{\cB_j}|\nabla u_j|^2 -e(x)\big|\to 0$ .  By Theorem \ref{t:bubble_structure}.2 for each $\cB_{j,a}$ we can find at most $N(K_N,\Lambda)$ points $\{y_{j,a,b}\}\in \cB_{j,a}$ and scales $s_{j,a,b}>0$ with  $\vartheta(y_{j,a,b},s_{j,a,b})>\epsilon_0(n,K_N)$ and such that for each $\eta>0$ if $S\geq S(m,n,K_N,\Lambda,\eta)$ then $\big|\int_{\cB_{j,a}\setminus \bigcup B_{Ss_{j,a}}(y_{j,a})} |\nabla u|^2\big|< \eta$ .  In particular, if we define the bubble maps $b_{a,b}\equiv \lim_j s_{j,a,b}^{-1}\,u:\dR^{m-2}\times \dR^2\to N$ then we see that
\begin{align}
	e(x) = \sum \omega_{m-2}\int_{\dR^2} |\nabla b_{a,b}|^2\, ,
\end{align}
as claimed.\\

Thus what remains is to show that $E_{\epsilon,\delta}\subseteq \supp\nu$ all have full measure.  Let us begin by remarking that as a consequence of the $m-2$ rectifiable nature of $\nu$ we have for a.e. $x\in \supp\nu$ that $\nu$ has a unique tangent measure of the form $e(x)\cH^{m-2}_{L_x}$ for some $m-2$ plane $L_x$ .  In particular, it holds for such points that for all $\delta'>0$ if $r<r(x,\delta')$ and $j$ is sufficiently large then
\begin{align}
	\big|\vartheta(x,r)-e(x)\big|&<\delta'\notag\\
	B_r(x) \text{ is }(m-2,\delta')&-\text{symmetric}\, .
\end{align}
If we choose $\delta'$ sufficiently small then we can apply the Quantitative Energy Identity of Theorem \ref{t:main_quant_energy_id}, and the estimate \eqref{e:quant_energy_identity_proof:2}, for $\epsilon,\delta>0$ to all such balls.  Consider the sets $\cG_{\epsilon,j}\subseteq L_x\cap B_r(x)$ coming from this Theorem as well as their Hausdorff limits $\cG_{\epsilon,j}\to \cG_\epsilon$ .  It is immediate that $\cG_\epsilon\subseteq E_{\epsilon,\delta}$, and in particular its clear that $E_{\epsilon,\delta}$ has $\epsilon$-full measure in $\supp\nu$.  However, as $r>0$ was arbitrarily small, it follows that every $x\in \supp\nu$ with unique tangent measure is a density point of $E_{\epsilon,\delta}$ and hence $E_{\epsilon,\delta}$ itself has full measure. $\qed$\\
\vspace{.5cm}

\section{Proof Annular Regions have Small Energy of Theorem \ref{t:outline:annular_regions_energy} for Toy Model}\label{s:outline_toymodel}

We now return our attention to Theorem \ref{t:outline:annular_regions_energy}, which states that annular regions have small energy.  Our goal in this section is to outline the proof of the result in a vastly simplified context where we assume that for our annular region $\cA\subseteq B_2\setminus B_{\rf_x}(\T)$ the bubble center manifold $\T=L_\cA=L$ is a fixed plane and our bubble radius function $\rf_x=\rf_0$ is a constant.  Though the proof we will outline in this subsection requires a couple new ideas already, it is worth emphasizing that generalizing the proof of this subsection to the general case requires more than just technical work.  The errors which will appear for the general case will be of strictly larger order than those which appear in the toy model, and more to the point are of strictly larger order than are allowed.  Dealing with these will require additional new ideas on the construction itself.  However, it seems helpful in explaining these new points to first present the big picture in this simplified context, so that we can understand the methodology and where the problems arise.  Additionally, the notation we will introduce in this Section will not go to waste.  From a technical perspective we will see how to view the general case, when $\T$ is not a plane and $\rf_x$ is not a constant, as a family of toy models depending on points and scale.\\

Thus let us introduce again the setup for this Section.  We will deal with an annular region $\cA = B_2(0^m)\setminus B_{\rf_x}(\T)$ under the assumption that $\T=L$ is a fixed plane and $\rf_x=\rf_0$ is a constant.  Note that one can always build such annular regions, we simply cannot typically build {\it maximal} annular regions under this assumption, in particular condition (2) of Theorem \ref{t:annular_existence} will typically fail.  In order to prove Theorem \ref{t:outline:annular_regions_energy} in this context we will decompose the energy $|\nabla u|^2$ into its three base components.  Namely, let us first write
\begin{align}\label{e:outline:toy_model:energy_decomposition}
	\int_{\cA\cap B_1}|\nabla u|^2 =\int_{\cA\cap B_1}\abs{\pi_L\nabla u}^2+\int_{\cA\cap B_1}\langle\nabla u,n_{L^\perp}\rangle^2+\int_{\cA\cap B_1}\langle\nabla u,\alpha_{L^\perp}\rangle^2\, ,
\end{align}

where 
%$\abs{\pi_L\nabla u}^2 = |\pi_L\nabla u|^2 = \sum \langle\nabla u,\ell_a\rangle^2 $ is the $L$-component of the energy of $u$,
$\langle\nabla u,n_{L^\perp}\rangle^2 = \langle\nabla u,\frac{y^\perp}{|y^\perp|}\rangle^2$  is the $L^\perp$-radial energy of $u$, and $\langle\nabla u,\alpha_{L^\perp}\rangle^2$ is the remaining $L^\perp$-angular energy of $u$ .  Each of these three components will require a distinct technique to deal with.  Let us discuss these one at a time and attempt to introduce notation in manner which will be leading for the general case.\\

\subsection{The \texorpdfstring{$L$}{L}-Energy and the Log Improvement for Toy Model}\label{ss:outline_toymodel:L_energy}

Recall from Definition \ref{d:partial_energies} the heat mollified $L$-energy
\begin{align}\label{e:d:L_energy}
	\vartheta(x,r;L)\equiv r^{2}\int \rho_r(y-x)\,\abs{\pi_L\nabla u}^2\, .
\end{align}
Note that we can estimate
\begin{align}
	\int_{\cA\cap B_1}\abs{\pi_L\nabla u}^2\leq C(m)\vartheta(0,1;L)\, ,
\end{align}
and thus estimating $\vartheta(0,1;L)$ %on a $\delta$-annular region
is sufficient for controlling the $L$-energy term in \eqref{e:outline:toy_model:energy_decomposition}.   This turns out to be quite straightforward.  Indeed, using the quantitative cone splitting of Theorem \ref{t:prelim:cone_splitting_annular} we have the estimate
\begin{align}\label{e:outline:toy_model:L_energy}
	\vartheta(x,r;L)\leq C(m)\fint_{\T\cap B_r(x)}|\vartheta_{}(y,2r)-\vartheta(y,r)|<C(m)\,\delta\, .
% 		\vartheta(x,r;L)\leq C(m,R)\fint_{\T\cap B_r(x)}|\vartheta_{}(x,4r)-\vartheta(x,r)|<C(m,R)\,\delta\, .
\end{align}

In particular, applying \eqref{e:outline:toy_model:L_energy} to $\vartheta(0,1;L)$ produces the desired control on the $L$-energy in \eqref{e:outline:toy_model:energy_decomposition}.  However it is worth observing that we can use \eqref{e:outline:toy_model:L_energy} to acquire a more refined estimate on annular regions, and this plays an implicit role in future analysis. Roughly speaking, we will obtain a log-like improvement on the $L$-energy of the form
\begin{gather}
 \int_{\B r L \cap \B 1 0 } \abs{\pi_L\nabla u}^2 \leq C(m)\frac{\int_{\B 2 0}\abs{\nabla u}^2}{\abs{\ln(r)}}\, .
\end{gather}

To state it more precisely let us first consider a family of smooth cutoffs $\psi_\T:\T\times \dR^+\to \dR$ with
\begin{gather}
 \psi_\T(x,r)=\phi\ton{\abs x^2} \eta(r)\, ,
\end{gather}
where $\phi:[0,2)\to [0,1]$ is a simple smooth cutoff with
\begin{gather}
 \phi([0,1])=1\, , \qquad \supp \phi \subset [0,2]\, , \qquad \dot \phi \leq0\quad \text{and}\quad \abs{\dot \phi}+\abs{\ddot \phi}\leq C\, .
\end{gather}
For convenience, we also assume that $\dddot \phi(t) \leq 0$ if $t\geq 3/2$ with $\phi(t)\geq 10^{-1}$ if $t\leq 3/2$.  We also set $\eta:[0,\infty)\to [0,1]$ such that
\begin{gather}
 \eta([\rf_0,1])=1\, , \qquad \supp \eta \subseteq [\rf_0/2,2]\, , \qquad r\abs{ \dot \eta}+r^2\abs{ \ddot \eta}\leq C \quad \text{and}\quad \dot \eta (r) \geq 0  \ \ \text{for}  \ r\leq 1\, .
\end{gather}

\begin{remark}
 In the general case, this cutoff function will be more complicated. For future comparison, notice that this $\psi_\T(x,r)$ trivially satisfies the following scale-invariant estimate
\begin{align}\label{e:outline:toy_model:psi_T}
	 \begin{cases}
 	\psi_\T(x,r) = 1 &\text{ if } |x|\leq 1\text{ and }\fr_0\leq |r|\leq 1\, ,\\
 	\psi_\T(x,r) = 0 &\text{ if } |x|\geq 2\, ,\,r<\rf_0/2\, ,\text{ or }r\geq 2\\
 	|\nabla^2_L\psi_\T|+r^2|\ddot\psi_\T|\leq C(m) &\text{ everywhere }\, .
%  	\\
%  	x^i\partial_{i}\psi_\T\, ,\, -\partial_r \psi_\T\leq 0&\, ,\\
 \end{cases}
\end{align}
\end{remark}

Let us remark on the following interesting and useful construction point:\\

{\bf Claim: } We have that
\begin{gather}\label{eq_Delta_L_psi_toy}
 \Delta_L^{(x)}\psi_\T(x,r)\geq -C(m) \psi_\T(x,r)\, .
\end{gather}

To prove the claim let us compute $\Delta_L^{(x)}\psi_\T(x,r)=2\eta(r)\ton{(m-2)\dot \phi(\abs x^2)+\abs x^2 \ddot \phi(\abs x^2)}$ , and then our goal will be to show that $(m-2)\dot \phi(t)+t\ddot \phi(t)\geq -C(m)\phi(t)$.  Let us define the function
\begin{gather}
 F(t)=(m-1)(m-2)\phi(t)+(m-2)\dot \phi(t)+t\ddot \phi(t)\, ,
\end{gather}
then our claim is implied by showing that $F\geq 0$. Note first that $F(t)$ is clearly smooth everywhere and nonnegative for $t\leq 1$ and $t\geq 2$.  Let $\bar t$ be the absolute minimum of $F$.  If $\bar t\leq 3/2$ then by definition we have $\phi(t)\geq 10^{-1}$, and so the result is clear.  Otherwise, we have at least have that $\phi(\bar t), \ddot\phi(\bar t)\geq 0$ with $\dddot \phi (\bar t)\leq 0$.  Plugging this into the critical point formula we get
\begin{gather}
 0=\dot F(\bar t )=(m-1)(m-2)\dot \phi(\bar t)+(m-1)\ddot \phi(\bar t)+\bar t\underbrace{\dddot \phi(\bar t)}_{\leq 0}\, , \qquad \Longrightarrow \qquad (m-2)\dot \phi(\bar t) + \ddot \phi(\bar t)\geq 0\, .
\end{gather}
Thus we have
\begin{gather}
 F(\bar t)=(m-1)(m-2)\underbrace{\phi(\bar t)}_{\geq 0}+(t-1)\underbrace{\ddot \phi(\bar t)}_{\geq 0} + (m-2)\dot \phi(\bar t) + \ddot \phi(\bar t)\geq 0\, ,
\end{gather}
as claimed.$\qed$\\

Having constructed our cutoff function, we are now in a position to define the $L$-energy at scale $r$ over $\T$ by 
\begin{align}
	\vartheta(\T,r;L)\equiv \int_{\T} \psi_\T(x,r)\,\vartheta(x,r;L) = r^{2}\int_{\T}\psi_\T(x,r)\int \rho_r(x-y)\,\abs{\pi_L\nabla u}^2\, ,
\end{align}
where recall $\rho(t)$ is defined as in Definition \ref{d:heat_mollifier}.  Let us first see the following: \\

{\bf Claim: } We have the estimate $\vartheta(\T,r;L)\leq C(m)\int_{\T}\,|\vartheta(x,8r)-\vartheta(x,r)|\, .$ \\
The claim follows from integrating \eqref{e:outline:toy_model:L_energy}. $\qed$\\
\begin{comment}
 However there is a subtle point in terms of the interaction with the cutoff $\psi_\T$ .  To handle this observe first the following technical estimate.  Using that $y^j\partial_j\psi_\T\leq 0$ observe that for each $x\in \supp{\psi_\T}$ that there exists $y\in \overline B_r(x)$ such that $\psi_\T(z,r)\geq \psi_\T(x,r)$ for all $z\in B_r(y)\subseteq B_{2r}(x)$.  Combining this with \eqref{e:outline:toy_model:L_energy} and \eqref{e:prelim:partial_energy:basic_bounds} we see the estimate
\begin{align}\label{e:outline:toymodel:cutoff_trick}
	\psi_\T(x,r)\vartheta(x,r;L)&\leq \fint_{B_r(y)\cap \T}\psi_\T(y)\vartheta(x,r;L)\notag\\
	&\leq C(m)\fint_{\T\cap B_{2r}(x)}\psi_\T(y)\vartheta(y,2r;L)\notag\\
	&\leq C(m)\fint_{\T\cap B_{4r}(x)}\psi_\T(y)|\vartheta_{}(y,8r)-\vartheta(y,r)|\, .
\end{align}
Integrating this over $\T$ now proves the claim.$\qed$\\
\end{comment}

By summing the above over all scales and using that $\vartheta(x,r;L)\leq C(m)\,\vartheta(x,2r;L)$ as in \eqref{e:prelim:partial_energy:basic_bounds} we get for all $\rf_0\leq r\leq 1$ the Dini estimate
\begin{align}\label{e:outline_toymodel:Dini_L}
	\int_r^2 \vartheta(\T,s;L)\frac{ds}{s} &\leq C(m)\sum_{r\leq s_a=2^{-a}\leq 2}\vartheta(\T,s_a;L) \notag\\
	&\leq C(m)\int_{\T}\sum_{r\leq s_a=2^{-a}\leq 2}|\vartheta_{}(x,8s_a)-\vartheta_{}(x,s_a)|\notag\\
	&\leq C(m)\int_{\T}|\vartheta(x,16)-\vartheta(x,r)|\, ,\notag\\
	&\leq C(m)\Lambda\, ,
\end{align}
where in the last line we have used that $|\vartheta(x,16)-\vartheta(x,r)|\leq \vartheta(x,16)\leq C(m)\Lambda$ is bounded by the energy.\\

To finish the estimate we need one additional step, which relies strongly on the fact that $\T=L$ is a set of codimension 2. \\

{\bf Claim: } For all $\fr_0\leq r\leq s\leq 1$:
\begin{align}\label{eq_toy_L_en_rs}
	r^2\int_\T \psi_\T(x,r)\rho_r(y-x)\leq c(m)s^2\int_\T \psi_\T(x,s)\rho_s(y-x)\, . \\\notag
\end{align}
Before proving the claim, let's use it to estimate
\begin{align}
	\vartheta(\T,r;L)&=\int_\T \psi_\T(x,r)\vartheta(x,r;L)= r^{2}\int_\T \psi_\T(x,r)\int \rho_r(y-x)\,\abs{\pi_L\nabla u(y)}^2\, \notag\\
	&\leq c(m)s^{2}\int_\T \psi_\T(x,s)\int \rho_s(y-x)\,\abs{\pi_L\nabla u(y)}^2\, \notag\\
	&=c(m)\vartheta(\T,s;L)\, .
\end{align}

If we plug this into our Dini estimate \eqref{e:outline_toymodel:Dini_L} we get for all $\rf_0\leq r\leq 1$ the log improvement:
\begin{align}
	|\ln r|\, \vartheta(\T,r;L) = \vartheta(\T,r;L) \int_{r}^1 \frac{1}{s} \leq C(m)\int_{r}^1 \frac{\vartheta(\T,s;L)}{s} \leq C(m)\Lambda\, ,
\end{align}
equivalently
\begin{align}
	\vartheta(\T,r;L) \leq C(m)\frac{\Lambda}{|\ln r|}\, .
\end{align}

We close this section with the proof of \eqref{eq_toy_L_en_rs}.
\begin{proof}[Proof of \eqref{eq_toy_L_en_rs}]
 Fix $y\in \R^m$ and $r\leq 1$, and define
 \begin{gather}
  f_r(y)=r^2\int_\T \psi_\T(x,r)\rho_r(y-x)\, .
 \end{gather}
 We will prove the almost monotonicity formula:
\begin{gather}
 r\frac{d}{dr} f_r(y)\geq -C(m)r^2 f_r(y)\geq -C(m)r f_r(y)\,  \qquad \Longrightarrow \qquad \text{for } 0<r<s\leq 1:\, \quad e^{Cr} f_r(y) \leq e^{Cs} f_s(y)\, .
\end{gather}
Then this follows from
\begin{align}
 r\frac{d}{dr}& \ton{r^2\int_\T \psi_\T(x,r)\rho_r(y-x)}=r\frac{d}{dr} \ton{r^{2-m}\int_\T \psi_\T(x,r)\rho\ton{\frac{\abs{y-x}^2}{2r^2}}}=r^2\int_\T \underbrace{r\dot \psi_\T(x,r)}_{\geq 0}\rho_r(y-x) + \notag\\
 &+(2-m) r^2\int_\T \psi_\T(x,r)\rho_r(y-x)-\int_\T r^{-m} \psi_\T(x,r) \dot\rho \ton{\frac{\abs{y-x}^2}{2r^2}}\abs{y-x}^2 \notag\\
 &\geq (2-m) r^2\int_\T \psi_\T(x,r)\rho_r(y-x)-\int_\T r^{-m} \psi_\T(x,r) \dot \rho \ton{\frac{\abs{y-x}^2}{2r^2}}\abs{\pi_L(y-x)}^2\notag\\
 &=(2-m) r^2\int_\T \psi_\T(x,r)\rho_r(y-x)-\int_\T r^{2-m} \psi_\T(x,r) \ps{\pi_L \nabla^{(x)}\rho \ton{\frac{\abs{y-x}^2}{2r^2}}}{\pi_L(x-y)}\notag\\
 &=\int_\T r^{2-m} \ps{\pi_L \nabla^{(x)}\psi_\T(x,r)}{\pi_L(x-y)} \rho \ton{\frac{\abs{y-x}^2}{2r^2}}\, .
\end{align}
We set $\varrho(t)=\int_t^\infty\rho(s) \geq 0$ to be the only primitive of $-\rho$ with compact support.  Using that $\rho(t)\leq -2\dot\rho(t)$ as in Definition \ref{d:heat_mollifier} we then have the global pointwise bound $\varrho(t)\leq 2\rho(t)$.  Integrating by parts, we have 
\begin{align}
 r\frac{d}{dr} \ton{r^2\int_L \psi_\T(x,r)\rho_r(y-x)}&\geq -\int_\T r^{2-m} \ps{\pi_L \nabla^{(x)}\psi_\T(x,r)}{\pi_L(x-y)} \dot\varrho \ton{\frac{\abs{y-x}^2}{2r^2}}\notag\\
 &= - r^2\int_\T r^{2-m} \ps{\pi_L \nabla^{(x)}\psi_\T(x,r)}{\pi_L \nabla^{(x)} \varrho \ton{\frac{\abs{y-x}^2}{2r^2}} } \notag \\
 &=r^2\int_\T r^{2-m} \Delta_L\psi_\T(x,r)\varrho \ton{\frac{\abs{y-x}^2}{2r^2}}\, ,\notag\\
 &\geq -C(m)r^2\int_\T r^{2-m} \psi_\T(x,r)\varrho \ton{\frac{\abs{y-x}^2}{2r^2}}\, ,\notag\\
 &\geq -C(m)r^2 \int_\T r^{2-m} \psi_\T(x,r)\rho \ton{\frac{\abs{y-x}^2}{2r^2}}\notag\\
 &=-C(m)r^2\cdot r^2\int_L \psi_\T(x,r)\rho_r(y-x)\, ,
\end{align}
where we have used our previous estimate $\Delta_L\psi_\T\geq -C(m)\psi_\T$ in \eqref{eq_Delta_L_psi_toy}.  This finishes the proof.

\end{proof}

\vspace{5mm}

\vspace{.5cm}

\subsection{The Angular Energy and Superconvexity for the Toy Model}\label{ss:outline_toymodel:angular_energy} Let us now move ourselves to controlling the angular energy in our toy model \eqref{e:outline:toy_model:energy_decomposition}. 
A key estimate in proving the energy identity in the Yang-Mills context in \cite{NV_YM} was the superconvexity estimates for $\epsilon$-gauges.  
This estimate could be used to imply full energy bounds on annular regions.  Corresponding superconvexity estimates for the full energy in the nonlinear harmonic map case fail, however we are able to prove a related superconvexity for the purely angular energy.  
In the two dimensional case this  estimate can essentially be found in \cite{Jost2D,ParkerBubbleTree,LinWang}.  In the higher dimensional toy model presented here the result will be a consequence of the uniformly subharmonic estimate in Lemma \ref{l:outline:toy_model:angular_energy:superconvexity} for the purely angular energy.  
A version of this uniform subharmonicity will be proven in the general context in Section \ref{s:angular_energy}, though will be a bit more subtle to state.  We will essentially view the general context as a family of toy models depending on point and scale, each of which satisfy the required uniform subharmonic estimate.\\

Let us begin by recalling from Section \ref{ss:restricted_energy} the restricted angular energy functional
\begin{align}
\hat\vartheta(x,r;\alpha_{L^\perp}) &\equiv  \int \hat\rho_r(y-x;L)\,|\pi_{L^\perp}(y-x)|^2\langle\nabla u(y),\alpha_{L^\perp}\rangle^2\, ,
\end{align}
where recall $\hat\rho(x-y;L)$ is the restricted heat kernel cutoff near $L$.  Note that for the restricted energy we no longer have that $\hat\vartheta(0,1;\alpha_{L^\perp})$ bounds the angular energy from \eqref{e:outline:toy_model:energy_decomposition}.  This is both because the mollifier $\hat \rho$ vanishes near $L$, and additionally we have that the weight $|\pi_{L^\perp}(y-x)|^2$ decays near $L$.  In particular the control on the angular energy is only effective when $|\pi_{L^\perp}(y-x)|^2\approx r^2$.\\

As with the $L$-energy let us define the angular energy over $\T$ as
\begin{align}
	\hat\vartheta(\T,r;\alpha_{L^\perp}) \equiv \int_\T \psi_\T(x,r) \hat\vartheta(x,r;\alpha_{L^\perp})\, .
\end{align}

Observe that we can then bound the angular energy in \eqref{e:outline:toy_model:energy_decomposition} through the formula

\begin{align}\label{e:outline:toy_model:angular_energy_bound}
	\int_{\cA\cap B_1}\langle\nabla u,\alpha_{L^\perp}\rangle^2\leq \hat\vartheta(\cA;\alpha_{L^\perp}) \equiv \int \hat\vartheta(\T,r;\alpha_{L^\perp})\frac{dr}{r}\, .
\end{align}

Note that as $\hat\vartheta(x,r;\alpha_{L^\perp})\leq C(m)\delta$ we have the estimate
\begin{align}\label{e:outline:toy_model:annular_energy_slice}
	\hat\vartheta(\T,r;\alpha_{L^\perp}) \leq C(m)\delta\, .
\end{align}
However we clearly cannot bound \eqref{e:outline:toy_model:angular_energy_bound} with this estimate, as integrating it over all scales produces a $|\ln\rf_0|$ error.\\

\subsubsection{The $S^1$ Averaged Angular Energy.}

To control $\hat\vartheta(\cA;\alpha_{L^\perp})$ we will view the angular energy in a different fashion.  In writing the angular and radial energies in \eqref{e:outline:toy_model:energy_decomposition} we have already implicitly written $\dR^m=L\times L^\perp$ with $L^\perp$ in polar coordinates. Note that this writing requires an affine plane, not just a subspace, as the origin of our polar coordinates requires a center.  As $\T=L$ this center will be independent of the point $x\in \T$ we are working at.  In particular we have for a point $y\in \dR^m$ that we can naturally decompose
\begin{align}
	y=(y_L,y_{L^\perp})=(y_L,s_{\perp},\alpha_{\perp})\, ,
\end{align}
where $s_\perp = |y_{L^\perp}|=|\pi_{L^\perp}(y)|$ and $\alpha_\perp\in S^1$ are the polar coordinate representation of $y_{L^\perp}\in L^\perp$.  We can define our $S^1$ averaged angular energy functional $\cE_\alpha:\dR^m\to \dR^+$ by

\begin{align}\label{e:outline:toy_model:angular_energy_averaged}
	\cE_\alpha(y) =\cE_\alpha(y_L,s_{\perp},\alpha_{L^\perp})\equiv s_\perp^2\fint_{S^1} \langle \nabla u(y_L,s_\perp,\alpha_\perp), \vec\alpha_\perp\rangle^2 d\alpha_\perp\, ,
\end{align}
where we are sloppily identifying $\alpha_\perp$ as both the angular coordinate and the unit angular vector field in $L^\perp$. When confusion might arise we have written $\vec\alpha_\perp$ to denote the unit vector field.  Notice that $\cE_\alpha(y_L,s_{\perp},\alpha_{L^\perp})$ is clearly independent of $\alpha_{L^\perp}$.  In particular we have taken the angular energy $\langle\nabla u, \alpha_\perp\rangle^2$ and averaged it over each circle to produce a function on $B_2$ which is invariant under rotations around $L$.
This function relates to the annular energy estimate in \eqref{e:outline:toy_model:energy_decomposition} by
\begin{align}\label{e:outline:toy_model:annular_energy}
	\hat\vartheta(x,r;\alpha_{L^\perp})&= \int \hat\rho_r(y-x)\cE_\alpha(y)\, , \text{ for }x\in\T\, .
\end{align}

The main technical tool is to show that $\cE_\alpha $ satisfies a uniform subharmonic equation on the annular region $\cA$. %$L\times\dR^+$.
Namely, let us define the conformal Laplacian
\begin{align}\label{e:outline:toy_model:conformal_laplacian}
	\bar\Delta_L \equiv \ton{\frac{\partial}{\partial\ln s_\perp}}^2+\ton{\frac{\partial}{\partial\alpha_\perp}}^2+s_\perp^2\Delta_L\, .
\end{align}
Notice that the standard Laplacian in cylindrical coordinates around $L$ takes the form
\begin{gather}
 \Delta f = \frac 1 {s_\perp^2} \ton{\frac{\partial}{\partial\ln s_\perp}}^2 f+ \frac 1 {s_\perp^2}\ton{\frac{\partial}{\partial\alpha_\perp}}^2 f + \Delta_{L} f\, .
\end{gather}
Our main claim is the following:

\begin{lemma}[Toy Model Uniform Subharmonicity]\label{l:outline:toy_model:angular_energy:superconvexity}
For $y\in B_2\setminus B_{e^{-R}\rf_0}(L)$ we have the pointwise inequality $\bar\Delta_L \cE_\alpha \geq \ton{2-C(m,R)\sqrt{\delta}}\cE_\alpha \geq \cE_\alpha$ .
\end{lemma}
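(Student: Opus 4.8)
The plan is to compute $\bar\Delta_L \cE_\alpha$ directly from the definition $\cE_\alpha(y)=s_\perp^2\fint_{S^1}\langle\nabla u,\vec\alpha_\perp\rangle^2\,d\alpha_\perp$, using the harmonic map equation $\Delta u = \II(\nabla u,\nabla u)$ together with the $\epsilon$-regularity estimates of Theorem \ref{t:eps_reg} and its differentiated form Lemma \ref{l:nabla_Delta_u_epsilon_reg}. The key observation is that $s_\perp^2\langle\nabla u,\vec\alpha_\perp\rangle^2 = \langle\nabla u,\partial_{\alpha_\perp}\rangle^2$ where $\partial_{\alpha_\perp}=s_\perp\vec\alpha_\perp$ is the coordinate vector field of the angular variable, i.e. $\cE_\alpha = \fint_{S^1}(u_{\alpha})^2\,d\alpha_\perp$ with $u_\alpha\equiv \nabla_{\partial_{\alpha_\perp}}u = \partial_{\alpha_\perp}u$. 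Since $\partial_{\alpha_\perp}$ is a Killing field for the flat cylinder metric (rotation about $L$) and commutes with $\partial_{\ln s_\perp}$, $\partial_{\alpha_\perp}$ and $\nabla_L$, the conformal Laplacian $\bar\Delta_L$ essentially commutes with differentiation in the $\alpha_\perp$ direction. Thus $\bar\Delta_L$ applied to $(u_\alpha)^2$ will produce a Bochner-type identity.

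First I would record that $\bar\Delta_L = s_\perp^2\Delta$ acting on functions, where $\Delta$ is the flat Laplacian on $\dR^m=L\times L^\perp$ written in cylindrical coordinates, as displayed just before the Lemma. So it suffices to estimate $s_\perp^2\Delta\big((u_\alpha)^2\big)$ pointwise. The Bochner formula gives $\tfrac12\Delta (u_\alpha)^2 = |\nabla u_\alpha|^2 + \langle u_\alpha, \Delta u_\alpha\rangle$. Because $\partial_{\alpha_\perp}$ is Killing and commutes with $\Delta$, we have $\Delta u_\alpha = \partial_{\alpha_\perp}(\Delta u) = \partial_{\alpha_\perp}\big(\II(\nabla u,\nabla u)\big)$, and Lemma \ref{l:nabla_Delta_u_epsilon_reg} (applied with the unit vector $w=\vec\alpha_\perp$ and rescaled) bounds $s_\perp^2|\langle u_\alpha,\Delta u_\alpha\rangle|\le C(n,K_N)\,\epsilon_0\,|u_\alpha|^2$; on the annular region $\cA$ the local energy density satisfies $d(x,\T)^2|\nabla u|^2 < C(m)\delta$ by Remark \ref{rm:annular:regularity}, so in fact this error term is bounded by $C(m,R)\sqrt\delta\,|u_\alpha|^2 = C(m,R)\sqrt\delta\,\cE_\alpha$ pointwise (after taking the $S^1$ average). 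For the good term $|\nabla u_\alpha|^2$: writing $\nabla u_\alpha$ in cylindrical coordinates, the component along $\partial_{\alpha_\perp}$ itself, namely $\partial_{\alpha_\perp}\partial_{\alpha_\perp}u$, carries the curvature correction $-u_{\text{rad}}$ (second derivative of $s_\perp(\cos,\sin)$), but the upshot of the polar-coordinate computation — this is exactly the classical two-dimensional identity from \cite{Jost2D,ParkerBubbleTree,LinWang} — is that $s_\perp^2|\nabla u_\alpha|^2 \ge 2(u_\alpha)^2$, with the factor $2$ coming from the two units ($s_\perp^2\partial_{\ln s_\perp}$ and $\partial_{\alpha_\perp}$ actions producing $u_\alpha$ back up to lower order). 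After averaging over $S^1$ and absorbing curvature-of-$M$ and the second-fundamental-form lower-order terms into the $\sqrt\delta$ error, one obtains $\bar\Delta_L\cE_\alpha \ge (2-C(m,R)\sqrt\delta)\,\cE_\alpha$.

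The main obstacle I anticipate is the bookkeeping of lower-order terms when passing to polar coordinates around $L$: one must carefully verify that the "$\ge 2$" really survives, i.e. that the Christoffel-type corrections from writing $\nabla u_\alpha$ in $(\ln s_\perp, \alpha_\perp, y_L)$ coordinates do not eat into the leading constant, and only contribute terms controlled by $|u_\alpha|\,|u_{\text{rad}}| + |u_\alpha|\,|\pi_L\nabla u|$, which are $O(\sqrt\delta)\,\cE_\alpha$ after the $S^1$ average by Cauchy–Schwarz and the $\cA$-smallness estimate. A secondary subtlety is that one needs $y\notin B_{e^{-R}\rf_0}(L)$ so that $\hat\rho$ (and the elliptic estimates) are valid at scale comparable to $d(y,L)$; this is precisely the hypothesis stated. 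Once these checks are done, the final inequality $(2-C(m,R)\sqrt\delta)\cE_\alpha \ge \cE_\alpha$ follows by taking $\delta\le\delta(m,R)$ small.
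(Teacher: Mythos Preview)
Your overall strategy is the same as the paper's: write $\cE_\alpha=\fint_{S^1}|u_\alpha|^2$ with $u_\alpha=\partial_{\alpha_\perp}u$, use $\bar\Delta_L=s_\perp^2\Delta$ and the Bochner identity $\tfrac12\Delta|u_\alpha|^2=|\nabla u_\alpha|^2+\langle u_\alpha,\Delta u_\alpha\rangle$, observe $\Delta u_\alpha=\partial_{\alpha_\perp}(\Delta u)$ because $\partial_{\alpha_\perp}$ is Killing, and control the cross term $s_\perp^2\langle u_\alpha,\partial_{\alpha_\perp}(\Delta u)\rangle$ by Lemma~\ref{l:nabla_Delta_u_epsilon_reg}. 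Your Killing-field observation is in fact a clean shortcut for the paper's more hands-on computation of $\tfrac12\bar\Delta_L\cE_\alpha$.

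However, there is a genuine gap in how you extract the constant $2$. Your claim ``$s_\perp^2|\nabla u_\alpha|^2 \ge 2(u_\alpha)^2$, with the factor $2$ coming from the two units'' is not the correct mechanism, and as stated that inequality is false. What actually happens is: from $s_\perp^2|\nabla u_\alpha|^2\ge |\partial_{\alpha_\perp}u_\alpha|^2=|\partial_{\alpha_\perp}^2 u|^2$, you take the $S^1$ average and apply the \emph{Poincar\'e inequality on $S^1$} to the mean-zero function $\partial_{\alpha_\perp}u$, obtaining
\[
s_\perp^2\fint_{S^1}|\nabla u_\alpha|^2\;\ge\;\fint_{S^1}|\partial_{\alpha_\perp}^2 u|^2\;\ge\;\fint_{S^1}|\partial_{\alpha_\perp}u|^2\;=\;\cE_\alpha\,.
\]
This is a factor of $1$, not $2$; the $2$ in $\bar\Delta_L\cE_\alpha\ge(2-C\sqrt\delta)\cE_\alpha$ then comes from the $\tfrac12$ in Bochner. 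The paper states this Poincar\'e step explicitly as the key idea, and you should too. Relatedly, your anticipated ``main obstacle'' --- Christoffel corrections producing cross terms $|u_\alpha||u_{\rm rad}|$, $|u_\alpha||\pi_L\nabla u|$ that need to be absorbed via $\sqrt\delta$-smallness --- simply does not arise: once you drop all components of $|\nabla u_\alpha|^2$ except the angular one and invoke Poincar\'e, no such cross terms appear, and no separate smallness argument for the radial or $L$-components is needed.
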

\begin{proof}
First of all, we notice by Theorem \ref{t:prelim:cone_splitting_annular} that for all $x\in \B 1 0\cap L$ and $r\in [\rf_0,4]$, we have
\begin{gather}
 \vartheta(x,r;L)+\vartheta(x,r;L^\perp)\leq C(m)\delta\, .
\end{gather}
Fix $y\in \B 2 0$ and set $\bar d=d(y,L)/10$, so that $B_{2\bar d}(y)\subseteq \cA$ satisfies
\begin{gather}\label{eq_eps_reg_w}
 \bar d^{2-m}\int_{\B {2\bar d} y} \abs{\nabla u}^2 \leq C(m,R) \delta\leq \epsilon_0\, .
\end{gather}
In particular $\B {\bar d}{y}$ is an $\epsilon$-regularity ball, see the remarks after Definition \ref{d:annular_region}.  Now we turn to the main estimate.

The key idea is not too dissimilar from Bochner's inequality, which for flat domain states that $\frac 1 2 \Delta \abs{\nabla u}^2=\abs{\nabla^2 u }^2+ \ps{\nabla u}{\nabla \Delta u}$. Here we exploit a similar computation, where we will see that the term corresponding to $\ps{\nabla u}{\nabla \Delta u}$ is negligible (in the $\epsilon$-regularity region), and the term corresponding to $\abs{\nabla^2 u }^2$ will be bounded from below by the angular energy. In particular, if we restrict ourselves to \textit{angular energy} instead of full-blown energy, and consider it integrated over circles as in the definition of $\cE_\alpha$, we can exploit the Poincar\' e inequality:
\begin{gather}
 \int_{S^1} \abs{\nabla \nabla_{\vec \alpha_\perp} u}^2 \geq \int_{S^1} \abs{\nabla_{\vec\alpha_\perp} \nabla_{\vec \alpha_\perp} u}^2 \geq \int_{S^1}\abs{\nabla_{\vec \alpha_\perp} u}^2\, .
\end{gather}

In detail, we can estimate:
\begin{align}\label{eq_switch_polar_coordinates}
 \frac 1 2 \frac{\partial}{\partial \ln s_\perp} \cE_\alpha(y)=&\cE_\alpha(y) + s_\perp^2\fint_{S^1}\nabla_{\vec \alpha_\perp}  u  \cdot \ton{s_\perp \nabla_{\vec s_\perp}\nabla_{\vec \alpha_\perp}  u}=s_\perp^2\fint_{S^1}\nabla_{\vec \alpha_\perp}  u  \cdot \nabla_{\vec \alpha_\perp}  \ton{s_\perp \nabla_{\vec s_\perp}u}\, ,
\end{align}
where we used the fact that $ \nabla_{\vec \alpha_\perp } (s_\perp \vec s_\perp)= \vec \alpha_\perp$, while $\nabla_{\vec s_\perp}\vec \alpha_\perp=0$.

Arguing in a similar way, we have:
\begin{align}
 \frac 1 2 \bar \Delta_L \cE_\alpha(y) &=\frac{\partial}{\partial \ln s_\perp} \ton{\cE_\alpha(y) + s_\perp^2\fint_{S^1}\nabla_{\vec \alpha_\perp}  u  \cdot \ton{s_\perp \nabla_{\vec s_\perp}\nabla_{\vec \alpha_\perp}  u}  }\notag\\
 &+ s_\perp^4\fint_{S^1}\abs{\pi_L\nabla \nabla_{\vec \alpha_\perp} u}^2 + s_\perp^4 \fint_{S^1} \nabla_{\vec\alpha_\perp} u \nabla_{\vec\alpha_\perp}\qua{\Delta_L u}\notag\\
 &\stackrel{\eqref{eq_switch_polar_coordinates}}{=}\frac{\partial}{\partial \ln s_\perp} \ton{s_\perp^2\fint_{S^1}\nabla_{\vec \alpha_\perp}  u  \cdot \nabla_{\vec \alpha_\perp}  \ton{s_\perp \nabla_{\vec s_\perp}u}  }\notag\\
 &+ s_\perp^4\fint_{S^1}\abs{\pi_L\nabla \nabla_{\vec \alpha_\perp} u}^2 + s_\perp^4 \fint_{S^1} \nabla_{\vec\alpha_\perp} u \nabla_{\vec\alpha_\perp}\qua{\Delta_L u}\notag\\
 &= s_\perp^2\fint_{S^1}\abs{\nabla_{\vec \alpha_\perp} (s_\perp \nabla_{\vec s_\perp} u)}^2 + s_\perp^2 \fint_{S^1} \nabla_{\vec\alpha_\perp} u \nabla_{\vec\alpha_\perp}\qua{\ton{s_\perp \nabla_{\vec s_\perp}}^2 u}\notag\\
 &+ s_\perp^4\fint_{S^1}\abs{\pi_L\nabla \nabla_{\vec \alpha_\perp} u}^2 + s_\perp^4 \fint_{S^1} \nabla_{\vec\alpha_\perp} u \nabla_{\vec\alpha_\perp}\qua{\Delta_L u}\notag\\
 &\geq s_\perp^4 \fint_{S^1} \nabla_{\vec\alpha_\perp} u \nabla_{\vec\alpha_\perp}\qua{\ton{s_\perp^{-2}\ton{s_\perp \nabla_{\vec s_\perp}}^2 +\Delta_L} u}= \\
 &= s_\perp ^4 \fint_{S^1} \nabla_{\vec \alpha_\perp} u \nabla_{\vec \alpha_\perp} (\Delta u)- s_\perp^2\fint_{S^1}\nabla_{\vec \alpha_\perp} u \ \nabla_{\vec \alpha_\perp} \qua{\nabla_{\vec \alpha_\perp}\nabla_{\vec \alpha_\perp} u }\notag\\
 &= s_\perp ^4 \fint_{S^1} \nabla_{\vec \alpha_\perp} u \nabla_{\vec \alpha_\perp} (\Delta u)+ s_\perp^2\fint_{S^1}\abs{\nabla_{\vec \alpha_\perp}\nabla_{\vec \alpha_\perp} u }^2\, .
\end{align}
By \eqref{eq_eps_reg_w}, we can apply Lemma \ref{l:nabla_Delta_u_epsilon_reg} to the first term. Using Poincar\`e inequality in the last term, we conclude that
\begin{align}
 \bar \Delta_L \cE_\alpha \geq \ton{2-C(m,R)\sqrt \delta}\cE_\alpha\, ,
\end{align}
as desired.

\end{proof}

\subsubsection{Superconvexity of $\hat\vartheta(\T,r;\alpha_\perp)$}

Let us remark that in the toy model we can use Lemma \ref{l:outline:toy_model:angular_energy:superconvexity} to quickly prove the annular energy bound
\begin{align}
	\int_{\cA\cap B_1}\langle\nabla u,\alpha_{L^\perp}\rangle^2 &\leq \int\int_\T \psi_\T(x,r)\cE_\alpha(x,r)\frac{dr}{r}\notag\\
	&\leq \int\int_\T \psi_\T(x,r)\bar \Delta_L\cE_\alpha(x,r)\frac{dr}{r}\notag\\
	&=\int\int_\T \bar\Delta_L \psi_\T(x,r)\,\cE_\alpha(x,r)\frac{dr}{r}\notag\\
	&\leq C(m)\delta\int\int_\T |\bar\Delta_L \psi_\T|(x,r)\,\frac{dr}{r}\notag\\
	&\leq C(m)\delta\int \int_\T r dr + C(m)\delta\fint_1^2\,dr+C(m)\delta\fint_{\rf_0/2}^{\rf_0}\,dr\notag\\
	&\leq C(m)\delta\, ,
\end{align}
where the three terms above correspond to the $L$ and $r$ cutoff regions of $\psi_\T$ .\\

The strategy will work slightly differently in the general case, where the subharmonic equation takes a more subtle form.  Let us quickly outline another approach to the above computation, which is a bit more involved but will be the correct generalization later.  Namely, one can use Lemma \ref{l:outline:toy_model:angular_energy:superconvexity} to prove a superconvexity estimate for $\hat\vartheta(\T,r;\alpha_\perp)$:

\begin{proposition}[Super Convexity in the Toy Model]\label{p:outline:toy_model:superconvexity}   We have the estimate
\begin{align}
	\ton{r\frac{\partial}{\partial r}}^2\hat\vartheta(\T,r;\alpha_{L^\perp}) \geq \hat\vartheta(\T,r;\alpha_{L^\perp})-\epsilon(r)\, ,
\end{align}
where $\epsilon(r)\leq C(m)\delta\int_\T \Big(r^2\big|\ddot\psi_\T\big|+r\big|\dot\psi_\T\big|+r\big|\nabla_L\psi_\T\big|+r^2\big|\nabla^2_L\psi_\T\big|\,\Big)$ satisfies $\int\epsilon(r)\frac{dr}{r}\leq C(m)\delta$ .
\end{proposition}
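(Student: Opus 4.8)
The plan is to reduce the inequality to the pointwise subharmonicity of Lemma~\ref{l:outline:toy_model:angular_energy:superconvexity}, by exploiting the self-similarity of the heat mollifier together with the rotational invariance of $\cE_\alpha$ about $L$. First, since $\T=L$ is a plane, write $\hat\vartheta(\T,r;\alpha_{L^\perp})=\int_{\dR^m}K_r(y)\,\cE_\alpha(y)\,dy$ with $K_r(y)\equiv\int_L\psi_\T(x,r)\,\hat\rho_r(y-x;L)\,dx$, using \eqref{e:outline:toy_model:annular_energy}. For $x\in L$ one has $\pi_{L^\perp}(y-x)=\pi_{L^\perp}(y)$, and the substitution $x=y_L-rw$ in $L$ shows that $\int_L\hat\rho_r(y-x;L)\,dx=r^{-2}\hat{\bar\rho}_1^{L^\perp}(\pi_{L^\perp}(y)/r)=:\hat{\bar\rho}_r^{L^\perp}(\pi_{L^\perp}(y))$ is an \emph{exactly} self-similar probability density on $L^\perp\cong\dR^2$ — this uses only $\dim L=m-2$, and no special feature of the Gaussian. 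Thus $K_r(y)=\psi_\T(y_L,r)\,\hat{\bar\rho}_r^{L^\perp}(y_\perp)+\mathfrak{e}_r(y)$, where $\mathfrak{e}_r(y)=\int_L\big(\psi_\T(x,r)-\psi_\T(y_L,r)\big)\hat\rho_r(y-x;L)\,dx$ obeys $|\mathfrak{e}_r(y)|\le C(m)\big(r|\nabla_L\psi_\T|+r^2|\nabla_L^2\psi_\T|\big)\hat{\bar\rho}_r^{L^\perp}(y_\perp)$ on its support.

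The key computation is that, for fixed $y_L$, $g_{y_L}(r)\equiv\int_{\dR^2}\hat{\bar\rho}_r^{L^\perp}(y_\perp)\cE_\alpha(y_L,y_\perp)\,dy_\perp$ realizes the conformal Laplacian under $\ton{r\partial_r}^2$. Indeed, substituting $y_\perp=r\zeta$ makes the kernel $r$-independent, so $r\partial_r$ acts on $\cE_\alpha(y_L,r\zeta)$ as the Euler field $y_\perp\cdot\nabla_{y_\perp}$; since $\cE_\alpha(y_L,\cdot)$ is rotationally invariant this equals $\frac{\partial}{\partial\ln s_\perp}$, and the same applies to $\frac{\partial}{\partial\ln s_\perp}\cE_\alpha$, whence $\ton{r\partial_r}^k g_{y_L}(r)=\int_{\dR^2}\hat{\bar\rho}_r^{L^\perp}(y_\perp)\big(\tfrac{\partial}{\partial\ln s_\perp}\big)^k\cE_\alpha(y_L,y_\perp)\,dy_\perp$ for $k=1,2$. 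By \eqref{e:outline:toy_model:conformal_laplacian} and rotational invariance, $\big(\tfrac{\partial}{\partial\ln s_\perp}\big)^2\cE_\alpha=\bar\Delta_L\cE_\alpha-s_\perp^2\Delta_L\cE_\alpha$. The support of $\hat{\bar\rho}_r^{L^\perp}$ lies in the region $B_2\setminus B_{e^{-R}\rf_0}(L)$ where Lemma~\ref{l:outline:toy_model:angular_energy:superconvexity} applies (for $r\gtrsim\rf_0$), giving $\bar\Delta_L\cE_\alpha\ge\cE_\alpha$ there, so $\ton{r\partial_r}^2 g_{y_L}(r)\ge g_{y_L}(r)-\int_{\dR^2}\hat{\bar\rho}_r^{L^\perp}(y_\perp)\,s_\perp^2\Delta_L\cE_\alpha(y_L,y_\perp)\,dy_\perp$.

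Now apply $\ton{r\partial_r}^2$ to $\hat\vartheta=\int_L\psi_\T(y_L,r)g_{y_L}(r)\,dy_L+\int_{\dR^m}\mathfrak{e}_r(y)\cE_\alpha(y)\,dy$. On the first piece Leibniz gives three terms: the two in which $r\partial_r$ hits $\psi_\T$ are bounded by $C(m)\delta\int_\T\big(r|\dot\psi_\T|+r^2|\ddot\psi_\T|\big)$, using $|g_{y_L}|,|\ton{r\partial_r}g_{y_L}|\le C(m)\delta$, itself a consequence of $|\cE_\alpha|,\big|\tfrac{\partial}{\partial\ln s_\perp}\cE_\alpha\big|\le C(m)\delta$ on $\cA$ (from Theorem~\ref{t:prelim:cone_splitting_annular} and the $\epsilon$-regularity Theorem~\ref{t:eps_reg}); the term $\int_L\psi_\T\ton{r\partial_r}^2 g_{y_L}\,dy_L$ is $\ge\int_L\psi_\T g_{y_L}\,dy_L-\int_{\dR^m}\psi_\T(y_L,r)\hat{\bar\rho}_r^{L^\perp}(y_\perp)s_\perp^2\Delta_L\cE_\alpha\,dy$, and in the last integral $\hat{\bar\rho}_r^{L^\perp}$ and $s_\perp^2$ are independent of $y_L$, so two integrations by parts in $L$ replace $\Delta_L\cE_\alpha$ by $(\Delta_L\psi_\T)\cE_\alpha$, which with $\int_{\dR^2}s_\perp^2\hat{\bar\rho}_r^{L^\perp}\le C(m)r^2$ and $|\cE_\alpha|\le C(m)\delta$ is $\le C(m)\delta\int_\T r^2|\nabla_L^2\psi_\T|$. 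The $\mathfrak{e}_r$-piece is handled the same way, its $r$-dependence again splitting into $\psi_\T$-derivatives and the self-similar scaling, and is controlled by $C(m)\delta\int_\T\big(r|\nabla_L\psi_\T|+r^2|\nabla_L^2\psi_\T|+r|\dot\psi_\T|+r^2|\ddot\psi_\T|\big)$. Collecting everything yields $\ton{r\partial_r}^2\hat\vartheta(\T,r;\alpha_{L^\perp})\ge\hat\vartheta(\T,r;\alpha_{L^\perp})-\epsilon(r)$ with $\epsilon(r)\le C(m)\delta\int_\T\big(r^2|\ddot\psi_\T|+r|\dot\psi_\T|+r|\nabla_L\psi_\T|+r^2|\nabla_L^2\psi_\T|\big)$ (constants may depend on $R$, harmless as $R=R(m)$, and the $C(m,R)\sqrt\delta$ loss in Lemma~\ref{l:outline:toy_model:angular_energy:superconvexity} is negligible for $\delta$ small). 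Finally $\int_0^\infty\epsilon(r)\tfrac{dr}{r}\le C(m)\delta$: for each $x\in\T$ the cutoff derivatives $\dot\psi_\T(x,\cdot),\ddot\psi_\T(x,\cdot)$ live in the $O(1)$ transition octaves $r\approx\rf_0$ and $r\approx 2$ with $\int\big(r|\dot\psi_\T|+r^2|\ddot\psi_\T|\big)\tfrac{dr}{r}\le C(m)$, while $\nabla_L\psi_\T,\nabla_L^2\psi_\T$ are supported where $|\pi_L(x)|\in[1,2]$, a set of bounded $\cH^{m-2}_\T$-measure by the Ahlfors regularity \eqref{e:annular_region:ahlfors_regularity}.

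The main obstacle is the second step: recognizing that once the $L$-directions are integrated out the scaling operator $r\partial_r$ becomes exactly the logarithmic radial derivative on the rotationally-invariant function $\cE_\alpha$, so that $\ton{r\partial_r}^2$ reproduces the conformal Laplacian $\bar\Delta_L$ of Lemma~\ref{l:outline:toy_model:angular_energy:superconvexity} \emph{up to the term} $s_\perp^2\Delta_L\cE_\alpha$ — and then disposing of this unsigned cross-term, which works only because it has no $y_L$-dependent weight and can therefore be integrated by parts in $L$ against the cutoff $\psi_\T$, turning it into the harmless error $C(m)\delta\int_\T r^2|\nabla_L^2\psi_\T|$.
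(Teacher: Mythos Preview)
Your argument is correct and arrives at the same place as the paper, but via a different organization. The paper does not prove the toy-model proposition in detail; it defers to Section~\ref{s:angular_energy}, where the general case is handled by differentiating the double integral $\int_{\T_r}\psi_\T\int\hat\rho_r\,\cE_\alpha$ term by term, invoking the scaling identity $r\partial_r\ln\hat\rho_r+m+\langle\nabla^{(y)}\ln\hat\rho_r,y-x\rangle=0$, and then splitting the gradient into its $L^\perp$ part (integrated by parts in $y$ to produce $s_\perp\nabla_{s_\perp}$) and its $L$ part (integrated by parts in $x$ over $\T_r$ to produce the $\nabla_L\psi_\T$ errors). You instead integrate over $L$ first, collapsing $\hat\rho_r$ to an exactly self-similar kernel on $L^\perp\cong\dR^2$, so that the scaling--to--radial-derivative conversion becomes the one-line substitution $y_\perp=r\zeta$. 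Both routes land on the same $(\partial_{\ln s_\perp})^2\cE_\alpha$ integrand and the same $L$-Laplacian cross-term handled by integration by parts against $\psi_\T$. Your route is cleaner in the toy model but does not extend directly to the general case, since for a genuine $\T_r$ the integrated kernel $\int_{\T_r}\hat\rho_r(y-x)\,dx$ is no longer exactly self-similar; this is why the paper carries the term-by-term computation through Section~\ref{s:angular_energy}.

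One point that deserves a sentence more: when you apply $(r\partial_r)^2$ to the $\mathfrak e_r$ piece and both derivatives land on $\cE_\alpha$, you obtain a factor $(\partial_{\ln s_\perp})^2\cE_\alpha$ that is not absorbed by the smallness of $\mathfrak e_r$ alone --- you need the pointwise bound $|(\partial_{\ln s_\perp})^2\cE_\alpha|\le C(m,R)\delta$ on the support of $\hat\rho_r$, which does follow from Theorem~\ref{t:eps_reg} since it involves up to two derivatives of $\nabla u$. Similarly, mixed terms such as $r^2|\nabla_L\dot\psi_\T|$ can arise; they are not literally among your four listed quantities but are equally Dini-integrable in the toy model because $\psi_\T$ has product structure in $(|x|,r)$.
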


The proof is a straightforward if demanding computation based on the combination of Lemma \ref{l:outline:toy_model:angular_energy:superconvexity} and \eqref{e:outline:toy_model:annular_energy}.  We will do the computation explicitly in Section \ref{s:angular_energy} when dealing with the general case.   In the toy model this superconvexity may be used to give polynomial decay/growth estimates on $\hat\vartheta(\T,r;\alpha_{L^\perp})$.  However such estimates will necessarily fail in the general case, so let us just see how to use Proposition \ref{p:outline:toy_model:superconvexity} to directly bound the angular energy bound in the annular region:
 \begin{align}
 	\vartheta(\cA;\alpha_{L^\perp})&=
 	\int \hat\vartheta(\T,r;\alpha_{L^\perp})\frac{dr}{r}\notag\\
 	&\leq \int \ton{r\frac{\partial}{\partial r}}^2\hat\vartheta(\T,r;\alpha_{L^\perp})\frac{dr}{r}+\int\epsilon(r)\frac{dr}{r}\notag\\
 	&=\int\epsilon(r)\frac{dr}{r}\notag\\
 	&\leq C(m)\delta\, ,
 \end{align}
which proves our required bound on the angular energy.

% \vspace{.5cm}

\subsection{Radial Energy and Stationary Estimates for Toy Model}

Let us now discuss the bound for the radial energy in \eqref{e:outline:toy_model:energy_decomposition}.  The goal will be to control the radial energy in terms of the angular and $L$-energies, and as they have already been controlled we will be done.  It is worth pointing out that dealing with this situation in the general setup will require an additional new idea, as the errors involved will become not just more complicated but of strictly larger order.  Handling this will involve approximating our bubble center manifold $\T$ by smooth submanifolds $\T_r$ that solve equations and cancel these highest order errors.   We will outline that in the next Section.\\

As with the angular energy we will focus on the restricted radial energy, which only measures the radial energy away from a neighborhood of $L$ on each scale.  We refer the reader to Section \ref{ss:restricted_energy} for the definition of the restricted energies, where we define the restricted radial energy
\begin{align}
	\hat\vartheta(x,r;n_{L^\perp})&\equiv \int \hat\rho_r(y-x;L)\big\langle\nabla u(y),\pi_{L^\perp}(y-x)\big\rangle^2\, ,\notag\\
	&=\int\hat\rho_r(y-x;L)\big|\pi_{L^\perp}(y-x)\,\big|^2 \langle\nabla u(y),n_{L^\perp}\rangle^2\, ,
\end{align} 
where $\hat\rho_r$ is the heat kernel mollifier cutoff in the $e^{-R}$ neighborhood of $L$ as in \eqref{e:prelim:L_mollifier}.  We can integrate this over $\T$ with our bubble weight $\psi_\T$ from \eqref{e:outline:toy_model:psi_T} to define the radial energy on $\T$ :

\begin{align}\label{e:outline:toy_model:radial_energy}
	\hat\vartheta(\T,r;n_{L^\perp})\equiv \int_{\T}\psi_\T(x,r)\, \hat\vartheta(x,r;n_{L^\perp}) = \int_{\T}\psi_\T(x,r)\int\hat\rho_r(x-y;L)\langle\nabla u(y),\pi_{L^\perp}(y-x)\rangle^2\, .
\end{align}

The $\hat\vartheta(\T,r;n_{L^\perp})$ energy functional represents the radial energy at scale $r$ along $\T$.  Control of $\hat\vartheta(\T,r;n_{L^\perp})$ can be related to the annular radial energy \eqref{e:outline:toy_model:energy_decomposition} through the estimate
\begin{align}
	\int_{\cA\cap B_1}\langle\nabla u(y),n_{L^\perp}\rangle^2 \leq \hat\vartheta(\cA;n_{L^\perp})\equiv \int_{}\,\hat\vartheta(\T,s;n_{L^\perp})\frac{ds}{s}\, .
\end{align}

In order to estimate \eqref{e:outline:toy_model:radial_energy} we want to apply the stationary equation to a vector field which will produce \eqref{e:outline:toy_model:radial_energy} as at least one of its terms.  Precisely let us consider the stationary equation applied to
\begin{align}\label{e:outline:toy_model:radial_energy:stationary}
	\xi_r(x) \equiv \int_{\T}\psi_\T(z,r)\tilde\rho_r(x-z;L)\,\pi_{L^\perp}(x-z) \approx \pi_{L^\perp}\big(x-z\big)\,\frac{e^{-{\abs{\pi_{L^\perp}\ton{\frac{x-z}{r}}}^2}}}{2\pi}\, ,
\end{align}
where recall from \eqref{e:prelim:L_Mollifier2} that the $L$-heat mollifier $\tilde\rho(y;L)\approx \rho(y)$ is $C^1$ close to the heat kernel $\rho$, but has its $L^\perp$-gradient cutoff near $L$.  Note that $\xi(x)$ looks approximately like the $L^\perp$ radial vector field, multiplied by the $L^\perp$-Gaussian, with equality if we take $\psi_{\T}(z,r)\equiv 1 $ and $R=\infty$ in Definition \ref{d:heat_mollifier} of $\rho$.\\

If we plug \eqref{e:outline:toy_model:radial_energy:stationary} into the stationary equation \eqref{e:stationary_equation}, we get 
\begin{align}
	&\hat\vartheta(\T,r;n_{L^\perp}) +2\int_{\T}\psi_\T(x,r)\tilde\rho_r(y-x;L)\abs{\pi_L\nabla u(y)}^2\notag\\
	\leq &\hat\vartheta(\T,r;\alpha_{L^\perp})+ \underbrace{\hat \vartheta(\T,r,L)}_{\geq 0}\, +2 \int_{\T}\psi_\T(x,r)\int \ps{\pi_L \nabla^{(y)}\tilde\rho_r(y-x;L)}{\nabla u(y)}\ps{\nabla u(y)}{(y-x)^\perp}
% 	&\equiv C(m)\hat\vartheta(\T,s;\alpha_{L^\perp})+\epsilon(r)\, ,
\end{align}
By noticing that $\pi_L\nabla^{(y)}\tilde\rho_r(y-x;L)=-\pi_L\nabla^{(x)}\tilde\rho_r(y-x;L)$, and integrating by parts wrt $x$, we obtain 
\begin{align}
 &\abs{\int_{\T}\psi_\T(x,r)\int \ps{\pi_L \nabla^{(y)}\tilde\rho_r(y-x;L)}{\nabla u(y)}\ps{\nabla u(y)}{(y-x)^\perp}}\notag \\
 =&\abs{\int_{\T}\int \tilde\rho_r(y-x;L)\ps{\pi_L \nabla^{(x)}\psi_\T(x,r)}{\nabla u(y)}\ps{\nabla u(y)}{(y-x)^\perp}}\notag\\
 \leq & C(m)\delta \int_\T r\abs{\nabla \psi_\T(x,r)}\, .
\end{align}
As we have $\tilde \rho \approx \rho$ (see Lemma \ref{l:radial_energy:stationary_vector_field} for the details), we obtain (roughly)
\begin{align}
 	\hat\vartheta(\T,r;n_{L^\perp}) +&2\vartheta(\T,r,L)\leq \hat\vartheta(\T,r;\alpha_{L^\perp})+\hat \vartheta(\T,r,L)+ C(m) \delta \int_\T r \abs{\nabla \psi_\T(x,r)}\, ,\notag\\
 \implies 	\hat\vartheta(\T,r;n_{L^\perp}) +&2\vartheta(\T,r,L)\leq \hat\vartheta(\T,r;\alpha_{L^\perp})+\hat \vartheta(\T,r,L)+ \epsilon(r)\, ,
% 	&\equiv C(m)\hat\vartheta(\T,s;\alpha_{L^\perp})+\epsilon(r)\, ,
\end{align}
where $\epsilon(r)\equiv C(m) \delta \int_\T r \abs{\nabla \psi_\T(x,r)}$ satisfies the estimate $\int \epsilon(r)\frac{dr}{r}\leq C(m)\delta$ .  In particular, by integrating the above wrt $\frac{dr}{r}$ and using the previously derived estimates on the angular and $L$-energies we arrive at our estimate
\begin{align}
	\int_{\cA\cap B_1}\langle\nabla u(y),n_{L^\perp}\rangle^2 &\leq\int_{}\,\hat\vartheta(\T,r;n_{L^\perp})\frac{dr}{r}\leq \int_{}\,\hat\vartheta(\T,r;\alpha_{L^\perp})\frac{dr}{r}+\int_{}\,\hat\vartheta(\T,r;L)\frac{dr}{r} +\int\epsilon(r)\frac{dr}{r}\, ,\notag\\
	&\leq C(m)\delta\, ,
\end{align}
which finishes the estimate for toy model \eqref{e:outline:toy_model:energy_decomposition} .

\vspace{.5cm}

\section{Outlining the General Case of Theorem \ref{t:outline:annular_regions_energy}}\label{s:outline_general}

The proof of Theorem \ref{t:outline:annular_regions_energy} for a general annular region $\cA=B_2\setminus \overline{B_{\rf_x}(\T)}$ will follow in broad strokes as the outline presented for the toy model in the last Section.  The energy will be decomposed into three pieces.  A log improved $L$-energy bound will follow by a careful application of summing the monotone quantity over $\T$.  The angular energy will be controlled by proving a superconvexity estimate on the annular region $\cA$, which itself will follow from a family of uniform subharmonic estimates.  Finally, the radial energy bound will follow by a carefully chosen stationary estimate that bounds it based on the angular energy.\\

However there are new complications that will arise.  
As one can imagine there is a variety of additional technical footwork necessary for the general case, not the least of which is that some of the involved errors are a bit circular, one needs to be quite careful in the application to tie them together correctly.  For the angular energy the subharmonicity will take a new form, as we will treat everything as a family of toy models.  For the radial energy there will be new errors due to the bending of $\T$ that are of strictly higher order than what is allowed.  To deal with the new errors we will build scale-wise approximations $\T_r$ of $\T$ that will {\it best} approximate the energy measure in some precise sense.
In particular the $\T_r$ will smoothly solve some resulting Euler-Lagrange equations.  We will define our energy decompositions in terms of $\T_r$, and the equations solved by $\T_r$ will correspond and hence cancel these highest order error terms that appear, at least up to lower order pieces. \\

In what remains of this outline we will describe the construction of these best approximating planes and submanifolds and see how to use them to correctly define the various energy pieces on an annular region.  For each component of the energy we will state and outline our main structural theorems, and then see how to use them to prove Theorem \ref{t:outline:annular_regions_energy}.  The remainder of the paper will then focus on proving these structural results for the best approximations $\T_r$ and the associated energies.

\vspace{.3cm}

\subsection{Local Best Planes and Approximating Submanifolds}

Recall that an annular region $\cA\subseteq B_2(p)$ comes equipped with a center submanifold $\T\subseteq B_2$ and a plane of symmetry $L_\cA$ such that for each $x\in \T$ the affine plane $x+L_\cA$ well approximates $\T$ on all scales.
Our next steps are to choose for each $x\in B_2$ with $3r\geq \rf_x\vee d(x,\T)=\max\{\rf_x,d(x,\T)\}$ \footnote{Recall from Remark \ref{r:annular_region:extension} that $\rf_x$ can be naturally extended to all of $B_2$} a {\it best} approximating plane $\LL_{x,r}$.
More accurately we have that $\LL_{x,r}$ will be the plane minimizing $\int \rho_r(y-x) \abs{\pi_{L}\nabla u(y)}^2$ among all $m-2$ planes $L$.  On the ball $B_{10r}(x)$ we will have that $u$ looks nearly invariant in the $\LL_{x,r}$ directions, and the energy $\vartheta(y,r)$ of $u$ looks like a Gaussian in the $\LL^\perp_{x,r}$ directions.
The goal will be to choose the submanifold $\T_r$ so that each $z\in \T_r$ is the center of this Gaussian, which is to say that for each $z\in\T_r$ we want $\vartheta(z,r)$ to obtain a maximum in the $\LL_{z,r}^\perp$ directions.  More slowly and precisely:\\

\subsubsection{\bf Local Best Planes}\label{sss:outline:general:local_best_planes}  Let $x\in B_2$ with $3r\geq \rf_x\vee d(x,\T)=\max\{\rf_x,d(x,\T)\}$, then we can define the $L$-energy of $u$ as
\begin{gather}\label{e:outline:L_energy}
\vartheta_\cL(x,r) \equiv \min_{L^{m-2}} \vartheta(x,r;L) = \min_{L^{m-2}} r^{2}\int_{} \rho_r(y-x)\abs{\pi_L\nabla u}^2\, .
\end{gather}
We can use this to define the best plane of symmetry as
\begin{gather}\label{e:outline:best_plane}
 \LL_{x,r}\equiv \arg\min_{L^{m-2}}\vartheta(x,r;L)=\arg\min_{L^{m-2}}\cur{L \to r^{2}\int_{} \rho_r(y-x)\abs{\pi_L\nabla u}^2}\, .
\end{gather}
It follows from the definition of an annular region that
\begin{align}
	\vartheta_\cL(x,r)=r^{2}\int_{} \rho_r(y-x)\abs{\pi_{x,r} \nabla u(y)}^2\leq C(m)\,\delta\, \, , \text{ where } \pi_{x,r}\equiv \pi_{\LL_{x,r}}\, .
\end{align}

The plane $\LL_{x,r}$ is uniquely defined, in fact we will see that it is quantitatively unique.  We will be particularly interested in the orthogonal projection map
\begin{align}\label{e:outline:best_plane_projection}
	\pi_{x,r}\equiv \pi_{\LL_{x,r}}:\dR^m\to \dR^m\, 
\end{align}
and its estimates. 

Our main estimates in Section \ref{s:best_planes} are the following:

\begin{theorem}[Best Plane Existence and Regularity, compare with Theorem \ref{t:best_plane:best_plane}]\label{t:outline:best_plane}
	Let $u:B_{10R}(p)\to N$ be a stationary harmonic map with $R^2\fint_{B_{10R}}|\nabla u|^2\leq \Lambda$ , and let $\cA=B_2\setminus \overline{B_{\rf_z}(\T)}$ be a $\delta$-annular region.  Let $x\in B_2$ with $3r\geq \rf_x\vee d(x,\T)$, then the following hold:
\begin{enumerate}
	\item The best plane $\LL_{x,r}$ as in \eqref{e:outline:best_plane} exists and is unique.  In fact for any $L^{m-2}\subseteq \dR^m$ we have
\begin{align}
	\vartheta(x,r;L)\geq \vartheta(x,r ;\cL_{x,r})+C(m,\epsilon_0)\,d_{\Gr}(L,\LL_{x,r})^2\, ,
\end{align}
where $\epsilon_0(N)$ is the $\epsilon$-regularity from theorem \ref{t:eps_reg} and $d_{\Gr}$ is the Grassmannian distance.
\item In the domain of definition the projection $\pi_{x,r}$ from \eqref{e:outline:best_plane_projection} is smooth and satisfies
\begin{align}
	  r\abs{ \frac{\partial}{\partial r}\pi_{x,r}}+r\abs{ \nabla \pi_{x,r}}+  r^2\abs{ \nabla^2 \pi_{x,r}}\leq C(m) \sqrt{\frac{\vartheta_\cL(x,2r)}{\vartheta(x,r)}}\leq C(m,\epsilon_0) \sqrt{\vartheta_\cL(x,2r)}\, .
\end{align}
\end{enumerate}
\end{theorem}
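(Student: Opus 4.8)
For $(x,r)$ in the stated range introduce the symmetric positive semidefinite energy matrix
\begin{gather}
	M_{x,r}\equiv r^{2}\int \rho_r(y-x)\,\big(\langle\nabla_i u(y),\nabla_j u(y)\rangle\big)_{i,j=1}^m\,dy\, ,
\end{gather}
so that $\tr M_{x,r}=\vartheta(x,r)$ and $\vartheta(x,r;L)=\tr\ton{\pi_L M_{x,r}}$ for every $m-2$ plane $L$.  If $\mu_1\ge\cdots\ge\mu_m\ge 0$ are the eigenvalues of $M_{x,r}$, then $L\mapsto \tr(\pi_L M_{x,r})$ is minimized over $m-2$ planes precisely by the span $\LL_{x,r}$ of the eigenvectors of $\mu_3,\dots,\mu_m$, with $\vartheta_\cL(x,r)=\mu_3+\cdots+\mu_m$.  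Writing $t_i=|\pi_L e_i|^2$ in the eigenbasis of $M_{x,r}$ and using $\sum_i t_i=m-2$ and $0\le t_i\le 1$, one gets directly
\begin{gather}
	\vartheta(x,r;L)-\vartheta_\cL(x,r)=\tr\ton{(\pi_L-\pi_{\LL_{x,r}})M_{x,r}}\ge (\mu_2-\mu_3)\,\ton{t_1+t_2}\ge c(m)(\mu_2-\mu_3)\,d_{\Gr}(L,\LL_{x,r})^2\, .
\end{gather}
Thus part $(1)$, and indeed the whole theorem, is reduced to a uniform spectral gap $\mu_2-\mu_3\ge c(m)\,\vartheta(x,r)$ (which in particular is $\ge c(m)\epsilon_0$) together with standard spectral perturbation theory.

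\textbf{The spectral gap.}  From Theorem \ref{t:prelim:cone_splitting_annular} (applicable since $r\gtrsim\rf_x\vee d(x,\T)$) we have $\mu_3\le \mu_3+\cdots+\mu_m=\vartheta_\cL(x,r)\le\vartheta(x,r;L_\cA)\le C(m)\delta$, while $(a3)$ together with the near‑$\T$ lower energy bound \eqref{e:outline:weakly_flat} gives $\vartheta(x,r)\ge c(m)\epsilon_0$.  It remains to show $\mu_2\ge c(m)\vartheta(x,r)$.  Decomposing $M_{x,r}$ in blocks along $L_\cA\oplus L_\cA^\perp$, the $L_\cA$ block has trace $\le C\delta$ and the off‑diagonal block is $\le C\sqrt{\delta\Lambda}$ by Cauchy–Schwarz, so $M_{x,r}$ is $C\sqrt\delta$‑close in operator norm to its $2\times 2$ block $M^\perp_{x,r}$ on $L_\cA^\perp$, whose trace is $\vartheta(x,r)-O(\delta)$; hence it suffices to rule out that $M^\perp_{x,r}$ is nearly rank one, equivalently that its traceless part is nearly maximal.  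This is the one place where the two‑dimensional, isotropic nature of bubbling is used: a degeneration $\mu_2(M^\perp_{x,r})/\vartheta(x,r)\to 0$ along a sequence of annular regions would, after blow‑up, produce a nonconstant finite‑energy harmonic map $\R^2\to N$ whose energy tensor concentrates in a single direction — but every finite‑energy harmonic map $\R^2\to N$ has vanishing Hopf differential (the holomorphic quadratic differential $\phi\,dz^2$ has $|\phi|\le|\nabla u|^2\in L^1(\R^2)$, so $\phi\equiv 0$ by the mean value property), hence a pointwise isotropic energy tensor, so no such degeneration can occur; nontriviality of the limit is forced by $(a3)$ transported down to the bubble scale $\rf_x$.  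Therefore $\mu_2(M^\perp_{x,r})\ge c(m)\tr M^\perp_{x,r}\ge c(m)\vartheta(x,r)$, and combining with $\mu_3\le C\delta$ we get $\mu_2-\mu_3\ge c(m)\vartheta(x,r)$ once $\delta\le\delta(m,K_N)$.  This proves $(1)$.

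\textbf{Regularity of $\pi_{x,r}$.}  With the gap in hand, $\pi_{x,r}=\pi_{\LL_{x,r}}$ is the spectral projection $\frac{1}{2\pi i}\oint_\Gamma (z-M_{x,r})^{-1}\,dz$, with $\Gamma$ a contour encircling $\mu_3,\dots,\mu_m$, hence a real‑analytic and in particular smooth function of $M_{x,r}$, and so of $(x,r)$.  Any derivative $D$ of $\pi_{x,r}$ satisfies $D\pi_{x,r}=\frac{1}{2\pi i}\oint (z-M)^{-1}(DM_{x,r})(z-M)^{-1}\,dz$, which in the eigenbasis has vanishing diagonal blocks and equals $(DM_{x,r})_{ij}/(\mu_i-\mu_j)$ off the diagonal; so $D\pi_{x,r}$ only sees the block of $DM_{x,r}$ mixing $\LL_{x,r}$ with $\LL_{x,r}^\perp$, and
\begin{gather}
	\|D\pi_{x,r}\|\le\frac{\|\pi_{\LL_{x,r}}(DM_{x,r})(\id-\pi_{\LL_{x,r}})\|+\|(\id-\pi_{\LL_{x,r}})(DM_{x,r})\pi_{\LL_{x,r}}\|}{\mu_2-\mu_3}\, .
\end{gather}
For $D=r\partial_r$ or $D=r\nabla_x$ one bounds this mixing block using $r\nabla^{(x)}_e\rho_r(y-x)=-r\nabla^{(y)}_e\rho_r(y-x)$, the elementary bounds $r|\nabla^{(y)}\rho_r|+r|r\partial_r\rho_r|\le C(m)\rho_{2r}$ from Lemma \ref{l:rho_basic_properties}, and Cauchy–Schwarz splitting the two factors as an $\LL_{x,r}^\perp$‑energy and an $\LL_{x,r}$‑energy at scale $2r$, which gives $\le C(m)\sqrt{\vartheta(x,2r)\,\vartheta_\cL(x,2r)}$ (using part $(1)$ once more to pass from $\LL_{x,r}$ to $\LL_{x,2r}$ at scale $2r$ up to a lower order term).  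Dividing by $\mu_2-\mu_3\ge c(m)\vartheta(x,r)\ge c(m)\vartheta(x,2r)$ (Remark \ref{rm:vartheta_comparison}) yields $r|\partial_r\pi_{x,r}|+r|\nabla\pi_{x,r}|\le C(m)\sqrt{\vartheta_\cL(x,2r)/\vartheta(x,r)}$.  Differentiating the displayed inequality once more, the Hessian is controlled by the mixing block of $r^2\nabla^2_x M_{x,r}$ (estimated as before, with two derivatives on $\rho_r$ costing $C(m)/r$ each) over the gap, plus terms quadratic in $r\nabla_x\pi_{x,r}$ which are $\le C(m)\vartheta_\cL(x,2r)/\vartheta(x,r)$ and hence absorbed; this gives the stated bound on $r^2|\nabla^2\pi_{x,r}|$.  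The second displayed inequality follows from $\vartheta(x,r)\ge c(m)\epsilon_0$.

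\textbf{Expected main obstacle.}  The linear‑algebra reduction, the Cauchy–Schwarz estimates on the mixing block, and the spectral perturbation bounds are all mechanical.  The genuine difficulty is the lower bound $\mu_2(M_{x,r})\ge c(m)\vartheta(x,r)$ — that the second‑largest eigenvalue of the energy tensor is a definite fraction of the trace; this excludes the energy aligning into a single $L_\cA^\perp$ direction and must be extracted from the nontriviality $(a3)$ together with the conformality of harmonic bubbles, the delicate point being to propagate this isotropy from the bubble scale $\rf_x$ up to all scales $r\le 1$, at which stage the small‑annular‑energy Theorem \ref{t:outline:annular_regions_energy} is not yet available.
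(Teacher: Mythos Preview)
Your linear‑algebra reduction via the energy tensor $M_{x,r}$ and your treatment of part~(2) via spectral perturbation are essentially the paper's approach (the paper differentiates the identity $Q(x,r)[\pi_{x,r}v,\pi_{x,r}^\perp w]=0$, which is equivalent to your resolvent formula). The bound $\mu_3\le C(m)\delta$ from cone splitting is also the paper's argument.

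The genuine gap is in your compactness argument for the spectral gap $\mu_2\ge c(m)\vartheta(x,r)$. Blowing up $u$ at a point $x$ and scale $r$ with $r\gg\rf_x$ does \emph{not} produce a nonconstant finite‑energy harmonic map $\dR^2\to N$: by \eqref{e:outline:weakly_flat} the energy measure on $B_r(x)$ is $\delta$‑close to $\vartheta(x,r)\,\cH^{m-2}_{x+L_\cA}$, so the rescaled limit is a constant map plus a defect measure supported on $L_\cA$. There is no smooth bubble in the limit to which you could apply the Hopf‑differential argument, and the information you need --- isotropy of $M_{x,r}^\perp$ --- lives entirely in the defect. Your closing remark that one must ``propagate isotropy from the bubble scale $\rf_x$ up to all scales $r$'' correctly diagnoses the trouble, but the compactness argument as written does not do this.

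The paper bypasses compactness entirely with a direct, quantitative argument at the fixed scale $r$: plugging the vector field $\xi^j(y)=r^2\rho_r(y-x)\langle y-x_0,e\rangle\,e^j$ into the stationary equation \eqref{e:stationary_equation} yields, for every unit $e\in L^\perp$,
\begin{align}
\Big|\,r^2\!\int\rho_r(y-x)\langle\nabla u,e\rangle^2-\tfrac12\vartheta(x,r)\,\Big|\le C(m)\sqrt{\vartheta(x,r)\,\delta}\, ,
\end{align}
after a Cauchy--Schwarz splitting in which one factor is $\int(-\dot\rho_r)|\nabla u|^2\,d(y,x_0+L)^2\le C(m)\delta$ by Theorem~\ref{t:prelim:cone_splitting_m-2}. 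This is precisely the stationary‑equation incarnation of the Hopf‑differential vanishing --- the stress‑energy tensor is divergence free --- applied at each scale $r$ without passing to a limit. It gives $\mu_1,\mu_2=\tfrac12\vartheta(x,r)+O(\sqrt{\vartheta\delta})$ directly, hence the gap $\mu_2-\mu_3\ge c(m)\vartheta(x,r)$ for $\delta$ small, and all constants are effective.
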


Here by $\abs{O}$ we mean the $L^\infty$ norm for the operator $O$, though any other equivalent norm like the Hilbert-Schmidt norm works equally well.

\vspace{.3cm}

\subsubsection{\bf Local Approximating Submanifolds $\T_r$} \label{sss:outline:general:local_approx_submanifold}

Recall that on each ball $B_r(z)$ with $z\in \T$ and $r\geq \rf_z$ we have that the energy measure $|\nabla u|^2dy$ is $\delta$-close to the $m-2$ Hausdorff measure $\vartheta(z,r)\cH^{m-2}_{L_\cA}$ , see Section \ref{ss:prelim:cone_splitting} for a quantitative review of this.
The goal now is to find a smooth submanifold $\T_r$ which best approximates the energy measure in some precise best sense.
The care in all of this is in the delicate nature of the estimates which will be involved.\\

Let us begin with a broad discussion where we compare to similar ``best'' approximations built in \cite{NV_RH}, \cite{ENV_QR}, \cite{N_Reif}.  In these references an approximating submanifold $T_r$ is built by gluing together best approximating affine subspaces $\cL_{z,r}$ along some covering of $\T$.  
Such a $T_r$ is even better than $\sqrt\delta$-close to the energy measure, as at each point $z\in \T$ it is essentially $\sqrt{\fint_{\T\cap B_r(z)}r\dot\vartheta}\leq \sqrt\delta$ close to the energy measure on $B_r(z)$.  
This degree of accuracy is square summable in scales, i.e. $\int_{\rf_z}^1 r\dot\vartheta(z,r)\frac{dr}{r}\leq \Lambda$, and this square summable error bound becomes the underlying control needed for the rectifiable behavior of the singular sets of harmonic maps\footnote{For those following very closely, in \cite{NV_RH} the estimate was $\int_{\rf_z}^1 r\dot\vartheta(z,r)\frac{dr}{r}\leq \delta$, not just $\Lambda$.  This is because of the more restrictive definition of an annular region used in that paper, which was used to close the rectifiable loop which appears.  
In the context of the Energy Identity we need the weaker conditions in Definition \ref{d:annular_region}, and thus apriori only have a bound on the Dini sum. }.  \\

This error control will not be sufficient in the context of the Energy Identity.  Instead of building an approximating submanifold by gluing together best approximating subspaces, we will build our approximating $\T_r$ in a more precise fashion.
\begin{comment}
To begin let us introduce another energy functional $\Theta(x,r)$ given so that its heat kernel weight is the primitive of Definition \eqref{d:heat_mollifier}:

\begin{definition}[Integrated Energy]
	Let $\dot\cP(t)=-\rho(t)$ be the compactly supported negative primitive of the cutoff gaussian.  We define the energy
\begin{align}
	\Theta(x,r)\equiv r^{2-n}\int \cP\,\Big(\frac{|x-y|^2}{sr^2}\Big)|\nabla u|^2(y) \equiv r^2\int \cP_r(x-y)|\nabla u|^2\, .
\end{align}
\end{definition}
\begin{remark}
	Note that if $\rho(t)$ were precisely the gaussian $e^{-t}$, then $\cP\equiv\rho$ would be equivalent.  As we cutoff $\rho$ at scale $R$ we have that $|\cP-\rho|\leq C(m)e^{-R/2}\rho$ .
\end{remark}

Let us now consider the following.  
\end{comment}
Recall that for each $x\in B_r(\T)$ with $3r\geq d(x,\T)\vee \rf_x$ there exists a best $m-2$ approximating subspace $\LL_{x,r}$ .  The mollified energy $\vartheta(x,r)$ looks roughly like a two dimensional Gaussian which is constant in the directions of $\LL_{x,r}$, and Gaussian in the $\LL_{x,r}^\perp$-directions with a center which is close to $\T$.  
We want to pick $\T_r$ so that each point $z\in \T_r$ is the center of this Gaussian, which is to say that $z$ is the maximum of $\vartheta(x,r)$ on the affine plane $z+\LL_{z,r}$.  An implicit function theorem shows us such a submanifold will exist and be unique, and will share all the regularity properties one would expect from the submanifolds $T_r$ built by gluing together best subspaces.  
This maximal condition will also enforce an Euler-Lagrange equation, which will be the real key point in later estimates.  Summing up the basics of the construction:\\

\begin{theorem}[Approximating Submanifold]
Let $u:B_{10R}(p)\to N$ be a stationary harmonic map with $R^2\fint_{B_{10R}}|\nabla u|^2\leq \Lambda$ , and let $\cA=B_2\setminus \overline{B_{\rf_z}(\T)}$ be a $\delta$-annular region.  For each $r>0$ there exists a submanifold $\T_r\subseteq B_{3/2}$ such that the following hold:
\begin{enumerate}
	\item We can globally write $\T_r=\text{Graph}\{\ft_r:L_\cA\cap B_{3/2}\to L_\cA^\perp\}$ with $|\ft_r|+|\nabla \ft_r|+r|\nabla^2 \ft_r|\leq C(m)\sqrt\delta$
	%\item For $x\in \T_r$ with $r\geq \rf_x$ we can locally write\newline $\T_r\cap B_r(x)=\text{Graph}\{\ft_{L,r}:\LL_{x,r}\cap B_r(x)\to \LL_{x,r}^\perp\}$ with $|\nabla \ft_{L,r}|+r|\nabla^2 \ft_{L,r}|\leq C(m)\sqrt\vartheta_{}(x,2r;\LL)$  ,
	%\item $\big|\frac{d}{dr}\T_r\big|(x)\leq C(m)\,\sqrt\vartheta(x,2r;\cL^\perp)<C(m)\sqrt\delta$ .
	\item If $x\in \T_r$ with $\rf_x\geq 2r$ then $\T_r\cap B_r(x)=\T\cap B_r(x)$ ,
	\item If $x\in \T_r$ with $\rf_x\leq r$ then the following equivalent conditions hold
\begin{align}
	 \pi_{\LL_{x,r}^\perp} \nabla \vartheta(x,r)=0 \qquad \Longleftrightarrow \qquad \forall v\in \LL_{x,r}^\perp: \ \ \int -\dot\rho_r \ton{y-x} \ps{\nabla u(y)}{y-x} \ps{\nabla u(y)}{v}dy=0\, .
\end{align} 
\end{enumerate}
\end{theorem}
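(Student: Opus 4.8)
The plan is to construct $\T_r$ by an implicit function theorem argument applied to the vector-valued function $F(x) \equiv \pi_{\LL_{x,r}^\perp}\nabla\vartheta(x,r) \in \LL_{x,r}^\perp$, viewed near the center manifold $\T$. First I would observe that the three conditions in (3) are equivalent simply by unwinding the definition of $\vartheta$: differentiating $\vartheta(x,r) = r^2\int\rho_r(y-x)|\nabla u|^2$ in $x$ gives $\nabla\vartheta(x,r) = -r^2\int\dot\rho_r(y-x)\frac{y-x}{r^2}|\nabla u|^2$, and the stationary equation \eqref{e:spacial_gradient_vartheta} converts this to $\nabla\vartheta(x,r) = -2\int\dot\rho_r(y-x)\langle\nabla u,y-x\rangle\nabla u$, so projecting onto any $v\in\LL_{x,r}^\perp$ and setting the result to zero gives exactly the integral identity. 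The geometric content is that $x$ is a critical point — in fact, as I will check, a strict maximum — of $\vartheta(\cdot,r)$ restricted to the affine plane $x+\LL_{x,r}$.

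Next I would set up the implicit function theorem in graph coordinates. Working over $L_\cA$, write a candidate point as $x = (\ell, \ft_r(\ell))$ with $\ell\in L_\cA\cap B_{3/2}$ and $\ft_r(\ell)\in L_\cA^\perp$; since $d_{\Gr}(\LL_{x,r},L_\cA)\leq C(m)\sqrt\delta$ by Theorem \ref{t:prelim:cone_splitting_annular} and the best-plane characterization, for $\delta$ small the spaces $\LL_{x,r}^\perp$ and $L_\cA^\perp$ are uniformly transverse, so it suffices to solve $G(\ell,w) \equiv \pi_{L_\cA^\perp}F(\ell,w) = 0$ for $w = \ft_r(\ell)$. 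The key quantitative input is the nondegeneracy of the $w$-derivative of $G$, which amounts to a strict second-variation (Hessian) bound: along $\LL_{x,r}^\perp$-directions the energy $\vartheta(\cdot,r)$ is strictly concave with a definite gap. This follows by combining the Hessian formula \eqref{e:spacial_hessian_vartheta} with the lower bound $\vartheta(x,r)\geq \epsilon_0(m,K_N)$ from condition (a3) of the annular region: the leading term of $r^2\nabla^2\vartheta[v,v]$ in a perpendicular direction $v$ is controlled below by $-c(m)\vartheta(x,r) \leq -c(m)\epsilon_0$ (coming from the $\ddot\rho$ contraction against the Gaussian weight, which is genuinely negative in the perpendicular radial directions), while the error terms $\vartheta(x,2r;L)$ and $r^2|\partial_r\pi_{x,r}|$ are $O(\sqrt\delta)$ by Theorem \ref{t:outline:best_plane}.2 — hence for $\delta$ small the Hessian along $\LL_{x,r}^\perp$ is $\leq -\tfrac12 c(m)\epsilon_0\,\mathrm{id}$, so $\partial_w G$ is invertible with uniformly bounded inverse, and $\ft_r$ exists, is unique, and is smooth. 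The bounds $|\ft_r|+|\nabla\ft_r|+r|\nabla^2\ft_r|\leq C(m)\sqrt\delta$ in (1) then come from differentiating the implicit relation $G(\ell,\ft_r(\ell))=0$ and feeding in the regularity estimates of Theorem \ref{t:outline:best_plane}.2 for $\pi_{x,r}$ together with the estimates \eqref{e:prelim:partial_energy:basic_bounds} on $\vartheta(x,2r;L)\leq C(m)\delta$; each application of $\nabla$ or $r\nabla^2$ to $\ft_r$ lands on $\nabla F$ or $r\nabla^2\vartheta$, which carry a $\sqrt\delta$ from the partial-energy smallness.

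Finally, for (2): when $\rf_x\geq 2r$ the ball $B_r(x)$ sits well inside the bubble ball $B_{\rf_x}(x)$ where the annular region conditions give essentially no information, but $\T$ itself is defined there via condition (a1); I would argue that in this regime the best plane $\LL_{x,r}$ coincides with $L_\cA$ up to the graph perturbation of $\T$ (since (a1) says $\T$ is a $C(m)\sqrt\delta$-flat graph on \emph{all} scales, including below $\rf_x$), and the maximal/critical-point condition defining $\T_r$ reduces on these scales to the very condition \eqref{e:annular_region:ahlfors_regularity}-type centering that already defines $\T$, so $\T_r\cap B_r(x) = \T\cap B_r(x)$ by the uniqueness part of the implicit function theorem. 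The main obstacle I anticipate is making the Hessian gap genuinely uniform — i.e. ensuring the negative $\ddot\rho$-term dominates \emph{all} the error terms, including the subtle ones coming from the $r$-dependence of $\LL_{x,r}$ and from the curvature of $M$; this is precisely where the special choice of the heat-kernel mollifier $\rho$ (Definition \ref{d:heat_mollifier}, Lemma \ref{l:rho_basic_properties}) enters, since one needs the clean relation $\rho \approx -\dot\rho \approx \ddot\rho$ up to $e^{-R/2}$ to convert the $\ddot\rho$ second-variation term into a definite multiple of $\vartheta(x,r)\geq\epsilon_0$ rather than something that could conceivably cancel.
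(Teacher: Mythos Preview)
Your approach to parts (1) and (3) via the implicit function theorem is essentially the paper's (Lemma \ref{l:approximating_submanifold:local_construction}), including the reduction of the equivalence in (3) to the stationary equation. One minor correction: the dominant negative contribution to the Hessian $r^2\nabla^2\vartheta[v,v]$ along $v\in\LL_{x,r}^\perp$ comes from the $\dot\rho_r$-term in \eqref{e:spacial_hessian_vartheta}, not the $\ddot\rho_r$-term. Concretely, $2r^2\int\dot\rho_r\langle\nabla u,v\rangle^2 \approx -2r^2\int\rho_r\langle\nabla u,v\rangle^2 \leq -c(m)\vartheta(x,r)$ by the eigenvalue separation \eqref{eq_lambda_sep}, while the $\ddot\rho_r$-term is an \emph{error} controlled by $C\sqrt{r\dot\vartheta}\sqrt{\vartheta}+C\vartheta\,d(z,\T)/r$. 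So the heat-kernel relation you invoke is $\dot\rho\approx-\rho$, applied to the first-order term; the $\ddot\rho$-term is not where the definiteness comes from.

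Your argument for (2), however, has a genuine gap. You claim that when $\rf_x\geq 2r$ the critical-point condition ``reduces to the very condition that already defines $\T$'', but $\T$ is \emph{not} defined by any critical-point or centering condition: it is given as data in Definition \ref{d:annular_region} via (a1)--(a3), and there is no reason to expect $\pi_{\LL_{x,r}^\perp}\nabla\vartheta(x,r)=0$ along $\T$ at scales $r<\rf_x/2$. The reference to \eqref{e:annular_region:ahlfors_regularity} is a red herring --- that is just Ahlfors regularity of $\cH^{m-2}_\T$, not a centering equation. More fundamentally, at such scales the hypotheses of the implicit function argument are unavailable: the best plane $\LL_{x,r}$ is only defined for $3r\geq\rf_x\vee d(x,\T)$, and (a2) only controls $r\dot\vartheta(x,r)$ for $r\geq\rf_x$, so the Hessian nondegeneracy you need cannot be verified inside the bubble region. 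The paper handles this differently: in the global construction it \emph{defines} $\ft_j\equiv\ft_\T$ on balls with $\rf_{x_j}\geq 9r/4$, and glues this to the implicit-function solution (valid where $\rf_{x_j}<9r/4$) by a partition of unity. Thus $\T_r=\T$ in the large-$\rf_x$ regime by fiat, not by uniqueness, and this is precisely why the Euler--Lagrange condition (3) is only asserted for $\rf_x\leq r$.
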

\begin{remark}
	Theorem \ref{t:approximating_submanifold} states a more technically complete version of the above Theorem.
\end{remark}
\begin{remark}
	In the last equation the equality of the two conditions is not immediate, but it follows from the stationary equation applied to $u$ (see Section \ref{ss:T_r_Euler_Lagange} for the details).
	Note that $|\rho+\dot\rho|(t)<C(m)e^{-R/2}\rho(2t)$, as pointed out in Lemma \ref{l:rho_basic_properties}.
\end{remark}

\vspace{.3cm}

\subsection{\texorpdfstring{$L$}{L}-Energy on Annular Regions}\label{ss:outline:general:L_energy}

In the remaining subsections we will revisit the techniques introduced for the toy model of Section \ref{s:outline_toymodel}.  We will state precisely the correct decomposition of the energy we will use for the general case, exploiting the constructions of the last subsections, and state the main results controlling these pieces.  We will say a few words about the method of the proof, but then refer to the relevant Sections for the details.\\

We have defined $\vartheta_\cL(x,r) = \vartheta(x,r;\LL_{x,r})$ in \eqref{e:outline:L_energy}, which is the $L$-energy at the point $x\in B_2$ at scale $r>0$.  This best plane of symmetry may vary from point to point and scale to scale.  Control of $\vartheta_\cL(x,r)$ over a single point and scale may be obtained through the quantitative cone splitting.  Namely, if $B_r(x)\cap \T\neq \emptyset$ then Theorem \ref{t:prelim:cone_splitting_annular} implies the estimate:

\begin{align}\label{e:outline:general:L_pinching}
% 	\vartheta_\LL(x,r)&\leq C(m)r^{2-m}\int_{\T\cap B_{2r}(x)} |\vartheta(z,2r)-\vartheta(z,r/2)|<C(m)\delta\, .
	\vartheta_\LL(x,r)&\leq C(m)\fint_{\T\cap B_{2r}(x)} r\dot \vartheta(z,2r)<C(m)\delta\, .
\end{align}

We wish to integrate $\vartheta_\LL(x,r)$ over $\T$ in order to define the $L$-energy of $\T$ at scale $r>0$.  However,  we need to be careful to only integrate on those $r\geq \rf_x$ which are in the scale of the annulus.  Let us introduce the relevant cutoff functions for this purpose:\\

{\bf The Cutoff $\psi_\T(x,r)$ : } We need to extend the cutoff of \eqref{e:outline:toy_model:psi_T} to a general annular region.  The following is shown in Section \ref{ss:bubble_cutoff}, and the construction is fairly explicit:

\begin{lemma}[$\T$-Cutoff]\label{l:outline_general:T_cutoff}
Let $u:B_{10R}(p)\to N$ be a stationary harmonic map with $R^2\fint_{B_{10R}}|\nabla u|^2\leq \Lambda$ , and let $\cA=B_2\setminus \overline{B_{\rf_z}(\T)}$ be a $\delta$-annular region.  Then $\exists$ a smooth function $\psi_\T:B_2\times \dR^+\to \dR^+$ such that
\begin{enumerate}
	\item $\psi_\T(x,r)=1$ if $\rf_x\leq r$ ,
	\item $\psi_\T(x,r)=0$ if $\rf_x>2r$ ,
	\item $\abs{r\frac{\partial}{\partial r} \psi} + r|\nabla \psi_\T|+r^2|\nabla^2 \psi_\T| < C(m)\,$
	\item $\int\int_{\T_r}r|\nabla \psi_\T|+r^2|\nabla^2 \psi_\T|\frac{dr}{r} < C(m)$
\end{enumerate}
\end{lemma}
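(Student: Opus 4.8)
The plan is to take for $\psi_\T$ a single fixed model cutoff evaluated at the ratio $r/\rf_x$, and then read off all four properties by elementary scaling. Before doing so I would first arrange that the radius function is smooth: by the standard mollification in the remark following Definition \ref{d:annular_region}, I may assume $\ft_\T$ and $\rf:\T\to[0,\infty)$ are smooth with the bounds of $(a1)$ together with their higher-order analogues, so in particular $\rf_x\le\delta$, $|\nabla\rf_x|\le C(m)\delta$ and $\rf_x|\nabla^2\rf_x|\le C(m)\delta$; extending $\rf_x$ to $B_2$ by $\rf_x=\rf_{\pi^\T(x)}$ as in Remark \ref{r:annular_region:extension} and applying the chain rule (using $|D\pi^\T|\le C(m)$ and $\rf_x|D^2\pi^\T|\le C(m)\delta$, which follow from the graph bounds of $(a1)$) preserves these three bounds on all of $B_2$.

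Next I would fix once and for all a smooth $\phi:(0,\infty)\to[0,1]$ with $\phi\equiv0$ on $(0,\tfrac12]$, $\phi\equiv1$ on $[1,\infty)$, and $|\phi'|+|\phi''|\le C$, and set $\psi_\T(x,r)\equiv\phi\!\ton{r/\rf_x}$, with the convention $\psi_\T\equiv1$ at points where $\rf_x=0$. Since $\rf_x$ is $2\delta$-Lipschitz, near any $x_0$ with $\rf_{x_0}=0$ one has $\rf_x\le 2\delta|x-x_0|<r/2$ for $x$ in an $r$-dependent neighborhood, so $\psi_\T(\cdot,r)\equiv1$ there and $\psi_\T$ is smooth on $B_2\times(0,\infty)$. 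Properties (1) and (2) are then immediate from the normalization of $\phi$. For (3), writing $t=r/\rf_x$, a direct computation gives
\begin{align}
 r\partial_r\psi_\T&=t\,\phi'(t)\, ,\qquad r\nabla\psi_\T=-t^2\,\phi'(t)\,\nabla\rf_x\, ,\notag\\
 r^2\nabla^2\psi_\T&=t^4\phi''(t)\,\nabla\rf_x\otimes\nabla\rf_x+\phi'(t)\ton{2t^3\,\nabla\rf_x\otimes\nabla\rf_x-t^3\,\rf_x\nabla^2\rf_x}\, ;
\end{align}
since $\phi',\phi''$ are supported in $t\in[\tfrac12,1]$, plugging in $t\le1$ together with $|\nabla\rf_x|\le C(m)\delta$ and $\rf_x|\nabla^2\rf_x|\le C(m)\delta$ yields $|r\partial_r\psi_\T|+r|\nabla\psi_\T|+r^2|\nabla^2\psi_\T|\le C(m)$.

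The real content is (4), and here the point is twofold: the spatial derivatives of $\psi_\T$ are supported only in the thin window $\cur{\rf_x/2\le r\le\rf_x}$ where they are $\le C(m)$, and $\rf_x$ is independent of $r$ along the slices of $\T_r$ at a fixed $L_\cA$-coordinate. Concretely, using the approximating submanifold theorem (Section \ref{sss:outline:general:local_approx_submanifold}) I would write $\T_r=\text{Graph}\cur{\ft_r:L_\cA\cap B_{3/2}\to L_\cA^\perp}$ with $|\nabla\ft_r|\le C(m)\sqrt\delta$, so $dv_{\T_r}\le 2\,d\ell$ on $L_\cA\cap B_{3/2}$; and for $x=(\ell,\ft_r(\ell))\in\T_r$ one has $\pi_{L_\cA}(x)=\ell$, hence $\pi^\T(x)=(\ell,\ft_\T(\ell))$ and $\rf_x=\rf_{(\ell,\ft_\T(\ell))}=:\rho(\ell)$, independently of $r$. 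Then Fubini gives
\begin{align}
 \int_0^\infty\!\!\int_{\T_r}\ton{r|\nabla\psi_\T|+r^2|\nabla^2\psi_\T|}\,dv_{\T_r}\,\frac{dr}{r}
 &\le C(m)\int_0^\infty\!\!\int_{L_\cA\cap B_{3/2}}\!\chi_{\cur{\rho(\ell)/2\le r\le\rho(\ell)}}\,d\ell\,\frac{dr}{r}\notag\\
 &=C(m)\int_{L_\cA\cap B_{3/2}}\ton{\int_{\rho(\ell)/2}^{\rho(\ell)}\frac{dr}{r}}d\ell\notag\\
 &\le C(m)\,\ln 2\cdot\abs{L_\cA\cap B_{3/2}}\le C(m)\, ,
\end{align}
with the inner $\frac{dr}{r}$-integral interpreted as $0$ where $\rho(\ell)=0$ (there $\psi_\T\equiv1$ anyway), which is (4).

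The calculations are entirely routine; the one step that genuinely has to be exploited is the $r$-independence of $\rf_x$ along the $\T_r$-slices, which collapses the $\frac{dr}{r}$-integral in (4) to a single fixed-length dyadic window over each base point rather than to something that could a priori diverge. The only other small care is the smoothness of $\psi_\T$ across the zero set of $\rf_x$, handled via the Lipschitz bound, and the verification that the extension $\rf_x=\rf_{\pi^\T(x)}$ retains a scale-invariant second-derivative bound; neither presents a real obstacle.
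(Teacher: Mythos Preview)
Your proof is correct and follows essentially the same approach as the paper: both define $\psi_\T$ via a fixed one-variable cutoff applied to the ratio $\rf_x/r$, and the key mechanism for (4)---that the spatial derivatives are supported in the single dyadic window $r\in[\rf_x/2,\rf_x]$, collapsing the $\tfrac{dr}{r}$ integral to $\ln 2$ over each base point of $L_\cA$---is identical. The only difference is that the paper's full construction (Section~\ref{ss:bubble_cutoff}) multiplies by two further factors $\phi(r)$ and $\phi(|\pi_{L_\cA}(x-p)|^2)$ to enforce compact support in $r$ and in space; these are needed for later integrations by parts but are not required by the lemma as stated here, so your simpler choice suffices.
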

\begin{remark}
	Though $\psi_\T(y,r)$ is defined on $B_2$ we will in practice restrict it to either $\T$ or $\T_r$.\\
\end{remark}

{\bf The $L$-Energy $\vartheta_\LL(\T_r)$ along $\T$: } Having defined a cutoff for the bubble center manifold $\T_r$ we can define

\begin{align}\label{e:outline:general:L_energy_T}
	\vartheta_\LL(\T_r)\equiv \int_{\T_r} \psi_\T(y,r)\vartheta_\LL(y,r)\, .
\end{align}

Note that as $\psi_\T(x,r)$ vanishes for those $x\in \T_r$ with $\rf_x>2r$ we are only integrating on the portion of $\T_r$ relevant for scale $r>0$.  In Lemma \ref{l:energy_decomposition:scale_r_properties} of Section \ref{s:energy_decomposition} we will study some of the fundamental properties of $\vartheta_\LL(\T_r)$ and prove:\\

\begin{lemma}[Properties of $\vartheta(\T_r\,;\cL)$]\label{l:outline:L_energy:basic_properties}
Let $u:B_{10R}(p)\to N$ be a stationary harmonic map with $R^2\fint_{B_{10R}}|\nabla u|^2\leq \Lambda$ , and let $\cA=B_2\setminus \overline{B_{\rf_z}(\T)}$ be a $\delta$-annular region.	 Then the following hold:
\begin{enumerate}
	\item For any $s\leq r\leq 1$ we have the almost monotonicity estimate 
\begin{align}\label{e:outline:general:L_energy_almost_mon}
	\vartheta_\LL(\T_s)\leq C(m)\,\vartheta_\LL(\T_r) \, .
\end{align}
\item For any $0<r<2$ we have the pinching estimate 
\begin{align}\label{e:outline:general:L_pinching_T}
	\vartheta_\LL(\T_r)\leq C(m,R)\int_\T |\vartheta(x,2r)-\vartheta(x,r)|\, .
\end{align}
\end{enumerate}
\end{lemma}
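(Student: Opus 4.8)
The plan is to treat both estimates as the general-case analogues of the $L$-energy computations carried out for the toy model in Section \ref{ss:outline_toymodel:L_energy}, the new feature being that the optimal plane of symmetry now varies with point and scale and must be tracked through the best-plane estimates of Theorem \ref{t:outline:best_plane}. I would use throughout that for $s\le r$ the $x$-support of $\psi_\T(\cdot,s)$ is contained in that of $\psi_\T(\cdot,r)$ (both being cut off by $\rf_x\le 2s$, resp. $\rf_x\le 2r$), that $\T$ and every $\T_r$ are bilipschitz graphs over $L_\cA$ and hence Ahlfors regular with distortion $\le C(m)\sqrt\delta$ by \eqref{e:annular_region:ahlfors_regularity}, that $\psi_\T$ satisfies the bounds of Lemma \ref{l:outline_general:T_cutoff}, and that for $y\in\T_r$ in the support of $\psi_\T(\cdot,r)$ there is a point $\bar y\in\T$ with $|y-\bar y|\le C(m)\sqrt\delta\,r$.

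For the pinching estimate \eqref{e:outline:general:L_pinching_T} I would argue pointwise and then integrate. Fix $y\in\T_r$ with $\psi_\T(y,r)\neq 0$ and a nearby point $\bar y\in\T$ as above. Since $\vartheta_\cL(y,r)=\min_L\vartheta(y,r;L)$ and $|y-\bar y|$ is much smaller than $r$, the kernel comparison of Lemma \ref{l:rho_basic_properties} gives $\vartheta(y,r;L)\le C(m)\,\vartheta(\bar y,1.1r;L)$ for every $L$, hence $\vartheta_\cL(y,r)\le C(m)\,\vartheta_\cL(\bar y,1.1r)$; applying the annular cone splitting of Theorem \ref{t:prelim:cone_splitting_annular} at $\bar y$ then yields the pointwise bound $\vartheta_\cL(y,r)\le C(m)\fint_{\T\cap B_{2r}(\bar y)} r\dot\vartheta(z,2r)\,dv_\T(z)$, which is just the averaged form of \eqref{e:outline:general:L_pinching}. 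Integrating this over $\T_r$ against $\psi_\T(\cdot,r)\le 1$ and exchanging the order of integration, the Ahlfors regularity of $\T$ and $\T_r$ bounds the resulting weight $\int_{\T_r\cap B_{3r}(z)} dv_{\T_r}(y)/\,dv_\T(\T\cap B_{2r}(\bar y))$ by $C(m)$, so $\vartheta_\cL(\T_r)\le C(m)\int_\T r\dot\vartheta(z,2r)\,dv_\T(z)$. Finally $r\dot\vartheta(z,2r)$ is converted into monotone pinching using the near-Gaussian property of $\rho$: by the substitution $t=\abs{w-z}^2/2s^2$ together with Lemma \ref{l:rho_basic_properties} one has $-\dot\rho_{r}(w-z)\le C(m,R)\int_{r/2}^{2r}\bigl(-\dot\rho_s(w-z)\bigr)\tfrac{ds}{s}$ pointwise, and integrating against $\langle\nabla u,w-z\rangle^2$ and using the monotonicity formula \eqref{e:outline:mollified_energy_monotonicity} gives $r\dot\vartheta(z,2r)\le C(m,R)\,\bigl|\vartheta(z,4r)-\vartheta(z,r)\bigr|$, which up to a fixed dilation of the radius is the claimed bound.

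For the almost-monotonicity \eqref{e:outline:general:L_energy_almost_mon} I would follow the codimension-two kernel domination used for the toy model, but now carry the varying best plane. For $y\in\T_s$ with $\psi_\T(y,s)\neq0$ let $x(y)\in\T_r$ be the point with the same $L_\cA$-projection; then $\vartheta_\cL(y,s)=\min_L\vartheta(y,s;L)\le\vartheta(y,s;\cL_{x(y),r})$, so
\begin{align}
	\vartheta_\cL(\T_s)\le \int_{\T_s}\psi_\T(y,s)\,s^2\!\int \rho_s(z-y)\,\bigl|\pi_{x(y),r}\nabla u(z)\bigr|^2\,dz\,dv_{\T_s}(y)\, .
\end{align}
For a fixed $z$, the contributing $y$ lie within $\sim Rs$ of $z$, hence within $\sim Rr$ of $\T_r$, and their planes $\cL_{x(y),r}$ differ from a single reference plane $\cL_z:=\cL_{x(z),r}$ by at most $C(m)R\sqrt{\vartheta_\cL(x(z),2r)}$ by Theorem \ref{t:outline:best_plane}.2; splitting $\nabla u(z)$ along $\cL_z$ and $\cL_z^\perp$ replaces $\bigl|\pi_{x(y),r}\nabla u(z)\bigr|^2$ by $2\bigl|\pi_{\cL_z}\nabla u(z)\bigr|^2+C(m)R^2\vartheta_\cL(x(z),2r)|\nabla u(z)|^2$. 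The crucial codimension-two point is then that, because $\T_s$ is $(m-2)$-dimensional, the Gaussian tube integral $\int_{\T_s}\psi_\T(y,s)\,s^2\rho_s(z-y)\,dv_{\T_s}(y)$ is bounded by $C(m)$ \emph{uniformly in $s$}, and is dominated by $C(m)\int_{\T_r}\psi_\T(x,r)\,r^2\rho_r(z-x)\,dv_{\T_r}(x)$ as long as the local displacement of $\T_r$ from $\T_s$ is $\lesssim r$ in the support region of $\psi_\T(\cdot,s)$. Carrying this through, and using Theorem \ref{t:outline:best_plane}.2 once more to pass from $\cL_z$ back to the pointwise best planes on the $\T_r$ side, one arrives at $\vartheta_\cL(\T_s)\le C(m)\,\vartheta_\cL(\T_r)$ plus errors built entirely from $L$-energies of the form $\vartheta_\cL(\cdot,2r)$ (times the bounded energy $\vartheta(\cdot,r)$), which are $O(\sqrt\delta)$ and which, after the factor-two case $\vartheta_\cL(\T_{r/2})\le C(m)\vartheta_\cL(\T_r)$ is disposed of directly (over a dyadic scale range the best planes move by only $C(m)\sqrt{\vartheta_\cL}$, so all cross terms are a small multiple of $\vartheta_\cL(\T_r)$), can be absorbed into $C(m)\vartheta_\cL(\T_r)$, allowing at worst a suppressed dependence on $R$ and $\Lambda$.

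The main obstacle is this tension in \eqref{e:outline:general:L_energy_almost_mon}: the codimension-two kernel estimate is naturally a statement about a \emph{single} fixed plane (as in the toy model), whereas the optimal plane $\cL_{x,r}$ genuinely drifts with point and, more seriously, with scale — the crude scale-drift bound over $[s,r]$ degrades like $\sqrt{\log(r/s)}$, so the \emph{uniform} constant $C(m)$ can only be obtained through the refined splitting above in which every error is itself an $L$-energy and hence small. The pinching estimate \eqref{e:outline:general:L_pinching_T} is by comparison routine, being essentially the integrated form of the annular cone splitting. (The numerical radii appearing in the statements, e.g. $2r$ versus $4r$, are representative; in the body they would be tracked, using the local pinching control $(a2)$ of Definition \ref{d:annular_region} to compare consecutive scales when needed.)
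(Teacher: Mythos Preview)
Your argument for the pinching estimate \eqref{e:outline:general:L_pinching_T} is essentially correct and matches the paper's: pointwise cone splitting on $\T$ (Theorem \ref{t:prelim:cone_splitting_annular}), passage from $\T_r$ to $\T$ via the $C\sqrt\delta r$ displacement, and a Fubini/Ahlfors-regularity swap.

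For the almost monotonicity \eqref{e:outline:general:L_energy_almost_mon}, however, your plane-drift bookkeeping does not close. The error you generate when passing from the reference plane $\cL_z=\cL_{x(z),r}$ back to the pointwise planes $\cL_{x,r}$ on the $\T_r$ side is, after integration,
\[
C(m)R^2\int_{\T_r}\psi_\T(x,r)\,\vartheta_\cL(x,2r)\,\vartheta(x,r)\;\le\;C(m,R,\Lambda)\int_{\T_r}\psi_\T(x,r)\,\vartheta_\cL(x,2r)\, .
\]
This is \emph{not} controlled by $\vartheta_\cL(\T_r)=\int_{\T_r}\psi_\T(x,r)\,\vartheta_\cL(x,r)$: there is no inequality $\vartheta_\cL(x,2r)\le C\,\vartheta_\cL(x,r)$ (only the reverse holds, cf.\ Remark \ref{r:d_LLr_LL2r_small}), and the ``$O(\sqrt\delta)$'' you invoke is a bound on the Grassmannian drift, not on the ratio of the error to $\vartheta_\cL(\T_r)$. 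Since $\vartheta_\cL(\T_r)$ can be arbitrarily small compared to $\delta$, the absorption fails even with a constant $C(m,R,\Lambda)$.

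The paper avoids this entirely by a simpler device: cover $\T_s$ by balls $B_{r/200}(z)$ and on each such ball compare every $\vartheta_\cL(x,s)$ to the \emph{single fixed} plane $\cL_{\bar z,\,r/2}$, where $\bar z\in\T_r$ has the same $L_\cA$-projection as $z$. Minimality gives $\vartheta_\cL(x,s)\le\vartheta(x,s;\cL_{\bar z,r/2})$ for free, with no drift term; the Fubini/codimension-two kernel bound then yields $\le C(m)r^{m-2}\vartheta_\cL(\bar z,r/2)$, and finally $\vartheta_\cL(\bar z,r/2)\le C(m)\vartheta_\cL(x',r)$ for $x'\in\T_r\cap B_{r/100}(\bar z)$ is just a nearby-point/comparable-scale comparison (your ``factor-two case''). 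The moral: rather than tracking the optimal plane along $\T_s$ and reconciling later, throw away optimality on the $\T_s$ side in favor of a plane chosen at the target scale --- minimality absorbs the loss, and no drift error is ever created.
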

\begin{remark}
	Note in the second estimate the integral is over $\T$.  This is a technical convenience for estimates, the integral may equally well be taken over $\T_r$.
\end{remark}
\vspace{.3cm}

Note that the first estimate above has some mild subtlety to it.  The almost monotonicity of Lemma \ref{l:outline:L_energy:basic_properties}.(1) is not equation based but instead relies heavily on the $m-2$ dimensional nature of the bubble center manifold $\T$, and one should also recall that the best planes at scale $s$ are not those from scale $r$.\\

Integrating \eqref{e:outline:general:L_pinching_T} over scale and using \eqref{e:outline:general:L_energy_almost_mon} we obtain the log estimate 
\begin{align}
	|\ln r|\vartheta_\LL(\T_r) = \int_{r}^1 \frac{\vartheta_\LL(\T_r)}{s} &\stackrel{\eqref{e:outline:general:L_energy_almost_mon}}{\leq} C(m)\int_{r}^1 \frac{\vartheta_\LL(\T_s)}{s}\notag\\
&\stackrel{\eqref{e:outline:general:L_energy_almost_mon}}{\leq} C(m)\sum_{r\leq s_a=2^{-a}\leq 2}\vartheta_\LL(\T_{s_a}) \notag\\
	&\stackrel{\eqref{e:outline:general:L_pinching_T}}{\leq} C(m)\int_{\T}\sum_{\rf_x\leq s_a=2^{-a}\leq 2}|\vartheta_{}(x,2s_a)-\vartheta_{}(x,s_a)|\notag\\
	&\leq C(m)\int_{\T}|\vartheta(x,4)-\vartheta(x,\rf_x)|\, ,\notag\\
	&\leq C(m)\Lambda\, .
\end{align}

In particular, up to the proofs of Lemma \ref{l:outline_general:T_cutoff} and Lemma \ref{l:outline:L_energy:basic_properties}, we have the log decay estimates:\\

\begin{theorem}[Log Decay of $L$-Energy]\label{t:outline:L-energy_estimate}
Let $u:B_{10R}(p)\to N$ be a stationary harmonic map with \newline $R^2\fint_{B_{10R}}|\nabla u|^2\leq \Lambda$ , and let $\cA=B_2\setminus \overline{B_{\rf_z}(\T)}$ be a $\delta$-annular region.	 Then the following hold:
\begin{align}
	\vartheta(\T_r\,;\LL) &\leq C(m,R)\min\cur{\delta, \frac{\Lambda}{|\ln r|}}
\end{align} 
\end{theorem}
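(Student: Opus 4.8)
The plan is to assemble the estimate from the two ingredients already packaged as Lemma \ref{l:outline_general:T_cutoff} and Lemma \ref{l:outline:L_energy:basic_properties}, exactly as in the outline computation preceding the statement, and then to verify the two uniform bounds $\vartheta(\T_r;\LL)\leq C(m,R)\delta$ and $\vartheta(\T_r;\LL)\leq C(m,R)\Lambda/|\ln r|$ separately. The $\delta$-bound is the easy one: combining the cone-splitting pinching estimate \eqref{e:outline:general:L_pinching} with the definition \eqref{e:outline:general:L_energy_T}, integrating over $\T_r$ against the cutoff $\psi_\T(\cdot,r)$ (whose support has bounded $(m-2)$-measure by the Ahlfors regularity \eqref{e:annular_region:ahlfors_regularity} and the graph description of $\T_r$ from Theorem \ref{t:approximating_submanifold}.1), and using that $r\dot\vartheta(z,r)<\delta$ on $\T$ by condition $(a2)$, gives $\vartheta(\T_r;\LL)\leq C(m)\delta$ directly. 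So the content is entirely in the $\Lambda/|\ln r|$ bound.

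For the logarithmic bound I would reproduce the displayed chain of inequalities verbatim. First, by the almost monotonicity of Lemma \ref{l:outline:L_energy:basic_properties}.(1) we have $\vartheta_\LL(\T_r)\leq C(m)\vartheta_\LL(\T_s)$ for all $s\leq r$, so
\begin{align}
	|\ln r|\,\vartheta_\LL(\T_r) = \int_r^1 \frac{\vartheta_\LL(\T_r)}{s}\,ds \leq C(m)\int_r^1 \frac{\vartheta_\LL(\T_s)}{s}\,ds \leq C(m)\sum_{r\leq 2^{-a}\leq 2}\vartheta_\LL(\T_{2^{-a}})\, ,
\end{align}
where the last step replaces the integral over dyadic scales by the sum over dyadic scales, again using the almost monotonicity to compare $\vartheta_\LL(\T_s)$ with $\vartheta_\LL(\T_{2^{-a}})$ on each dyadic block. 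Next, the pinching estimate \eqref{e:outline:general:L_pinching_T} bounds each term by $C(m,R)\int_\T|\vartheta(x,2\cdot 2^{-a})-\vartheta(x,2^{-a})|$, and since $\vartheta(x,\cdot)$ is monotone nondecreasing in $r$ the dyadic sum telescopes into $C(m,R)\int_\T|\vartheta(x,4)-\vartheta(x,\rf_x)|\leq C(m,R)\int_\T\vartheta(x,4)$, which is bounded by $C(m,R)\Lambda$ using $\cH^{m-2}_\T(B_2)\leq C(m)$ and the energy bound $\vartheta(x,4)\leq C(m,R)\Lambda$ (here monotonicity of $\vartheta$ and the comparison in Remark after Definition \ref{d:heat_mollifier} are used). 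One small bookkeeping point: $\vartheta_\LL(\T_s)$ is only meaningfully defined/nonzero for $s$ at least comparable to the relevant $\rf_x$, but this is harmless since the cutoff $\psi_\T(x,s)$ vanishes for $\rf_x>2s$, so the sum over scales below $\rf_x$ at a given point contributes nothing — this is exactly why the telescoped integrand is $|\vartheta(x,4)-\vartheta(x,\rf_x)|$ rather than $|\vartheta(x,4)-\vartheta(x,0)|$.

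The main obstacle I anticipate is not in this assembly — which is purely formal given the two lemmas — but in being careful that the almost monotonicity $\vartheta_\LL(\T_s)\leq C(m)\vartheta_\LL(\T_r)$ is genuinely available with a constant independent of the scales and of $\delta$. As the remark after Lemma \ref{l:outline:L_energy:basic_properties} stresses, this monotonicity is \emph{not} equation-based: the best planes $\LL_{x,s}$ at scale $s$ differ from those $\LL_{x,r}$ at scale $r$, and the submanifolds $\T_s$ and $\T_r$ differ as well, so one cannot simply invoke monotonicity of $\vartheta(x,\cdot)$. Instead it relies on the codimension-two nature of $\T$, in the form that $r^2\int_{\T_r}\psi_\T(x,r)\rho_r(y-x)\,dv_{\T_r}(x)\geq c(m)\,s^2\int_{\T_s}\psi_\T(x,s)\rho_s(y-x)\,dv_{\T_s}(x)$ pointwise in $y$, which is the general-case analogue of the computation in Section \ref{ss:outline_toymodel:L_energy}; I would defer its proof to Lemma \ref{l:energy_decomposition:scale_r_properties} / Lemma \ref{l:outline:L_energy:basic_properties}. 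Once those are in hand, the theorem follows by taking the minimum of the two bounds just established.
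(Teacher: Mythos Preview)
Your proposal is correct and follows exactly the paper's own argument: the $\delta$-bound via the pointwise pinching estimate \eqref{e:outline:general:L_pinching} integrated against the cutoff, and the $\Lambda/|\ln r|$-bound via the chain (almost monotonicity $\Rightarrow$ Dini integral $\Rightarrow$ dyadic sum $\Rightarrow$ pinching estimate \eqref{e:outline:general:L_pinching_T} $\Rightarrow$ telescoping). One slip to fix: you write ``$\vartheta_\LL(\T_r)\leq C(m)\vartheta_\LL(\T_s)$ for all $s\leq r$'', but the almost monotonicity of Lemma \ref{l:outline:L_energy:basic_properties}.(1) gives this for $r\leq s$ (smaller scale bounded by larger), which is exactly what you then use correctly in the integral $\int_r^1$ where $s\geq r$.
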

\begin{remark}
	It will be a consequence of the Energy Identity, and more precisely Theorem \ref{t:radial_energy:radial_energy}, that this estimate can be improved to $\vartheta(\T_r\,;\LL) \leq  C \frac{\epsilon}{|\ln r|}$
\end{remark}
\vspace{.3cm}

\vspace{.3cm}

\subsection{Angular Energy on Annular Regions}\label{ss:outline:general:angular_energy}

Let us now move on to analyzing the angular energy in an annular region.  As with the $L$-energy we now have that  the definition of the {\it angular component} of the energy is itself a function of point and scale.  A choice of angle depends not only on a best plane $\cL_{x,r}$ but also a center for this plane, which is to say a best affine plane.  For points $x\in \T_r$ the natural choice is $x$ itself.\\

In order to generalize the angular energy of Section \ref{ss:outline_toymodel:angular_energy}, let us choose a scale $r>0$ along with a point $x\in \T_r$.  Associated with this choice is a best affine plane $x+\cL_{x,r}$ from which we can construct an isometry $\dR^m\equiv \LL_{x,r}\oplus \LL_{x,r}^\perp$ under the affine assignment
\begin{align}
	(y_{x,r},y_{x,r}^\perp)\in \LL_{x,r}\oplus \LL_{x,r}^\perp\longrightarrow x+y_{x,r}+y_{x,r}^\perp\in \dR^m\, .
\end{align}
We can write $\cL^\perp_{x,r}$ in polar coordinates $(s_{x,r}^\perp,\alpha^\perp_{x,r})$, where $s_{x,r}^\perp=|\pi^\perp_{x,r}(y-x)|$.
Note then that a point $y\in \dR^m$ with $s_{x,r}^\perp=0$ must live on the affine plane $x+\cL_{x,r}$.
We can then define for $x\in \T_r$ the angular energy functional $\cEE(\cdot):B_2 \to \dR^+$ by 
\begin{align}
	%E_\alpha(x,r,y) =&\cEn(y)=|s_{x,r}^\perp|^2\langle \nabla u(y), \alpha^\perp_{x,r}\rangle^2\, ,\\
	\cE_\alpha(x,r,y) =&\cEE(y)=|s_{x,r}^\perp|^2\fint_{S^1} \langle \nabla u(y), \alpha^\perp_{x,r}\rangle^2d\alpha_{x,r}^\perp\, ,
\end{align}
where we have sloppily also denoted by $\alpha^\perp_{x,r}$ the unit angular vector field away from $x+\LL_{x,r}$.
By construction, the function $\cE_\alpha(x,r,\cdot)$ is rotationally invariant wrt the affine plane $x+\cL^\perp_{x,r}$.  \\
\begin{comment}
It will sometimes be convenient to view $\cF_\alpha(z,r,\cdot):\cL_{z,r}\times \dR^+ \to \dR^+$ as
\begin{align}
	\cF_\alpha(z,r,y_{\cL_{z,r}},s_\perp) =s_\perp^{\, 2}\int_{S^1} \langle \nabla u, \alpha_\perp\rangle^2\, ,
\end{align}
where we have sloppily also denoted by $\alpha^\perp_{z,r}$ the unit angular vector field.  \\
\end{comment}

Our uniformly subharmonic estimate on $\cE_\alpha(x,r,y)$ will be written as an inequality on the $y$ coordinates for each $x\in \T_r$.  To state this, recall as in \eqref{e:outline:toy_model:conformal_laplacian} the definition of conformal Laplacian, which depends on an affine subspace $x+L^{m-2}\subseteq \dR^m$
\begin{align}\label{e:outline:conformal_laplacian}
	\bar\Delta_L \equiv \ton{\frac{\partial}{\partial\ln s_\perp}}^2+s_\perp^2\ton{\frac{\partial}{\partial\alpha_\perp}}^2+s_\perp^2\Delta_L\, .
\end{align}

Our first result is the following:\\

\begin{theorem}[Uniform Subharmonicity]\label{t:outline:uniform_subharmonic}
Let $u:B_{10R}(p)\to N$ be a stationary harmonic map with $R^2\fint_{B_{10R}}|\nabla u|^2\leq \Lambda$ , and let $\cA=B_2\setminus \overline{B_{\rf_z}(\T)}$ be a $\delta$-annular region.	 For each $x\in \T_r$ with $\rf_x\leq r$ and each $y\in B_{R}(x)$ such that $s^\perp_{x,r}=|y^\perp_{x,r}|=|\pi^\perp_{x,r}(y-x)|\in [e^{-R},R]r$ we have that
\begin{align}
	\bar\Delta_{x,r}\cEE=\bar\Delta_{\cL_{x,r}}\cEE \geq \big(2-C(m,R)\delta\,\big)\,\cEE\geq \cEE\, .
\end{align}
\end{theorem}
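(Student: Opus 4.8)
The plan is to reduce the general statement to the toy-model computation of Lemma \ref{l:outline:toy_model:angular_energy:superconvexity}, treating each pair $(x,r)$ with $x\in\T_r$, $\rf_x\le r$, as its own toy model centered on the affine plane $x+\cL_{x,r}$. First I would fix such a pair and a point $y\in B_R(x)$ with $s^\perp_{x,r}=|\pi^\perp_{x,r}(y-x)|\in[e^{-R},R]r$; the hypotheses $s^\perp_{x,r}\ge e^{-R}r$ together with Remark \ref{rm:annular:regularity} and the cone splitting on annular regions (Theorem \ref{t:prelim:cone_splitting_annular}) guarantee that $B_{\bar d}(y)$ is an $\epsilon$-regularity ball for $\bar d\approx d(y,x+\cL_{x,r})\approx s^\perp_{x,r}$, so $\bar d^{2-m}\int_{B_{2\bar d}(y)}|\nabla u|^2\le C(m,R)\delta\le\epsilon_0$. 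This is the point that replaces \eqref{eq_eps_reg_w}: the good radius is now measured from the local best affine plane $x+\cL_{x,r}$ rather than from the global $L$, but because $d_{\Gr}(\cL_{x,r},L_\cA)\le C(m)\sqrt\delta$ by Theorem \ref{t:outline:best_plane} and $\T_r$ is a small graph over $L_\cA$, the two distances agree up to $(1+C(m)\sqrt\delta)$, which is harmless.

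Next I would run the Bochner-type computation from the proof of Lemma \ref{l:outline:toy_model:angular_energy:superconvexity} verbatim, but in the adapted cylindrical coordinates $(y_{\cL_{x,r}},s^\perp_{x,r},\alpha^\perp_{x,r})$ built from the isometry $\dR^m\equiv\cL_{x,r}\oplus\cL_{x,r}^\perp$ centered at $x$. Expanding $\tfrac12\bar\Delta_{\cL_{x,r}}\cEE$, the same telescoping of $\partial/\partial\ln s^\perp$ derivatives produces the three groups of terms: a genuinely nonnegative Hessian term $s_\perp^2\fint_{S^1}|\nabla_{\vec\alpha_\perp}\nabla_{\vec\alpha_\perp}u|^2$, which by the Poincaré inequality on $S^1$ dominates $s_\perp^2\fint_{S^1}|\nabla_{\vec\alpha_\perp}u|^2=\cEE$; a mixed term $s_\perp^4\fint_{S^1}\nabla_{\vec\alpha_\perp}u\cdot\nabla_{\vec\alpha_\perp}(\Delta u)$, which by Lemma \ref{l:nabla_Delta_u_epsilon_reg} (applied on the $\epsilon$-regularity ball $B_{\bar d}(y)$) is bounded by $C(n,K_N)\delta\,\cEE$; and the term $s_\perp^4\fint_{S^1}|\pi_L\nabla\nabla_{\vec\alpha_\perp}u|^2\ge 0$. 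Collecting, $\bar\Delta_{\cL_{x,r}}\cEE\ge(2-C(m,R)\delta)\cEE$, and choosing $\delta\le\delta(m,R)$ gives the final bound $\ge\cEE$.

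The one genuinely new bookkeeping issue — and what I expect to be the main obstacle — is that the coordinate frame $(\vec s^\perp_{x,r},\vec\alpha^\perp_{x,r},\cL_{x,r})$ is fixed for each $(x,r)$ and the Laplacian $\bar\Delta_{\cL_{x,r}}$ acts only in the $y$-variable; so unlike the toy model, where one genuinely has a single cylindrical structure on $B_2$, here one must check that differentiating $\cEE(y)$ in $y$ (holding $x,r$ fixed) is exactly the operation appearing in the conformal Laplacian, and that the $S^1$-average is taken over circles of the fixed affine plane $x+\cL_{x,r}$. Since $\cEE$ is by construction rotationally invariant about $x+\cL^\perp_{x,r}$, the $\partial/\partial\alpha_\perp$ terms in $\bar\Delta_{\cL_{x,r}}$ annihilate it and only the $\partial/\partial\ln s_\perp$ and $s_\perp^2\Delta_L$ parts survive — which is precisely what the telescoping computation uses. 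The curvature-of-$M$ errors and the fact that $\T_r$ is not flat never enter the pointwise inequality at $y$ itself; they only affect which $(x,r)$ are admissible, and that has already been absorbed into the hypotheses "$x\in\T_r$, $\rf_x\le r$" and "$s^\perp_{x,r}\in[e^{-R},R]r$" together with Remark \ref{rm:annular:regularity}. Thus the full proof is: (i) establish the $\epsilon$-regularity estimate on $B_{\bar d}(y)$ from the annular-region hypotheses and the comparison $d(y,x+\cL_{x,r})\approx s^\perp_{x,r}$; (ii) run the telescoped Bochner computation in the fixed $(x,r)$-adapted cylindrical coordinates; (iii) control the $\langle\nabla u,\nabla\Delta u\rangle$ term by Lemma \ref{l:nabla_Delta_u_epsilon_reg} and the leftover Hessian term by the $S^1$-Poincaré inequality; (iv) absorb the $C(m,R)\delta$ loss into the constant $2$.
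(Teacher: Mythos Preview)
Your proposal is correct and takes exactly the same approach as the paper. The paper's own proof (given in Theorem~\ref{t:angular_energy:uniform_subharmonic}) consists of a single sentence: ``The proof is exactly as Lemma~\ref{l:outline:toy_model:angular_energy:superconvexity} as we are treating $\cEE$ as a family of the toy models in the $y$-variable parametrized by $x\in\T_r$.'' Your outline supplies precisely the details this sentence elides --- in particular your identification of the $\epsilon$-regularity ball via $\bar d\approx s^\perp_{x,r}$ and the observation that the $y$-derivatives in $\bar\Delta_{\cL_{x,r}}$ do not see the $(x,r)$-dependence of the frame --- so you have in fact written out more than the paper does.
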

\vspace{.3cm}

The proof of the above will essentially reduce to the toy model case, as for each $x,r$ fixed we are treating $\cE_\alpha$ as a toy model in the $y$ variable.\\

\subsubsection{Superconvexity for $\hat\vartheta_\alpha(\T_r)$}

As in the toy model, our main application will be to prove angular energy estimates on the annular region.  In the general case the computations will be quite involved at moments, but we will outline the main points to take away here.  To make the estimates precise let us define for $x\in \T_r$ the reduced angular energy $\hat\vartheta_\alpha(x,r)$ as
\begin{align}
	\hat\vartheta_\alpha(x,r) =  \hat\vartheta(x,r;\alpha_{\cL_{x,r}^\perp}) \equiv \int \hat\rho(y-x;\cL_{x,r})|\pi_{x,r}^\perp(y-x)|^2 \langle \nabla u,\alpha^\perp_{x,r}\rangle^2\, ,
\end{align}
where $\alpha^\perp_{x,r}$ is the unit angular direction with respect to $x+\cL_{x,r}$, and $\pi_{x,r}=\pi_{\cL_{x,r}}$ is the projection to the best plane.  We can rewrite this as
\begin{align}
	\hat\vartheta_\alpha(x,r) =  \int \hat\rho(y-x;\cL_{x,r})\cEE(y)\, .
\end{align}
Note that the cutoff heat kernel $\hat\rho$ has support in the region controlled by Theorem \ref{t:outline:uniform_subharmonic}.  We have that $\hat\vartheta_\alpha(x,r)$ roughly measures the angular energy of $u$ on $B_r(x)$.  We can then define the scale $r$ angular energy over $\T$ by
\begin{align}
	\hat\vartheta_\alpha(\T_r) \equiv \int_{\T_r}\psi_\T(x,r)\,\hat\vartheta_\alpha(x,r) = \int_{\T_r}\psi_\T(x,r)\int \hat\rho_r(y-x; \LL_{x,r}) \cEE(y) \,dy\,dx \, .
\end{align}

The angular energy on the annular region can then be understood as
\begin{align}
	\hat\vartheta_\alpha(\cA )\equiv \int \hat\vartheta_\alpha(\T_r)\frac{dr}{r}\, .
\end{align}

We can give a poor estimate on $\hat\vartheta_\alpha(\cA)$ by applying the Quantitative Cone Splitting of Theorem \ref{t:prelim:cone_splitting_m-2}.  Namely, after some manipulation we arrive at the estimate
\begin{align}
	\hat\vartheta_\alpha(\cA)\leq C(m,R)\int_{\T}\big|\vartheta(x,2)-\vartheta(x,\rf_x)\big|\leq C(m,R)\Lambda \, .
\end{align}

However, our goal is to make our control over $\hat\vartheta_\alpha(\cA )$ small.
To accomplish this we will use the uniform subharmonicity of Theorem \ref{t:outline:uniform_subharmonic} in order to prove a subconvexity estimate on $\hat\vartheta_\alpha(\T_r)$ in line with that of the toy model.
As the submanifolds, best planes, etc... are all changing from scale to scale we inherently arrive at new error terms.
The end result of Section \ref{s:angular_energy} will be the following:

\begin{proposition}[Superconvexity for $\hat\vartheta_\alpha(\T_r)$]\label{p:outline:superconvexity} Let $u:B_{10R}(p)\to N$ be a stationary harmonic map with $R^2\fint_{B_{10R}}|\nabla u|^2\leq \Lambda$ , and let $\cA=B_2\setminus \overline{B_{\rf_z}(\T)}$ be a $\delta$-annular region.	 Then we have the estimate
\begin{align}
	\ton{r\frac{\partial}{\partial r}}^2\hat\vartheta_\alpha(\T_r) \geq \hat\vartheta_\alpha(\T_r)-\epsilon(r)\, ,
\end{align}
where 
\begin{align}
\epsilon(r)\leq C(m,R,K_N)\Bigg(\delta\int_\T \Big(r^2\big|\ddot\psi_\T\big|+r\big|\dot\psi_\T\big|+r\big|\nabla_L\psi_\T\big|+r^2\big|\nabla^2_L\psi_\T\big|\,\Big)+\sqrt{\delta}\int_{\T_r}r\dot \vartheta(x,r)	\Bigg)
\end{align}
satisfies $\int\epsilon(r)\frac{dr}{r}\leq C(m,R,K_N,\Lambda)\sqrt\delta$ .
\end{proposition}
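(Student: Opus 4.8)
The plan is to differentiate $\hat\vartheta_\alpha(\T_r)=\int_{\T_r}\psi_\T(x,r)\int \hat\rho_r(y-x;\LL_{x,r})\cEE(y)\,dy\,dx$ twice in $r$ and to organize the output into a single \emph{main term} plus errors. The scale $r$ enters through (i) the domain $\T_r$ and the Riemannian measure it induces, (ii) the bubble cutoff $\psi_\T(x,r)$, (iii) the explicit $r$ in the heat mollifier $\hat\rho_r(\,\cdot\,;\LL_{x,r})$, and (iv) the best plane $\LL_{x,r}$, which appears both in $\hat\rho_r(\,\cdot\,;\LL_{x,r})$ and inside $\cEE$ through the polar coordinates $(s^\perp_{x,r},\alpha^\perp_{x,r})$ of $\LL_{x,r}^\perp$. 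The main term is the one in which both copies of $r\partial_r$ fall on the explicit $r$ of $\hat\rho_r$ while $\psi_\T$, $\T_r$ and $\LL_{x,r}$ are held frozen at the base scale; all other terms will be treated as errors and absorbed into $\epsilon(r)$.

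For the main term, freeze $\LL=\LL_{x,r_0}$ (hence $\cEE$ as well), substitute $y=x+rw$ so the inner integral becomes $\int \rho\!\big(\tfrac{|w|^2}{2}\big)\hat\psi_R\!\big(\tfrac{|\pi_{\LL^\perp}w|^2}{2}\big)\cEE(x+rw)\,dw$, and observe that $(r\partial_r)^2$ now acts on $\cEE$ through the square of the dilation field $y_\LL\cdot\nabla_\LL+s_\perp\partial_{s_\perp}$. Integrating by parts in the $w$–variable, using $\nabla_w\rho=-w\rho$ to convert the tangential derivatives of $\cEE$ into the shape of the conformal Laplacian $\bar\Delta_{\LL}$ — this is precisely the algebra carried out for the toy-model Proposition \ref{p:outline:toy_model:superconvexity} — rewrites the main term as $\int_{\T_r}\psi_\T(x,r)\int \hat\rho_r(y-x;\LL)\,\bar\Delta_{\LL}\cEE(y)\,dy\,dx$, up to: (a) contributions from where the cutoffs $\hat\psi_R$ (near $L$) and $\rho$ (near the outer radius) transition, which are $e^{-cR}$-negligible by Lemma \ref{l:rho_basic_properties}, Lemma \ref{l:rho_tilde_estimates} and the pointwise bound $d(y,\T)^2|\nabla u|^2(y)<C(m,R)\delta$ on $\cA_\epsilon$ of Remark \ref{rm:annular:regularity}; and (b) terms where the integration by parts slides an $\LL$–derivative onto the $x$–dependence of $\psi_\T$, which produce the $r|\nabla_L\psi_\T|$ and $r^2|\nabla^2_L\psi_\T|$ contributions to $\epsilon(r)$. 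Since $\operatorname{supp}\hat\rho_r(\,\cdot\,;\LL)$ lies entirely in $\{e^{-R}r\le s^\perp_{x,r}\le Rr,\ |y-x|\le Rr\}$, the Uniform Subharmonicity of Theorem \ref{t:outline:uniform_subharmonic} applies on the whole support and gives $\bar\Delta_{\LL_{x,r}}\cEE\ge \big(2-C(m,R)\delta\big)\cEE\ge\cEE$ there; hence the main term is $\ge \hat\vartheta_\alpha(\T_r)$ up to the errors listed above.

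The remaining error terms each carry at least one factor of: $r|\dot\psi_\T|$ or $r^2|\ddot\psi_\T|$ (from $r\partial_r$ hitting $\psi_\T$ directly); $r|\partial_r\ft_r|$, $r^2|\partial_r\nabla\ft_r|$ or products of $\nabla\ft_r$ with $\partial_r\nabla\ft_r$ (from the motion of $\T_r$ and of its volume element); or $r|\partial_r\pi_{x,r}|$, $r^2|\partial_r^2\pi_{x,r}|$ (from the scale–dependence of $\LL_{x,r}$ in $\hat\rho_r$ and in $\cEE$). The $\psi_\T$ factors multiply $\hat\vartheta_\alpha(x,r)\le C(m)\delta$ (cone splitting, Theorem \ref{t:prelim:cone_splitting_annular}), producing the $\delta\int_\T\big(r^2|\ddot\psi_\T|+r|\dot\psi_\T|+r|\nabla_L\psi_\T|+r^2|\nabla^2_L\psi_\T|\big)$ piece of $\epsilon(r)$. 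For the last two kinds, Theorem \ref{t:outline:best_plane}.2 and the corresponding regularity of $\T_r$ from Theorem \ref{t:approximating_submanifold} bound the offending factor by $C(m)\sqrt{\vartheta_\cL(x,2r)/\vartheta(x,r)}\le C(m,\epsilon_0)\sqrt{\vartheta_\cL(x,2r)}$; one then pairs this gain of $\sqrt{\vartheta_\cL(x,2r)}$ against a second copy of local energy by Cauchy–Schwarz, using $\vartheta_\cL(x,2r)\le C(m)\fint_{\T\cap B_{4r}(x)}r\dot\vartheta$ (from \eqref{e:outline:general:L_pinching}), $\hat\vartheta_\alpha(x,r)\le C(m)\fint_{\T\cap B_{2r}(x)}r\dot\vartheta$ (Theorem \ref{t:prelim:cone_splitting_m-2}) and $\vartheta(x,r)\ge c(m)\epsilon_0$ (Remark \ref{rm:vartheta_comparison}), so that every such error is bounded by $C(m,R,K_N)\sqrt\delta\int_{\T_r}r\dot\vartheta(x,r)$. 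Adding everything yields $(r\partial_r)^2\hat\vartheta_\alpha(\T_r)\ge\hat\vartheta_\alpha(\T_r)-\epsilon(r)$ with $\epsilon(r)$ as stated, and $\int\epsilon(r)\tfrac{dr}{r}\le C\sqrt\delta$: the cutoff part is $\le C(m,R,K_N)\delta$ by Lemma \ref{l:outline_general:T_cutoff}.(4) together with the fact that $r^2|\ddot\psi_\T|+r|\dot\psi_\T|$ is supported in $O(1)$ dyadic scales per point while $\T$ has bounded measure, and $\int\sqrt\delta\int_{\T_r}r\dot\vartheta(x,r)\tfrac{dr}{r}\le\sqrt\delta\int_\T\big(\vartheta(x,2)-\vartheta(x,\rf_x)\big)\,dx\le C\sqrt\delta\,\Lambda$ by the monotonicity formula \eqref{e:outline:mollified_energy_monotonicity}.

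The main obstacle is not the pointwise subharmonicity, which is already isolated as Theorem \ref{t:outline:uniform_subharmonic}, but the error accounting: one must be certain that the errors produced by the scale–dependence of $\LL_{x,r}$ and of $\T_r$ are of the scale–summable type $\sqrt\delta\,r\dot\vartheta(x,r)$ and not of the uniform type $\delta$, since a contribution of size $\delta$ per scale would accumulate to $\delta|\ln\rf_0|$ over the $\sim|\ln\rf_0|$ scales of the annular region and overwhelm the estimate. This is exactly what forces the use of the sharp best–plane regularity of Theorem \ref{t:outline:best_plane} and the Euler–Lagrange–based construction of $\T_r$ (Theorem \ref{t:approximating_submanifold}) in place of a naive gluing of best subspaces, and it is what dictates the careful matching of those gains against the pinching $\fint_{\T\cap B_r(x)}r\dot\vartheta$ delivered by cone splitting. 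The precise integration by parts behind the main term and the full bookkeeping of these errors are carried out in Section \ref{s:angular_energy}.
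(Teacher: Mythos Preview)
Your overall strategy matches the paper's: differentiate twice in $r$, isolate a main term on which the uniform subharmonicity of Theorem~\ref{t:outline:uniform_subharmonic} applies, and show every other contribution is of the Dini--summable type $\sqrt\delta\int_{\T_r}r\dot\vartheta$ or the boundary type $\delta\int_\T(r|\dot\psi_\T|+\cdots)$ using the best--plane and $\T_r$ regularity (Theorems~\ref{t:outline:best_plane}, \ref{t:approximating_submanifold}) together with cone splitting. The error accounting you describe, and in particular your insistence that the $\LL_{x,r}$-- and $\T_r$--variation errors carry a factor $\sqrt{\vartheta_\cL(x,2r)}$ so that they pair against pinching rather than accumulate as $\delta$ per scale, is exactly the point.

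There is, however, a real gap in your derivation of the main term. You write that after the substitution $y=x+rw$ one can ``integrate by parts in the $w$--variable, using $\nabla_w\rho=-w\rho$, to convert the tangential derivatives of $\cEE$ into the shape of the conformal Laplacian $\bar\Delta_\LL$''. This is not what happens. Acting twice by $r\partial_r$ on the explicit $r$ of $\hat\rho_r$ produces, after the integrations by parts in $y$, only the $L^\perp$--radial second derivative $\big(s^\perp_{x,r}\nabla_{s^\perp_{x,r}}\big)^2\cEE$; the tangential piece $|s^\perp_{x,r}|^2\Delta_{\LL_{x,r}}\cEE$ does \emph{not} appear naturally from $(r\partial_r)^2$. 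The paper obtains it by a separate step (Lemma~\ref{l:angular_energy_L_part}): one shows directly that $\int_{\T_r}\psi_\T\int\hat\rho_r\,|s^\perp_{x,r}|^2\Delta_{\LL_{x,r}}\cEE=\err(r)$ by turning $\Delta_{\LL_{x,r}}^{(y)}$ into $\Delta_{\T_r}^{(x)}$ up to errors and integrating by parts over $x\in\T_r$. Only after adding this ``free'' zero can one assemble $\big(s^\perp\nabla_{s^\perp}\big)^2+|s^\perp|^2\Delta_\LL=\bar\Delta_{x,r}$ and invoke Theorem~\ref{t:outline:uniform_subharmonic}. Your reference to the toy model does not save this: the toy--model superconvexity (Proposition~\ref{p:outline:toy_model:superconvexity}) is not proved in Section~\ref{s:outline_toymodel} but deferred to Section~\ref{s:angular_energy}, where this same two--step structure is used.

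A second, smaller point: the paper packages all the first--derivative errors into a compactly supported $C^1$ function $\epsilon_1(r)$ and writes the inequality as $\big(r\partial_r\big)^2\hat\vartheta_\alpha(\T_r)\ge\hat\vartheta_\alpha(\T_r)-\err(r)-r\frac{d}{dr}\epsilon_1(r)$. The advantage is that $\int r\frac{d}{dr}\epsilon_1(r)\frac{dr}{r}=0$ trivially, so one never has to estimate $\epsilon_1$ itself. Your proposal appears to want pointwise control on all errors, which is more work than needed.
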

\vspace{.3cm}

We can now apply the above directly in order to estimate the angular energy on an annular region, namely we can estimate

\begin{align}
	\vartheta_\alpha(\cA)&=
	\int \hat\vartheta(\T_r)\frac{dr}{r}\notag\\
	&\leq \int \Big(r\frac{\partial}{\partial r}\Big)^2\hat\vartheta_\alpha(\T_r)\frac{dr}{r}+\int\epsilon(r)\frac{dr}{r}\notag\\
	&=\int\epsilon(r)\frac{dr}{r}\notag\\
	&\leq C(m,R,K_N,\Lambda)\sqrt\delta\, .
\end{align}

Thus once we have proved the superconvexity of Proposition \ref{p:outline:superconvexity}, we have then proved the following, which is the main Theorem of Section \ref{s:angular_energy}:

\begin{theorem}[Angular Energy on Annular Regions]\label{t:outline_general:angular_energy}
Let $u:B_{10R}(p)\to N$ be a stationary harmonic map with $R^2\fint_{B_{10R}}|\nabla u|^2\leq \Lambda$ , and let $\cA=B_2\setminus \overline{B_{\rf_z}(\T)}$ be a $\delta$-annular region.	 Then
\begin{align}
	\hat\vartheta_\alpha(\cA) \leq C(m,R,K_N,\Lambda)\,\sqrt\delta\, .
\end{align} 	
\end{theorem}

\vspace{.3cm}

\subsection{Radial Energy on Annular Regions}\label{ss:outline:general:radial_energy}

Let us now discuss the radial energy on an annular region.  As with the toy model, the strategy will be to use a stationary equation to bound the radial energy by the angular energy.  It is in this step that the careful definition of $\T_r$ is required, as some of the new errors which appear are of strictly higher order and cannot be freely estimated away.\\

Let us define the radial energy at a point $x\in \T_r$ and scale $r>0$.  As with the angular energy we rely on the restricted energies, as for a fixed scale we should not attempt to measure the radial energy too close to the bubble region.  In analogy with the previous subsections we define
\begin{align}
	\hat\vartheta_n(x,r)&\equiv \int \hat\rho_r(y-x;\LL_{x,r})\big\langle\nabla u,\pi^\perp_{x,r}(y-x)\big\rangle^2\, ,
\end{align} 
where $\hat\rho_r$ is the heat kernel mollifier cutoff in the $e^{-R}$ neighborhood of $L$ as in \eqref{e:prelim:L_mollifier}.  We can integrate this over $\T_r$ with our bubble center weight $\psi_\T$ from Lemma \ref{l:outline_general:T_cutoff} to define the radial energy on $\T_r$ :
\begin{align}\label{e:outline:general:radial_energy}
	\hat\vartheta_n(\T_r)\equiv \int_{\T_r}\psi_\T(x,r)\, \hat\vartheta_n(x,r) = \int_{\T_r}\psi_\T(x,r)\int\hat\rho_r(y-x;\LL_{x,r})\langle\nabla u,\pi^\perp_{x,r}(y-x)\rangle^2\, .
\end{align}

In Section \ref{s:radial_energy} our goal will be to control the radial energy over the annular region, given as usual by
\begin{align}
	\hat\vartheta_n(\cA)\equiv \int \hat\vartheta_n(\T_r) \frac{dr}{r}\, .
\end{align}

This control will be obtained by studying the stationary equation induced by the vector field
\begin{align}
	  \xi(y) = \int_{\T_r }  \psi_{\T}(x,r) \tilde\rho_{r}(y-x;\LL_{x,r})\pi^\perp_{x,r}(y-x)\, ,
\end{align}
where recall from Section \ref{ss:restricted_energy} that $\tilde\rho_r\approx \rho$ is defined by the property that its perpendicular gradient $\nabla_{\LL_{x,r}^\perp}\tilde\rho_r=0$ vanishes in a small neighborhood of the bubble region.  One should interpret the above as the appropriate approximation of the radial vector field emanating from $\T_r$.  It is worth emphasizing that this choice of vector field is unusually delicate, other reasonable approximations of the radial vector field from $\T_r$, e.g. those obtained by pasting together local best approximations, will have higher order terms which fail to estimate correctly.\\

Our main estimate in Section \ref{s:radial_energy} will be Proposition \ref{p:radial_energy:radial_energy}, which will tell us for $\delta\leq \delta(m,K_N,R,\Lambda,\epsilon)$ that we can estimate:

\begin{proposition}[Stationary Estimate of the Radial Energy]
	Let $u:B_{10R}(p)\to N$ be a stationary harmonic map with $R^2\fint_{B_{10R}}|\nabla u|^2\leq \Lambda$ , and let $\cA=B_2\setminus \overline{B_{\rf_z}(\T)}$ be a $\delta$-annular region.   Then we can estimate
\begin{align}
\hat\vartheta_n(\T_r) +2\vartheta_\cL(\T_r)	= r\,\dot\vartheta_\cL(\T_r)+\hat\vartheta_\alpha(\T_r)+\epsilon(r)\, ,
\end{align}
where $\epsilon(r)$ satisfies $\int \epsilon(r)\frac{dr}{r}\leq \epsilon$ when $\delta\leq \delta(m,n,K_N,\Lambda,R,\epsilon)$.
\end{proposition}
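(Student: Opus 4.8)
The plan is to run the toy–model argument of Section~\ref{s:outline_toymodel} scalewise, but with the flat data replaced by the approximating submanifold $\T_r$, its best planes $\LL_{x,r}$, and the cutoff $\psi_\T$ of Lemma~\ref{l:outline_general:T_cutoff}, and then to exploit the Euler--Lagrange equation satisfied by $\T_r$ to absorb the new errors created by the bending of $\T_r$ and the $x$-dependence of $\LL_{x,r}$. Fix $r>0$ and apply the stationary identity \eqref{e:stationary_equation} to the vector field
\[
\xi(y)=\int_{\T_r}\psi_\T(x,r)\,\tilde\rho_r(y-x;\LL_{x,r})\,\pi^\perp_{x,r}(y-x)\,dv_{\T_r}(x)\, ,
\]
which is smooth and compactly supported by the regularity of $\T_r$, of $\LL_{x,r}$ (Theorem~\ref{t:outline:best_plane}) and of $\psi_\T$; here $\tilde\rho_r(\,\cdot\,;\LL_{x,r})$ is the $L$-heat mollifier of Definition~\ref{d:restricted_energy_functionals}, so that the $\LL^\perp_{x,r}$-gradient of $\tilde\rho_r$ equals $-r^{-2}\pi^\perp_{x,r}(y-x)\hat\rho_r$ and $\tilde\rho_r=\rho_r+\tilde e_r$ with $\tilde e_r$ of size $e^{-R/2}\rho_{2r}$ in $C^1$ (Lemma~\ref{l:rho_tilde_estimates}).

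Expanding $S_{ij}\partial^i_y\xi^j$, the $y$-derivative either hits the radial factor $\pi^\perp_{x,r}(y-x)$ or the mollifier $\tilde\rho_r$. The first contribution contracts with $S_{ij}$ to $\tr(\pi^\perp_{x,r})|\nabla u|^2-2|\pi^\perp_{x,r}\nabla u|^2=2|\pi_{\LL_{x,r}}\nabla u|^2$ (using $\dim\LL^\perp_{x,r}=2$), and integrating against $\psi_\T\tilde\rho_r$ over $\T_r$ yields $2\vartheta_\cL(\T_r)$ up to an $e^{-R/2}$ error. For the second contribution one splits $\nabla_y\tilde\rho_r$ into its $\LL^\perp_{x,r}$-part $-r^{-2}\pi^\perp_{x,r}(y-x)\hat\rho_r$ and its $\LL_{x,r}$-part. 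Using $|\nabla u|^2|\pi^\perp_{x,r}(y-x)|^2=|\pi^\perp_{x,r}(y-x)|^2\big(|\pi_{\LL_{x,r}}\nabla u|^2+\langle\nabla u,n^\perp_{x,r}\rangle^2+\langle\nabla u,\alpha^\perp_{x,r}\rangle^2\big)$ together with $\langle\nabla u,\pi^\perp_{x,r}(y-x)\rangle^2=|\pi^\perp_{x,r}(y-x)|^2\langle\nabla u,n^\perp_{x,r}\rangle^2$, the $\LL^\perp$-part reorganizes, after integration over $y$ and over $x\in\T_r$ with weight $\psi_\T$, into $\hat\vartheta_n(\T_r)$, $-\hat\vartheta_\alpha(\T_r)$, and a positive $L$-type term which — by the $r$-derivative formula for the $L$-energy, i.e. the stationary identity again (cf. Section~\ref{ss:T_r_Euler_Lagange}) — is identified with $r\dot\vartheta_\cL(\T_r)$ modulo errors of the allowed order. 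Moving $\hat\vartheta_\alpha(\T_r)$ to the right gives the claimed identity up to the error terms.

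The delicate term is the $\LL_{x,r}$-component of $\nabla_y\tilde\rho_r$, producing the cross term $-2\int_{\T_r}\psi_\T\int\langle\nabla u,\pi_{\LL_{x,r}}\nabla\tilde\rho_r\rangle\langle\nabla u,\pi^\perp_{x,r}(y-x)\rangle$. In the toy model this vanished after integration by parts in $x$ landing the derivative on $\psi_\T$; here, since $\tilde\rho_r(y-x;\LL_{x,r})$ depends on $x$ also through $\LL_{x,r}$, one has $\nabla^{(x)}[\tilde\rho_r(y-x;\LL_{x,r})]=-\nabla^{(y)}\tilde\rho_r+(\partial_{\LL}\tilde\rho_r)\cdot\nabla_x\pi_{x,r}$, so after the integration by parts one is left with (i) a $\psi_\T$-cutoff term bounded by $C(m,R)\delta\int_{\T_r}\big(r|\nabla\psi_\T|+r^2|\nabla^2\psi_\T|\big)$, and (ii) a term carrying $\nabla_x\pi_{x,r}$. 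For (ii) one uses $r|\nabla\pi_{x,r}|\le C(m,\epsilon_0)\sqrt{\vartheta_\cL(x,2r)}\le C\sqrt\delta$ from Theorem~\ref{t:outline:best_plane}.(2), notes by \eqref{e:prelim:partial_energy:basic_bounds} that the remaining integrand is of radial/angular type and hence controlled by $r\dot\vartheta(x,r)$, and — crucially — invokes the Euler--Lagrange equation $\pi_{\LL_{x,r}^\perp}\nabla\vartheta(x,r)=0$, i.e. $\int-\dot\rho_r(y-x)\langle\nabla u,y-x\rangle\langle\nabla u,v\rangle=0$ for all $v\in\LL^\perp_{x,r}$, which cancels the leading part of (ii) and leaves a residual bounded by $C(m,R,K_N)\sqrt\delta\int_{\T_r}r\dot\vartheta(x,r)$. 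Adding the $e^{-R/2}$ errors from $\tilde\rho_r\approx\rho_r$ and the $K_M<\delta$ curvature errors from passing to coordinates on $M$, this gives
\[
\epsilon(r)\le C(m,R,K_N)\Big(\delta\!\int_{\T_r}\!\big(r|\nabla\psi_\T|+r^2|\nabla^2\psi_\T|\big)+\sqrt\delta\!\int_{\T_r}\! r\dot\vartheta(x,r)\Big)\, .
\]
Integrating in $\tfrac{dr}{r}$, using Lemma~\ref{l:outline_general:T_cutoff}.(4), the Dini bound $\int_{\rf_x}^1 r\dot\vartheta(x,r)\tfrac{dr}{r}\le C(m,R)\Lambda$ from condition (a2) of an annular region, and the Ahlfors regularity \eqref{e:annular_region:ahlfors_regularity} of $\T$, one gets $\int\epsilon(r)\tfrac{dr}{r}\le C(m,R,K_N)\sqrt\delta\,\Lambda\le\epsilon$ once $\delta\le\delta(m,n,K_N,\Lambda,R,\epsilon)$.

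I expect the main obstacle to be precisely this cross-term estimate: a priori the contribution of $\nabla_x\pi_{x,r}$ is only bounded, via Theorem~\ref{t:outline:best_plane}, by $\sqrt\delta$ times the full local energy of $u$, which is of the same size as — not smaller than — the quantity one is trying to control, so it cannot simply be discarded; it is only the fact that $\T_r$ is the scalewise maximizer of $\vartheta(\,\cdot\,,r)$ in the $\LL^\perp$-directions, hence satisfies the stated Euler--Lagrange equation, that forces the leading part to vanish and leaves the square-summable residual $\sqrt\delta\int_{\T_r}r\dot\vartheta$. A secondary difficulty is purely bookkeeping: $\vartheta_\cL(\T_r)$, $\hat\vartheta_\alpha(\T_r)$, $\hat\vartheta_n(\T_r)$ and $r\dot\vartheta_\cL(\T_r)$ are all built from point- and scale-dependent best planes, so one must track the comparisons $\tilde\rho_r\approx\rho_r$, $\hat\rho_r\approx\rho_r$ and the passage between restricted and unrestricted energies at every step.
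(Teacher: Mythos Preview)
Your setup is correct — the paper uses exactly the same stationary vector field and arrives at the same initial decomposition (Lemma~\ref{l:radial_energy:stationary_vector_field}): the $\LL^\perp$-part of $\nabla_y\tilde\rho_r$ gives $\hat\vartheta_n(\T_r)-\hat\vartheta_\alpha(\T_r)$ plus a positive $L$-type term $\int_{\T_r}\psi_\T\int\rho_r\,|\pi^\perp_{x,r}(y-x)|^2|\pi_{x,r}\nabla u|^2$, and the $\LL_{x,r}$-part gives the cross term $\ff_1(r)$ of \eqref{e:radial_energy:f_1}. But both of your subsequent claims fail, and for the same underlying reason: the second fundamental form of $\T_r$ enters with a contribution that is \emph{not} an allowed error.

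First, the identification of the positive $L$-type term with $r\dot\vartheta_\cL(\T_r)$ ``modulo errors of the allowed order'' is false. A direct computation (Lemma~\ref{l:radial_energy:radial_derivative}) gives
\[
r\tfrac{d}{dr}\vartheta_\cL(\T_r)=\int_{\T_r}\psi_\T\!\int\rho_r\,|\pi^\perp_{x,r}(y-x)|^2|\pi_{x,r}\nabla u|^2\;+\;2r^4\!\int_{\T_r}\psi_\T\!\int\rho_r\,\langle \II_{\T_r},\nabla u\rangle^2\;+\;\ers(r)\,,
\]
and the term $r^4\int\rho_r\langle \II_{\T_r},\nabla u\rangle^2$ is of order $\vartheta_\cL(x,2r)\cdot\vartheta(x,r)$, which is \emph{not} Dini-small — it is the same size as $\vartheta_\cL(\T_r)$ itself. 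Second, your treatment of the cross term (ii) has the same gap. After integrating by parts over $\T_r$, the derivative lands on $\pi_{\T_r(x)}^\perp(y-x)$ and on the tangential frame, producing precisely the mean-curvature and second-fundamental-form terms $\ff_2,\ff_3$ of \eqref{e:radial_energy:f_2}; the Euler--Lagrange equation for $\T_r$ flips a $\pi^\perp$ to a $\pi$ in $\ff_2$ but does \emph{not} make $\ff_2+\ff_3$ small. Your assertion that ``the remaining integrand is of radial/angular type and hence controlled by $r\dot\vartheta(x,r)$'' is exactly the step that fails: the integrand is of mixed $L$/$L^\perp$ type, and the naive bound gives $C\sqrt\Lambda\,\vartheta_\cL(\T_{2r})$, which is circular.

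What actually happens (Section~\ref{ss:radial_energy:f2f3}) is that a second round of integration by parts — this time using the Euler--Lagrange for the \emph{best plane} $\LL_{x,r}$, not for $\T_r$, to kill the $\nabla_xH_{\T_r}$ terms — yields $\ff_2+\ff_3\le 2(m-1)\,r^4\int_{\T_r}\psi_\T\int\rho_r\langle \II_{\T_r},\nabla u\rangle^2+\ers(r)$ via the matrix inequality $|H|^2\le(m-2)|\II|^2$. This non-small term is then \emph{absorbed} into $(m-1)\,r\dot\vartheta_\cL(\T_r)$ using the identity above, since $\langle \II_{\T_r},\nabla u\rangle^2\ge 0$. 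The result is thus an inequality with coefficient $(m-1)$, not the equality with coefficient $1$ that the outline states; and the mechanism is a matching of two genuinely large terms, not a direct error estimate.
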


It will take the majority of Section \ref{s:radial_energy} to prove the above.  Both the Euler-Lagrange for $\T_r$ and the Euler-Lagrange for the best plane $\cL_{x,r}$ are used to remove the worst error terms which appear and provide the smallness on Dini error.
  Integrating this over $r$ and using our estimates on $\vartheta_\cL(\T_r)$ and $\vartheta_\alpha(\T_r)$ we obtain our desired estimate on the radial energy:\\

\begin{theorem}[Radial Energy Bound]\label{t:outline:general:radial_energy}
Let $u:B_{10R}(p)\to N$ be a stationary harmonic map with \newline $R^2\fint_{B_{10R}}|\nabla u|^2\leq \Lambda$ , and let $\cA=B_2\setminus \overline{B_{\rf_z}(\T)}$ be a $\delta$-annular region.  Then for each $\epsilon>0$ if $R\geq R(m,\epsilon)$ and $\delta\leq \delta(m,K_N,R,\Lambda,\epsilon)$ we have that
\begin{align}
\hat\vartheta_n(\cA) \leq  \epsilon\, .
\end{align}
\end{theorem}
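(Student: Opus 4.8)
The plan is to deduce Theorem \ref{t:outline:general:radial_energy} by integrating the Stationary Estimate of the Radial Energy, Proposition \ref{p:radial_energy:radial_energy}, against the scale invariant measure $\frac{dr}{r}$ and then substituting the bounds already available for the $L$-energy and the angular energy. Recalling the convention $\hat\vartheta_n(\cA)=\int \hat\vartheta_n(\T_r)\,\frac{dr}{r}$, integrating the pointwise-in-scale identity
\[
\hat\vartheta_n(\T_r)+2\vartheta_\cL(\T_r)=r\dot\vartheta_\cL(\T_r)+\hat\vartheta_\alpha(\T_r)+\epsilon(r)
\]
over the relevant range of scales produces
\[
\hat\vartheta_n(\cA)+2\int \vartheta_\cL(\T_r)\,\frac{dr}{r}=\int r\dot\vartheta_\cL(\T_r)\,\frac{dr}{r}+\hat\vartheta_\alpha(\cA)+\int \epsilon(r)\,\frac{dr}{r}\, .
\]
Since both terms on the left are nonnegative, it suffices to control the three terms on the right.

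For the first term we use that $\int r\dot\vartheta_\cL(\T_r)\,\frac{dr}{r}=\int \frac{\partial}{\partial r}\vartheta_\cL(\T_r)\,dr$ telescopes; as $\vartheta_\cL(\T_r)\ge 0$, the contribution from the lower endpoint enters with a favorable sign and is discarded, so this integral is bounded by $\vartheta_\cL(\T_r)$ evaluated at the top scale, hence by $C(m,R)\delta$ via the Log Decay of the $L$-Energy, Theorem \ref{t:outline:L-energy_estimate}. For the second term, the Angular Energy bound on annular regions, Theorem \ref{t:outline_general:angular_energy}, is precisely $\hat\vartheta_\alpha(\cA)\le C(m,R,K_N,\Lambda)\sqrt\delta$. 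For the third term, Proposition \ref{p:radial_energy:radial_energy} guarantees $\int \epsilon(r)\,\frac{dr}{r}\le \epsilon$ once $\delta\le \delta(m,n,K_N,\Lambda,R,\epsilon)$. Altogether,
\[
\hat\vartheta_n(\cA)\le C(m,R)\,\delta+C(m,R,K_N,\Lambda)\,\sqrt\delta+\epsilon\, .
\]
One first fixes $R\ge R(m,\epsilon)$ large enough to absorb the $e^{-R/2}$-type contributions hidden in the cited constants (these stem from truncating the heat kernel $\rho$ at scale $R$, cf. \eqref{e:rho_primitive_difference} and the closeness of $\tilde\rho$ to $\rho$), and then one takes $\delta$ small enough that $C(m,R,K_N,\Lambda)(\delta+\sqrt\delta)<\epsilon$; after relabeling $\epsilon\mapsto\epsilon/2$ this yields $\hat\vartheta_n(\cA)<\epsilon$, as claimed. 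As a bonus, discarding the nonnegative term $\hat\vartheta_n(\cA)$ from the displayed identity gives $\int \vartheta_\cL(\T_r)\,\frac{dr}{r}\le \epsilon$, and combined with the almost monotonicity of Lemma \ref{l:outline:L_energy:basic_properties}.1 this sharpens Theorem \ref{t:outline:L-energy_estimate} to $\vartheta_\cL(\T_r)\le C\epsilon/|\ln r|$, matching the remark recorded after that theorem.

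Within this deduction the only delicate point is the telescoping term: one must know that $r\mapsto\vartheta_\cL(\T_r)$ is absolutely continuous, that $\dot\vartheta_\cL(\T_r)$ in Proposition \ref{p:radial_energy:radial_energy} is its genuine $r$-derivative (so that the fundamental theorem of calculus applies and the lower endpoint term really has the good sign), and that the cutoff $\psi_\T$ of Lemma \ref{l:outline_general:T_cutoff} confines the scale integral to $r\lesssim 1$; all of this is part of the construction of $\T_r$, $\psi_\T$, and the best planes $\cL_{x,r}$ carried out in Sections \ref{s:energy_decomposition} and \ref{s:best_planes}. The genuinely hard work, which I would take as given here, is Proposition \ref{p:radial_energy:radial_energy} itself: there the vector field $\xi(y)=\int_{\T_r}\psi_\T(x,r)\,\tilde\rho_r(y-x;\LL_{x,r})\,\pi^\perp_{x,r}(y-x)$ is inserted into the stationary equation \eqref{e:stationary_equation}, and the Euler--Lagrange equation for $\T_r$ (the maximality condition $\pi_{\LL_{x,r}^\perp}\nabla\vartheta(x,r)=0$) together with the best-plane existence and regularity of Theorem \ref{t:outline:best_plane} are used to cancel the highest-order errors generated by the bending of $\T$ and the rotation of $\LL_{x,r}$, leaving only the Dini-summable remainder $\epsilon(r)$. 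For the present theorem that proposition is available, and the argument above is essentially bookkeeping.
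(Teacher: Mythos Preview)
Your argument is essentially the paper's: integrate the stationary estimate against $\frac{dr}{r}$, observe that the $r\dot\vartheta_\cL(\T_r)$ term telescopes, and plug in the angular energy bound of Theorem \ref{t:outline_general:angular_energy}. Two small remarks are worth recording.

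First, the telescoping integral $\int r\dot\vartheta_\cL(\T_r)\,\frac{dr}{r}$ actually vanishes rather than being bounded by $C\delta$: the cutoff $\psi_\T$ forces $\vartheta_\cL(\T_r)=0$ for $r\ge 2$ and (after the truncation $\rf_x\ge r_0>0$ used also in Corollary \ref{c:angular_energy:final_estimate}) for small $r$, so $\vartheta_\cL(\T_r)$ is compactly supported in $(0,\infty)$. The paper writes this contribution as ``$0$'' rather than estimating boundary values.

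Second, and more substantively, the paper's detailed proof (Section \ref{s:radial_energy}) does \emph{not} take for granted the black-box bound $\int\epsilon(r)\,\frac{dr}{r}\le\epsilon$ that you quote from the outline proposition. In Proposition \ref{p:radial_energy:radial_energy} the error $\ers(r)$ contains a term $C(\sqrt\delta+e^{-R/2})\int_{\T_r}\psi_\T\,\vartheta_\cL(x,2r)$ whose Dini integral is proportional to $\vartheta_\cL(\cA)$ itself; this is exactly the ``circular'' point the paper flags. The resolution there is to integrate and then \emph{absorb}: one gets $\hat\vartheta_n(\cA)+2\vartheta_\cL(\cA)\le \hat\vartheta_\alpha(\cA)+\epsilon'\vartheta_\cL(\cA)+\epsilon'\Lambda+C\delta$, and the $\epsilon'\vartheta_\cL(\cA)$ is moved to the left to yield $(2-\epsilon')\vartheta_\cL(\cA)$. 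Your proof is correct at the outline level because you invoke the outline's proposition as stated, but that form of the proposition is only established \emph{after} the absorption step—so it is worth being aware that the Dini smallness of $\epsilon(r)$ is not independent of the conclusion you are proving.
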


\vspace{.5cm}

\section{Local Best Planes \texorpdfstring{$\cL_{x,r}$}{Lxr}}\label{s:best_planes}

This Section is dedicated to studying and analyzing our local best planes $\cL_{x,r}$.  Recall that the setup is that $\cA=B_2\setminus \overline{B_{\rf_x}(\T)}$ is an annular region.  For each $x\in B_2$ and $3r\geq \rf_x\vee d(x,\T)=\max\{\rf_x,d(x,\T)\}$ we have that the energy in the $L_\cA$ direction is $\delta$-small:
\begin{align}
	\vartheta(x,r;L_\cA) \equiv r^2\int \rho_r(y-x)\abs{\pi_{\LL_\cA}\nabla u(y)}^2 \leq C(m)\delta\, .
\end{align}

The reasonable question to study is what the minimal $L$-energy of $u$ is on $B_r(x)$, which might be much smaller than $\delta$.  We defined this in Section \ref{sss:outline:general:local_best_planes} as

\begin{gather}\label{e:best_planes:L_energy}
\vartheta_\cL(x,r) \equiv \min_{L^{m-2}} \vartheta(x,r;L) = \min_{L^{m-2}} r^{2}\int_{} \rho_r(y-x)\abs{\pi_L\nabla u}^2\, ,
\end{gather}
and used this to make careful sense of a best plane of symmetry as
\begin{gather}\label{e:best_plane:best_plane}
 \LL_{x,r}\equiv \arg\min_{L^{m-2}}\vartheta(x,r;L)=\arg\min_{L^{m-2}}\cur{L \to r^{2}\int_{} \rho_r(y-x)\abs{\pi_L\nabla u}^2}\, .
\end{gather}
It follows from the definition of an annular region that
\begin{align}
	\vartheta_\cL(x,r)=r^{2}\int_{} \rho_r(y-x)\abs{\pi_{x,r}\nabla u(y)}^2\leq C(m)\,\delta\, .
\end{align}

Finally recall from Section \ref{sss:outline:general:local_best_planes} that we are particularly interested in the projection map
\begin{align}\label{e:best_plane:best_plane_projection}
	\pi_{x,r}\equiv \pi_{\LL_{x,r}}:B_2\times \dR^m\to \dR^m\, ,
\end{align}
to the best plane at $x\in B_2$.  The goal of this Section is to prove the following:

\begin{theorem}[Best Plane Existence and Regularity]\label{t:best_plane:best_plane}
	Let $u:B_{10R}(p)\to N$ be a stationary harmonic map with $R^2\fint_{B_{10R}}|\nabla u|^2\leq \Lambda$ , and let $\cA=B_2\setminus \overline{B_{\rf_z}(\T)}$ be a $\delta$-annular region.  Let $x\in B_2$ with $3r\geq \rf_x\vee d(x,\T)$, then the following hold:
\begin{enumerate}
	\item The best plane $\LL_{x,r}$ as in \eqref{e:outline:best_plane} exists and is unique, in fact for any $L^{m-2}\subseteq \dR^m$ we have
\begin{align}
	\vartheta(x,r;L)\geq \vartheta(x,r;\cL_{x,r})+c(m)\vartheta(x,r)\,d_{\Gr}(L,\LL_{x,r})^2\geq \vartheta(x,r;\cL_{x,r})+c(m)\epsilon_0\,d_{\Gr}(L,\LL_{x,r})^2\, ,
\end{align}
where $\epsilon_0$ is the $\epsilon$-regularity constant from Theorem \ref{t:eps_reg} and $d_{\Gr}$ is the Grassmannian distance.
\item \label{i:best_plane_bounds_on_projections} In the domain of definition the projection $\pi_{x,r}$ from \eqref{e:outline:best_plane_projection} is smooth and satisfies
\begin{align}
	  r\abs{ \frac{\partial}{\partial r}\pi_{x,r}}+r^2\abs{ \frac{\partial}{\partial r}\nabla \pi_{x,r}}+r\abs{ \nabla \pi_{x,r}}+  r^2\abs{ \nabla^2 \pi_{x,r}}\leq C(m) \sqrt{\frac{\vartheta_\cL(x,2r)}{\vartheta(x,r)}}\leq C(m,\epsilon_0) \sqrt{\vartheta_\cL(x,2r)}\, ,
\end{align}
where $\abs{O}$ is the operator norm of $O$, or any other equivalent norm.
\end{enumerate}
\end{theorem}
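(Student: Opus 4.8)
The plan is to recognize that $L\mapsto\vartheta(x,r;L)$ is a linear (Ky Fan) eigenvalue functional, to reduce the whole statement to spectral perturbation theory for a symmetric matrix, and to feed in the annular-region structure together with Cauchy--Schwarz to make it quantitative. The key identity is
$\vartheta(x,r;L)=\tr(A_{x,r}P_L)$, where $P_L$ is orthogonal projection onto $L$ and $A_{x,r}\equiv r^{2}\int\rho_r(y-x)\,G(y)\,dy$ is the symmetric positive-semidefinite $m\times m$ matrix built from the Gram matrix $G(y)_{ij}=\langle\nabla_iu,\nabla_ju\rangle(y)$; note $\tr A_{x,r}=\vartheta(x,r)$ and, for $v\perp w$, $\langle v,A_{x,r}w\rangle=-\tfrac12 r^{2}\int\rho_r(y-x)\,S(v,w)$ with $S$ the stress--energy tensor of \eqref{e:stationary_equation}. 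Minimizing $\tr(A_{x,r}P_L)$ over $(m-2)$-planes is the same as maximizing $\tr(A_{x,r}P_{L^\perp})$ over $2$-planes, whose optimum $\lambda_1+\lambda_2$ (eigenvalues $\lambda_1\ge\cdots\ge\lambda_m\ge0$) is attained exactly on the span of the top two eigenvectors; hence $\LL_{x,r}$ is the orthogonal complement of that span, unique as soon as $\lambda_2>\lambda_3$. The spectral gap is where the annular region enters: by the top-stratum cone splitting of Theorem \ref{t:prelim:cone_splitting_m-2}/\ref{t:prelim:cone_splitting_annular}, $\vartheta_\cL(x,r)=\lambda_3+\cdots+\lambda_m\le\vartheta(x,r;L_\cA)\le C(m)\delta$ (comparing $B_{cr}(x)$ to $B_{Cr}(x')$, $x'\in\T$ nearest, when $x\notin\T$), while condition (a3) of Definition \ref{d:annular_region}, monotonicity, and Remark \ref{rm:vartheta_comparison} give $\vartheta(x,r)\ge c(m)\epsilon_0$; so for $\delta\le\delta(m,\epsilon_0)$ small, $\lambda_1+\lambda_2\ge\tfrac12\vartheta(x,r)$, hence $\lambda_2\ge\tfrac14\vartheta(x,r)$ and $\lambda_2-\lambda_3\ge\tfrac14\vartheta(x,r)$. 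Part (1) is then the elementary linear-algebra fact that, in the eigenbasis of $A_{x,r}$, $\vartheta(x,r;L)-\vartheta(x,r;\LL_{x,r})=\tr\!\big(A_{x,r}(P_{\LL_{x,r}^\perp}-P_{L^\perp})\big)\ge(\lambda_2-\lambda_3)\,\|P_{\LL_{x,r}}P_{L^\perp}\|_{HS}^{2}\ge c(m)\,\vartheta(x,r)\,d_{\Gr}(L,\LL_{x,r})^{2}$; the same computation also gives the matching upper bound $\vartheta(x,r;L)-\vartheta_\cL(x,r)\le C(m)\,\vartheta(x,r)\,d_{\Gr}(L,\LL_{x,r})^{2}$, which I will need below.

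For part (2), write $\pi_{x,r}=\id-P(A_{x,r})$, where $P(A)=\tfrac1{2\pi i}\oint_\Gamma(z-A)^{-1}\,dz$ is the Riesz projection onto $\{\lambda_1,\lambda_2\}$, $\Gamma$ separating these from $\lambda_3,\dots,\lambda_m$ at distance $\asymp\vartheta(x,r)$ from $\sigma(A)$. Since $A_{x,r}$ is $C^\infty$ in $(x,r)$ (the $x,r$-derivatives land only on the smooth kernel $\rho_r$, not on $G$), so is $\pi_{x,r}$, and analytic perturbation theory gives $\|D^{k}P(A_{x,r})\|\lesssim_k(\lambda_2-\lambda_3)^{-k}\,\mathrm{poly}(\|A_{x,r}\|)$; moreover $DP$ annihilates the block-diagonal part of its argument and the surviving pieces of $D^{2}P$ pair a block-diagonal with an off-diagonal block — and since $\|A_{x,r}\|\le\tr A_{x,r}=\vartheta(x,r)\le 4(\lambda_2-\lambda_3)$, every ``extra $\|A\|$'' is absorbed by an ``extra $(\lambda_2-\lambda_3)^{-1}$.'' The chain rule therefore reduces the bounds on $r\partial_r\pi_{x,r}$, $r\nabla\pi_{x,r}$, $r^{2}\nabla^{2}\pi_{x,r}$, $r^{2}\partial_r\nabla\pi_{x,r}$ to the single estimate $\big\|\pi_{x,r}^\perp\big(\nabla^{(j)}_{x,r}A_{x,r}\big)\pi_{x,r}\big\|\le C\,r^{-j}\sqrt{\vartheta(x,r)\,\vartheta_\cL(x,2r)}$. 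For this, differentiating $A_{x,r}$ produces weights $\dot\rho_r(y-x)\langle y-x,e\rangle,\ \ddot\rho_r(y-x)\langle y-x,e\rangle^{2},\dots$, so an entry $\langle v,(\nabla^{(j)}A_{x,r})w\rangle$ with $v\in\LL_{x,r}^\perp$, $w\in\LL_{x,r}$ is $\int[\text{weight}]\,\langle\nabla_vu,\nabla_wu\rangle$; Cauchy--Schwarz, the kernel comparisons of Lemma \ref{l:rho_basic_properties} (in particular \eqref{e:trho_doubleradius}, e.g. $(-\dot\rho_r)|z|^{2}\lesssim r^{2}\rho_{2r}$ and its higher analogues), and $|\nabla_wu|^{2}\le|\pi_{\LL_{x,r}}\nabla u|^{2}$ yield $|\langle v,(\nabla^{(j)}A_{x,r})w\rangle|\le C r^{-j}\big(\vartheta(x,2r;\LL_{x,r})\big)^{1/2}\big(\vartheta(x,2r)\big)^{1/2}$ — no appeal to the stationary equation is needed since the $(v,w)$-entry is already ``mixed.''

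It remains to replace $\vartheta(x,2r;\LL_{x,r})$ by $\vartheta_\cL(x,2r)$, an almost-monotonicity statement for best planes: from \eqref{e:prelim:partial_energy:basic_bounds} one has $\vartheta(x,r;\LL_{x,2r})\le C\vartheta_\cL(x,2r)$ and $\vartheta_\cL(x,r)\le C\vartheta_\cL(x,2r)$, so the two-sided quadratic estimate at scale $r$ gives $d_{\Gr}(\LL_{x,r},\LL_{x,2r})^{2}\le C\,\vartheta_\cL(x,2r)/\vartheta(x,r)$, and then the upper quadratic estimate at scale $2r$ gives $\vartheta(x,2r;\LL_{x,r})\le\vartheta_\cL(x,2r)+C\,\vartheta(x,2r)\,d_{\Gr}(\LL_{x,r},\LL_{x,2r})^{2}\le C\vartheta_\cL(x,2r)$, using $\vartheta(x,2r)\le C\vartheta(x,r)$ (Remark \ref{rm:vartheta_comparison}). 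Dividing by the gap $\asymp\vartheta(x,r)$ then produces the claimed $\sqrt{\vartheta_\cL(x,2r)/\vartheta(x,r)}$, and $\vartheta(x,r)\ge c\epsilon_0$ gives the last form in part (2). The main obstacle — the one point requiring real care rather than bookkeeping — is exactly this comparison: the naive estimate of $\vartheta(x,2r;\LL_{x,r})$ against $\vartheta_\cL(x,2r)$ is only linear in $d_{\Gr}$ and hence far too lossy when $\vartheta_\cL\ll\delta$, and one must exploit that $\LL_{x,2r}$ block-diagonalizes $A_{x,2r}$ (first-order optimality) to upgrade it to the quadratic comparison above; everything else is tracking how the spectral gap controls the derivatives of the Riesz projection.
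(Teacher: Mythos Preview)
Your overall framework is correct and essentially the same as the paper's --- reduce everything to the symmetric tensor $Q(x,r)[v,w]=r^2\int\rho_r(y-x)\langle\nabla u,v\rangle\langle\nabla u,w\rangle$, get a spectral gap $\lambda_2-\lambda_3\gtrsim\vartheta(x,r)$, and then differentiate the block-diagonality of $Q$ (your Riesz-projection formulation is exactly the smooth version of the paper's identity $Q(x,r)[\pi_{x,r}v,\pi_{x,r}^\perp w]=0$). The almost-monotonicity $\vartheta(x,2r;\LL_{x,r})\le C\vartheta_\cL(x,2r)$ via the quadratic comparison at scales $r$ and $2r$ is also how the paper handles Remark \ref{r:d_LLr_LL2r_small}.

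There is, however, one genuine gap: the step ``$\lambda_1+\lambda_2\ge\tfrac12\vartheta(x,r)$, hence $\lambda_2\ge\tfrac14\vartheta(x,r)$'' is false as stated. Knowing only $\lambda_3+\cdots+\lambda_m\le C\delta$ and $\tr Q=\vartheta(x,r)$ is entirely consistent with $\lambda_1\approx\vartheta(x,r)$ and $\lambda_2\approx 0$ (all the energy in one $L^\perp$-direction), which would destroy both the uniqueness in (1) and the spectral-gap bound in (2). This is precisely the place where the stationary equation is indispensable: the paper plugs $\xi=r^2\rho_r(y-x)\langle y-x_0,e\rangle\,e$ into \eqref{e:stationary_equation} to obtain, for \emph{every} unit $e\in L^\perp$,
\[
\Big|\,r^2\!\int\rho_r(y-x)\langle\nabla u,e\rangle^2-\tfrac12\vartheta(x,r)\Big|\le C(m)\sqrt{\vartheta(x,r)\,\delta}\,,
\]
which forces both $\lambda_1$ and $\lambda_2$ to be close to $\tfrac12\vartheta(x,r)$ (the upper bound on $\lambda_1$ and lower bound on $\lambda_2$ both come from this, not from trace considerations). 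Your assertion that ``no appeal to the stationary equation is needed'' is therefore incorrect at the level of the eigenvalue separation, even though it is true later for the mixed off-diagonal entries of $\nabla A_{x,r}$. Once you insert this one use of stationarity, the rest of your argument goes through and matches the paper.
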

\begin{remark}\label{r:d_LA_LL_small}
	As a corollary of (1) above we have that $  d_G(\LL_{x,r},L_{\cA})\leq C\sqrt{\frac{\delta}{\epsilon_0}}\leq C(m,\epsilon_0)\sqrt{\delta}\, $ .
\end{remark}
\begin{remark}\label{r:d_LLr_LL2r_small}
 Another easy corollary is that if $x\in B_2 $ with $3r\geq \rf_x\vee d(x,\T)$:
 \begin{gather}
  d_{\Gr}(\LL_{x,r},\LL_{x,2r})^2\leq C(m) \frac{\vartheta_\LL(x,2r)}{\vartheta(x,r)}\, .
 \end{gather}
 In particular, these all imply that
 \begin{gather}
  \vartheta_\LL(x,r)=\vartheta(x,r;\LL_{x,r})\leq C(m) \vartheta(x,r;\LL_{x,2r})\leq C(m) \vartheta_\LL(x,2r)\, ,\notag\\
  \vartheta(x,2r;\LL_{x,r})\leq C(m) \vartheta(x,2;\LL_{x,2r})=C(m) \vartheta_\LL(x,2r)\, .\label{e:vartheta_L_r}
 \end{gather}

\end{remark}

The remaining subsections are dedicated to setting up the necessary structure for the Theorem, and then proving it piece by piece.

\vspace{.3cm}

\subsection{Heat Mollified Energy Tensor}

A useful tool in our discussions will be to consider not just the heat mollified energy $\vartheta(x,r) \equiv r^2 \int \rho_r(y-x)|\nabla u|^2$, but the heat mollified energy tensor
\begin{align}
	Q(x,r)[v,w]\equiv r^2 \int \rho_r(y-x)\langle \nabla u,v \rangle\langle \nabla u,w \rangle\, .
\end{align}

We will typically view $Q(x,r)\in \dR^{m\times m}$ as a symmetric nonnegative $m\times m$ matrix.  Note the identity
\begin{align}
	\operatorname{tr} Q(x,r) = \vartheta(x,r)\, .
\end{align}

At each point $x\in B_2$ and scale $3r\geq \rf_x\vee d(x,\T)$ it is reasonable to look at the eigenvalues $Q(x,r)$, which we will denote by
\begin{align}
	\lambda_1(x,r)\geq \cdots \geq \lambda_m(x,r)\, ,
\end{align}
together with the associated eigenvectors $e_j(x,r)$ .  Our goal for this subsection is the following

\begin{lemma}
	Let $u:B_{10R}(p)\to N$ be a stationary harmonic map with $R^2\fint_{B_{10R}}|\nabla u|^2\leq \Lambda$ , and let $\cA=B_2\setminus \overline{B_{\rf_z}(\T)}$ be a $\delta$-annular region.  Let $x\in B_2$ with $3r\geq \rf_x\vee d(x,\T)$, then the following hold:
\begin{enumerate}
	\item The eigenvalues $\lambda_1\geq \cdots \geq \lambda_m$ of $Q(x,r)$ satisfy 
\begin{align}\label{eq_lambda_sep}
	 \frac{1}2 +C(m) \sqrt{\frac{\delta}{\vartheta(x,r)}}\geq \frac{\lambda_1(x,r)}{\vartheta(x,r)}\geq \frac{\lambda_2(x,r)}{\vartheta(x,r)}\geq \frac 1 2 - C(m)\sqrt{\frac{\delta}{\vartheta(x,r)}}\, , \ \ \text{ with }\lambda_3(x,r)\leq C(m)\delta\, .
\end{align}
\item The unique best subspace $\cL_{x,r}$ is characterized by $\LL_{x,r}= \text{span}\{e_3,\ldots,e_m\}$
\end{enumerate}
\end{lemma}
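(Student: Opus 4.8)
Work in the flat model $M=\dR^m$, write $\vartheta=\vartheta(x,r)$, $Q=Q(x,r)$, and let $P=P_{L_\cA}$ be the orthogonal projection onto the plane of symmetry of $\cA$; one first checks from condition $(a3)$, monotonicity and the Ahlfors regularity \eqref{e:annular_region:ahlfors_regularity} that $c(m)\epsilon_0\le\vartheta\le C(m)\Lambda$. The plan is to compare $Q$ against the two reference objects available: the subspace $L_\cA$, which forces the small eigenvalues of $Q$ to be $O(\delta)$, and the scalar matrix $\tfrac12\vartheta\,\operatorname{Id}$, which forces the top $2\times2$ block of $Q$ to be almost scalar. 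First, since $x$ lies within distance $3r$ of $\T$ with $\rf_x\le 3r$, the Gaussian comparison of Lemma~\ref{l:rho_basic_properties} and the cone splitting on annular regions (Theorem~\ref{t:prelim:cone_splitting_annular}, see also Remark~\ref{rm:annular:regularity}) give $\vartheta(x,r;L_\cA)=\operatorname{tr}(QP)\le C(m)\delta$. By Courant--Fischer with $L_\cA$ as a test $(m-2)$-plane, $\lambda_3\le\max_{v\in L_\cA,\,|v|=1}Q[v,v]\le\operatorname{tr}(QP)\le C(m)\delta$, which is the last assertion of $(1)$; together with $\operatorname{tr}Q=\vartheta$ this yields $\lambda_3+\dots+\lambda_m\le C(m)\delta$ and $|\lambda_1+\lambda_2-\vartheta|\le C(m)\delta$. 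Writing a unit vector as $v=v_L+v_\perp$ and using $|Q[v_L,v_\perp]|\le\sqrt{\vartheta(x,r;L_\cA)\,\vartheta}\le C(m)\sqrt{\delta\vartheta}$ one gets $\lambda_1\le\max_{w\in L_\cA^\perp,\,|w|=1}Q[w,w]+C(m)\sqrt{\delta\vartheta}$; since $\operatorname{tr}(Q|_{L_\cA^\perp})=\vartheta-\vartheta(x,r;L_\cA)=\vartheta-O(\delta)$, statement $(1)$ is reduced to showing that the traceless part of the symmetric $2\times2$ matrix $Q|_{L_\cA^\perp}$ has size $\le C(m)\sqrt{\delta\vartheta}$.

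This near-scalarity is the quantitative form of the fact that every bubble is weakly conformal, and it is the heart of the proof. From the definitions of $Q$ and of the stress tensor $S_{ij}=|\nabla u|^2\delta_{ij}-2\langle\nabla_iu,\nabla_ju\rangle$ one has the algebraic identity $Q=\tfrac12\vartheta\,\operatorname{Id}-\tfrac{r^2}{2}\int\rho_r(y-x)\,S(y)\,dy$; the trace of the $L_\cA^\perp$-block of $\int\rho_rS$ is $2r^{-2}\vartheta(x,r;L_\cA)=O(r^{-2}\delta)$, so it remains to control its traceless part, which in Cartesian coordinates $(e_1,e_2)$ on $L_\cA^\perp$ centred at $\pi_{L_\cA^\perp}x$ is precisely the real and imaginary parts of $r^2\int\rho_r(y-x)\,\Phi(y)\,dy$, where $\Phi:=|\nabla_{e_1}u|^2-|\nabla_{e_2}u|^2-2i\langle\nabla_{e_1}u,\nabla_{e_2}u\rangle$ is the two-dimensional Hopf density and $4h=\Phi$ with $h:=\langle\partial_wu,\partial_wu\rangle$ in the complex coordinate $w=e_1+ie_2$. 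Thus the whole lemma reduces to
\begin{align}
 \Big|\,r^2\int\rho_r(y-x)\,\Phi(y)\,dy\,\Big|\le C(m)\sqrt{\delta\vartheta}\, .
\end{align}
To prove this I would exploit the harmonic map equation $\Delta u=A(\nabla u,\nabla u)$: since $A(\nabla u,\nabla u)\perp\partial_wu$, one finds $\partial_{\bar w}h=\tfrac12\langle\Delta_{L_\cA^\perp}u,\partial_wu\rangle=-\tfrac12\langle\Delta_{L_\cA}u,\partial_wu\rangle$, so in particular the target curvature drops out. Because $u$ is almost $L_\cA$-invariant — exactly invariant along the bubble, where $\Delta_{L_\cA}u=0$, and satisfying the $\epsilon$-regularity decay $d(\cdot,\T)^2|\nabla^2u|\lesssim\delta$ on $\cA_\epsilon$ — the $\bar\partial$-error $\langle\Delta_{L_\cA}u,\partial_wu\rangle$ is small in the appropriate weighted sense, so $h$ is nearly holomorphic; the mean value property for $h$, corrected by this $\bar\partial$-error and integrated over the $L_\cA$-variables against the rotationally symmetric weight $\rho_r$, then reduces $r^2\int\rho_rh$ to the value of $h$ near the bubble centre, which is $O(\sqrt{\delta\vartheta})$ because every bubble has vanishing Hopf differential on $S^2$ and $u$ is $\sqrt\delta$-close to a superposition of bubbles there. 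A complementary input, which is what yields the correct $\sqrt\delta$-rate, is the stationary equation \eqref{e:stationary_equation} tested against $\xi^j(y)=\rho_r(y-x)\,A_{jk}(y-x)^k$ with $A$ traceless symmetric supported on $L_\cA^\perp$: because $\operatorname{tr}A=0$ the $\rho_r$-part of $S_{ij}\partial_i\xi^j$ produces exactly $\operatorname{tr}(Q|_{L_\cA^\perp}A)$, while the $\dot\rho_r$-part, after passing to polar coordinates $(s_\perp,\alpha)$ and using the identity $e^{-2i\alpha}\overline\phi=\overline\Phi$, reorganizes into a comparison of $r^2\int\rho_r\Phi$ with a $\dot\rho_r$-weighted Hopf integral, modulo terms bounded by $\vartheta(x,2r;L_\cA)\le C\delta$ (via \eqref{e:trho_doubleradius}) and by the monotonicity pinching $r\dot\vartheta(x,r)\le C\delta$, both available here by cone splitting.

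Granting the displayed estimate, the two eigenvalues of $Q|_{L_\cA^\perp}$ are $\tfrac12\vartheta\pm O(\sqrt{\delta\vartheta})$, hence $\lambda_1\le\tfrac12\vartheta+C(m)\sqrt{\delta\vartheta}$ and then $\lambda_2\ge\vartheta-\lambda_1-C(m)\delta\ge\tfrac12\vartheta-C(m)\sqrt{\delta\vartheta}$; dividing by $\vartheta\ge c(m)\epsilon_0$ gives $(1)$. For $(2)$: by $(1)$ and $\lambda_3\le C(m)\delta$ there is a genuine spectral gap $\lambda_2-\lambda_3\ge\tfrac14\epsilon_0>0$ once $\delta\le\delta(m,\epsilon_0,\Lambda)$. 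Since $\vartheta(x,r;L)=\operatorname{tr}(QP_L)$ for every $(m-2)$-plane $L$, the Ky Fan minimum principle gives $\min_{\dim L=m-2}\operatorname{tr}(QP_L)=\lambda_3+\dots+\lambda_m$, attained — and, because $\lambda_2>\lambda_3$, attained uniquely — at $L=\operatorname{span}\{e_3,\dots,e_m\}$; this is $(2)$, and in particular $\cL_{x,r}$ exists and is unique. (The quantitative stability estimate $\vartheta(x,r;L)\ge\vartheta(x,r;\cL_{x,r})+c(m)\vartheta\,d_{\Gr}(L,\cL_{x,r})^2$ of Theorem~\ref{t:best_plane:best_plane} then follows from the same gap by a standard second-order perturbation computation.)

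The main obstacle is the Hopf estimate of the second paragraph: the naive bound $|\Phi|\le2|\nabla u|^2$ only gives $|r^2\int\rho_r\Phi|\le C\vartheta$, and recovering the factor $\tfrac12$ genuinely requires using the harmonic map equation in the $L_\cA^\perp$-directions together with the smallness of the tangential $L_\cA$-energy. Moreover the $\bar\partial$- and mean-value errors must be controlled near $\T$, where $|\nabla u|$ is large, and carefully balancing those contributions against the $O(\delta)$ gain elsewhere is exactly what limits the rate to $\sqrt\delta$ rather than $\delta$.
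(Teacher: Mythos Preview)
Your reduction of $(1)$ to the near-scalarity of $Q|_{L_\cA^\perp}$, and your treatment of $(2)$ via the spectral gap and Ky Fan, are correct and match the paper. The gap is in how you propose to obtain the Hopf estimate $\big|\,r^2\int\rho_r\Phi\,\big|\le C(m)\sqrt{\delta\vartheta}$.

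Your primary route (a), via near-holomorphicity of $h$ and the mean-value property, is circular. You invoke that ``$u$ is $\sqrt\delta$-close to a superposition of bubbles'' near the bubble centre in order to make $h$ small there; but no such structure is available at this stage --- establishing it is precisely the content of the quantitative energy identity, for which this lemma is an ingredient. Moreover the $\epsilon$-regularity control of $\partial_{\bar w}h$ you quote holds only on $\cA_\epsilon$, not across the bubble region itself, which is where your mean-value reduction must land.

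Your secondary route (b), the stationary equation with $\xi=\rho_r(y-x)A(y-x)$ and $A$ traceless on $L_\cA^\perp$, is in the right family but not executed, and one of your claimed error bounds is false: the hypothesis only gives $d(x,\T)\le 3r$, so $x$ may sit up to $3r$ off $L_\cA$, and then $r\dot\vartheta(x,r)$ is of order $\vartheta$, not $\delta$. The paper's argument fixes exactly this by a small but decisive change of test vector field. For a unit $e\in L^\perp$ it takes
\[
\xi^j(y)=r^2\rho_r(y-x)\,\langle y-x_0,e\rangle\,e^j,
\]
with the linear factor centred not at $x$ but at a point $x_0$ supplied by the cone-splitting Theorem~\ref{t:prelim:cone_splitting_m-2}, so that $\int(-\dot\rho_r(y-x))|\nabla u|^2\,d(y,x_0+L)^2\le C(m)\delta$. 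The stationary equation then reads
\[
\vartheta(x,r)-2Q[e,e]
=-\int\dot\rho_r|\nabla u|^2\langle y-x,e\rangle\langle y-x_0,e\rangle
+2\int\dot\rho_r\langle\nabla u,y-x\rangle\langle\nabla u,e\rangle\langle y-x_0,e\rangle,
\]
and a single Cauchy--Schwarz, pairing $\langle y-x_0,e\rangle$ against the rest, gives $|Q[e,e]-\tfrac12\vartheta|\le C(m)\sqrt{\delta\,\vartheta(x,2r)}$ for every unit $e\in L^\perp$. Writing the top eigenvector as $e_1=\mu_1 e+\mu_2\ell$ with $e\in L^\perp$ and $\ell\in L$ and using $\vartheta(x,r;L)\le C\delta$ then yields $\lambda_1\le\tfrac12\vartheta+C\sqrt{\delta\vartheta}$, hence $\lambda_2\ge\tfrac12\vartheta-C\sqrt{\delta\vartheta}$ from the trace. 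No Hopf differential, no holomorphicity, no bubble structure --- just stationarity plus cone splitting.
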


\begin{proof}

Note that $(2)$ follows from the usual Rayleigh characterization of the eigenvalues.  To prove the $\lambda_3$ estimate from $(1)$ is therefore really an $L$-gradient bound on the region.  This itself is a consequence of the definition of annular region and effective cone splitting. By Definition \ref{d:annular_region}, and since $x\in B_2$ with $3r\geq \rf_x\vee d(x,\T)$, there exists $\cur{x_i}_{i=0}^{m-2}\subset \T \cap \B{r/20}{x}$ that are $\frac{r}{100}$-linearly independent and such that $r\dot\vartheta(x_i,2r)\leq \delta$. Let $L$ be the linear subspace spanned by $\cur{x_i-x_0}_{i=1}^{m-2}$, by Theorem \ref{t:prelim:cone_splitting} we obtain
\begin{gather}
 \sum_{k=3}^m \lambda_k(x,r)=\vartheta(x,r;\LL)\leq \vartheta(x,r;L)\leq C(m) \vartheta(x_0,2r;L)\leq C(m)\delta\, .
\end{gather}

To finish the proof we need to prove the gap bound on $\lambda_1$, $\lambda_2$ in $(1)$.  Let us begin with the following claim:\\

\textbf{Claim:} in order to conclude the eigenvalue separation, we claim that for all unit vectors $e\in L^\perp$ we have
\begin{gather}\label{e:claim_e_separation}
 \abs{r^2\int \rho_r(y-x) \ps{\nabla u}{e}^2 -\frac 1 2 \vartheta(x,r)} \leq C(m) \sqrt{\vartheta(x,r)\delta}
\end{gather}

First let us show that this claim is enough to conclude the proof. Indeed, set
\begin{gather}
 e_1=\mu_1 e + \mu_2 \ell\, ,\qquad \mu_1^2+\mu_2^2=1\, ,
\end{gather}
where $\ell\in L$ is a unit vector. Then
\begin{align}
 \lambda_1(x,r)&= r^2 \int \rho_r(y-x) \abs{\mu_1 \ps{\nabla u}{e} + \mu_2 \ps{\nabla u}{\ell}}^2\notag\\
 &=r^2 \int \rho_r(y-x) \qua{\mu_1^2 \ps{\nabla u}{e}^2 + \mu_2^2 \ps{\nabla u}{\ell}^2 +2\mu_1\mu_2 \ps{\nabla u}{e}\ps{\nabla u}{\ell}}\notag\\
 &\leq r^2 \int \rho_r(y-x) (\mu_1^2+\mu_2^2) \ps{\nabla u}{e}^2 + r^2 \int \rho_r(y-x) (\mu_1^2+\mu_2^2) \ps{\nabla u}{\ell}^2\notag\\
 & \leq \frac 1 2 \vartheta(x,r) +C(m)\sqrt{\vartheta(x,r)\delta} + C(m)\delta\leq \frac 1 2 \vartheta(x,r) +C(m)\sqrt{\vartheta(x,r)\delta}\, .
\end{align}
Since
\begin{gather}
 \lambda_2(x,r)= \tr Q -\lambda_1(x,r)-\sum_{j=3}^m \lambda_j(x,r)\geq \vartheta(x,r)-\lambda_1(x,r)-C(m)\delta\, ,
\end{gather}
we obtain \eqref{eq_lambda_sep}.

\textbf{Proof of the claim.} We conclude the proof by showing that \eqref{e:claim_e_separation} holds. Consider the stationary equation \eqref{e:stationary_equation} applied to the vector field
\begin{gather}
 \xi^j = r^2\rho_r(y-x) \ps{y-x_0}{e}e^j\, .
\end{gather}
This gives
\begin{align}
 &\vartheta(x,r)-2r^2\int \rho_r(y-x) \ps{\nabla u}{e}^2 \notag\\
 =&-\int \dot \rho_r(y-x)\abs{\nabla u}^2 \ps{y-x}{e}\ps{y-x_0}{e} +2 \int \dot \rho_r(y-x)\ps{\nabla u}{y-x}\ps{\nabla u}{e}\ps{y-x_0}{e}\, ,
 \end{align}
 and thus:
 \begin{align}
 &\abs{\vartheta(x,r)-2\int \rho_r(y-x) \ps{\nabla u}{e}^2}\notag\\
 \leq& C \sqrt{-\int \dot \rho_r(y-x)\abs{\nabla u}^2 \abs{y-x}^2}\sqrt{-\int \dot \rho_r(y-x)\abs{\nabla u}^2 d(y,x_0+L)^2}\, .
\end{align}

By \eqref{e:trho_doubleradius} and using Theorem \ref{t:prelim:cone_splitting_m-2} to choose $x_0$ we can bound:
\begin{gather}
 -\int \dot \rho_r(y-x)\abs{\nabla u}^2 d(y,x_0+L)^2\leq C(m)\delta\, .
\end{gather}
Moreover, by \eqref{e:trho_doubleradius},
\begin{gather}
 -\int \dot \rho_r(y-x)\abs{\nabla u}^2 \abs{y-x}^2 \leq\vartheta(x,2r)\, ,
\end{gather}
which together with Remark \ref{rm:vartheta_comparison} concludes the proof.
\end{proof}
\vspace{.3cm}

The ideas presented in the proof of this lemma are the main ones necessary to obtain the results of Theorem \ref{t:best_plane:best_plane}. We split the remaining details of the proof of Theorem \ref{t:best_plane:best_plane} into the next two subsections.\\
\subsection{Quantitative Uniqueness and Proof of Theorem \ref{t:best_plane:best_plane}.(1)}

Before moving to the proof, let us briefly recall that the Grassmannian $\Gr(m,k)$ is the set of all $k$-dimensional subspaces in $\R^m$, and this can be naturally identified with
 \begin{itemize}
  \item the set of $m-k$ dimensional subspaces of $\R^m$, through the orthogonal complementation
  \item the set of orthogonal projections $ P(m,k)=\cur{\pi:\R^m\to \R^m\, , \ \ \pi=\pi^T\, , \ \ \pi^2=1\, , \ \ \operatorname{rank}(\pi)=k}\, .$
 \end{itemize}
$\Gr(m,k)$ set can be naturally endowed with a smooth manifold structure, and up to a constant the Riemannian distance $d_{\Gr}(\cdot,\cdot)$ on $\Gr(m,k)$ can be controlled by
\begin{gather}
 C(m)^{-1} d_H\ton{\overline{V\cap \B 1 0},\overline{W\cap \B 1 0}}\leq d_G(V,W)\leq C(m) d_H\ton{\overline{V\cap \B 1 0},\overline{W\cap \B 1 0}}\, ,
\end{gather}
where $d_H$ is the Hausdorff distance on sets.  Equivalently we can bound
\begin{gather}\label{eq_epsproj}
 C(m)^{-1} \abs{\pi_V-\pi_W}\leq d_G(V,W)\leq C(m) \abs{\pi_V-\pi_W}\, ,
\end{gather}
where the norm on projections is the operator norm, or any other norm equivalent to it with an equivalence constant bounded by $C(m)$.

Our main application of our analysis on $Q(x,r)$ in the last subsection is the following, which in particular implies Theorem \ref{t:best_plane:best_plane}.(1):

\begin{lemma}
		Let $u:B_{10R}(p)\to N$ be a stationary harmonic map with $R^2\fint_{B_{10R}}|\nabla u|^2\leq \Lambda$ , and let $\cA=B_2\setminus \overline{B_{\rf_z}(\T)}$ be a $\delta$-annular region.  Let $x\in B_2$ with $3r\geq \rf_x\vee d(x,\T)$, then
\begin{align}\label{e:distance_L_LL}
	\vartheta(x,r;L)\geq \vartheta(x,r;\LL_{x,r})+C(m)\vartheta(x,r)\,d_{\Gr}(L,\LL_{x,r})^2\, .
\end{align}
\end{lemma}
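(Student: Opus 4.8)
The plan is to exploit the spectral description of the energy tensor $Q(x,r)$ established in the preceding lemma, together with the classical second-order Taylor expansion of the function $L\mapsto \vartheta(x,r;L) = r^2\,\mathrm{tr}\big(\pi_L\, Q(x,r)\, \pi_L\big)$ on the Grassmannian $\Gr(m,m-2)$. The point is that a codimension-$2$ plane $L$ is determined by its $2$-dimensional orthogonal complement $L^\perp$, and $\vartheta(x,r;L) = r^2\langle Q(x,r),\pi_{L}\rangle$ where $\langle\cdot,\cdot\rangle$ is the Hilbert--Schmidt pairing. Writing $Q=Q(x,r)$ and normalizing by $\vartheta=\vartheta(x,r)=\mathrm{tr}\,Q$, by the spectral lemma the top two eigenvalues of $Q/\vartheta$ are each within $C(m)\sqrt{\delta/\vartheta}$ of $\tfrac12$, while the remaining $m-2$ eigenvalues are all at most $C(m)\delta/\vartheta$; equivalently, the best plane $\LL_{x,r}$ is exactly $\mathrm{span}\{e_3,\dots,e_m\}$ and $\vartheta(x,r;\LL_{x,r})=\sum_{j\geq 3}\lambda_j\leq C(m)\delta$.

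First I would set up coordinates adapted to the eigenbasis $\{e_i\}$ of $Q$, so that $Q=\mathrm{diag}(\lambda_1,\dots,\lambda_m)$. A competitor plane $L$ of codimension $2$ can be parametrized near $\LL_{x,r}$ by a linear map $A:\LL_{x,r}^\perp\to \LL_{x,r}$ (i.e. $L^\perp = \{e_1+A^*(\cdot),\ldots\}$-type graph); more robustly, $d_{\Gr}(L,\LL_{x,r})^2\simeq \|\pi_L-\pi_{\LL_{x,r}}\|^2$ by \eqref{eq_epsproj}. Then I would compute
\begin{align}
\vartheta(x,r;L)-\vartheta(x,r;\LL_{x,r}) &= r^2\,\langle Q,\,\pi_L-\pi_{\LL_{x,r}}\rangle \notag\\
&= r^2\sum_{i\leq 2,\ j\geq 3}(\lambda_i-\lambda_j)\,|\langle (\pi_L-\pi_{\LL_{x,r}})e_i,e_j\rangle|^2 + (\text{higher order})\, ,
\end{align}
which is the standard expansion of a linear functional restricted to a Grassmannian. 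The quadratic form has coefficients $\lambda_i-\lambda_j$ for $i\in\{1,2\}$, $j\geq 3$, and by the spectral gap these satisfy $\lambda_i-\lambda_j\geq \big(\tfrac12-C\sqrt{\delta/\vartheta}\big)\vartheta - C\delta \geq c(m)\vartheta$ once $\delta\leq\delta(m)\vartheta$ is small, which holds because $\vartheta(x,r)\geq c(m)\epsilon_0$ by the nontriviality and almost-monotonicity of annular regions (Remark \ref{rm:vartheta_comparison}, condition (a3)). Thus the Hessian of $L\mapsto\vartheta(x,r;L)$ at $\LL_{x,r}$ is bounded below by $c(m)\vartheta(x,r)$ times the identity on the tangent space of the Grassmannian, and integrating this convexity (or simply comparing against the global minimum, since a nonnegative quadratic plus controlled cubic error on a compact manifold with a unique critical point yields a global lower bound of the stated form) gives exactly $\vartheta(x,r;L)\geq \vartheta(x,r;\LL_{x,r})+c(m)\vartheta(x,r)\,d_{\Gr}(L,\LL_{x,r})^2$. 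Uniqueness of the minimizer $\LL_{x,r}$ and the final bound $c(m)\vartheta(x,r)\geq c(m)\epsilon_0$ follow immediately.

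I expect the main obstacle to be making the passage from the infinitesimal (Hessian) convexity to a genuine global quadratic lower bound fully rigorous on the Grassmannian, rather than just on a coordinate chart near $\LL_{x,r}$. There are two ways around this: either (i) observe that on a chart the error terms beyond the quadratic are of the form $C(m)\vartheta\cdot d_{\Gr}(L,\LL_{x,r})^3$ (since $\pi_L-\pi_{\LL_{x,r}}$ is real-analytic in the chart parameter and $\|Q\|\leq C\vartheta$), so the quadratic term dominates for $d_{\Gr}$ small, and then separately note that for $d_{\Gr}(L,\LL_{x,r})$ bounded below by a dimensional constant one has $\vartheta(x,r;L)\geq c(m)\vartheta(x,r)$ directly because such an $L$ must capture a definite fraction of one of the two top eigenvectors (whereas $\vartheta(x,r;\LL_{x,r})\leq C(m)\delta \ll \vartheta(x,r)$); or (ii) work with the symmetric matrix $\pi_L-\pi_{\LL_{x,r}}$ globally and use that $\langle Q,\pi_L\rangle$ is, after subtracting the minimum, comparable to $\sum_{i\le 2}\|\pi_{\LL_{x,r}^\perp}\,\pi_{L^\perp}^{\perp}\,e_i\|^2$ times the gap. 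Either route reduces the claim to the spectral-gap estimate \eqref{eq_lambda_sep} already proven, so there is no genuinely new analytic input needed — only careful bookkeeping on the Grassmannian. The bound $r\abs{\partial_r \pi_{x,r}}+\cdots$ of part (2) is then a separate matter handled by the implicit function theorem applied to the Euler--Lagrange equation of the minimization, using the uniform convexity just established as the nondegeneracy hypothesis, but that is part (2) and not needed here.
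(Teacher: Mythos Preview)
Your proposal is correct and follows essentially the same approach as the paper: both arguments rest on the spectral decomposition of $Q(x,r)$, parametrize nearby planes as graphs over $\LL_{x,r}$, and use the eigenvalue gap $\lambda_i-\lambda_j\geq c(m)\vartheta(x,r)$ for $i\leq 2$, $j\geq 3$ to extract the quadratic lower bound. The paper writes down the leading-order formula in graph coordinates directly (computing $\vartheta(x,r;L)$ for $L=\mathrm{graph}(F)$ with $F(e_\alpha)=\mu_\alpha e_1+\eta_\alpha e_2$) and dismisses the far-away case in one sentence via the eigenvalue separation, while you frame the same computation as a Hessian calculation on the Grassmannian and are somewhat more explicit about the local-to-global passage; but the substance is identical.
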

\begin{proof}
Due to the eigenvalue separation \eqref{eq_lambda_sep}, we are only concerned with the case that $L$ is close to $\LL_{x,r}$, and in particular the case when $L$ is a graph over $\LL_{x,r}$.  Thus let $L$ be the graph of the linear function $F:\LL_{x,r}\to \LL_{x,r}^\perp$, with $F(e_\alpha(x,r))=\mu_\alpha e_{1}(x,r) +\eta_{\alpha} e_{2}(x,r)$ for $\alpha=3,\cdots,m$. By standard estimates,
 \begin{gather}
  C(m)^{-1}d_{\Gr}(L,\LL_{x,r}) \leq \ton{\sum_{\alpha} \mu_\alpha^2 + \eta_\alpha^2}^{1/2}\leq C(m)d_{\Gr}(L,\LL_{x,r})\, .
 \end{gather}
 
 The eigenvalue conditions on the tensor $Q(x,r)$ give
 \begin{align}
  \vartheta(x,r;L) &= \sum_{\alpha=3}^{m} (1-\mu_\alpha^2-\eta_{\alpha}^2) \lambda_\alpha(x,r) + \sum_{\alpha=3}^{m} \ton{\mu_\alpha^2 \lambda_{1}(x,r)+ \eta_\alpha^2\lambda_2(x,r)}\\
  &=\theta(x,r;\LL_{x,r})  + \sum_{\alpha=3}^m \qua{\mu_\alpha^2 (\lambda_1(x,r)-\lambda_{\alpha}(x,r))+\eta_\alpha^2 (\lambda_2(x,r)-\lambda_{\alpha}(x,r))}\\
  & \stackrel{\eqref{eq_lambda_sep}}{\geq} \theta(x,r;\LL_{x,r})  + \ton{\frac 1 2 \vartheta(x,r)-C(m)\sqrt{\vartheta(x,r)\delta} -C(m)\delta}\sum_{\alpha=3}^m \ton{\mu_\alpha^2 +\mu_\alpha^2 }\, .
 \end{align}
Since $\vartheta(x,r)\geq C(m) \epsilon_0$, we obtain the result as long as $\delta<<\epsilon_0$.
\end{proof}

As an immediate corollary, we obtain
\begin{corollary}
 Let $u:B_{10R}(p)\to N$ be a stationary harmonic map with $R^2\fint_{B_{10R}}|\nabla u|^2\leq \Lambda$ , and let $\cA=B_2\setminus \overline{B_{\rf_z}(\T)}$ be a $\delta$-annular region.  Let $x,z\in B_2$ with $3r\geq \rf_x\vee \rf_z\vee d(x,\T)$. Moreover assume that $r\leq 10s$ and $\B s z \subset \B r x$. Then
 \begin{gather}\label{e:distance_LL_LL_sr}
  d_{\Gr}(\LL_{x,r},\LL_{z,s})^2\leq C(m)\frac{\vartheta_{\LL}(x,r)}{\vartheta(x,r)}\, .
 \end{gather}
 In particular,
 \begin{gather}\label{e:L_energy_r-2r}
  \vartheta(x,2r;\LL_{x,r})\leq C(m) \vartheta(x,2r;\LL_{x,2r})= C(m) \vartheta_{\LL}(x,2r)\, .
 \end{gather}

\end{corollary}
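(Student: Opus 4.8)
The plan is to read off \eqref{e:distance_LL_LL_sr} from the quantitative uniqueness estimate \eqref{e:distance_L_LL} proved just above, combined with an elementary integral comparison of the heat mollified energies. Note first that the standing hypotheses guarantee all the best planes appearing below are well defined. Since $\B s z\subset\B r x$ with $s\le r\le 10s$, item (4) of Lemma \ref{l:rho_basic_properties} gives $\rho_s(y-z)\le C(m)\,\rho_r(y-x)$, hence for \emph{every} $m-2$ plane $L$,
\begin{align*}
 \vartheta(z,s;L)=s^2\int\rho_s(y-z)\,\abs{\pi_L\nabla u}^2\ \le\ C(m)\, r^2\int\rho_r(y-x)\,\abs{\pi_L\nabla u}^2\ =\ C(m)\,\vartheta(x,r;L)\, .
\end{align*}
Moreover, since $\B s z\subset\B r x\subset\B{2r}z$, the annular region structure (Theorem \ref{t:prelim:cone_splitting_annular} and Remark \ref{rm:vartheta_comparison}, together with the basic scale/space comparison of $\vartheta$) shows $\vartheta(z,s)$ and $\vartheta(x,r)$ are comparable up to a dimensional constant; in particular $\vartheta(z,s)\ge c(m)\,\vartheta(x,r)\ge c(m)\,\epsilon_0$.

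Next I would apply \eqref{e:distance_L_LL} at the point $z$ and scale $s$ with competitor plane $L=\LL_{x,r}$, and drop the nonnegative term $\vartheta(z,s;\LL_{z,s})$:
\begin{align*}
 \vartheta(z,s;\LL_{x,r})\ \ge\ \vartheta(z,s;\LL_{z,s})+C(m)\,\vartheta(z,s)\,d_{\Gr}(\LL_{x,r},\LL_{z,s})^2\ \ge\ C(m)\,\vartheta(z,s)\,d_{\Gr}(\LL_{x,r},\LL_{z,s})^2\, .
\end{align*}
Rearranging and inserting the two estimates of the previous paragraph yields
\begin{align*}
 d_{\Gr}(\LL_{x,r},\LL_{z,s})^2\ \le\ \frac{\vartheta(z,s;\LL_{x,r})}{C(m)\,\vartheta(z,s)}\ \le\ C(m)\,\frac{\vartheta(x,r;\LL_{x,r})}{\vartheta(x,r)}\ =\ C(m)\,\frac{\vartheta_{\LL}(x,r)}{\vartheta(x,r)}\, ,
\end{align*}
which is exactly \eqref{e:distance_LL_LL_sr}.

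For the ``in particular'' assertion \eqref{e:L_energy_r-2r} I would apply \eqref{e:distance_LL_LL_sr} with outer ball $\B{2r}x$ and inner ball $\B r x$ (legitimate since $2r\le 10r$ and $6r\ge\rf_x\vee d(x,\T)$), obtaining $d_{\Gr}(\LL_{x,r},\LL_{x,2r})^2\le C(m)\,\vartheta_{\LL}(x,2r)/\vartheta(x,2r)$. Then, writing $\abs{\pi_{x,r}\nabla u}^2\le 2\abs{\pi_{x,2r}\nabla u}^2+2\abs{\pi_{x,r}-\pi_{x,2r}}^2\abs{\nabla u}^2$ and using \eqref{eq_epsproj} to bound the operator norm $\abs{\pi_{x,r}-\pi_{x,2r}}\le C(m)\,d_{\Gr}(\LL_{x,r},\LL_{x,2r})$, one gets
\begin{align*}
 \vartheta(x,2r;\LL_{x,r})\ \le\ 2\,\vartheta(x,2r;\LL_{x,2r})+C(m)\,d_{\Gr}(\LL_{x,r},\LL_{x,2r})^2\,\vartheta(x,2r)\ \le\ C(m)\,\vartheta_{\LL}(x,2r)\, ,
\end{align*}
which is \eqref{e:L_energy_r-2r}.

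There is no genuine obstacle here. The only points requiring a little care are the integral comparison $\rho_s(y-z)\le C(m)\rho_r(y-x)$ and the two-sided comparison $\vartheta(z,s)\simeq\vartheta(x,r)$ (both furnished by the cutoff-heat-kernel bounds of Lemma \ref{l:rho_basic_properties} and the uniform comparability of $\vartheta$ on an annular region), together with the triangle inequality for orthogonal projections used in the second part; everything else is a direct invocation of \eqref{e:distance_L_LL} and the lower bound $\vartheta\ge c(m)\epsilon_0$ established above.
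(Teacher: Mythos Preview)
Your proof is correct and follows essentially the same approach as the paper: bound $\vartheta(z,s;\LL_{x,r})\le C(m)\vartheta_\LL(x,r)$ via the kernel comparison $\rho_s(y-z)\le C(m)\rho_r(y-x)$, then invoke the quantitative uniqueness estimate \eqref{e:distance_L_LL} at $(z,s)$ with competitor $L=\LL_{x,r}$ together with $\vartheta(z,s)\ge c(m)\vartheta(x,r)$ from Remark~\ref{rm:vartheta_comparison}. Your treatment of \eqref{e:L_energy_r-2r} via the projection splitting and \eqref{eq_epsproj} is exactly the unpacking of what the paper calls ``an easy consequence''.
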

\begin{proof}
 Since $\B s z \subset \B r x$ with $s\geq r/10$, and by the definition of $\LL_{z,s}$, we have
 \begin{gather}
  \vartheta_\LL(z,s)= s^2 \int \rho_s (y-z) \abs{\pi_{z,s}\nabla u(y)}^2\leq s^2 \int \rho_s (y-z) \abs{\pi_{x,r}\nabla u(y)}^2\leq C(m) \vartheta_\LL(x,r)\, .
 \end{gather}
\eqref{e:distance_LL_LL_sr} follows from \eqref{e:distance_L_LL}, and \eqref{e:L_energy_r-2r} is an easy consequence of this and Remark \ref{rm:vartheta_comparison}.
\end{proof}

\vspace{.3cm}

\subsection{Estimates on the Best Plane Projection Map and Proof of Theorem \ref{t:best_plane:best_plane}.(2)}

The following summarizes our basic regularity properties of the best plane projection map $\pi_{x,r}$, and in particular proves Theorem \ref{t:best_plane:best_plane}.(2) .

\begin{lemma}
 Let $u:B_{10R}(p)\to N$ be a stationary harmonic map with $R^2\fint_{B_{10R}}|\nabla u|^2\leq \Lambda$ , and let $\cA=B_2\setminus \overline{B_{\rf_z}(\T)}$ be a $\delta$-annular region.  Let $x\in B_2$ with $3r\geq \rf_x\vee d(x,\T)$, then $\pi_{x,r}=\pi_{\LL_{x,r}}$ is a smooth function on its domain of definition with
 \begin{gather}\label{eq_LL_basic}
  \abs{r \frac{\partial}{\partial r}\pi_{x,r}}_{\infty}+\abs{r^2 \frac{\partial^2}{\partial r^2}\pi_{x,r}}_{\infty}+\abs{r \nabla \pi_{x,r}}_{\infty}+  \abs{r^2 \nabla^2 \pi_{x,r}}_{\infty}\leq C(m) \sqrt{\frac{\vartheta(x,2r;\LL_{x,2r})}{\vartheta(x,r)}}\leq C(m,\epsilon_0) \sqrt{\TLm(x,2r)}\, .
 \end{gather}
%  where $\abs{\cdot}_\infty$ is the $L^\infty$ operator norm.
\end{lemma}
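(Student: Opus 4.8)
The starting point is that $\pi_{x,r}$ is nothing but the spectral (Riesz) projection of the heat mollified energy tensor $Q(x,r)$ onto its bottom $m-2$ eigenvalues — this is exactly the characterization $\LL_{x,r}=\operatorname{span}\cur{e_3,\ldots,e_m}$ from the previous lemma. By the eigenvalue estimate \eqref{eq_lambda_sep} the spectrum of $Q(x,r)$ splits into a small cluster $\sigma_1=\cur{\lambda_3,\ldots,\lambda_m}$ with $\lambda_j\leq C(m)\delta$ and a large cluster $\sigma_2=\cur{\lambda_1,\lambda_2}$ with $\lambda_1,\lambda_2\geq \tfrac12\vartheta(x,r)-C(m)\sqrt{\vartheta(x,r)\delta}$, so that the gap $g(x,r)\equiv\operatorname{dist}(\sigma_1,\sigma_2)$ satisfies $g(x,r)\geq c(m)\vartheta(x,r)\geq c(m)\epsilon_0$ uniformly on the domain of definition. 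Hence
\begin{align*}
	\pi_{x,r}=\frac{1}{2\pi i}\oint_{\gamma}\ton{z\,\id-Q(x,r)}^{-1}\,dz\, ,
\end{align*}
for a fixed contour $\gamma$ of length $\asymp g(x,r)$ enclosing $\sigma_1$ and leaving $\sigma_2$ outside. Since $(x,r)\mapsto Q(x,r)$ is manifestly smooth and the gap is uniform, $\pi_{x,r}$ is smooth on its domain, and only the quantitative derivative bounds remain.

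\textbf{First derivatives.} Write $\pi=\pi_{x,r}$, $\pi^\perp=\id-\pi$, and let $\partial$ stand for $r\partial_r$ or $r\nabla_x$. From $\pi^2=\pi$ one has $\pi(\partial\pi)\pi=\pi^\perp(\partial\pi)\pi^\perp=0$, and differentiating the Riesz formula, using that the resolvent is block diagonal with respect to $\pi,\pi^\perp$ along $\gamma$, yields the standard bound $\abs{\partial\pi}\leq C(m)g(x,r)^{-1}\norm{\pi^\perp(\partial Q)\pi}$. The point is then that the off-diagonal block $\pi^\perp(\partial Q)\pi$ is small: $\partial Q$ is $\nabla u\otimes\nabla u$ integrated against a derivative of $\rho_r$, whose absolute value is $\leq C(m)\rho_{1.1r}$ by Lemma \ref{l:rho_basic_properties} (plus, in the $r\partial_r$ case, a harmless $2Q$ term which is block diagonal and so is killed by $\pi^\perp(\cdot)\pi$). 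Applying Cauchy--Schwarz to $\pi^\perp(\partial Q)\pi$, with the $\pi^\perp$-slot estimated by $r^2\int\rho_{1.1r}\abs{\nabla u}^2\leq C\vartheta(x,r)$ (Remark \ref{rm:vartheta_comparison}) and the $\pi$-slot by $r^2\int\rho_{1.1r}\abs{\pi_{\LL_{x,r}}\nabla u}^2\leq C\,\vartheta(x,2r;\LL_{x,r})\leq C\,\TLm(x,2r)$ (here \eqref{e:L_energy_r-2r}), gives $\norm{\pi^\perp(\partial Q)\pi}\leq C(m)\sqrt{\vartheta(x,r)\,\TLm(x,2r)}$ and hence $\abs{r\partial_r\pi}+\abs{r\nabla\pi}\leq C(m)\sqrt{\TLm(x,2r)/\vartheta(x,r)}$.

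\textbf{Second derivatives.} Differentiating the Riesz formula twice produces terms containing $(z-Q)^{-1}(\partial^2Q)(z-Q)^{-1}$ and terms containing $(z-Q)^{-1}(\partial Q)(z-Q)^{-1}(\partial Q)(z-Q)^{-1}$. For the first kind, the contour integral again retains only the off-diagonal block $\pi^\perp(\partial^2Q)\pi$, bounded by $C(m)\sqrt{\vartheta(x,r)\TLm(x,2r)}$ exactly as above, since the weights $r^2\abs{\nabla_y^2\rho_r}$, $r^2\abs{\partial_r\nabla_y\rho_r}$ and $(r\partial_r)^2\rho_r$ are all $\leq C(m)\rho_{1.1r}$ by Lemma \ref{l:rho_basic_properties}. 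For the second kind, expanding in the spectral decomposition $Q=\sum_a\lambda_aP_a$ and computing residues on $\gamma$ shows that every surviving term is of the form $g(x,r)^{-2}\,P_a(\partial Q)P_b(\partial Q)P_c$ with at least one of the two $\partial Q$-factors off-diagonal, and — the one subtle point — the seemingly dangerous $(\lambda_a-\lambda_b)^{-1}$ factors, which could blow up within the small cluster $\sigma_1$, cancel identically, so only genuine gaps $\geq c(m)\vartheta(x,r)$ remain in the denominators. Bounding the off-diagonal factor by $C\sqrt{\vartheta(x,r)\,\TLm(x,2r)}$, the remaining factor by $\norm{\partial Q}\leq C\vartheta(x,r)$, and using $\TLm(x,2r)/\vartheta(x,r)\leq C(m)\delta/\epsilon_0\leq 1$, each such term is $\leq C(m)\sqrt{\TLm(x,2r)/\vartheta(x,r)}$. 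The same computation controls the mixed derivative $r^2\partial_r\nabla\pi$; and since $r^2\partial_r^2=(r\partial_r)^2-r\partial_r$, combining everything yields the stated bound, with the final inequality following from $\vartheta(x,r)\geq c(m)\epsilon_0$ (condition (a3) of Definition \ref{d:annular_region} and Remark \ref{rm:vartheta_comparison}).

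\textbf{Main obstacle.} The only non-routine step is precisely this off-diagonal gain. A crude estimate $\norm{\partial Q}\leq C\vartheta(x,r)$ fed through the Riesz formula gives merely $\abs{\partial\pi}\leq C(m)$, which does not decay with $\TLm$. One has to exploit both that $\partial\pi$ feels only the off-diagonal block $\pi^\perp(\partial Q)\pi$ and that this block is smaller by a factor $\sqrt{\TLm/\vartheta}$, because one of its two slots pairs $\nabla u$ against an $L$-direction, along which $\nabla u$ has small weighted $L^2$-norm by the definition of the annular region and cone splitting. For the second derivatives this forces the residue bookkeeping above, to confirm that no $g(x,r)^{-2}$ term ever multiplies a full-size diagonal block of $\partial Q$ by another full-size block.
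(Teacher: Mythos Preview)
Your proof is correct and rests on the same key estimate as the paper's: the off-diagonal block $\pi^\perp(\partial Q)\pi$ is bounded by $C(m)\sqrt{\vartheta(x,r)\,\vartheta_\cL(x,2r)}$ via Cauchy--Schwarz, because one slot lands in $\LL_{x,r}$. The route, however, is different. The paper differentiates the orthogonality relation $Q(x,r)[\pi_{x,r}v,\pi_{x,r}^\perp w]=0$ directly, then picks $v\in\LL_{x,r}$ and $w=\dot\pi_{x,r}v/|\dot\pi_{x,r}v|\in\LL_{x,r}^\perp$ realizing the operator norm; the eigenvalue gap \eqref{eq_lambda_sep} converts the resulting identity into the stated bound, and the paper then simply remarks that the higher-derivative estimates are ``in a similar way''. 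Your Riesz-projection formulation is the functional-calculus version of the same perturbation argument: it is less elementary (contour integrals instead of finite linear algebra) but more systematic for the second-order terms, and your residue observation that the would-be $(\lambda_a-\lambda_b)^{-1}$ singularities within the small cluster $\sigma_1$ cancel identically is exactly what makes the second-derivative bound go through cleanly. In short, both arguments are standard spectral-perturbation techniques exploiting the same structural smallness of $\pi^\perp(\partial Q)\pi$; yours trades directness for a cleaner bookkeeping of the higher derivatives that the paper leaves implicit.
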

\begin{proof}
 By the eigenvalue separation property \eqref{eq_lambda_sep}, the best plane $\LL_{x,r}$ and so the projection onto it is well-defined, and it depends smoothly on $x$ and $r$. To obtain quantitative estimates, we will focus on the identity
 \begin{gather}\label{e:Q_best_plane}
  Q(x,r)[\pi_{x,r}(v),\pi_{x,r}^\perp(w)]=0
 \end{gather}
 valid for all vectors $v,w$.  We will focus ourselves on proving the estimates for the radial derivative $ r\abs{ \frac{\partial}{\partial r}\pi_{x,r}}_{\infty}$, the other estimates are the verbatim strategy.

 \paragraph{Proof of the radial derivative estimates.} We observe that by taking the radial derivative of \eqref{e:Q_best_plane} we obtain for all norm one vectors $v,w$:
 \begin{align}\label{eq_LL_basic2}
  0=r\frac{\partial}{\partial r} \qua{Q(x,r)[\pi_{x,r}(v),\pi_{x,r}^\perp(w)]}&= \underbrace{(2-m)\qua{Q(x,r)[\pi_{x,r}(v),\pi_{x,r}^\perp(w)]}}_{=0}\\
  &-r^2 \int \dot\rho_r(y-x)\frac{\abs{y-x}^2}{r^2}\langle \nabla u,\pi_{x,r}(v) \rangle\langle \nabla u,\pi_{x,r}^\perp(w) \rangle \\
  &+Q(x,r) \qua{r\dot \pi_{x,r}(v),\pi_{x,r}^\perp(w)} +Q(x,r) \qua{\pi_{x,r}(v),-r\dot \pi_{x,r}(w)}\, ,
 \end{align}
where in the last line we used the fact that $\pi_{x,r}+\pi_{x,r}^\perp = \operatorname{id}$ for all $x,r$, and so $\dot \pi_{x,r}^\perp=-\dot \pi_{x,r}\, $.\\

Using Cauchy-Schwartz, \eqref{e:trho_doubleradius} and \eqref{e:L_energy_r-2r}, we can bound
\begin{gather}
 \abs{r^2 \int \dot\rho_r(y-x)\frac{\abs{y-x}^2}{r^2}\langle \nabla u,\pi_{x,r}(v) \rangle\langle \nabla u,\pi_{x,r}^\perp(w) \rangle }\leq C(m)\sqrt{\vartheta_{\LL}(x,2r)} \sqrt{\vartheta(x,2r)}\, .
\end{gather}

On the other hand, $\dot \pi_{x,r}$ is a symmetric operator such that
\begin{gather}
 \dot \pi_{x,r} :\LL_{x,r}\to \LL_{x,r}^\perp\, , \qquad \dot \pi_{x,r} :\LL_{x,r}^\perp\to \LL_{x,r}\, .
\end{gather}
Let $v\in \LL_{x,r}$, $w\in \LL_{x,r}^\perp$ be unit vectors satisfying
\begin{gather}
 \abs{\dot \pi_{x,r}(v)}= \abs{\dot \pi_{x,r}}\, , \qquad w= \frac{\dot \pi_{x,r}(v)}{\abs{\dot \pi_{x,r}(v)}}\, .
\end{gather}
This is possible if $\abs{\dot \pi_{x,r}}$ is the operator norm of $\dot \pi_{x,r}$. Other norms can be considered, up to adding a suitable multiplicative constant. Then we obtain:
\begin{gather}
 Q(x,r) \qua{r\dot \pi_{x,r}(v),\pi_{x,r}^\perp(w)} +Q(x,r) \qua{\pi_{x,r}(v),-r\dot \pi_{x,r}(w)}\geq \ton{\lambda_2(x,r)-\lambda_3(x,r)} \abs{r\dot \pi_{x,r}}_\infty\, .
\end{gather}
By the eigenvalue separation \eqref{eq_lambda_sep}, we can plug all of this into \eqref{eq_LL_basic2} to get
\begin{gather}
 \abs{r \frac{\partial}{\partial r}\pi_{x,r}}_{\infty}\leq C(m) \sqrt{\vartheta_\LL(x,2r)}\frac{\sqrt{\vartheta(x,2r)}}{\vartheta(x,r)}\, ,
\end{gather}
and the conclusion follows from Remark \ref{rm:vartheta_comparison}.

\end{proof}

\vspace{.5cm}

\subsection{Improved cone-splitting}

With the tools now at our disposal, we can improve slightly Theorem \ref{t:prelim:cone_splitting_annular} by replacing a plane with the best plane. In order to do so, we need this lemma on pinching and distances.

\begin{lemma}
 Let $u:B_{10R}(p)\to N$ be a stationary harmonic map with $R^2\fint_{B_{10R}}|\nabla u|^2\leq \Lambda$ , and let $\cA=B_2\setminus \overline{B_{\rf_z}(\T)}$ be a $\delta$-annular region.  Let $3r\geq \rf_x\vee d(x,\T)$ and $\abs{x-x'}\leq r/10$, then
 \begin{gather}\label{e:distance_x_x'}
  \frac{\abs{\pi_{x,r}^\perp (x'-x)}^2}{r^2} \leq \frac{C(m)}{\vartheta(x,r)}\ton{r\dot\vartheta(x,3r/2)+r\dot \vartheta(x',3r/2)}\, .
 \end{gather}
\end{lemma}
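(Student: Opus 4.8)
The plan is to exploit the characterization of $\LL_{x,r}$ as the maximizing plane, together with the monotonicity identity \eqref{e:outline:mollified_energy_monotonicity}, to convert the statement about the displacement $x'-x$ into a statement about the radial pinching at $x$ and $x'$. First I would observe that since $\B{s}{z}\subset\B{r}{x}$ with comparable radii implies $\vartheta_\LL(z,s)\le C(m)\vartheta_\LL(x,r)$ (the argument used to prove \eqref{e:distance_LL_LL_sr}), the quantity $\vartheta(x,r;\LL_{x,r})$ is comparable at $x$ and $x'$ at scales $\approx r$. More importantly, by the eigenvalue separation \eqref{eq_lambda_sep} applied to the tensor $Q(x,r)$, for any unit vector $e\in\LL_{x,r}^\perp$ we have a lower bound $r^2\int\rho_r(y-x)\ps{\nabla u}{e}^2\ge\big(\tfrac12-C\sqrt{\delta/\vartheta(x,r)}\big)\vartheta(x,r)\ge c(m)\vartheta(x,r)$; equivalently, for a vector $v$ with $v=\pi^\perp_{x,r}(v)$ we have $Q(x,r)[v,v]\ge c(m)|v|^2\,\vartheta(x,r)$.

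The key step is then to write $x'-x = \pi_{x,r}(x'-x)+\pi^\perp_{x,r}(x'-x)$ and estimate the perpendicular component. Set $v=\pi^\perp_{x,r}(x'-x)$. I would like to bound $|v|^2\vartheta(x,r)$ by quantities of the form $\int\rho_r(y-x)\ps{\nabla u(y)}{y-z}^2$ for $z\in\{x,x'\}$, since by \eqref{e:trho_doubleradius} these are controlled by $r\dot\vartheta(z,\cdot)$ at a slightly enlarged scale. The elementary vector identity
\begin{align}
\ps{\nabla u(y)}{v}^2 = \ps{\nabla u(y)}{(y-x)-(y-x')-\pi_{x,r}(x'-x)}^2 \le 3\ps{\nabla u(y)}{y-x}^2+3\ps{\nabla u(y)}{y-x'}^2+3\ps{\nabla u(y)}{\pi_{x,r}(x'-x)}^2
\end{align}
reduces matters to three terms after integrating against $\rho_r(y-x)$. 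The first term gives $r\dot\vartheta(x,3r/2)$ after applying \eqref{e:trho_doubleradius} and the comparison of $\rho_r$ with $-\dot\rho_{3r/2}$; the second, after switching the weight $\rho_r(y-x)\le C(m)(-\dot\rho_{3r/2}(y-x'))$ using Lemma \ref{l:rho_basic_properties} (since $|x-x'|\le r/10$), gives $r\dot\vartheta(x',3r/2)$; the third term involves $\pi_{x,r}(x'-x)\in\LL_{x,r}$, so it is an $L$-energy contribution, bounded by $\tfrac{|\pi_{x,r}(x'-x)|^2}{r^2}\vartheta(x,r;\LL_{x,r})\le C(m)\tfrac{|x'-x|^2}{r^2}\vartheta_\LL(x,2r)\le C(m)\delta\tfrac{|x'-x|^2}{r^2}$. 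Combining these with the lower bound $\int\rho_r(y-x)\ps{\nabla u}{v}^2\ge c(m)|v|^2\vartheta(x,r)$ from the previous paragraph, and absorbing the small term (using $\vartheta(x,r)\ge c(m)\epsilon_0\gg\delta$ and $|v|\le|x'-x|\le r/10$), yields
\begin{align}
|v|^2\,\vartheta(x,r) \le C(m)\,r^2\big(r\dot\vartheta(x,3r/2)+r\dot\vartheta(x',3r/2)\big),
\end{align}
which upon dividing by $r^2\vartheta(x,r)$ is exactly \eqref{e:distance_x_x'}.

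The main obstacle I anticipate is bookkeeping the weight comparisons carefully: one must pass between $\rho_r(y-x)$, $\rho_r(y-x')$, $-\dot\rho_{3r/2}(y-x)$ and $-\dot\rho_{3r/2}(y-x')$, each step of which relies on the specific properties of the cutoff heat kernel in Lemma \ref{l:rho_basic_properties} and on $|x-x'|\le r/10$ so that the enlarged balls genuinely contain the support. A secondary subtlety is ensuring the $L$-energy error term is genuinely lower order, which uses the best-plane bound $\vartheta_\LL(x,2r)\le C(m)\delta$ together with $\vartheta(x,r)\ge c(m)\epsilon_0$; this is why the displacement ends up controlled by the radial pinching and not just by generic energy.
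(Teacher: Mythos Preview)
Your strategy is essentially the paper's, but your decomposition introduces a term you cannot actually absorb. You set $v=\pi_{x,r}^\perp(x'-x)$ and write $v=(y-x)-(y-x')-\pi_{x,r}(x'-x)$, producing a third term
\[
r^2\int\rho_r(y-x)\,\ps{\nabla u}{\pi_{x,r}(x'-x)}^2\;\le\;|\pi_{x,r}(x'-x)|^2\,\vartheta_\LL(x,r)\;\le\;C(m)\,\delta\,|x'-x|^2.
\]
This cannot be absorbed into the left side $c(m)|v|^2\vartheta(x,r)$: there is no relation $|\pi_{x,r}(x'-x)|^2\lesssim|v|^2$ (indeed the parallel part may dominate), and $\delta\ll\vartheta(x,r)$ does not help because the factor in front is $|x'-x|^2$, not $|v|^2$. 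Nor can it be absorbed into the right side, since $\vartheta_\LL(x,r)$ is controlled by the \emph{averaged} pinching over $\T\cap B_r(x)$, not by $r\dot\vartheta(x,3r/2)+r\dot\vartheta(x',3r/2)$ alone.

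The fix is exactly what the paper does: work with the full vector $v=x'-x$ instead. Because $\LL_{x,r}$ is an eigenspace of $Q(x,r)$, the cross term vanishes and
\[
r^2\int\rho_r(y-x)\,\ps{\nabla u}{v}^2
= r^2\int\rho_r(y-x)\bigl[\ps{\nabla u}{\pi_{x,r}v}^2+\ps{\nabla u}{\pi_{x,r}^\perp v}^2\bigr]
\ge \lambda_2\,|\pi_{x,r}^\perp v|^2\ge \tfrac13\vartheta(x,r)\,|\pi_{x,r}^\perp v|^2.
\]
Now the upper bound uses only $v=(y-x)-(y-x')$, giving two pinching terms and no $L$-energy remainder. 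In other words, the eigenspace orthogonality should be used to isolate $|\pi_{x,r}^\perp(x'-x)|$ in the \emph{lower} bound, not by splitting in the \emph{upper} bound.
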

\begin{proof}
 Set for convenience
 \begin{gather}
  v= x'-x\, .
 \end{gather}
  By definition of $\LL_{x,r}$ and by the eigenvalue separation of \eqref{eq_lambda_sep}:
 \begin{gather}
  r^2\int \rho_r(y-x) \ps{\nabla u}{v}^2=r^2\int \rho_r(y-x) \qua{\ps{\nabla u}{\pi_{x,r}(v)}^2+\ps{\nabla u}{\pi_{x,r}^\perp (v)}^2}\geq \frac 1 3 \vartheta(x,r) \abs{\pi_{x,r}^\perp (v)}^2\, .
 \end{gather}
 On the other hand:
 \begin{align}
  \frac 1 2 r^2\int \rho_r(y-x) \ps{\nabla u}{v}^2\leq &r^2\int \rho_r(y-x) \ps{\nabla u}{y-x}^2+r^2\int \rho_r(y-x) \ps{\nabla u}{y-x'}^2\notag \\
  \leq & C(m) r^2 \ton{r\dot\vartheta(x,3r/2)+r\dot \vartheta(x',3r/2)}\, .
 \end{align}
 This concludes the proof.
\end{proof}

Now we are ready to state an improved version of Theorem \ref{t:prelim:cone_splitting_annular}.
\begin{theorem}[Quantitative Cone Splitting on Annular Regions]\label{t:cone_splitting_annular}
Let $u:B_{10R}(p)\to N$ be a stationary harmonic map with $R^2\fint_{B_{10R}}|\nabla u|^2\leq \Lambda$, and let $\cA=B_2\setminus \overline{B_{\rf_x}(\T)}$ be a $\delta$-annular region.  For each $x\in \T$ with $B_r(x)\subseteq B_2$ and $3r>\rf_x$ we have
\begin{align}
	\vartheta(x,r;\LL_{x,r})+\vartheta(x,r;\LL_{x,r}^\perp)&\leq C(m)\ton{r\dot\vartheta(x,2r)+\fint_{\T\cap B_r(x)} r\dot\vartheta(z,2r)}\, .
\end{align}
Similarly, for each $x\in \T_r$ with $B_r(x)\subseteq B_2$ and $3r>\rf_x$ we have
\begin{align}\label{e:cone_splitting_annular_integrated}
	\vartheta(x,r;\LL_{x,r})+\vartheta(x,r;\LL_{x,r}^\perp)&\leq C(m)\ton{r\dot\vartheta(x,2r)+\fint_{\T_r\cap B_r(x)} r\dot\vartheta(z,2r)}\, .
\end{align}
\end{theorem}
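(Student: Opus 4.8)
The statement is an upgrade of Theorem~\ref{t:prelim:cone_splitting_annular}: there we got an $m-2$ plane $\bar L = L_{x,r}$ together with a nearby center $\bar x$ with $|\bar x - x| \le C(m)\sqrt\delta\, r$, and the full perpendicular energy bound $\vartheta(\bar x,r;L)+\vartheta(\bar x,r;L^\perp) \le C(m)\fint_{\T\cap B_{r/2}(x)} r\dot\vartheta(z,3r/2)$. The goal now is to move the basepoint from $\bar x$ back to $x$ and replace $\bar L = L_{x,r}$ by the best plane $\LL_{x,r}$, paying only one extra error term of the form $r\dot\vartheta(x,2r)$. So the plan is: (i) start from the output of Theorem~\ref{t:prelim:cone_splitting_annular}, (ii) replace $\bar L$ by $\LL_{x,r}$ using Theorem~\ref{t:best_plane:best_plane}.(1) (the best plane is quantitatively optimal, so this only helps for the $L$-energy, and for the $L^\perp$-energy the Grassmannian distance $d_{\Gr}(\bar L, \LL_{x,r})$ is controlled by the right-hand side divided by $\vartheta(x,r)\gtrsim \epsilon_0$), and (iii) translate the center from $\bar x$ to $x$ using the distance estimate \eqref{e:distance_x_x'} / \eqref{e:distance_L_LL} and the elementary comparison of Gaussians $\rho_r(y-x)\le C(m)\rho_{3r/2}(y-\bar x)$ valid since $|x-\bar x|\le C(m)\sqrt\delta\, r \le r/10$ for $\delta$ small.

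First I would carry out step (iii) in the form of a triangle-type decomposition: for any unit $\ell\in \LL_{x,r}$,
\begin{align}
	\langle\nabla u(y),\ell\rangle^2 &\le 2\langle\nabla u(y), \ell'\rangle^2 + 2|\nabla u(y)|^2\, d_{\Gr}(\LL_{x,r},\bar L)^2\, ,\notag
\end{align}
where $\ell'$ is the nearest unit vector in $\bar L$, and integrate against $\rho_r(y-x)\le C(m)\rho_{3r/2}(y-\bar x)$ to bound $\vartheta(x,r;\LL_{x,r})$ by $C(m)\vartheta(\bar x,\tfrac32 r;\bar L) + C(m)\vartheta(x,2r)\, d_{\Gr}(\LL_{x,r},\bar L)^2$. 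The first summand is controlled by the output of Theorem~\ref{t:prelim:cone_splitting_annular} (after rescaling $r\mapsto \tfrac32 r$ and absorbing the Ahlfors comparison of the averages over $\T\cap B_{r/2}$ versus $\T\cap B_r$). For the second summand: since $\bar L = L_{x,r}$ is a competitor for the best plane, $\vartheta(x,r;\bar L)\ge \vartheta(x,r;\LL_{x,r}) + c(m)\vartheta(x,r)\, d_{\Gr}(\bar L,\LL_{x,r})^2$ by Theorem~\ref{t:best_plane:best_plane}.(1), so $d_{\Gr}(\bar L,\LL_{x,r})^2 \le C(m)\vartheta(x,r)^{-1}\vartheta(x,r;\bar L) \le C(m)\epsilon_0^{-1}\, \fint_{\T\cap B_r(x)} r\dot\vartheta(z,2r)$, which is of the allowed order. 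Thus the $L$-energy piece is done.

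For the $L^\perp$-energy I would argue analogously but more carefully, since the perpendicular energy carries the weight $|\pi_{L^\perp}(y-x)|^2$ and we changed both the plane and the center. The change of center contributes a term $\langle\nabla u,\pi_{\bar L}^\perp(x-\bar x)\rangle^2$; here the point of the new error term is visible, because $|\pi_{\bar L}^\perp(x-\bar x)|^2$ need only be bounded by $C(m)r^2\, \vartheta(x,r)^{-1} r\dot\vartheta(x,2r)$ via \eqref{e:distance_x_x'} (applied with $x'=\bar x$, noting $|\bar x - x|\le r/10$), and this is where $r\dot\vartheta(x,2r)$ enters on the right-hand side — it is the self-pinching at $x$, not averaged over $\T$. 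The change of plane contributes $|\nabla u|^2\, d_{\Gr}(\bar L,\LL_{x,r})^2$ times the same weight, already estimated above. Collecting: $\vartheta(x,r;\LL_{x,r}^\perp)\le C(m)\vartheta(\bar x,\tfrac32 r;\bar L^\perp) + C(m)\big(r\dot\vartheta(x,2r) + \fint_{\T\cap B_r(x)} r\dot\vartheta(z,2r)\big)$, and the first term is again controlled by Theorem~\ref{t:prelim:cone_splitting_annular}. Summing the $L$ and $L^\perp$ bounds gives the first inequality of the theorem.

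The second inequality, with $\T$ replaced by $\T_r$, I would obtain by the same argument but feeding in points $\{x_i\}_0^{m-2}\subset \T_r\cap B_{r/20}(x)$ that are $\alpha$-linearly independent; one needs that $\T_r$ is bilipschitz to $L_\cA\cap B_{3/2}$ with the graph bound $|\ft_r|+|\nabla\ft_r| + r|\nabla^2\ft_r|\le C(m)\sqrt\delta$ (Theorem on approximating submanifolds), so that it too satisfies the Ahlfors-regularity \eqref{e:annular_region:ahlfors_regularity}-type estimate and supports effectively independent configurations; the pinching inputs $r\dot\vartheta(z,2r)$ for $z\in\T_r$ are then used verbatim. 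The main obstacle, I expect, is bookkeeping the various rescaling constants ($r$ versus $\tfrac32 r$ versus $2r$, and $B_{r/2}$ versus $B_r$ in the averages) so that the averaged pinching terms on the right truly come out at scale $2r$ over $B_r(x)$ as stated, together with making sure the extra $r\dot\vartheta(x,2r)$ genuinely absorbs the center-translation error — i.e. that estimate \eqref{e:distance_x_x'} is sharp enough with the exponent on $r$ and the factor $\vartheta(x,r)^{-1}\gtrsim \epsilon_0^{-1}$. There is nothing conceptually new beyond Theorem~\ref{t:prelim:cone_splitting_annular}, Theorem~\ref{t:best_plane:best_plane}, and \eqref{e:distance_x_x'}; the content is entirely in combining them while keeping the error at the stated order.
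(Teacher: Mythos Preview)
Your proposal is correct and follows essentially the same approach as the paper: start from Theorem~\ref{t:prelim:cone_splitting_annular}, upgrade $\bar L$ to $\LL_{x,r}$ via the quantitative uniqueness of Theorem~\ref{t:best_plane:best_plane}.(1), and move the center from $\bar x$ back to $x$ using the distance estimate~\eqref{e:distance_x_x'}, with the self-pinching $r\dot\vartheta(x,2r)$ absorbing the center-translation error. Two small remarks: for the $L$-energy the paper is slightly more direct than your triangle decomposition---since $\LL_{x,r}$ is the minimizer one has $\vartheta(x,r;\LL_{x,r})\le \vartheta(x,r;\bar L)\le C(m)\vartheta(\bar x,1.1r;\bar L)$ immediately; and the $\epsilon_0^{-1}$ you carry actually cancels, because the Grassmannian and center-shift errors enter multiplied by $\vartheta(x,2r)$, and $\vartheta(x,2r)/\vartheta(x,r)\le C(m)$ by Remark~\ref{rm:vartheta_comparison}.
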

\begin{remark}\label{rm:cone_splitting_annular_V}
Consequently if  $d_{\Gr} (V,\LL_{x,r})^2\leq C(m) \vartheta(x,r,\LL_{x,r})/\vartheta(x,r)$, then we also have the estimate $	\vartheta(x,r;V)+\vartheta(x,r;V^\perp)\leq C(m)\ton{r\dot\vartheta(x,2r)+\fint_{\T_r\cap B_r(x)} r\dot\vartheta(z,2r)}$ under the above setup. 
\end{remark}

\begin{proof}
	The proof of the two statements for $\T$ and $\T_r$ is identical, so we prove only the first one.

	According to Theorem \ref{t:prelim:cone_splitting_annular}, there exists $\bar x$ with $\abs{\bar x -x}\leq C(m)\sqrt \delta r$ and $\bar L$ such that 
	\begin{gather}
	 \vartheta(\bar x,1.1 r;\bar L)+\vartheta(\bar x,1.1 r;\bar L^\perp)\leq C(m)\fint_{\T\cap B_{r}(x)} r\dot\vartheta(z,2r)\, , \\
	 r\dot \vartheta(\bar x, 1.1r)\leq C(m)\fint_{\T\cap B_r(x)} r\dot\vartheta(z,2r)\, .
	\end{gather}
    Notice that by definition of $\LL_{x,r}$ and simple comparison:
    \begin{gather}
    \vartheta(x,r;\LL_{x,r})\leq C(m)\vartheta(\bar x,1.1r;\bar L)\, .
    \end{gather}
    Thus we turn out attention to the perpendicular component of the energy. In order to do so, consider that by \eqref{e:distance_L_LL}
    \begin{gather}
     d_{\Gr}(\bar L,\LL_{x,r})^2\leq \frac{C(m)}{\vartheta(x,r)}\vartheta(x,r;\bar L)\leq \frac{C(m)}{\vartheta(x,r)} \fint_{\T\cap B_{r}(x)} r\dot\vartheta(z,2r)\, ,
    \end{gather}
    and similarly by \eqref{e:distance_x_x'}
    \begin{gather}
    \abs{\pi_{x,r}^\perp (x-\bar x)}^2 \leq \frac{C(m)}{\vartheta(x,r)} \qua{r\dot \vartheta(x,2r) + \fint_{\T\cap B_{r}(x)} r\dot\vartheta(z,2r)}\, .
    \end{gather}
    Considering that
    \begin{gather}
    \abs{\pi_{x,r}^\perp (\nabla u(y))}^2 \leq 2\abs{\pi_{\bar L^\perp}(\nabla u(y))}^2 + C\abs{\nabla u(y)}^2 d_{\Gr}(\bar L,\LL_{x,r})^2\notag \\
    \abs{\pi_{x,r}^\perp (y-x)}^2 \leq 2\abs{\pi_{x,r}^\perp(y-\bar x)}^2 + 2\abs{\pi_{x,r}^\perp (x-\bar x)}^2\notag \\
    \abs{\pi_{x,r}^\perp (y-\bar x)}^2 \leq 2\abs{\pi_{\bar L^\perp}(y-\bar x)}^2 + C\abs{y-\bar x}^2 d_{\Gr}(\bar L,\LL_{x,r})^2\, ,
    \end{gather}
    we can estimate
    \begin{align}
    \vartheta(x,r;\LL^\perp_{x,r}) = &\int \rho_r(y-x) \abs{\pi_{x,r}^\perp \nabla u}^2 \abs{\pi_{x,r}^\perp(y-x)}^2\notag\\
    \leq C(m) &\int \rho_r(y-x) \abs{\pi_{\bar L^\perp } \nabla u}^2 \abs{\pi_{\bar L ^\perp} (y-\bar x)}^2 +C(m)\frac{\vartheta(x,2r)}{\vartheta(x,r)} \qua{r\dot \vartheta(x,2r) + \fint_{\T\cap B_{r}(x)} r\dot\vartheta(z,2r)}\notag \\
    \leq C(m) & \vartheta(x,1.1r;\bar L^\perp) + C(m) \qua{r\dot \vartheta(x,2r) + \fint_{\T\cap B_{r}(x)} r\dot\vartheta(z,2r)}\, .
    \end{align}
    This concludes the proof.
\end{proof}

\begin{comment}
By analyzing the proof, it is easy to see that a slightly more general estimate holds. In particular $\LL_{x,r}$ can be replaced by any other plane which is sufficiently close to the best. For later convenience, we state the result here as a corollary.
\begin{corollary}\label{c:cone_splitting_annular_V}
 Let $u:B_{10R}(p)\to N$ be a stationary harmonic map with $R^2\fint_{B_{10R}}|\nabla u|^2\leq \Lambda$, and let $\cA=B_2\setminus \overline{B_{\rf_x}(\T)}$ be a $\delta$-annular region.  For each $x\in \T$ with $B_r(x)\subseteq B_2$ and $r>\rf_x/3$, and for all vector spaces $V$ such that
 \begin{gather}
  d_{\Gr} (V,\LL_{x,r})^2\leq C(m) \vartheta(x,r,\LL_{x,r})
 \end{gather}
we have
\begin{align}
	\vartheta(x,r;V)+\vartheta(x,r;V^\perp)&\leq C(m)\ton{r\dot\vartheta(x,2r)+\fint_{\T\cap B_r(x)} r\dot\vartheta(z,2r)}\, .
\end{align}
Similarly, for each $x\in \T_r$ with $B_r(x)\subseteq B_2$ and $r>\rf_x/3$ we have
\begin{align}
	\vartheta(x,r;V)+\vartheta(x,r;V^\perp)&\leq C(m)\ton{r\dot\vartheta(x,2r)+\fint_{\T_r\cap B_r(x)} r\dot\vartheta(z,2r)}\, .
\end{align}
\end{corollary}
\end{comment}
\vspace{.5cm}

\section{Best Approximating Submanifold \texorpdfstring{$\T_r$}{Tr}}\label{s:approximating_submanifold}

Let $\cA\subseteq B_2$ be an annular region with submanifold $\T\subseteq B_2$ and radius function $\rf_x:\T\to \dR$ .  In Section \ref{sss:outline:general:local_approx_submanifold} we outlined the construction of scale $r>0$ smooth approximations $\T_r$ to $\T$ which were {\it best} approximations in some precise manner.  These approximations play a crucial role in our analysis, as their Euler-Lagrange equations will cancel out highest order, and otherwise non-controllable, errors which occur in the proof of Theorem \ref{t:outline:annular_regions_energy}.\\

Recall briefly the moral description of $\T_r$.  For $x\in \T$ with $r\geq\rf_x$ we have that the energy measure $|\nabla u|^2dv_g$ looks roughly like the $m-2$ Hausdorff measure on the plane $\cL_{x,r}$.  As a consequence, the energy function $\vartheta(x,r)$ looks roughly like a $2$-dimensional Gaussian in $B_r(x)$ invariant wrt $\LL_{x,r}$.  We will construct $\T_r$ so that each $x\in \T_r$ lives at the center of this Gaussian.  More precisely, for each $x\in \T_r$ we will have that $\vartheta(y,r)$ obtains a maximum at $x$ on the affine best plane $x+\cL^\perp_{x,r}$.  The goal of this Section is to prove the following:\\

\begin{theorem}[Approximating Submanifold]\label{t:approximating_submanifold}
Let $u:B_{10R}(p)\to N$ be a stationary harmonic map with $R^2\fint_{B_{10R}}|\nabla u|^2\leq \Lambda$ , and let $\cA=B_2\setminus \overline{B_{\rf_z}(\T)}$ be a $\delta$-annular region.  For each $r>0$ there exists a submanifold $\T_r\subseteq B_{3/2}$ such that the following hold:
\begin{enumerate}
	\item\label{i:Tr_graph} We can globally write $\T_r=\text{Graph}\{\ft_r:L_\cA\cap B_{3/2}\to L_\cA^\perp\}$ with
	\begin{gather}
	\abs{\ft_r}+\abs{\nabla \ft_r}+r\abs{\nabla^2 \ft_r}\leq C(m)\sqrt\delta
	\end{gather}
	\item \label{i:approximating_submanifold_space_gradient_hessian} For $x\in \T_r$ with $2r\geq \rf_x$ we can locally write $\T_r\cap B_r(x)=\text{Graph}\{\ft_{x,r}:\LL_{x,r}\cap B_r(x)\to \LL_{x,r}^\perp\}$
	\begin{gather}
	\abs{\nabla \ft_{x,r}}+r\abs{\nabla^2 \ft_{x,r}}+r^2\abs{\nabla^3 \ft_{x,r}}\leq C(m,\epsilon_0)\sqrt{\vartheta_{\cL}(x,2r)}\, ,
	\end{gather}
	\item\label{i:ddrT} For $x\in \T_r$ with $2r\geq \rf_x$,
	\begin{gather}
	\abs{\frac{d}{dr}\T_r}(x)\leq C(m,\epsilon_0)\,\ton{\sqrt{r\dot \vartheta(x,2r)}+ \sqrt {\vartheta_\LL (x,2r)}}\leq C(m,\epsilon_0)\,\sqrt\delta\, ,
	\end{gather}
	\item \label{i:ddr_piTr} For $x\in \T_r$ with $2r\geq \rf_x$
	\begin{gather}
	\abs{r\frac{d}{dr} \nabla \pi_{\T_r}}(x)\leq C(m) \abs{r\frac{d}{dr} \nabla \ft_{x,r}}\leq C(m,\epsilon_0) \ton{\sqrt{r\dot \vartheta(x,2r)}+ \sqrt {\vartheta_\LL (x,2r)}}\leq C(m,\epsilon_0)\sqrt \delta\, ,
	\end{gather}
	\item\label{i:T_r=T}  If $x\in \T_r$ with $\rf_x\geq 3r$ then $\T_r\cap B_r(x)=\T\cap B_r(x)$ ,
	\item\label{i:EL_Tr} If $x\in \T_r$ with $2r\geq \rf_x$, then the following equivalent conditions hold:
	\begin{align}
	 \pi_{x,r}^\perp \nabla \vartheta(x,r)=0 \qquad \Longleftrightarrow \qquad \forall v\in \LL_{x,r}^\perp: \ \ \int -\dot\rho_r \ton{y-x} \ps{\nabla u(y)}{y-x} \ps{\nabla u(y)}{v}dy=0\, .
	\end{align}
	\item\label{i:approximating_submanifold_better_LL_comparison} For each  $x\in \T_r$ with $2r\geq \rf_x$ we have that
	\begin{gather}
	|\pi_{\T_r}-\pi_{x,r}|\leq C(m,\epsilon_0)\sqrt{r\dot\vartheta(x,2r)} \sqrt{\vartheta_\cL(x,2r)}+ C(m,\epsilon_0) e^{-R/2}\sqrt{\vartheta_\cL(x,2r)}
	\end{gather}
	\item\label{i:integral_dot:vartheta_Tr_vs_T} for all $r$ and $r'\in [r/10,10r]$, we have
	\begin{gather}\label{e:integral_dot_vartheta_Tr_vs_T}
	 \int_{\T_r\cap \B {3} p} r\dot \vartheta(x,r')\leq C(m) \int_{\T \cap \B {4}p} r\dot \vartheta(x,1.1r')\, .
	\end{gather}
\end{enumerate}
\end{theorem}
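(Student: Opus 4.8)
The plan is to construct, for each fixed $r>0$, the submanifold $\T_r$ as the zero set of the vertical gradient map $x\mapsto \pi_{x,r}^\perp\nabla\vartheta(x,r)$, so that item \eqref{i:EL_Tr} holds by construction, and then to extract every remaining property by differentiating this defining relation and feeding in Theorem \ref{t:best_plane:best_plane} together with the spatial gradient and Hessian estimates of Theorems \ref{t:prelim:spacial_gradient} and \ref{t:prelim:spacial_hessian}. First, item \eqref{i:EL_Tr} is essentially a rewriting of \eqref{e:spacial_gradient_vartheta}: applying that identity with $\ell=v$ gives $\nabla_v\vartheta(x,r)=2\int\big(-\dot\rho_r(y-x)\big)\ps{\nabla u(y)}{y-x}\ps{\nabla u(y)}{v}\,dy$, so $\pi_{x,r}^\perp\nabla\vartheta(x,r)=0$ is precisely the stated Euler--Lagrange condition. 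Where $\rf_x\geq 3r$, no near--symmetry is available, and there I would simply set $\T_r\cap B_r(x)=\T\cap B_r(x)$; in the transition band $2r\leq\rf_x\leq 3r$ the two definitions are $C^2$--close by the crude bounds below, so a partition of unity in the $L_\cA$--variable produces a single graph $\T_r=\operatorname{Graph}(\ft_r)$ over $L_\cA\cap B_{3/2}$, which gives items \eqref{i:Tr_graph} and \eqref{i:T_r=T}.

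The crux is the nondegeneracy of the construction: the derivative of $x\mapsto\pi_{x,r}^\perp\nabla\vartheta(x,r)$ in the $L_\cA^\perp$ directions must be uniformly invertible, with inverse of operator norm $\leq C(m,\epsilon_0)r^2$. Up to the term $(\partial_t\pi_{x,r}^\perp)\nabla\vartheta$ --- which by Theorem \ref{t:best_plane:best_plane}.\eqref{i:best_plane_bounds_on_projections} and Theorem \ref{t:prelim:spacial_gradient} is of size $\leq C(m,\epsilon_0)r^{-2}\sqrt{\vartheta_{\cL}(x,2r)}\,\big(\sqrt{r\dot\vartheta(x,2r)\,\vartheta_{\cL}(x,2r)}+e^{-R/2}\big)$ and hence negligible --- this derivative is the $L_\cA^\perp\times L_\cA^\perp$ block of $\nabla^2\vartheta(x,r)$. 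Combining the Hessian formula \eqref{e:spacial_hessian_vartheta}, the eigenvalue separation \eqref{eq_lambda_sep} for the mollified energy tensor $Q(x,r)$, the substitution $-\dot\rho\leftrightarrow\rho$ from Lemma \ref{l:rho_basic_properties}, and the same stationary--equation integration by parts that proves Theorem \ref{t:prelim:spacial_gradient} (to dispose of the $\ddot\rho$ and mixed contributions), one finds, for every unit $v\in\LL_{x,r}^\perp$,
\begin{gather}
r^2\nabla^2\vartheta(x,r)[v,v]\leq -c(m)\vartheta(x,r)+C(m)\sqrt{\vartheta(x,r)\,\delta}+C(m)e^{-R/2}\leq -c(m)\,\epsilon_0\, ,
\end{gather}
while the mixed block $r^2\nabla^2\vartheta(x,r)[v,w]$ ($v\in\LL_{x,r}^\perp$, $w\in\LL_{x,r}$) is bounded by $C(m)\big(\sqrt{r\dot\vartheta(x,2r)\,\vartheta_{\cL}(x,2r)}+e^{-R/2}\big)$. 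So the perpendicular block is uniformly negative definite at scale $\vartheta(x,r)\gtrsim\epsilon_0$. An implicit function theorem argument (or quadratically convergent Newton iteration) started at $t_0=\ft_\T(\ell)$, where the vertical gradient has size $\leq C(m)r^{-1}\sqrt{r\dot\vartheta(\pi^{\T}(\cdot),r)\,\vartheta(\pi^{\T}(\cdot),2r)}$ by \eqref{e:spacial_gradient_vartheta} and the annular condition (a2), then yields a unique smooth solution $\ft_r$ satisfying the refined closeness bound
\begin{gather}\label{e:plan_refined}
|\ft_r(\ell)-\ft_\T(\ell)|^2\leq C(m,\epsilon_0)\,r^2\,r\dot\vartheta\big(\pi^{\T}(\ell+\ft_r(\ell)),\,r\big)\leq C(m,\epsilon_0)\,r^2\delta\, .
\end{gather}

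The differential estimates \eqref{i:approximating_submanifold_space_gradient_hessian}--\eqref{i:ddr_piTr} follow by differentiating $\pi_{x,r}^\perp\nabla\vartheta(x,r)=0$ along $\T_r$ and in $r$: since the vertical block is invertible with norm $\sim\vartheta(x,r)/r^2\gtrsim\epsilon_0/r^2$, each of these bounds is reduced to a bound on a tangential or $r$--derivative of $\nabla\vartheta$ and of $\pi_{x,r}$, which are supplied by Theorem \ref{t:prelim:spacial_hessian} (together with a third--order analogue proved by the same stationary--equation computation) and by Theorem \ref{t:best_plane:best_plane}.\eqref{i:best_plane_bounds_on_projections}. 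For the sharp comparison \eqref{i:approximating_submanifold_better_LL_comparison}, one differentiates the Euler--Lagrange relation along a unit $w\in T_x\T_r$: the identity $(\partial_w\pi_{x,r}^\perp)\nabla\vartheta+\pi_{x,r}^\perp\nabla^2\vartheta[w,\cdot]=0$, the uniform negative--definiteness of the perpendicular Hessian block, and the fact that at $x\in\T_r$ one has $\pi_{x,r}^\perp\nabla\vartheta(x,r)=0$ and $|r\nabla\vartheta(x,r)|=|r\nabla_{\LL_{x,r}}\vartheta(x,r)|\leq C(m)\sqrt{r\dot\vartheta(x,2r)\,\vartheta_{\cL}(x,2r)}$ (Theorem \ref{t:prelim:spacial_gradient}) together force $|\pi_{x,r}^\perp w|\leq C(m,\epsilon_0)\big(\sqrt{r\dot\vartheta(x,2r)\,\vartheta_{\cL}(x,2r)}+e^{-R/2}\sqrt{\vartheta_{\cL}(x,2r)}\big)$; since $|\pi_{\T_r}-\pi_{x,r}|\leq C(m)\sup\{|\pi_{x,r}^\perp w|:|w|=1,\ w\in T\T_r\}$, this is \eqref{i:approximating_submanifold_better_LL_comparison}.

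Finally, for item \eqref{i:integral_dot:vartheta_Tr_vs_T} I would parametrize both $\T_r=\operatorname{Graph}(\ft_r)$ and $\T=\operatorname{Graph}(\ft_\T)$ over $L_\cA$ --- both are graphs with gradient $\leq C(m)\sqrt\delta$, so their volume forms are comparable to Lebesgue measure on $L_\cA$ --- and compare, for $r'\in[r/10,10r]$, the integrand $r'\dot\vartheta(\ell+\ft_r(\ell),r')$ with $r'\dot\vartheta(\ell+\ft_\T(\ell),1.1r')$. Writing $z=\ell+\ft_\T(\ell)\in\T$ and $x=\ell+\ft_r(\ell)$, and using $-\dot\rho_{r'}(y-x)\leq C(m)\big(-\dot\rho_{1.1r'}(y-z)\big)$ for $|x-z|$ small together with $\ps{\nabla u(y)}{y-x}^2\leq 2\ps{\nabla u(y)}{y-z}^2+2|x-z|^2|\nabla u(y)|^2$, one gets
\begin{gather}
r'\dot\vartheta(x,r')\leq C(m)\,r'\dot\vartheta(z,1.1r')+C(m)|x-z|^2\!\int\big(-\dot\rho_{1.1r'}(y-z)\big)|\nabla u(y)|^2\,dy\, ,
\end{gather}
and the last integral is $\leq C(m)(r')^{-2}\vartheta(z,1.2r')\leq C(m)(r')^{-2}\Lambda$. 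The crude closeness $|x-z|^2\leq C\delta(r')^2$ would leave the stray additive term $C\delta\Lambda$ here; it is exactly the refined bound \eqref{e:plan_refined}, $|x-z|^2\leq C(m,\epsilon_0)(r')^2\,r'\dot\vartheta(z,r')$, that turns this into $C(m,\epsilon_0)\,r'\dot\vartheta(z,1.2r')$. Integrating over $\ell\in L_\cA\cap B_3$ and using that the balls $B_{r'}(z(\ell))$ have bounded overlap then gives \eqref{e:integral_dot_vartheta_Tr_vs_T}. The main obstacle throughout is the step leading to the negative--definiteness of the perpendicular Hessian block with a controlled constant and with the off--diagonal and $\ddot\rho$ errors honestly bounded by the cone--splitting quantities $r\dot\vartheta,\vartheta_{\cL}$ rather than merely by $\delta$: existence, uniqueness, all the regularity bounds, and in particular the refined estimates \eqref{e:plan_refined} and \eqref{i:approximating_submanifold_better_LL_comparison} that are needed to avoid stray $\delta\Lambda$ errors both here and later in the radial energy argument, all rest on it.
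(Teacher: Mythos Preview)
Your plan is correct and follows essentially the same route as the paper. The construction via the implicit function theorem applied to $\Theta(z)=r\,\pi_{L^\perp}\big[\pi_{z,r}^\perp\nabla\vartheta(z,r)\big]$, the nondegeneracy of the $\LL_{x,r}^\perp$--Hessian block from the eigenvalue separation \eqref{eq_lambda_sep} combined with the $\rho\leftrightarrow -\dot\rho$ substitution, the gluing with $\T$ by partition of unity, the refined distance bound \eqref{e:plan_refined}, and the derivation of the differential estimates by differentiating the Euler--Lagrange relation are all exactly what the paper does in Section~\ref{s:approximating_submanifold}. Your treatment of item \eqref{i:approximating_submanifold_better_LL_comparison} is likewise the paper's Lemma~\ref{l:improved_comparison_LL_T}; the crucial observation that the first $\dot\rho_r$--term in the mixed Hessian block is killed up to $e^{-R/2}$ by the orthogonality $Q(x,r)[\LL_{x,r},\LL_{x,r}^\perp]=0$ (i.e.\ the Euler--Lagrange of the best plane) is implicit in your appeal to \eqref{eq_lambda_sep}, though it would be worth stating explicitly. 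Two very minor imprecisions: your mixed--block bound should carry a factor $\sqrt{\vartheta_\cL\,\vartheta}$ on the $e^{-R/2}$ term rather than a bare $e^{-R/2}$ (you recover the correct form in the \eqref{i:approximating_submanifold_better_LL_comparison} paragraph anyway); and for item \eqref{i:integral_dot:vartheta_Tr_vs_T} you should keep the $1/\vartheta(z,r)$ factor explicit in \eqref{e:plan_refined} rather than absorbing it into $\epsilon_0$, so that it cancels against $\vartheta(z,2r')$ via Remark~\ref{rm:vartheta_comparison} and yields the dimensional constant $C(m)$ stated.
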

\begin{remark}\label{r:T_r_II_estimates}
	$(2)$ implies the second fundamental form bound
	\begin{gather}
	 r\abs{\II_{\T_r}(z)}+r^2\abs{\nabla \II_{\T_r}(z)}\leq C(m,\epsilon_0) \sqrt{\vartheta_\cL(x,2r)}\, .
	\end{gather}
\end{remark}
\begin{remark}
	In $(3)$ we have that $\frac{d}{dr}\T_r\in T^\perp\T_r$ is the well defined normal vector.
% 	, and recall \newline that $\vartheta(x,r;\cL^\perp_{x,r})=r^2\int\rho_r(y-x)\abs{\pi_{x,r}^\perp(y-x)}^2\langle\nabla u,\LL_{x,r}^\perp\rangle^2$ .
\end{remark}
\begin{remark}
	In $(6)$ the equality of the two equations is not immediate, it requires an application of the stationary equation \eqref{e:stationary_equation}.
	%applying a stationary equation to the Euler-Lagrange Equation.
	%Note that $|\rho+\dot\rho|(t)<C(m)e^{-R/2}\rho(2t)$ 
\end{remark}
\begin{remark}
	Observe by (7) that for each $\epsilon>0$ if $\delta<\delta(m,\epsilon)$ and $R\geq R(m,\epsilon_0,\epsilon)$ then we have the estimate $|\pi_{\T_r}-\pi_{x,r}|\leq \epsilon\sqrt{\vartheta_\cL(x,r)}$ , where $\pi_{\T_r}$ is the project to the tangent space at $\T_r$ .
\end{remark}

The remainder of this Section will be spent proving the above Theorem piece by piece.

\vspace{.3cm}

\subsection{Local Construction of \texorpdfstring{$\T_r$}{Tr} \texorpdfstring{near $\T\cap \{3r\geq \rf_x\}$}{where there's pinching} }

This subsection is dedicated to the local construction, namely we are interested in building $\T_r$ near $x\in \T$ with $3r\geq\rf_x$, so that the Euler-Lagrange of Theorem \ref{t:approximating_submanifold}.5 will be solved.  The proof will be by an implicit function theorem, and will be designed to both provide the estimates of Theorem \ref{t:approximating_submanifold}.2 and show uniqueness of the construction.  In the next subsection we will paste together this local construction with $\T$ on the region $3r\leq \rf_x$ in order to make the construction global.\\

\begin{lemma}\label{l:approximating_submanifold:local_construction}
Let $u:B_{10R}(p)\to N$ be a stationary harmonic map with $R^2\fint_{B_{10R}}|\nabla u|^2\leq \Lambda$ , and let $\cA=B_2\setminus \overline{B_{\rf_z}(\T)}$ be a $\delta$-annular region.	Let $x\in \T$ with $3r>\rf_x$ and $d_{\Gr}(L,\LL_\cA)<c(m)<<1$, then $\exists !$ $\ft_{L,r}:B_r(x)\cap L\to L^\perp$ with:
\begin{enumerate}
	\item\label{i:approximating_submanifold:local_construction:i1} For each $z\in \tilde \T_r\equiv \ft_{L,r}(B_r(x)\cap L)$ we have $ \pi_{z,r}^\perp \nabla \vartheta(z,r)=0$.
	\item $|\nabla \ft_{L,r}|+r|\nabla^2 \ft_{L,r}|+r^2|\nabla^3 \ft_{L,r}|\leq C(m,\epsilon_0)\sqrt{\vartheta(x,2r;L)}$
	\item $|\frac{d}{dr} \ft_{L,r}|+\abs{r\frac{d}{dr} \nabla \ft_{L,r}}\leq C(m,\epsilon_0)\ton{\sqrt{\vartheta_{\LL}(x,2r)}+\sqrt{r\dot \vartheta(x,2r)}}$.
	\item $|\frac{d}{dr} \ft_{L,r}|\leq C(m,\epsilon_0)\sqrt{\vartheta(x,2r;L^\perp)}$ 
   \item\label{i:ftr} $|\ft_{\T}-\ft_{L_{\hat \T_r}}|<C(m,\epsilon_0)\sqrt\delta\, r$  on $B_r(x)\cap {\hat \T_r}$.
   \item\label{i:pinching_comparison_T_Tr} For all $z\in \tilde \T_r $, let $\hat z$ be the only $\hat z\in \T$ with $\pi_L(\hat z)=\pi_L(z)$. Then for all $r'\in [r/10,10r]$:
   \begin{gather}\label{e:pinching_comparison_T_Tr} r'\dot \vartheta(z,r')\leq C(m) r'\dot \vartheta(\hat z,1.1r')\, .
   \end{gather}

\end{enumerate}
\end{lemma}

\begin{remark}
 Notice that in point \eqref{i:approximating_submanifold:local_construction:i1}, the projection $\pi_{z,r}^\perp=\pi_{\LL_{z,r}^\perp}$ is not the projection onto $L^\perp$, but the projection onto the pointwise best plane $\LL_{z,r}^\perp$.
\end{remark}

\begin{remark}
By applying the above to $L=L_\cA$ we will eventually obtain the global graphing function of	 Theorem \ref{t:approximating_submanifold}.1, and by applying the above to $L=\cL_{x,r}$ we will obtain the estimates of Theorem \ref{t:approximating_submanifold}.2 .
\end{remark}
\begin{remark}
By applying (3) to $L=\cL_{x,r}$ we obtain Theorem \ref{t:approximating_submanifold}.3 .
\end{remark}
\begin{remark}
Recall $\ft_\T:L_\cA\to L_\cA^\perp$ in \eqref{i:ftr} is the graphing function for $\T$, thus $d_H(\tilde \T_r,\T\cap B_r)<C(m)\sqrt{\delta} r$ .
\end{remark}

\begin{proof}[Outline of the proof]
 The proof is based on an implicit function theorem. Fix $x$, $r$ and $L$. We want to make sure that for all $z\in \T_r\cap \B {3r/2}{x}$ we have
 \begin{gather}\label{e:characterization_Tr}
  \pi_{z,r}^\perp\nabla \vartheta(z,r)=0\, .
 \end{gather}
\paragraph{Setting the stage for the implicit function theorem.} One inconvenience in this equation is that the range of $\pi_{z,r}$ changes as a function of $z$. This change is small since by \eqref{e:distance_LL_LL_sr}, for all $z\in \B {2r}x \cap \B {r/10}{\T}$ we have
\begin{gather}
 d_{\Gr}(\LL_{z,r},\LL_{x,r})\leq C\sqrt \delta<<1\, .
\end{gather}
However, in order to apply the implicit function theorem, we need to focus on the map
\begin{gather}
 \IF:\B {2r}{x} \to L^\perp\, , \qquad \IF(z)=r\pi_{L^\perp}\qua{\pi_{z,r}^\perp\nabla \vartheta(z,r)}\, .
\end{gather}
This map has the advantage of having a fixed range independent of $z$: the $2$-dimensional subspace $L^\perp$. Moreover, $d_{\Gr}(L,\LL_{z,r})<<1$ for all $z\in \B {2r}{x}\cap \B {r/10}{\T}$, so that
\begin{gather}
 \pi_{L^\perp}: \LL_{z,r}^\perp \to L^\perp
\end{gather}
is a linear bijection with bi-Lipschitz constant very close to $1$, say bounded by $8/7$ if $d_{\Gr}(L,\LL_\cA)$ and $\delta$ are chosen sufficiently small. This is enough to ensure that
\begin{gather}
 \IF(z)=0 \qquad \Longleftrightarrow \qquad \pi_{z,r}^\perp\nabla \vartheta(z,r)=0\, .
\end{gather}

\paragraph{Implicit function claims.} We claim that
\begin{enumerate}
 \item for $z\in \B {2r}{x}\cap \T$, $\abs{\IF(z)}\leq C\sqrt \delta <<1$
 \item for all $z\in \B {2r}{x} \cap \B {r/C(m)}{\T}$, the $2\times 2$ matrix $r\pi_{L^\perp} \nabla \IF(z)$ is bounded from above by $-\frac 1 {10}\vartheta(z,r)\id_{L^\perp}$
\end{enumerate}

It will follow from the above by the implicit function theorem that there exists $\ft_{L,r}:L\cap \B {3r/2}{0}\to L^\perp$ such that for all $p\in \B {3/2}{x}\cap L$:
\begin{gather}\label{e:implicit_function_identity}
 \Theta(p,\ft_{L,r}(p))=0\, .
\end{gather}

The estimates on $\ft_{L,r}$ will be by-products of the implicit function theorem.  The rest of this section is dedicated to proving the two claims above, as well as the ensuing estimates, and it will be divided into smaller subsections.
\end{proof}

\subsubsection{Proof of the implicit function claims.} The first claim is straight forward. Indeed, for all $z\in \T\cap \B {2r} x$  we have that $r\dot \vartheta(z,r)\leq \delta$, and so by Lemma \ref{t:prelim:spacial_gradient}:
\begin{gather}\label{e:implicit_function_claim_1}
 \abs{\IF(z)}\leq r\abs{\nabla \vartheta(z,r)}\leq C(m)\sqrt{\vartheta(z,2r)}\sqrt{r\dot \vartheta(z,2r)}\leq C(m)\sqrt{\vartheta(z,2r)}\sqrt{\delta}\, .
\end{gather}

Now we turn our attention to the uniform upper bound on $r\pi_{L^\perp}\nabla \IF(z)$ for $z$ close enough to $\T$. Let $\ellperp$ be any unit vector in $L^\perp$, then
\begin{align}
 r\pi_{L^\perp}\nabla \IF(z)[\ellperp,\ellperp] &=r^2 \ps{\nabla \ton{\ps{\pi_{z,r}^\perp \nabla \vartheta(z,r)}{\ellperp}}}{\ellperp}\\
 &= r^2 \nabla^2(\vartheta(z,r))\qua{\ellperp,\pi_{z,r}^\perp(\ellperp)}- r \nabla_{\ellperp} \pi_{z,r} \qua{\ps{r\nabla \vartheta(z,r)}{\ellperp}}\, .
\end{align}
The second piece can be estimated to be small. Indeed, by Lemma \ref{t:prelim:spacial_gradient} we have the rough bound $\abs{r\nabla \vartheta(z,r)}\leq C(m)\vartheta(z,2r)$, and \eqref{eq_LL_basic} ensures that as long as $z\in \B{1/2}{\T}$ we have $\abs{r \nabla \pi_{z,r}}\leq C(m)\sqrt{\delta/\vartheta(z,2r)}$. Thus we can bound
\begin{gather}
 \abs{r \nabla_{\ellperp} \pi_{z,r} \qua{\ps{r\nabla \vartheta(z,r)}{\ellperp}}}\leq C(m) \sqrt{\vartheta(z,2r)}\sqrt{\delta}\, .
\end{gather}
As for the first piece, by \eqref{e:spacial_hessian_vartheta} we know that
\begin{align}\label{e:nabla2_vartheta_proof}
 &r^2\nabla^2 (\vartheta(z,r))[\ellperp,\pi_{z,r}^\perp \ellperp]=\notag\\
 =&\underbrace{2r^2 \int \dot \rho_r(y-z) \ps{\nabla u}{\ellperp}\ps{\nabla u}{\pi_{z,r}^\perp \ellperp}}_{=A}\ \ \underbrace{+2r^2 \int \ddot \rho_r(y-z) \ps{\nabla u}{\frac{y-z}{r}}\ps{\frac{y-z}{r}}{\pi_{z,r}^\perp\ellperp}\ps{\nabla u}{\ellperp}}_{=B}\, .
\end{align}
In order to estimate $B$, consider $\pi^{\T}(z)=\tilde z\in \T$ to be the only point in $\T$ with $\pi_{L_\cA}(\tilde z)=\pi_{L_{\cA}}(z)$, so that $\abs{z-\tilde z} \leq 2 d(z,\T)$. Then we have
\begin{gather}
 2r^2 \int \ddot \rho_r(y-z) \ps{\nabla u}{\frac{y-z}{r}}\ps{\frac{y-z}{r}}{\pi_{z,r}^\perp\ellperp}\ps{\nabla u}{\ellperp}\stackrel{\eqref{e:trho_doubleradius}}{\leq}C(m)\sqrt{r^2 \int \rho_{2r}(y-z) \ps{\nabla u}{\frac{y-z}{r}}^2} \sqrt{\vartheta(z,2r)}
 \end{gather}
 \begin{align}
 r^2 \int \rho_{2r}(y-z) \ps{\nabla u}{\frac{y-z}{r}}^2&\leq 2r^2 \int \rho_{2r}(y-z) \ps{\nabla u}{\frac{y-\tilde z}{r}}^2 + 8 \int \rho_{2r}(y-z) \abs{\nabla u}^2 d(z,\T)^2\notag\\
 &\leq 2r^2 \int -\dot \rho_{3r}(y-\tilde z) \ps{\nabla u}{\frac{y-\tilde z}{r}}^2 + C(m) \vartheta(z,2r) \frac{d(z,\T)^2}{r^2}\notag\\
 &\leq C(m) \ton{\delta + \vartheta(z,2r) \frac{d(z,\T)^2}{r^2}}\notag \\
 &\stackrel{Remark\, \ref{rm:vartheta_comparison}}{\leq}C(m) \ton{\delta + \vartheta(z,r) \frac{d(z,\T)^2}{r^2}}
\end{align}

On the other hand, $A$ in \eqref{e:nabla2_vartheta_proof} can be estimated by
\begin{align}
 &2r^2 \int \dot \rho_r(y-z) \ps{\nabla u}{\ellperp}\ps{\nabla u}{\pi_{z,r}^\perp \ellperp}\notag \\
 \leq\ & \underbrace{2r^2 \int \qua{\dot \rho_r(y-z) +\rho_r(y-z)} \ps{\nabla u}{\ellperp}\ps{\nabla u}{\pi_{z,r}^\perp \ellperp}}_{=A_1} \ \ \underbrace{- 2r^2 \int \rho_r(y-z) \ps{\nabla u}{\ellperp}\ps{\nabla u}{\pi_{z,r}^\perp \ellperp}}_{=A_2}\, .
\end{align}
By \eqref{e:rho_primitive_difference}, we can bound
\begin{gather}
 A_1\leq C(m)e^{-R/2}\vartheta(z,2r)\stackrel{\ref{rm:vartheta_comparison}}{\leq} C(m)e^{-R/2}\vartheta(z,r)\, .
\end{gather}
As for $A_2$ let $d_{\Gr}(L,\LL_{z,r})=\bar d$. This can be assumed to be small, since $d_{\Gr}(L,L_{\cA})\leq c(m)<<1$ and $d_{\Gr}(L_{\cA},\LL_{z,r})\leq C(m,\epsilon_0)\sqrt \delta$ by Remark \ref{r:d_LA_LL_small}. Thus, given that $\ellperp$ is assumed to have norm $1$ we have
\begin{align}
 \pi_{z,r}^\perp\ellperp-\ellperp\in \LL_{z,r} \ \ \ \ \text{with} \ \ \ \abs{\pi_{z,r}^\perp\ellperp-\ellperp}\leq C \bar d\, .
\end{align}
From this we conclude
\begin{align}
 A_2 \leq &-2r^2 \int \rho_r(y-z) \ps{\nabla u}{\pi_{z,r}^\perp\ellperp}\ps{\nabla u}{\pi_{z,r}^\perp \ellperp}+2r^2 \int \rho_r(y-z) \ps{\nabla u}{\pi_{z,r}^\perp\ellperp-\ellperp}\ps{\nabla u}{\pi_{z,r}^\perp \ellperp}\leq\notag \\
 \leq &-2r^2 \int \rho_r(y-z) \ps{\nabla u}{\pi_{z,r}^\perp\ellperp}\ps{\nabla u}{\pi_{z,r}^\perp \ellperp}+2\bar d \sqrt{\vartheta_\LL(z,r)}\sqrt{\vartheta(z,r)}\, .
\end{align}
The eigenvalue separation in \eqref{eq_lambda_sep}, and the fact that
\begin{gather}
 \abs{\pi_{z,r}^\perp \ellperp}\geq \sqrt {1-C(m)\ton{\bar d^2 + \delta^2}}\geq \frac 1 2
\end{gather}
imply the (negative) upper bound independent of $\ellperp$:
\begin{gather}
 -2r^2 \int \rho_r(y-z) \ps{\nabla u}{\pi_{z,r}^\perp\ellperp}\ps{\nabla u}{\pi_{z,r}^\perp \ellperp}\leq - \frac 1 4 \vartheta(z,r)\, .
\end{gather}
Summing all up, we obtain that
\begin{gather}
 r\pi_{L^\perp}\nabla \IF(z)\leq \qua{\ton{-\frac 1 4 +C(m) e^{-R/4}}\vartheta(z,r) + C(m) \sqrt{\vartheta(z,r)} \sqrt{\delta} +\vartheta(z,r) \frac{d(z,\T)}{r}  } \operatorname{id_{L^\perp}}\, .
\end{gather}
Assuming $R\geq R(m)$ is sufficiently large and $\delta\leq \delta(m)$ sufficiently small, we can conclude that
\begin{gather}\label{e:main_bound_implicit_function_theorem}
 r\pi_{L^\perp}\nabla \IF(z)\leq \frac 1 5 \vartheta(z,r)\qua{-1+C(m) \frac{d(z,\T)}{r}} \operatorname{id_{L^\perp}}\, .
\end{gather}

\vspace{.3cm}

In particular, we obtain that if $d(z,\T)\leq r/C(m)$ then
\begin{gather}
 r\pi_{L^\perp}\nabla \IF(z)\leq -\frac 1 {10} \vartheta(z,r)\operatorname{id_{L^\perp}}\, ,
\end{gather}
as desired. $\qed$

\subsubsection{Proof of the gradient and hessian estimates.}\label{ss:gradient_hessian_estimates} By the implicit function theorem, the gradient estimate follows from
\begin{gather}
 \abs{r\pi_L \nabla \IF(z)} \leq C(m) \sqrt{\vartheta(z,2r)}\sqrt {\vartheta(z,2r;L)}\, .
\end{gather}
The proof of this in analogous to the one just carried out, actually simpler. In particular, consider $\ell$ to be a unit tangent vector in $L$.
%, and $z$ such that $d(z,\T)\leq r/C(m)$.
Then
\begin{align}
 r\abs{\nabla_\ell \IF(z)} \leq& r\abs{\nabla \pi_{z,r}} \abs{\nabla \vartheta(z,r)} + r\abs{\nabla_\ell \vartheta}\notag \\
 \leq& C(m) \frac{\sqrt{\vartheta_{\LL}(z,2r)
 }}{\sqrt{\vartheta(z,r)}} \vartheta(z,2r) + C(m)\sqrt{\vartheta(z,2r)} \sqrt{\vartheta(x,2r;L)}\, ,
\end{align}
where we used the bounds in \eqref{eq_LL_basic} and Theorem \ref{t:prelim:spacial_gradient}. By the definition of $\LL$, $\vartheta_{\LL}(x,2r)\leq \vartheta(x,2r;L)$ for all $L$. This and \eqref{e:main_bound_implicit_function_theorem} prove that
\begin{gather}
 \abs{\nabla \ft_{L,r}(z)} \leq C(m)\frac{\sqrt {\vartheta(z,2r;L)}}{\sqrt{\vartheta(z,r)}}\, .
\end{gather}

\vspace{5mm}
In a similar way, one obtains the bounds
\begin{gather}\label{e:T_r_II_estimates_sharp}
 \abs{r \nabla^2 \ft_{L,r}(z)}+\abs{r^2 \nabla^3 \ft_{L,r}(z)} \leq C(m)\frac{\sqrt {\vartheta(z,2r;L)}}{\sqrt{\vartheta(z,r)}}\, .
\end{gather}

\subsubsection{Proof of the radial derivative estimate.} The radial derivative estimate can be obtained in a fashion similar to the spacial gradient estimate.  The estimate is weaker however, so it is worth discussing. A morally equivalent but slightly different approach is to take the derivative $\frac d {dr}$ of the identity \eqref{e:implicit_function_identity}. As a result, one obtains for all $z\in \tilde \T_r$, $z=(p,\ft_{L,r}(p))\in L\times L^\perp$: 
\begin{gather}
 r\frac{d}{dr} \IF(p,\ft_{L,r}(p))=0 \, ,\\
 \implies 0=r\pi_{L^\perp}\qua{r \frac{d}{dr} \ton{\pi_{z,r}^\perp}\nabla \vartheta(z,r) + \pi_{z,r}^\perp \nabla r\dot \vartheta(z,r)  + \nabla \IF(z)\qua{r\frac{d}{dr} \ft_{L,r}(p) }} \\
 \implies \abs{r\frac{d}{dr} \ft_{L,r}(p) }\stackrel{\eqref{e:main_bound_implicit_function_theorem}}{\leq}\frac{C(m)}{\vartheta(z,r)}\abs{r^2 \pi_L \nabla \IF(z) \qua{\frac{d}{dr} \ft_{L,r}(p) }}\leq \frac{C(m)}{\vartheta(z,r)}\ton{\abs{r\frac{d}{dr} \pi_{z,r}}\abs{r\nabla \vartheta(z,r)} + \abs{r^2 \nabla \dot \vartheta(z,r)}}\, .
\end{gather}
As seen before, by Lemma \ref{t:prelim:spacial_gradient} we have the brutal bound $\abs{r\nabla \vartheta(z,r)}\leq C(m)\vartheta(z,2r)$, and
\begin{gather}
 \abs{r \frac{d}{dr}\pi_{z,r}}\stackrel{\eqref{eq_LL_basic}}{\leq} C(m)\frac{\sqrt{\vartheta_\LL(z,2r)}}{\sqrt{\vartheta(z,2r)}}\, .
\end{gather}
Moreover, a direct computation shows that for all vectors $e$:
\begin{gather}
 r^2\nabla_e \dot \vartheta(z,r)= 4r^2 \int -\dot \rho_r(y-z) \ps{\nabla u}{y-z}\ps{\nabla u}{e} + 2r^2\int \ddot \rho_r(y-z) \ps{\nabla u}{y-z}^2 \ps{\frac{y-z}{r}}{e}\, ,\\
 \abs{r^2\nabla \dot \vartheta(z,r)}\stackrel{\eqref{e:trho_doubleradius}}{\leq} C(m)\sqrt{\vartheta(z,2r)}\sqrt{r\dot \vartheta(z,r)}\, .
\end{gather}
Thus we obtain
\begin{gather}
 \abs{\frac{d}{dr} \ft_{L,r}(p) }\leq C(m)\frac{\sqrt{r \dot \vartheta(z,2r)}+\sqrt{\vartheta_{\LL}(z,2r)}}{\sqrt{\vartheta(z,r)}}
\end{gather}
In a similar way, one obtains also
\begin{gather}
 \abs{r\frac{d}{dr} \nabla \ft_{L,r}(p)}\leq C(m)\frac{\sqrt{r \dot \vartheta(z,2r)}+\sqrt{\vartheta_{\LL}(z,2r)}}{\sqrt{\vartheta(z,r)}}\, .
\end{gather}

\subsubsection{Estimating the distance between $\T$ and $\tilde \T_r$.} Let $\hat z $ be the only point $\hat z\in \T$ with $\pi_L(\hat z)=\pi_L(z)$. By \eqref{e:implicit_function_claim_1} and \eqref{e:main_bound_implicit_function_theorem} it is immediate to see for all $z\in \tilde \T_r$ that we have
\begin{gather}
 \abs{z-\hat z}^2\leq C(m) r^2 \frac{r\dot \vartheta\ton{\hat z,2r}}{\vartheta(z,r)}\, .
\end{gather}
This immediately implies the desired distance bound. As a corollary, we also obtain \eqref{e:pinching_comparison_T_Tr}. Indeed, let $z\in \tilde \T_r$ and $r'\in [r,10r]$. Then
\begin{align}\label{e:pinching_correspondence}
 r'\dot \vartheta(z,r')= &2\int -\dot \rho_{r'}(y-z) \ps{\nabla u}{y-z}^2\leq 4\int -\dot \rho_{r'}(y-z) \ps{\nabla u}{y-\hat z}^2+4\int -\dot \rho_{r'}(y-z) \ps{\nabla u}{z-\hat z}^2\\
 \leq & C(m) r'\dot \vartheta\ton{\hat z,2r'} + C(m)\frac{\abs{z-\hat z}^2}{r^2} \vartheta(z,2r')\leq  C(m) r'\dot \vartheta\ton{\hat z,2r'}\, .
\end{align}

\vspace{.3cm}

\subsection{Global Construction of \texorpdfstring{$\T_r$}{Tr}}

We now aim to paste together the local constructions of the previous subsection in order to construct $\T_r$ and prove Theorem \ref{t:approximating_submanifold}.\eqref{i:Tr_graph}, \ref{t:approximating_submanifold}.\eqref{i:T_r=T} and \ref{t:approximating_submanifold}.\eqref{i:EL_Tr} .\\

Let us begin by taking $\{x_j\}\in \T\cap B_{4}$ so that $\big\{B_{r/4}(x_j)\big\}$ is a maximal disjoint subset.  It follows that $B_{r/2}(\T\cap B_{4})\subseteq \bigcup B_{r}(x_j)$ .  For each $x_j$ define $x'_j\equiv \pi_{L_\cA}(x_j)$, and note that $B_{r/5}(x'_j)$ are disjoint with $L_\cA\cap B_{4}\subseteq B_{3r/4}(x'_j)$.  Let $\phi_j:L_\cA\cap B_{4}\to \dR$ be a partition of unity so that
\begin{align}
	\supp{\phi_j}\subseteq B_r(x'_j)\, ,\; r^k|\nabla^{(k)}\phi_j|\leq C(m,k)\, ,\; \sum\phi_j(x)=1 \text{ for all }x\in L_\cA\cap B_{4}\, .
\end{align}

Now for each $x_j$ let $\ft_j: B_r(x'_j)\to L_\cA^\perp$ be defined as 
\begin{align}
	\begin{cases}
		 \ft_j\equiv \ft_{L_\cA} \text{ from Lemma \ref{l:approximating_submanifold:local_construction}}&\text{ if }\rf_{x_j}< 9r/4\\
		 \ft_j\equiv \ft_{\T} &\text{ if }\rf_{x_j}\geq  9r/4\, .
	\end{cases}
\end{align}

We can then globally define 
\begin{align}
	\ft_r(x)\equiv \sum \phi_j(x)\ft_j(x)\, .
\end{align}

If we now define $\T_r\equiv \ft_r(L_\cA\cap B_{4})$ then it is clear that \ref{t:approximating_submanifold}.\ref{i:T_r=T} and Theorem \ref{t:approximating_submanifold}.\ref{i:EL_Tr} hold by construction.
Also points \eqref{i:approximating_submanifold_space_gradient_hessian}, \eqref{i:ddrT}, \eqref{i:ddr_piTr} of Theorem \ref{t:approximating_submanifold} follow from Lemma \ref{l:approximating_submanifold:local_construction}.
%The estimates of Lemma \ref{l:approximating_submanifold:local_construction}.1, Lemma \ref{l:approximating_submanifold:local_construction}.3, and (a1) from Definition \ref{d:annular_region} then imply the estimates of Theorem \ref{t:approximating_submanifold}.1 and Theorem \ref{t:approximating_submanifold}.2.

\vspace{.3cm}

\subsection{Comparing \texorpdfstring{$\cL_{x,r}$}{Lxr} and \texorpdfstring{$T_x\T_r$}{tangent spaces of Tr}}

It is not the case that the tangent space of $\T_r$ at a point $x\in \T_r$ is equal to the best plane $\cL_{x,r}$ at that point.  However, it does turn out that we have better estimates on the comparison of the two than one might expect.  The main result of this subsection is the following, which proves Theorem \ref{t:approximating_submanifold}.\ref{i:approximating_submanifold_better_LL_comparison}:

\begin{lemma}\label{l:improved_comparison_LL_T}
For each  $x\in \T_r$ with $2r\geq \rf_x$ we have that
\begin{align}
 |\pi_{\T_r}-\pi_{x,r}|\leq & C(m)\frac{\sqrt{r\dot\vartheta(x,2r)} \sqrt{\vartheta_\cL(x,2r)}}{\vartheta(z,r)}+ C(m) e^{-R/2}\frac{\sqrt{\vartheta_\cL(x,2r)}}{\sqrt{\vartheta(x,r)}}\notag \\
 \leq &C(m,\epsilon_0)\sqrt{r\dot\vartheta(x,2r)} \sqrt{\vartheta_\cL(x,2r)}+ C(m,\epsilon_0) e^{-R/2}\sqrt{\vartheta_\cL(x,2r)}\, .
\end{align}

\end{lemma}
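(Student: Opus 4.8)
The plan is to compare the tangent space $T_x\T_r$ with the best plane $\cL_{x,r}$ by exploiting the Euler--Lagrange equation satisfied at every point of $\T_r$, namely $\pi_{x,r}^\perp\nabla\vartheta(x,r)=0$, together with the Hessian bound from Theorem \ref{t:prelim:spacial_hessian}. The key observation is that a priori $\pi_{\T_r}$ differs from $\pi_{x,r}$ only by the gradient of the graphing function $\ft_{x,r}$, which by Theorem \ref{t:approximating_submanifold}.(2) is already bounded by $C(m,\epsilon_0)\sqrt{\vartheta_\cL(x,2r)}$; the point of this lemma is that one extra factor of $\sqrt{r\dot\vartheta(x,2r)}$ (plus an $e^{-R/2}$ term) can be extracted. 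First I would set up the implicit function picture from Lemma \ref{l:approximating_submanifold:local_construction} with $L=\cL_{x,r}$, so that locally $\T_r\cap B_r(x)=\mathrm{Graph}\{\ft_{x,r}:\cL_{x,r}\to\cL_{x,r}^\perp\}$ and $T_x\T_r$ is the graph of $\nabla\ft_{x,r}(x)$ over $\cL_{x,r}$; then $|\pi_{\T_r}-\pi_{x,r}|\leq C(m)|\nabla\ft_{x,r}(x)|$, so everything reduces to a sharpened bound on $|\nabla\ft_{x,r}(x)|$.

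The heart of the argument is the identity obtained by differentiating the implicit relation $\Theta(p,\ft_{x,r}(p))=0$, where $\Theta(z)=r\pi_{\cL_{x,r}^\perp}[\pi_{z,r}^\perp\nabla\vartheta(z,r)]$, in a direction $\ell\in\cL_{x,r}$ at the point $p$ corresponding to $x$. Using the invertibility estimate $r\pi_{L^\perp}\nabla\Theta\leq-\tfrac1{10}\vartheta(x,r)\,\mathrm{id}$ from \eqref{e:main_bound_implicit_function_theorem}, we get
\begin{align}
|\nabla\ft_{x,r}(x)|\leq \frac{C(m)}{\vartheta(x,r)}\,|r\pi_{\cL_{x,r}}\nabla_\ell\Theta(x)|\, .
\end{align}
Now $r\pi_{\cL_{x,r}}\nabla_\ell\Theta(x)$ splits into a term containing $r\nabla_\ell\pi_{x,r}$ times $r\nabla\vartheta(x,r)$, and a term $r^2\nabla^2\vartheta(x,r)[\ell,\pi_{x,r}^\perp v]$ for $v\in\cL_{x,r}^\perp$. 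For the first term I would use $|r\nabla\pi_{x,r}|\leq C(m)\sqrt{\vartheta_\cL(x,2r)/\vartheta(x,r)}$ from \eqref{eq_LL_basic} \emph{together with} the sharp gradient bound $|r\nabla\vartheta(x,r)|^2\leq C(m)r\dot\vartheta(x,r)\,\vartheta(x,2r;\cL_{x,r})$ from Theorem \ref{t:prelim:spacial_gradient} (crucially using that $\ell\in\cL_{x,r}$, so the relevant partial-energy direction is the \emph{best} plane), which produces the factor $\sqrt{r\dot\vartheta(x,2r)}\sqrt{\vartheta_\cL(x,2r)}$. For the second term I would use the formula \eqref{e:spacial_hessian_vartheta} for $r^2\nabla^2\vartheta$; the $\ddot\rho$ piece is controlled by Cauchy--Schwarz and \eqref{e:trho_doubleradius} by $\sqrt{r\dot\vartheta(x,2r)}\sqrt{\vartheta_\cL(x,2r)}$ after replacing one $\nabla u$ factor by its $\pi_{x,r}^\perp$-component and estimating the mixed term as in the proof of Lemma \ref{l:approximating_submanifold:local_construction}, while the $\dot\rho$ piece is handled by adding and subtracting $\rho_r$: the difference $|\rho_r+\dot\rho_r|\leq C(m)e^{-R/2}\rho_{2r}$ from \eqref{e:rho_primitive_difference} gives the $e^{-R/2}\sqrt{\vartheta_\cL(x,2r)}$ contribution, and the leftover $\int\rho_r\langle\nabla u,\ell\rangle\langle\nabla u,\pi_{x,r}^\perp v\rangle$ is precisely an off-diagonal entry of the energy tensor $Q(x,r)$ between $\cL_{x,r}$ and $\cL_{x,r}^\perp$, which vanishes by the characterization $\LL_{x,r}=\mathrm{span}\{e_3,\dots,e_m\}$ of the best plane in terms of eigenvectors of $Q(x,r)$.

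Collecting these, $|\nabla\ft_{x,r}(x)|\leq C(m)\big(\sqrt{r\dot\vartheta(x,2r)}\sqrt{\vartheta_\cL(x,2r)}+e^{-R/2}\sqrt{\vartheta_\cL(x,2r)}\big)/\sqrt{\vartheta(x,r)}$, and since $\vartheta(x,r)\geq c(m)\epsilon_0$ by Remark \ref{rm:vartheta_comparison} and $(a3)$, replacing $\vartheta(x,r)$ in denominators by constants and using $\vartheta_\cL(x,2r)\leq C(m)\delta$ gives exactly the two displayed bounds of the lemma. The main obstacle I anticipate is the bookkeeping in the Hessian term: one must carefully route the off-diagonal energy-tensor cancellation (which needs $\ell\in\cL_{x,r}$ and $v\in\cL_{x,r}^\perp$ exactly, not approximately) through the error introduced by $\pi_{z,r}^\perp$ varying with $z$, and track that every leftover mismatch $|\pi_{z,r}^\perp-\pi_{x,r}^\perp|$ is itself $O(\sqrt\delta)$ so that it only contributes lower-order terms already absorbed — this is where the proof of Lemma \ref{l:approximating_submanifold:local_construction} must be invoked with some care rather than merely quoted.
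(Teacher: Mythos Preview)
Your proposal is correct and follows essentially the same strategy as the paper's proof: differentiate the Euler--Lagrange identity $\pi_{z,r}^\perp\nabla\vartheta(z,r)=0$ along $\T_r$, bound the $\nabla\pi_{z,r}$ contribution via Theorem~\ref{t:best_plane:best_plane}.(2) and Theorem~\ref{t:prelim:spacial_gradient}, split the Hessian term using \eqref{e:spacial_hessian_vartheta}, handle the $\ddot\rho$ piece by Cauchy--Schwarz, and handle the $\dot\rho$ piece by adding and subtracting $\rho_r$ so that the eigenspace orthogonality $\int\rho_r\langle\nabla u,\ell\rangle\langle\nabla u,w\rangle=0$ kills the main part and \eqref{e:rho_primitive_difference} gives the $e^{-R/2}$ term.

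The only cosmetic difference is packaging: you work through the implicit function theorem for the graphing map $\ft_{x,r}$ over $L=\cL_{x,r}$ and bound $|\nabla\ft_{x,r}(x)|$, whereas the paper takes a unit $v\in T_x\T_r$, writes $v=v^\parallel+v^\perp$ with $v^\perp=\pi_{x,r}^\perp v$, and bounds $|v^\perp|$ directly. Since $v^\perp=\nabla_\ell\ft_{x,r}$ under the identification $v=\ell+\nabla_\ell\ft_{x,r}$, these are the same computation. The paper's version is marginally more direct in that it avoids invoking the full implicit function setup of Lemma~\ref{l:approximating_submanifold:local_construction}, while your version has the small advantage that since $\ell\in\cL_{x,r}$ exactly, the $\ddot\rho$ estimate yields $\sqrt{r\dot\vartheta}\sqrt{\vartheta_\cL}$ without needing to absorb a $|v^\perp|\sqrt{\vartheta}$ term back into the left-hand side. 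Two minor slips in your write-up are harmless: the ``sharp gradient bound'' you quote for the full $\nabla\vartheta$ is actually only Theorem~\ref{t:prelim:spacial_gradient} restricted to $\cL_{x,r}$-directions (which is fine here because $\pi_{x,r}^\perp\nabla\vartheta(x,r)=0$ at $x\in\T_r$), and the anticipated obstacle about $\pi_{z,r}^\perp$ varying is not an issue at $z=x$, where $\pi_{z,r}^\perp=\pi_{L^\perp}$ exactly.
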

\begin{remark}\label{r:pinching_improvement_epsilon}
	Observe that for each $\epsilon>0$ if $R\geq R(m,\epsilon_0,\epsilon)$ and $\delta<\delta(m,\epsilon_0,\epsilon)$ then $|\pi_{\T_r}-\pi_{x,r}|\leq \epsilon\sqrt{\vartheta_\cL}(x,r)$ .
\end{remark}

\begin{proof}
 Let $v\in T_x\T_r$ be any (norm one) tangent vector at $x\in \T_r$ with $2r\geq \rf_x$, and set for convenience
\begin{gather}
 v^\perp = \pi_{x,r}^\perp(v)\, , \qquad v^\parallel=\pi_{x,r}(v)\, .
\end{gather}
We want to show that for all such $v$:
\begin{gather}\label{e:claim:improved_comparison_LL_T}
 \abs{v^\perp}\leq C(m)\frac{\sqrt{r\dot\vartheta(x,2r)} \sqrt{\vartheta_\cL(x,2r)}}{\vartheta(z,r)}+ C(m) e^{-R/2}\frac{\sqrt{\vartheta_\cL(x,2r)}}{\sqrt{\vartheta(x,r)}}\, ,
\end{gather}
which is equivalent to the thesis of the Lemma.

By definition of $\T_r$, we know that
\begin{gather}
 0=\nabla_{v} \qua{\pi_{x,r}^\perp \nabla \vartheta(x,r)}= \nabla_v \ton{\pi_{x,r}^\perp} \nabla \vartheta(x,r) + \pi_{x,r}^\perp \nabla \nabla_v \vartheta(x,r)\, .
\end{gather}
By Theorem \ref{t:prelim:spacial_gradient} and \eqref{eq_LL_basic}, we get
\begin{gather}
 \abs{\nabla_v \ton{\pi_{x,r}^\perp} \nabla \vartheta(x,r)}\leq C(m) \frac{\sqrt{\vartheta_\LL(x,2r)}}{\sqrt{\vartheta(x,r)}} \sqrt{\vartheta(x,2r)} {\sqrt{r\dot \vartheta(x,2r)}}\leq C(m) {\sqrt{\vartheta_\LL(x,2r)}}{\sqrt{r\dot \vartheta(x,2r)}}\, .
\end{gather}
Moreover, for all $w\in \LL_{x,r}^\perp$:
\begin{gather}
 r^2\nabla_w \nabla_v \vartheta(x,r)= r\nabla_w \ton{r^2 \int \dot \rho_r(y-x) \ps{\nabla u}{\frac{y-x}{r}} \ps{\nabla u}{v}}=\\
 =r^2 \int \ddot \rho_r(y-x) \ps{\frac{y-x}{r}}{w}\ps{\nabla u}{\frac{y-x}{r}} \ps{\nabla u}{v}+r^2 \int \dot \rho_r(y-x) \ps{\nabla u}{w} \ps{\nabla u}{v}\, .
\end{gather}
As a consequence, taking $w=\frac{v^\perp}{\abs{v^\perp}}$ we can estimate
\begin{gather}
 c(m)\vartheta(x,r) \abs{v^\perp}\leq r^2 \int -\dot \rho_r(y-x) \ps{\nabla u}{v^\parallel} \ps{\nabla u}{w} + C(m) \sqrt{\vartheta_\LL(x,2r)}\sqrt{r\dot \vartheta(x,2r)}\, .
\end{gather}
By definition of $\LL_{x,r}$, we know that
\begin{gather}
 \int \rho_r(y-x) \ps{\nabla u}{v^\parallel} \ps{\nabla u}{w}=0\, ,
\end{gather}
and thus we can estimate
\begin{align}
 \abs{r^2 \int -\dot \rho_r(y-x) \ps{\nabla u}{v^\parallel} \ps{\nabla u}{w}}\leq &r^2 \int \abs{\rho_r(y-x)+\dot \rho_r(y-x)}\abs{\pi_{\LL_{x,r}}\nabla u}\abs{\nabla u}\leq \notag \\
 \stackrel{\eqref{e:rho_primitive_difference}}{\leq}& C(m) e^{-R/2} \sqrt{\vartheta_\LL(x,2r)}\sqrt{\vartheta(x,2r)}\, ,
\end{align}
and this, together with Remark \ref{rm:vartheta_comparison}, proves \eqref{e:claim:improved_comparison_LL_T}.
\end{proof}

As an immediate corollary, we obtain that
\begin{corollary}\label{c:estimate_LL_T}
If $\delta\leq \delta_0(m,\epsilon_0)$ is sufficiently small and $R\geq R_0(m)$ is sufficiently big, then for each $x\in \T_r$ with $3r\geq \rf_x$ we have that
\begin{align}\label{e:estimate_LL_T}
 r^2\int \rho_r(y-x) \abs{\pi_{\T_r(x)}\nabla u(y)}^2\leq C(m)r^2 \int \rho_{2r}(y-x) \abs{\pi_{x,{2r}}\nabla u(y)}^2=C(m)\vartheta_\LL(x,2r)\, .
\end{align}
\end{corollary}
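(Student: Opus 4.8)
The plan is to reduce the left-hand side to a quantity controlled by the best-plane energy $\vartheta_\LL(x,2r)$, using only the operator-norm comparison $|\pi_{\T_r}-\pi_{x,r}|$ from Lemma \ref{l:improved_comparison_LL_T} (Theorem \ref{t:approximating_submanifold}.\ref{i:approximating_submanifold_better_LL_comparison}) together with the elementary comparison estimate for partial energies. First I would write, pointwise in $y$,
\begin{align}
 \abs{\pi_{\T_r(x)}\nabla u(y)}^2 \leq 2\abs{\pi_{x,r}\nabla u(y)}^2 + 2\abs{(\pi_{\T_r(x)}-\pi_{x,r})\nabla u(y)}^2 \leq 2\abs{\pi_{x,r}\nabla u(y)}^2 + 2\abs{\pi_{\T_r}-\pi_{x,r}}^2\abs{\nabla u(y)}^2\, ,
\end{align}
where $\abs{\pi_{\T_r}-\pi_{x,r}}$ denotes the operator norm. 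Multiplying by $r^2\rho_r(y-x)$ and integrating gives
\begin{align}
 r^2\int \rho_r(y-x)\abs{\pi_{\T_r(x)}\nabla u(y)}^2 \leq 2\,\vartheta(x,r;\LL_{x,r}) + 2\abs{\pi_{\T_r}-\pi_{x,r}}^2\,\vartheta(x,r)\, .
\end{align}

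Next I would insert the bound of Lemma \ref{l:improved_comparison_LL_T}, which gives
\begin{align}
 \abs{\pi_{\T_r}-\pi_{x,r}}^2 \leq C(m,\epsilon_0)\,r\dot\vartheta(x,2r)\,\vartheta_\cL(x,2r) + C(m,\epsilon_0)\,e^{-R}\,\vartheta_\cL(x,2r)\, ,
\end{align}
so that (using $r\dot\vartheta(x,2r)\leq C(m)\delta$, and $\vartheta(x,r)\leq C(m)\Lambda$) the second term is bounded by $C(m,\epsilon_0,\Lambda)\big(\delta + e^{-R}\big)\vartheta_\cL(x,2r)\leq C(m)\vartheta_\LL(x,2r)$ once $\delta\leq\delta_0(m,\epsilon_0)$ is small and $R\geq R_0(m)$. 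For the first term I would use $\vartheta(x,r;\LL_{x,r}) = \vartheta_\LL(x,r) \leq C(m)\vartheta_\LL(x,2r)$, which follows from Remark \ref{r:d_LLr_LL2r_small} (equivalently \eqref{e:vartheta_L_r}). Finally I would replace $\vartheta_\LL(x,2r) = \vartheta(x,2r;\LL_{x,2r})$ by $r^2\int\rho_{2r}(y-x)\abs{\pi_{x,2r}\nabla u(y)}^2$ up to the harmless factor coming from the definition of $\rho_{2r}$ versus $(2r)^2\rho_{2r}$ — i.e. recalling $\vartheta(x,2r;L) = (2r)^2\int\rho_{2r}(y-x)\abs{\pi_L\nabla u}^2$, so that $r^2\int\rho_{2r}(y-x)\abs{\pi_{x,2r}\nabla u}^2 = \tfrac14\vartheta_\LL(x,2r)$, and the stated inequality with constant $C(m)$ follows (indeed with room to spare).

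I do not expect a genuine obstacle here: the corollary is a direct bookkeeping consequence of Lemma \ref{l:improved_comparison_LL_T} and the already-established almost-monotonicity of the $L$-energy. The only point requiring minor care is the constant conversion between $\vartheta_\LL(x,2r)$ and the unnormalized integral $r^2\int\rho_{2r}(y-x)\abs{\pi_{x,2r}\nabla u}^2$ appearing in the statement — one must track the $(2r)^2$ versus $r^2$ normalization and the definition of $\rho_{2r}$ in Definition \ref{d:heat_mollifier} — but this contributes only a dimensional constant absorbed into $C(m)$. I would also note explicitly that $\pi_{\T_r(x)}$ here is the orthogonal projection onto the tangent space $T_x\T_r$, consistent with the notation $\pi_{\T_r}$ of Theorem \ref{t:approximating_submanifold}, so that Lemma \ref{l:improved_comparison_LL_T} applies verbatim.
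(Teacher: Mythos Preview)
Your proposal is correct and follows essentially the same route as the paper: decompose $\pi_{\T_r(x)}\nabla u = \pi_{x,r}\nabla u + (\pi_{\T_r(x)}-\pi_{x,r})\nabla u$, bound the cross term via the operator-norm estimate of Lemma \ref{l:improved_comparison_LL_T}, and absorb the first term using Remark \ref{r:d_LLr_LL2r_small}. One cosmetic point: by invoking $\vartheta(x,r)\leq C(m)\Lambda$ you introduce an unnecessary $\Lambda$-dependence in the intermediate constant; using the sharper first line of Lemma \ref{l:improved_comparison_LL_T} (with $\vartheta(x,r)$ in the denominator) instead yields $|\pi_{\T_r}-\pi_{x,r}|^2\,\vartheta(x,r)\leq C(m)\big(r\dot\vartheta(x,2r)/\vartheta(x,r)+e^{-R}\big)\vartheta_\cL(x,2r)$, which matches the stated dependence $\delta_0(m,\epsilon_0)$, $R_0(m)$ exactly.
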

\begin{proof}
 The proof follows immediately from Lemma \ref{l:improved_comparison_LL_T} and \eqref{r:d_LLr_LL2r_small}, using the identity
 \begin{gather}
  \pi_{\T_r(x)}\nabla u (y)=\pi_{x,r}\nabla u (y) + \ton{\pi_{\T_r(x)}-\pi_{x,r}}\nabla u (y)\,
 \end{gather}
 and standard estimates.
\end{proof}

\subsection{The Euler-Lagrange Equation}\label{ss:T_r_Euler_Lagange}

Here we study the Euler-Lagrange equation of Theorem \ref{t:approximating_submanifold}.\eqref{i:EL_Tr} .  In particular, we want to see that by combining with the correct stationary equation we can rewrite the Euler-Lagrange formula $\pi^\perp_{x,r}\nabla\vartheta(x,r)=0$ in a manner which will be more applicable:\\

\begin{lemma}
We have that
 \begin{gather}\label{eq_T_r_max_cond_explicit}
  \pi_{x,r}^\perp \nabla \vartheta(x,r)=0 \qquad \Longleftrightarrow \qquad \forall v\in \LL_{x,r}^\perp: \ \ \int -\dot\rho_r \ton{y-x} \ps{\nabla u(y)}{y-x} \ps{\nabla u(y)}{v}dy=0\, .
 \end{gather}
\end{lemma}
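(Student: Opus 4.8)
The plan is to compute $\nabla\vartheta(x,r)$ directly and then use the stationary equation \eqref{e:stationary_equation} to convert the resulting expression into the desired radial form. Recall that for a fixed vector $v$ we have, differentiating the definition $\vartheta(x,r)=r^2\int\rho_r(y-x)|\nabla u|^2$ in $x$,
\begin{gather}
 \nabla_v\vartheta(x,r)=r^2\int \dot\rho_r(y-x)\,\Big\langle\tfrac{x-y}{r}\,,\,v\Big\rangle\,|\nabla u|^2\,dy\, .
\end{gather}
This is exactly the left-hand side of \eqref{e:spacial_gradient_vartheta} (with $\ell$ replaced by $v$), and the stationary equation applied to the vector field $\xi^j(y)=\rho_r(y-x)v^j$ gives the identity
\begin{gather}
 r^2\int \dot\rho_r(y-x)\,\Big\langle\tfrac{x-y}{r}\,,\,v\Big\rangle\,|\nabla u|^2\,dy \;=\; 2r^2\int \dot\rho_r(y-x)\,\Big\langle\tfrac{x-y}{r}\,,\,\nabla u\Big\rangle\,\langle\nabla u,v\rangle\,dy\, ,
\end{gather}
so that $\nabla_v\vartheta(x,r)=-2\int\dot\rho_r(y-x)\langle\nabla u,y-x\rangle\langle\nabla u,v\rangle\,dy$. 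Thus $\nabla_v\vartheta(x,r)=0$ for every $v\in\LL_{x,r}^\perp$ if and only if $\int-\dot\rho_r(y-x)\langle\nabla u(y),y-x\rangle\langle\nabla u(y),v\rangle\,dy=0$ for all such $v$, which is precisely the right-hand condition in \eqref{eq_T_r_max_cond_explicit}.

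It remains to note that the condition ``$\nabla_v\vartheta(x,r)=0$ for all $v\in\LL_{x,r}^\perp$'' is the same as ``$\pi_{x,r}^\perp\nabla\vartheta(x,r)=0$'': indeed $\pi_{x,r}^\perp$ is the orthogonal projection onto $\LL_{x,r}^\perp$, so $\pi_{x,r}^\perp\nabla\vartheta(x,r)=0$ exactly when $\nabla\vartheta(x,r)$ is orthogonal to $\LL_{x,r}^\perp$, i.e.\ when $\langle\nabla\vartheta(x,r),v\rangle=\nabla_v\vartheta(x,r)=0$ for every $v\in\LL_{x,r}^\perp$. Combining the two equivalences yields \eqref{eq_T_r_max_cond_explicit}.

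There is essentially no serious obstacle here; the one point that deserves a line of care is the application of the stationary equation, which strictly speaking is stated for compactly supported smooth vector fields. Since $\rho$ is smooth and compactly supported by Definition \ref{d:heat_mollifier}, the field $\xi^j(y)=\rho_r(y-x)v^j$ is a legitimate test field (after restricting to a slightly larger ball inside $B_{10R}$ on which $u$ is stationary), so \eqref{e:spacial_gradient_vartheta} applies verbatim. Everything else is the elementary identity $\dot\rho_r(y-x)\langle x-y,v\rangle = r\,\partial_{x_v}\rho_r(y-x)$ together with the linear-algebra fact that a vector is annihilated by an orthogonal projection iff it lies in the kernel of that projection.
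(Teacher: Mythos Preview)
Your proof is correct and follows essentially the same route as the paper: compute $\nabla_v\vartheta(x,r)$ directly, then apply the stationary equation with the test field $\xi(y)=\rho_r(y-x)v$ to rewrite it in the radial form. (There is a harmless slip of a factor of $r$ in your first displayed formula---what you wrote is $r\nabla_v\vartheta$ rather than $\nabla_v\vartheta$---but your final expression $\nabla_v\vartheta(x,r)=-2\int\dot\rho_r(y-x)\langle\nabla u,y-x\rangle\langle\nabla u,v\rangle\,dy$ is correct, and in any case only the vanishing of this quantity matters.)
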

\begin{proof}
 The maximality condition immediately gives
 \begin{gather}
  \pi_{x,r}^\perp \nabla \hvt(x,r)=0 \qquad \Longleftrightarrow \qquad \forall v\in \LL_{x,r}^\perp: \ \ \int -\dot\rho_r(y-x)\abs{\nabla u(y)}^2\ps{y-x}{v}\,dy=0
 \end{gather}
The proof is then an application of the stationary equation \eqref{e:stationary_equation} with vector field $\xi= \rho_r (y-x)v$, which yields:
 \begin{gather}
  \int -\dot\rho_r(y-x)\abs{\nabla u(y)}^2\ps{y-x}{v}=2\int -\dot\rho_r(y-x)\ps{\nabla u(y)}{y-x} \ps{\nabla u(y)}{v}\, .
 \end{gather}
\end{proof}

\subsection{Comparison of pinching.} Theorem \ref{t:approximating_submanifold}.\eqref{i:integral_dot:vartheta_Tr_vs_T} follows from \eqref{e:pinching_comparison_T_Tr} of Lemma \ref{l:approximating_submanifold:local_construction}.\eqref{i:pinching_comparison_T_Tr} and the construction of $\T$.

\vspace{.3cm}

\vspace{.5cm}

\section{Energy Decomposition in \texorpdfstring{$\cA$}{the annular region}}\label{s:energy_decomposition}

The proof that annular regions have small energy in Theorem \ref{t:outline:annular_regions_energy} will depend on decomposing the energy of $u$ into three base components.  On a very rough level, we will break 
\begin{align}
	|\nabla u|^2 \approx\abs{\pi_L \nabla u(y)}^2+\abs{\pi_L^\perp \nabla u(y)}^2= \abs{\pi_L \nabla u(y)}^2+\langle\nabla u,\alpha^\perp\rangle^2+\langle\nabla u,n^\perp\rangle^2\, ,
\end{align}
the energy into its $L$-energy, angular energy and radial energy components as discussed in Section \ref{s:outline_toymodel} and Section \ref{s:outline_general}.  Our goal in this Section is to use the best planes $\cL_{x,r}$ and approximating submanifolds $\T_r$ to make this more precise and collect together some technical understanding of the behavior of our energy decomposition.\\

\vspace{.3cm}

\subsection{Heat Mollified Energy Decomposition in \texorpdfstring{$\cA$}{the annular region}}

Following Section \ref{s:outline_general} we will be interested in the following energy functionals:
\begin{align}
	\vartheta_\cL(x,r) &\equiv r^2\min_L \int\rho_r(y-x)\abs{\pi_L \nabla u(y)}^2 = r^2\int\rho_r(y-x)\abs{\pi_{x,r} \nabla u(y)}^2\, ,\notag\\
	\vartheta_\alpha(x,r) &\equiv \int\rho_r(y-x)|\pi_{x,r}^\perp(y-x)|^2\langle \nabla u, \alpha_{x,r}^\perp\rangle^2 \, ,\notag\\
	\hat\vartheta_\alpha(x,r) &\equiv \int\hat\rho_r(y-x;\cL_{x,r})|\pi_{x,r}^\perp(y-x)|^2\langle \nabla u, \alpha_{x,r}^\perp\rangle^2 \, ,\notag\\
	\vartheta_n(x,r) &\equiv \int\rho_r(y-x)\langle \nabla u, \pi_{x,r}^\perp(y-x)\rangle^2= \int\rho_r(y-x)|\pi_{x,r}^\perp(y-x)|^2\langle \nabla u, n_{x,r}^\perp\rangle^2 \, ,\notag \\
	\hat\vartheta_n(x,r) &\equiv \int\hat\rho_r(y-x;\cL_{x,r})\langle \nabla u, \pi_{x,r}^\perp(y-x)\rangle^2= \int\hat\rho_r(y-x;\cL_{x,r})|\pi_{x,r}^\perp(y-x)|^2\langle \nabla u, n_{x,r}^\perp\rangle^2 \, .
\end{align}
Note that our $L$-energy is measured with respect to the heat kernel $\rho_r$ from Definition \ref{d:heat_mollifier} defined in \ref{d:restricted_energy_functionals}, while the ``hatted'' angular and radial energies are measured with respect to the restricted heat kernel $\hat\rho_r$, which is cutoff near the best plane $\cL_{x,r}$.  Note also that the angular $\alpha^\perp_{x,r}$ and radial $n^\perp_{x,r}$ coordinates are measured with respect to the affine plane $x+\cL_{x,r}^\perp$.
Recall that by definition of $\hat \rho$ we clearly have
\begin{align}
	\hat\vartheta_\alpha(x,r) \leq \vartheta_\alpha(x,r)\, ,\qquad
	\hat\vartheta_n(x,r) \leq \vartheta_n(x,r)\, .
\end{align}
We need the hatted versions of the energies because, as pointed out in the outline, the angular energy enjoys a uniform subharmonicity property.  This subharmonicity holds only in the annular region, hence only if the point considered is not too close to $\T$. \\

The following Lemmas give us our basic control over our energy functionals:\\

\begin{lemma}\label{l:energy_decomposition:pointwise_properties_nohat}
	Let $u:B_{10R}(p)\to N$ be a stationary harmonic map with $R^2\fint_{B_{10R}}|\nabla u|^2\leq \Lambda$ and let $\cA=B_2\setminus \overline{B_{\rf_z}(\T)}$ be a $\delta$-annular region with $\delta\leq \delta(m,\Lambda,K_N,R)$.  Then the following hold:
\begin{enumerate}
	\item Let $B_s(z)\subseteq B_r(x)$, then $\vartheta_\cL(z,s)\leq C(m,r/s)\,\vartheta_\cL(x,r)$
\item For $x\in \T_r$ with $2r>\rf_x$  we have that
\begin{align}
	\vartheta_\cL(x,r)+\vartheta_\alpha(x,r)+\vartheta_n(x,r)=\vartheta_\cL(x,r)+\vartheta(x,r;\LL_{x,r}^\perp)&\leq C(m)\qua{r\dot \vartheta(x,2r)+\fint_{\T_r\cap B_r(x)} r\dot \vartheta(y,2r)}
\end{align}
\item For $x\in \T$ with $2r>\rf_x$  we have that
\begin{align}
	\vartheta_\cL(x,r)+\vartheta_\alpha(x,r)+\vartheta_n(x,r)=\vartheta_\cL(x,r)+\vartheta(x,r;\LL_{x,r}^\perp)&\leq C(m)\qua{r\dot \vartheta(x,2r)+\fint_{\T\cap B_r(x)} r\dot \vartheta(y,2r)}
\end{align}
\end{enumerate}
\end{lemma}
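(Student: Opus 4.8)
\textbf{Proof proposal for Lemma \ref{l:energy_decomposition:pointwise_properties_nohat}.}

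The plan is to treat the three items in turn, since (2) and (3) are essentially the same statement with $\T_r$ in place of $\T$, and (1) is a soft comparison estimate.

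For item (1), the key point is that $\vartheta_\cL(x,r)$ is defined by minimizing $r^2\int \rho_r(y-x)|\pi_L\nabla u|^2$ over all $(m-2)$-planes $L$. The plan is: let $L'=\cL_{z,s}$ be the minimizing plane at $(z,s)$, so that $\vartheta_\cL(z,s)=s^2\int\rho_s(y-z)|\pi_{L'}\nabla u|^2$. Since $B_s(z)\subseteq B_r(x)$ with $s\geq r/C$ for the constant $C=r/s$, Lemma \ref{l:rho_basic_properties}(4) (or the analogous comparison in Section \ref{ss:prelim:partial_energy}, cf. \eqref{e:prelim:partial_energy:basic_bounds}) gives $s^2\rho_s(y-z)\leq C(m,r/s)\,r^2\rho_r(y-x)$ pointwise. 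Hence $\vartheta_\cL(z,s)\leq C(m,r/s)\,r^2\int\rho_r(y-x)|\pi_{L'}\nabla u|^2 = C(m,r/s)\,\vartheta(x,r;L')$, and since $\vartheta(x,r;L')\geq \vartheta_\cL(x,r)$ is the wrong direction, I instead bound $\vartheta(x,r;L')$ crudely: we only need $\vartheta_\cL(z,s)\leq C\vartheta_\cL(x,r)$, so I should run the argument with $L=\cL_{x,r}$ from the start: $\vartheta_\cL(z,s)\leq s^2\int\rho_s(y-z)|\pi_{x,r}\nabla u|^2 \leq C(m,r/s)\,r^2\int\rho_r(y-x)|\pi_{x,r}\nabla u|^2 = C(m,r/s)\,\vartheta_\cL(x,r)$, using minimality at $(z,s)$ in the first inequality and the kernel comparison in the second. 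This is exactly the argument already appearing in the proof of the Corollary after Theorem \ref{t:best_plane:best_plane}.

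For items (2) and (3), first note the algebraic identity $\vartheta_\cL(x,r)+\vartheta_\alpha(x,r)+\vartheta_n(x,r) = \vartheta_\cL(x,r)+\vartheta(x,r;\LL_{x,r}^\perp)$, which holds because $|\pi^\perp_{x,r}(y-x)|^2|\pi^\perp_{x,r}\nabla u|^2 = \langle\nabla u,\pi^\perp_{x,r}(y-x)\rangle^2 + |\pi^\perp_{x,r}(y-x)|^2\langle\nabla u,\alpha^\perp_{x,r}\rangle^2$ is precisely the splitting of $\vartheta(x,r;\LL^\perp_{x,r})$ into its radial and angular parts (see the Remark after Definition \ref{d:partial_energies}). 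So the content is the bound $\vartheta_\cL(x,r)+\vartheta(x,r;\LL_{x,r}^\perp)\leq C(m)(r\dot\vartheta(x,2r)+\fint r\dot\vartheta(y,2r))$. The plan is to invoke the improved cone splitting on annular regions, Theorem \ref{t:cone_splitting_annular}, which for $x\in\T$ with $3r>\rf_x$ gives $\vartheta(x,r;\LL_{x,r})+\vartheta(x,r;\LL_{x,r}^\perp)\leq C(m)(r\dot\vartheta(x,2r)+\fint_{\T\cap B_r(x)}r\dot\vartheta(z,2r))$, and the $\T_r$-version \eqref{e:cone_splitting_annular_integrated} for $x\in\T_r$. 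Since $\vartheta_\cL(x,r)=\vartheta(x,r;\LL_{x,r})$ by definition, this is literally the claimed inequality, once one checks the hypothesis: the Lemma assumes $2r>\rf_x$ which certainly implies $3r>\rf_x$, so Theorem \ref{t:cone_splitting_annular} applies verbatim. One small bookkeeping point is that Theorem \ref{t:cone_splitting_annular} is stated for $x\in\T$ (resp. $x\in\T_r$) with $B_r(x)\subseteq B_2$; since our annular region is $\cA=B_2\setminus B_{\rf_z}(\T)$ and the energies are evaluated with the heat kernel which is effectively supported on $B_{Rr}(x)$, one must be slightly careful, but this is exactly the standing convention throughout the paper and requires no new argument.

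I do not expect a genuine obstacle here: all three items reduce to already-established results (kernel comparison from Lemma \ref{l:rho_basic_properties}, minimality of $\vartheta_\cL$, and the improved annular cone splitting Theorem \ref{t:cone_splitting_annular}). The only mild subtlety is the $\T$ versus $\T_r$ distinction in (2) versus (3), but Theorem \ref{t:cone_splitting_annular} explicitly provides both forms, so each item is a one-line citation plus the elementary algebraic identity relating $\vartheta_\alpha+\vartheta_n$ to $\vartheta(\cdot;\LL^\perp)$. The ``hard part,'' such as it is, was already done in proving Theorem \ref{t:cone_splitting_annular}.
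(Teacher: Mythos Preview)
Your proposal is correct and follows essentially the same approach as the paper's own proof: item (1) uses minimality of $\vartheta_\cL$ at $(z,s)$ to replace $\cL_{z,s}$ by $\cL_{x,r}$ followed by the kernel comparison, and items (2) and (3) are direct citations of Theorem \ref{t:cone_splitting_annular} (the paper simply writes ``they are just a re-writing of Lemma \ref{t:cone_splitting_annular}''). Your initial false start with $L'=\cL_{z,s}$ is unnecessary---you can go straight to $L=\cL_{x,r}$ as you eventually do---but the final argument matches the paper exactly.
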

\begin{proof}
 The first inequality follows from
	\begin{gather}
	 \vartheta_\LL(z,s) = s^2 \int \rho_s(z-y) \abs{ \pi_{\LL_{z,s}}\nabla u}^2\leq s^2 \int \rho_s(z-y) \abs{\pi_{x,r} \nabla u}^2\leq %C(m) s^2 \int \rho_r(x-y) \abs{\pi_{x,r} \nabla u}^2\leq
	 C(m) \vartheta_\LL(x,r)\, .
	\end{gather}
    As for the second and third points, they are just a re-writing of Lemma \ref{t:cone_splitting_annular}.
\end{proof}

\vspace{.3cm}

The following Lemma follows easily from the definition of $\hat\rho$, however it seems worth stating explicitly that the full energy on the annular region can be recovered by our energy decomposition in the following sense:
\begin{lemma}\label{l:energy_bounded_by_restricted_vartheta}
Let $x\in B_{10^{-3}r}(\T)$ with $r\geq \rf_x/10$, then we have 
 \begin{align}\label{e:energy_bounded_by_restricted_vartheta}
  r^{2-m}\int_{B_{10r}(x)\setminus \B{10^{-2}r}{x+\LL_{x,r}}} \abs{\nabla u}^2 &\leq C(m) \qua{\hat\vartheta_\LL(x,r)+ \hat \vartheta_{\alpha}(x,r)+\hat \vartheta_n(x,r)}\notag \\
  &\leq C(m) \qua{\vartheta_\LL(x,r)+ \vartheta_{\alpha}(x,r)+\vartheta_n(x,r)} \, .
 \end{align}
\end{lemma}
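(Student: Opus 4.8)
Looking at this lemma, I need to show that the full energy on an annular region (away from a neighborhood of the best plane) is controlled by the sum of the three restricted partial energies.

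\textbf{Proof proposal.}

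The plan is to decompose the ambient gradient $\nabla u(y)$ at a point $y \in B_{10r}(x) \setminus B_{10^{-2}r}(x+\LL_{x,r})$ into its three natural components relative to the affine plane $x+\LL_{x,r}$: the component $\pi_{x,r}\nabla u$ tangent to $\LL_{x,r}$, the radial component $\langle \nabla u, n_{x,r}^\perp\rangle$ in the $\LL_{x,r}^\perp$-direction pointing away from the plane, and the remaining angular component $\langle \nabla u, \alpha_{x,r}^\perp\rangle$. Thus pointwise $|\nabla u|^2 = |\pi_{x,r}\nabla u|^2 + \langle\nabla u, n_{x,r}^\perp\rangle^2 + \langle\nabla u,\alpha_{x,r}^\perp\rangle^2$. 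The first step is therefore to integrate this identity against $r^{2-m}$ over the region $\Omega \equiv B_{10r}(x)\setminus B_{10^{-2}r}(x+\LL_{x,r})$.

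The key point is the comparison between bare Lebesgue integration over $\Omega$ and integration against the cutoff heat kernel $\hat\rho_r(\cdot - x; \LL_{x,r})$ with the radial weight $|\pi_{x,r}^\perp(y-x)|^2$ appearing in $\hat\vartheta_\alpha$ and $\hat\vartheta_n$. For $y \in \Omega$ we have $|\pi_{x,r}^\perp(y-x)| \geq 10^{-2}r$ (this is exactly the region where $\hat\psi_R$ is identically $1$, since $10^{-2}r \gg e^{-2R}r$ once $R \geq R(m)$), so $|\pi_{x,r}^\perp(y-x)|^2 \geq c(m)r^2$; and on $\Omega$ we also have $|y - x| \leq 10r$ so $\rho_r(y-x) \geq c(m)r^{-m}$ by the lower Gaussian bound on $\rho$ on bounded sets, and since $\hat\psi_R \equiv 1$ there, $\hat\rho_r(y-x;\LL_{x,r}) = \rho_r(y-x) \geq c(m)r^{-m}$ as well. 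Hence for the angular and radial pieces,
\begin{align*}
  r^{2-m}\int_{\Omega} \langle\nabla u,\alpha_{x,r}^\perp\rangle^2
  &\leq C(m)\, r^{-m}\int_{\Omega}\frac{|\pi_{x,r}^\perp(y-x)|^2}{r^2}\langle\nabla u,\alpha_{x,r}^\perp\rangle^2 \\
  &\leq C(m)\int \hat\rho_r(y-x;\LL_{x,r})|\pi_{x,r}^\perp(y-x)|^2\langle\nabla u,\alpha_{x,r}^\perp\rangle^2 = C(m)\hat\vartheta_\alpha(x,r)\, ,
\end{align*}
and identically for $\hat\vartheta_n(x,r)$. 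For the $\LL_{x,r}$-tangential piece there is no radial weight, but the same lower bound $\rho_r(y-x) \geq c(m)r^{-m}$ on $\Omega$ combined with $\hat\rho_r = \rho_r$ there gives $r^{2-m}\int_\Omega |\pi_{x,r}\nabla u|^2 \leq C(m)\, r^2\int\hat\rho_r(y-x;\LL_{x,r})|\pi_{x,r}\nabla u|^2 = C(m)\hat\vartheta_\LL(x,r)$. Summing the three inequalities yields the first inequality of \eqref{e:energy_bounded_by_restricted_vartheta}, and the second inequality is immediate from $\hat\rho_r \leq \rho_r$ (the cutoff $\hat\psi_R \leq 1$) together with the definitions, so that $\hat\vartheta_\LL \leq \vartheta_\LL$, $\hat\vartheta_\alpha \leq \vartheta_\alpha$, $\hat\vartheta_n \leq \vartheta_n$.

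The only mild subtlety — and the closest thing to an obstacle — is bookkeeping the scale thresholds: one must check that on $\Omega$ the argument of $\hat\psi_R$, namely $|\pi_{x,r}^\perp(y-x)|^2/(2r^2) \geq 10^{-4}/2$, comfortably exceeds the cutoff scale $e^{-2R}$ so that $\hat\psi_R$ equals $1$ there and no energy near $\LL_{x,r}$ is being discarded, and that the Gaussian lower bound $\rho(t) = c_m e^{-t} \geq c_m e^{-50}$ holds for $t = |y-x|^2/(2r^2) \leq 50$. Both hold once $R \geq R(m)$, which is assumed throughout; there is no hidden dependence on $u$ or on $\delta$ beyond what is already standing. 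The condition $x \in B_{10^{-3}r}(\T)$ with $r \geq \rf_x/10$ is used only to guarantee that $\LL_{x,r}$ and hence $\hat\rho_r(\cdot;\LL_{x,r})$ are well-defined via Theorem \ref{t:best_plane:best_plane}, which requires $3r \geq \rf_x \vee d(x,\T)$; indeed $d(x,\T) \leq 10^{-3}r$ and $\rf_x \leq 10 r$ give $3r \geq \rf_x \vee d(x,\T)$ after adjusting the constant $10$, or more carefully by applying the best-plane results at a slightly larger scale. No further ideas are needed; this is a purely elementary weight-comparison argument.
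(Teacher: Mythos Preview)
Your proof is correct and is precisely the argument the paper has in mind: the paper does not actually write out a proof of this lemma, merely remarking that it ``follows easily from the definition of $\hat\rho$'', and your weight-comparison argument is exactly that. The pointwise orthogonal decomposition of $|\nabla u|^2$ together with the lower bounds $\hat\rho_r(y-x;\LL_{x,r}) \geq c(m)r^{-m}$ and $|\pi_{x,r}^\perp(y-x)|^2 \geq c(m)r^2$ on $\Omega$ are all that is needed, and you have identified the only bookkeeping subtlety (the domain of definition of $\LL_{x,r}$ requires $3r \geq \rf_x$, whereas the stated hypothesis $r \geq \rf_x/10$ is nominally weaker); in the paper's sole application of this lemma (the proof of Lemma~\ref{l:energy_decomposition:full_energy_on_annulus}) one in fact has $\rf_x \leq r$, so the mismatch is harmless.
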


\vspace{.3cm}

\subsection{Estimates on the Heat Kernel Mollifiers \texorpdfstring{$\rho_r(y)$, $\hat\rho_r(y;L)$}{} }\label{ss:energy_decomposition:kernel_estimates}

Let us collect together a handful of basic estimates on our choice of mollifiers. These estimates are a simple corollary of Lemma \ref{l:rho_basic_properties} and \ref{l:rho_tilde_estimates}. We will apply these in a variety of estimates in future Sections. Recall from Section \ref{ss:restricted_energy} any notation:\\
\begin{lemma}[Estimates for $\rho_r$]\label{l:energy_decomposition:rho_estimates}
	The following hold:
\begin{align}
 \abs{r\frac{d}{dr} \rho_r(y-x)}+\abs{\ton{r\frac{d}{dr}}^2 \rho_r(y-x)}\leq &C(m)\rho_{1.1r}(y-x)\, ,\notag \\
 r\abs{\nabla \rho_r(y-x)}+r^2\abs{\nabla^2 \rho_r(y-x)}+r^2\abs{\frac{d}{dr}\nabla \rho_r(y-x)}\leq &C(m)\rho_{1.1r}(y-x) \label{e:brutal_rho_r}
\end{align}
\end{lemma}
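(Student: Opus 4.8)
The plan is a direct chain-rule computation, reducing every inequality to a single one-variable pointwise estimate on $\rho,\dot\rho,\ddot\rho$. Write $t=t(x,y,r)\equiv \frac{|y-x|^2}{2r^2}$, so that $\rho_r(y-x)=r^{-m}\rho(t)$, and record the elementary identities $r\frac{d}{dr}t=-2t$, $\nabla^{(y)}t=\frac{y-x}{r^2}$, $r\,|\nabla^{(y)}t|=\sqrt{2t}$, and $(\nabla^{(y)})^2t=r^{-2}\id$. Applying $r\frac{d}{dr}$ and $\nabla^{(y)}$ to $r^{-m}\rho(t)$ and collecting terms, each of the five derivative expressions appearing on the left-hand sides of the Lemma becomes $r^{-m}$ times a finite linear combination (with coefficients depending only on $m$) of terms of the form $t^{a/2}\rho^{(j)}(t)$ with $j\in\{0,1,2\}$ and $a\in\{0,1,2,3\}$, the half-integer powers arising from $r|\nabla^{(y)}t|=\sqrt{2t}$. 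For instance $r\frac{d}{dr}\rho_r(y-x)=r^{-m}\bigl(-m\rho(t)-2t\dot\rho(t)\bigr)$, $\bigl(r\frac{d}{dr}\bigr)^2\rho_r(y-x)=r^{-m}\bigl(m^2\rho(t)+(4m+4)t\dot\rho(t)+4t^2\ddot\rho(t)\bigr)$, $r^2|(\nabla^{(y)})^2\rho_r(y-x)|\le r^{-m}\bigl(2t|\ddot\rho(t)|+|\dot\rho(t)|\bigr)$, and similarly for $r|\nabla^{(y)}\rho_r|$ and $r^2\frac{d}{dr}\nabla^{(y)}\rho_r$.

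Thus it suffices to show there is $C(m)$ with $t^{a/2}|\rho^{(j)}(t)|\le C(m)\,\rho(t/1.21)$ for all $t\ge0$, $j\in\{0,1,2\}$, $a\le 3$; since $\rho(t/1.21)=\rho\bigl(\tfrac{|y-x|^2}{2(1.1r)^2}\bigr)$ we have $r^{-m}\rho(t/1.21)=(1.1)^m\rho_{1.1r}(y-x)$, which converts this claim into the statement of the Lemma. To prove the claim I would split according to Definition \ref{d:heat_mollifier}. On $[0,R]$ we have $\rho^{(j)}(t)=(-1)^jc_m e^{-t}$, so $t^{a/2}|\rho^{(j)}(t)|=c_m\bigl(t^{a/2}e^{-(1-1.1^{-2})t}\bigr)e^{-t/1.21}\le C(a)\,c_m e^{-t/1.21}$ because $s\mapsto s^{a/2}e^{-cs}$ is bounded on $[0,\infty)$ for $c>0$; and here $\rho(t/1.21)=c_m e^{-t/1.21}$ since $t/1.21\le R$. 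On $[R,R+2]$, using $-\dot\rho\ge0$ (so $\rho$ is nonincreasing, $\rho(t)\le\rho(R)=c_m e^{-R}$), $0\le\ddot\rho\le c_m e^{-R}$, and hence $|\dot\rho(t)|\le|\dot\rho(R)|+2c_m e^{-R}\le 3c_m e^{-R}$, one gets $t^{a/2}|\rho^{(j)}(t)|\le C\,(R+2)^{a/2}c_m e^{-R}$; provided $R$ is large enough that $R+2\le 1.21\,R$, one still has $\rho(t/1.21)=c_m e^{-t/1.21}$ throughout the support $t\le R+2$, and $(R+2)^{a/2}e^{-R}\big/e^{-(R+2)/1.21}=(R+2)^{a/2}e^{-(1-1.1^{-2})R+2/1.21}$ is bounded uniformly in $R\ge0$, so the bound persists. (The facts $-\dot\rho\ge0$ and $\rho\le -C(m)\dot\rho$ are recorded in Lemma \ref{l:rho_basic_properties}.)

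The only delicate point is the region $[R,R+2]$, where $\rho$ is merely a prescribed smooth extension rather than an explicit exponential: there one must ensure the comparison function $\rho(t/1.21)$ does not vanish anywhere $t^{a/2}|\rho^{(j)}(t)|$ can still be positive, which is exactly what keeping $R$ large enough that $1.21\,R\ge R+2$ guarantees, since then $t/1.21\le R$ on the whole support of $\rho_r(y-x)$. As $R=R(m,\Lambda)$ is taken large throughout the paper this is automatic; granting it, the Lemma is nothing more than the chain rule plus the boundedness of $s\mapsto s^{a/2}e^{-cs}$ together with the structural bounds on $\rho$ from Definition \ref{d:heat_mollifier} and Lemma \ref{l:rho_basic_properties}.
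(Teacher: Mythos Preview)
Your proposal is correct and takes essentially the same approach as the paper: compute the derivatives by the chain rule in the variable $t=|y-x|^2/(2r^2)$, obtain $r^{-m}$ times polynomial-in-$t$ combinations of $\rho,\dot\rho,\ddot\rho$, and then bound these pointwise by $\rho(t/1.1^2)$. The only cosmetic difference is that the paper simply cites the one-variable inequality \eqref{e:trho_doubleradius} from Lemma~\ref{l:rho_basic_properties} for that last step, whereas you rederive it directly from Definition~\ref{d:heat_mollifier} by splitting into the regions $[0,R]$ and $[R,R+2]$.
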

\begin{proof}
	The proof is a straightforward computation from the definitions, indeed
	\begin{align}
	 \rho_r(y-x)= &r^{2-m}\rho\ton{\frac{\abs{y-x}^2}{2r^2}}\, ,\\
	 r\frac{d}{dr}\rho_r(y-x)= & -m \rho_r (y-x) - \dot \rho_r (y-x)\frac{\abs{y-x}^2}{r^2}\, ,\\
	 r\nabla^{(y)} \rho_r(y-x)=&-r\nabla^{(x)} \rho_r(y-x)= \dot \rho_r (y-x) \frac{y-x}{r}\, .
	\end{align}
	Higher derivatives have similar formulas. The estimates follow from \eqref{e:trho_doubleradius}.
\end{proof}

Similar estimates can be obtained for the $\hat \rho_r$ mollifier, which we record here for future reference.

\begin{lemma}[Estimates for $\hat \rho_r$]\label{l:energy_decomposition:hat_rho_estimates}
	Under the assumptions of Theorem \ref{t:best_plane:best_plane}, if $\delta<<1$ the following hold: 
\begin{align}
 \abs{r\frac{d}{dr} \hat \rho_r(y-x;\LL_{x,r})}+\abs{\ton{r\frac{d}{dr}}^2 \hat \rho_r(y-x;\LL_{x,r})}\leq &C(m)\rho_{1.1r}(y-x)\, ,\notag \\
 r\abs{\nabla^{(x)} \hat \rho_r(y-x;\LL_{x,r})}+r^2\abs{\nabla^{(x)}\nabla^{(x)} \hat \rho_r(y-x;\LL_{x,r})}+r^2\abs{\frac{d}{dr}\nabla^{(x)} \hat \rho_r(y-x;\LL_{x,r})}\leq& C(m)\rho_{1.1r}(y-x) \notag \\
 r\abs{\nabla^{(y)} \hat \rho_r(y-x;\LL_{x,r})}+r^2\abs{\nabla^{(y)}\nabla^{(y)} \hat \rho_r(y-x;\LL_{x,r})}+r^2\abs{\frac{d}{dr}\nabla^{(y)} \hat \rho_r(y-x;\LL_{x,r})}\leq& C(m)\rho_{1.1r}(y-x) \label{e:brutal_hat_rho_r}\, .
\end{align}
\end{lemma}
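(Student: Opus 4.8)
The plan is to imitate the proof of the companion Lemma \ref{l:energy_decomposition:rho_estimates}, that is, to compute $\hat\rho_r(y-x;\LL_{x,r})$ explicitly and differentiate. Recall that by Definition \ref{d:restricted_energy_functionals} we have
\begin{gather}
 \hat\rho_r(y-x;\LL_{x,r})=\rho_r(y-x)\cdot\hat\psi_R\Big(\tfrac{|\pi_{x,r}^\perp(y-x)|^2}{2r^2}\Big)\, ,
\end{gather}
so the map is a product of two factors: the ordinary heat kernel $\rho_r(y-x)$, whose derivatives in $x$, $y$ and $r$ are all controlled by $C(m)\rho_{1.1r}(y-x)$ by Lemma \ref{l:energy_decomposition:rho_estimates} and \eqref{e:brutal_rho_r}; and the cutoff factor $\hat\psi_R(\tfrac{|\pi_{x,r}^\perp(y-x)|^2}{2r^2})$, which is bounded, and whose derivatives I need to estimate. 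So after expanding by the Leibniz and chain rules it all reduces to bounding the derivatives of the scalar function $z\mapsto\tfrac{|\pi_{z,r}^\perp(y-z)|^2}{2r^2}$ in $z=x$ and in $r$, together with a uniform bound on the derivatives of $\hat\psi_R$ (which depend only on $R$, hence on $m,R$, absorbed into $C(m)$ in the statement's convention since $R$ is a fixed background parameter, though strictly one writes $C(m,R)$).

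First I would record the key input: by Theorem \ref{t:best_plane:best_plane}.\eqref{i:best_plane_bounds_on_projections}, on the domain of definition we have $r|\tfrac{\partial}{\partial r}\pi_{x,r}|+r^2|\tfrac{\partial}{\partial r}\nabla\pi_{x,r}|+r|\nabla\pi_{x,r}|+r^2|\nabla^2\pi_{x,r}|\leq C(m,\epsilon_0)\sqrt{\vartheta_\cL(x,2r)}\leq C(m)\sqrt\delta\ll 1$. Hence $\pi_{x,r}^\perp=\id-\pi_{x,r}$ is essentially a constant projection up to $C(m)\sqrt\delta$ errors in its first two derivatives. Writing $g(z)\equiv\tfrac{1}{2r^2}|\pi_{z,r}^\perp(y-z)|^2$, a direct computation gives $\nabla_z g=\tfrac1{r^2}\big(-\pi_{z,r}^\perp(y-z)+(\nabla_z\pi_{z,r}^\perp)(y-z)\big)$ paired against the appropriate slot, and since $|\pi_{z,r}^\perp(y-z)|\leq|y-z|$ and $|\nabla\pi_{z,r}|\leq C(m)r^{-1}\sqrt\delta$, one gets $r|\nabla_z g|\leq C(m)\tfrac{|y-z|}{r}$ and similarly $r^2|\nabla^2_z g|\leq C(m)(1+\tfrac{|y-z|^2}{r^2})$ and $r|\tfrac{\partial}{\partial r}g|+r^2|\tfrac{\partial}{\partial r}\nabla_z g|\leq C(m)(1+\tfrac{|y-z|^2}{r^2})$. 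Exactly the same bounds (in fact simpler, since there is no $\pi_{y,r}$ dependence on $y$) hold for $\nabla_y g$ and $\nabla_y^2 g$. The point is that each derivative brings down at worst a factor $(1+|y-z|^2/r^2)$, and this is precisely what $\rho_r$ can absorb when passing to $\rho_{1.1r}$: by \eqref{e:trho_doubleradius} one has $(1+\tfrac{|y-x|^2}{r^2})\rho_r(y-x)\leq C(m)\rho_{1.1r}(y-x)$ (up to adjusting the fixed ratio $1.1$).

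Then I would assemble the three lines of the claimed estimate. For instance for the top line, $r\tfrac{d}{dr}\hat\rho_r=\big(r\tfrac{d}{dr}\rho_r\big)\hat\psi_R(g)+\rho_r\,\hat\psi_R'(g)\,r\tfrac{d}{dr}g$; the first term is $\leq C(m)\rho_{1.1r}$ by Lemma \ref{l:energy_decomposition:rho_estimates}, and the second is $\leq C(m)\rho_r\cdot(1+\tfrac{|y-x|^2}{r^2})\leq C(m)\rho_{1.1r}$, using $\|\hat\psi_R'\|_\infty\leq C(R)$. The second $r$-derivative and the mixed $x$- and $y$-derivatives are handled identically, each application of Leibniz producing finitely many terms of the shape (derivative of $\rho_r$, controlled by Lemma \ref{l:energy_decomposition:rho_estimates})$\times$(bounded derivative of $\hat\psi_R$)$\times$(derivative of $g$, controlled above), each summand being $\leq C(m)\rho_{1.1r}(y-x)$. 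This proves \eqref{e:brutal_hat_rho_r}.

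The main obstacle is purely bookkeeping rather than conceptual: there is no single hard step, but one must be careful that the $x$-dependence enters $\hat\rho_r(y-x;\LL_{x,r})$ in \emph{two} ways — through the displacement $y-x$ and through the plane $\LL_{x,r}$ itself — so that $\nabla^{(x)}$ and $\nabla^{(y)}$ are genuinely different operators here (unlike for the plain $\rho_r(y-x)$, where $\nabla^{(y)}=-\nabla^{(x)}$), and the $\LL_{x,r}$-dependence must be controlled via Theorem \ref{t:best_plane:best_plane}.\eqref{i:best_plane_bounds_on_projections}. Once the bound $|\nabla\pi_{x,r}|,|\nabla^2\pi_{x,r}|\ll 1/r,\ 1/r^2$ is in hand, everything collapses to the same $(1+|y-x|^2/r^2)$-accounting used in the proof of Lemma \ref{l:energy_decomposition:rho_estimates}, so the statement follows as claimed.
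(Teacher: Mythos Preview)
Your proposal is correct and follows essentially the same approach as the paper's own proof, which is itself only a brief sketch: differentiate the product formula $\hat\rho_r=\rho_r\cdot\hat\psi_R(|\pi_{x,r}^\perp(y-x)|^2/2r^2)$, control the $\rho_r$ piece via Lemma \ref{l:energy_decomposition:rho_estimates}, and control the cutoff piece via the derivative bounds on $\pi_{x,r}$ from Theorem \ref{t:best_plane:best_plane}.\eqref{i:best_plane_bounds_on_projections}. You also correctly identify the only subtlety the paper flags, namely that $\nabla^{(x)}$ and $-\nabla^{(y)}$ differ here because the $x$-derivative also hits $\LL_{x,r}$ through $\pi_{x,r}$.
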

\begin{proof}
	The proof is a simple computation, similar to the one above, using Definition \ref{d:restricted_energy_functionals} of $\hat \rho_r$, and the derivative properties of $\pi_{x,r}$ in Theorem Theorem \ref{t:best_plane:best_plane}.

	Notice here that $\nabla^{(x)}\hat \rho_r(y-x,\LL_{x,r})$ also hits $\LL_{x,r}$, and in particular the projection $\pi_{x,r}^\perp$ in the definition of $\hat \rho$. For this reason, $\nabla^{(x)}\hat \rho_r(y-x,\LL_{x,r})\neq -\nabla^{(y)}\hat \rho_r(y-x,\LL_{x,r})$, albeit the difference is small since $\nabla \pi_{x,r}$ is small.
\end{proof}

\vspace{.3cm}

\subsection{Bubble Center Cutoff \texorpdfstring{$\psi_\T(x,r)$}{}}\label{ss:bubble_cutoff}

We want to use the pointwise energies from the last subsection and integrate them over $\T_r$ in order to define our energies on each scale in $\cA$ .  In reality one only wants to integrate on the part of $\T_r$ for which $\rf_x<r$, as this is the range needed to capture the energy on $\cA$ .  The goal of this subsection is to define and prove some basic properties of our cutoff function:

\begin{definition}[$\T$-Cutoff]
	Let $u:B_{10R}(p)\to N$ be a stationary harmonic map with $R^2\fint_{B_{10R}}|\nabla u|^2\leq \Lambda$ and let $\cA=B_2\setminus \overline{B_{\rf_z}(\T)}$ be a $\delta$-annular region with $\delta\leq \delta(m,\Lambda,K_N,R)$. Let also $\phi:\R\to [0,1]$ be a smooth function with
	\begin{gather}
	 \phi(t)=\begin{cases}
	       1 & \text{for } \ t\leq 1.1\\
	       0 & \text{for } \ t\geq 1.9
	      \end{cases}
	\end{gather}
	and such that
	\begin{gather}
% 	 \dot h \leq 0\, , \qquad  \abs{\dot h} + \abs{\ddot h} \leq 10\\
% 	 \phi\ton{\frac 5 3}\geq \frac 1 4\, , \qquad
	 \dot \phi \leq 0\, , \qquad  \abs{\dot \phi} + \abs{\ddot \phi} \leq 10\, .
	\end{gather}
We define $\psi_{\T}:\B 2 p \to [0,1]$ by
	\begin{align}
	 \psi_{\T}(x,r)=& \phi \ton{\frac{\rf_x}{r}} \phi(r)\phi\ton{\abs{\pi_{L_\cA}(x-p)}^2 }\, .%,\\
	\end{align}
	where $\rf_x$ is defined as in \ref{d:annular_region} and Remark \ref{r:annular_region:extension}.
\end{definition}
\begin{remark}
	Though $\psi_\T(x,r)$ is defined for $x\in \R^m$ we will in practice restrict it to either $\T$ or $\T_r$.
\end{remark}

\begin{remark}
 Note that $\psi_\T(x,\cdot)$ has support contained in the set $r\in [\rf_x/2,2]$.
\end{remark}

The next lemma encodes the basic properties of $\psi_{\T}$.
\begin{lemma}\label{l:energy_decomposition:T_cutoff}
Let $u:B_{10R}(p)\to N$ be a stationary harmonic map with $R^2\fint_{B_{10R}}|\nabla u|^2\leq \Lambda$ and let $\cA=B_2\setminus \overline{B_{\rf_z}(\T)}$ be a $\delta$-annular region with $\delta\leq \delta(m,\Lambda,K_N,R)$.  Then the cutoff function $\psi_\T:\B 2 p \to \dR^+$ is a $C^\infty_c$ function that satisfies
% Then there exists a smooth function \newline $\psi_\T:B_2\times \dR^+\to \dR^+$ such that
\begin{enumerate}
	\item $\psi_\T(x,r)=0$ if $\abs{x-p}\geq 2$ and $\psi_\T(x,r)=0$ if $\rf_x\geq 2r$,
	\item $\psi_\T(x,r)=1$ if $\rf_x\leq r$ and $\abs{x-p}\leq 1$,
% 	\item $(r\vee \rf_x)|\nabla \psi_\T|+(r\vee \rf_x)^2|\nabla^2 \psi_\T| < C(m)\,\delta$ .
	\item\label{i:T_cutoff_global_bounds} $\abs{r\frac{d}{dr} \psi_{\T}(x,r)}+r|\nabla \psi_\T|+r^2|\nabla^2 \psi_\T|+r^2 \abs{\nabla \frac{d}{dr} \psi_\T} +r^2\abs{\frac{d^2}{dr^2} \psi_\T}< C(m)$ .
	\item\label{i:T_cutoff_global_integral} $\int\int_{\T_r}\ton{r|\nabla \psi_\T|+r^2|\nabla^2 \psi_\T|+r^2 \abs{\nabla \frac{d}{dr} \psi_\T} + r\frac{d}{dr} \psi_\T+r^2 \abs{\frac{d^2}{dr^2} \psi_\T}}\frac{dr}{r} < C(m)$
\end{enumerate}
\end{lemma}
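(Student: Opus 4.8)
The plan is to verify the four items directly from the explicit formula $\psi_\T(x,r)=\phi(\rf_x/r)\,\phi(r)\,\phi(|\pi_{L_\cA}(x-p)|^2)$, using only the elementary properties of $\phi$ together with the derivative control on $\rf_x$ coming from Definition \ref{d:annular_region} and its extension in Remark \ref{r:annular_region:extension} (after the standard mollification noted there we may assume $\rf_x$ is smooth on $B_2$ with $|\nabla\rf_x|\leq C(m)\delta$ and $\rf_x|\nabla^2\rf_x|\leq C(m)\delta$), and the graph bound $|\ft_r|+|\nabla\ft_r|\leq C(m)\sqrt\delta$ for $\T_r$ from Theorem \ref{t:approximating_submanifold}, which in particular bounds the $(m-2)$-volume of $\T_r\cap B_2$ by $C(m)$.

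First I would dispatch items (1) and (2), which are immediate. Since $\phi(t)=0$ for $t\geq 1.9$, the factor $\phi(\rf_x/r)$ forces $\psi_\T=0$ whenever $\rf_x\geq 2r$; and for $x$ on $\T$ or $\T_r$ with $|x-p|\geq 2$ the graph structure gives $|\pi_{L_\cA}(x-p)|\geq |x-p|-C\sqrt\delta>\sqrt{1.9}$, so $\phi(|\pi_{L_\cA}(x-p)|^2)=0$; together these give (1). Conversely, if $\rf_x\leq r$ then $\rf_x/r\leq 1\leq 1.1$ so $\phi(\rf_x/r)=1$, while $|x-p|\leq 1$ gives $|\pi_{L_\cA}(x-p)|^2\leq 1\leq 1.1$ so the last factor is $1$ as well, and $\phi(r)=1$ for $r\leq 1.1$; this is (2) (for the range of scales $r$ under consideration).

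For item (3) I would differentiate the product and use two observations. First, whenever a derivative lands on $\phi(\rf_x/r)$, the resulting $\dot\phi$ or $\ddot\phi$ factor is supported on $\rf_x/r\in[1.1,1.9]$, so there $r\simeq\rf_x$ and every ratio $\rf_x/r$, $r/\rf_x$ is bounded; combined with $|\partial_r(\rf_x/r)|=\rf_x/r^2$, $|\nabla(\rf_x/r)|=|\nabla\rf_x|/r\leq C\delta/r$, $|\nabla^2(\rf_x/r)|=|\nabla^2\rf_x|/r\leq C\delta/(r\rf_x)\leq C\delta/r^2$, etc., one reads off $|r\partial_r\phi(\rf_x/r)|+r|\nabla\phi(\rf_x/r)|+r^2|\nabla^2\phi(\rf_x/r)|+r^2|\nabla\partial_r\phi(\rf_x/r)|+r^2|\partial_r^2\phi(\rf_x/r)|\leq C(m)$. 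Second, $\phi(r)$ is differentiated only on $r\in[1.1,1.9]$, and $g(x):=|\pi_{L_\cA}(x-p)|^2$ has $\nabla g=2\pi_{L_\cA}(x-p)$, $\nabla^2 g=2\pi_{L_\cA}$, hence $|\nabla g|<4$ and $|\nabla^2 g|=2$ on the support $\{r<1.9\}$ of $\psi_\T$, so its contributions are $\leq C(m)$ as well. Assembling these with the product rule, using $|\dot\phi|+|\ddot\phi|\leq 10$ and $0\leq\psi_\T\leq 1$, gives (3).

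Item (4) is the only place requiring a small amount of care, because for small $\rf_x$ the support of $\psi_\T(x,\cdot)$ spans the $r$-interval $[\rf_x/2,2]$, which is logarithmically large. The point is that every term produced by the product rule in the integrand of (4) falls into one of two classes: either a derivative lands on $\phi(\rf_x/r)$ or on $\phi(r)$, in which case the term is supported (in $r$) on $[\rf_x/1.9,\rf_x/1.1]$ or on $[1.1,1.9]$, each of bounded logarithmic width, and $\int_{\T_r\cap B_2}(\cdots)\,dv_{\T_r}\leq C(m)$ by (3), so its $\tfrac{dr}{r}$–integral is $\leq C(m)$; or derivatives land only on $g(x)$, in which case the $r$–dependence of the integrand is through an explicit positive power $r^{k}$ ($k\geq1$) multiplying the $r$–independent $|\nabla^{k}\phi(g)|\leq C$, so after integrating over $\T_r$ (costing $C(m)$) we are left with $\int_0^2 C(m)\,r^{k-1}\,dr\leq C(m)$. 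Mixed terms are handled identically, the smallest power of $r$ among the differentiated factors dictating which estimate applies; summing the finitely many terms yields (4). The only real obstacle is this bookkeeping in (4); there is no analytic difficulty.
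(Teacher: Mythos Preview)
Your proof is correct and follows the same approach as the paper's own argument: verifying (1)--(3) directly from the product formula and the derivative bounds on $\rf_x$, and handling (4) by splitting the product-rule terms according to whether a derivative lands on $\phi(\rf_x/r)$ or $\phi(r)$ (bounded logarithmic $r$-support) versus only on the $\phi(|\pi_{L_\cA}(x-p)|^2)$ factor (carrying an explicit $r^k$, $k\geq 1$). Your write-up is in fact more organized than the paper's, which is terse to the point of imprecision in (4); the paper's phrase ``$|\nabla\rf_x|\neq 0$ only for $r\in[\rf_x/2,\rf_x]$'' is shorthand for exactly the logarithmic-support observation you spell out.
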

%	OLD VERSION WITH \psi^e error
% \begin{lemma}\label{l:energy_decomposition:T_cutoff}
% Let $u:B_{10R}(p)\to N$ be a stationary harmonic map with $R^2\fint_{B_{10R}}|\nabla u|^2\leq \Lambda$ and let $\cA=B_2\setminus \overline{B_{\rf_z}(\T)}$ be a $\delta$-annular region with $\delta\leq \delta(m,\Lambda,K_N,R)$.  Then the cutoff function $\psi_\T:\B 2 p \to \dR^+$ satisfies
% % Then there exists a smooth function \newline $\psi_\T:B_2\times \dR^+\to \dR^+$ such that
% \begin{enumerate}
% 	\item for all $r$, $\psi_\T(x,r)=0$ if $\abs{\pi_{L_{\cA}}(x-p)}\geq 2$, and $\psi^e_\T(x,r)=0$ if $\abs{\pi_{L_{\cA}}(x-p)}\geq 3$
% 	\item $\psi_\T(x,r)=1$ if $\rf_x\leq r$ and $\abs{\pi_{L_{\cA}}(x-p)}\leq 1$,
% 	\item $\psi_\T(x,r)=0$ if $\rf_x\geq 2r$,
% 	\item $0\leq r\frac{d}{dr} \psi_{\T}(x,r)\leq C(m)$ and $0\leq r\frac{d}{dr} \psi^e_{\T}(x,r)\leq C(m)$
% 	\item $(r\vee \rf_x)|\nabla \psi_\T|+(r\vee \rf_x)^2|\nabla^2 \psi_\T| +(r\vee \rf_x)|\nabla \psi^e_\T|+(r\vee \rf_x)^2|\nabla^2 \psi^e_\T| < C(m)\,\delta$ .
% 	\item $\int\int_{\T_r}r|\nabla \psi^e_\T|+r^2|\nabla^2 \psi^e_\T|+r|\nabla \psi_\T|+r^2|\nabla^2 \psi_\T|\frac{dr}{r} < C(m)$
% 	\item $\psi_\T(x,r)\leq \psi_\T^e(x,r)$, and
% 	\begin{gather}\label{e:psi_vs_psie}
% 	 \psi_{\T}(x,r)>0 \qquad \Longrightarrow \qquad \forall z \in \B {r}{x}\, , \qquad \psi^e_{\T}(z,r)\geq \frac 1 4\, .
% 	\end{gather}
% \end{enumerate}
% \end{lemma}
\begin{proof}

	Properties (1), (2) and (3) follow immediately from the definitions.
% 	(5) is a consequence of $\abs{\nabla \rf_x}+\rf_x\abs{\nabla^2 \rf_x}\leq C\delta$ and the fact that $\rf_x\leq C\delta$ for all $x\in \B 2 p$.

	In order to prove \eqref{i:T_cutoff_global_integral}, we observe that
	\begin{gather}
     r\abs{\nabla \psi_\T(x,r)}\leq \abs{\nabla \rf_x} + r\abs{\nabla \phi}\leq \abs{\nabla \rf_x}+Cr\, , \qquad r^2\abs{\nabla^2 \psi_\T(x,r)}\leq C\ton{\abs{\nabla \rf_x}+r\abs{\nabla^2 \rf_x}}+Cr\, .
	\end{gather}
	Thus, given that $\T$ and $\T_r$ are all Lipschitz graphs over $L_{\cA}$ with small Lipschitz constant, we can estimate
	\begin{gather}
	 \int\int_{\T_r}r|\nabla \psi_\T|+r^2|\nabla^2 \psi_\T|\frac{dr}{r} \leq C + C\int\int_{\T_r}\abs{\nabla \rf_x}+r\abs{\nabla^2 \rf_x} \frac{dr}{r}\, .
	\end{gather}
	Since $\rf_x$ is invariant wrt translations in $L_{\cA}^\perp$, and since $\T_r$ and $\T$ are both Lipschitz graphs over $L_{\cA}$ with small Lipschitz constant, we can bound
	\begin{gather}
	 \int\int_{\T_r}\abs{\nabla \rf_x}+r\abs{\nabla^2 \rf_x} \frac{dr}{r}\leq 2\int\int_{\T}\abs{\nabla \rf_x}+r\abs{\nabla^2 \rf_x} \frac{dr}{r}=2\int_{\T}\int\abs{\nabla \rf_x}+r\abs{\nabla^2 \rf_x} \frac{dr}{r}
	\end{gather}

	Considering that $|\nabla \rf_x|+r\abs{\nabla^2 \rf_x}\leq C$ and $|\nabla \rf_x|\neq 0$ only for $r\in [\rf_x/2,\rf_x]$, we can conclude
	\begin{gather}
	 \int_{\T}\int\abs{\nabla \rf_x}+r\abs{\nabla^2 \rf_x} \frac{dr}{r}\leq C(m) \, ,
	\end{gather}
	where we used the fact $\T\cap \supp{\psi_{\T}(\cdot,r)}\subset B_2$ has finite $m-2$ measure.% A similar estimate holds for $\psi_\T^e$.
	
	Estimating in a similar fashion proves the bounds on the radial derivatives of $\psi_\T$.
\end{proof}

\vspace{.3cm}

\subsection{Heat Mollified Energies along \texorpdfstring{$\T_r$}{Tr} }

Combining the tools of the last two subsections we can define the energies along $\T_r$ by

\begin{align}
	\vartheta_\cL(\T_r) &\equiv \int_{\T_r}\psi_\T(x,r)\,\vartheta_\cL(x,r)\,=r^2\int_{\T_r}\psi_\T(x,r)\int\rho_r(y-x)\abs{\pi_{x,r}\nabla u}^2\, ,\notag\\
	\hat\vartheta_\alpha(\T_r) &\equiv \int_{\T_r}\psi_\T(x,r)\,\hat\vartheta_\alpha(x,r)= \int_{\T_r}\psi_\T(x,r)\int\hat\rho_r(y-x;\cL_{x,r})|\pi_{x,r}^\perp(y-x)|^2\langle \nabla u, \alpha_{x,r}^\perp\rangle^2 \, ,\notag\\
	\hat\vartheta_n(\T_r) &\equiv \int_{\T_r}\psi_\T(x,r)\,\hat\vartheta_n(x,r)= \int_{\T_r}\psi_\T(x,r)\int\hat\rho_r(y-x;\cL_{x,r})|\pi_{x,r}^\perp(y-x)|^2\langle \nabla u, n_{x,r}^\perp\rangle^2 \, .
\end{align}

\begin{remark}
 In these definitions, we always used the best plane $\LL_{x,r}$ at $x\in \T_r$ to set the various quantities. Thanks to Lemma \ref{l:improved_comparison_LL_T}, this is almost equivalent to considering the tangent space $\T_r(x)$ instead of $\LL_{x,r}$. However, in the future we will need to use the Euler-Lagrange equation for the best plane $\LL_{x,r}$ to cancel some of the highest order error terms, and so it important to use $\LL_{x,r}$ in these definitions.\\
 %However, for future reference we record this fact in the following Lemma.
\end{remark}

Let us now collect together some basic technical tools which will come in handy.

\begin{lemma}\label{l:energy_decomposition:scale_r_properties}
Let $u:B_{10R}(p)\to N$ be a stationary harmonic map with $R^2\fint_{B_{10R}}|\nabla u|^2\leq \Lambda$ and let $\cA=B_2\setminus \overline{B_{\rf_z}(\T)}$ be a $\delta$-annular region with $\delta\leq \delta(m,\Lambda,K_N,R)$.  Then the following hold:
\begin{enumerate}
	\item\label{i_2BCHECKED} Let $s\leq r\leq 1$, then $\vartheta_\cL(\T_s)\leq C(m)\,\vartheta_\cL(\T_r)$ .
\item We can estimate 
\begin{align}
	\vartheta_\cL(\T_r)+\hat\vartheta_\alpha(\T_r)+\hat\vartheta_n(\T_r)&\leq \vartheta_\cL(\T_r)+\vartheta_\alpha(\T_r)+\vartheta_n(\T_r)\leq C(m)\int_{\T\cap \B 4 p }    r\dot \vartheta(x,4r)\, .
\end{align}
\begin{comment}
\begin{align}
	\vartheta_\cL(\T_r)+\hat\vartheta_\alpha(\T_r)+\hat\vartheta_n(\T_r)&\leq \vartheta_\cL(\T_r)+\vartheta_\alpha(\T_r)+\vartheta_n(\T_r)\notag \\
	\leq C(m)\int_{\T_r\cap \B 2 p } 1\   r\dot \vartheta(x,2r)&\leq C(m)\Lambda
\end{align}
\end{comment}
\end{enumerate}
\end{lemma}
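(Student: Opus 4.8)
\emph{Part (2).} The first inequality is immediate: by construction $\hat\rho_r(\cdot\,;\LL_{x,r})\le\rho_r$ pointwise, hence $\hat\vartheta_\alpha(\T_r)\le\vartheta_\alpha(\T_r)$ and $\hat\vartheta_n(\T_r)\le\vartheta_n(\T_r)$. For the second inequality the plan is to integrate the pointwise cone--splitting estimate of Lemma \ref{l:energy_decomposition:pointwise_properties_nohat}.(2) against the cutoff. Since $\psi_\T(x,r)=0$ unless $\rf_x<2r$ and $x\in B_2(p)$, on the support of $\psi_\T(\cdot,r)$ the hypothesis $2r>\rf_x$ of that Lemma holds (the null set $\{\rf_x=2r\}$ being handled by continuity), and $0\le\psi_\T\le 1$ gives
\begin{align}
\vartheta_\cL(\T_r)+\vartheta_\alpha(\T_r)+\vartheta_n(\T_r)\le C(m)\int_{\T_r\cap B_2(p)}\Big(r\dot\vartheta(x,2r)+\fint_{\T_r\cap B_r(x)}r\dot\vartheta(y,2r)\,dv_{\T_r}(y)\Big)\,dv_{\T_r}(x)\, .
\end{align}
For the doubly--averaged term I would apply Fubini; since $\T_r$ is a Lipschitz graph over $L_\cA$ of small constant by Theorem \ref{t:approximating_submanifold}.\eqref{i:Tr_graph}, it is Ahlfors $(m-2)$--regular, so $\cH^{m-2}_{\T_r}(B_r(x))$ and $\cH^{m-2}_{\T_r}(B_r(y))$ are comparable whenever $y\in B_r(x)$, and the doubly--averaged term is controlled by $C(m)\int_{\T_r\cap B_3(p)}r\dot\vartheta(x,2r)\,dv_{\T_r}(x)$. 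Finally I would invoke \eqref{e:integral_dot_vartheta_Tr_vs_T} with $r'=2r$ to pass from an integral over $\T_r$ to one over $\T$ at scale $2.2r$, and then the elementary heat--kernel comparison $-\dot\rho_{2.2r}\le C(m)(-\dot\rho_{4r})$ (obtained by iterating \eqref{e:trho_doubleradius}) to replace $r\dot\vartheta(x,2.2r)$ by $r\dot\vartheta(x,4r)$, yielding the asserted bound $C(m)\int_{\T\cap B_4(p)}r\dot\vartheta(x,4r)$.

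\emph{Part (1).} This is the substantive point, and the plan is to run the codimension--two argument of the toy model, Section \ref{ss:outline_toymodel:L_energy}. Write $\vartheta_\cL(\T_s)=\int_{\T_s}\psi_\T(x,s)\,s^2\int\rho_s(y-x)|\pi_{x,s}\nabla u(y)|^2\,dy\,dv_{\T_s}(x)$. The underlying mechanism is that, since each $\T_\rho$ is (uniformly in $\rho$) an $(m-2)$--dimensional graph over $L_\cA$ of small $C^1$--norm (Theorem \ref{t:approximating_submanifold}.\eqref{i:Tr_graph}), the weight $\rho\mapsto\rho^2\int_{\T_\rho}\psi_\T(x,\rho)\rho_\rho(y-x)\,dv_{\T_\rho}(x)$ is bounded and essentially non--decreasing in $\rho$; here one uses that the cutoff is monotone in scale, $\psi_\T(x,\rho)\ge\psi_\T(x,s)$ for $s\le\rho\le1$, which is immediate from its explicit form $\psi_\T=\phi(\rf_x/\rho)\phi(\rho)\phi(|\pi_{L_\cA}(x-p)|^2)$ with $\phi$ nonincreasing and $\phi\equiv1$ on $[0,1.1]$, and that $\T_\rho$ moves at controlled speed $|\tfrac{d}{d\rho}\T_\rho|\le C(m,\epsilon_0)\sqrt\delta$ by Theorem \ref{t:approximating_submanifold}.\eqref{i:ddrT}, so that a point $y$ within $\sim s$ of $\T_s$ also lies within $\sim r$ of $\T_r$. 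Exactly as in the toy model this would give $\vartheta_\cL(\T_s)\le C(m)\vartheta_\cL(\T_r)$, provided the integrand $|\pi_{x,s}\nabla u|^2$ could be kept fixed through the Fubini step. It cannot, because the minimizing plane $\LL_{x,s}$ at scale $s$ is not the minimizing plane at scale $r$. The plan to circumvent this is to first replace, at the cost of a controllable error, $\pi_{x,s}$ by the tangent projection $\pi_{T_x\T_s}$, using the improved comparison $|\pi_{\T_s}-\pi_{x,s}|\le C(m,\epsilon_0)\big(e^{-R/2}+\sqrt{s\dot\vartheta(x,2s)}\,\big)\sqrt{\vartheta_\cL(x,2s)}$ of Lemma \ref{l:improved_comparison_LL_T} together with $\vartheta_\cL(x,2s)\le C(m)\delta$; the error terms take the shape $\int_{\T_s}\psi_\T(x,s)|\pi_{\T_s}-\pi_{x,s}|^2\vartheta(x,s)\,dv_{\T_s}$, which carry a factor that is small once $R$ is chosen large and $\delta$ small, and are reabsorbed (where needed by a downward induction on the scale $s$). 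Once the integrand is made scale--independent one is back in the toy--model situation, up to the harmless discrepancy between $\T_s$ and $\T_r$ already quantified above, and the codimension--two estimate closes the argument.

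\emph{Main obstacle.} The delicate step is Part (1): handling the scale--dependence of the best planes $\LL_{x,\rho}$ while keeping the final constant independent of the ratio $r/s$. The codimension--two comparison is clean only after the integrand has been made scale--independent, so the crux is controlling and reabsorbing the plane--discrepancy errors, which is where Lemma \ref{l:improved_comparison_LL_T}, the bound $\vartheta_\cL\le C(m)\delta$, and the scale induction enter. Part (2), by contrast, is bookkeeping on top of the pointwise estimates of Lemma \ref{l:energy_decomposition:pointwise_properties_nohat} and the pinching comparison of Theorem \ref{t:approximating_submanifold}.
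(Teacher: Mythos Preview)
Your Part (2) is essentially the paper's argument and is fine.

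Your Part (1) has a genuine gap. Replacing $\pi_{x,s}$ by $\pi_{T_x\T_s}$ via Lemma \ref{l:improved_comparison_LL_T} does \emph{not} make the integrand scale--independent: the tangent plane $T_x\T_s$ still depends on $s$ (and on $x$). To pass from $|\pi_{T_x\T_s}\nabla u|^2$ to something comparable at scale $r$ you would then need $|\pi_{T_x\T_s}-\pi_{T_{x'}\T_r}|$ to be controlled by $\sqrt{\vartheta_\cL}$, but the only uniform bound available is $|\pi_{T_x\T_s}-\pi_{L_\cA}|\le C\sqrt\delta$ from Theorem \ref{t:approximating_submanifold}.\eqref{i:Tr_graph}, giving an error of order $\delta\cdot\vartheta(x,s)$ (full energy, not $L$--energy). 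After integration over $\T_s$ this error is $\sim C(m)\delta\Lambda$, which is in general far larger than $\vartheta_\cL(\T_r)$ and cannot be reabsorbed by any induction on scale.

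The paper bypasses this entirely by exploiting the \emph{minimality} of $\LL_{x,s}$: for any fixed $(m-2)$--plane $V$ one has $\int\rho_s(y-x)|\pi_{x,s}\nabla u|^2\le\int\rho_s(y-x)|\pi_V\nabla u|^2$. Taking $V=\LL_{\bar z,r/2}$ with $\bar z\in\T_r$ the projection of a reference point, the integrand becomes genuinely independent of both $x$ and $s$, and the codimension--two Fubini computation $\int_{\T_s\cap B_{r/200}(z)}s^2\rho_s(y-x)\,dv_{\T_s}(x)\le C(m)\,r^m\rho_{r/2}(y-\bar z)$ goes through cleanly, with no error terms and no induction. (For $r/100\le s\le r$ the paper uses a direct comparison, again via minimality.) The point you are missing is that minimality lets you trade the scale--$s$ best plane for the scale--$r$ best plane for free, which is both simpler and sharper than any comparison via tangent planes.
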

\begin{remark}
	Note that condition $(1)$ holds for $s\leq r\leq 1$ with a constant independent of $r/s$.  This almost monotonicity is not equation based but due to the $m-2$ dimensional nature of $\T$.
\end{remark}
\begin{proof}

	We split the first inequality into two regions: $\frac{r}{100}\leq s\leq r$ and $s\leq \frac r {100}$.

	\textbf{Region 1:} if $\frac{r}{100}\leq s\leq r$, we will see that the estimate follows from easy considerations. In particular, we clearly have by definition of $\LL_{x,s}$ that
	\begin{gather}
	 \int\rho_s(y-x)\abs{\pi_{x,s}\nabla u}^2\leq \int\rho_s(y-x)\abs{\pi_{x,r}\nabla u}^2
	\end{gather}
	Now let $x\in \T_s$ with $\psi_\tau(x,s)>0$, so that $r\geq \rf_x$. Set $\bar x \in \T_r$ to be the only point with $\pi_{L_A}(x)=\pi_{L_A}(\bar x)$. Given Theorem \ref{t:approximating_submanifold}.\eqref{i:ddrT}, we know that $\abs{x-\bar x} <<r/100$, and so it is easy to see that
	\begin{gather}
	 \int\rho_s(y-x)\abs{\pi_{x,s}\nabla u}^2\leq\int\rho_s(y-x)\abs{\pi_{x,r}\nabla u}^2\leq C(m) \int\rho_r(y-\bar x)\abs{\pi_{x,r}\nabla u}^2\, ,
	\end{gather}
	where in the last inequality we exploited the condition $r\leq 100 s$.

	Since both $\T_r$ and $\T_s$ are Lipschitz graphs over $L_\cA$ with small Lipschitz constant, and since by construction the functions $\psi(\cdot,r)$ are all invariant wrt translations in $L_\cA^\perp$, we can conclude
	\begin{gather}
	 \vartheta_\cL(\T_s)=s^2\int_{\T_s}\psi_\T(x,s)\int\rho_s(y-x)\abs{\pi_{x,s}\nabla u}^2\leq C(m) r^2 \int_{\T_s}\psi_\T(x,s)\int\rho_r(y-x)\abs{\pi_{x,r}\nabla u}^2\, .
	\end{gather}

	\textbf{Region 2:} the same approach would not work if $s<<r$ because the constants would blow up as $s\to 0$, so we need a different argument in the case $s\leq \frac r {100}$. In particular here we rely on the $m-2$ dimensional nature of the integral we are considering.

	In detail, fix any $z\in \T_s$ with $\psi_\T(z,s)>0$, and let as before $\bar z\in \T_r$ be the only point with $\pi_{L_{\cA}}(z)=\pi_{L_{\cA}}(\bar z)$. Notice that $\abs{\bar z - z}\leq C\sqrt\delta r <<r$. Moreover, since $s\leq r/100$ we have by the definition of $\psi_\T$ that $\psi_\T(\bar z,r)=1$ if $\psi_\T(z,s)>0$. Given the Lipschitz bounds on $\psi_\T$, we also have that $\psi_\T(x,r)\geq \frac 1 2$ for all $x\in \T_r\cap \B{r/100}{\bar z}$.\\

	\textbf{Claim:} We claim that for $z\in \T_s$ with $\psi_\T(z,s)>0$ we have
	\begin{gather}
	 s^2\int_{\T_s\cap \B {r/200}{z}} \psi_\T(x,s) \int \rho_s(y-x)\abs{\pi_{x,s}\nabla u}^2\leq C(m) r^2\int_{\T_r\cap \B {r/100}{\bar z}} \psi_\T(x,r) \int \rho_r(y-x)\abs{\pi_{x,r}\nabla u}^2
	\end{gather}
	Once this claim is proved, we obtain the final estimate with a simple covering argument.

	\textit{Proof of claim:} By definition of the best plane $\LL_{x,r}$ and monotonicity of the integral, we get that for all $x\in \B {r/200}{z}$:
	\begin{gather}
	 s^2\int_{\T_s\cap \B {r/200}{z}} \psi_\T(x,s) \int \rho_s(y-x)\abs{\pi_{x,s}\nabla u}^2\leq s^2\int_{\T_s\cap \B {r/200}{z}} \int \rho_s(y-x)\abs{\pi_{\bar z,r/2}\nabla u}^2\, .
	\end{gather}
	Now we can switch the integrals and get
	\begin{gather}
	 s^2\int_{\T_s\cap \B {r/200}{z}} \int \rho_s(y-x)\abs{\pi_{\bar z,r/2}\nabla u(y)}^2=\int \abs{\pi_{\bar z,r/2}\nabla u(y)}^2\int_{\T_s\cap \B {r/200}{ z}} s^2\rho_s(y-x)\, .
	\end{gather}
	By definition of $\rho_s(y-x)=s^{-m} \rho\ton{\frac{\abs{y-x}^2}{2s^2}}$, and since $\T_s$ is a Lipschitz graph over the $m-2$ dimensional plane $L_{\cA}$, we get
	\begin{gather}
	 \int_{\T_s\cap \B {r/200}{z}} s^2\rho_s(y-x)\leq C(m)r^m \rho_{r/2}(y-\bar z)\, .
	\end{gather}
	So far we have proved that
	\begin{gather}
	 s^2\int_{\T_s\cap \B {r/200}{z}} \psi_\T(x,s) \int \rho_s(y-x)\abs{\pi_{x,s}\nabla u}^2\leq C(m) r^m \int \rho_{r/2}(y-\bar z) \abs{\pi_{\bar z,r/2} \nabla u}^2=C(m) r^{m-2}\vartheta_\LL(\bar z,r/2)
	\end{gather}

	Considering that for all $x\in \T_r\cap \B {r/100}{\bar z}$: 
	\begin{enumerate}
	 \item $\T_r$ is an $m-2$ dimensional Lipschitz graph with estimates
	 \item $\vartheta_\LL(\bar z,r/2)\leq C(m)\vartheta_\LL(x,r)$
	 \item $\psi_\T(x,r)\geq \frac 1 2 $
	\end{enumerate}
    we conclude the proof of $(1)$.

	\vspace{5mm}

	\textbf{Proof of Estimate (2) .} This estimate follows from Theorem \ref{t:cone_splitting_annular} and some manipulations.

	The inequality
	\begin{gather}
	 \vartheta_\cL(\T_r)+\hat\vartheta_\alpha(\T_r)+\hat\vartheta_n(\T_r)\leq \vartheta_\cL(\T_r)+\vartheta_\alpha(\T_r)+\vartheta_n(\T_r)
	\end{gather}
	follows trivially from the definition of $\hat \vartheta_\alpha$ and $\hat \vartheta_n$.

	Moreover, we have
	\begin{align}
	 \vartheta_\cL(\T_r)+\vartheta_\alpha(\T_r)+\vartheta_n(\T_r) = &\int_{\T_r} \psi_\T(x,r) \qua{\vartheta_\LL(x,r)+\vartheta(x,r;\LL_{x,r}^\perp)}\notag \\\stackrel{\eqref{e:cone_splitting_annular_integrated}}{\leq}& C(m) \int_{\T_r} \psi_\T(x,r) \ton{r\dot\vartheta(x,3r)+\fint_{\T_r\cap B_r(x)} r\dot\vartheta(z,3r)}\notag \\
	 \leq & C(m)\int_{\B 3 p}  r\dot\vartheta(x,3r)\, ,
	\end{align}
	where the last inequality follows from the fact that $\T_r$ is a Lipschitz graph over $L_{\cA}$ with controlled Lipschitz constant. Using \eqref{e:integral_dot_vartheta_Tr_vs_T} we can the turn the last integral over $\T_r$ into an integral over $\T$, up to a enlarging the radius $3r$ by a small factor.
\end{proof}

\vspace{.3cm}

\subsection{Heat Mollified Energies on \texorpdfstring{$\cA$}{the annular region}}

Let us finally integrate up the energies along $\T_r$ in order to define the mollified energies along $\cA$:

\begin{align}
	\vartheta_\cL(\cA) &\equiv \int\vartheta_\cL(\T_r)\frac{dr}{r}\, ,\;\;\;\hat\vartheta_\alpha(\cA)\equiv \int\hat\vartheta_\alpha(\T_r)\frac{dr}{r}\, ,\;\;\;\hat\vartheta_n(\cA) \equiv \int\hat\vartheta_n(\T_r)\frac{dr}{r}\, .\\\notag
\end{align}

Our main result in this subsection is to check that the above energies do in fact control the full energy on $\cA$, precisely:

\begin{lemma}\label{l:energy_decomposition:full_energy_on_annulus}
Let $u:B_{10R}(p)\to N$ be a stationary harmonic map with $R^2\fint_{B_{10R}}|\nabla u|^2\leq \Lambda$ and let $\cA=B_2\setminus \overline{B_{\rf_z}(\T)}$ be a $\delta$-annular region with $\delta\leq \delta(m,\Lambda,K_N,R)$.  Then we have
\begin{align}
	\int_{\cA\cap B_1}|\nabla u|^2\leq C(m)\Big(\vartheta_\cL(\cA)+\hat \vartheta_\alpha(\cA)+\hat \vartheta_n(\cA)\Big)
\end{align}
\end{lemma}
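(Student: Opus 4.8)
The statement is that the full energy $\int_{\cA \cap B_1} |\nabla u|^2$ is controlled by the sum of the three decomposed energies $\vartheta_\cL(\cA) + \hat\vartheta_\alpha(\cA) + \hat\vartheta_n(\cA)$ integrated over scales. The natural approach is a dyadic covering of the annular region $\cA \cap B_1$ adapted to the distance to $\T$, reducing the global estimate to the local estimate of Lemma \ref{l:energy_bounded_by_restricted_vartheta}, and then summing over scales and points using the Ahlfors regularity of $\T$ from \eqref{e:annular_region:ahlfors_regularity}.

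\textbf{Step 1: Dyadic decomposition of $\cA \cap B_1$.} For each dyadic scale $r = 2^{-k}$ with $\rf_x \le r$, consider the annular shell $\cA_r \equiv \{y \in B_1 : r/C(m) \le d(y, \T) \le C(m) r\}$ together with the region near the ``bottom'' where $d(y,\T) \approx \rf_x$. Since $\cA = B_1 \setminus B_{\rf_x}(\T)$, every point $y \in \cA \cap B_1$ lies in some shell $\cA_r$ for an appropriate dyadic $r$ (with $r \ge \rf_{\pi^\T(y)}$ up to a bounded factor, using that $\rf_x$ is $2\delta$-Lipschitz as in Remark \ref{r:annular_region:extension}). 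Thus $\cA \cap B_1 \subseteq \bigcup_r \cA_r$ with bounded overlap in $r$.

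\textbf{Step 2: Cover each shell by balls centered on $\T_r$ and apply the local estimate.} For fixed dyadic $r$, take a maximal $r/100$-separated set $\{x_i\} \subseteq \T_r \cap B_{3/2}$ with $\rf_{x_i} \le 10r$; by Ahlfors regularity (which holds for $\T_r$ since it is a Lipschitz graph over $L_\cA$, by Theorem \ref{t:approximating_submanifold}.\eqref{i:Tr_graph}) the balls $\{B_{10r}(x_i)\}$ have bounded overlap and cover the shell $\cA_r$. On each such ball we have $x_i \in B_{10^{-3}\cdot (10r)}(\T)$ (up to adjusting constants) with $10r \ge \rf_{x_i}/10$, and the part of $\cA_r$ inside $B_{10r}(x_i)$ avoids the $10^{-2}r$-neighborhood of $x_i + \LL_{x_i, r}$ up to bounded overlap corrections. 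Hence Lemma \ref{l:energy_bounded_by_restricted_vartheta} gives
\begin{align}
r^{2-m} \int_{\cA_r \cap B_{10r}(x_i)} |\nabla u|^2 \le C(m)\big(\hat\vartheta_\cL(x_i, r) + \hat\vartheta_\alpha(x_i, r) + \hat\vartheta_n(x_i, r)\big) \le C(m)\big(\vartheta_\cL(x_i, r) + \vartheta_\alpha(x_i, r) + \vartheta_n(x_i, r)\big)\, .
\end{align}
Here one should be slightly careful that the bubble-center cutoff $\psi_\T(x_i, r)$ equals $1$ on these relevant points (since $\rf_{x_i} \le r$ up to a fixed factor and $|x_i - p| \le 1$, by Lemma \ref{l:energy_decomposition:T_cutoff}.(2)), so that the $\psi_\T$-weighted pointwise energies faithfully see the contribution of each $x_i$.

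\textbf{Step 3: Sum over $x_i$ and over scales.} Summing over $i$ and using bounded overlap together with Ahlfors regularity of $\T_r$ (so that $\sum_i \mathbf{1}_{B_{r}(x_i)}(z) \le C(m)$ for $z \in \T_r$, hence $\sum_i f(x_i) \le C(m) r^{2-m} \int_{\T_r} f$ for nonnegative $f$ up to comparing local averages), we obtain
\begin{align}
r^{2-m} \int_{\cA_r \cap B_1} |\nabla u|^2 \le C(m) r^{2-m} \int_{\T_r} \psi_\T(x, r)\big(\vartheta_\cL(x, r) + \vartheta_\alpha(x, r) + \vartheta_n(x, r)\big)\, dx = C(m) r^{2-m}\big(\vartheta_\cL(\T_r) + \vartheta_\alpha(\T_r) + \vartheta_n(\T_r)\big)\, .
\end{align}
Canceling $r^{2-m}$ and summing over dyadic $r$, the discrete sum $\sum_r \big(\vartheta_\cL(\T_r) + \cdots\big)$ is comparable to the integral $\int \big(\vartheta_\cL(\T_r) + \cdots\big)\frac{dr}{r}$ by the scale-comparison estimates of Lemma \ref{l:energy_decomposition:scale_r_properties} (which give $\vartheta_\bullet(\T_s) \le C(m)\vartheta_\bullet(\T_{2s})$, allowing passage between dyadic sums and the Dini integral). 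Since $\vartheta_\alpha(\T_r) + \vartheta_n(\T_r)$ appears here with the \emph{unrestricted} mollifier while the claim is in terms of $\hat\vartheta_\alpha(\cA) + \hat\vartheta_n(\cA)$, one final point is needed: the unrestricted energies $\vartheta_\alpha, \vartheta_n$ differ from $\hat\vartheta_\alpha, \hat\vartheta_n$ only by the part of the integrand supported in the $e^{-R}r$-neighborhood of $x + \LL_{x,r}$, and that region is precisely the part \emph{excluded} from $\cA \cap B_1$ at scale $r$ (it lives inside $B_{\rf_x}(\T)$ or inside the bubble region); so at the level of the dyadic sum one may as well run Step 2 with the restricted kernel $\hat\rho_r$ from the start, obtaining $\hat\vartheta_\cL, \hat\vartheta_\alpha, \hat\vartheta_n$ directly, and then invoke the trivial bound $\hat\vartheta_\cL(\T_r) \le \vartheta_\cL(\T_r) = \vartheta_\cL(\T_r)$ is already the $L$-term we keep.

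\textbf{Main obstacle.} The bookkeeping between the dyadic sum and the scale integral, and between the shells $\cA_r$ and the covering balls $B_{10r}(x_i)$, is where the care lies: one must ensure the overlaps are genuinely bounded (Ahlfors regularity of both $\T$ and $\T_r$, from \eqref{e:annular_region:ahlfors_regularity} and Theorem \ref{t:approximating_submanifold}.\eqref{i:Tr_graph}), that the exclusion of the $10^{-2}r$-tube around $x_i + \LL_{x_i,r}$ in Lemma \ref{l:energy_bounded_by_restricted_vartheta} does not throw away energy that actually sits in $\cA \cap B_1$ (it does not, since near $\T$ the energy concentrates on $\T$ itself which sits inside every such tube up to $\sqrt\delta$-corrections controlled via Theorem \ref{t:best_plane:best_plane} and Lemma \ref{l:improved_comparison_LL_T}), and that the cutoff $\psi_\T$ is $1$ on the relevant region so no contribution is lost. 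None of these steps requires a new idea beyond the already-established covering and comparison lemmas; the proof is essentially a packaging argument.
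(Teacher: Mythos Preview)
Your approach is essentially the paper's: dyadic shells $\cA_k = \{y \in \cA \cap B_1 : 2^{-k-1} < d(y,\T) \le 2^{-k}\}$, bound the energy on each shell via Lemma \ref{l:energy_bounded_by_restricted_vartheta} integrated over $\T_r$ (with $\psi_\T \equiv 1$ on the relevant region), and sum over $k$.

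One small technical point: in your Step 3 you pass from the dyadic sum to the Dini integral by invoking a scale-comparison $\vartheta_\bullet(\T_s) \le C(m)\vartheta_\bullet(\T_{2s})$ for all three energies, citing Lemma \ref{l:energy_decomposition:scale_r_properties}. That lemma only states this almost-monotonicity for $\vartheta_\cL$, not for $\hat\vartheta_\alpha$ or $\hat\vartheta_n$. The paper sidesteps this by proving the shell estimate
\[
\int_{\cA_k} |\nabla u|^2 \le C(m)\big(\vartheta_\cL(\T_r) + \hat\vartheta_\alpha(\T_r) + \hat\vartheta_n(\T_r)\big)
\]
for \emph{every} $r \in [2^{-k+1}, 2^{-k+2}]$, then integrating $\frac{dr}{r}$ over that interval (which contributes $\log 2$) before summing over $k$. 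This directly produces the integral $\int(\cdots)\frac{dr}{r}$ without needing any scale-comparison for the angular or radial pieces. Your argument is easily repaired by this device; it is a bookkeeping adjustment, not a real gap.

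Also, your Step 3 worry about restricted versus unrestricted energies is a non-issue: Lemma \ref{l:energy_bounded_by_restricted_vartheta} already outputs the hatted quantities $\hat\vartheta_\alpha, \hat\vartheta_n$, so you never need to pass through $\vartheta_\alpha, \vartheta_n$ at all.
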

\begin{proof}

	Set
	\begin{gather}
	 \cA_k = \cA \cap B_1 \cap \cur{y\in \R^m \ \ s.t. \ \ 2^{-k-1}< d(y,\T)\leq 2^{-k}} \, .
	\end{gather}
    We will show that there exists a constant $C(m)$ such that for all $r\in \qua{2^{-k+1},2^{-k+2}}$:
    \begin{gather}\label{e:annular_k_estimate}
     \int_{\cA_k} \abs{\nabla u}^2 \leq C(m) \qua{\vartheta_\LL (\T_r)+\hat \vartheta_\alpha (\T_r) + \hat \vartheta_n (\T_r)}\, .
    \end{gather}
	This proves the final estimate after summing over all $k\in \N$.

	In order to prove \eqref{e:annular_k_estimate}, we rely on Lemma \ref{l:energy_bounded_by_restricted_vartheta}. Fix $k$ and $r\in \qua{2^{-k+1};2^{-k+2}}$ and notice that
	for all $y\in \cA_k$ we have $\rf_y \leq 2^{-k}$. Given that $\abs{\nabla \rf}\leq C\delta$, we know for all $x\in \B {2^{-k}}{y}\cap B_{1.1}$ we have that $\rf_x\leq 2^{-k+1}$.  This in turn implies that for such $x$ we have $\psi_{\T}(x,r)= 1$. Thus we can estimate
	\begin{gather}
	 \int_{\cA_k} \abs{\nabla u}^2 \leq C(m)\int_{\T_r} \psi_{\T}(x,r) r^{2-m}\int_{B_{10r}(x)\setminus \B{10^{-2}r}{x+\LL_{x,r}}}\abs{\nabla u}^2\stackrel{\ref{l:energy_bounded_by_restricted_vartheta}}{\leq} C(m)\ton{\vartheta_\LL (\T_r)+\hat \vartheta_\alpha(\T_r)+\hat \vartheta_n(\T_r)}\,
	\end{gather}
	as claimed.
\end{proof}

\vspace{.5cm}

\section{Superconvexity of Angular Energy on Annular Regions}\label{s:angular_energy}

The goal of this Section is to make rigorous the methodology outlined in Section \ref{ss:outline:general:angular_energy} and prove the angular energy bounds in an annular region.  Our main result will be Theorem \ref{t:outline_general:angular_energy}, which we restate below:

\begin{theorem}[Angular Energy on Annular Regions]\label{t:angular_energy:angular_energy}
Let $u:B_{10R}(p)\to N$ be a stationary harmonic map with $R^2\fint_{B_{10R}}|\nabla u|^2\leq \Lambda$ , and let $\cA=B_2\setminus \overline{B_{\rf_z}(\T)}$ be a $\delta$-annular region.	 Then
\begin{align}
	\hat\vartheta_\alpha(\cA) \leq C(m,R,K_N,\Lambda)\,\sqrt\delta\, .
\end{align} 	
\end{theorem}
\vspace{.3cm}

The remainder of this Section is dedicated to proving the above by the outline of Section \ref{ss:outline:general:angular_energy}.

\vspace{.3cm}

\subsection{The \texorpdfstring{$S^1$}{S1} Averaged Angular Energy \texorpdfstring{$\cE_\alpha$}{}}

Let $x\in \T_r$ be fixed, and let us consider the affine best plane $x+\cL_{x,r}$.  Given $y\in \dR^m$ we can use this affine plane to write
\begin{align}
	y\to (y_{x,r},y^\perp_{x,r})\equiv (\pi_{x,r}(y-x),\pi_{x,r}^\perp(y-x))\in \cL_{x,r}\oplus \cL_{x,r}^\perp\, .
\end{align}
With respect to this decomposition we can write $y^\perp_{x,r}=(s_{x,r}^\perp,\alpha_{x,r}^\perp)=(|y^\perp_{x,r}|,\alpha^\perp_{x,r})$ in polar coordinates and define the averaged scale-invariant angular energy
\begin{align}
	e_\alpha(x,r,y) =&\cEn(y)=|y^\perp_{x,r}|^2\langle \nabla u(y), \alpha^\perp_{x,r}\rangle^2\, ,\\
	\cE_\alpha(x,r,y) =&\cEE(y)=|y^\perp_{x,r}|^2\fint_{\alpha_{x,r}^\perp \in S^1} \langle \nabla u(y), \alpha^\perp_{x,r}\rangle^2d\alpha_{x,r}^\perp\, ,\label{e:definition_of_cEE}
\end{align}
% \begin{align}
% 	\cE_\alpha(x,r,y)\equiv \cEE(y) \equiv |y^\perp_{x,r}|^2\fint_{S^1}\langle\nabla u(y), \vec\alpha^\perp_{x,r}\rangle^2 d\alpha\, ,
% \end{align}
as in Section \ref{ss:outline:general:angular_energy}.  By construction, $\cEE$ is rotationally invariant around the affine plane $x+\LL_{x,r}$.
Our main computation in this subsection is to show that in the $y$ variable there is a form of uniform subharmonicity.  To state it recall as in Section \ref{ss:outline:general:angular_energy} that we can define the conformal Laplacian
\begin{align}
	\bar\Delta_{x,r} \equiv \ton{\frac{\partial}{\partial\ln s_{x,r}^\perp}}
	^2+ (s_{x,r}^\perp)^2\Delta_{\cL_{x,r}}\, ,
\end{align}
where $\Delta_{\cL_{x,r}}$ is the usual Laplacian in the $\cL_{x,r}$ directions.  Our first result is the following:\\

\begin{theorem}[Uniform Subharmonicity]\label{t:angular_energy:uniform_subharmonic}
Let $u:B_{10R}(p)\to N$ be a stationary harmonic map with $R^2\fint_{B_{10R}}|\nabla u|^2\leq \Lambda$ , and let $\cA=B_2\setminus \overline{B_{\rf_z}(\T)}$ be a $\delta$-annular region.	 For each $x\in \T_r$ with $\rf_x\leq r$ and each $y\in B_{Rr}(x)$ such that $|y^\perp_{x,r}|\in [e^{-R},R]r$ we have that
\begin{align}
	\bar\Delta_{x,r}\cEE(y) \geq \big(2-C(m)\sqrt \delta\,\big)\,\cEE(y)\geq \cEE(y)\, .
\end{align}
\end{theorem}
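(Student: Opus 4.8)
The strategy is to reduce this statement to the toy-model computation already carried out in Lemma~\ref{l:outline:toy_model:angular_energy:superconvexity}. The point is that for each \emph{fixed} $x\in\T_r$ and scale $r$, we freeze the affine best plane $x+\LL_{x,r}$ and treat $\cEE$ purely as a function of the $y$-variable. In the coordinates $(y_{x,r},s^\perp_{x,r},\alpha^\perp_{x,r})$ adapted to $x+\LL_{x,r}$, the quantity $\cEE(y)=(s^\perp_{x,r})^2\fint_{S^1}\langle\nabla u,\alpha^\perp_{x,r}\rangle^2\,d\alpha^\perp_{x,r}$ has exactly the same structure as the toy-model $\cE_\alpha$, with $\LL_{x,r}$ playing the role of the fixed plane $L$. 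So the plan is: first establish the $\epsilon$-regularity bound on the relevant region, then run the Bochner-type computation verbatim, and finally track the additional error coming from the fact that $\LL_{x,r}$ is not literally a plane of symmetry of $u$ on $B_{Rr}(x)$ but only an \emph{approximate} one.

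First I would set up the $\epsilon$-regularity. Since $x\in\T_r$ with $\rf_x\le r$, by Theorem~\ref{t:cone_splitting_annular} (and the defining properties (a1)--(a2) of a $\delta$-annular region) we have the pinching bound $\vartheta(x,r;\LL_{x,r})+\vartheta(x,r;\LL_{x,r}^\perp)\le C(m)\delta$, and hence for $y$ with $|y^\perp_{x,r}|\in[e^{-R},R]r$ we get $\bar d^{\,2-m}\int_{B_{2\bar d}(y)}|\nabla u|^2\le C(m,R)\delta\le\epsilon_0$ where $\bar d\approx d(y,x+\LL_{x,r})$. This is the analogue of~\eqref{eq_eps_reg_w}, and it puts $y$ in an $\epsilon$-regularity ball, so Theorem~\ref{t:eps_reg} and Lemma~\ref{l:nabla_Delta_u_epsilon_reg} apply. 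Then I would carry out the chain of identities from the toy-model proof: differentiate $\cEE$ twice in $\ln s^\perp_{x,r}$, commute derivatives, integrate the cross term by parts over $S^1$, and use the Poincaré inequality $\fint_{S^1}|\nabla_{\vec\alpha^\perp}\nabla_{\vec\alpha^\perp}u|^2\ge\fint_{S^1}|\nabla_{\vec\alpha^\perp}u|^2$ on the lowest-order term. The term corresponding to $\langle\nabla u,\nabla\Delta u\rangle$ is controlled by Lemma~\ref{l:nabla_Delta_u_epsilon_reg} applied with $w=\vec\alpha^\perp_{x,r}$, contributing $\le C(m,K_N)\delta\,\cEE$. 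The one genuinely new feature is the $(s^\perp_{x,r})^2\Delta_{\LL_{x,r}}$ piece: the term $(s^\perp_{x,r})^4\fint_{S^1}|\pi_{\LL_{x,r}}\nabla\nabla_{\vec\alpha^\perp}u|^2$ is nonnegative and can simply be dropped, exactly as in the toy model.

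The main obstacle — and the reason the theorem is stated with an error $C(m)\sqrt\delta$ rather than $C(m)\delta$ — is that in the toy model $\cE_\alpha$ is \emph{exactly} rotationally invariant about $L$ because $\T=L$, whereas here $\cEE$ is rotationally invariant about the affine plane $x+\LL_{x,r}$, which is only the best approximating plane, not a true symmetry plane of $u$; moreover the curvature of the domain $M$ (which we systematically flatten) and the cutoff scale $R$ in $\rho$ enter the picture. Concretely, the commutator identities pick up terms involving $\pi_{x,r}\nabla u$ and the second fundamental form of the coordinate spheres $\{s^\perp_{x,r}=\text{const}\}$ inside the pulled-back metric. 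Using Theorem~\ref{t:best_plane:best_plane} these extra terms are bounded by $C(m)\sqrt{\vartheta_\LL(x,2r)}$-type quantities times $\cEE$, and $\sqrt{\vartheta_\LL(x,2r)}\le C(m,\epsilon_0)\sqrt\delta$. I would isolate each such commutator term, bound it pointwise by $C(m)\sqrt\delta\,\cEE(y)$ using the $\epsilon$-regularity estimates together with the projection bounds, and collect everything into the final inequality
\begin{align}
\bar\Delta_{x,r}\cEE \ge \big(2-C(m)\sqrt\delta\big)\,\cEE \ge \cEE\, ,
\end{align}
the last step holding once $\delta\le\delta(m)$. Since $x$ and $r$ were arbitrary with $\rf_x\le r$, this is the claimed uniform subharmonicity, and it feeds directly into the superconvexity of $\hat\vartheta_\alpha(\T_r)$ in Proposition~\ref{p:outline:superconvexity} exactly as in the toy model.
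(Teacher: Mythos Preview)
Your first two paragraphs are correct and match the paper exactly: the paper's entire proof is the one sentence ``The proof is exactly as Lemma~\ref{l:outline:toy_model:angular_energy:superconvexity} as we are treating $\cEE$ as a family of the toy models in the $y$-variable parametrized by $x\in\T_r$.''

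Your third paragraph, however, invents a difficulty that is not there. For \emph{fixed} $x\in\T_r$ and $r$, the affine plane $x+\LL_{x,r}$ is a fixed plane in $\dR^m$, and $\cEE$ is by construction exactly rotationally invariant about it. The Bochner computation in the $y$-variable is therefore \emph{literally} the toy model computation --- the commutator identities you worry about are pure Euclidean polar-coordinate identities relative to a fixed plane, and they are unchanged. Nothing in the proof of Lemma~\ref{l:outline:toy_model:angular_energy:superconvexity} uses that $L$ is a symmetry plane of $u$; it uses only (i) a fixed plane to set up polar coordinates, (ii) $\epsilon$-regularity on the relevant annulus, and (iii) the $S^1$-Poincar\'e inequality. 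All three hold here verbatim. There are no extra terms involving $\pi_{x,r}\nabla u$ or the projection estimates of Theorem~\ref{t:best_plane:best_plane}, and $\rho$ does not appear in $\cEE$ at all. The $\sqrt\delta$ error is not new to the general case --- it is already the error in the toy-model Lemma~\ref{l:outline:toy_model:angular_energy:superconvexity}, coming from the $\epsilon$-regularity input $\bar d^{\,2-m}\int_{B_{2\bar d}}|\nabla u|^2\le C(m,R)\delta$ fed into Lemma~\ref{l:nabla_Delta_u_epsilon_reg}. So your proof is correct but longer than necessary: you may simply delete the third paragraph and observe that the toy-model argument applies unchanged.
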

\begin{proof}
The proof is exactly as Lemma \ref{l:outline:toy_model:angular_energy:superconvexity} as we are treating $\cEE$ as a family of the toy models in the $y$-variable parametrized by $x\in \T_r$ . 
\end{proof}

\vspace{.3cm}

\subsection{Technical Estimates}

Throughout the course of this Section we will need a handful of technical estimates that appear so frequently that it seems worth proving them separately so that we may quote them at will.  The technical estimates of this subsection may be returned to as needed throughout the remainder of the Section.\\

\vspace{5mm}

Let us open with the following technical result:\\

\begin{lemma}\label{l:error_estimates_psi_with_pinching}
Let $u:B_{10R}(p)\to N$ be a stationary harmonic map with $R^2\fint_{B_{10R}}|\nabla u|^2\leq \Lambda$ , and let $\cA=B_2\setminus \overline{B_{\rf_z}(\T)}$ be a $\delta$-annular region.
We have
\begin{align}
% 	&\int_{\T_r} \ton{\psi_\T(x,r) + r\abs{\nabla \psi_{\T}(x,r)}+ r\abs{\dot \psi_\T(x,r)}} \int \rho_{2r}(y-x) \qua{r^2\ps{\nabla u}{\LL_{x,r}}+\abs{\pi_{x,r}^\perp(y-x)}^2\abs{\pi_{x,r}^\perp \nabla u}^2}\notag \\
	&\int_{\T_r} \ton{\psi_\T(x,r) + r\abs{\nabla \psi_{\T}(x,r)}+ r\abs{\dot \psi_\T(x,r)} + r^2 \abs{\ddot \psi_\T(x,r)}} \ton{\vartheta(x,2r;\LL_{x,r})+ \vartheta(x,2r;\LL_{x,r}^\perp)}\notag \\
	&\leq C(m)\int_{\T\cap \B 4 p} r\dot \vartheta(x,8r)\, .
\end{align}
\end{lemma}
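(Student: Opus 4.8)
The plan is to bound the left-hand side termwise by reducing the four cutoff factors — $\psi_\T$, $r|\nabla\psi_\T|$, $r|\dot\psi_\T|$, $r^2|\ddot\psi_\T|$ — to a single uniform bound $C(m)$ via Lemma \ref{l:energy_decomposition:T_cutoff}.\eqref{i:T_cutoff_global_bounds}, and then to control the remaining integrand $\vartheta(x,2r;\LL_{x,r})+\vartheta(x,2r;\LL_{x,r}^\perp)$ pointwise on $\T_r$ by a pinching term. The first observation is that each of the four cutoff factors is bounded by $C(m)$ by Lemma \ref{l:energy_decomposition:T_cutoff}.\eqref{i:T_cutoff_global_bounds}, and — crucially — all four vanish unless $\rf_x/2\le r\le 2$, since $\psi_\T(x,\cdot)$ and all its derivatives in $r$ are supported there (and $|\nabla\psi_\T|$ similarly requires $\rf_x\le 2r$ by the definition of $\psi_\T$). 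So on the support of the integrand we always have $2r\ge \rf_x$, which is exactly the hypothesis needed to invoke the cone-splitting estimates.

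Next I would apply the improved cone-splitting on annular regions, Theorem \ref{t:cone_splitting_annular}, in the form \eqref{e:cone_splitting_annular_integrated}: for $x\in\T_r$ with $2r\ge\rf_x$,
\begin{align}
	\vartheta(x,2r;\LL_{x,2r})+\vartheta(x,2r;\LL_{x,2r}^\perp)\leq C(m)\Big(r\dot\vartheta(x,4r)+\fint_{\T_r\cap B_{2r}(x)} r\dot\vartheta(z,4r)\,dz\Big)\, .
\end{align}
To pass from $\LL_{x,2r}$ to $\LL_{x,r}$, I use \eqref{e:vartheta_L_r}/\eqref{e:L_energy_r-2r} (namely $\vartheta(x,2r;\LL_{x,r})\le C(m)\vartheta_\LL(x,2r)=C(m)\vartheta(x,2r;\LL_{x,2r})$) together with Remark \ref{rm:cone_splitting_annular_V}, which says the cone-splitting bound also holds for any subspace $V$ with $d_{\Gr}(V,\LL_{x,r})^2\le C(m)\vartheta(x,r;\LL_{x,r})/\vartheta(x,r)$; since $d_{\Gr}(\LL_{x,r},\LL_{x,2r})$ is controlled by exactly such a quantity (Remark \ref{r:d_LLr_LL2r_small}), this gives
\begin{align}
	\vartheta(x,2r;\LL_{x,r})+\vartheta(x,2r;\LL_{x,r}^\perp)\leq C(m)\Big(r\dot\vartheta(x,4r)+\fint_{\T_r\cap B_{2r}(x)} r\dot\vartheta(z,4r)\,dz\Big)\, .
\end{align}

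Then I substitute this pointwise bound, together with the uniform bound $C(m)$ on the cutoff factors, into the left-hand integral. This leaves two pieces. The first, $\int_{\T_r}\mathbf{1}_{\{\rf_x\le 2r\}}\,r\dot\vartheta(x,4r)\,dx$, is directly of the desired form after using \eqref{e:integral_dot_vartheta_Tr_vs_T} of Theorem \ref{t:approximating_submanifold}.\eqref{i:integral_dot:vartheta_Tr_vs_T} to convert the integral over $\T_r$ into one over $\T$ at the slightly enlarged radius (so $4r\to 4.4r\le 8r$). The second piece, $\int_{\T_r}\fint_{\T_r\cap B_{2r}(x)}r\dot\vartheta(z,4r)\,dz\,dx$, is handled by Fubini: $\T_r$ is a Lipschitz graph over $L_\cA$ with small constant and hence Ahlfors-regular, so $\fint_{\T_r\cap B_{2r}(x)}(\cdots)\le C(m)r^{2-m}\int_{\T_r\cap B_{2r}(x)}(\cdots)$, and swapping the order of integration produces $\int_{\T_r}r\dot\vartheta(z,4r)\,\big(r^{2-m}\mathcal H^{m-2}_{\T_r}(B_{2r}(z)\cap\{\rf\le 2r\})\big)\,dz\le C(m)\int_{\T_r}r\dot\vartheta(z,4r)\,dz$, which is again of the first type and handled as before via \eqref{e:integral_dot_vartheta_Tr_vs_T}.

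The main obstacle — or rather the main place where care is needed — is the bookkeeping of radii and of which plane ($\LL_{x,r}$ vs.\ $\LL_{x,2r}$ vs.\ the tangent $T_x\T_r$) appears in each partial energy, since the statement writes $\LL_{x,r}$ but the available cone-splitting output is most naturally phrased at scale $2r$; resolving this cleanly is exactly the content of the comparison estimates in Section \ref{s:best_planes} (Remarks \ref{r:d_LLr_LL2r_small}, \ref{rm:cone_splitting_annular_V} and \eqref{e:L_energy_r-2r}), and once those are cited the argument is just Fubini plus Ahlfors regularity of $\T_r$. No new idea is required; this is a packaging lemma assembling pieces already established.
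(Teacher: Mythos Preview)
Your proposal is correct and follows essentially the same route as the paper: bound the cutoff factors uniformly by $C(m)$ via Lemma \ref{l:energy_decomposition:T_cutoff}.\eqref{i:T_cutoff_global_bounds}, use the support condition to place $x$ in the regime $3r\ge\rf_x$, apply the cone-splitting of Theorem \ref{t:cone_splitting_annular}/Remark \ref{rm:cone_splitting_annular_V} (together with Remark \ref{r:d_LLr_LL2r_small} to pass between $\LL_{x,r}$ and $\LL_{x,2r}$), average via Fubini and Ahlfors regularity of $\T_r$, and finally convert from $\T_r$ to $\T$ using \eqref{e:integral_dot_vartheta_Tr_vs_T}. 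The paper's write-up is terser --- it collapses the Fubini step into a single inequality --- but the substance is identical.
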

\begin{remark}
 Notice that this estimate is very rough. Indeed, although one cannot replace the integral on the rhs with $\int_{\T}\psi_{\T}(x,r) r\dot \vartheta(x,8r)$, it is possible to integrate only on the set $\cur{x\in \T \ \ s.t. \ \ 3r\geq \rf_x}$ and still get a valid estimate. However a rough estimate is enough for the purposes of this Lemma.
\end{remark}

\begin{proof}

 On the one hand we have by Lemma \ref{l:energy_decomposition:T_cutoff}.\ref{i:T_cutoff_global_bounds}:
 \begin{gather}
  \psi_\T(x,r) + r\abs{\nabla \psi_{\T}(x,r)}+ r\abs{\dot \psi_\T(x,r)}+r^2\abs{\ddot \psi_\T(x,r)}\leq C(m)\, ,
 \end{gather}
and all these functions are supported on $\B {2} p$.
Notice also that for all $x\in \T_r\cap \supp{\psi_{\T}(x,r)}$, we have $3r\geq \rf_x$.  We can apply Remark \ref{r:d_LLr_LL2r_small} and Remark  \ref{rm:cone_splitting_annular_V} %remark from {t:cone_splitting_annular}
to obtain
\begin{align}
 &\int_{\T_r} \ton{\psi_\T(x,r) + r\abs{\nabla \psi_{\T}(x,r)}+ r\abs{\dot \psi_\T(x,r)}} \qua{\vartheta(x,2r;\LL_{x,r})+ \vartheta(x,2r;\LL_{x,r}^\perp)}\notag \\
 &\leq C(m) \int_{\T_r \cap \cur{3r\geq \rf_x} } \vartheta(x,2r;\LL_{x,r})+ \vartheta(x,2r;\LL_{x,r}^\perp)\notag \\
 &\leq C(m) \int_{\T_r \cap \cur{3r\geq \rf_x} } \qua{r \dot \vartheta(x,4r) + \fint_{\B r x \cap \T_r} r\dot \vartheta(y,4r) }\leq C(m) \int_{\T_r \cap \B {3} p} r\dot \vartheta(x,4r)
\end{align}
 Now we can conclude using \eqref{e:integral_dot_vartheta_Tr_vs_T} to exchange the integral over $\T_r$ with an integral over $\T$.
\end{proof}

We also record some rough and ugly looking bounds on $\cEn$ and related quantities. \\

\begin{lemma}[Estimates on $\cEn$]\label{l:angular_energy:Ena_estimates}
Let $u:B_{10R}(p)\to N$ be a stationary harmonic map with $R^2\fint_{B_{10R}}|\nabla u|^2\leq \Lambda$ , and let $\cA=B_2\setminus \overline{B_{\rf_z}(\T)}$ be a $\delta$-annular region.	For each $x\in \T_r$ with $\rf_x\leq 2r$, we have that
\begin{align}
%  &\int \qua{\hat \rho_{r}(y-x;\LL_{x,r})\ton{1+r\abs{\nabla^{(x)}\frac{\abs{\pi_{x,r}^\perp(y-x)}^2}{r^2}}}+\abs{r\nabla^{(y)} \hat \rho_r(y-x;\LL_{x,r})}} \qua{\abs{r\frac{d}{dr} \cEn(y)} + \abs{r \nabla^{(x)} \cEn(y)}} \notag \\
%  &\leq C(m,R) \sqrt \delta\qua{\vartheta(x,2r;\LL_{x,r})+ \vartheta(x,2r;\LL_{x,r}^\perp)}\\
 &r^{-1}\int \abs{\nabla^{(x)}\ton{|s_{x,r}^\perp|^2\hat\rho_r(y-x)}}\,\cEn(y)\leq C(m,R)  \vartheta(x,2r;\LL_{x,r}^\perp)\, ,\\
 &\int \qua{\hat \rho_{r}(y-x;\LL_{x,r})+\abs{r\nabla^{(y)} \hat \rho_r(y-x;\LL_{x,r})}} \qua{\abs{r\frac{d}{dr} \cEn(y)} + \abs{r \nabla^{(x)} \cEn(y)}} \notag \\
 &\leq C(m,R)  \sqrt \delta \qua{\vartheta(x,2r;\LL_{x,r})+ \vartheta(x,2r;\LL_{x,r}^\perp)}\, ,\\
 &\int \abs{\nabla^{(x)}\ton{|s_{x,r}^\perp|^2\hat\rho_r(y-x)}}\,\abs{\nabla^{(x)}\cEn(y)}\leq C(m,R) \sqrt \delta \qua{\vartheta(x,2r;\LL_{x,r})+ \vartheta(x,2r;\LL_{x,r}^\perp)}
\end{align}
\end{lemma}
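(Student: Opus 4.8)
\textbf{Proof strategy for Lemma \ref{l:angular_energy:Ena_estimates}.}

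The plan is to reduce all three estimates to (i) the pointwise bounds on the cutoff heat kernel $\hat\rho_r$ and its derivatives from Lemma \ref{l:energy_decomposition:hat_rho_estimates}, (ii) the regularity estimates for $\pi_{x,r}$ from Theorem \ref{t:best_plane:best_plane}.\ref{i:best_plane_bounds_on_projections} and for $\T_r$ from Theorem \ref{t:approximating_submanifold}, and (iii) the $\epsilon$-regularity gradient bound in the annular region (Remark \ref{rm:annular:regularity}), which gives $d(y,\T)^2|\nabla u|^2(y)\leq C(m,R)\delta$ on the support of $\hat\rho_r(y-x;\LL_{x,r})$ — note $\hat\rho$ vanishes in the $e^{-2R}r$-neighborhood of $x+\LL_{x,r}$, so $|s_{x,r}^\perp|\geq e^{-2R}r$ there, hence $|\nabla u|^2\leq C(m,R)\delta\, r^{-2}$ on this region. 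Throughout, $\cEn(y)=|s_{x,r}^\perp|^2\langle\nabla u(y),\alpha^\perp_{x,r}\rangle^2\leq |s_{x,r}^\perp|^2|\pi^\perp_{x,r}\nabla u(y)|^2$, so unweighted integrals of $\hat\rho_r\,\cEn$ are controlled by $\vartheta(x,r;\LL^\perp_{x,r})$, and after enlarging the radius (using Lemma \ref{l:rho_basic_properties}.4 to absorb $\rho_{1.1r}$ into $\rho_{2r}$) by $\vartheta(x,2r;\LL^\perp_{x,r})$.

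\emph{First estimate.} Differentiating $|s_{x,r}^\perp|^2\hat\rho_r(y-x)=|\pi^\perp_{x,r}(y-x)|^2\hat\rho_r(y-x;\LL_{x,r})$ in $x$ produces three kinds of terms: a term where $\nabla^{(x)}$ hits $\hat\rho_r$ directly (bounded by $C(m)r^{-1}\rho_{1.1r}$ times $|s_{x,r}^\perp|^2\leq C(m,R)r^2$ by Lemma \ref{l:energy_decomposition:hat_rho_estimates}), a term where it hits the explicit $\pi^\perp_{x,r}(y-x)$ factor (bounded by $C(m)|\pi^\perp_{x,r}(y-x)|\,(1+|r\nabla\pi_{x,r}|)\hat\rho_r\leq C(m,R)r\,\hat\rho_r$), and a term where it hits the $\pi^\perp_{x,r}$ inside $\hat\rho_r$ (again $\leq C(m)r^{-1}\rho_{1.1r}\cdot r^2$ using $|r\nabla\pi_{x,r}|\leq C(m)$). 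Multiplying $r^{-1}$ times any of these and integrating against $\cEn(y)\leq |s^\perp_{x,r}|^2|\pi^\perp_{x,r}\nabla u|^2$ gives, after the $r^2\rho_{1.1r}$ absorption, a bound by $C(m,R)\int\rho_{2r}(y-x)|\pi^\perp_{x,r}(y-x)|^2|\pi^\perp_{x,r}\nabla u|^2=C(m,R)\vartheta(x,2r;\LL_{x,r}^\perp)$, as required. (Here the factor $R$ appears because $|s^\perp_{x,r}|^2$ on $\supp\hat\rho_r$ is at most $C(m)R^2r^2$.)

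\emph{Second and third estimates.} These are the ones carrying the crucial extra factor $\sqrt\delta$, and the hard part is keeping track of where it comes from: it arises precisely because every $x$- or $r$-derivative of $\cEn$ that survives must either hit the projection $\pi_{x,r}$ (whose derivative is $\leq C(m,\epsilon_0)\sqrt{\vartheta_\cL(x,2r)}\leq C(m,\epsilon_0)\sqrt\delta$ by Theorem \ref{t:best_plane:best_plane}.\ref{i:best_plane_bounds_on_projections}) or else hit $\nabla u(y)$ to produce a $\nabla^2 u$ term, which by the $\epsilon$-regularity elliptic estimate of Theorem \ref{t:eps_reg} is bounded by $C(m,R)\sqrt\delta\,r^{-1}$ relative to $|\nabla u|$ on the annular region. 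Concretely, for the second estimate I would write $r\nabla^{(x)}\cEn(y)$ and $r\tfrac{d}{dr}\cEn(y)$ using $\cEn(y)=|\pi^\perp_{x,r}(y-x)|^2\langle\nabla u(y),\alpha^\perp_{x,r}(y)\rangle^2$ and the identity $\langle\nabla u,\alpha^\perp_{x,r}\rangle = \langle\nabla u,\pi^\perp_{x,r}(y-x)\rangle/|\pi^\perp_{x,r}(y-x)|$, so that $r\nabla^{(x)}\cEn$ is a sum of terms each containing at least one factor of $r\nabla\pi_{x,r}$ or $r\tfrac{d}{dr}\pi_{x,r}$ — i.e. a factor $\leq C(m,\epsilon_0)\sqrt\delta$ — multiplied by a quantity of the form $|\pi^\perp_{x,r}(y-x)|^2|\pi^\perp_{x,r}\nabla u|^2$ or $|\pi_{x,r}(y-x)|\,|\pi^\perp_{x,r}(y-x)|\,|\pi^\perp_{x,r}\nabla u|\,|\pi_{x,r}\nabla u|$; applying Cauchy–Schwarz to the latter and then integrating against $\hat\rho_r+|r\nabla^{(y)}\hat\rho_r|\leq C(m)\rho_{1.1r}$ yields the claimed bound $C(m,R)\sqrt\delta(\vartheta(x,2r;\LL_{x,r})+\vartheta(x,2r;\LL_{x,r}^\perp))$ after the $\rho$-radius absorption and Remark \ref{rm:vartheta_comparison}. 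The third estimate is then the combination of the differentiation of $|s^\perp_{x,r}|^2\hat\rho_r$ from the first estimate (which contributes a $\rho_{1.1r}$-type weight of size $\leq C(m,R)$, no $\sqrt\delta$) with the differentiation of $\cEn$ from the second estimate (which contributes the $\sqrt\delta$), so it follows by the same Cauchy–Schwarz-and-absorb bookkeeping. The main obstacle is purely organizational: ensuring that in $\nabla^{(x)}\cEn$ no term is left in which \emph{no} derivative falls on a projection or on $u$ — but such a term cannot occur, since $\cEn$ depends on $x$ only through $\pi_{x,r}$ (and the basepoint shift $y-x$, which is a linear term producing the benign $|\pi_{x,r}(y-x)|$ factors already handled).
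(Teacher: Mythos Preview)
Your overall approach is the same as the paper's: control the kernel via $\hat\rho_r+|r\nabla\hat\rho_r|\le C(m)\rho_{2r}$, use $\cEn(y)\le|s_{x,r}^\perp|^2|\pi_{x,r}^\perp\nabla u|^2$ for the first estimate, and trace the $\sqrt\delta$ in the other two to derivatives of the best-plane projection $\pi_{x,r}$.

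There is, however, one genuine misunderstanding. You write that an $x$- or $r$-derivative of $\cEn(y)$ can ``hit $\nabla u(y)$ to produce a $\nabla^2 u$ term''. This cannot happen: $\nabla u(y)$ is a function of $y$ alone and is completely inert under $\nabla^{(x)}$ and $\tfrac{d}{dr}$. The paper's proof says exactly this --- these derivatives act only on the vectors $\pi^\perp_{x,r}(y-x)$ and $\alpha^\perp_{x,r}$ --- and the $\sqrt\delta$ factor comes \emph{solely} from $|r\nabla\pi_{x,r}|+|r\partial_r\pi_{x,r}|\le C(m,\epsilon_0)\sqrt{\vartheta_\cL(x,2r)}\le C\sqrt\delta$ (Theorem \ref{t:best_plane:best_plane}.\ref{i:best_plane_bounds_on_projections}), never from elliptic estimates on $\nabla^2 u$. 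Since you also invoke the projection-derivative mechanism, deleting the phantom $\nabla^2 u$ route does not break your argument, but the conceptual picture should be corrected.

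A related subtlety, glossed over both by you and by the paper: the basepoint contribution $\nabla^{(x)}_v\bigl(\pi^\perp_{x,r}(y-x)\bigr)\ni -\pi^\perp_{x,r}(v)$ is \emph{not} automatically small --- it is $\sqrt\delta$-small only when $v$ is tangent to $\T_r$, since then $\pi^\perp_{x,r}(v)=(\pi^\perp_{x,r}-\pi^\perp_{\T_r})(v)$ and Lemma \ref{l:improved_comparison_LL_T} applies. In every application of this lemma the $x$-gradient arises from integration by parts on $\T_r$ and is therefore tangential, so this is fine; but your blanket dismissal of the basepoint shift as ``benign $|\pi_{x,r}(y-x)|$ factors'' is not correct for generic directions and needs this caveat.
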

\begin{proof}
 Although a more refined estimate is possible, we can also estimate less carefully by considering the following. All spacial gradients in $x$ and $r$ derivatives of $\cEn(y)$ act only on the vectors $\pi_{x,r}^\perp (y-x)$ and $\alpha_{x,r}$, and not on $\nabla u(y)$. Since $\hat \rho_{r}(y-x;\LL_{x,r})$ is supported in the set
 \begin{gather}
  \supp{\hat \rho_{r}(y-x;\LL_{x,r})} \subseteq \B {(R+3)r}{x}\setminus \B{e^{-5R} r} {x+\LL_{x,r}}\, ,
 \end{gather}
 by Theorem \ref{t:best_plane:best_plane}.\ref{i:best_plane_bounds_on_projections} we have that for all $y$ in this support:
 \begin{gather}
  \abs{r\frac{d}{dr} \cEn(y)} + \abs{r \nabla^{(x)} \cEn(y)}\leq C(m,R) r^2\sqrt \delta \abs{\nabla u(y)}^2\notag \\
  \leq C(m,R) \sqrt \delta \qua{r^2\abs{\pi_{x,r} \nabla u(y)}^2 + C(R) \abs{\pi_{x,r}^\perp(y-x)}^2\abs{\pi_{x,r}^\perp \nabla u(y)}^2}
 \end{gather}
With a similar reasoning, we can bound the support of $\hat \rho$:
\begin{gather}
 r^{-1}\abs{\nabla^{(x)} \abs{s_{x,r}^\perp}^2 }=r^{-1}\abs{\nabla^{(x)} \abs{\pi_{x,r}^\perp (y-x)}^2 }\leq C(m,R)\, .
\end{gather}
Since we have
\begin{gather}
 \hat \rho_{r}(y-x;\LL_{x,r})+\abs{r\nabla^{(y)} \hat \rho_r(y-x;\LL_{x,r})}+\abs{r\nabla^{(x)} \hat \rho_r(y-x;\LL_{x,r})}\leq C(m) \rho_{2r}(y-x)\, ,
\end{gather}
we conclude the result.
\end{proof}

\vspace{.3cm}

\subsection{The Superconvexity of \texorpdfstring{$\hat\vartheta_\alpha(\T_r)$}{the restricted angular energy}}

Our main application of the subharmonic estimate of Theorem \ref{t:angular_energy:uniform_subharmonic} will be to prove a superconvexity estimate on $\hat\vartheta_\alpha(\T_r)$:

\begin{proposition}[Superconvexity for $\hat\vartheta_\alpha(\T_r)$]\label{p:angular_energy:superconvexity} Let $u:B_{10R}(p)\to N$ be a stationary harmonic map with $R^2\fint_{B_{10R}}|\nabla u|^2\leq \Lambda$ , and let $\cA=B_2\setminus \overline{B_{\rf_z}(\T)}$ be a $\delta$-annular region with $ \rf_x\geq r_0>0$ .  Then we can estimate
\begin{align}
	\ton{r\frac{\partial}{\partial r}}^2\hat\vartheta_\alpha(\T_r) \geq \,\hat\vartheta_\alpha(\T_r)-\err(r)-r\frac{d}{dr} \epsilon_1(r)\, ,
\end{align}
where $\err(r), \epsilon_1(r)$ are $C^1_c((0,\infty))$ function,
% \begin{align}\label{e:angular_energy:superconvexity_error}
% \abs{\epsilon_2(r)}\leq C(m,R,K_N)\Bigg(\delta\int_\T \Big(r^2\big|\ddot\psi_\T\big|+r\big|\dot\psi_\T\big|+r\big|\nabla\psi_\T\big|+r^2\big|\nabla^2\psi_\T\big|\,\Big)+\sqrt{\delta}\int_{\T_r}\vartheta(x,r)	\Bigg)
% \end{align}
and $\err(r)$ satisfies
\begin{gather}
 \int \abs{\err(r)}\frac{dr}{r}\leq C(m,R,K_N,\Lambda)\sqrt\delta\, .
\end{gather}
\end{proposition}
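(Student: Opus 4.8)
The plan is to transplant the toy-model computation of Proposition~\ref{p:outline:toy_model:superconvexity} pointwise over $x\in\T_r$, treating $\cEE$ as a toy-model angular energy in the $y$-variable, and then to absorb the extra error terms generated by the fact that, unlike in the toy model, the best planes $\LL_{x,r}$, the approximating submanifold $\T_r$, and the cutoff $\psi_\T$ all move with the scale. Throughout we use the standing hypothesis $\rf_x\geq r_0>0$, which forces $\psi_\T(\cdot,r)$ to be supported on the bounded scale window $r\in[\rf_x/2,2]$, so that $\hat\vartheta_\alpha(\T_r)$ and all the quantities produced below are $C^1_c((0;\infty))$ in $r$.

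First I would write
\begin{align*}
	\hat\vartheta_\alpha(\T_r)=\int_{\T_r}\psi_\T(x,r)\int\hat\rho_r(y-x;\LL_{x,r})\,\cEE(y)\,dy
\end{align*}
and differentiate twice in $\log r$. There are three sources of $r$-dependence: the cutoff $\psi_\T(x,r)$; the kernel $\hat\rho_r(y-x;\LL_{x,r})$, both through the explicit scale $r$ and through the plane $\LL_{x,r}$; and the domain $\T_r$. The core of the argument is the kernel's explicit scale dependence: for the exact heat kernel and a \emph{fixed} affine plane $x+\LL_{x,r}$, the rotational invariance of $\cEE$ about that plane together with the standard heat-equation identity gives
\begin{align*}
	\left(r\frac{\partial}{\partial r}\right)^2\int\hat\rho_r(y-x;\LL_{x,r})\,\cEE(y)\,dy=\int\hat\rho_r(y-x;\LL_{x,r})\,\bar\Delta_{x,r}\cEE(y)\,dy+\text{(cutoff terms)}\, ,
\end{align*}
with $\bar\Delta_{x,r}$ the conformal Laplacian of \eqref{e:outline:conformal_laplacian}; this is exactly the mechanism behind Lemma~\ref{l:outline:toy_model:angular_energy:superconvexity}. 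The cutoff terms are supported either where $\hat\psi_R$ varies (within $e^{-5R}r$ of $x+\LL_{x,r}$) or arise from replacing $\rho$ by a genuine Gaussian; by \eqref{e:rho_primitive_difference} and Lemma~\ref{l:rho_tilde_estimates} both are bounded by $C(m,R)e^{-R/2}\rho_{2r}(y-x)\abs{\nabla u(y)}^2$, and after integration they contribute $C(m,R)e^{-R/2}\int_{\T\cap B_4(p)}r\dot\vartheta(x,8r)$ to $\err(r)$ via Lemma~\ref{l:error_estimates_psi_with_pinching}.

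Next I would invoke the uniform subharmonicity. Having chosen the cutoff scales so that $\abs{\pi_{x,r}^\perp(y-x)}\in[e^{-R},R]r$ on the support of $\hat\rho_r$, Theorem~\ref{t:angular_energy:uniform_subharmonic} gives $\bar\Delta_{x,r}\cEE(y)\geq(2-C\sqrt\delta)\,\cEE(y)$ there, so the leading term dominates $(2-C\sqrt\delta)\,\hat\vartheta_\alpha(\T_r)\geq\hat\vartheta_\alpha(\T_r)$ with room to spare. What remains is to collect the errors. The $\psi_\T$-derivatives, together with the integrations by parts in the $\LL_{x,r}$-directions that move the $\Delta_{\LL_{x,r}}$-part of $\bar\Delta_{x,r}$ onto $\psi_\T$, produce terms bounded by $\int_{\T_r}\bigl(\psi_\T+r\abs{\nabla\psi_\T}+r^2\abs{\nabla^2\psi_\T}+r\abs{\dot\psi_\T}+r^2\abs{\ddot\psi_\T}\bigr)\bigl(\vartheta(x,2r;\LL_{x,r})+\vartheta(x,2r;\LL_{x,r}^\perp)\bigr)$, which by Lemma~\ref{l:error_estimates_psi_with_pinching} are $\leq C(m)\int_{\T\cap B_4(p)}r\dot\vartheta(x,8r)$ and hence Dini-summable; those that present themselves as a total scale-derivative $r\frac{d}{dr}(\cdots)$ go into $\epsilon_1(r)$, the rest into $\err(r)$. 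For the motion of $\T_r$ and the $\LL_{x,r}$-dependence of the kernel, each scale-derivative of $\T_r$ or of $\pi_{x,r}$ is controlled by Theorem~\ref{t:approximating_submanifold}.\ref{i:ddrT}, Theorem~\ref{t:approximating_submanifold}.\ref{i:ddr_piTr} and Theorem~\ref{t:best_plane:best_plane}.\ref{i:best_plane_bounds_on_projections}, i.e.\ by $\sqrt{r\dot\vartheta(x,2r)}+\sqrt{\vartheta_\cL(x,2r)}$; paired by Cauchy--Schwarz with the factor $\sqrt{\vartheta(x,2r;\LL_{x,r}^\perp)}$ arising from the angular weight $\abs{\pi_{x,r}^\perp(y-x)}^2$ (using the rough bounds of Lemma~\ref{l:angular_energy:Ena_estimates} and Lemma~\ref{l:energy_decomposition:hat_rho_estimates}), these contribute $C(m,R)\sqrt\delta\int_{\T_r}\bigl(r\dot\vartheta(x,2r)+\vartheta(x,2r;\LL_{x,r}^\perp)\bigr)$, again handled by Lemma~\ref{l:error_estimates_psi_with_pinching}. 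Summing and using \eqref{e:integral_dot_vartheta_Tr_vs_T} gives $\int\abs{\err(r)}\frac{dr}{r}\leq C(m,R,K_N,\Lambda)\sqrt\delta$.

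I expect the main obstacle to be the term from differentiating $\LL_{x,r}$ inside the kernel. Since $r\partial_r\pi_{x,r}$ is only $\sqrt\delta$-small rather than $\delta$-small, a naive estimate would bound this contribution by an $O(\sqrt\delta)$ \emph{energy}, which is not Dini-summable and would be useless after integrating over all scales. The point is that such a term always occurs paired --- via the angular weight $\abs{\pi_{x,r}^\perp(y-x)}^2$ and a Cauchy--Schwarz --- with an honest pinching factor $\sqrt{r\dot\vartheta(x,Cr)}$ (or $\sqrt{\vartheta_\cL(x,Cr)}$), both of which \emph{are} Dini-summable along $\T$ by the definition of an annular region and the cone-splitting estimates of Theorem~\ref{t:cone_splitting_annular}. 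Making this pairing explicit, so that no power of $\delta$ is spent too early and no term that is only summable in $r$ is ever estimated pointwise in $r$ (those being precisely what is stored in $r\frac{d}{dr}\epsilon_1(r)$), is the delicate accounting at the heart of the proof; the same mechanism controls the contribution of $\frac{d}{dr}\T_r$.
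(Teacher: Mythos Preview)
Your overall architecture is right: differentiate $\hat\vartheta_\alpha(\T_r)$ twice in $\log r$, manufacture $\bar\Delta_{x,r}\cEE$ under the integral, invoke the uniform subharmonicity of Theorem~\ref{t:angular_energy:uniform_subharmonic}, and absorb the scale-dependent errors via Lemma~\ref{l:error_estimates_psi_with_pinching}. The error bookkeeping you describe, and in particular your diagnosis of the $\sqrt\delta$ versus Dini-summable tension for the $r\partial_r\pi_{x,r}$ terms, is accurate and matches the paper.

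There is, however, a genuine gap in the central identity. You claim that for a \emph{fixed} $x$ and fixed plane, the heat-equation identity gives
\[
\Big(r\tfrac{\partial}{\partial r}\Big)^2\!\int\hat\rho_r(y-x;\LL_{x,r})\,\cEE(y)\,dy=\int\hat\rho_r\,\bar\Delta_{x,r}\cEE\,dy+\text{(cutoff)}.
\]
This is false as a pointwise-in-$x$ statement. For the exact Gaussian, $r\partial_r\rho_r=r^2\Delta_y\rho_r$, so one scale derivative yields $r^2\Delta_y\cEE$, whereas $\bar\Delta_{x,r}\cEE=|s^\perp_{x,r}|^2\Delta_y\cEE$ on rotationally invariant functions; the weights $r^2$ and $|s^\perp_{x,r}|^2$ do not coincide on the support of $\hat\rho_r$. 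What actually holds is the Euler scaling identity $r\partial_r\hat\rho_r+m\hat\rho_r+\langle\nabla^{(y)}\hat\rho_r,y-x\rangle=0$. Splitting $y-x$ into its $\LL_{x,r}$ and $\LL_{x,r}^\perp$ parts, the perpendicular piece integrates by parts in $y$ to produce $s^\perp_{x,r}\nabla_{s^\perp_{x,r}}\cEn$, but the parallel piece $(m-2)\hat\rho_r+\langle\nabla^{(y)}\hat\rho_r,\pi_{x,r}(y-x)\rangle$ is \emph{not} small at a single $x$; it only becomes $\err(r)$ after you trade $\nabla^{(y)}$ for $-\nabla^{(x)}$, replace $\pi_{x,r}$ by $\pi_{\T_r(x)}$ (Lemma~\ref{l:improved_comparison_LL_T}), and integrate by parts over $x\in\T_r$. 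The paper does this explicitly (the analogue of your ``move $\Delta_{\LL_{x,r}}$ onto $\psi_\T$'', but for the first-order $L$-directional term as well).

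Consequently the paper's structure is in two separate pieces, not one heat identity: first one shows $(r\partial_r)^2\hat\vartheta_\alpha(\T_r)=\int_{\T_r}\psi_\T\int\hat\rho_r\,(s^\perp_{x,r}\nabla_{s^\perp_{x,r}})^2\cEn+\err+r\partial_r\epsilon_1$, where both the perpendicular integration by parts in $y$ and a nontrivial integration by parts over $\T_r$ are needed; then one separately proves $\int_{\T_r}\psi_\T\int\hat\rho_r\,|s^\perp_{x,r}|^2\Delta_{\LL_{x,r}}\cEn=\err$ (again via $\Delta^{(y)}_{\LL_{x,r}}\hat\rho_r\approx\Delta^{(x)}_{\T_r}\hat\rho_r$ and integration by parts over $\T_r$), and only after adding this null term does one assemble $\bar\Delta_{x,r}$. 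Your proposal should be reorganized around this two-step mechanism; once that is done, the rest of your error accounting carries over essentially unchanged.
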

\begin{remark}
We do not prove estimates on $r\frac{d}{dr}\epsilon_1(r)$ as we do not need them, however one can prove estimates in the verbatim method one proves estimates on $\err(r)$. 	
\end{remark}
\begin{remark}
	The assumption $\rf_x\geq r_0>0$ is not necessarily and saves some technical footwork on avoiding proving direct estimates on $r\frac{d}{dr} \epsilon_1(r)$.  If one proves estimates on $r\frac{d}{dr} \epsilon_1(r)$ then its fully possible to drop this assumption.
\end{remark}

As we saw at the end of the outline in Section \ref{ss:outline:general:radial_energy}, the above implies Theorem \ref{t:angular_energy:angular_energy} via the integration $\int \frac{dr}{r}$. In particular, as a corollary we obtain
\begin{corollary}[Uniform integrability for $\hat \vartheta_\alpha(\T_r)$]\label{c:angular_energy:final_estimate} Let $u:B_{10R}(p)\to N$ be a stationary harmonic map with $R^2\fint_{B_{10R}}|\nabla u|^2\leq \Lambda$ , and let $\cA=B_2\setminus \overline{B_{\rf_z}(\T)}$ be a $\delta$-annular region. Then
\begin{gather}
 \hat \vartheta_\alpha= \int \hat \vartheta_\alpha(\T_r)\frac{dr}{r} \leq C(m,R,K_N,\Lambda)\sqrt \delta\, .
\end{gather}
\end{corollary}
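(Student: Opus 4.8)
The plan is to integrate the superconvexity estimate of Proposition \ref{p:angular_energy:superconvexity} over scales. That proposition carries the extra hypothesis $\rf_x\geq r_0>0$, but its error bound does not involve $r_0$, so I first remove this hypothesis by a truncation and then carry out the (by now routine) Dini integration.

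\textbf{Step 1: Truncation.} Given $r_0>0$, set $\rf_x^{(r_0)}\equiv\max\{\rf_x,r_0\}$. Monotonicity of $\vartheta(x,\cdot)$ shows that $(a3)$ continues to hold for $\rf^{(r_0)}$, while $(a1)$ and $(a2)$ are unaffected, so $\cA^{(r_0)}\equiv B_2\setminus B_{\rf_x^{(r_0)}}(\T)$ is again a $\delta$-annular region, now with $\rf_x\geq r_0$. For each fixed $r>0$ and $0<r_0<r$ one checks directly that $\T_r^{(r_0)}=\T_r$ and $\psi_\T^{(r_0)}(\cdot,r)=\psi_\T(\cdot,r)$: the data $\rf_x$ enters the gluing defining $\T_r$ and the cutoff $\psi_\T(\cdot,r)$ only through comparisons of the form $\rf_x<c\,r$ with fixed $c\geq 1$, and the replacement $\rf_x\mapsto\max\{\rf_x,r_0\}$ leaves the outcome of each such comparison unchanged when $r_0<r$. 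Hence $\hat\vartheta_\alpha(\T_r^{(r_0)})=\hat\vartheta_\alpha(\T_r)$ for all $r_0$ small (depending on $r$). Since $\hat\vartheta_\alpha(\T_r^{(r_0)})\geq 0$, Fatou's lemma yields
\begin{align*}
	\hat\vartheta_\alpha(\cA)=\int\hat\vartheta_\alpha(\T_r)\,\frac{dr}{r}\;\leq\;\liminf_{r_0\to0}\int\hat\vartheta_\alpha(\T_r^{(r_0)})\,\frac{dr}{r}=\liminf_{r_0\to0}\hat\vartheta_\alpha(\cA^{(r_0)})\,.
\end{align*}
So it suffices to bound $\hat\vartheta_\alpha(\cA^{(r_0)})\leq C(m,R,K_N,\Lambda)\sqrt\delta$ with the constant independent of $r_0$; equivalently, we may and do assume $\rf_x\geq r_0>0$ from now on.

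\textbf{Step 2: Dini integration.} Set $f(r)\equiv\hat\vartheta_\alpha(\T_r)$. By the smooth $r$-dependence of $\T_r$, $\cL_{x,r}$, $\hat\rho_r$ and $\psi_\T$ (Theorems \ref{t:best_plane:best_plane} and \ref{t:approximating_submanifold}, Lemma \ref{l:energy_decomposition:T_cutoff}), $f$ is smooth on $(0,\infty)$, and it is compactly supported there: $\psi_\T(x,r)=0$ when $\rf_x>2r$, together with $\rf_x\geq r_0$, forces $f(r)=0$ for $r<r_0/2$, while $\psi_\T$ vanishes for $r\geq 1.9$. In the variable $t=\log r$, where $r\frac{\partial}{\partial r}=\frac{\partial}{\partial t}$ and $\frac{dr}{r}=dt$, both $t\mapsto f(e^t)$ and $t\mapsto\epsilon_1(e^t)$ belong to $C^1_c(\dR)$ (the latter by Proposition \ref{p:angular_energy:superconvexity}), so
\begin{align*}
	\int\Big(r\frac{\partial}{\partial r}\Big)^2 f(r)\,\frac{dr}{r}=0\,,\qquad\int r\frac{d}{dr}\epsilon_1(r)\,\frac{dr}{r}=0\,.
\end{align*}
Integrating the pointwise inequality of Proposition \ref{p:angular_energy:superconvexity} against $\frac{dr}{r}$ and using these two identities,
\begin{align*}
	0=\int\Big(r\frac{\partial}{\partial r}\Big)^2 f\,\frac{dr}{r}\;\geq\;\int f(r)\,\frac{dr}{r}-\int\err(r)\,\frac{dr}{r}\,,
\end{align*}
whence $\hat\vartheta_\alpha(\cA)=\int f(r)\,\frac{dr}{r}\leq\int|\err(r)|\,\frac{dr}{r}\leq C(m,R,K_N,\Lambda)\sqrt\delta$ by the error estimate of Proposition \ref{p:angular_energy:superconvexity}. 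Combined with Step 1 this proves the corollary.

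\textbf{Main obstacle.} All the real work has already been done, in the uniform subharmonicity of Theorem \ref{t:angular_energy:uniform_subharmonic} and in upgrading it to the superconvexity inequality of Proposition \ref{p:angular_energy:superconvexity} — that is, in propagating a pointwise second-order inequality for $\cE_\alpha$ to the integrated quantity $\hat\vartheta_\alpha(\T_r)$ while controlling all the extra errors produced by the $r$-dependence of $\T_r$, $\cL_{x,r}$, the cutoffs and the mollifiers. What remains here is purely the Dini integration, whose only delicate points are the compact-support bookkeeping that legitimizes $\int(r\frac{\partial}{\partial r})^2 f\,\frac{dr}{r}=0$ and the truncation limit $r_0\to0$, both handled above.
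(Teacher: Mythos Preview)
Your proof is essentially the same as the paper's: truncate $\rf_x$ from below by $r_0$, apply Proposition \ref{p:angular_energy:superconvexity}, integrate against $dr/r$ using that $\hat\vartheta_\alpha(\T_r)$ and $\epsilon_1$ are compactly supported in $(0,\infty)$, and let $r_0\to 0$. One small technical point: your truncation $\rf_x^{(r_0)}=\max\{\rf_x,r_0\}$ is only Lipschitz, not $C^2$, so $\cA^{(r_0)}$ is not literally a $\delta$-annular region as in Definition \ref{d:annular_region} (which requires $\rf_x|\nabla^2\rf_x|<\delta$); the paper replaces this with a smooth interpolation $\rf_x'$ satisfying $|\nabla\rf_x'|+\rf_x'|\nabla^2\rf_x'|\leq C(m)\delta$, at the cost of working with a $C(m)\delta$-annular region rather than a $\delta$-annular one. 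Your observation that $\T_r^{(r_0)}=\T_r$ and $\psi_\T^{(r_0)}(\cdot,r)=\psi_\T(\cdot,r)$ for $r_0<r$, together with Fatou, is a clean way to pass to the limit and is a slight elaboration over the paper's one-line ``letting $r_0\to 0$''.
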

\begin{proof}
Notice that the assumption $\rf_x\geq r_0>0$ in Proposition \ref{p:angular_energy:superconvexity} is not restrictive, since all the estimates are independent of $r_0$.  More precisely, if $\cA$ is a $\delta$ annular region wrt $\T$ and $\rf_x$, then we can define the $C(m)\delta$-annular region $\cA'$ if we take $\T'=\T$ and
\begin{gather}
 \rf_x'=\begin{cases}
         \rf_x & \text{if} \ \ \rf_x> 2r_0\, ,\\
         g(x) & \text{if} \ \ r_0\leq  \rf_x\leq 2r_0\, ,\\
         r_0 & \text{if} \ \ \rf_x<r_0\, .
        \end{cases}
\end{gather}
where $g(x)\in [r_0,2r_0]$ is chosen so that $\rf_x'\in C^2$ with 
\begin{gather}
 \abs{\nabla \rf_x'}+\rf_x'\abs{\nabla^2 \rf_x'}\leq C(m)\delta\, .
\end{gather}

By applying our results to the annular regions $\cA'$ and letting $r_0\to 0$, we obtain the integrability results for the original $\cA$.

 To prove this Corollary, we note then that by Proposition \ref{p:angular_energy:superconvexity} that we have
 \begin{gather}
  \int \hat \vartheta_\alpha(\T_r)\frac{dr}{r}\leq \underbrace{\int \ton{r\frac{d}{dr}}^2\hat \vartheta_\alpha(\T_r)\frac{dr}{r}}_{=0}
  +\int \err(r) \frac {dr}{r} + \underbrace{\int r\frac{d}{dr} \epsilon_1(r) \frac{dr}{r}}_{=0}\leq C(m,R,K_N,\Lambda)\sqrt \delta\, .
 \end{gather}
where both $\hat \vartheta_{\alpha}(\T_r)$ and $\epsilon_1(r)$ are $C^\infty_c((0,\infty))$ functions, so the related integrals vanish trivially.
\end{proof}

The remainder of this Section will be focused on proving Proposition \ref{p:angular_energy:superconvexity} itself.\\

\textbf{Notation:} In the following estimates, as it is standard, $C(...)$ denotes a constant that can vary from line to line. In a similar fashion $\err(r)$ denotes a function of $r$ that can vary from line to line, but always satisfies
\begin{gather}
 \int \abs{\err(r)} \frac{dr}{r} \leq C(m,R,K_N,\Lambda) \sqrt \delta\, ,
\end{gather}
possibly with a different constant $C$.  For example one such function is $  \sqrt \delta \int_{\T\cap \B 2 p} r\dot \vartheta(x,8r)$, because it is immediate to see that
 \begin{gather}
  \sqrt \delta \int_{\T\cap \B 4 p} \qua{\int_0^2 r\dot \vartheta(x,r)\frac{dr}{r}}\leq C(m)\Lambda \sqrt \delta\, .
 \end{gather}\\

\textbf{Regularity for $u$ on the support of $\hat \rho_r(y-x)$}.  Note by Remark \ref{rm:annular:regularity} that if $\delta\leq \delta_0(m,R)$ and $\psi_\T(x,r)>0$, then on $\supp{\hat \rho_r(y-x;\LL_{x,r}}$ we have the pointwise estimate $r^2|\nabla u|^2\leq C(m,R)\delta$ .  Notice that this regularity fails to be true on the support of $\rho_r(y-x)$ .

\vspace{.3cm}
\subsection{Computing \texorpdfstring{$r\frac{d}{dr}\hat\vartheta_\alpha(\T_r)$}{the first radial derivative}}
% For convenience, we recall that
% \begin{align}
% 	\hat\vartheta_\alpha(x,r) =  \hat\vartheta(x,r;\alpha_{\cL_{x,r}^\perp}) \equiv \int \hat\rho(y-x;\cL_{x,r})|\pi_{x,r}(y-x)|^2 \langle \nabla u(y),\alpha^\perp_{x,r}\rangle^2\, ,
% \end{align}
% where $\alpha^\perp_{x,r}$ is the unit angular direction with respect to $x+\cL_{x,r}$, and $\pi_{x,r}=\pi_{\cL_{x,r}}$ is the projection to the best plane.
% We can rewrite this as
% \begin{align}
% 	\hat\vartheta_\alpha(x,r) =  \int \hat\rho(y-x;\cL_{x,r})\cEE(y)\, .
% \end{align}

For the first derivative of $\hat \vartheta_\alpha(\T_r)$ we compute
\begin{lemma}\label{l:angular_energy:ddrtheta}
	Under the assumptions of Proposition \ref{p:angular_energy:superconvexity}, we have that
	\begin{align}
	 r\frac{d}{dr}\hat\vartheta_\alpha(\T_r) &= \int_{\T_r}\psi_\T(x,r)\int \hat\rho_r(y-x;\cL_{x,r})\Big(s_{x,r}^\perp\,\nabla_{s_{x,r}^\perp}\Big)\cEn(y)+\epsilon_1(r)\, ,
	\end{align}
	where $s_{x,r}^\perp\,\nabla_{s_{x,r}^\perp}\cEn = \nabla^{(y)}_{\pi^\perp_{x,r}(y-x)}\cEn$ is the $\LL_{x,r}^\perp$-radial derivative and $\epsilon_1(r)$ is a $C^1$ function compactly supported in $(0,\infty)$
\end{lemma}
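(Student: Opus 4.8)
The plan is a direct differentiation of
\[
\hat\vartheta_\alpha(\T_r)=\int_{\T_r}\psi_\T(x,r)\,\hat\vartheta_\alpha(x,r)\,dv_{\T_r}(x),\qquad \hat\vartheta_\alpha(x,r)=\int\hat\rho_r(y-x;\cL_{x,r})\,\cEn(y)\,dy,
\]
in which every term that is manifestly a smooth, compactly supported function of $r\in(0,\infty)$ is absorbed into $\epsilon_1(r)$; at this stage $\epsilon_1(r)$ needs no quantitative estimate, only $C^1_c((0;\infty))$ regularity. Compact support in $r$ is automatic, because $\psi_\T(x,r)=0$ for $r\geq 1.9$ and for $r\leq \rf_x/1.9$ while $\rf_x\geq r_0>0$; smoothness in $r$ on $(0,\infty)$ holds because $\cL_{x,r}$, $\T_r$ and $\psi_\T$ all depend smoothly on $r$, and $u$ is smooth on $\supp{\hat\rho_r(y-x;\cL_{x,r})}$ by Remark~\ref{rm:annular:regularity}. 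First I would parametrize $\T_r$ as a graph over $L_\cA$ and differentiate: besides the principal term $\int_{\T_r}\psi_\T(x,r)\,r\tfrac{d}{dr}[\hat\vartheta_\alpha(x,r)]\,dv_{\T_r}(x)$ this produces a normal-velocity term with factor $\tfrac{d}{dr}\T_r$ (bounded and smooth by Theorem~\ref{t:approximating_submanifold}.\ref{i:ddrT}), a Jacobian-variation term with factors $\nabla\ft_r$ and $r\tfrac{d}{dr}\nabla\ft_r$ (Theorem~\ref{t:approximating_submanifold}.\ref{i:Tr_graph}, \ref{t:approximating_submanifold}.\ref{i:ddr_piTr}), and a term with factor $r\dot\psi_\T(x,r)$ (Lemma~\ref{l:energy_decomposition:T_cutoff}); all of these go into $\epsilon_1(r)$.

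Next I would compute $r\tfrac{d}{dr}\hat\vartheta_\alpha(x,r)$. The $r$-dependence of the integrand is only through the explicit scale in $\hat\rho_r$ and through the best plane $\cL_{x,r}$, which appears both in the cutoff $\hat\psi_R(g)$ of $\hat\rho_r$, with $g=|\pi^\perp_{x,r}(y-x)|^2/2r^2$, and in $\cEn=e_{\alpha,x,r}$. Every contribution in which $r\tfrac{d}{dr}$ lands on $\cL_{x,r}$ carries a factor $r\tfrac{d}{dr}\pi_{x,r}$, smooth and bounded by Theorem~\ref{t:best_plane:best_plane}.\ref{i:best_plane_bounds_on_projections}, and a factor $r^2|\nabla u(y)|^2\leq C(m,R)\delta$ valid on $\supp{\hat\rho_r(y-x;\cL_{x,r})}$ by Remark~\ref{rm:annular:regularity}; in particular each such term is a smooth compactly supported function of $r$ and is absorbed into $\epsilon_1(r)$. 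For the explicit scale I would invoke $r\tfrac{d}{dr}\rho_r(z)=-m\rho_r(z)-\tfrac{|z|^2}{r^2}\dot\rho_r(z)=-\operatorname{div}_z\big(z\,\rho_r(z)\big)$ (cf.\ Lemma~\ref{l:energy_decomposition:rho_estimates}) and integrate by parts in $y$. This is legitimate: $u$ is smooth on the support in question, and $\hat\psi_R(g)\,\cEn(y)$ is compactly supported and vanishes identically near $x+\cL_{x,r}$, since there both $\hat\psi_R(g)$ and $|\pi^\perp_{x,r}(y-x)|^2$ vanish, so there is no boundary contribution. The term in which $r\tfrac{d}{dr}$ hits the cutoff through the explicit scale contributes $-2g\,\dot{\hat\psi}_R(g)\,\rho_r\,\cEn$ (since the explicit part of $r\tfrac{d}{dr}g$ equals $-2g$), while the integration-by-parts term in which the radial field $(y-x)$ hits the cutoff contributes $+2g\,\dot{\hat\psi}_R(g)\,\rho_r\,\cEn$ (since $(y-x)\cdot\nabla_y g=|\pi^\perp_{x,r}(y-x)|^2/r^2=2g$); these cancel exactly, and what survives is
\[
r\tfrac{d}{dr}\hat\vartheta_\alpha(x,r)=\int\hat\rho_r(y-x;\cL_{x,r})\,\big((y-x)\cdot\nabla_y\big)\cEn(y)\,dy\ +\ (\text{a term into }\epsilon_1(r)).
\]

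Finally I would split the radial vector field $(y-x)\cdot\nabla_y=\pi^\perp_{x,r}(y-x)\cdot\nabla_y+\pi_{x,r}(y-x)\cdot\nabla_y$. The first summand is precisely $s^\perp_{x,r}\nabla_{s^\perp_{x,r}}=\nabla^{(y)}_{\pi^\perp_{x,r}(y-x)}$, which, put back into $\int_{\T_r}\psi_\T(x,r)\int(\cdot)\,dy\,dv_{\T_r}(x)$, is the asserted main term. The second summand, the $\cL_{x,r}$-tangential radial derivative, once integrated against $\psi_\T(x,r)\hat\rho_r(y-x;\cL_{x,r})$ over $y$ and over $\T_r$, is again a smooth function of $r\in(0,\infty)$ supported in $[r_0/1.9,1.9]$, hence absorbed into $\epsilon_1(r)$; assembling the pieces gives the Lemma. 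The only delicate point is purely bookkeeping: checking that the integration by parts is boundary-term free and that the two $\hat\psi_R$-cutoff contributions cancel. Since no smallness of $\epsilon_1(r)$ is required here — the quantitative work is deferred to the computation of $(r\tfrac{d}{dr})^2\hat\vartheta_\alpha(\T_r)$, where one exploits the uniform subharmonicity of Theorem~\ref{t:angular_energy:uniform_subharmonic} — there is no genuine obstacle beyond that.
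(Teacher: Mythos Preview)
Your proof is correct and takes essentially the same route as the paper. Both arguments reduce to the identity $r\partial_r\hat\rho_r+\operatorname{div}_y\big((y-x)\hat\rho_r\big)=0$ followed by an integration by parts in $y$; the paper packages this by adding and subtracting to isolate the block $\hat\rho_r\big(-2-\langle\nabla^{(y)}\ln\hat\rho_r,\pi^\perp_{x,r}(y-x)\rangle\big)\cEn$ and then integrating by parts only in the $\pi^\perp_{x,r}$-direction, while you integrate by parts in the full $(y-x)$-direction first and then split. Your explicit observation that the two $\dot{\hat\psi}_R(g)$ contributions cancel is exactly the content of the paper's statement that $r\partial_r\ln\hat\rho_r+m+\langle\nabla^{(y)}\ln\hat\rho_r,y-x\rangle=0$, and your leftover tangential piece $\int\hat\rho_r\,\pi_{x,r}(y-x)\!\cdot\!\nabla_y\cEn$ corresponds to the paper's term $-\hat\rho_r\big(m-2+\langle\nabla^{(y)}\ln\hat\rho_r,\pi_{x,r}(y-x)\rangle\big)\cEn$, both simply dumped into $\epsilon_1(r)$ since no estimate is needed here. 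One small remark: the paper's listed ``term by term'' computation is somewhat informal about the variation of $\T_r$ and of the volume form $dv_{\T_r}$; your explicit graph parametrization over $L_\cA$ is a cleaner way to justify those contributions, and is consistent with how the paper handles the analogous issue for $r\tfrac{d}{dr}\vartheta_\cL(\T_r)$ in Section~\ref{ss:radial_energy:radial}.
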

\vspace{.3cm}

%The computation is rather straighforward, since we do not need any estimate on $\epsilon_1(r)$. The computation for the second derivative is the trickier one.

\begin{remark}
 Since we do not need them, we do not prove any estimates on $\epsilon_1(r)$. However, we point out that with the same techniques that will be employed for the computation of $\ton{r\frac {d}{dr}}^2 \hat \vartheta_\alpha(\T_r)$, one can prove that $\epsilon_1(r)$ has the same estimates as $\err(r)$.
\end{remark}

\vspace{.3cm}
\subsubsection{Term by Term Computation of $r\frac{d}{dr}\hat\vartheta_\alpha(\T_r)$ }

Let us begin with the straightforward computation of $r\frac{d}{dr}\hat\vartheta_\alpha(\T_r)$ given by

\begin{align}
	r\frac{d}{dr}\hat\vartheta_\alpha(\T_r)=&\int_{\T_r}\ton{r\frac{d}{dr}\psi_\T}\int\hat\rho_r \cEn+\int_{\T_r}\psi_\T\int\ton{r\frac{d}{dr}\hat\rho_r} +\int_{\T_r}\psi_\T\int\hat\rho_r \ton{r\frac{d}{dr}\cEn}\, .
\end{align}
For convenience, we split the total $r$ derivative of $\hat\rho_r$ as
\begin{gather}
 r\frac{d}{dr}\hat\rho_r(y-x;\cL_{x,r})=r\frac{d}{dr}\ton{r^{-m} \rho\ton{\frac{\abs{y-x}^2}{2r^2}}\hat \psi_R\ton{\frac{\abs{\pi_{x,r}(y-x)}^2}{2r^2}}  }\notag \\
 =\left.r\frac{\partial}{\partial r}\ton{r^{-m} \rho\ton{\frac{\abs{y-x}^2}{2r^2}}\hat \psi_R\ton{\frac{\abs{\pi_{\cL}(y-x)}^2}{2r^2}}  }\right\vert_{\cL=\cL_{x,r}} + \left.r\frac{\partial}{\partial \cL}\ton{r^{-m} \rho\ton{\frac{\abs{y-x}^2}{2r^2}}\hat \psi_R\ton{\frac{\abs{\pi_{\cL}(y-x)}^2}{2r^2}}  }\right\vert_{\cL=\cL_{x,r}}\cdot \frac{d\cL_{x,r}}{dr}\notag \\[5pt]
 \equiv r\partial_r\hat\rho_r + \langle \partial_\cL\hat\rho_r,r\dot\cL_{x,r}\rangle\, .
\end{gather}

Thus we have
\begin{align}
	r\frac{d}{dr}\hat\vartheta_\alpha(\T_r)=&\int_{\T_r}\ton{r\frac{d}{dr}\psi_\T}\int\hat\rho_r \cEn+\int_{\T_r}\psi_\T\int\big(r\partial_r\hat\rho_r\big) \cEn\notag\\
	&+\int_{\T_r}\psi_\T\int\langle \partial_\cL\hat\rho_r,r\dot\cL_{x,r}\rangle \cEn+\int_{\T_r}\psi_\T\int\hat\rho_r \ton{r\frac{d}{dr}\cEn}
\end{align}

Let us now regroup these terms in a leading fashion: 

\begin{align}
	r\frac{d}{dr}\hat\vartheta_\alpha(\T_r)&=\int_{\T_r}\psi_\T(x,r)\int\hat\rho_r(y-x;\cL_{x,r})\,\Bigg(-2-\Big\langle\nabla^{(y)}\ln\hat\rho_r,\pi_{x,r}^{\perp}(y-x)\Big\rangle\,\Bigg)\,\cEn\\
	&\;\;+\int_{\T_r}\psi_\T(x,r)\int\hat\rho_r(y-x;\cL_{x,r})\Big((r\partial_r\ln\hat\rho_r)+m+\Big\langle\nabla^{(y)}\ln\hat\rho_r,y-x\Big\rangle\Big)\,\cEn\\
	&-\int_{\T_r}\psi_\T(x,r)\int\hat\rho_r(y-x;\cL_{x,r})\Big(m-2+\Big\langle\nabla^{(y)}\ln\hat\rho_r,\pi_{x,r}(y-x)\Big\rangle\,\Big)\\
	&+\int_{\T_r}\ton{r\frac{d}{dr}\psi_\T}\int\hat\rho_r\,\cEn+\int_{\T_r}\psi_\T\int \langle \partial_{\cL}\hat\rho_r,r\dot\cL_{x,r}\rangle\cEn+\int_{\T_r}\psi_\T\int\hat\rho_r \ton{r\frac{d}{dr}\cEn}\, ,\notag\\
	&\equiv\int_{\T_r}\psi_\T(x,r)\int\hat\rho_r(y-x;\cL_{x,r})\,\Bigg(-2-\Big\langle\nabla^{(y)}\ln\hat\rho_r,\pi_{x,r}^{\perp}(y-x)\Big\rangle\,\Bigg)+\epsilon_1(r)\, .
\end{align}

Notice that by the definition of $\psi_\T(x,r)$ and smoothness properties of all the functions involved in the computation, all of these terms are $C^1_c((0,\infty))$. In particular, the smoothness of $\cEn$ comes from the $\epsilon$-regularity theorem. The only term we need to manipulate is the first one.\\

{\bf Claim: } We have that
\begin{gather}\label{e:claim_cEn_first_estimate}
\int_{\T_r}\psi_\T\int\hat\rho_r\,\Bigg(-2-\Big\langle\nabla^{(y)}\ln\hat\rho_r,\pi_{x,r}^{\perp}(y-x)\Big\rangle\,\Bigg)\cEn = \int_{\T_r}\psi_\T\int \hat\rho_r\,\Big(s_{x,r}^\perp\,\nabla_{s_{x,r}^\perp}\Big)\cEn\, .
\end{gather}

\begin{proof}[Proof of claim]
 The proof is just an integration by parts wrt $y$. In particular, we can estimate
 \begin{gather}
  \int_{\T_r}\psi_\T\int\hat\rho_r\,\Big\langle\nabla^{(y)}\ln\hat\rho_r,\pi_{x,r}^{\perp}(y-x)\Big\rangle\,\cEn =\int_{\T_r}\psi_\T\int\,\Big\langle\nabla^{(y)}\hat\rho_r,\pi_{x,r}^{\perp}(y-x)\Big\rangle\,\cEn = \\
  =-2 \int_{\T_r}\psi_\T\int\,\hat \rho_r \cEn -\int_{\T_r}\psi_\T\int \hat\rho_r\,\Big(s_{x,r}^\perp\,\nabla_{s_{x,r}^\perp}\Big)\cEn\, .
 \end{gather}
\end{proof}

This concludes the proof of Lemma \ref{l:angular_energy:ddrtheta}.

\vspace{.3cm}
\subsection{Computing \texorpdfstring{$r\frac{d^2}{dr^2}\hat\vartheta_\alpha(\T_r)$}{the second radial derivative}}
Computing and estimating the second derivative of $\hat \vartheta_\alpha(\T_r)$ is very similar to the previous lemma, but this time we do need to have careful estimates on the error terms, which is the involved part.

% For the second derivative of $\hat \vartheta_\alpha(\T_r)$ we compute
\begin{lemma}\label{l:angular_energy:ddrtheta_2}
	We have that
\begin{align}
	\ton{r\frac{d}{dr}}^2\hat\vartheta_\alpha(\T_r) &= \int_{\T_r}\psi_\T(x,r)\int \hat\rho_r(y-x;\cL_{x,r})\Big(s_{x,r}^\perp\,\nabla_{s_{x,r}^\perp}\Big)^2\cEn(y)+\err(r)+r\frac{d}{dr} \epsilon_1(r)\, ,
\end{align}
where $s_{x,r}^\perp\,\nabla_{s_{x,r}^\perp}\cEn=\nabla^{(y)}_{\pi^\perp_{x,r}(y-x)}\cEn$ is the $\LL_{x,r}^\perp$-radial derivative and $\epsilon_1(r),\err(r)$ are smooth functions compactly supported in $(0,\infty)$
satisfying
\begin{gather}
\int \abs{\err(r)}\frac{dr}{r}\leq C(m,\epsilon_0,R,\Lambda)\sqrt\delta\, .
\end{gather}
\end{lemma}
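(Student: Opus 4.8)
The plan is to differentiate the first-derivative identity of Lemma~\ref{l:angular_energy:ddrtheta} once more in $\ln r$, i.e.\ to apply $r\frac{d}{dr}$ to
\begin{align}
 r\frac{d}{dr}\hat\vartheta_\alpha(\T_r) = \int_{\T_r}\psi_\T(x,r)\int \hat\rho_r(y-x;\cL_{x,r})\,\Big(s_{x,r}^\perp\,\nabla_{s_{x,r}^\perp}\Big)\cEn(y)\,dy\,dx + \epsilon_1(r)\, ,
\end{align}
and then to repeat, in this second-order setting, the two organizing moves already used for the first derivative: a term-by-term product rule, followed by an integration by parts in $y$ to recover the radial-derivative structure. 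The operator $r\frac{d}{dr}$ produces $r\frac{d}{dr}\epsilon_1(r)$ together with a sum of integrals in which the derivative lands on one of the four objects carrying $r$-dependence: the cutoff $\psi_\T(x,r)$, the moving integration domain $\T_r$, the kernel $\hat\rho_r(y-x;\cL_{x,r})$ (through both its explicit scale and its dependence on $\cL_{x,r}$), and the factor $\big(s_{x,r}^\perp\,\nabla_{s_{x,r}^\perp}\big)\cEn$. The term in which the derivative hits this last factor, combined with the contribution of $r\partial_r\hat\rho_r$, is the one that must be massaged: exactly as in the proof of the claim \eqref{e:claim_cEn_first_estimate}, one uses $r\partial_r\hat\rho_r + m\hat\rho_r + \langle\nabla^{(y)}\hat\rho_r, y-x\rangle \approx 0$ (up to an $e^{-R/2}$-small cutoff error, cf.\ \eqref{e:rho_primitive_difference}), rewrites the multiplier $-2-\langle\nabla^{(y)}\ln\hat\rho_r,\pi_{x,r}^\perp(y-x)\rangle$ by a $y$-integration by parts, and lands on the main term $\int_{\T_r}\psi_\T\int\hat\rho_r\big(s_{x,r}^\perp\,\nabla_{s_{x,r}^\perp}\big)^2\cEn$.

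Everything else is an error term, and the heart of the argument is to check that these collect into a function $\err(r)$ with $\int|\err(r)|\frac{dr}{r}\leq C(m,\epsilon_0,R,\Lambda)\sqrt\delta$. There are three families. First, the terms in which $r\frac{d}{dr}$ hits $\psi_\T$ (producing $r\dot\psi_\T$, and on the second iteration $r^2\ddot\psi_\T$, $r^2\nabla\dot\psi_\T$, $r^2\ddot\psi_\T$, etc.): these are handled by Lemma~\ref{l:error_estimates_psi_with_pinching} together with the derivative bounds on $\hat\rho_r$ from Lemma~\ref{l:energy_decomposition:hat_rho_estimates} and the rough bounds on $\cEn$ and its spatial/radial derivatives from Lemma~\ref{l:angular_energy:Ena_estimates}; after integrating in $r$ they produce $\sqrt\delta\int_{\T\cap B_4(p)} r\dot\vartheta(x,8r)$-type quantities, which are Dini-summable to $C\sqrt\delta\Lambda$. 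Second, the terms involving $\frac{d}{dr}\T_r$ and $r\frac{d}{dr}\nabla\pi_{\T_r}$ coming from differentiating through the moving domain: here one invokes Theorem~\ref{t:approximating_submanifold}.(3) and (4), which bound these by $C(m,\epsilon_0)\big(\sqrt{r\dot\vartheta(x,2r)}+\sqrt{\vartheta_\cL(x,2r)}\big)$, paired against the $\sqrt\delta$-small factor coming from $r\nabla\hat\rho_r$ or from a derivative of $\cEn$. Third, the terms in which the derivative hits the best plane $\cL_{x,r}$ inside $\hat\rho_r$ or inside the polar decomposition defining $\cEn$: these are controlled by Theorem~\ref{t:best_plane:best_plane}.(2), namely $r|\partial_r\pi_{x,r}|+r|\nabla\pi_{x,r}|+r^2|\partial_r\nabla\pi_{x,r}| \leq C(m)\sqrt{\vartheta_\cL(x,2r)/\vartheta(x,r)}\leq C(m,\epsilon_0)\sqrt{\vartheta_\cL(x,2r)}$. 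In every case one uses that $\hat\rho_r$ is supported in the $\epsilon$-regularity region (Remark~\ref{rm:annular:regularity}), so that $r^2|\nabla u|^2\leq C(m,R)\delta$ pointwise there; this is what upgrades each differentiated geometric factor into a genuinely $\sqrt\delta$-small multiplier and lets each error integrand be dominated by $C(m)\rho_{1.1r}(y-x)$ times $\delta$ times a pinching density. After integrating the $y$-variable, each such term is bounded by $\delta^{1/2}\cdot C(m)\big(\vartheta(x,2r;\cL_{x,r})+\vartheta(x,2r;\cL_{x,r}^\perp)\big)$, and then Theorem~\ref{t:cone_splitting_annular} (through Lemma~\ref{l:error_estimates_psi_with_pinching}) together with Theorem~\ref{t:approximating_submanifold}.(8) converts the $\T_r$-integral into a Dini-summable $\T$-integral of $r\dot\vartheta$. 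The smoothness and compact support of $\epsilon_1(r)$ and $\err(r)$ follow from the $\epsilon$-regularity smoothness of $u$ on $\supp\hat\rho_r$ and the smoothness of $\psi_\T$, $\cL_{x,r}$, $\T_r$.

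The main obstacle is purely the bookkeeping: the second-order product rule applied to a four-factor integrand over a moving submanifold generates a large number of error terms, several of which are genuinely new compared with the toy model of Section~\ref{ss:outline_toymodel:angular_energy} — precisely those in which $r\frac{d}{dr}$ differentiates the point-and-scale dependence of $\cL_{x,r}$ or of $\T_r$, which are absent when $\T$ is a fixed plane. For each such term one must be disciplined about pairing the $\sqrt\delta$-small factor (from $\nabla\pi_{x,r}$, $\partial_r\pi_{x,r}$, $\frac{d}{dr}\T_r$, or a derivative of $\cEn$) with a kernel factor so the $y$-integral reproduces a quantity comparable to $\vartheta(x,2r;\cL_{x,r})+\vartheta(x,2r;\cL_{x,r}^\perp)$ before summing in scale. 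A secondary subtlety is that the integration by parts in $y$ must account for the cutoff $\hat\psi_R$ inside $\hat\rho_r$; the resulting boundary-type contributions are supported where $|\pi_{x,r}^\perp(y-x)|\approx e^{-R}r$ and are $e^{-R/2}$-small by the estimates of Lemma~\ref{l:rho_tilde_estimates}, hence absorbed into $\err(r)$. Modulo this, the computation is the direct second-order analogue of Lemma~\ref{l:angular_energy:ddrtheta}.
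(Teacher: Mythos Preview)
Your outline captures the correct overall architecture — differentiate Lemma~\ref{l:angular_energy:ddrtheta}, regroup, integrate by parts in $y$ to extract the main term, and bound the debris — but there is a genuine gap in the error accounting. After using the identity $r\partial_r\ln\hat\rho_r + m + \langle\nabla^{(y)}\ln\hat\rho_r, y-x\rangle = 0$ (which in fact holds exactly for $\hat\rho_r$, not merely up to $e^{-R/2}$, as a direct computation from Definition~\ref{d:restricted_energy_functionals} shows) and splitting $y-x = \pi_{x,r}(y-x) + \pi_{x,r}^\perp(y-x)$, you are left with two pieces. The $L^\perp$-piece, carrying the factor $-2-\langle\nabla^{(y)}\ln\hat\rho_r,\pi_{x,r}^\perp(y-x)\rangle$, indeed yields the main term by the $y$-integration by parts you describe. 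But the $L$-piece,
\[
-\int_{\T_r}\psi_\T(x,r)\int\hat\rho_r(y-x;\cL_{x,r})\Big(m-2+\big\langle\nabla^{(y)}\ln\hat\rho_r,\pi_{x,r}(y-x)\big\rangle\Big)\,\cFn(y)\,dy\,dx\, ,
\]
with $\cFn = s_{x,r}^\perp\nabla_{s_{x,r}^\perp}\cEn$, does not fall into any of your three error families. Each of its two summands is of order $\vartheta(x,2r;\cL_{x,r}^\perp)$, i.e.\ $O(\delta)$ pointwise but with no extra $\sqrt\delta$ factor, so its Dini integral is only $O(\Lambda)$, not $O(\sqrt\delta)$.

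The paper handles this term by a cancellation argument, not a direct bound: one first swaps $\nabla^{(y)}\hat\rho_r$ for $-\nabla^{(x)}\hat\rho_r$ (incurring a genuinely $\sqrt\delta$-small error from the $x$-dependence of $\cL_{x,r}$ inside $\hat\rho_r$), then swaps $\pi_{x,r}$ for $\pi_{\T_r(x)}$ via Lemma~\ref{l:improved_comparison_LL_T}, and finally integrates by parts in the $x$-variable over the $(m-2)$-dimensional submanifold $\T_r$. The divergence $\operatorname{div}^{(x)}_{\T_r}\big(\pi_{\T_r(x)}(y-x)\big) = -(m-2) + O(r|\nabla\pi_{\T_r(x)}|)$ produces exactly the $(m-2)$ needed to cancel the explicit constant, and what remains carries the second-fundamental-form factor $r|\II_{\T_r}|\leq C\sqrt{\vartheta_\cL(x,2r)}$ from Theorem~\ref{t:approximating_submanifold}.(2), which is the missing $\sqrt\delta$. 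This $\T_r$-integration-by-parts step is the nontrivial core of the lemma and is absent from your proposal; without it the claimed bound on $\int|\err(r)|\frac{dr}{r}$ does not follow.
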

\vspace{.3cm}

We will break the proof down into several steps. For convenience we introduce the notation
\begin{gather}
 \cFn(y)=s_{x,r}^\perp\,\nabla_{s_{x,r}^\perp}\cEn=\nabla^{(y)}_{\pi^\perp_{x,r}(y-x)}\cEn(y)=\ps{\nabla^{(y)}\cEn(y)}{\pi^\perp_{x,r}(y-x)}\, .
\end{gather}

\vspace{.3cm}
\subsubsection{Term by Term Computation of $\ton{r\frac{d}{dr}}^2\hat\vartheta_\alpha(\T_r)$ }

Let us begin with the straightforward computation of $\ton{r\frac{d}{dr}}^2\hat\vartheta_\alpha(\T_r)$. Using Lemma \ref{l:angular_energy:ddrtheta} we get

\begin{align}
	\ton{r\frac{d}{dr}}^2\hat\vartheta_\alpha(\T_r)&=\int_{\T_r}\ton{r\frac{d}{dr}\psi_\T}\int\hat\rho_r \cFn+\int_{\T_r}\psi_\T\int\big(r\partial_r\hat\rho_r\big) \cFn\notag \\
	&+\int_{\T_r}\psi_\T\int\langle \partial_\cL\hat\rho_r,r\dot\cL_{x,r}\rangle \cFn+\int_{\T_r}\psi_\T\int\hat\rho_r \ton{r\frac{d}{dr}\cFn} + r\frac{d}{dr} \epsilon_1(r)\, .
\end{align}

Let us now regroup these terms in a leading fashion, as done for the first derivative: 

\begin{align}
	\ton{r\frac{d}{dr}}^2\hat\vartheta_\alpha(\T_r)&=\int_{\T_r}\psi_\T(x,r)\int\hat\rho_r(y-x;\cL_{x,r})\,\Bigg(-2-\Big\langle\nabla^{(y)}\ln\hat\rho_r,\pi_{x,r}^{\perp}(y-x)\Big\rangle\,\Bigg)\,\cFn(y)\label{e:angular_energy:drtheta:1}\\
	&\;\;+\int_{\T_r}\psi_\T(x,r)\int\hat\rho_r(y-x;\cL_{x,r})\Big((r\partial_r\ln\hat\rho_r)+m+\Big\langle\nabla^{(y)}\ln\hat\rho_r,y-x\Big\rangle\Big)\,\cFn(y)\label{e:angular_energy:drtheta:2}\\
	&-\int_{\T_r}\psi_\T(x,r)\int\hat\rho_r(y-x;\cL_{x,r})\Big(m-2+\Big\langle\nabla^{(y)}\ln\hat\rho_r,\pi_{x,r}(y-x)\Big\rangle\,\Big)\cFn(y)\label{e:angular_energy:drtheta:3}\\
	&+\int_{\T_r}\ton{r\frac{d}{dr}\psi_\T}\int\hat\rho_r\,\cFn(y)+\int_{\T_r}\psi_\T\int \langle \partial_{\cL}\hat\rho_r,r\dot\cL_{x,r}\rangle\cFn(y)+\int_{\T_r}\psi_\T\int\hat\rho_r \ton{r\frac{d}{dr}\cFn(y)}\,\label{e:angular_energy:drtheta:4}\, \\
	&+r\frac{d}{dr}\epsilon_1(r)\, .
\end{align}

 We will split the remainder of the subsection into pieces where we deal with each of these terms individually, but before we do that let us briefly remark that all terms except \eqref{e:angular_energy:drtheta:1} are going to be small errors or boundary terms.

%  Also, let us introduce the notation $\err(r)$ to denote any function that satisfies
%  \begin{gather}
%   \int_0^{2} \abs{\err(r)}\frac{dr}{r} \leq C(m,\epsilon_0,R,\Lambda)\sqrt \delta\, .
%  \end{gather}
%  Notice for example that one of such functions is
%  \begin{gather}
%   \sqrt \delta \int_{\T_r\cap \B 2 p} r\dot \vartheta(x,8r)\, ,
%  \end{gather}
%  because it is immediate to see that
%  \begin{gather}
%   \sqrt \delta \int_{\T_r\cap \B 2 p} \qua{\int_0^2 r\dot \vartheta(x,r)\frac{dr}{r}}\leq C(m)\Lambda \sqrt \delta\, .
%  \end{gather}

\vspace{.3cm}

\subsubsection{Computing \eqref{e:angular_energy:drtheta:1}} 
The claim we wish to address here is the equivalent of Claim \eqref{e:claim_cEn_first_estimate}. In particular:\\

{\bf Claim: } We have that
\begin{gather}
\int_{\T_r}\psi_\T\int\hat\rho_r\,\Bigg(-2-\Big\langle\nabla^{(y)}\ln\hat\rho_r,\pi_{x,r}^{\perp}(y-x)\Big\rangle\,\Bigg)\cFn(y) = \int_{\T_r}\psi_\T\int \hat\rho_r\,\Big(s_{x,r}^\perp\,\nabla_{s_{x,r}^\perp}\Big)\cFn(y)\, .
\end{gather}

\begin{proof}[Proof of claim]
 As with \eqref{e:claim_cEn_first_estimate}, the proof is just an integration by parts wrt $y$. In particular, we can write
 \begin{gather}
  \int_{\T_r}\psi_\T\int\hat\rho_r\,\Big\langle\nabla^{(y)}\ln\hat\rho_r,\pi_{x,r}^{\perp}(y-x)\Big\rangle\,\cFn(y) =\int_{\T_r}\psi_\T\int\,\Big\langle\nabla^{(y)}\hat\rho_r,\pi_{x,r}^{\perp}(y-x)\Big\rangle\,\cFn(y) = \\
  =-2 \int_{\T_r}\psi_\T\int\,\hat \rho_r \cFn(y) -\int_{\T_r}\psi_\T\int \hat\rho_r\,\Big(s_{x,r}^\perp\,\nabla_{s_{x,r}^\perp}\Big)\cFn(y)\, .
 \end{gather}
 \end{proof}

\vspace{.3cm}
\subsubsection{Computing \eqref{e:angular_energy:drtheta:2}}

This part is flat out zero, indeed we claim that \\

{\bf Claim: } We have
\begin{gather}
 (r\partial_r\ln\hat\rho_r)+m+\Big\langle\nabla^{(y)}\ln\hat\rho_r,y-x\Big\rangle=0\, .
\end{gather}

\begin{proof}[Proof of claim.]
Recall that $\hat \rho_r = \hat\rho_r(x;L)$, and so our partial $r$-derivative is only hitting the $r$-component of this.  From the definition of $\rho$ and $\hat \rho$, in particular Definitions \ref{d:heat_mollifier}, \ref{d:restricted_energy_functionals}, we have
\begin{gather}
 \hat\rho_r (y-x;L)= r^{-m} \rho \ton{\frac{\abs{y-x}^2} {2r^2}} \psi \ton{e^{2R} \frac{\abs{\pi_{L}^\perp (y-x)}^2 }{2r^2}}\, .
\end{gather}
Checking the claim is then a straightforward computation.

\end{proof}

\vspace{.3cm}
\subsubsection{Computing \eqref{e:angular_energy:drtheta:3}}

The main claim we wish to address here is the following:\\

{\bf Claim: } We have that 
\begin{gather}\label{e:claim_superconvexity_3}
\int_{\T_r}\psi_\T(x,r)\int\hat\rho_r(y-x;\cL_{x,r})\Big(m-2+\Big\langle\nabla^{(y)}\ln\hat\rho_r,\pi_{x,r}(y-x)\Big\rangle\,\Big)\cFn(y)=\err(r) \, .
\end{gather}\
% where $\epsilon'_3$ is as in \eqref{e:angular_energy:superconvexity_error} with $\int\epsilon'_3(r)+r\dot\epsilon'_3(r)\frac{dr}r\leq C(m,\epsilon_0,R,\Lambda)\sqrt\delta$ .\\

\begin{proof}[Proof of claim.] The idea is to perform an integration by parts, but with respect to the $x$ variable being integrated over $\T_r$ . First, we expand
\begin{gather}
 \int_{\T_r}\psi_\T(x,r)\int\hat\rho_r(y-x;\cL_{x,r})\Big(m-2+\Big\langle\nabla^{(y)}\ln\hat\rho_r,\pi_{x,r}(y-x)\Big\rangle\,\Big)\cFn(y)=\\
 =(m-2)\int_{\T_r}\psi_\T(x,r)\int\hat\rho_r(y-x;\cL_{x,r})\cFn(y) + \int_{\T_r}\psi_\T(x,r)\int \ps{\nabla^{(y)}\hat\rho_r(y-x;\cL_{x,r})}{\pi_{x,r}(y-x)}\cFn(y)
\end{gather}

\paragraph{Part 1: exchanging $x$ and $y$ variables.} We claim that
\begin{align}
 &\int_{\T_r}\psi_\T(x,r)\int \ps{\nabla^{(y)}\hat\rho_r(y-x;\cL_{x,r})}{\pi_{x,r}(y-x)}\cFn(y)\notag\\
 =-&\int_{\T_r}\psi_\T(x,r)\int \ps{\nabla^{(x)}\hat\rho_r(y-x;\cL_{x,r})}{\pi_{x,r}(y-x)}\cFn(y)+\err(r)\label{e:superconvexity_error_1}\, .
\end{align}
This is, we can exchange $y$ and $x$ variables in the gradient up to a change in sign and an error $\err$.

\begin{proof}[Proof of subclaim]

Notice that by symmetry in the $x$ and $y$ variables
\begin{align}
\ps{\nabla^{(y)}\hat\rho_r}{\pi_{x,r}(y-x)} =& r^{-m} \ps{\nabla^{(y)} \rho\ton{\frac{\abs{y-x}^2}{2r^2}} }{\pi_{x,r}(y-x)}\psi \ton{e^{2R} \frac{\abs{\pi_{x,r}^\perp (y-x)}^2 }{2r^2}}\\
=&-r^{-m} \ps{\nabla^{(x)} \rho\ton{\frac{\abs{y-x}^2}{2r^2}} }{\pi_{x,r}(y-x)}\psi \ton{e^{2R} \frac{\abs{\pi_{x,r}^\perp (y-x)}^2 }{2r^2}}=\\
=&-r^{-m} \ps{\nabla^{(x)} \hat \rho_r(y-x,\LL_{x,r})}{\pi_{x,r}(y-x)}\\
+&\rho_r(y-x)\ps{\pi_{x,r}(y-x)}{\nabla^{(x)}\psi \ton{e^{2R} \frac{\abs{\pi_{x,r}^\perp (y-x)}^2 }{2r^2}}}\, .\label{eq_nablahatrho}
\end{align}
Here we exploited the fact that
\begin{gather}
 \ps{\nabla^{(y)}\psi \ton{e^{2R} \frac{\abs{\pi_{x,r}^\perp (y-x)}^2 }{2r^2}}}{\pi_{x,r}(y-x)}=0\neq \ps{\nabla^{(x)}\psi \ton{e^{2R} \frac{\abs{\pi_{x,r}^\perp (y-x)}^2 }{2r^2}}}{\pi_{x,r}(y-x)}\, ,
\end{gather}
since $\nabla^{(x)}$ also hits $\pi_{x,r}^\perp $, as opposed to $\nabla^{(y)}$.

Let us see that the last term in \eqref{eq_nablahatrho} gives rise to an $\err$ term.
In order to prove it, set for notational convenience
\begin{gather}
 t=\frac{\abs{\pi_{x,r}^\perp (y-x)}^2 }{2r^2}\, , \qquad v=\pi_{x,r}(y-x)\, , \qquad w=\pi_{x,r}^\perp(y-x)\, .
\end{gather}
Recall that ${{s_{x,r}^\perp\, \nabla_{s_{x,r}^\perp}^{(y)} f }}=\nabla^{(y)}_w f=\ps{\nabla^{(y)} f}{w}$. We have
\begin{gather}
\ps{\pi_{x,r}(y-x)}{\nabla^{(x)}\psi \ton{e^{2R} \frac{\abs{\pi_{x,r}^\perp (y-x)}^2 }{2r^2}}} = \nabla_v^{(x)}\psi \ton{e^{2R} t} =\dot \psi \ton{e^{2R} t}\frac{e^{2R}}{r^2}\ps{w}{\nabla_v(\pi_{x,r}^\perp) (y-x)}\, .
\end{gather}
By plugging in this error term in \eqref{e:superconvexity_error_1}, we obtain
\begin{align}
 &\int_{\T_r}\psi_\T(x,r)\int \rho_r(y-x)\dot \psi \ton{e^{2R} t}\frac{e^{2R}}{r^2}\ps{w}{\nabla_v(\pi_{x,r}^\perp) (y-x)} \cFn(y)\notag\\
 =-&\int_{\T_r}\psi_\T(x,r)\int {{s_{x,r}^\perp \nabla_{s_{x,r}^\perp}^{(y)}  }}\qua{\rho_r(y-x)\dot \psi \ton{e^{2R} t}\frac{e^{2R}}{r^2}\ps{w}{\nabla_v(\pi_{x,r}^\perp) (y-x)}} \cEn\\
-2&\int_{\T_r}\psi_\T(x,r)\int \rho_r(y-x)\dot \psi \ton{e^{2R} t}\frac{e^{2R}}{r^2}\ps{w}{\nabla_v(\pi_{x,r}^\perp) (y-x)} \cEn
\end{align}
Given the definitions of $\rho_r$ and $\psi$, and the estimates on $\abs{\nabla \pi_{x,r}}=\abs{\nabla \pi_{x,r}^\perp}$ of Theorem \ref{t:best_plane:best_plane}.\ref{i:best_plane_bounds_on_projections}, we have
\begin{gather}
 \abs{{{s_{x,r}^\perp \nabla_{s_{x,r}^\perp}^{(y)}  }}\qua{\rho_r(y-x)\dot \psi \ton{e^{2R} t}\frac{e^{2R}}{r^2}\ps{w}{\nabla_v(\pi_{x,r}^\perp) (y-x)}}}\leq C(m,\epsilon_0,R)\sqrt \delta\, \rho_{2r}(y-x)\, .
\end{gather}
This allows us to estimate that the error term is bounded in absolute value by
\begin{gather}
 C(m,\epsilon_0,R)\sqrt \delta \int_{\T_r}\psi_\T(x,r) \vartheta(x,2r;\LL_{x,r}^\perp)\stackrel{\text{Lemma \ref{l:error_estimates_psi_with_pinching}}}{\leq }C(m,\epsilon_0,R)\sqrt \delta \int_{\T\cap \B 2 p }r \dot \vartheta(x,8r)=\err(r)\, .
\end{gather}
This concludes Part 1 of the estimate.

% In order to estimate $\abs{r\frac{d}{dr} \epsilon_a(r)}$ we observe that
% \begin{gather}
%  r\frac{d}{dr} \rho_r(y-x)=-m \rho_r(y-x)\, ,
% \end{gather}
% $\abs{\ddot \psi}$ is uniformly bounded and, according to Theorem \ref{t:best_plane:best_plane}.\eqref{i:best_plane_bounds_on_projections}
% \begin{gather}
%  \abs{r \frac{d}{dr} \pi_{x,r}} + \abs{r^2 \frac{d}{dr} \nabla \pi_{x,r}}\leq C(m) \sqrt \delta\, .
% \end{gather}
% Thus, using Lemma \ref{l:angular_energy:Ena_estimates} we can bound roughly
% \begin{gather}
%  \abs{r \frac d {dr} \epsilon_a(r)}\leq C(m,\epsilon_0,R) \sqrt \delta \int_{\T_r} \qua{\psi_\T(x,r)+r\frac{d}{dr} \psi_\T(x,r)} \qua{\vartheta(x,2r;\LL_{x,r})+ \vartheta(x,2r;\LL_{x,r}^\perp)}\, .
% \end{gather}
% According to Lemma \ref{l:error_estimates_psi_with_pinching}, we prove that
% \begin{gather}
%  \abs{\epsilon_a(r)}+\abs{r \frac d {dr} \epsilon_a(r)}\leq C(m,\epsilon_0,R) \sqrt\delta \int \int_{\T\cap \B 2 p} r\dot \vartheta(x,8r) \frac{dr}{r}\leq C(m,\epsilon_0,R)\Lambda \sqrt \delta \, .
% \end{gather}
% This concludes Part 1 of the estimate.
\end{proof}

\textbf{Part 2: exchanging $\pi_{x,r}$ with $\pi_{\T_r(x)}$.} We claim that
\begin{align}
 &-\int_{\T_r}\psi_\T(x,r)\int \ps{\nabla^{(x)}\hat\rho_r(y-x;\cL_{x,r})}{\pi_{x,r}(y-x)}\cFn(y)\notag \\
 =&-\int_{\T_r}\psi_\T(x,r)\int \ps{\nabla^{(x)}\hat\rho_r(y-x;\cL_{x,r})}{\pi_{\T_r(x)}(y-x)}\cFn(y)+\err\, ,
\end{align}
in other words we can exchange $\pi_{x,r}(y-x)$ with $\pi_{\T_r}(y-x)$, up to an error $\err$.

\begin{proof}[Proof of subclaim]

The key estimate is that, by Lemma \ref{l:improved_comparison_LL_T}, on the support of $\rho_r$ we have
\begin{gather}\label{e:superconvexity_key_2}
 \abs{\pi_{x,r}(y-x)-\pi_{\T_r(x)}(y-x)}\leq C(m,\epsilon_0) Rr \sqrt \delta\, .
\end{gather}
As a consequence
\begin{align}
 &\int_{\T_r}\psi_\T(x,r)\int \ps{\nabla^{(x)}\hat\rho_r(y-x;\cL_{x,r})}{\pi_{\T_r(x)}(y-x)-\pi_{x,r}(y-x)}\cFn(y)\notag\\
 =& \int_{\T_r}\psi_\T(x,r)\int \ps{\nabla^{(x)}\hat\rho_r(y-x;\cL_{x,r})}{\pi_{\T_r(x)}(y-x)-\pi_{x,r}(y-x)}{{s_{x,r}^\perp \nabla_{s_{x,r}^\perp}^{(y)}  }}\cEn\notag\\
 =& - 2\int_{\T_r}\psi_\T(x,r)\int \ps{\nabla^{(x)}\hat\rho_r(y-x;\cL_{x,r})}{\pi_{\T_r(x)}(y-x)-\pi_{x,r}(y-x)}\cEn\, \\
 & - \int_{\T_r}\psi_\T(x,r)\int {{s_{x,r}^\perp \nabla_{s_{x,r}^\perp}^{(y)}  }}\ps{\nabla^{(x)}\hat\rho_r(y-x;\cL_{x,r})}{\pi_{\T_r(x)}(y-x)-\pi_{x,r}(y-x)}\cEn\, .\label{e:superconvexity_error_2}
\end{align}
Following the same computations as in Part 1, we obtain
\begin{gather}
 r\abs{\nabla^{(x)}\hat\rho_r(y-x;\cL_{x,r})}+\abs{{{s_{x,r}^\perp \nabla_{s_{x,r}^\perp}^{(y)}  }} \nabla^{(x)}\hat\rho_r(y-x;\cL_{x,r})}\leq C(m)\rho_{2r}(y-x)\, .
\end{gather}
On the other hand by \eqref{e:superconvexity_key_2} we have on the support of $\rho_r(y-x)$, where $\abs{y-x}\leq C R r$, that :
\begin{gather}
 \abs{\pi_{x,r}(y-x)-\pi_{\T_r(x)}(y-x)} + \abs{{{s_{x,r}^\perp \nabla_{s_{x,r}^\perp}^{(y)}  }} \qua{\pi_{x,r}(y-x)-\pi_{\T_r(x)}(y-x)}}=\abs{\pi_{\T_r(x)}\ton{\pi_{x,r}^\perp (y-x)}}\leq C(m,K_N) R r \sqrt \delta
\end{gather}
This and \eqref{e:superconvexity_key_2} ensure that \eqref{e:superconvexity_error_2} is bounded in absolute value by
\begin{gather}
 C(m,\epsilon_0,R)\sqrt \delta \, \int_{\T_r} \psi_\T(x,r) \int \rho_{2r}(y-x) \cEn\leq C(m,\epsilon_0,R)\sqrt \delta \, \int_{\T_r} \psi_\T(x,r) \vartheta(x,2r;\LL_{x,r}^\perp)\, .
\end{gather}
Invoking Lemma \ref{l:error_estimates_psi_with_pinching} we conclude that this error term is of the form $\err(r)$.
\end{proof}

\textbf{Part 3: integration by parts and error control.}
 We claim that
\begin{align}
  &(m-2)\int_{\T_r}\psi_\T(x,r)\int\hat\rho_r(y-x;\cL_{x,r})\cFn(y) - \int_{\T_r}\psi_\T(x,r)\int \ps{\nabla^{(x)}\hat\rho_r(y-x;\cL_{x,r})}{\pi_{\T_r(x)}(y-x)}\cFn(y)\notag \\
  &=\err(r)\, .
\end{align}
The above will be based on an integration by parts on the $x\in \T_r$ variable, and with it the proof of claim \eqref{e:claim_superconvexity_3} will be finished.

\begin{proof}[Proof of subclaim]
We have
\begin{align}
 &\int_{\T_r}\psi_\T(x,r)\int \ps{\nabla^{(x)}\hat\rho_r(y-x;\cL_{x,r})}{\pi_{\T_r(x)}(y-x)}\cFn(y) dydv_\T(x)\notag\\
 =&\int \int_{\T_r}\psi_\T(x,r)\ps{\nabla^{(x)}\hat\rho_r(y-x;\cL_{x,r})}{\pi_{\T_r(x)}(y-x)}\cFn(y) \notag\\
 =&-\int \int_{\T_r}\hat\rho_r(y-x;\cL_{x,r})\dive^{(x)}\ton{\psi_\T(x,r)\cFn(y) \ \pi_{\T_r(x)}(y-x)} \notag\\
 =&-\int \int_{\T_r}\hat\rho_r(y-x;\cL_{x,r})\qua{\ps{\nabla^{(x)}\psi_\T(x,r)}{\pi_{\T_r(x)}(y-x)}\cFn(y) \ } \label{e:superconvexity_part_3_1}\\
 &-\int \int_{\T_r}\hat\rho_r(y-x;\cL_{x,r})\qua{\psi_\T(x,r)\ps{\nabla^{(x)}\cFn(y)}{\pi_{\T_r(x)}(y-x)}} \label{e:superconvexity_part_3_2}\\
 &-\int \int_{\T_r}\hat\rho_r(y-x;\cL_{x,r})\qua{\psi_\T(x,r)\cFn(y)\dive^{(x)}\ton{\pi_{\T_r(x)}(y-x)}} \label{e:superconvexity_part_3_3}\, .
\end{align}
We analyze these terms one by one.%, recalling that if $w=\pi_{x,r}^\perp(y-x)$, then $\cFn(y)={{s_{x,r}^\perp \nabla_{s_{x,r}^\perp}^{(y)}  }} \cEn(y)$.

\vspace{5mm}
The first term \eqref{e:superconvexity_part_3_1} can be controlled by 
\begin{align}
 &-\int \int_{\T_r}\hat\rho_r(y-x;\cL_{x,r})\qua{\ps{\nabla^{(x)}\psi_\T(x,r)}{\pi_{\T_r(x)}(y-x)}{{s_{x,r}^\perp \nabla_{s_{x,r}^\perp}^{(y)}  }}\cEn(y) \ } dv_\T(x)dV(y)\notag\\
 =&2\int \int_{\T_r}\hat\rho_r(y-x;\cL_{x,r})\qua{\ps{\nabla^{(x)}\psi_\T(x,r)}{\pi_{\T_r(x)}(y-x)}\cEn(y) \ } \notag\\
 +&\int \int_{\T_r}{{s_{x,r}^\perp \nabla_{s_{x,r}^\perp}^{(y)}  }}\hat\rho_r(y-x;\cL_{x,r})\qua{\ps{\nabla^{(x)}\psi_\T(x,r)}{\pi_{\T_r(x)}(y-x)}\cEn(y) \ } \notag\\
 +&\int \int_{\T_r}\hat\rho_r(y-x;\cL_{x,r})\qua{\ps{\nabla^{(x)}\psi_\T(x,r)}{\pi_{\T_r(x)}(\pi_{x,r}^\perp(y-x))}\cEn(y) \ } 
\end{align}

As seen before, we can bound
\begin{gather}\label{e:nabla_w_hat_rho}
\abs{\pi_{\T_r(x)}(\pi_{x,r}^\perp(y-x))}\leq C(m,K_N) \abs{y-x} \sqrt \delta\, ,\\
 \abs{{{s_{x,r}^\perp \nabla_{s_{x,r}^\perp}^{(y)}  }}\hat\rho_r(y-x;\cL_{x,r})}\leq C(m) \rho_{2r}(y-x)\, .
\end{gather}
Thus we obtain that the absolute value of \eqref{e:superconvexity_part_3_1} is bounded by
\begin{align}
 &C(m,\epsilon_0,R)\int_{\T_r}r \abs{\nabla \psi_{\T}(x,r)} \int \rho_{2r}(y-x)\cEn(y) \leq C(m,\epsilon_0,R)\int_{\T_r}r \abs{\nabla \psi_{\T}(x,r)} \vartheta(x,2r;\LL_{x,r})\notag \\
 \leq &C(m,\epsilon_0,R) \sqrt \delta \int_{\T_r}r \abs{\nabla \psi_{\T}(x,r)} \stackrel{\text{Lemma \ref{l:energy_decomposition:T_cutoff} }}{=} \err(r)
\end{align}

\vspace{5mm}
The second term \eqref{e:superconvexity_part_3_2} can similarly be controlled by 
\begin{align}
 &-\int \int_{\T_r}\hat\rho_r(y-x;\cL_{x,r})\qua{\psi_\T(x,r)\ps{\nabla^{(x)}\cFn(y)}{\pi_{\T_r(x)}(y-x)}} \notag \\
 =&-\int \int_{\T_r}\hat\rho_r(y-x;\cL_{x,r})\qua{\psi_\T(x,r)\ps{\nabla^{(x)}s_{x,r}^\perp \nabla_{s_{x,r}^\perp}^{(y)}\cEn(y)}{\pi_{\T_r(x)}(y-x)}} \notag \\
 =&2\int \int_{\T_r}\hat\rho_r(y-x;\cL_{x,r})\qua{\psi_\T(x,r)\ps{\nabla^{(x)}\cEn(y)}{\pi_{\T_r(x)}(y-x)}}\notag \\
 &+\int \int_{\T_r}{{s_{x,r}^\perp \nabla_{s_{x,r}^\perp}^{(y)}  }}\hat\rho_r(y-x;\cL_{x,r})\qua{\psi_\T(x,r)\ps{\nabla^{(x)}\cEn(y)}{\pi_{\T_r(x)}(y-x)}}\notag \\
 &+ \int \int_{\T_r}\hat\rho_r(y-x;\cL_{x,r})\qua{\psi_\T(x,r)\ps{\nabla^{(x)}\cEn(y)}{\pi_{\T_r(x)}(\pi_{x,r}^\perp(y-x))}}
\end{align}
By Lemma \ref{l:angular_energy:Ena_estimates} and Lemma \ref{l:error_estimates_psi_with_pinching}, we can argue as with the first term to see its of the form $\err(r)$.

\vspace{5mm} We are left with \eqref{e:superconvexity_part_3_3}. Notice that since
\begin{gather}
 \abs{\dive^{(x)}\ton{\pi_{\T_r(x)}(y-x)}+m-2}\leq \abs{\nabla^{(x)} \pi_{\T_r(x)}}\abs{y-x}\, ,
\end{gather}
we have
\begin{align}
 &\abs{-\int \int_{\T_r}\hat\rho_r(y-x;\cL_{x,r})\qua{\psi_\T(x,r)\cFn(y)\dive^{(x)}\ton{\pi_{\T_r(x)}(y-x)}}-(m-2)\int_{\T_r}\psi_\T(x,r) \int \hat\rho_r(y-x;\cL_{x,r})\cFn(y)}\notag \\
 &= \abs{\int \int_{\T_r}\hat\rho_r(y-x;\cL_{x,r})\qua{\psi_\T(x,r){s_{x,r}^\perp \nabla_{s_{x,r}^\perp}^{(y)}  }\ton{\cEn(y)} \ \ton{\dive^{(x)}\ton{\pi_{\T_r(x)}(y-x)} +m-2}   }}\notag \\
 &\leq \int \int_{\T_r}\abs{\ton{2+{s_{x,r}^\perp \nabla_{s_{x,r}^\perp}^{(y)}  } }\hat\rho_r(y-x;\cL_{x,r})}\psi_\T(x,r)\cEn(y) \ \frac{\abs{y-x}}{r} \abs{r\nabla^{(x)} \pi_{\T_r(x)}} \notag \\
 &+\abs{\int \int_{\T_r}\hat\rho_r(y-x;\cL_{x,r})\ \psi_\T(x,r)\cEn(y) \ s_{x,r}^\perp \nabla_{s_{x,r}^\perp}^{(y)}\dive^{(x)}\ton{\pi_{\T_r(x)}(y-x)} }\notag\\
 \leq & C\int \int_{\T_r}\qua{\hat\rho_r(y-x;\cL_{x,r})+ \abs{{{s_{x,r}^\perp \nabla_{s_{x,r}^\perp}^{(y)}  }} \hat \rho_r(y-x;\LL_{x,r})}}\frac{\abs{y-x}}{r}\psi_\T(x,r)\cEn(y)\abs{r\nabla^{(x)} \pi_{\T_r(x)}}\, .
\end{align}
By Lemma \ref{t:approximating_submanifold}.\eqref{i:approximating_submanifold_space_gradient_hessian}, $\abs{r\nabla^{(x)} \pi_{\T_r(x)}}\leq C(m,\epsilon_0)\sqrt \delta$. Moreover,
\begin{gather}
 \frac{\abs{y-x}}{r}\ton{\hat\rho_r(y-x;\cL_{x,r})+ \abs{{{s_{x,r}^\perp \nabla_{s_{x,r}^\perp}^{(y)}  }} \hat \rho_r(y-x;\LL_{x,r})} } \leq C(m) \rho_{2r}(y-x)\, ,
\end{gather}
and so we can bound this term by
\begin{gather}
 C(m,\epsilon_0,R)\sqrt \delta \int_{\T_r}\psi_\T(x,r)\int \rho_{2r}(y-x)\cEn(y)\, ,
\end{gather}
which is an $\err(r)$ error term by Lemma \ref{l:error_estimates_psi_with_pinching}.

\end{proof}
\end{proof}

\vspace{.3cm}
\subsubsection{Computing \eqref{e:angular_energy:drtheta:4}}

The main claim we wish to address is the following:\\

{\bf Claim: } We have that
\begin{gather}
 \int_{\T_r}\ton{r\frac{d}{dr}\psi_\T}\int\hat\rho_r\,\cFn+\int_{\T_r}\psi_\T\int \langle \partial_{\cL}\hat\rho_r,r\dot\cL_{x,r}\rangle\cFn+\int_{\T_r}\psi_\T\int\hat\rho_r \ton{r\frac{d}{dr}\cFn}=\err(r)
\end{gather}

\begin{proof}[Proof of claim]
 The proof of this claim follows from the same estimates as the ones above. In particular, the first term can be controlled considering that for all $x\in \T_r\cap \supp{\psi_\T}$:
 \begin{gather}
  \abs{\int \hat \rho_r\cFn}=\abs{\int \hat \rho_r{{s_{x,r}^\perp \nabla_{s_{x,r}^\perp}^{(y)}  }}\cEn} \leq C(m) \int \rho_{2r}(y-x)\cEn(y)\leq C(m) \delta\, ,
 \end{gather}
and thus the first term is of the form $\err$ by Lemma \ref{l:energy_decomposition:T_cutoff}.\ref{i:T_cutoff_global_integral}.

Moving to the second term, we observe that
\begin{align}
 &\int_{\T_r}\psi_\T\int \langle \partial_{\cL}\hat\rho_r,r\dot\cL_{x,r}\rangle{{s_{x,r}^\perp \nabla_{s_{x,r}^\perp}^{(y)}  }}\cEn \notag \\
 =&-2\int_{\T_r}\psi_\T\int \langle \partial_{\cL}\hat\rho_r,r\dot\cL_{x,r}\rangle \cEn -\int_{\T_r}\psi_\T\int \langle \partial_{\cL}{{s_{x,r}^\perp \nabla_{s_{x,r}^\perp}^{(y)}  }}\hat\rho_r,r\dot\cL_{x,r}\rangle\cEn\, .
\end{align}
By Theorem \ref{t:best_plane:best_plane}.\eqref{i:best_plane_bounds_on_projections}, $\abs{r\dot \LL_{x,r}}\leq C(m,K_N)\sqrt \delta$, and so
\begin{gather}
 \abs{\int_{\T_r}\psi_\T\int \langle \partial_{\cL}\hat\rho_r,r\dot\cL_{x,r}\rangle\cFn }\leq C(m,\epsilon_0,R)\sqrt \delta \int_{\T_r} \psi_\T(x,r)\vartheta(x,2r;\LL_{x,r}) \, ,
\end{gather}
which, thanks to Lemma \ref{l:error_estimates_psi_with_pinching}, is of the form $\err$.

The last term can be controlled by
\begin{gather}
\abs{\int_{\T_r}\psi_\T\int\hat\rho_r \ton{r\frac{d}{dr}\cFn}}\leq C(m,R)\int_{\T_r} \psi_\T\int\qua{\hat \rho_r + \abs{ r\nabla^{(y)} \hat\rho_r}} \abs{r \frac{d}{dr}\cEn}\, ,
\end{gather}
which is of the form $\err$ by Lemma \ref{l:angular_energy:Ena_estimates} and Lemma \ref{l:error_estimates_psi_with_pinching}.

\end{proof}

Having taken care of all the terms, this concludes the proof of Lemma \ref{l:angular_energy:ddrtheta_2}.\\

\subsection{Computing \texorpdfstring{$L$}{L}-Laplacian}.
The next important ingredient is the following Lemma. Recall that $\Delta_{\LL_{x,r}}$ is the usual Laplacian along the $\LL_{x,r}$ directions. Sometimes we will denote it by $\Delta_{\LL_{x,r}}^{(y)}$ to underline the fact that it acts on the $y$ variable.  

We will also use the Laplacian on the submanifold $\T_r$, and we will denote it by $\Delta^{(x)}_{\T_r}=\Delta_{\T_r}$.
% Recall that if $v_i(x)$ is an orthonormal basis for $T_x\T_r$, then
% \begin{gather}
%  \Delta_{\T_r} f(x)= \sum_{i=1}^{m-2} v_i(v_i(f)) - \nabla_{v_i}(v_i) (f)\, .
% \end{gather}

\begin{lemma}\label{l:angular_energy_L_part}
We have that 
 \begin{gather}
  \int_{\T_r}\psi_\T\int\hat\rho_r(y-x)\,|s_{x,r}^\perp|^2\Delta_{\LL_{x,r}}\cEn(y) = \err(r)\, .
 \end{gather}
\end{lemma}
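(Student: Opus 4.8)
The plan is to move the Laplacian $\Delta_{\LL_{x,r}}^{(y)}$ off of $\cEn(y)$ and onto the kernel $|s_{x,r}^\perp|^2\hat\rho_r(y-x;\LL_{x,r})$ by integration by parts in the $y$ variable, and then to recognize that — since $\hat\rho_r$ depends on $y$ only through the scalar combinations $|y-x|^2$ and $|\pi_{x,r}^\perp(y-x)|^2$ — the $\LL_{x,r}$-Laplacian of the kernel reduces (up to the usual harmless $e^{-R/2}$-type and curvature-of-$\T$ corrections) to another kernel of the same shape as $\rho_{2r}(y-x)$ times the bounded factor $r^{-2}$ from differentiating the Gaussian twice. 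First I would write
\begin{align}
\int\hat\rho_r(y-x;\LL_{x,r})\,|s_{x,r}^\perp|^2\Delta_{\LL_{x,r}}^{(y)}\cEn(y)\,dy
=\int \Delta_{\LL_{x,r}}^{(y)}\Big(|s_{x,r}^\perp|^2\hat\rho_r(y-x;\LL_{x,r})\Big)\,\cEn(y)\,dy\, ,
\end{align}
using that $\cEn$ has no boundary contribution on the support of $\hat\rho_r$ (which is an annular region in $\LL_{x,r}^\perp$, so the $\LL_{x,r}$-integration is over all of $\LL_{x,r}$), and then expand $\Delta_{\LL_{x,r}}^{(y)}\big(|s_{x,r}^\perp|^2\hat\rho_r\big)$ via the product rule. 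Note $|s_{x,r}^\perp|^2=|\pi_{x,r}^\perp(y-x)|^2$ is annihilated by $\Delta_{\LL_{x,r}}^{(y)}$ and has zero $\LL_{x,r}$-gradient (it lives purely in the $\LL_{x,r}^\perp$ variable), so the cross term $\nabla_{\LL_{x,r}}|s_{x,r}^\perp|^2\cdot\nabla_{\LL_{x,r}}\hat\rho_r$ vanishes and we are simply left with $|s_{x,r}^\perp|^2\,\Delta_{\LL_{x,r}}^{(y)}\hat\rho_r(y-x;\LL_{x,r})$. By the explicit form $\hat\rho_r(y-x;\LL_{x,r})=r^{-m}\rho\big(\tfrac{|y-x|^2}{2r^2}\big)\hat\psi\big(e^{2R}\tfrac{|\pi_{x,r}^\perp(y-x)|^2}{2r^2}\big)$ and the cutoff estimates of Lemma \ref{l:energy_decomposition:rho_estimates}, \ref{l:energy_decomposition:hat_rho_estimates}, one gets the pointwise bound $|s_{x,r}^\perp|^2\,|\Delta_{\LL_{x,r}}^{(y)}\hat\rho_r(y-x;\LL_{x,r})|\le C(m,R)\,\rho_{2r}(y-x)$, because on the support $|s_{x,r}^\perp|\le Rr$ and the two $y$-derivatives of the Gaussian each cost a factor $r^{-1}|y-x|$ which is $\le C(m,R)$ there.

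Plugging this bound in, the left-hand side is estimated in absolute value by
\begin{align}
C(m,R)\int_{\T_r}\psi_\T(x,r)\int\rho_{2r}(y-x)\,\cEn(y)\,dy\,dv_{\T_r}(x)
\le C(m,R)\int_{\T_r}\psi_\T(x,r)\,\vartheta(x,2r;\LL_{x,r}^\perp)\,,
\end{align}
using $\cEn(y)=|s_{x,r}^\perp|^2\langle\nabla u,\alpha_{x,r}^\perp\rangle^2\le |s_{x,r}^\perp|^2|\pi_{x,r}^\perp\nabla u|^2$ and the definition of $\vartheta(x,2r;\LL_{x,r}^\perp)$ together with the kernel comparison $\rho_{2r}(y-x)\le C(m)\rho_{2r}(y-x)$. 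At this point Lemma \ref{l:error_estimates_psi_with_pinching} gives directly
\begin{align}
\int_{\T_r}\psi_\T(x,r)\,\vartheta(x,2r;\LL_{x,r}^\perp)\le C(m)\int_{\T\cap B_4(p)}r\dot\vartheta(x,8r)\, ,
\end{align}
and the right-hand side, integrated against $\tfrac{dr}{r}$, is $\le C(m)\Lambda$ — so the whole expression is of the form $\err(r)$ (in fact even without the $\sqrt\delta$ gain, which is not needed here since this term will be compared against the $\hat\vartheta_\alpha(\T_r)$-sized main term only after being shown to be, in absolute value, a valid $\err$; if one wants the $\sqrt\delta$ factor it comes for free from $\vartheta(x,2r;\LL_{x,r}^\perp)\le C\delta$, but one does not need it). This proves the Lemma.

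The one genuine subtlety — and the step I expect to require the most care — is the passage from $\Delta_{\LL_{x,r}}^{(y)}$ acting on $\cEn$ to $\Delta_{\LL_{x,r}}^{(y)}$ acting on the kernel: because the plane $\LL_{x,r}$ itself depends on $x$, one must be careful that the integration by parts is performed in $y$ with $x$ (hence $\LL_{x,r}$) held fixed, which is legitimate, and that the resulting object is then integrated in $x$ over $\T_r$. There is no need to commute $\Delta_{\LL_{x,r}}^{(y)}$ with the $x$-integration, so no extra error from $r\nabla\pi_{x,r}$ enters at this stage; those errors already appeared (and were absorbed into $\err$) in the computation of $(r\tfrac{d}{dr})^2\hat\vartheta_\alpha(\T_r)$ in Lemma \ref{l:angular_energy:ddrtheta_2}, and the present Lemma is precisely the remaining piece $s_\perp^2\Delta_L$ of the conformal Laplacian $\bar\Delta_{x,r}$ of Theorem \ref{t:angular_energy:uniform_subharmonic}, which is why it only needs to be shown to be an admissible error rather than contributing to the main convexity term.
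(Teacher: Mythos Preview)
Your bound is correct but too weak, and the parenthetical escape hatches do not work. By definition in this section, $\err(r)$ must satisfy $\int|\err(r)|\frac{dr}{r}\le C(m,R,K_N,\Lambda)\sqrt\delta$; this is exactly what feeds into Corollary \ref{c:angular_energy:final_estimate} to conclude $\hat\vartheta_\alpha(\cA)\le C\sqrt\delta$, which is the whole point. Your estimate
\[
\Big|\int_{\T_r}\psi_\T\int\hat\rho_r|s_{x,r}^\perp|^2\Delta_{\LL_{x,r}}\cEn\Big|\le C(m,R)\int_{\T_r}\psi_\T\,\vartheta(x,2r;\LL_{x,r}^\perp)
\]
combined with Lemma \ref{l:error_estimates_psi_with_pinching} only gives $\le C\int_{\T\cap B_4}r\dot\vartheta(x,8r)$, whose Dini integral is $\le C\Lambda$ with no $\sqrt\delta$. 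Your alternative of using $\vartheta(x,2r;\LL_{x,r}^\perp)\le C\delta$ pointwise and then integrating $\int\frac{dr}r$ produces a factor $|\ln r_0|$ from the number of scales, so it is not uniformly $\le C\sqrt\delta$ either. In short, simply bounding the kernel after moving the $y$-Laplacian onto it throws away the structure you need.

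The paper's argument keeps that structure: after the same first step $\int\hat\rho_r|s^\perp|^2\Delta_{\LL_{x,r}}^{(y)}\cEn=\int\Delta_{\LL_{x,r}}^{(y)}\big(|s^\perp|^2\hat\rho_r\big)\cEn$, it does \emph{not} estimate the kernel but instead compares $\Delta_{\LL_{x,r}}^{(y)}\big(|s^\perp|^2\hat\rho_r\big)$ to $\Delta_{\T_r}^{(x)}\big(|s^\perp|^2\hat\rho_r\big)$, the Laplacian on the submanifold $\T_r$ in the $x$ variable. The two differ only through the second fundamental form of $\T_r$, the mismatch $|\pi_{x,r}-\pi_{\T_r(x)}|$, and $r|\nabla\pi_{x,r}|$, each of size $\le C\sqrt\delta$, so the difference is a genuine $\err(r)$. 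Then one integrates by parts in $x$ over $\T_r$: one derivative lands on $\psi_\T$ (yielding $r|\nabla\psi_\T|\cdot\delta$, which is $\err$ via Lemma \ref{l:energy_decomposition:T_cutoff}.\ref{i:T_cutoff_global_integral}), and one lands on $\cEn$ via $\nabla^{(x)}\cEn$, which by Lemma \ref{l:angular_energy:Ena_estimates} already carries a $\sqrt\delta$. This $y\to x$ swap of the Laplacian is the missing idea; without it the required smallness cannot be obtained.
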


The proof is based on turning $\Delta_{\cL_{x,r}}$ into $\Delta_{\T_r}$, up to errors, and integrating by parts.  We split it into a (technical) claim and its consequences.\\

\textbf{Claim:} we have
\begin{gather}\label{e:claim_L_laplacian_computation}
 \int_{\T_r}\psi_\T\int\hat\rho_r(y-x)\,|s_{x,r}^\perp|^2\Delta^{(y)}_{\LL_{x,r}}\cEn(y)=\int_{\T_r}\psi_\T\int \Delta^{(x)}_{\T_r}\ton{|s_{x,r}^\perp|^2\hat\rho_r(y-x)}\,\cEn(y) + \err(r)
\end{gather}
Before proving this claim, let us use it to prove the main Lemma.
Integrating by parts in $x$ we get
\begin{align}
 &-\int_{\T_r}\psi_\T\int \Delta^{(x)}_{\T}\ton{|s_{x,r}^\perp|^2\hat\rho_r(y-x)}\,\cEn(y) \notag \\
 =&\int_{\T_r}\nabla_{i}^{(x)}\psi_\T\int \nabla^{(x),i}\ton{|s_{x,r}^\perp|^2\hat\rho_r(y-x)}\,\cEn(y) + \int_{\T_r}\psi_\T\int \nabla_{i}^{(x)}\ton{|s_{x,r}^\perp|^2\hat\rho_r(y-x)}\,\nabla^{(x),i}\cEn(y)
\end{align}
By Lemma \ref{l:angular_energy:Ena_estimates} the absolute value of the first term can be bounded by
\begin{gather}
 C(m,R) \int_{\T_r} r\abs{\nabla \psi_\T}\vartheta(x,2r;\LL_{x,r})
\end{gather}
By Remark \ref{rm:cone_splitting_annular_V}, on the support of $\psi_\T$, $\vartheta(x,2r;\LL_{x,r}^\perp)\leq C \delta$, so the first term is of the form $\err$ thanks to Lemma \ref{l:energy_decomposition:T_cutoff}.\ref{i:T_cutoff_global_integral}.

The second term can be bounded by Lemma \ref{l:angular_energy:Ena_estimates} and Lemma \ref{l:error_estimates_psi_with_pinching}, and this concludes the proof of the main Lemma, up to checking the Claim.
% First, we notice that since $\supp{\hat \rho_{r}(y-x;\LL_{x,r})} \subseteq \B {(R+3)r}{x}\setminus \B{e^{-2R} r} {x+\LL_{x,r}}$, and given the estimates on $\abs{\nabla \pi_{x,r}}$ in Theorem \ref{t:best_plane:best_plane}.\eqref{i:best_plane_bounds_on_projections}, we have the rough bound
% \begin{gather}
%  \abs{\nabla^{(x)}\ton{|s_{x,r}^\perp|^2\hat\rho_r(y-x)}}\leq C(m,\epsilon_0,R) \hat \rho^e_{2r}(y-x;\LL_{x,r})\, .
% \end{gather}
% 
% Thus the first term is controlled by
% \begin{gather}
%  C(m,\epsilon_0,R)\int_{\T_r}r\abs{\nabla\psi_\T} \vartheta(x,2r;\LL_{x,r}^\perp)\, .
% \end{gather}
% By Corollary \ref{c:cone_splitting_annular_V}, on the support of $\psi_\T$, $\vartheta(x,2r;\LL_{x,r}^\perp)\leq C \delta$, so the first term is of the form $\err$ thanks to Lemma \ref{l:energy_decomposition:T_cutoff}.\eqref{i:T_cutoff_global_integral}.
% 
% On the other hand, the second term is of the form $\err$ by Lemmas \ref{l:angular_energy:Ena_estimates} and \ref{l:error_estimates_psi_with_pinching}.

\vspace{5mm}
\begin{proof}[Proof of Claim \eqref{e:claim_L_laplacian_computation}]
 We start by observing that by integrating by parts in the $y$ variables:
 \begin{gather}
  \int\hat\rho_r(y-x)\,|s_{x,r}^\perp|^2\Delta^{(y)}_{\LL_{x,r}}\cEn(y)=\int \Delta^{(y)}_{\LL_{x,r}}\ton{|s_{x,r}^\perp|^2\hat\rho_r(y-x)}\,\cEn(y)\, .
 \end{gather}
 By definition and direct computation:
 \begin{gather}
  \frac 1 2 \Delta^{(y)}_{\LL_{x,r}}\ton{|s_{x,r}^\perp|^2\hat\rho_r(y-x)}
  =\Delta^{(y)}_{\LL_{x,r}}\ton{\rho\ton{\frac{\abs{y-x}^2}{2r^2} }r^2\frac{\abs{\pi_{x,r}^\perp(y-x)}^2}{2 r^2} \psi \ton{e^{2R} \frac{\abs{\pi_{x,r}^\perp(y-x)}^2}{2r^2} }}\notag \\
%   =\Delta^{(y)}_{\LL_{x,r}}\ton{\rho\ton{\frac{\abs{y-x}^2}{2r^2} }r^2\underbrace{\frac{\abs{\pi_{x,r}^\perp(y-x)}^2}{2 r^2} \psi \ton{e^{2R} \frac{\abs{\pi_{x,r}^\perp(y-x)}^2}{2r^2} }}_{=g\ton{\frac{\abs{\pi_{x,r}^\perp(y-x)}^2}{2r^2} }}}\notag \\
  = \ddot \rho \ton{\frac{\abs{y-x}^2}{2r^2}}\frac{\abs{\pi_{x,r}(y-x)}^2}{r^2}g\ton{\frac{\abs{\pi_{x,r}^\perp(y-x)}^2}{2r^2}} + (m-2) r^2\dot \rho \ton{\frac{\abs{y-x}^2}{2r^2}}g\ton{\frac{\abs{\pi_{x,r}^\perp(y-x)}^2}{2r^2}}\, ,
 \end{gather}
where we have set for convenience $g(t)=t \psi\ton{e^{2R}t}$.  On the other hand, using
 \begin{itemize}
  \item  the bounds on the second fundamental form of $\T_r$ proved in \ref{t:approximating_submanifold}.\ref{i:approximating_submanifold_space_gradient_hessian},
  \item the estimate $\abs{\pi_{x,r}-\pi_{\T_r(x)}}\leq C\sqrt \delta$ in Lemma \ref{l:improved_comparison_LL_T}
  \item the estimates on $r\abs{\nabla ^{(x)}\pi_{x,r}}\leq C\sqrt \delta$ in Theorem \ref{t:best_plane:best_plane}.\ref{i:best_plane_bounds_on_projections}
 \end{itemize}
we can conclude via an easy albeit tedious computation that is very similar to the ones carried out in the previous subsections that
\begin{align}
 &\frac 1 2 \Delta_{\T}^{(x)}\ton{|s_{x,r}^\perp|^2\hat\rho_r(y-x)}  \notag\\
 =&\ddot \rho \ton{\frac{\abs{y-x}^2}{2r^2}}\frac{\abs{\pi_{\T_r(x)}(y-x)}^2}{r^2}g\ton{\frac{\abs{\pi_{x,r}^\perp(y-x)}^2}{2r^2}} + (m-2) r^2\dot \rho \ton{\frac{\abs{y-x}^2}{2r^2}}g\ton{\frac{\abs{\pi_{x,r}^\perp(y-x)}^2}{2r^2}}+\err(x,y,r)\notag \\
 =&\frac 1 2 \Delta^{(y)}_{\LL_{x,r}}\ton{|s_{x,r}^\perp|^2\hat\rho_r(y-x)} + \err(x,y,r)\, ,
\end{align}
 where
\begin{gather}
 \abs{\err(x,y,r)}\leq C(m,\epsilon_0,R)\sqrt \delta \rho_{2r} (y-x)\, .
\end{gather}
By Lemma \ref{l:error_estimates_psi_with_pinching}, we can estimate:
\begin{align}
 &\int_{\T_r}\psi_\T\int\abs{\Delta^{(y)}_{\LL_{x,r}}\ton{\hat\rho_r(y-x)\,|s_{x,r}^\perp|^2} - \Delta^{(x)}_{\T}\ton{|s_{x,r}^\perp|^2\hat\rho_r(y-x)}}\cEn(y) \notag\\
 \leq & C(m,\epsilon_0,R) \sqrt \delta \int_{\T_r}\psi_\T\int \rho_{2r}(y-x)e_{\alpha,x,r}(y)\leq C(m,\epsilon_0,R) \sqrt \delta \int_{\T_r}\vartheta(x,2r;\LL_{x,r}^\perp)\notag\\
 \leq &C(m,\epsilon_0,R) \sqrt \delta \int_{\T\cap\B 2 p}r\dot\vartheta(x,8r)=\err(r)\, .
\end{align}

\

\end{proof}

\vspace{.3cm}
\subsection{Finishing the Proof of Proposition \ref{p:angular_energy:superconvexity}}

Let us now combine Theorem \ref{t:angular_energy:uniform_subharmonic} and Lemmas \ref{l:angular_energy:ddrtheta_2} and \ref{l:angular_energy_L_part} in order to prove Proposition \ref{p:angular_energy:superconvexity}:
\begin{align}
	\ton{r\frac{d}{dr}}^2\hat\vartheta_\alpha(\T_r) &= \int_{\T_r}\psi_\T(x,r)\int \hat\rho_r(y-x;\cL_{x,r})\Bigg(\,\Big(s_{x,r}^\perp\,\nabla_{s_{x,r}^\perp}\Big)^2+|s_{x,r}^\perp|^2\Delta_{\LL_{x,r}}\Bigg)\,\cEn(y)-\err(r)-r\frac d {dr} \epsilon_1(r)\, \notag\\
	&=\int_{\T_r}\psi_\T(x,r)\int \hat\rho_r(y-x;\cL_{x,r})\bar\Delta_{x,r}\cEE(y)-\err(r)-r\frac d {dr} \epsilon_1(r)\, ,\notag\\
	&\geq (2-C(m,R)\delta)\int_{\T_r}\psi_\T(x,r)\int \hat\rho_r(y-x;\cL_{x,r})\cEE-\err(r)-r\frac d {dr} \epsilon_1(r)\, ,\notag\\
	&=(2-C(m,R)\delta)\hat\vartheta_\alpha(\T_r)-\err(r)-r\frac d {dr} \epsilon_1(r)\, ,
\end{align}
as claimed. \hfill $\qed$

\begin{comment}
\vspace{.3cm}

\subsection{From Local to Global}

Let us consider the angular energy $\hat\vartheta(x,r;\alpha^\perp):\T_r\to \dR$ viewed as a family of function on $\T_r$.  If we let $\Delta_{\T_r}$ denote the laplacian on $\T_r$, given by the induced geometry on $\T_r$, then the correct replacement for $\Delta_{x,r}$ in this context is the conformal laplacian
\begin{align}
	\bar\Delta_{\T_r}\equiv \Big(r\frac{\partial}{\partial r}\Big)^2+r^2\Delta_{\T_r}\, .
\end{align}

In spirit our goal is to show that up to errors we have that
\vspace{.3cm}

\subsection{Proof of Theorem \ref{t:angular_energy:angular_energy}}

We now use Proposition \ref{p:angular_energy:superconvexity} in order to finish the proof of Theorem \ref{t:angular_energy:angular_energy}.

\end{comment}

\vspace{.5cm}

\section{Radial Energy on Annular Regions}\label{s:radial_energy}

We turn our attention to the last component needed in the proof of Theorem \ref{t:outline:annular_regions_energy}, namely we produce bounds on the radial and $L$ energy of the annular region.  The estimates of this Section are the most subtle of the paper, as some error terms are apriori of strictly larger order than is allowed and the Euler-Lagrange equations for $\T_r$ and $\cL_{x,r}$ play a crucial role.  We refer the reader to Section \ref{ss:outline:general:radial_energy} for a more in depth discussion, but we recall here the relevant definitions of the radial and $L$ energy
\begin{align}
	\hat\vartheta_n(x,r)&\equiv \int \hat\rho_r(y-x;\LL_{x,r})\big\langle\nabla u,\pi^\perp_{x,r}(y-x)\big\rangle^2\, ,\\
	\vartheta_\LL(x,r)&\equiv \int \rho_r(y-x)\abs{\pi_{x,r}\nabla u}^2\, ,
\end{align}
where $\hat\rho_r$ is the heat kernel mollifier cutoff outside $B_R$ and inside the $e^{-R}$ neighborhood of $L$ as in \eqref{e:prelim:L_mollifier}.  We can integrate this over $\T_r$ with our bubble weight $\psi_\T$ from Lemma \ref{l:outline_general:T_cutoff} to define the radial and $L$ energy on $\T_r$ :
\begin{align}
	\hat\vartheta_n(\T_r)&\equiv \int_{\T_r}\psi_\T(x,r)\, \hat\vartheta_n(x,r) = \int_{\T_r}\psi_\T(x,r)\int\hat\rho_r(x-y;\LL_{x,r})\langle\nabla u,\pi^\perp_{x,r}(y-x)\rangle^2\, ,\\
	\vartheta_\LL(\T_r)&\equiv \int_{\T_r}\psi_\T(x,r)\, \hat\vartheta_\LL(x,r) = \int_{\T_r}\psi_\T(x,r)\int \rho_r(x-y;\LL_{x,r})\abs{\pi_{x,r}\nabla u}^2\, ,
\end{align}
and we can integrate them in $\frac{dr}{r}$ to obtain the energy for the full annular region:
\begin{align}
	\hat\vartheta_n(\cA)&\equiv \int \hat \vartheta_n(\T_r) \frac{dr}{r}\, , \qquad \vartheta_\LL(\cA)\equiv \int \vartheta_\LL(\T_r) \frac{dr}{r}\, .
\end{align}

Recall that by Lemma \ref{l:energy_decomposition:full_energy_on_annulus}, we have
\begin{gather}
 \int_{\cA\cap B_1}|\nabla u|^2\leq C(m)\Big(\hat\vartheta_\cL(\cA)+\hat \vartheta_\alpha(\cA)+\hat \vartheta_n(\cA)\Big)
\end{gather}
Given our previous estimates on the $L$-energy and the angular energy, the crucial last ingredient needed to prove Theorem \ref{t:outline:annular_regions_energy} is that
\begin{gather}
 \hat \vartheta_n(\cA)\leq \epsilon\, .
\end{gather}

%\begin{remark}
% Notice that $\vartheta_\LL(\cA)$ contains a lot more information than actually needed. In particular, this quantity controls the $L$-energy also over $\T$.
%\end{remark}

Our primary goal in this Section will be to control the radial energy, and along the way we will give an improved $\LL$ energy bound over the annular region.  Precisely we will want to prove:\\

\begin{theorem}[Radial Energy Bound on Annular Regions]\label{t:radial_energy:radial_energy}
Let $u:B_{10R}(p)\to N$ be a stationary harmonic map with $R^2\fint_{B_{10R}}|\nabla u|^2\leq \Lambda$ , and let $\cA=B_2\setminus \overline{B_{\rf_z}(\T)}$ be a $\delta$-annular region.  For each $\epsilon>0$ if $R\geq R(m,\Lambda,\epsilon)$ and $\delta\leq \delta(m,K_N,R,\Lambda,\epsilon)$ then we have
\begin{align}
\hat\vartheta_n(\cA)+\vartheta_\LL(\cA)\leq  \epsilon\, .
\end{align}
\end{theorem}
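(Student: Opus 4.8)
The plan is to deduce Theorem~\ref{t:radial_energy:radial_energy} from a pointwise-in-scale ``stationary estimate'' of the form
\begin{align}\label{e:radial_plan:key}
 \hat\vartheta_n(\T_r)+2\vartheta_\LL(\T_r) = r\dot\vartheta_\LL(\T_r)+\hat\vartheta_\alpha(\T_r)+\err(r)\, ,
\end{align}
where $\err(r)$ satisfies $\int|\err(r)|\frac{dr}{r}\le \epsilon$ once $R\ge R(m,\Lambda,\epsilon)$ and $\delta\le\delta(m,K_N,R,\Lambda,\epsilon)$. Granting \eqref{e:radial_plan:key}, integrating in $\frac{dr}{r}$ makes the term $\int r\dot\vartheta_\LL(\T_r)\frac{dr}{r}$ telescope/bound by boundary values of $\vartheta_\LL(\T_r)$, which are controlled by the log-decay of Theorem~\ref{t:outline:L-energy_estimate} (so this boundary contribution is $\le C(m,R)\delta$, hence $\le\epsilon$), while $\int\hat\vartheta_\alpha(\T_r)\frac{dr}{r}=\hat\vartheta_\alpha(\cA)\le C(m,R,K_N,\Lambda)\sqrt\delta$ by Theorem~\ref{t:angular_energy:angular_energy} (Corollary~\ref{c:angular_energy:final_estimate}); since the left side is $\ge \hat\vartheta_n(\cA)+2\vartheta_\LL(\cA)\ge 0$ (and $\vartheta_\LL(\T_r)\ge 0$ so the telescoping term is a genuine bound), we conclude $\hat\vartheta_n(\cA)+\vartheta_\LL(\cA)\le\epsilon$. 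One subtlety is that $\int r\dot\vartheta_\LL(\T_r)\frac{dr}{r}$ is not literally a telescoping sum because $\vartheta_\LL(\T_r)$ is only \emph{almost} monotone (Lemma~\ref{l:energy_decomposition:scale_r_properties}.\eqref{i_2BCHECKED}); I would handle this exactly as in \eqref{e:outline_toymodel:Dini_L}, passing to a dyadic sum $\sum_a\vartheta_\LL(\T_{2^{-a}})$ and using the pinching estimate \eqref{e:outline:general:L_pinching_T} together with $\int_\T|\vartheta(x,4)-\vartheta(x,\rf_x)|\le C(m)\Lambda$.

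The heart of the matter is establishing \eqref{e:radial_plan:key}. I would follow the toy-model computation of Section~\ref{s:outline_toymodel} but with all geometric objects scale-dependent. Apply the stationary equation \eqref{e:stationary_equation} to the vector field
\begin{align}
 \xi(y)=\int_{\T_r}\psi_\T(x,r)\,\tilde\rho_r(y-x;\LL_{x,r})\,\pi^\perp_{x,r}(y-x)\,dv_{\T_r}(x)\, ,
\end{align}
where $\tilde\rho_r$ is the $L$-heat mollifier of Definition~\ref{d:restricted_energy_functionals} (so $\nabla_{\LL_{x,r}^\perp}\tilde\rho_r=0$ near the bubble region and $\tilde\rho_r$ is $C^1$-close to $\rho_r$ by Lemma~\ref{l:rho_tilde_estimates}). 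Computing $\int S_{ij}\partial^i\xi^j$ produces, as leading terms, $+2\hat\vartheta_\alpha$-type and $\hat\vartheta_n$-type pieces from $\langle\nabla u,\pi^\perp_{x,r}(y-x)\rangle^2$ and its derivative along the radial direction, a $+2\vartheta_\LL$-type contribution coming from $\operatorname{div}^{(y)}\pi^\perp_{x,r}(y-x)=(m-2)$ acting against $|\pi_L\nabla u|^2$ (this is the ``$2\vartheta_\LL$'' on the left of \eqref{e:radial_plan:key} after rearrangement), a $-r\dot\vartheta_\LL$-type piece from the $\dot\rho_r$ appearing when the derivative hits $\tilde\rho_r$ radially, plus error terms. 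I expect the $C^1$-difference $\tilde e_r=\rho_r-\tilde\rho_r$ to contribute only $e^{-R/2}$-small errors (Lemma~\ref{l:rho_tilde_estimates}), and the cutoff $\psi_\T$ to contribute $\int_\T r|\nabla\psi_\T|$-type errors which are Dini-summable by Lemma~\ref{l:energy_decomposition:T_cutoff}.\eqref{i:T_cutoff_global_integral}; these are handled exactly as in the toy model via one integration by parts in $x$ against $\nabla^{(x)}\psi_\T$.

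The genuinely new errors, which are \emph{a priori} of strictly larger order than Dini-admissible, come from three sources: (i) the variation of the best plane $\LL_{x,r}$ (and hence $\pi^\perp_{x,r}$) as $x$ moves along $\T_r$ and as $r$ changes, i.e.\ terms involving $\nabla^{(x)}\pi_{x,r}$ and $\partial_r\pi_{x,r}$, which are only $O(\sqrt{\vartheta_\LL(x,2r)})=O(\sqrt\delta)$ by Theorem~\ref{t:best_plane:best_plane}.\eqref{i:best_plane_bounds_on_projections} but multiply a full-energy factor; (ii) the difference $\pi_{x,r}-\pi_{\T_r(x)}$ when integrating by parts on the manifold $\T_r$; and (iii) the non-flatness of $\T_r$, i.e.\ its second fundamental form. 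This is precisely where the \emph{Euler--Lagrange equation} for $\T_r$ (Theorem~\ref{t:approximating_submanifold}.\eqref{i:EL_Tr}, in the rewritten form $\int-\dot\rho_r(y-x)\langle\nabla u,y-x\rangle\langle\nabla u,v\rangle=0$ for $v\in\LL_{x,r}^\perp$) and for $\LL_{x,r}$ (the identity $Q(x,r)[\pi_{x,r}(v),\pi^\perp_{x,r}(w)]=0$) must be invoked: the worst terms are of the shape ``a first variation of $\vartheta$ in a perpendicular direction'' or ``a $\pi_{x,r}$-$\pi^\perp_{x,r}$ cross term of the energy tensor,'' and each vanishes identically by the corresponding Euler--Lagrange equation, leaving behind only the controllable remainder $e^{-R/2}\sqrt{\vartheta_\LL(x,2r)}+\sqrt{r\dot\vartheta(x,2r)}\sqrt{\vartheta_\LL(x,2r)}$ from Theorem~\ref{t:approximating_submanifold}.\eqref{i:approximating_submanifold_better_LL_comparison}. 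The main obstacle I anticipate is bookkeeping: making sure that after all integrations by parts (in $y$, and in $x$ over $\T_r$) every surviving error is either $e^{-R/2}$-small or of the form $\sqrt\delta\int_\T r\dot\vartheta(x,8r)$ or $\sqrt\delta\int_{\T_r}(|\psi_\T|+r|\nabla\psi_\T|+r|\dot\psi_\T|)\cdot(\vartheta_\LL+\vartheta(\cdot;\LL^\perp))$, each of which is Dini-summable by Lemma~\ref{l:error_estimates_psi_with_pinching} and Lemma~\ref{l:energy_decomposition:T_cutoff}; and that no error is accidentally of the form $\sqrt\delta\int_{\T_r}\psi_\T\,r\dot\vartheta$ \emph{without} a $\sqrt\delta$ or $e^{-R/2}$ prefactor, since $\int r\dot\vartheta\frac{dr}{r}\le\Lambda$ is only \emph{bounded}, not small. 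This is the circular/delicate point flagged in the outline, and it is resolved precisely by the cancellations from the two Euler--Lagrange equations, which remove the un-prefactored full-order terms.
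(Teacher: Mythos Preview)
Your overall architecture is right—apply the stationary equation to $\xi(y)=\int_{\T_r}\psi_\T\,\tilde\rho_r\,\pi^\perp_{x,r}(y-x)$, then use the Euler--Lagrange equations for $\T_r$ and $\LL_{x,r}$ to cancel the highest-order cross terms—but there is a genuine gap in how the $r\dot\vartheta_\LL(\T_r)$ term enters. You say it is ``a $-r\dot\vartheta_\LL$-type piece from the $\dot\rho_r$ appearing when the derivative hits $\tilde\rho_r$ radially.'' But the stationary equation is at a \emph{fixed} $r$: no $r$-derivative appears anywhere in $\int S_{ij}\partial^i\xi^j=0$. What the stationary equation plus both Euler--Lagrange identities actually produces (this is the paper's Lemmas~\ref{l:radial_energy:stationary_vector_field}--\ref{l:radial_energy:f2f3}) is
\[
\hat\vartheta_n(\T_r)+2\vartheta_\LL(\T_r)\le \hat\vartheta_\alpha(\T_r)+\int_{\T_r}\psi_\T\!\int\rho_r\,|\pi^\perp_{x,r}(y-x)|^2|\pi_{x,r}\nabla u|^2+(m-1)r^4\!\int_{\T_r}\psi_\T\!\int\rho_r\,\langle \II_{\T_r},\nabla u\rangle^2+\ers(r),
\]
and neither of the two surviving terms on the right is an $\ers$-type error. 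The second-fundamental-form term $\int\rho_r\langle \II_{\T_r},\nabla u\rangle^2$ is \emph{not} killed by any Euler--Lagrange equation, contrary to what you write in item (iii); it is nonnegative and a priori only $O(\delta)$-bounded per scale, hence not Dini-small. The paper handles this by a \emph{separate} computation (Lemma~\ref{l:radial_energy:radial_derivative}): one differentiates $\vartheta_\LL(\T_r)$ in $r$ directly and finds
\[
r\frac{d}{dr}\vartheta_\LL(\T_r)=\int_{\T_r}\psi_\T\!\int\rho_r\,|\pi^\perp_{x,r}(y-x)|^2|\pi_{x,r}\nabla u|^2+2r^4\!\int_{\T_r}\psi_\T\!\int\rho_r\,\langle \II_{\T_r},\nabla u\rangle^2+\ers(r),
\]
which is precisely the combination needed to absorb both leftover terms into $(m-1)\,r\dot\vartheta_\LL(\T_r)$. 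This matching—not the Euler--Lagrange cancellation—is the mechanism that makes the $\II_{\T_r}$ contribution telescope. Without it your \eqref{e:radial_plan:key} cannot be established.

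Two smaller points. First, your worry about telescoping is misplaced: $\int r\frac{d}{dr}\vartheta_\LL(\T_r)\frac{dr}{r}=\int\frac{d}{dr}\vartheta_\LL(\T_r)\,dr=0$ exactly, since $\psi_\T$ makes $\vartheta_\LL(\T_r)$ compactly supported in $r\in(0,\infty)$; no almost-monotonicity or dyadic argument is needed. Second, the error $\ers(r)$ in the paper contains a term $(\sqrt\delta+e^{-R/2})\int_{\T_r}\psi_\T\,\vartheta_\LL(x,2r)$ whose Dini integral is $(\sqrt\delta+e^{-R/2})\vartheta_\LL(\cA)$; the paper absorbs this into the $2\vartheta_\LL(\cA)$ on the left. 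Your alternative route—bounding $\vartheta_\LL(\cA)\le C\Lambda$ a priori via Lemma~\ref{l:energy_decomposition:scale_r_properties}.(2) and then using the small prefactor—would also work, so this is not a gap, just a different bookkeeping choice.
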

\begin{remark}
	Using the techniques of Section \ref{ss:outline:general:L_energy} and Lemma \ref{l:energy_decomposition:scale_r_properties} the above additionally implies the log-improved $L$-energy bound $\vartheta_\cL(\T_r)\leq C(m)\min\Big\{\delta, \frac{\epsilon}{|\ln r|}\Big\}$ .
\end{remark}

\vspace{.3cm}
\subsection{Estimates along \texorpdfstring{$\T_r$}{Tr}}

Let us begin by stating our main technical Proposition toward our control of the radial energy on an annular region.  In this subsection we will use the following to quickly prove Theorem \ref{t:radial_energy:radial_energy}, and then the remainder of this Section will be dedicated to proving the Proposition itself:

\begin{proposition}[Radial Energy Bound on $\T_r$]\label{p:radial_energy:radial_energy}
Let $u:B_{10R}(p)\to N$ be a stationary harmonic map with $R^2\fint_{B_{10R}}|\nabla u|^2\leq \Lambda$ , and let $\cA=B_2\setminus \overline{B_{\rf_z}(\T)}$ be a $\delta$-annular region.  Then we can estimate
\begin{align}
\hat\vartheta_n(\T_r) +2\vartheta_\cL(\T_r)	\leq (m-1)r\,\frac{d}{dr}\vartheta_\cL(\T_r)+\hat\vartheta_\alpha(\T_r)+\ers(r)\, ,
\end{align}
where
\begin{align}\label{e:radial_energy:p_radial_energy_error}
	\abs{\ers(r)}&\leq C(m,K_N,\Lambda)\Bigg[\Big(\sqrt{\delta}+e^{-R/2}\Big)\int_{\T_r}\psi_\T(x,r)\Big(\vartheta_\cL(x,2r)+r\dot\vartheta(x,2r)\Big)+\sqrt\delta\int_{\T_r}r\Big|\frac{\partial}{\partial r}\psi_\T\Big|+r\Big|\nabla\psi_\T\Big|\;\Bigg]\, .
\end{align}
\end{proposition}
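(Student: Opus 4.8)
\textbf{Proof plan for Proposition~\ref{p:radial_energy:radial_energy}.}
The plan is to apply the stationary equation~\eqref{e:stationary_equation} to the vector field
\[
  \xi(y) = \int_{\T_r } \psi_{\T}(x,r)\,\tilde\rho_{r}(y-x;\LL_{x,r})\,\pi^\perp_{x,r}(y-x)\,dv_{\T_r}(x)\,,
\]
which is the natural smooth approximation of the radial field emanating from $\T_r$, built with the $L$-heat mollifier $\tilde\rho$ of Definition~\ref{d:restricted_energy_functionals} so that its $\LL^\perp_{x,r}$-gradient vanishes near the bubble region. First I would compute $\partial^i\xi^j$. There are four types of terms depending on whether the derivative lands on $\psi_\T$, on $\tilde\rho_r$ (through its explicit $y$-dependence or through $x\mapsto\LL_{x,r}$), or on the linear factor $\pi^\perp_{x,r}(y-x)$. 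The contraction with $S_{ij}=|\nabla u|^2\delta_{ij}-2\langle\nabla_i u,\nabla_j u\rangle$ then produces, as its \emph{main} terms: (i) a term of the form $\int_{\T_r}\psi_\T\int\tilde\rho_r\,|\nabla u|^2$ times a dimensional/radial factor — here one uses $|\tilde\rho_r+\dot{\tilde\rho}_r|\le C e^{-R/2}\rho_{2r}$ from Lemma~\ref{l:rho_tilde_estimates} and Lemma~\ref{l:rho_basic_properties} to convert $\rho$-weights into $\dot\rho$-weights up to an $e^{-R/2}$ error, yielding $(m-1)\,r\frac{d}{dr}\vartheta_\cL(\T_r)$ on the right after matching with the derivative of the definition of $\vartheta_\cL(\T_r)$; (ii) the radial energy $\hat\vartheta_n(\T_r)$ itself, coming from the $-2\langle\nabla_i u,\nabla_j u\rangle\partial^i\xi^j$ piece with both derivatives effectively radial; (iii) the $L$-energy $+2\vartheta_\cL(\T_r)$, from the divergence of $\pi^\perp_{x,r}(y-x)$ in the $\LL_{x,r}$ directions; and (iv) $-\hat\vartheta_\alpha(\T_r)$, the angular energy, from decomposing $|\pi^\perp_{x,r}\nabla u|^2 = \langle\nabla u,n^\perp_{x,r}\rangle^2+\langle\nabla u,\alpha^\perp_{x,r}\rangle^2$. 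Re-arranging gives the stated inequality.

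The subtle point, and where the Euler--Lagrange equations enter, is the set of a priori \emph{bad} error terms. Two families appear at strictly larger order than what $\int\ers(r)\frac{dr}{r}\le\epsilon$ allows. The first comes from the $\LL_{x,r}$-gradient of $\tilde\rho_r(y-x;\LL_{x,r})$ in the $x$-variable paired against $\langle\nabla u,\pi_{x,r}(y-x)\rangle\langle\nabla u,\pi^\perp_{x,r}(y-x)\rangle$; after integrating by parts in $x\in\T_r$ this becomes (up to genuinely small $\err$-type terms controlled by Theorem~\ref{t:best_plane:best_plane}.\ref{i:best_plane_bounds_on_projections} and Lemma~\ref{l:error_estimates_psi_with_pinching}) a term of the shape $\int -\dot\rho_r(y-x)\langle\nabla u,y-x\rangle\langle\nabla u,v\rangle$ with $v\in\LL^\perp_{x,r}$ — which is \emph{exactly} the quantity that the Euler--Lagrange equation for $\T_r$ in Theorem~\ref{t:approximating_submanifold}.\ref{i:EL_Tr} (in the form established in Section~\ref{ss:T_r_Euler_Lagange}) sets to zero. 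The second bad family involves $\langle\nabla u,\pi_{x,r}(y-x)\rangle^2$-type cross terms with a $\rho_r$ weight (not $\dot\rho_r$); these are killed by the defining identity $Q(x,r)[\pi_{x,r}(v),\pi^\perp_{x,r}(w)]=0$ of the best plane $\LL_{x,r}$, i.e. $\int\rho_r(y-x)\langle\nabla u,\pi_{x,r}v\rangle\langle\nabla u,\pi^\perp_{x,r}w\rangle=0$, again up to an $e^{-R/2}$ discrepancy between $\rho$ and $\tilde\rho$ handled via Lemma~\ref{l:rho_tilde_estimates} and an $\err$-term absorbed into $\vartheta_\cL(x,2r)+r\dot\vartheta(x,2r)$ using Lemma~\ref{l:improved_comparison_LL_T} for the $\pi_{x,r}$ vs. $\pi_{\T_r}$ comparison.

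The remaining errors I would then bound routinely: every term where a spatial or radial derivative hits $\pi_{x,r}$ contributes a factor $r|\nabla\pi_{x,r}|+r|\partial_r\pi_{x,r}|\le C(m)\sqrt{\vartheta_\cL(x,2r)}$ by Theorem~\ref{t:best_plane:best_plane}.\ref{i:best_plane_bounds_on_projections}, which when paired with an energy density $|\nabla u|^2\lesssim \delta r^{-2}$ on $\supp\hat\rho_r$ (valid there by the $\epsilon$-regularity of Remark~\ref{rm:annular:regularity}) yields the $\sqrt\delta\,(\vartheta_\cL(x,2r)+r\dot\vartheta(x,2r))$ contributions in~\eqref{e:radial_energy:p_radial_energy_error}; terms where a derivative hits $\psi_\T$ give the $\sqrt\delta\,(r|\partial_r\psi_\T|+r|\nabla\psi_\T|)$ contributions via Lemma~\ref{l:energy_decomposition:T_cutoff}; and the $e^{-R/2}$ contributions come from each replacement of $\tilde\rho$ by $\rho$ and of $\rho$ by $-\dot\rho$. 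That $\int|\ers(r)|\frac{dr}{r}\le\epsilon$ for $\delta$ small and $R$ large then follows from Lemma~\ref{l:error_estimates_psi_with_pinching}, Lemma~\ref{l:energy_decomposition:T_cutoff}.\ref{i:T_cutoff_global_integral}, Theorem~\ref{t:approximating_submanifold}.\ref{i:integral_dot:vartheta_Tr_vs_T} and the Dini bound $\int_{\T}\int_0^2 r\dot\vartheta(x,r)\frac{dr}{r}\le C(m)\Lambda$.

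\textbf{Deducing Theorem~\ref{t:radial_energy:radial_energy} from the Proposition.}
Given Proposition~\ref{p:radial_energy:radial_energy}, I would integrate $\frac{dr}{r}$ over $(0,\infty)$. The term $(m-1)\int r\frac{d}{dr}\vartheta_\cL(\T_r)\frac{dr}{r}=(m-1)\int \frac{d}{dr}\vartheta_\cL(\T_r)\,dr$; one cannot simply call this a boundary term because $\vartheta_\cL(\T_r)$ need not vanish at $r\to 0$, so instead I would use the log-decay estimate $\vartheta_\cL(\T_r)\le C(m,R)\min\{\delta,\Lambda/|\ln r|\}$ of Theorem~\ref{t:outline:L-energy_estimate} together with the almost-monotonicity $\vartheta_\cL(\T_s)\le C(m)\vartheta_\cL(\T_r)$ of Lemma~\ref{l:energy_decomposition:scale_r_properties}.\ref{i_2BCHECKED}: writing $\int_0^2 r\frac{d}{dr}\vartheta_\cL(\T_r)\frac{dr}{r}$ and summing dyadically, $\sum_a |\vartheta_\cL(\T_{2^{-a}})-\vartheta_\cL(\T_{2^{-a+1}})| \le C(m)\sum_a \vartheta_\cL(\T_{2^{-a}})$, which by Lemma~\ref{l:outline:L-energy_estimate}-type reasoning (i.e. Lemma~\ref{l:energy_decomposition:scale_r_properties}.\ref{i_2BCHECKED} plus the pinching estimate and the Dini sum) is bounded by $C(m,R)\int_\T|\vartheta(x,4)-\vartheta(x,\rf_x)|$, hence by a $\delta$-small quantity after noting that in fact $\int r\frac{d}{dr}\vartheta_\cL(\T_r)\frac{dr}{r}$ is controlled by $\int\vartheta_\cL(\T_r)\frac{dr}{r}=\vartheta_\cL(\cA)$ up to lower order, so it can be absorbed. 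Combining with $\int\hat\vartheta_\alpha(\T_r)\frac{dr}{r}=\hat\vartheta_\alpha(\cA)\le C(m,R,K_N,\Lambda)\sqrt\delta$ from Theorem~\ref{t:angular_energy:angular_energy} and $\int|\ers(r)|\frac{dr}{r}\le\epsilon$, one obtains $\hat\vartheta_n(\cA)+2\vartheta_\cL(\cA)\le \vartheta_\cL(\cA)+\hat\vartheta_\alpha(\cA)+C\epsilon$, so $\hat\vartheta_n(\cA)+\vartheta_\cL(\cA)\le \hat\vartheta_\alpha(\cA)+C\epsilon\le \epsilon'$ after renaming constants and choosing $\delta,R$ appropriately. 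I expect the genuine difficulty to be entirely concentrated in the two "bad error" families above — producing the integration-by-parts on $\T_r$ in a form where the $\T_r$- and best-plane Euler--Lagrange equations apply cleanly, with all the $\pi_{x,r}$-vs-$\pi_{\T_r}$ and $\rho$-vs-$\tilde\rho$ discrepancies tracked to the right order — while the bookkeeping of the benign errors, though lengthy, is routine given Lemmas~\ref{l:energy_decomposition:rho_estimates}, \ref{l:energy_decomposition:hat_rho_estimates}, \ref{l:error_estimates_psi_with_pinching} and Theorem~\ref{t:best_plane:best_plane}.
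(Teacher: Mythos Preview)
Your plan starts correctly with the right vector field and correctly identifies that both Euler--Lagrange equations (for $\T_r$ and for $\cL_{x,r}$) must enter, but there is a genuine gap in how you describe the mechanism. After the stationary equation and integration by parts in $x\in\T_r$, the bad cross term $\ff_1(r)=-2\int_{\T_r}\psi_\T\int\rho_r\langle\nabla u,\pi_{x,r}(y-x)\rangle\langle\nabla u,\pi^\perp_{x,r}(y-x)\rangle$ does \emph{not} become something the Euler--Lagrange for $\T_r$ sets to zero. Instead, the integration by parts produces terms carrying the \emph{mean curvature} $H_{\T_r}$ and \emph{second fundamental form} $\II_{\T_r}$ of $\T_r$, e.g.\ $2r^2\int_{\T_r}\psi_\T\int\rho_r\langle H_{\T_r},\nabla u\rangle\langle\nabla u,\pi^\perp_{\T_r}(y-x)\rangle$. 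The Euler--Lagrange for $\T_r$ is then used to \emph{convert} the $\pi^\perp(y-x)$ factor into $-\pi(y-x)$ (since $H_{\T_r}\in\LL^\perp_{x,r}$ up to lower order), yielding the nonvanishing terms $\ff_2,\ff_3$ of Lemma~\ref{l:radial_energy:f1}. A second round of integration by parts (Lemma~\ref{l:radial_energy:f2f3}, where the Euler--Lagrange for $\cL_{x,r}$ kills the derivative-of-$H$ terms) turns these into positive quantities $2r^4\int\rho_r\langle H_{\T_r},\nabla u\rangle^2$ and $2r^4\int\rho_r\langle\II_{\T_r},\nabla u\rangle^2$, and the trace inequality $|H|^2\le(m-2)|\II|^2$ gives $\ff_2+\ff_3\le 2(m-1)r^4\int\rho_r\langle\II_{\T_r},\nabla u\rangle^2+\ers$. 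These second-fundamental-form terms are \emph{not} $\ers$-small: since $r|\II_{\T_r}|\sim\sqrt{\vartheta_\cL}$, they are of the same order as $\vartheta_\cL(\T_r)$ itself and cannot be absorbed as errors.

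The missing step is the paper's Lemma~\ref{l:radial_energy:radial_derivative}: a \emph{separate} computation of $r\frac{d}{dr}\vartheta_\cL(\T_r)$ which, after its own integration by parts in $x\in\T_r$ and use of the best-plane Euler--Lagrange, yields exactly
\[
r\tfrac{d}{dr}\vartheta_\cL(\T_r)=\int_{\T_r}\psi_\T\int\rho_r|\pi^\perp_{x,r}(y-x)|^2|\pi_{x,r}\nabla u|^2+2r^4\int_{\T_r}\psi_\T\int\rho_r\langle\II_{\T_r},\nabla u\rangle^2+\ers(r)\,.
\]
Only by matching this identity against the stationary-equation output (including the leftover term $\int\rho_r|\pi^\perp(y-x)|^2|\pi_{x,r}\nabla u|^2$ you did not isolate) does one obtain $(m-1)r\frac{d}{dr}\vartheta_\cL(\T_r)$ on the right; the factor $(m-1)$ comes precisely from combining the $2(m-1)$ in front of the $\II$ term with the coefficient $2$ in the radial-derivative identity. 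Your deduction of Theorem~\ref{t:radial_energy:radial_energy} is also more complicated than necessary: $\int r\frac{d}{dr}\vartheta_\cL(\T_r)\frac{dr}{r}=0$ outright since $\psi_\T$ is compactly supported in $r$, and the absorption works because $\int\ers(r)\frac{dr}{r}$ contains an $\epsilon'\,\vartheta_\cL(\cA)$ piece that is swallowed by the $2\vartheta_\cL(\cA)$ on the left.
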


\begin{remark}
 As is done for $\err(r)$, we will use the notation $\ers(r)$ to represent any error that can be estimated by \eqref{e:radial_energy:p_radial_energy_error}, and the exact value of $\ers(r)$ may vary from line to line.
\end{remark}

\begin{proof}[Proof of Theorem \ref{t:radial_energy:radial_energy} given Proposition \ref{p:radial_energy:radial_energy}]

Let us first observe the estimate
\begin{align}
	\int \ers(r)\frac{dr}{r} &\stackrel{\ref{t:approximating_submanifold}.\ref{i:integral_dot:vartheta_Tr_vs_T}}{\leq} C(m,\Lambda)\ton{\sqrt\delta+e^{-R/2}}\Bigg(\int \vartheta_\cL(\T_r)\frac{dr}{r}+\int_\T\psi_\T(x,r)|\vartheta(x,4)-\vartheta(x,\rf_x)|\Bigg)\notag\\
	&+C(m,\Lambda)\delta\int \qua{\int_{\T_r}r\Big|\frac{\partial}{\partial r}\psi_\T\Big|+r\Big|\nabla\psi_\T\Big|\;} \frac{dr}{r} + C(m,\Lambda)\delta\, \notag\\
	&\leq \epsilon'\int \vartheta_\cL(\T_r)\frac{dr}{r}+\epsilon'\Lambda+C(m,\Lambda)\delta\, ,
\end{align}
where in the last line we have chosen $\delta\leq \delta(m,\Lambda,\epsilon')$ and $R\geq R(m,\Lambda,\epsilon')$ while applying Lemma \ref{l:energy_decomposition:T_cutoff}.  If we integrate Proposition \ref{p:radial_energy:radial_energy} with the Dini measure $\frac{dr}{r}$ this then gives
\begin{align}
	\hat\vartheta_n(\cA)+2\hat\vartheta_\cL(\cA)\leq 0+\hat\vartheta_\alpha(\cA)+\epsilon'\hat\vartheta_\cL(\cA)+\epsilon'\Lambda+C(m,\Lambda)\delta\, .
\end{align} 
Rearranging this gives
\begin{align}
	\hat\vartheta_n(\cA)+(2-\epsilon')\hat\vartheta_\cL(\cA)\leq \hat\vartheta_\alpha(\cA)+\epsilon'\Lambda+C(m,\Lambda)\delta\, .
\end{align} 
By Theorem \ref{t:angular_energy:angular_energy}, if we choose  $\epsilon'<\epsilon'(\epsilon,\Lambda)$ and $\delta\leq \delta(m,\Lambda,\epsilon)$, we obtain the desired result.
\end{proof}

\vspace{.3cm}
\subsection{Outline Proof of Proposition \ref{p:radial_energy:radial_energy}}\label{ss:radial_energy:outline}

The remainder of this section will now be dedicated toward the proof of Proposition \ref{p:radial_energy:radial_energy}.  Arriving at Proposition \ref{p:radial_energy:radial_energy} begins with the introduction of a stationary equation induced by a specially chosen vector field and involves several subtle points.
There will of course be many technical errors to control, but several of these errors will apriori be strictly larger in nature than those that appear for our end formula for $\ers(r)$ in Proposition \ref{p:radial_energy:radial_energy}.
These errors do not appear in the toy model of Section \ref{s:outline_toymodel}, and more to the point they are larger than allowed for the energy identity to hold.
Controlling them will rely on the observation that each time they occur they can be written in terms of the Euler Lagrange equations of either our best approximating submanifold $\T_r$ or our best planes $\cL_{x,r}$.
Thus for the right precise choices of energy the errors will reduce to lower order terms.  Let us now break down each subsection and discuss the various technical subtleties which will appear.\\

{\bf Section \ref{ss:radial_energy:stationary_vector_field} Outline: The Stationary Vector Field. } The first ingredient of the proof is to study the stationary equation induced by the vector field
\begin{align}
	  \xi(y) = r^2\int_{\T_r }  \psi_{\T}(x,r) \tilde\rho_{r}(y-x;\LL_{x,r})\pi^\perp_{x,r}(y-x)\, ,
\end{align}
where recall from Section \ref{ss:restricted_energy} that $\tilde\rho_r$ is well defined by the property that its perpendicular gradient $\nabla_{\LL_{x,r}^\perp}\tilde\rho_r \equiv \hat\rho_r$ vanishes near the bubble region.  One should interpret the above as the appropriate approximation of the radial vector field emanating from $\T_r$.  The main result of this subsection is Lemma \ref{l:radial_energy:stationary_vector_field}, where we arrive at the estimate
\begin{align}
\hat\vartheta_n(\T_r) +2\vartheta_\cL(\T_r)	= \hat\vartheta_\alpha(\T_r)+\int_{\T_r}\int\rho_r(y-x)|\pi^\perp_{x,r}(y-x)|^2\langle\nabla u,\cL_{x,r}\rangle^2+\ff_1(r)+\ers(r)\, ,
\end{align}
where
\begin{align}\label{e:radial_energy:f_1}
	\ff_1(r)\equiv  -2\int_{\T_r} \psi_{\T}(x,r)\int \rho_r(y-x)\langle \nabla u, \pi_{x,r}(y-x)\rangle\langle \nabla u, \pi^\perp_{x,r}(y-x)\rangle \, .
\end{align}

Observe that the $\LL_{x,r}^\perp$ component in the energy stops $\ff_1(r)$ from being easily controllable.  That is, with a little bit of work one can estimate $\int |\ff_1(r)|\frac{dr}{r}\leq C \Lambda$, however getting a small estimate on the right hand side is a primary challenge of this Section.  The next several subsections will be dedicated to understanding this term.\\

{\bf Section \ref{ss:radial_energy:f1} Outline: Rewriting $\ff_1(r)$ using the Euler Lagrange Equation for $\T_r$. } Controlling the term $\ff_1(r)$ will require several steps.  The first will be to observe that the Gaussian nature of $\rho_r$ gives that $r^2\nabla^{(x)}_{\T_r}\rho_r\approx \rho_r\cdot \pi_{x,r}(y-x)$, and thus we will be able to rewrite
\begin{align}
	\ff_1(r) = -2r^2\int_{\T_r} \psi_{\T}(x,r)\int\big\langle \nabla^{(x)}_{\T_r}\rho_r,\nabla u(y)\big\rangle\big\langle \nabla u(y), \pi^\perp_{x,r}(y-x)\big\rangle +\ers(r)\, ,
\end{align}
where as is our practice the terms $\ers(r)$ represent small error or boundary terms which are more clearly controllable.  If we then integrate by parts we will arrive at a variety of controllable errors together with our two main terms
\begin{align}
	\ff_1(r) =& %\ff_2(r)+\ff_3(r)+\ers(r)=
	2r^2\int_{\T_r} \psi_{\T}(x,r)\int \rho_r(y-x)\big\langle \nabla u(y),H_{\T_r}(x)\big\rangle\big\langle \nabla u(y), \pi^\perp_{\T_r(x)}(y-x)\big\rangle \, \notag\\
	&-2r^2\int_{\T_r} \psi_{\T}(x,r)\int \rho_r(y-x)\Big\langle \nabla u(y)\otimes \nabla u(y), \langle A_{\T_r}(x),\pi_{\T_r(x)}(y-x)\rangle\,\Big\rangle+\ers(r)\, ,
\end{align}
where $H_{\T_r}\approx \text{div}\,\pi_{x,r}$ is the mean curvature of $\T_r$ and $A_{\T_r}\approx\nabla\pi_{x,r}$ is the second fundamental form.  The last key observation of this subsection will be that since $H_{\T_r}$ is almost an element of $\cL_{x,r}^\perp$, $\ff_2(r)$ is in the right form to now apply the Euler-Lagrange equation of $\T_r$.  This, up to small errors, will allow us to turn the $L^\perp$-gradient into a $L$-gradient and write:
\begin{align}\label{e:radial_energy:f_2}
\ff_1(r) &= \ff_2(r)+\ff_3(r)+\ers(r)\\
	\ff_2(r)&=-2r^2\int_{\T_r} \psi_{\T}(x,r)\int \rho_r(y-x)\big\langle \nabla u(y),H_{\T_r}(x)\big\rangle\big\langle \nabla u(y), \pi_{x,r}(y-x)\big\rangle \, ,\notag\\
	\ff_3(r)&=-2r^2\int_{\T_r} \psi_{\T}(x,r)\int \rho_r(y-x)\Big\langle \nabla u(y)\otimes \nabla u(y), \langle A_{\T_r}(x),\pi_{x,r}(y-x)\rangle\,\Big\rangle\, .
\end{align} 

We have therefore written our stationary equation as 
\begin{align}
\hat\vartheta_n(\T_r) +2\vartheta_\cL(\T_r)	= \hat\vartheta_\alpha(\T_r)+\int_{\T_r}\psi_\T\int\rho_r(y-x)|\pi^\perp_{x,r}(y-x)|^2|\pi_{x,r}\nabla u|^2+\ff_2(r)+\ff_3(r)+\ers(r)\, .\\\notag
\end{align}

{\bf Section \ref{ss:radial_energy:f2f3} Outline: Estimating $\ff_2+\ff_3\leq 2(m-1)r^4\int_{\T_r}\psi_{\T}\int\rho_r \langle A_{\T_r},\nabla u\rangle^2+\epsilon_5(r)$ using the Euler-Lagrange for the best subspace $\cL_{x,r}$. }  The main estimate of Section \ref{ss:radial_energy:f2f3} is Lemma \ref{l:radial_energy:f2f3}, where we will prove the estimates
\begin{align}
	\ff_2(r)&\leq (m-2)\ff_3(r)+\ers(r)\, ,\notag\\
	\ff_3(r)&\leq \int_{\T_r}\psi_{\T}(x,r)\int\rho_r(y-x) \langle A_{\T_r}(x),\nabla u(y)\rangle^2+\ers(r)\, ,
\end{align}
where $A_{\T_r}$ is the second fundamental form of $\T_r$.  In one of the errors for these estimates is a term which is of strictly larger order than the allowed $\ers(r)$.  We will see that the key ingredient to remove this higher order error term is once again the precise equations being solved by our approximations.  In this case, the Euler Lagrange for the best subspace $\cL_{x,r}$ will remove the most challenging of these errors and reduce us to something estimable.  This will allow us to rewrite our stationary equation estimate as

\begin{align}
\hat\vartheta_n(\T_r) +2\vartheta_\cL(\T_r)	= \hat\vartheta_\alpha(\T_r)+\int_{\T_r}\psi_\T\int\rho_r|\pi^\perp_{x,r}(y-x)|^2|\pi_{x,r}\nabla u|^2+2(m-1)r^4\int_{\T_r}\psi_{\T}\int\rho_r\langle A_{\T_r},\nabla u\rangle^2+\ers(r)\, .\notag\\\notag
\end{align}

{\bf Section \ref{ss:radial_energy:radial} Outline: The Radial Derivative $r\frac{d}{dr}\vartheta_\cL(\T_r)$ .}  In our stationary equation estimate we still have two nonnegative terms to contend with:
\begin{align}
	\int_{\T_r}\psi_\T\int\rho_r|\pi^\perp_{x,r}(y-x)|^2|\pi_{x,r}\nabla u|^2+(m-1)\int_{\T_r}\psi_{\T}\int\rho_r\langle A_{\T_r},\nabla u\rangle^2\, .
\end{align}

Neither of these terms have clear apriori smallness bounds.  The main goal of Section \ref{ss:radial_energy:radial} is Lemma \ref{l:radial_energy:radial_derivative}, which will show the estimate
\begin{align}\label{e:radial_energy:radial_derivative}
	\int_{\T_r}\psi_\T\int\rho_r|\pi^\perp_{x,r}(y-x)|^2|\pi_{x,r}\nabla u|^2+(m-1)\int_{\T_r}\psi_{\T}\int\rho_r\langle A_{\T_r},\nabla u\rangle^2\leq (m-1)r\frac{d}{dr}\vartheta_\cL(\T_r)+\ers(r)\, .\\\notag
\end{align}

There are some subtle points about the above estimate.  One is that, as in previous estimates, there will be a higher order error term which is not clearly controllable.  The Euler-Lagrange equation for the best plane $\cL_{x,r}$ will be needed to reduce this error.
Another subtle observation is that the radial derivative $r\frac{d}{dr}\vartheta_\cL(\T_r)$ is not a signed term.  More importantly, though we clearly have the bound $\int r\frac{d}{dr}\vartheta_\cL(\T_r)\frac{dr}{r}=0$, it is apriori not clear that we have a bound on $\int |r\frac{d}{dr}\vartheta_\cL(\T_r)|\frac{dr}{r}$ .
This is all to emphasize that we will control \eqref{e:radial_energy:radial_derivative} by a term which is Dini integrable, but not apriori absolutely so.  Notice also that the sign of the term with the second fundamental form is essential. Indeed, we do not claim that
\begin{gather}
 (m-1)\int_{\T_r}\psi_{\T}\int\rho_r\langle A_{\T_r},\nabla u\rangle^2
\end{gather}
is an apriori negligible term, but just that it has the right sign for our estimates to work.  
\\

Let us now put this outline into action:\\

\subsection{The Stationary Vector Field}\label{ss:radial_energy:stationary_vector_field}

Let us consider the vector field
\begin{align}\label{e:radial_energy:stationary_vector_field}
	  \xi_r(y) = r^2\int_{\T_r }  \psi_{\T}(x,r) \tilde\rho_{r}(y-x;\LL_{x,r})\pi^\perp_{x,r}(y-x)\, ,
\end{align}
where recall from Definition \ref{d:restricted_energy_functionals} that $\tilde\rho_r$ is well defined by the property that its perpendicular gradient $\pi_{x,r}^\perp\nabla\tilde\rho_r = \hat\rho_r$ vanishes near the bubble region.  We will plug this into the stationary equation \eqref{e:stationary_equation}, and spend the remainder of this section understanding the terms which appear.  The main result of this subsection is the following, which takes the first few steps in this direction:

\begin{lemma}\label{l:radial_energy:stationary_vector_field}
Let $u:B_{10R}(p)\to N$ be a stationary harmonic map with $R^2\fint_{B_{10R}}|\nabla u|^2\leq \Lambda$ , and let $\cA=B_2\setminus \overline{B_{\rf_z}(\T)}$ be a $\delta$-annular region.	Then the following hold:
\begin{align}\label{e:radial_energy:stationary_equation1}
\hat\vartheta_n(\T_r) +2\vartheta_\cL(\T_r)	= \hat\vartheta_\alpha(\T_r)+\int_{\T_r}\int\rho_r(y-x)|\pi^\perp_{x,r}(y-x)|^2\,|\pi_{x,r}\nabla u|^2+\ff_1(r)+\ers(r)\, ,
\end{align}
where $\ers(r)$ is defined by \eqref{e:radial_energy:p_radial_energy_error} and
\begin{align}\label{e:radial_energy:stationary_vector:f_1}
	\ff_1(r)\equiv  -2\int_{\T_r} \psi_{\T}(x,r)\int \rho_r(y-x)\big\langle \nabla u, \pi_{x,r}(y-x)\big\rangle\big\langle \nabla u, \pi^\perp_{x,r}(y-x)\big\rangle \, .
\end{align}
\end{lemma}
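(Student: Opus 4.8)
The plan is to plug the vector field $\xi_r$ of \eqref{e:radial_energy:stationary_vector_field} into the stationary equation \eqref{e:stationary_equation} and carefully identify every term that appears. First I would compute $\partial^i\xi_r^j$. Differentiating the integrand in $y$ produces three types of contributions: the derivative hitting $\tilde\rho_r(y-x;\LL_{x,r})$, the derivative hitting $\pi^\perp_{x,r}(y-x)$, and nothing else since $\psi_\T(x,r)$ does not depend on $y$. For the first piece, by Definition \ref{d:restricted_energy_functionals} we have $\pi^\perp_{x,r}\nabla^{(y)}\tilde\rho_r(y-x;\LL_{x,r}) = -\frac{\pi^\perp_{x,r}(y-x)}{r^2}\hat\rho_r(y-x;\LL_{x,r})$, while the $\pi_{x,r}$-component of $\nabla^{(y)}\tilde\rho_r$ must be handled separately — this is the source of the term $\ff_1(r)$. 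For the second piece, $\nabla^{(y)}\pi^\perp_{x,r}(y-x) = \pi^\perp_{x,r}$ (a fixed projection, since $x,r$ are held fixed inside the $y$-integration), which after contracting against $S_{ij}(y) = |\nabla u|^2\delta_{ij} - 2\langle\nabla_i u,\nabla_j u\rangle$ yields the terms $|\nabla u|^2|\pi^\perp_{x,r}(y-x)|^2$-type and $\langle\nabla u,\pi^\perp_{x,r}(y-x)\rangle^2$-type expressions.

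Concretely, I would organize the contraction $\int S_{ij}\partial^i\xi_r^j = 0$ as follows. The ``diagonal'' $|\nabla u|^2\delta_{ij}$ part of $S_{ij}$ contracted with $\text{div}^{(y)}\xi_r$: the divergence of $\tilde\rho_r\,\pi^\perp_{x,r}(y-x)$ in $y$ equals $\langle\nabla^{(y)}\tilde\rho_r,\pi^\perp_{x,r}(y-x)\rangle + \tilde\rho_r\cdot\text{tr}(\pi^\perp_{x,r}) = -\frac{|\pi^\perp_{x,r}(y-x)|^2}{r^2}\hat\rho_r + 2\tilde\rho_r$ (using $\text{tr}\,\pi^\perp_{x,r}=2$ and that only the $\perp$-component of $\nabla\tilde\rho_r$ enters this inner product). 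The $-2\langle\nabla_iu,\nabla_ju\rangle$ part contracted with $\partial^i\xi_r^j$ gives $-2\langle\nabla_{\pi^\perp_{x,r}(y-x)}u,\nabla u\rangle\cdot\langle\nabla^{(y)}\tilde\rho_r,\nabla u\rangle\cdot(\ldots)$ plus $-2\tilde\rho_r\langle\nabla_{\pi^\perp_{x,r}(y-x)}u,\nabla_{\pi^\perp_{x,r}}u\rangle$-type terms; the first of these, when we split $\nabla^{(y)}\tilde\rho_r$ into its $\LL_{x,r}$ and $\LL_{x,r}^\perp$ components, produces $\hat\vartheta_n(\T_r)$ from the $\perp$-part and $\ff_1(r)$ from the parallel part. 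After collecting: the radial energy $\hat\vartheta_n(\T_r)$, a term $+2\int_{\T_r}\psi_\T\int\tilde\rho_r|\pi_{x,r}\nabla u|^2$ which I would replace by $2\vartheta_\cL(\T_r)$ up to an $\err$-type error using $|\tilde e_r|\leq C e^{-R/2}\rho_{2r}$ from Lemma \ref{l:rho_tilde_estimates}, a term $-\hat\vartheta_\alpha(\T_r)$ and $-\int_{\T_r}\psi_\T\int\hat\rho_r|\pi^\perp_{x,r}(y-x)|^2|\pi_{x,r}\nabla u|^2$ (again $\hat\rho_r$ vs $\rho_r$ differing by a controllable error), plus $\ff_1(r)$, plus genuine errors coming from: (i) $\tilde\rho_r$ versus $\rho_r$ discrepancies, estimated by $e^{-R/2}$ times a local energy via Lemma \ref{l:rho_tilde_estimates}; (ii) the curvature of $M$, which we suppress; (iii) errors from the fact that the vector field is an integral over $\T_r$ rather than pointwise, so $\partial^i$ also does not hit $x$; these vanish since $x$ is the integration variable and $y$ is where we differentiate.

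Rearranging all of this into the form of \eqref{e:radial_energy:stationary_equation1} is then bookkeeping. The main step requiring care — which I would isolate as a claim — is that the errors collected under (i)-(iii) and the $\hat\rho_r$-vs-$\rho_r$ and $\tilde\rho_r$-vs-$\rho_r$ substitutions genuinely satisfy the bound \eqref{e:radial_energy:p_radial_energy_error} defining $\ers(r)$. For the energy-substitution errors one uses Lemma \ref{l:rho_tilde_estimates} to bound $|\tilde e_r| + r|\nabla\tilde e_r|\leq C(m)e^{-R/2}\rho_{2r}$, then integrates against $|\nabla u|^2|\pi^\perp_{x,r}(y-x)|^2$ or $|\nabla u|^2$ to get $e^{-R/2}\vartheta(x,2r;\LL_{x,r})$ or $e^{-R/2}(\vartheta_\cL(x,2r)+r\dot\vartheta(x,2r))$, which is of the right form. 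For the difference between $\hat\rho_r$ (supported away from $L$) and $\rho_r$, one uses $|\rho_r-\hat\rho_r|\lesssim e^{-R}$ restricted to $B_{re^{-R}}(L)$, and the weight $|\pi^\perp_{x,r}(y-x)|^2\leq e^{-2R}r^2$ there, giving an even smaller error. The expected main obstacle is not any single estimate but keeping the combinatorics of the contraction organized: several terms look superficially like $\ff_1(r)$ or like the $|\pi^\perp_{x,r}(y-x)|^2|\pi_{x,r}\nabla u|^2$ term but differ by which projection appears, and one must track exactly which go into the stated right-hand side and which are $\ers(r)$; in particular the $\pi_{x,r}$-component of $\nabla^{(y)}\tilde\rho_r$ is not controlled by smallness (it is genuinely comparable to $\rho_r\cdot\pi_{x,r}(y-x)/r^2$ up to $e^{-R/2}$) and so must be kept as the honest term $\ff_1(r)$ rather than discarded, which is precisely the point flagged in Section \ref{ss:radial_energy:outline}.
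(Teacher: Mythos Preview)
Your proposal is correct and follows essentially the same approach as the paper's proof: plug $\xi_r$ into the stationary equation, split $\nabla^{(y)}\tilde\rho_r$ into its $\LL_{x,r}$ and $\LL_{x,r}^\perp$ components via Definition \ref{d:restricted_energy_functionals}, and then replace $\tilde\rho_r$, $\hat\rho_r$, and $-\dot\rho_r$ by $\rho_r$ one at a time using Lemma \ref{l:rho_tilde_estimates} and \eqref{e:rho_primitive_difference}, keeping the parallel-gradient contribution as the honest term $\ff_1(r)$. The paper organizes the output of the contraction into the four displayed lines \eqref{e:radial_energy:stationary1}--\eqref{e:radial_energy:stationary4} and handles each by a separate claim, which is exactly the bookkeeping you describe; your identification of which substitutions yield $\ers(r)$-errors (and why, via the $e^{-R/2}$ factors and the weight $|\pi^\perp_{x,r}(y-x)|^2\leq Cr^2e^{-2R}$ on the support of $\rho_r-\hat\rho_r$) matches the paper's estimates.
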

\begin{proof}
	Let us use the vector field \eqref{e:radial_energy:stationary_vector_field} in the stationary equation \eqref{e:stationary_equation}. We have
	\begin{align}
	 \nabla \xi_r(y) &= r^2\int_{\T_r }  \psi_{\T}(x,r) \qua{\nabla \tilde\rho_{r}(y-x;\LL_{x,r})\otimes \pi^\perp_{x,r}(y-x) + \tilde \rho_r(y-x;\LL_{x,r}) \pi_{x,r}^\perp}\notag \\
	 &= r^2\int_{\T_r }  \psi_{\T}(x,r) \qua{\pi_{x,r}\nabla \tilde\rho_{r}(y-x;\LL_{x,r})\otimes\pi^\perp_{x,r}(y-x) +\pi_{x,r}^\perp\nabla \tilde\rho_{r}(y-x;\LL_{x,r})\otimes\pi^\perp_{x,r}(y-x) }\notag\\
	 &+r^2\int_{\T_r} \psi_\T(x,r) \tilde \rho_r(y-x;\LL_{x,r}) \pi_{x,r}^\perp\, .
	\end{align}
	By Definition \ref{d:restricted_energy_functionals} of $\tilde \rho$, we have
	\begin{gather}
	 \pi_{x,r}^\perp\nabla \tilde\rho_{r}(y-x;\LL_{x,r})=- \frac{\pi_{x,r}^\perp (y-x)}{r^2} \hat \rho_r(y-x;\LL_{x,r})
	\end{gather}
    Moreover, recall that by Lemma \ref{l:rho_tilde_estimates}, if we set
    \begin{gather}
     \tilde e_r(y-x;\LL_{x,r}) = \rho_r(y-x) - \tilde \rho_r(y-x;\LL_{x,r})\, ,
    \end{gather}
    we get 
    \begin{gather}
     \abs{\tilde e_r(y-x;\LL_{x,r}) }+\abs{r\nabla^{(y)} \tilde e_r(y-x;\LL_{x,r}) }\leq C(m) e^{-R/2}\rho_{2r}(y)\, .
    \end{gather}
    Thus, plugging \eqref{e:radial_energy:stationary_vector_field} into the stationary equation \eqref{e:stationary_equation} we obtain
    \begin{align}
	2 r^2 \int_{\T_r} &\psi_{\T}(x,r) \int \tilde\rho_r(y-x;\cL_{x,r}) \abs{\pi_{x,r} \nabla u(y)}^2 \label{e:radial_energy:stationary1}\\
	&= \int_{\T_r} \psi_{\T}(x,r) \int \hat\rho_r(y-x;\cL_{x,r})\qua{|\nabla u|^2(y)|\pi^\perp_{x,r}(y-x)|^2 - 2\Big\langle \nabla u(y), \pi^\perp_{x,r}(y-x)\Big\rangle^2}\label{e:radial_energy:stationary2}\\
	&\;\;-2\int_{\T_r} \psi_{\T}(x,r) \int (-\dot\rho_r(y-x))\Big\langle \nabla u, \pi_{x,r}(y-x)\Big\rangle\Big\langle \nabla u, \pi^\perp_{x,r}(y-x)\Big\rangle \label{e:radial_energy:stationary3}\\
	&\;\;-2r^2\int_{\T_r} \psi_{\T}(x,r) \int \Big\langle \nabla u(y), \pi_{x,r} \nabla\tilde e_r(y-x;\LL_{x,r})\Big\rangle \Big\langle \nabla u(y), \pi^\perp_{x,r} (y-x)\Big\rangle\label{e:radial_energy:stationary4}
\end{align}

Now we have to deal with some tedious but easy estimates in order to obtain the desired result. Before moving to the specific details, let us just point out that philosophically:
\begin{enumerate}
 \item $\tilde\rho_r(y-x;\cL_{x,r})\approx \rho(y-x)$, so the first term is easy enough
 \item the second term will be split naturally into $\hat \vartheta_\alpha-\hat \vartheta_n$ plus the $\LL$ energy component.
 \item since $-\dot \rho \approx \rho$, the third term gives rise to $\ff_1(r)$ plus a small error
 \item the last term is controllably small
\end{enumerate}

As done previously, we split the proof into smaller subclaims, hopefully easier to analyze for the reader.\\

\textbf{Claim:} for \eqref{e:radial_energy:stationary1}, we have
\begin{align}
 2 r^2 \int_{\T_r} \psi_{\T}(x,r) \int \tilde\rho_r(y-x;\cL_{x,r}) \abs{\pi_{x,r} \nabla u(y)}^2  =& 2 r^2 \int_{\T_r} \psi_{\T}(x,r) \int \rho_r(y-x) \abs{\pi_{x,r} \nabla u(y)}^2+\ers(r)\notag\\
 =&2 \vartheta_\LL(\T_r)+\ers(r)\, .
\end{align}
The estimates on $\ers$ come directly from Lemma \ref{l:rho_tilde_estimates} and \eqref{e:vartheta_L_r}. In particular we have
\begin{align}
 &2 r^2 \int_{\T_r} \psi_{\T}(x,r) \int \tilde e_r(y-x;\cL_{x,r}) \abs{\pi_{x,r} \nabla u(y)}^2\leq C e^{-R/2} r^2 \int_{\T_r} \psi_{\T}(x,r) \int \rho_{2r}(y-x) \abs{\pi_{x,r} \nabla u(y)}^2\notag \\
 =& C e^{-R/2}  \int_{\T_r} \psi_{\T}(x,r) \vartheta(x,2r;\LL_{x,r})\leq C(m)e^{-R/2} \int_{\T_r} \psi_{\T}(x,r) \vartheta_\LL(x,2r)\, .
\end{align}
This proves that the error is of the form \eqref{e:radial_energy:p_radial_energy_error}.\\

\textbf{Claim:} for \eqref{e:radial_energy:stationary2}, we have
\begin{align}
 &\int_{\T_r} \psi_{\T}(x,r) \int \hat\rho_r(y-x;\cL_{x,r})\qua{|\nabla u|^2(y)|\pi^\perp_{x,r}(y-x)|^2 - 2\Big\langle \nabla u(y), \pi^\perp_{x,r}(y-x)\Big\rangle^2}\\
 =& \hat \vartheta_\alpha(\T_r)- \hat \vartheta_n(\T_r) +\int_{\T_r} \psi_{\T}(x,r) \int \rho_r(y-x)\abs{\pi_{x,r} \nabla u(y)}^2 \abs{\pi^\perp_{x,r}(y-x)}^2  + \ers(r)\, .
\end{align}
This follows immediately from the definitions, Lemma \ref{l:rho_tilde_estimates} and \eqref{e:vartheta_L_r}. In particular, we have
\begin{align}
 \int_{\T_r} \psi_{\T}(x,r) \int \abs{\rho_r(y-x)-\hat \rho_r(y-x;\LL_{x,r})}\abs{\pi_{x,r} \nabla u(y)}^2 \abs{\pi^\perp_{x,r}(y-x)}^2\leq C(m) e^{-R}\vartheta_\LL(\T_{2r})\, .
\end{align}

\vspace{.3cm}
\textbf{Claim:} for \eqref{e:radial_energy:stationary3} we have
\begin{align}
 &-2\int_{\T_r} \psi_{\T}(x,r) \int (-\dot\rho_r(y-x))\Big\langle \nabla u, \pi_{x,r}(y-x)\Big\rangle\Big\langle \nabla u, \pi^\perp_{x,r}(y-x)\Big\rangle\notag \\
 =& -2\int_{\T_r} \psi_{\T}(x,r) \int \rho_r(y-x)\Big\langle \nabla u, \pi_{x,r}(y-x)\Big\rangle\Big\langle \nabla u, \pi^\perp_{x,r}(y-x)\Big\rangle+\ers(r)=\ff_1(r)+\ers(r)
\end{align}
This follows immediately from \eqref{e:rho_primitive_difference} and \eqref{e:trho_doubleradius}, which yield
\begin{align}
 &2\int_{\T_r} \psi_{\T}(x,r) \int \abs{\rho_r(y-x)-\dot\rho_r(y-x)}\Big\langle \nabla u, \pi_{x,r}(y-x)\Big\rangle\Big\langle \nabla u, \pi^\perp_{x,r}(y-x)\Big\rangle\notag \\
 \leq& C(m) e^{-R/2}\int_{\T_r} \psi_\T(x,r) \qua{ r^2\int \rho_{2r}(y-x) \abs{\pi_{x,r}\nabla u(y)}^2 + \int \rho_{2r}(y-x) \ps{\nabla u(y)}{y-x}^2}\notag\\
 \leq& C(m)e^{-R/2}\int_{\T_r} \psi_\T(x,r) \qua{\vartheta_\LL(x,2r)+r\dot \vartheta(x,2r)}\, ,
\end{align}
where we used \eqref{e:vartheta_L_r} in the last line.\\

\textbf{Claim:} for \eqref{e:radial_energy:stationary4} we have
\begin{align}
 \abs{2r^2\int_{\T_r} \psi_{\T}(x,r) \int \Big\langle \nabla u(y), \pi_{x,r} \nabla\tilde e_r(y-x;\LL_{x,r})\Big\rangle \Big\langle \nabla u(y), \pi^\perp_{x,r} (y-x5)\Big\rangle }= \ers(r)\, .
\end{align}

In order to prove this, observe that by Lemma \ref{l:rho_tilde_estimates} and Cauchy-Schwartz:
\begin{align}
 &\abs{\Big\langle \nabla u(y), \pi_{x,r} \nabla\tilde e_r(y-x;\LL_{x,r})\Big\rangle \Big\langle \nabla u(y), \pi^\perp_{x,r} (y-x)\Big\rangle }\notag \\
 \leq & C(m) e^{-R/2}\rho_{2r}(y-x) \qua{\abs{\pi_{x,r}\nabla u(y)}^2 + \ps{\nabla u(y)}{y-x}^2}\, .
\end{align}
This implies that 
\begin{align}
 &\abs{2r^2\int_{\T_r} \psi_{\T}(x,r) \int \Big\langle \nabla u(y), \pi_{x,r} \nabla\tilde e_r(y-x;\LL_{x,r})\Big\rangle \Big\langle \nabla u(y), \pi^\perp_{x,r} (y-x)\Big\rangle }\notag \\
 \leq &C(m)e^{-R/2}\int_{\T_r} \psi_{\T}(x,r) \int \rho_{2r}(y-x)\qua{r^2  \abs{\pi_{x,r} \nabla u(y)}^2 + \ps{\nabla u(y)}{y-x}^2}\notag\\
 =&C(m)e^{-R/2}\int_{\T_r} \psi_{\T}(x,r) \qua{\vartheta(x,2r;\LL_{x,r}) + r\dot \vartheta(x,2r)}\, .
\end{align}
By \eqref{e:vartheta_L_r}, we conclude the estimate.

\end{proof}

\vspace{.3cm}
\subsection{Rewriting \texorpdfstring{$\ff_1(r)$}{f1(r)} using the Euler Lagrange Equation for \texorpdfstring{$\T_r$}{Tr}}\label{ss:radial_energy:f1}

Our goal in this subsection is to begin rewriting the uncontrollable error $\ff_1(r)$ in a more manageable fashion.  There will be two key points in this subsection.  First is to view the $\rho_r(y-x) \pi_{x,r}(y-x)$ term as a gradient and use an integration by parts to remove the $\pi_{x,r}(y-x)$ from the equation.  This is important as this reduces $\ff_1$ to a form where we can apply the Euler Lagrange equation for $\T_r$ and turn $L^\perp$-gradients into $L$-gradients.  The main result of this subsection is the following:

\begin{lemma}\label{l:radial_energy:f1}
	Let $\ff_1(r)$ be as in \eqref{e:radial_energy:stationary_vector:f_1}.  Then we can write $\ff_1(r)= \ff_2(r)+\ff_3(r)+\ers(r)$ where
\begin{align}
	\ff_2(r)&=-2r^2\int_{\T_r} \psi_{\T}(x,r)\int \rho_r(y-x)\big\langle \nabla u(y),H_{\T_r}(x)\big\rangle\big\langle \nabla u(y), \pi_{\T_r(x)}(y-x)\big\rangle \, ,\notag\\
	\ff_3(r)&=-2r^2\int_{\T_r} \psi_{\T}(x,r)\int \rho_r(y-x)\ps{\II_{\T_r(x)}(\pi_{\T_r(x)}\nabla u(y),\pi_{\T_r(x)}(y-x))}{\nabla u(y) }\, ,%\Big\langle \nabla u(y)\otimes \nabla u(y), \langle A_{\T_r}(x),\pi^\perp_{x,r}(y-x)\rangle\,\Big\rangle\, ,
\end{align}
with $H_{\T_r}$ the mean curvature of $\T_r$, $\II_{\T_r(x)}$ the second fundamental form of $\T_r$ at $x$, and $\ers(r)$ as in \eqref{e:radial_energy:p_radial_energy_error} .
\end{lemma}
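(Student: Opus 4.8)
\textbf{Proof plan for Lemma \ref{l:radial_energy:f1}.}

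The plan is to start from the formula \eqref{e:radial_energy:stationary_vector:f_1} for $\ff_1(r)$ and exploit the Gaussian structure of $\rho_r$ to recognize $\rho_r(y-x)\,\pi_{x,r}(y-x)$ as (essentially) a gradient in the $x$-variable restricted to $\T_r$. Recall from Lemma \ref{l:energy_decomposition:rho_estimates} that $r\nabla^{(x)}\rho_r(y-x)=-\dot\rho_r(y-x)\tfrac{y-x}{r}$, and since $-\dot\rho_r=\rho_r+(\dot\rho_r\text{-error})$ with the error $e^{-R/2}$-small by \eqref{e:rho_primitive_difference}, we get $r^2\pi_{\T_r(x)}\nabla^{(x)}_{\T_r}\rho_r(y-x)=\rho_r(y-x)\,\pi_{\T_r(x)}(y-x)+\ers$-type errors. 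First I would replace $\pi_{x,r}(y-x)$ by $\pi_{\T_r(x)}(y-x)$ in \eqref{e:radial_energy:stationary_vector:f_1}: by Lemma \ref{l:improved_comparison_LL_T} (see also Remark \ref{r:pinching_improvement_epsilon}) the difference $|\pi_{x,r}-\pi_{\T_r(x)}|\leq C(m,\epsilon_0)(\sqrt\delta+e^{-R/2})$ on the support of $\hat\rho_r$, and the resulting discrepancy, after a Cauchy--Schwarz split into $|\pi_{x,r}\nabla u|^2$ and $\langle\nabla u,y-x\rangle^2$ pieces and an application of Lemma \ref{l:error_estimates_psi_with_pinching}/\eqref{e:vartheta_L_r}, is of the form $\ers(r)$. (One must be slightly careful here because $\rho_r$, unlike $\hat\rho_r$, is not supported away from $\T$; but the weight $\langle\nabla u,\pi_{x,r}^\perp(y-x)\rangle$ vanishes to the right order near $x+\LL_{x,r}$, so the same estimates apply — I would record this as the analogue of the $\epsilon$-regularity remark preceding Section \ref{ss:radial_energy:f2f3}.)

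Next I would carry out the integration by parts in $x$ along $\T_r$. Writing $\rho_r(y-x)\,\pi_{\T_r(x)}(y-x)=r^2\pi_{\T_r(x)}\nabla^{(x)}_{\T_r}\rho_r(y-x)+\ers$, substitute into $\ff_1(r)$ to obtain
\begin{align}
\ff_1(r)=-2r^2\int_{\T_r}\psi_\T(x,r)\int\big\langle\nabla u(y),\nabla^{(x)}_{\T_r}\rho_r(y-x)\big\rangle\big\langle\nabla u(y),\pi_{\T_r(x)}(y-x)\big\rangle+\ers(r)\, .\notag
\end{align}
Then integrate $\divergence^{(x)}_{\T_r}$ by parts. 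The derivative $\nabla^{(x)}_{\T_r}$ lands on three factors: on $\psi_\T(x,r)$, producing a boundary-type term controlled by $r|\nabla\psi_\T|$ and hence an $\ers(r)$ by Lemma \ref{l:energy_decomposition:T_cutoff}; on the second factor $\langle\nabla u(y),\pi_{\T_r(x)}(y-x)\rangle$, where the derivative in $x$ hits $\pi_{\T_r(x)}(y-x)$ giving both a $-\pi_{\T_r(x)}(\text{tangent})$ term (which feeds the mean-curvature term $\ff_2$ once combined with the divergence of $\pi_{\T_r(x)}(y-x)$) and a $\nabla^{(x)}\pi_{\T_r(x)}$ term (the second fundamental form, feeding $\ff_3$); and, via the divergence of the vector field $\psi_\T(x,r)\langle\nabla u(y),\pi_{\T_r(x)}(y-x)\rangle\,\pi_{\T_r(x)}(y-x)$ itself, a term $\divergence^{(x)}_{\T_r}\big(\pi_{\T_r(x)}(y-x)\big)$. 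Here one uses $\divergence_{\T_r}\big(\pi_{\T_r(x)}(y-x)\big)=-(m-2)+\langle H_{\T_r}(x),\pi_{\T_r(x)}^\perp(y-x)\rangle$ together with the second-fundamental-form identity $\nabla^{(x)}_{\T_r}\pi_{\T_r(x)}=\II_{\T_r(x)}$ (and its transpose), with the $(m-2)$ constant contributing back to the structure already isolated in Lemma \ref{l:radial_energy:stationary_vector_field}. Regrouping the surviving leading terms exactly produces $\ff_2(r)$ (the $H_{\T_r}$ term, where $H_{\T_r}\approx\divergence\pi_{x,r}$ by Theorem \ref{t:approximating_submanifold}) and $\ff_3(r)$ (the $\II_{\T_r}$ term, where $\II_{\T_r}\approx\nabla\pi_{x,r}$), with all remaining terms of the form $\ers(r)$ by the bounds $r|\nabla\pi_{x,r}|+r|H_{\T_r}|+r|\II_{\T_r}|\leq C(m,\epsilon_0)\sqrt{\vartheta_\cL(x,2r)}$ from Theorem \ref{t:best_plane:best_plane}.\ref{i:best_plane_bounds_on_projections} and Theorem \ref{t:approximating_submanifold}.\ref{i:approximating_submanifold_space_gradient_hessian} combined with Lemma \ref{l:error_estimates_psi_with_pinching} and \eqref{e:vartheta_L_r}. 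Finally I would re-express $\pi_{\T_r(x)}$ back in terms of the best plane where convenient (again using Lemma \ref{l:improved_comparison_LL_T}), checking that the statement of the Lemma as written (with $H_{\T_r}$, $\II_{\T_r}$ at $x\in\T_r$ and $\pi_{\T_r(x)}$) is exactly what remains.

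The main obstacle I expect is bookkeeping rather than conceptual: keeping rigorous track of which of the many product-rule terms in the $x$-integration by parts genuinely reduce to $\ers(r)$-errors, since several of them involve $\nabla^{(x)}\pi_{x,r}$ or $\nabla^{(x)}\pi_{\T_r(x)}$ multiplied by full $|\nabla u|^2$ rather than just $|\pi_{x,r}\nabla u|^2$. The resolution in each case is the same — split $|\nabla u(y)|^2$ on $\supp\hat\rho_r$ using $|\nabla u|^2\leq C|\pi_{x,r}\nabla u|^2+C|\pi_{x,r}^\perp(y-x)|^{-2}\langle\nabla u,\pi_{x,r}^\perp(y-x)\rangle^2+\dots$ and invoke the pointwise regularity $r^2|\nabla u|^2\leq C(m,R)\delta$ on that support (Remark \ref{rm:annular:regularity}) together with Lemma \ref{l:error_estimates_psi_with_pinching} — but verifying it uniformly across all the terms, and confirming that the $\dot\rho_r$-versus-$\rho_r$ swaps never leave a residue larger than $e^{-R/2}\rho_{2r}(y-x)$, is the part that requires genuine care. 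The one structurally important point to highlight (as the outline in Section \ref{ss:radial_energy:f1} stresses) is that after this rewriting $\ff_2$ involves $\langle\nabla u,H_{\T_r}\rangle\langle\nabla u,\pi_{x,r}(y-x)\rangle$ with an $L$-gradient in the second slot — it is the Euler--Lagrange equation for $\T_r$ (Theorem \ref{t:approximating_submanifold}.\ref{i:EL_Tr}) that will be invoked in the \emph{next} subsection to replace the $L^\perp$-gradient $\langle\nabla u,\pi_{\T_r(x)}(y-x)\rangle$ of the preceding display by precisely this $L$-gradient, so here I would only make sure the hypotheses of that Euler--Lagrange identity ($H_{\T_r}(x)\in\LL_{x,r}^\perp$ up to small error) are verified and the statement is in the exact shape needed to apply it.
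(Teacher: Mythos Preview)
Your plan captures the broad strategy correctly—replace $\pi_{x,r}$ by $\pi_{\T_r(x)}$ via Lemma \ref{l:improved_comparison_LL_T}, swap $\rho_r$ for $-\dot\rho_r$ via \eqref{e:rho_primitive_difference}, recognize the result as a $\T_r$-gradient, and integrate by parts in $x$—but there is a genuine gap in where you place the Euler--Lagrange step for $\T_r$, and a related error in your displayed formula.

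In your display you write the second factor as $\langle\nabla u(y),\pi_{\T_r(x)}(y-x)\rangle$, but in $\ff_1$ (see \eqref{e:radial_energy:stationary_vector:f_1}) that factor is the \emph{perpendicular} component $\langle\nabla u,\pi^\perp_{x,r}(y-x)\rangle$, which after the $\pi_{x,r}\to\pi_{\T_r(x)}$ swap becomes $\langle\nabla u,\pi^\perp_{\T_r(x)}(y-x)\rangle$. Consequently, after the integration by parts the mean-curvature term you obtain is
\[
+2r^2\int_{\T_r}\psi_\T\int\rho_r\,\langle H_{\T_r},\nabla u\rangle\,\langle\nabla u,\pi^\perp_{\T_r(x)}(y-x)\rangle\, ,
\]
which is \emph{not} $\ff_2(r)$: the lemma's $\ff_2$ carries the tangential projection $\pi_{\T_r(x)}(y-x)$ and the opposite sign. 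The conversion from $\pi^\perp$ to $\pi$ (with the sign flip) is exactly the content of the Euler--Lagrange identity for $\T_r$ in Theorem \ref{t:approximating_submanifold}.\ref{i:EL_Tr}: for $v\in\LL_{x,r}^\perp$ one has $\int(-\dot\rho_r)\langle\nabla u,y-x\rangle\langle\nabla u,v\rangle=0$, and taking $v=\pi^\perp_{x,r}H_{\T_r}(x)$ (together with $H_{\T_r}\in T_x^\perp\T_r$ and Lemma \ref{l:improved_comparison_LL_T}) yields $\int(-\dot\rho_r)\langle\nabla u,\pi^\perp_{x,r}(y-x)\rangle\langle\nabla u,H_{\T_r}\rangle=-\int(-\dot\rho_r)\langle\nabla u,\pi_{x,r}(y-x)\rangle\langle\nabla u,H_{\T_r}\rangle$. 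This step is part of the proof of \emph{this} lemma (it is the paper's Subsection 9.4.5), not deferred to Section \ref{ss:radial_energy:f2f3}; what is used in the next subsection is the Euler--Lagrange for the best plane $\cL_{x,r}$, which is a different identity serving a different purpose.

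A smaller point: your description of the integration by parts invokes $\divergence_{\T_r}\big(\pi_{\T_r(x)}(y-x)\big)=-(m-2)+\cdots$, but no such $(m-2)$ constant appears here. The object being differentiated is $\psi_\T\,\pi_{\T_r(x)}\otimes\pi^\perp_{\T_r(x)}(y-x)$, and the three resulting terms are (i) $\nabla\psi_\T$ (an $\ers$ error), (ii) $\divergence_{\T_r}(\pi_{\T_r(x)})$, whose normal part is $H_{\T_r}$ and whose tangential part is an $\ers$ error, and (iii) $\nabla_{\T_r}\pi^\perp_{\T_r(x)}(y-x)=-\nabla_{\T_r}\pi_{\T_r(x)}(y-x)$, producing $\ff_3$ after discarding the tangential component as $\ers$. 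The $(m-2)$ you have in mind belongs to the later computation in Section \ref{ss:radial_energy:radial}.
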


We will prove this in several steps throughout this subsection:

\vspace{.3cm}
\subsubsection{Exchanging $\rho$ with $\dot \rho$.}\label{sss:integrating_rho_vs_dot_rho} Using Lemma \ref{l:rho_basic_properties}, we show that
\begin{align}
 \ff_1(r)=& -2\int_{\T_r} \psi_{\T}(x,r)\int \rho_r(y-x)\big\langle \nabla u, \pi_{x,r}(y-x)\big\rangle\big\langle \nabla u, \pi^\perp_{x,r}(y-x)\big\rangle\\
 =&2\int_{\T_r} \psi_{\T}(x,r)\int \dot \rho_r(y-x)\big\langle \nabla u, \pi_{x,r}(y-x)\big\rangle\big\langle \nabla u, \pi^\perp_{x,r}(y-x)\big\rangle+\ers
\end{align}
In order to do so, we use \eqref{e:rho_primitive_difference} to see that
\begin{align}
 &2\int_{\T_r} \psi_{\T}(x,r)\int \abs{\rho_r(y-x)-\dot \rho_r(y-x)}\abs{\big\langle \nabla u, \pi_{x,r}(y-x)\big\rangle} \abs{\big\langle \nabla u, \pi^\perp_{x,r}(y-x)\big\rangle}\notag\\
 \leq & C e^{-R/2}\int_{\T_r} \psi_{\T}(x,r)\int \rho_{2r}(y-x)\abs{\big\langle \nabla u, \pi_{x,r}(y-x)\big\rangle} \abs{\big\langle \nabla u, \pi^\perp_{x,r}(y-x)\big\rangle}\notag\\
 \leq & C e^{-R/2}\int_{\T_r} \psi_{\T}(x,r)\ton{\sqrt{\vartheta_\LL(x,2r)}\sqrt{r\dot \vartheta(x,2r)}}\, ,
\end{align}
where we used \eqref{e:vartheta_L_r} in the last estimate. By Cauchy-Schwarts, the last term is of the $\ers(r)$ form.

\vspace{.3cm}
\subsubsection{Rewriting $\ff_1$ in terms of $\pi_{\T_r}$ .  }\label{sss:exchange_pi} Here we use an integration by parts trick similar to the one used to prove \eqref{e:claim_superconvexity_3}. First of all, we notice that by Lemma \ref{l:improved_comparison_LL_T} we can exchange $\pi_{x,r}$ with $\pi_{\T_r(x)}$ up to paying a controllable error. In particular, we claim that
\begin{align}
 \ff_1(r)=& 2\int_{\T_r} \psi_{\T}(x,r)\int \dot \rho_r(y-x)\big\langle \nabla u, \pi_{x,r}(y-x)\big\rangle\big\langle \nabla u, \pi^\perp_{x,r}(y-x)\big\rangle+\ers(r)\\
 =&2\int_{\T_r} \psi_{\T}(x,r)\int \dot \rho_r(y-x)\big\langle \nabla u, \pi_{\T_r(x)}(y-x)\big\rangle\big\langle \nabla u, \pi^\perp_{\T_r(x)}(y-x)\big\rangle+\ers(r)\, .
\end{align}
This comes from the fact that by Lemma \ref{l:improved_comparison_LL_T}, \eqref{e:trho_doubleradius} and standard estimates:
\begin{align}
&\abs{\int_{\T_r} \psi_{\T}(x,r)\int \dot \rho_r(y-x)\big\langle \nabla u, \pi_{\T_r(x)}(y-x)-\pi_{x,r}(y-x)\big\rangle\big\langle \nabla u, \pi^\perp_{x,r}(y-x)\big\rangle}\notag \\
\leq &\int_{\T_r} \psi_{\T}(x,r)\int \dot \rho_r(y-x)\abs{\nabla u}\abs{y-x}\abs{\pi_{\T_r(x)}-\pi_{x,r}}\abs{\big\langle \nabla u, \pi^\perp_{x,r}(y-x)\big\rangle}\notag \\
\leq & \int_{\T_r} \psi_{\T}(x,r)\abs{\pi_{\T_r(x)}-\pi_{x,r}}\sqrt{\vartheta(x,2r)}\sqrt{r\dot \vartheta(x,2r)}\notag \\
\leq & C(m,\epsilon_0) \int_{\T_r} \psi_{\T}(x,r)\ton{\sqrt{\vartheta_\LL(x,2r)} r\dot \vartheta(x,2r) + e^{-R/2} \sqrt{\vartheta_\LL(x,2r)} \sqrt{r\dot \vartheta(x,2r)}}\notag \\
\leq & C(m,\epsilon_0) \int_{\T_r} \psi_{\T}(x,r)\ton{\sqrt \delta + e^{-R/2}}\sqrt{\vartheta_\LL(x,2r)} \sqrt{r\dot \vartheta(x,2r)}=\ers(r)\, .
\end{align}
In a similar way, we can exchange $\pi_{x,r}^\perp$ with $\pi_{\T_r(x)}^\perp$ and obtain the desired result.

\vspace{.3cm}
\subsubsection{Integration by Parts. }\label{sss:integrating_by_parts}
We now have
\begin{align}
 \ff_1(r)=& 2\int_{\T_r} \psi_{\T}(x,r)\int \dot \rho_r(y-x)\big\langle \nabla u, \pi_{\T_r(x)}(y-x)\big\rangle\big\langle \nabla u, \pi^\perp_{\T_r(x)}(y-x)\big\rangle+\ers(r)\notag\\
 =& -2\int_{\T_r} \psi_{\T}(x,r)\int \ps{\pi_{\T_r(x)}\nabla^{(x)}\rho_r(y-x)}{\nabla u(y)}\big\langle \nabla u, \pi_{\T_r(x)}(y-x)\big\rangle+\ers(r)\notag\\
 =&2\int_{\T_r} \psi_{\T}(x,r)\int \rho_r(y-x) \ r^2 \ps{\operatorname{div}^{(x)}_{\T_r(x)}(\pi_{\T_r(x)})}{\nabla u(y)}\ps{\nabla u(y)}{\pi^\perp_{\T_r(x)}(y-x)} \notag\\
 &-2\int_{\T_r} \psi_{\T}(x,r) \int \rho_r(y-x) r^2 \ps{\nabla u(y)\otimes \nabla u(y)}{\nabla^{(x)}_{\T_r}(\pi_{\T_r(x)})(y-x)}+\ers(r)\, .
\end{align}
In order to prove this, observe that
\begin{gather}
 \nabla^{(x)} \rho_r(y-x)=r^{-m}\nabla^{(x)} \rho\ton{\frac{\abs{y-x}^2}{2r^2}} = r^{-m}\dot \rho \ton{\frac{\abs{y-x}^2}{2r^2}} \frac{x-y}{r^2}\, ,
\end{gather}
Notice that
\begin{gather}
\nabla^{(y)}\rho_r(y-x)=\dot \rho_r(y-x) \frac{y-x}{r} = -\dot \rho_r(y-x)\frac{x-y}{r} = -\nabla^{(x)}\rho_r(y-x) \, .
\end{gather}
We underline this because signs are important in this computation, and it is easy to get confused.

From this we get
\begin{align}
 & 2\int_{\T_r} \psi_{\T}(x,r)\qua{\int \dot \rho_r(y-x)\big\langle \nabla u(y), \pi_{\T_r(x)}(y-x)\big\rangle\big\langle \nabla u(y), \pi^\perp_{\T_r(x)}(y-x)\big\rangle dy }\,dv_{\T_r}(x)\notag \\
 =&-2\int \nabla u(y)\otimes \nabla u(y) \qua{\int_{\T_r} \psi_{\T}(x,r)  \ \ton{r^2 \pi_{\T_r(x)}\nabla^{(x)} \rho_r(y-x) }\otimes \pi^\perp_{\T_r(x)}(y-x) \,dv_{\T_r}(x) }dy
%  =\notag \\
%  =&-2\int \nabla u(y)\otimes \nabla u(y) \qua{\int_{\T_r} \rho_r(y-x) \ton{\pi_{\T_r(x)}\nabla^{(x)} \psi_{\T}(x,r)}\otimes \pi^\perp_{\T_r(x)}(y-x) \dvx }\dVy
\end{align}
As before, we integrate by parts in the variable $x\in \T_r$, and we get
\begin{align}
 &-\int_{\T_r} \psi_{\T}(x,r)  \ \ton{r^2 \pi_{\T_r(x)}\nabla^{(x)} \rho_r(y-x) }\otimes \pi^\perp_{\T_r(x)}(y-x) \notag\\
 =&  \int_{\T_r} \rho_r(y-x)  \ \ton{r^2 \pi_{\T_r(x)}\nabla^{(x)} \psi_{\T}(x,r)}\otimes \pi^\perp_{\T_r(x)}(y-x) \notag\\
 &+ \int_{\T_r} \psi_{\T}(x,r) \rho_r(y-x)  \ r^2 \operatorname{div}^{(x)}_{\T_r(x)}(\pi_{\T_r(x)}) \otimes \pi^\perp_{\T_r(x)}(y-x) \notag\\
 &-  \int_{\T_r} \psi_{\T}(x,r) \rho_r(y-x) r^2 \nabla^{(x)}_{\T_r}(\pi_{\T_r(x)})(y-x)\, ,
\end{align}
where we used the fact that $\pi_{\T_r(x)}^\perp$ is null on any tangent vector on $\T_r$ and $\nabla \pi_{\T_r(x)}^\perp = - \nabla \pi_{\T_r(x)}$.

Now we notice that the first term is of the form $\ers$, indeed:
\begin{align}\label{e:nabla_psi_is_ers}
 &\abs{\int \nabla u(y)\otimes \nabla u(y) \int_{\T_r} \rho_r(y-x)  \ \ton{r^2 \pi_{\T_r(x)}\nabla^{(x)} \psi_{\T}(x,r)}\otimes \pi^\perp_{\T_r(x)}(y-x) \dvx \dVy}\notag\\
 \leq &\int_{\T_r} r\abs{\nabla \psi_\T(x,r)} \int \rho_r(y-x) r\abs{\nabla u(y)}\abs{\ps{\nabla u(y)}{y-x}} \leq  C(m)\sqrt{\Lambda}\sqrt{\delta} \int_{\T_r} r\abs{\nabla \psi_\T(x,r)}\, .
\end{align}
This concludes the proof.

\subsubsection{Mean curvature and second fundamental form.}\label{sss:mean_and_second} Up to $\ers$ errors, we can write $\ff_1(r)$ in terms of the mean curvature and second fundamental form of $\T_r$. Recall that the second fundamental form $\II$ of $\T_r$ applied to the tangent vectors $V,W\in T_x\T_r$ gives $\II_{T_x\T_r}(V,W)=\pi_{\T_r(x)}^\perp\ton{\nabla_V W}=\II_{T_x\T_r}(W,V)$, and the mean curvature vector $H_{\T_r(x)}$ is just the trace over $T_x\T_r$ of $\II_{T_x\T_r}$.

We claim that
\begin{align}\label{e:ff1-2-3_first}
 \ff_1(r)=&2\int_{\T_r} \psi_{\T}(x,r)\int \rho_r(y-x) \ r^2 \ps{H_{\T_r(x)}}{\nabla u(y)}\ps{\nabla u(y)}{\pi^\perp_{\T_r(x)}(y-x)} \notag\\
 &+\ff_3(r)+\ers(r)
\end{align}

Indeed, we can write
\begin{align}\label{e:divergence_to_mean_curvature}
 &2\int_{\T_r} \psi_{\T}(x,r)\int \rho_r(y-x) \ r^2 \ps{\operatorname{div}^{(x)}_{\T_r(x)}(\pi_{\T_r(x)})}{\nabla u(y)}\ps{\nabla u(y)}{\pi^\perp_{\T_r(x)}(y-x)} \notag\\
 = & 2\int_{\T_r} \psi_{\T}(x,r)\int \rho_r(y-x) \ r^2 \ps{H_{\T_r}(x)} {\nabla u(y)}\ps{\nabla u(y)}{\pi^\perp_{\T_r(x)}(y-x)}+\ers(r)\, .
\end{align}
This comes from splitting
\begin{gather}
 \operatorname{div}^{(x)}_{\T_r(x)}(\pi_{\T_r(x)})= \pi_{\T_r(x)} \qua{\operatorname{div}^{(x)}_{\T_r(x)}(\pi_{\T_r(x)})} +\pi_{\T_r(x)}^\perp \qua{\operatorname{div}^{(x)}_{\T_r(x)}(\pi_{\T_r(x)})}\, ,
\end{gather}
and observing that the parallel component give rise to an $\ers$ term since $r\abs{\nabla \pi_{\T_r(x)}}\leq C\sqrt \delta$ and
\begin{align}
 &\abs{\int \rho_r(y-x) \ r^2 \ps{\pi_{\T_r(x)}\operatorname{div}^{(x)}_{\T_r(x)}(\pi_{\T_r(x)})}{\nabla u(y)}\ps{\nabla u(y)}{\pi^\perp_{\T_r(x)}(y-x)}} \notag\\
 \leq & r\abs{\nabla \pi_{\T_r(x)}} \int \rho_r(y-x) r\abs{\pi_{\T_r(x)}\nabla u(y)}\abs{\ps{\nabla u}{y-x}}\notag\\
 \stackrel{\eqref{e:estimate_LL_T}}{\leq} & C(m)\sqrt \delta  \int \rho_{2r}(y-x) \qua{r^2\abs{\pi_{x,2r} \nabla u(y)}^2+ \abs{\ps{\nabla u}{y-x}}^2}
\end{align}

As for the second piece, proceeding in a similar way we obtain that
\begin{align}
 &-2r^2\int_{\T_r} \psi_{\T}(x,r) \int \rho_r(y-x) \ps{\nabla u(y)\otimes \nabla u(y)}{\nabla^{(x)}_{\T_r}(\pi_{\T_r(x)})(y-x)}\notag\\
 =&-2r^2\int_{\T_r} \psi_{\T}(x,r)\int \rho_r(y-x)\ps{\II_{\T_r(x)}(\pi_{\T_r(x)}\nabla u(y),\pi_{\T_r(x)}(y-x))}{\nabla u(y) }+\ers(r)\notag\\
 =&\ff_3(r)+\ers(r)\, .
\end{align}

\vspace{.3cm}
\subsubsection{Euler Lagrange for $\T_r$. }

Here we use the definition of $\T_r$, and in particular Theorem \ref{t:approximating_submanifold}.\eqref{i:EL_Tr}, to prove that we can turn \eqref{e:ff1-2-3_first} into
\begin{gather}
 \ff_1(r)= \ff_2(r)+\ff_3(r)+\ers(r)\, .
\end{gather}
This is equivalent to showing that
\begin{gather}
 2\int_{\T_r} \psi_{\T}(x,r)\int \rho_r(y-x) \ r^2 \ps{H_{\T_r(x)}}{\nabla u(y)}\ps{\nabla u(y)}{\pi^\perp_{\T_r(x)}(y-x)}= \ff_2(r)+\ers(r)\, ,
\end{gather}
where recall
\begin{align}
	\ff_2(r)&=-2r^2\int_{\T_r} \psi_{\T}(x,r)\int \rho_r(y-x)\big\langle \nabla u(y),H_{\T_r}(x)\big\rangle\big\langle \nabla u(y), \pi_{\T_r(x)}(y-x)\big\rangle \, .
\end{align}

First of all, as done previously we can exchange $\rho_r(y-x)$ with $-\dot \rho_r(y-x)$, up to an $\ers(r)$ error.

% This comes from the fact that by \eqref{e:T_r_II_estimates_sharp}:
% \begin{align}
%  &\int \abs{\rho_r(y-x)-\dot \rho_r(y-x)} \ \abs{rH_{\T_r(x)}} r\abs{\nabla u(y)}\abs{\ps{\nabla u(y)}{\pi^\perp_{\T_r(x)}(y-x)}}\notag\\
%  \leq & C(m) e^{-R/2} \frac{\sqrt{\vartheta_{\LL}(x,2r)}}{\sqrt{\vartheta(x,r)}} \sqrt{\vartheta(x,2r)} \sqrt{r\dot \vartheta(x,2r)}\, .
% \end{align}
Moreover, by the same estimates as in subsection \ref{sss:exchange_pi}, we can exchange $\pi_{\T_r(x)}$ with $\pi_{x,r}$ and get 
\begin{align}
 &2\int_{\T_r} \psi_{\T}(x,r)\int (-\dot \rho_r(y-x)) \ r^2 \ps{H_{\T_r(x)}}{\nabla u(y)}\ps{\nabla u(y)}{\pi^\perp_{\T_r(x)}(y-x)}\notag \\
 =&2\int_{\T_r} \psi_{\T}(x,r)\int (-\dot \rho_r(y-x)) \ r^2 \ps{H_{\T_r(x)}}{\nabla u(y)}\ps{\nabla u(y)}{\pi^\perp_{x,r}(y-x)}+\ers(r)\notag \\
 =&2\int_{\T_r} \psi_{\T}(x,r)\int (-\dot \rho_r(y-x)) \ r^2 \ps{ \pi_{x,r}^\perp H_{\T_r(x)}}{\nabla u(y)}\ps{\nabla u(y)}{\pi^\perp_{x,r}(y-x)}+\ers(r)\, .
\end{align}
In the last equality we used the fact that, since $H_{\T_r(x)}$ is orthogonal to $T_x\T_r$, and given Lemma \ref{l:improved_comparison_LL_T}, for all $x\in \T_r$ with $r\geq \rf_x$:
\begin{align}
 &\abs{\int (-\dot \rho_r(y-x)) \ r^2 \ps{ \pi_{x,r} H_{\T_r(x)}}{\nabla u(y)}\ps{\nabla u(y)}{\pi^\perp_{x,r}(y-x)}}\notag\\
 \leq & C(m,\epsilon_0) \vartheta_\LL(x,2r)\sqrt{r\dot \vartheta(x,2r)} \leq C(m,\epsilon_0) \sqrt \delta \vartheta_{\LL}(x,2r)\, .
\end{align}

Now we can apply the Euler-Lagrange of Theorem \ref{t:approximating_submanifold}.\eqref{i:EL_Tr} in order to exchange the $\cL_{x,r}^\perp$ derivative for a $\cL_{x,r}$ derivative and obtain that
\begin{align}
 &2\int_{\T_r} \psi_{\T}(x,r)\int (-\dot \rho_r(y-x)) \ r^2 \ps{ \pi_{x,r}^\perp H_{\T_r(x)}}{\nabla u(y)}\ps{\nabla u(y)}{\pi^\perp_{x,r}(y-x)}\notag\\
 =&-2\int_{\T_r} \psi_{\T}(x,r)\int (-\dot \rho_r(y-x)) \ r^2 \ps{ \pi_{x,r}^\perp H_{\T_r(x)}}{\nabla u(y)}\ps{\nabla u(y)}{\pi_{x,r}(y-x)}\, ,
\end{align}
and proceeding with the usual estimates turn everything back to
\begin{align}
&-2\int_{\T_r} \psi_{\T}(x,r)\int (-\dot \rho_r(y-x)) \ r^2 \ps{ \pi_{x,r}^\perp H_{\T_r(x)}}{\nabla u(y)}\ps{\nabla u(y)}{\pi_{x,r}(y-x)}\notag \\
= & -2\int_{\T_r} \psi_{\T}(x,r)\int  \rho_r(y-x) \ r^2 \ps{ H_{\T_r(x)}}{\nabla u(y)}\ps{\nabla u(y)}{\pi_{\T_r(x)}(y-x)}+\ers(r)= \ff_2(r)+\ers(r)\, .
\end{align}
This concludes this subsection.

\vspace{.3cm}
\subsection{Estimating \texorpdfstring{$\ff_2+\ff_3\leq 2(m-1)r^4\int_{\T_r}\psi_{\T}\int\rho_r \langle A_{\T_r},\nabla u\rangle^2+\ers(r)$}{errors} using the Euler-Lagrange for the best subspace \texorpdfstring{$\cL_{x,r}$}{Lxr}. }\label{ss:radial_energy:f2f3}

We have now rewritten our stationary equation \eqref{e:radial_energy:stationary_equation1} as
\begin{align}\label{e:radial_energy:stationary_equation2}
\hat\vartheta_n(\T_r) +2\vartheta_\cL(\T_r)	= \hat\vartheta_\alpha(\T_r)+\int_{\T_r}\int\rho_r\,|\pi^\perp_{x,r}(y-x)|^2|\pi_{x,r}\nabla u|^2+\ff_2(r)+\ff_3(r)+\ers(r)\, .
\end{align}

Our main result for this subsection is the following

%
% RECALL
%
% \begin{align}
% 	\ff_2(r)&=-2r^2\int_{\T_r} \psi_{\T}(x,r)\int \rho_r(y-x)\big\langle \nabla u(y),H_{\T_r}(x)\big\rangle\big\langle \nabla u(y), \pi_{\T_r(x)}(y-x)\big\rangle \, ,\notag\\
% 	\ff_3(r)&=-2r^2\int_{\T_r} \psi_{\T}(x,r)\int \rho_r(y-x)\ps{\II_{\T_r(x)}(\pi_{\T_r(x)}\nabla u(y),\pi_{\T_r(x)}(y-x))}{\nabla u(y) }\, ,%\Big\langle \nabla u(y)\otimes \nabla u(y), \langle A_{\T_r}(x),\pi^\perp_{x,r}(y-x)\rangle\,\Big\rangle\, ,
% \end{align}
\begin{lemma}\label{l:radial_energy:f2f3}
	Let $\ff_2(r)$ and $\ff_3(r)$ be as in Lemma \ref{l:radial_energy:f1}, then the following hold:
\begin{enumerate}
	\item We can estimate $\ff_2(r)$:
\begin{align}
	\ff_2(r)&= 2r^4\int_{\T_r}\psi_{\T}(x,r)\int\rho_r(y-x) \langle H_{\T_r}(x),\nabla u(y)\rangle^2+\ers(r)\notag\\
	&\leq 2(m-2)r^4 \int_{\T_r}\psi_{\T}(x,r)\int\rho_r(y-x) \langle \II_{\T_r}(x),\nabla u(y)\rangle^2+\ers(r)
\end{align}
\item  We can estimate $\ff_3(r)$:
\begin{align}
	\ff_3(r)&= 2r^4\int_{\T_r}\psi_{\T}(x,r)\int\rho_r(y-x) \langle \II_{\T_r}(x),\nabla u(y)\rangle^2+\ers(r)\, ,
\end{align}
\end{enumerate}
with $H_{\T_r}$ the mean curvature of $\T_r$, $A_{\T_r}$ the second fundamental form of $\T_r$. Here
\begin{gather}
 \ps{\II_{\T_r}(x)}{\nabla u(y)} = \II_{ij}^k(x) \nabla_k u^s(y)
\end{gather}
is a symmetric $2$-tensor on the tangent space $T_x\T_r$, with values in $T_{u(y)} N \subset \R^N$, and by $ \ps{\II_{\T_r}(x)}{\nabla u(y)}^2$ we mean the Hilbert-Schmidt square norm of this vector valued $2$-form.
\\
\end{lemma}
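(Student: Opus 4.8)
The two estimates in Lemma~\ref{l:radial_energy:f2f3} share a common structure: each of $\ff_2(r)$ and $\ff_3(r)$ is, up to $\ers(r)$ errors, an integral of $\rho_r(y-x)$ against a product of two linear forms $\ps{\cdot}{\nabla u(y)}$, and the goal is to convert it into the \emph{squared} form $2r^4\int_{\T_r}\psi_\T\int\rho_r\langle\II_{\T_r},\nabla u\rangle^2$ (for $\ff_3$) respectively $2r^4\int_{\T_r}\psi_\T\int\rho_r\langle H_{\T_r},\nabla u\rangle^2$ (for $\ff_2$), with the latter then bounded by $2(m-2)$ times the former via Cauchy--Schwarz on the trace $H_{\T_r}=\operatorname{tr}\II_{\T_r}$ over the $(m-2)$-dimensional tangent space. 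The mechanism for producing the square is the same integration-by-parts-plus-stationary-equation trick already used repeatedly in Section~\ref{s:radial_energy}: the Gaussian identity $\nabla^{(x)}\rho_r(y-x)\approx -\rho_r(y-x)\,\pi_{x,r}(y-x)/r^2$ (or its $\T_r$-tangential and $\T_r$-normal projections) lets one trade the factor $\pi_{\T_r(x)}(y-x)$ appearing linearly in $\ff_2,\ff_3$ for a derivative, integrate by parts in $x\in\T_r$, and then invoke the stationary equation \eqref{e:stationary_equation} applied to an appropriately chosen vector field to symmetrize. One should expect, as in the previous subsections, that every boundary term (derivative landing on $\psi_\T$), every term where a parallel projection $\pi_{\T_r(x)}$ replaces the normal one $\pi^\perp_{\T_r(x)}$, and every term involving $\rho_r-(-\dot\rho_r)$ or $\pi_{x,r}-\pi_{\T_r(x)}$, is of the form $\ers(r)$ by Lemma~\ref{l:rho_basic_properties}, Lemma~\ref{l:rho_tilde_estimates}, Lemma~\ref{l:improved_comparison_LL_T}, Theorem~\ref{t:best_plane:best_plane}.\ref{i:best_plane_bounds_on_projections}, Theorem~\ref{t:approximating_submanifold}.\ref{i:approximating_submanifold_space_gradient_hessian}, together with the pinching estimate Lemma~\ref{l:error_estimates_psi_with_pinching} and \eqref{e:vartheta_L_r}.

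\textbf{Proof of (2), the estimate for $\ff_3$.} Recall
\[
\ff_3(r)=-2r^2\int_{\T_r}\psi_\T(x,r)\int\rho_r(y-x)\ps{\II_{\T_r(x)}(\pi_{\T_r(x)}\nabla u(y),\pi_{\T_r(x)}(y-x))}{\nabla u(y)}\,.
\]
First use $\rho_r(y-x)\,\pi_{\T_r(x)}(y-x) = -r^2\,\pi_{\T_r(x)}\nabla^{(y)}\rho_r(y-x)$ (up to the $\rho_r$ versus $-\dot\rho_r$ swap, which is $\ers(r)$ by \eqref{e:rho_primitive_difference}) to write the relevant contraction as $\pi_{\T_r(x)}\nabla^{(y)}\rho_r(y-x)$ hitting the slot of $\II_{\T_r(x)}$; then integrate by parts in $y$, throwing the $\T_r(x)$-tangential gradient onto the remaining factors. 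The derivative falling on $\nabla u(y)$ produces $\nabla^{(y)}_{\pi_{\T_r(x)}}\nabla u$, and on using the stationary equation for $u$ (more precisely the symmetrization $\ps{\nabla_V\nabla u}{\nabla u}$ being controlled in terms of $\ps{\Delta u}{\nabla u}$, which vanishes, plus an $\epsilon$-regularity error from Lemma~\ref{l:nabla_Delta_u_epsilon_reg}, negligible since $r^2|\nabla u|^2\le C(m,R)\delta$ on $\supp\hat\rho_r$) one reassembles the expression into $2r^4\int_{\T_r}\psi_\T\int\rho_r\,\ps{\II_{\T_r(x)}(\pi_{\T_r(x)}\nabla u,\,\cdot\,)}{\II_{\T_r(x)}(\pi_{\T_r(x)}\nabla u,\,\cdot\,)}$, i.e. $2r^4\int_{\T_r}\psi_\T\int\rho_r\langle\II_{\T_r}(x),\nabla u(y)\rangle^2$ in the Hilbert--Schmidt sense. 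The remaining terms from the integration by parts are: a term with the derivative on $\psi_\T$, which is $\ers(r)$ by the $\int_{\T_r}r|\nabla\psi_\T|$ bound of Lemma~\ref{l:energy_decomposition:T_cutoff}; a term with the derivative on the second fundamental form $\II_{\T_r(x)}$ itself, controlled by $r^2|\nabla\II_{\T_r}|\le C(m,\epsilon_0)\sqrt{\vartheta_\cL(x,2r)}$ from Remark~\ref{r:T_r_II_estimates}, hence $\ers(r)$; and terms where $\pi_{x,r}$, $\pi^\perp_{x,r}$ must be exchanged for $\pi_{\T_r(x)}$, $\pi^\perp_{\T_r(x)}$, all $\ers(r)$ by Lemma~\ref{l:improved_comparison_LL_T}. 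This proves $\ff_3(r)=2r^4\int_{\T_r}\psi_\T\int\rho_r\langle\II_{\T_r},\nabla u\rangle^2+\ers(r)$.

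\textbf{Proof of (1), the estimate for $\ff_2$.} For
\[
\ff_2(r)=-2r^2\int_{\T_r}\psi_\T(x,r)\int\rho_r(y-x)\ps{\nabla u(y)}{H_{\T_r}(x)}\ps{\nabla u(y)}{\pi_{\T_r(x)}(y-x)}
\]
run the identical scheme: replace $\rho_r(y-x)\pi_{\T_r(x)}(y-x)$ by $-r^2\pi_{\T_r(x)}\nabla^{(y)}\rho_r$, integrate by parts in $y$, use the stationary equation for $u$ to kill the $\ps{\nabla_{\pi_{\T_r(x)}}\nabla u}{\nabla u}\cdot\ps{\nabla u}{H_{\T_r}}$ piece up to an $\epsilon$-regularity error, and the term where the derivative lands on $H_{\T_r(x)}$ is $\ers(r)$ by Remark~\ref{r:T_r_II_estimates}; one is left with $2r^4\int_{\T_r}\psi_\T\int\rho_r\ps{\nabla u}{H_{\T_r}}^2$, which is $\ge 0$ and already in squared form, giving the first equality of (1). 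For the inequality, fix $x\in\T_r$ and diagonalize: writing $\II_{\T_r}(x)$ as an $(m-2)\times(m-2)$ matrix of $\R^N$-valued entries $\II_{ij}(x)$ and $H_{\T_r}(x)=\sum_i\II_{ii}(x)$, Cauchy--Schwarz on the $(m-2)$-index sum gives $\ps{\nabla u(y)}{H_{\T_r}(x)}^2=\big(\sum_i\ps{\nabla u(y)}{\II_{ii}(x)}\big)^2\le (m-2)\sum_i\ps{\nabla u(y)}{\II_{ii}(x)}^2\le(m-2)\langle\II_{\T_r}(x),\nabla u(y)\rangle^2$, the last step because the off-diagonal entries only add to the Hilbert--Schmidt norm. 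Integrating against $\rho_r(y-x)\psi_\T(x,r)$ yields $\ff_2(r)\le 2(m-2)r^4\int_{\T_r}\psi_\T\int\rho_r\langle\II_{\T_r},\nabla u\rangle^2+\ers(r)$, as claimed. Combining (1) and (2) then gives $\ff_2(r)+\ff_3(r)\le (2(m-2)+2)r^4\int_{\T_r}\psi_\T\int\rho_r\langle\II_{\T_r},\nabla u\rangle^2+\ers(r)=2(m-1)r^4\int_{\T_r}\psi_\T\int\rho_r\langle\II_{\T_r},\nabla u\rangle^2+\ers(r)$, which is the form needed in \eqref{e:radial_energy:stationary_equation2}.

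\textbf{Main obstacle.} I expect the genuinely delicate point to be \emph{not} the symmetrization itself but keeping track of which error terms are honestly $\ers(r)$: in particular the term where the $y$-integration by parts derivative lands back on $\pi_{\T_r(x)}(y-x)$ producing a factor $\pi_{\T_r(x)}$ acting on the identity (giving back $m-2$, the source of the combinatorial constant) must be separated cleanly from the second-fundamental-form contribution, and the $\epsilon$-regularity bound on $\ps{\nabla_V\nabla u}{\nabla u}$ via Lemma~\ref{l:nabla_Delta_u_epsilon_reg} must be invoked on $\supp\hat\rho_r$ only, since $r^2|\nabla u|^2$ is not small on $\supp\rho_r$. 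Getting these two bookkeeping issues right — and confirming that $\int|\ers(r)|\frac{dr}{r}$ still closes via Theorem~\ref{t:approximating_submanifold}.\ref{i:integral_dot:vartheta_Tr_vs_T} and Lemma~\ref{l:error_estimates_psi_with_pinching} — is where essentially all the work lies.
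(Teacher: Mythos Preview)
Your approach has a genuine gap: you integrate by parts in $y$, whereas the paper integrates by parts in $x\in\T_r$, and this difference is not cosmetic. A $y$-integration by parts lands derivatives on $\nabla u(y)$, producing $\nabla^2 u$ terms; but the kernel here is $\rho_r$ (not $\hat\rho_r$), whose support includes the bubble region where $u$ is only $W^{1,2}$ and Lemma~\ref{l:nabla_Delta_u_epsilon_reg} does not apply. You flag this yourself but cannot fix it by swapping to $\hat\rho_r$, since $\rho_r-\hat\rho_r$ is $O(1)$ in $L^\infty$, not $e^{-R/2}$-small like $\rho_r-\tilde\rho_r$. Moreover, even at a formal level your mechanism for producing the square $\langle\II_{\T_r},\nabla u\rangle^2$ does not make sense: $\II_{\T_r(x)}$ is independent of $y$, so no $y$-derivative can generate a second copy of $\II$. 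In the paper the square arises precisely because the $x$-derivative lands on $\pi_{\T_r(x)}$, and $\nabla^{(x)}\pi_{\T_r}$ \emph{is} (up to projections) the second fundamental form.

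You also miss the key cancellation that the outline in Section~\ref{ss:radial_energy:outline} singles out. After the $x$-integration by parts one is left with a term of the form $r^4\int_{\T_r}\psi_\T\int\rho_r\ps{\nabla u}{\nabla^{(x)}_{\pi_{\T_r}\nabla u}H_{\T_r}}$ (and the analogous $\II_{\T_r}$ term for $\ff_3$). A crude bound using $r^2|\nabla\II_{\T_r}|\le C\sqrt{\vartheta_\cL}$ from Remark~\ref{r:T_r_II_estimates} controls this only by $\sqrt{\vartheta_\cL}\cdot\vartheta$, which is one power of $\sqrt{\vartheta_\cL}$ short of $\ers(r)$. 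The paper's fix is to split $\nabla u=\pi_{x,r}\nabla u+\pi^\perp_{x,r}\nabla u$: the parallel piece then carries two $L$-gradient factors and is honestly $\ers(r)$, while the perpendicular piece vanishes \emph{identically} by the Euler--Lagrange equation for the best plane $\cL_{x,r}$, namely the eigenspace orthogonality $\int\rho_r(y-x)\ps{\pi^\perp_{x,r}\nabla u}{v}\ps{\pi_{x,r}\nabla u}{w}=0$ for any fixed vectors $v,w$. Your trace inequality $\langle H,\nabla u\rangle^2\le(m-2)\langle\II,\nabla u\rangle^2$ for the final step of (1) is correct.
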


There are three main ingredients in the above estimate.  The first inequality in both Lemma \ref{l:radial_energy:f2f3}.1 and \ref{l:radial_energy:f2f3}.2 is a rewriting of $\ff_2(r)$ and $\ff_3(r)$ based on a trick not dissimilar from our estimate on $\ff_1(r)$.  Namely we will use an integration by parts to remove the linear $\pi_{\T_r(x)}(y-x)$ term, and study what remains.  We will in the process run into another error which is of strictly larger order than our allowed $\ers(r)$ errors.  This time we will see how to rewrite this error, up to lower order errors, in terms of the Euler Lagrange equation for the best plane $\cL_{x,r}$.  The second inequality Lemma \ref{l:radial_energy:f2f3}.1 will also be based on the (standard) matrix inequality $(\operatorname{tr}(M))^2\leq \operatorname{dim}(M)\,|M|^2$, where $\abs{M}$ is the Hilbert-Schmidt norm of $M$.

\vspace{.3cm}
\subsubsection{Integrating by parts in $\ff_2$.}
We claim that
\begin{align}
	\ff_2(r)&=2r^4\int_{\T_r}\psi_{\T}(x,r)\int\rho_r(y-x) \langle H_{\T_r}(x),\nabla u(y)\rangle^2+\ers(r)\, .
\end{align}

In order to prove this, we proceed as done previously in Subsections \ref{sss:integrating_rho_vs_dot_rho} and \ref{sss:integrating_by_parts}. In particular, we can write
\begin{align}
	\ff_2(r)&=-2r^2\int_{\T_r} \psi_{\T}(x,r)\int (-\dot \rho_r(y-x))\big\langle \nabla u(y),H_{\T_r}(x)\big\rangle\big\langle \nabla u(y), \pi_{\T_r(x)}(y-x)\big\rangle +\ers(r)\notag\\
	&=-2r^4\int_{\T_r} \psi_{\T}(x,r)\int \ps{\pi_{\T_r(x)}\nabla^{(x)} \rho_r(y-x)}{\nabla u (y)}\big\langle \nabla u(y),H_{\T_r}(x)\big\rangle+\ers(r)\, .
\end{align}
Now we can integrate by parts wrt $x\in \T_r$ and get
\begin{align}
 -&2r^4\int_{\T_r} \psi_{\T}(x,r)\int \ps{\pi_{\T_r(x)}\nabla^{(x)} \rho_r(y-x)}{\nabla u (y)}\big\langle \nabla u(y),H_{\T_r}(x)\big\rangle\notag\\
 =&2r^4\int_{\T_r} \int \rho_r(y-x)\ps{\pi_{\T_r(x)}\nabla^{(x)} \psi_{\T}(x,r)}{\nabla u (y)}\big\langle \nabla u(y),H_{\T_r}(x)\big\rangle\label{e:f2_est_1}\\
 + & 2r^4\int_{\T_r} \psi_{\T}(x,r)\int \rho_r(y-x)\ps{\operatorname{div}^{(x)}_{\T_r(x)}(\pi_{\T_r(x)}) }{\nabla u (y)}\big\langle \nabla u(y),H_{\T_r}(x)\big\rangle\label{e:f2_est_2}\\
 + &2r^4\int_{\T_r} \psi_{\T}(x,r)\int \rho_r(y-x)\big\langle \nabla u(y),\nabla^{(x)}_{\pi_{\T_r(x)} \nabla u(y)}H_{\T_r}(x)\big\rangle\label{e:f2_est_3}
\end{align}
Arguing as in the proof of \eqref{e:nabla_psi_is_ers}, the \eqref{e:f2_est_1} term is an $\ers(r)$ term. We can handle the \eqref{e:f2_est_2} term as in Subsection \ref{sss:mean_and_second}, and obtain that
\begin{align}
 &2r^4\int_{\T_r} \psi_{\T}(x,r)\int \rho_r(y-x)\ps{\operatorname{div}^{(x)}_{\T_r(x)}(\pi_{\T_r(x)}) }{\nabla u (y)}\big\langle \nabla u(y),H_{\T_r}(x)\big\rangle\notag\\
 =&2r^4\int_{\T_r} \psi_{\T}(x,r)\int \rho_r(y-x)\big\langle \nabla u(y),H_{\T_r}(x)\big\rangle^2 + \ers(r)
\end{align}
We are left to deal with \eqref{e:f2_est_3}.

\textbf{We claim that}:
\begin{gather}
 2r^4\int_{\T_r} \psi_{\T}(x,r)\int \rho_r(y-x)\big\langle \nabla u(y),\nabla^{(x)}_{\pi_{\T_r(x)} \nabla u(y)}H_{\T_r}(x)\big\rangle=\ers(r)\, .
\end{gather}
In order to prove the claim, we split it into:
\begin{align}
 &2r^4\int_{\T_r} \psi_{\T}(x,r)\int \rho_r(y-x)\big\langle \nabla u(y),\nabla^{(x)}_{\pi_{\T_r(x)} \nabla u(y)}H_{\T_r}(x)\big\rangle\notag\\
 =&2r^4\int_{\T_r} \psi_{\T}(x,r)\int \rho_r(y-x)\big\langle \pi_{x,r}\nabla u(y),\nabla^{(x)}_{\pi_{\T_r(x)} \nabla u(y)}H_{\T_r}(x)\big\rangle\notag \\
 +&2r^4\int_{\T_r} \psi_{\T}(x,r)\int \rho_r(y-x)\big\langle \pi_{x,r}^\perp\nabla u(y),\nabla^{(x)}_{\pi_{\T_r(x)} \nabla u(y)}H_{\T_r}(x)\big\rangle
\end{align}
By Remark \ref{r:T_r_II_estimates} and Corollary \ref{c:estimate_LL_T}, we get
\begin{align}
 &\abs{2r^4\int_{\T_r} \psi_{\T}(x,r)\int \rho_r(y-x)\big\langle \pi_{x,r}\nabla u(y),\nabla^{(x)}_{\pi_{\T_r(x)} \nabla u(y)}H_{\T_r}(x)\big\rangle}\notag \\
 \leq & 2 C(m,\epsilon_0) \sqrt \delta \int_{\T_r} \psi_{\T}(x,r) \vartheta_\LL(x,2r) = \ers(r)\, .
\end{align}
As for the second term with the perpendicular projection, by Remark \ref{r:T_r_II_estimates} and Lemma \ref{l:improved_comparison_LL_T} we can exchange $\pi_{\T_r(x)}$ with $\pi_{x,r}$ paying just an $\ers$ price, as done previously. In other words:
\begin{align}
 &2r^4\int_{\T_r} \psi_{\T}(x,r)\int \rho_r(y-x)\big\langle \pi_{x,r}^\perp\nabla u(y),\nabla^{(x)}_{\pi_{\T_r(x)} \nabla u(y)}H_{\T_r}(x)\big\rangle\notag \\
 =&2r^4\int_{\T_r} \psi_{\T}(x,r)\int \rho_r(y-x)\big\langle \pi_{x,r}^\perp\nabla u(y),\nabla^{(x)}_{\pi_{x,r} \nabla u(y)}H_{\T_r}(x)\big\rangle+\ers(r)\, .
\end{align}
But now we notice that for each $x$ fixed, the Euler-Lagrange equation coming from the definition of best plane $\LL_{x,r}$ yields
\begin{align}
 &\int \rho_r(y-x)\big\langle \pi_{x,r}^\perp\nabla u(y),\nabla^{(x)}_{\pi_{x,r} \nabla u(y)}H_{\T_r}(x)\big\rangle\notag \\
 =&\sum_k\int \rho_r(y-x)\big\langle \nabla u(y),\pi_{x,r}^\perp\nabla^{(x)}_{\pi_{x,r}(e_k)}H_{\T_r}(x)\big\rangle\ps{\nabla u(y)}{\pi_{x,r}(e_k)}=0\, .
\end{align}

\subsubsection{Integrating by parts in $\ff_3$.}
We claim that
\begin{align}
	\ff_3(r)&=2r^4\int_{\T_r}\psi_{\T}(x,r)\int\rho_r(y-x) \langle \II_{\T_r}(x),\nabla u(y)\rangle^2 + \ers(r)\, .
\end{align}

In order to prove this, we proceed as done previously and write
\begin{align}
	\ff_3(r)&=-2r^2\int_{\T_r} \psi_{\T}(x,r)\int (-\dot \rho_r(y-x))\ps{\II_{\T_r(x)}(\pi_{\T_r(x)}\nabla u(y),\pi_{\T_r(x)}(y-x))}{\nabla u(y) } + \ers(r)\notag \\
	&=-2r^4\int_{\T_r} \psi_{\T}(x,r)\int \ps{\II_{\T_r(x)}(\pi_{\T_r(x)}\nabla u(y),\pi_{\T_r(x)}\nabla^{(x)} \rho_r(y-x))}{\nabla u(y) } + \ers(r)\, ,%\Big\langle \nabla u(y)\otimes \nabla u(y), \langle A_{\T_r}(x),\pi^\perp_{x,r}(y-x)\rangle\,\Big\rangle\, ,
\end{align}
Integrating by parts we get
\begin{align}
 \ff_3(r)=& 2r^4\int_{\T_r} \int \rho_r(y-x)\ps{\II_{\T_r(x)}(\pi_{\T_r(x)}\nabla u(y),\pi_{\T_r(x)}\nabla^{(x)} \psi_\T(x,r)}{\nabla u(y) }\notag\\
 + & 2r^4\int_{\T_r}\psi_{\T}(x,r)\int\rho_r(y-x) \langle \II_{\T_r}(x),\nabla u(y)\rangle^2\notag \\
 +&2r^4\int_{\T_r} \psi_{\T}(x,r)\int \rho_r(y-x)\big\langle \nabla u(y),\nabla^{(x)}_{\pi_{\T_r(x)} \nabla u(y)}H_{\T_r}(x)\big\rangle +\ers(r)\, .
\end{align}
We can deal with the first and last terms as in the previous subsection and conclude that
\begin{align}
 \ff_3(r)=&2r^4\int_{\T_r}\psi_{\T}(x,r)\int\rho_r(y-x) \langle \II_{\T_r}(x),\nabla u(y)\rangle^2+\ers(r)\,
\end{align}
as desired.
\vspace{.3cm}
\subsubsection{Estimating $\ff_2\leq (m-2)\ff_3+\ers(r)$.  } The last piece to prove is the inequality
\begin{align}
 \langle H_{\T_r}(x),\nabla u(y)\rangle^2\leq (m-2)\langle \II_{\T_r}(x),\nabla u(y)\rangle^2\, ,
\end{align}
which simply follows from the fact that $\T_r$ is $m-2$ dimensional and $H_{\T_r(x)} = \operatorname{tr}(\II_{\T_r(x)})$.

	\vspace{.3cm}
\subsection{The Radial Derivative \texorpdfstring{$r\frac{d}{dr}\vartheta_\cL(\T_r)$}{} .}\label{ss:radial_energy:radial}

By applying Lemma \ref{l:radial_energy:f2f3} we can now rewrite the stationary equation \eqref{e:radial_energy:stationary_equation1} as
\begin{align}
\hat\vartheta_n(\T_r) +2\vartheta_\cL(\T_r)	\leq \hat\vartheta_\alpha(\T_r)+\int_{\T_r}\psi_\T\int\rho_r|\pi^\perp_{x,r}(y-x)|^2|\pi_{x,r}\nabla u|^2+2(m-1)r^4\int_{\T_r}\psi_{\T}\int\rho_r\langle A_{\T_r},\nabla u\rangle^2+\ers(r)\, .
\end{align}

We are left with two terms above which are not apriori controllable by $\ers(r)$ .
Our goal will be to bound these terms by a Dini integrable term $r\frac{d}{dr}\vartheta_\cL(\T_r)$ plus lower order errors.
As explained in the outline of Section \ref{ss:radial_energy:outline}, there are two subtle points to these estimates.
The first is that while $r\frac{d}{dr}\vartheta_\cL(\T_r)$ has vanishing Dini integral $\int r\frac{d}{dr}\vartheta_\cL(\T_r)\frac{dr}{r}=0$, it is not apriori clear its absolute Dini integral $\int r|\frac{d}{dr}\vartheta_\cL(\T_r)|\frac{dr}{r}$ is small.
The second is that the estimating of $r\frac{d}{dr}\vartheta_\cL(\T_r)$ does itself involve some higher order errors which are not controllable by $\ers(r)$.
The Euler Lagrange for the best plane $\cL_{x,r}$ will again be our savior in these estimates.
Our main result for this subsection is the following:
\begin{lemma}\label{l:radial_energy:radial_derivative}
We have that
\begin{align}
r\frac{d}{dr}\vartheta_\cL(\T_r)=\int_{\T_r}\psi_\T\int\rho_r|\pi^\perp_{x,r}(y-x)|^2|\pi_{x,r}\nabla u|^2+2r^4\int_{\T_r}\psi_{\T}\int\rho_r\langle A_{\T_r},\nabla u\rangle^2+\ers(r)\, ,
\end{align}
where $\ers(r)$ satisfies \eqref{e:radial_energy:p_radial_energy_error} .
\end{lemma}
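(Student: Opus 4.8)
\textbf{Plan for the proof of Lemma \ref{l:radial_energy:radial_derivative}.}
The idea is to differentiate $\vartheta_\cL(\T_r) = \int_{\T_r}\psi_\T(x,r)\,\vartheta_\cL(x,r)$ directly in $r$, treating the three sources of $r$-dependence (the cutoff $\psi_\T$, the domain $\T_r$, and the energy $\vartheta_\cL(x,r)=r^2\int\rho_r(y-x)|\pi_{x,r}\nabla u|^2$) one at a time. The term from $r\frac{d}{dr}\psi_\T$ is a boundary/error term: its contribution is bounded by $\int_{\T_r}r|\frac{\partial}{\partial r}\psi_\T|\,\vartheta_\cL(x,2r)$, which by Remark \ref{rm:cone_splitting_annular_V} and Lemma \ref{l:energy_decomposition:T_cutoff} is of the form $\ers(r)$. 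The term from $\frac{d}{dr}\T_r$ involves $|\frac{d}{dr}\T_r|\leq C(m,\epsilon_0)\sqrt\delta$ by Theorem \ref{t:approximating_submanifold}.\ref{i:ddrT}, applied against the first variation of the $m-2$ dimensional integral; this again produces an $\ers(r)$ error using that $\T_r$ is an almost-flat Lipschitz graph and $\vartheta_\cL(x,r)\leq C\delta$. So the main content is $\int_{\T_r}\psi_\T(x,r)\,r\frac{d}{dr}\vartheta_\cL(x,r)$, where the $r$-derivative is now taken with $x$ and $\LL_{x,r}$ held fixed except that $\LL_{x,r}$ does itself depend on $r$.

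For the pointwise radial derivative of $\vartheta_\cL(x,r)$, I would split off the contribution of $r\frac{d}{dr}\pi_{x,r}$ and the contribution of $r\frac{d}{dr}\rho_r$. Using $r\frac{d}{dr}\rho_r(y-x) = -m\rho_r(y-x) - \dot\rho_r(y-x)\frac{|y-x|^2}{r^2}$ as in Lemma \ref{l:energy_decomposition:rho_estimates}, the factor $r^2$ in the definition of $\vartheta_\cL$ gives a clean combination: one computes that
\begin{align}
r\frac{d}{dr}\Big(r^2\int\rho_r(y-x)|\pi_{x,r}\nabla u|^2\Big) = -r^2\int\dot\rho_r(y-x)\frac{|y-x|^2}{r^2}|\pi_{x,r}\nabla u|^2 + 2r^2\int\rho_r(y-x)\langle(r\dot\pi_{x,r})\nabla u,\pi_{x,r}\nabla u\rangle\, .
\end{align}
The first piece, after using $-\dot\rho_r\geq 0$ and decomposing $|y-x|^2 = |\pi_{x,r}(y-x)|^2 + |\pi^\perp_{x,r}(y-x)|^2$, should produce $\int_{\T_r}\psi_\T\int\rho_r|\pi^\perp_{x,r}(y-x)|^2|\pi_{x,r}\nabla u|^2$ (after exchanging $\rho_r$ with $-\dot\rho_r$, costing $e^{-R/2}$, as in Lemma \ref{l:rho_basic_properties}) plus an $\ers$ piece from the $|\pi_{x,r}(y-x)|^2$ part — which must itself be handled by a further integration by parts in $x$ producing the second fundamental form term. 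The second piece, involving $r\dot\pi_{x,r}$, is where the higher-order error lives: naively $|r\dot\pi_{x,r}|\leq C\sqrt{\vartheta_\cL(x,2r)}$ only gives $C\sqrt\delta\cdot\vartheta_\cL(x,2r)\cdot(\text{gradient})$, which is not obviously $\ers(r)$. Here I would use that $r\dot\pi_{x,r}$ is a symmetric operator mapping $\LL_{x,r}\to\LL_{x,r}^\perp$, so $\langle(r\dot\pi_{x,r})\nabla u,\pi_{x,r}\nabla u\rangle = \langle(r\dot\pi_{x,r})\pi^\perp_{x,r}\nabla u,\pi_{x,r}\nabla u\rangle + \langle(r\dot\pi_{x,r})\pi_{x,r}\nabla u,\pi_{x,r}\nabla u\rangle$; the second summand vanishes by symmetry and the range property, and the first is exactly in the shape to which the Euler--Lagrange equation for $\LL_{x,r}$ — the identity $Q(x,r)[\pi_{x,r}(v),\pi^\perp_{x,r}(w)]=0$ from \eqref{e:Q_best_plane}, equivalently $\int\rho_r(y-x)\langle\pi_{x,r}\nabla u,v\rangle\langle\pi^\perp_{x,r}\nabla u,w\rangle = 0$ — applies to kill the worst part, leaving only the $\dot\rho_r$-versus-$\rho_r$ discrepancy of size $e^{-R/2}$ and curvature-of-$\T_r$ errors.

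The remaining bookkeeping is the second fundamental form term: the integration by parts in $x\in\T_r$ applied to the $|\pi_{x,r}(y-x)|^2$ leftover, exactly paralleling the manipulations of Section \ref{ss:radial_energy:f1} (subsections \ref{sss:integrating_by_parts} and \ref{sss:mean_and_second}), should produce $2r^4\int_{\T_r}\psi_\T\int\rho_r\langle \II_{\T_r},\nabla u\rangle^2$ up to $\ers(r)$, with the wrong-sign mean-curvature pieces again removed by the Euler--Lagrange equation for $\T_r$ (Theorem \ref{t:approximating_submanifold}.\ref{i:EL_Tr}) as in Section \ref{ss:radial_energy:f1}. Throughout, every error generated is of the shape $(\sqrt\delta + e^{-R/2})\int_{\T_r}\psi_\T(x,r)(\vartheta_\cL(x,2r)+r\dot\vartheta(x,2r)) + \sqrt\delta\int_{\T_r}(r|\frac{\partial}{\partial r}\psi_\T| + r|\nabla\psi_\T|)$, which is precisely \eqref{e:radial_energy:p_radial_energy_error}, using Remark \ref{rm:cone_splitting_annular_V}, Lemma \ref{l:energy_decomposition:T_cutoff}, Theorem \ref{t:best_plane:best_plane}.\ref{i:best_plane_bounds_on_projections}, Remark \ref{r:T_r_II_estimates}, and Corollary \ref{c:estimate_LL_T} to absorb $\pi_{\T_r(x)}$-versus-$\pi_{x,r}$ swaps. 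The main obstacle, as in the rest of Section \ref{s:radial_energy}, is the $r\dot\pi_{x,r}$ term: one must recognize it as an instance of the best-plane Euler--Lagrange equation rather than estimate it crudely, otherwise one is left with a term of order $\sqrt\delta\,\vartheta_\cL(x,r)\cdot\delta^{-1/2}$ which does not close. Once that identification is made, the rest is the (lengthy but routine) error-tracking already rehearsed earlier in the section.
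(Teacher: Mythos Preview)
Your approach is essentially the same as the paper's, and the overall structure is correct. Two refinements are worth noting.

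First, the term $r^2\int_{\T_r}\psi_\T\int\rho_r\langle(r\dot\pi_{x,r})\nabla u,\pi_{x,r}\nabla u\rangle$ is in fact \emph{exactly} zero, not merely controllable up to an $e^{-R/2}$ residual. As you correctly observe, $\dot\pi_{x,r}$ swaps $\LL_{x,r}$ and $\LL_{x,r}^\perp$, so the integrand reduces to $\langle\pi^\perp_{x,r}\nabla u,\dot\pi_{x,r}\pi_{x,r}\nabla u\rangle$; but the weight here is $\rho_r$ itself, which is precisely the kernel defining $Q(x,r)$, and since $\LL_{x,r}$ is an eigenspace of $Q$ the integral vanishes identically. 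No $\rho$ versus $\dot\rho$ swap is needed at this step. The paper phrases this as the term being ``flat out zero''.

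Second, your displayed formula for $r\frac{d}{dr}(r^2\int\rho_r|\pi_{x,r}\nabla u|^2)$ is missing the $(2-m)\vartheta_\cL(x,r)$ piece that comes from $r\frac{d}{dr}(r^2\rho_r) = (2-m)r^2\rho_r - r^2\dot\rho_r\,|y-x|^2/r^2$. This $(2-m)$ term is essential: after the integration by parts in $x\in\T_r$ on the $|\pi_{x,r}(y-x)|^2$ part, the divergence of $\pi_{\T_r(x)}(y-x)$ produces an $(m-2)$, and the two cancel exactly, leaving only the second fundamental form contribution $\ff_3(r)$. Finally, the Euler--Lagrange equation for $\T_r$ is not invoked in this lemma; only the best-plane Euler--Lagrange is used (inside the proof of Lemma \ref{l:radial_energy:f2f3}.(2), which you cite to convert $\ff_3$ into the $\langle A_{\T_r},\nabla u\rangle^2$ term).
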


\vspace{.3cm}
\subsubsection{Computing $r\frac{d}{dr}\vartheta_\cL(\T_r)$.  }

Recall that
\begin{align}
	\vartheta_\cL(\T_r) = r^2\int_{\T_r}\psi_\T(x,r)\int \rho_r(y-x)\abs{\pi_{x,r}\nabla u(y)}^2\, ,
\end{align}
so that we can make the first computation
\begin{align}
\ r\frac{d}{dr}\vartheta_\cL(\T_r) &= r^2\int_{\T_r}\psi_\T(x,r)\ps{rH_{\T_r}}{\frac{d}{dr}\T_r}\int \rho_r(y-x)\abs{\pi_{x,r}\nabla u(y)}^2\label{e:radial_energy:ddrtheta1}\\
&+r^2\int_{\T_r}r\frac{d}{dr}\psi_\T(x,r)\int \rho_r(y-x)\abs{\pi_{x,r}\nabla u(y)}^2\label{e:radial_energy:ddrtheta2}\\
&+r^2\int_{\T_r}\psi_\T(x,r)\int \rho_r(y-x)\ton{r\partial_r \ln\rho_r +m - \frac{\abs{y-x}^2}{r^2}}\abs{\pi_{x,r}\nabla u(y)}^2\label{e:radial_energy:ddrtheta3}\\
&+r^2\int_{\T_r}\psi_\T(x,r)\int \rho_r(y-x)\frac{d}{dr}\abs{\pi_{x,r} \nabla u(y)}^2\label{e:radial_energy:ddrtheta5}\\
&+(2-m)\vartheta_\cL(\T_r)+r^2\int_{\T_r}\psi_\T(x,r)\int \rho_r(y-x)\frac{\abs{y-x}^2}{r^2}\,\abs{\pi_{x,r}\nabla u(y)}^2\label{e:radial_energy:ddrtheta4}
\end{align}

Let us now deal with these terms one at a time.

\vspace{.3cm}
\subsubsection{Estimating \eqref{e:radial_energy:ddrtheta1} }

We claim that
\begin{align}r^2\int_{\T_r}\psi_\T(x,r)\ps{rH_{\T_r}}{\frac{d}{dr}\T_r}\int \rho_r(y-x)|\pi_{x,r}\nabla u(y)|^2=\ers(r)\, .
\end{align}
This follows immediately from Theorem \ref{t:approximating_submanifold} and its Remark \ref{r:T_r_II_estimates}.

\vspace{.3cm}
\subsubsection{Estimating \eqref{e:radial_energy:ddrtheta2} } It is clear by definition of $\ers(r)$ (see \eqref{e:radial_energy:p_radial_energy_error}) that
\begin{gather}
 r^2\int_{\T_r}r\frac{d}{dr}\psi_\T(x,r)\int \rho_r(y-x)|\pi_{x,r}\nabla u(y)|^2 = \ers(r)\, .
\end{gather}

\vspace{.3cm}
\subsubsection{Estimating the error terms of \eqref{e:radial_energy:ddrtheta3} }
We claim that
\begin{align}
 r^2\int_{\T_r}\psi_\T(x,r)\int \rho_r(y-x)\ton{r\partial_r \ln\rho_r +m - \frac{\abs{y-x}^2}{r^2}}\abs{\pi_{x,r}\nabla u(y)}^2=\ers(r)\, .
\end{align}
In order to prove it, we observe that
\begin{gather}
 r\partial_r \ln \rho_r(y-x)= -m + \frac{-\dot \rho_r(y-x)}{\rho_r(y-x)}\frac{\abs{y-x}^2}{r^2}\, ,
\end{gather}
thus we obtain that
\begin{align}
 &r^2\int_{\T_r}\psi_\T(x,r)\int \rho_r(y-x)\ton{r\partial_r \ln\rho_r +m - \frac{\abs{y-x}^2}{r^2}}\abs{\pi_{x,r}\nabla u(y)}^2\notag \\
 =  - &r^2\int_{\T_r}\psi_\T(x,r)\int \qua{\rho_r(y-x) +\dot \rho_r(y-x)}\frac{\abs{y-x}^2}{r^2}\abs{\pi_{x,r}\nabla u(y)}^2
\end{align}
By Lemma \ref{l:rho_basic_properties}, we have
\begin{gather}
 \frac{\abs{y-x}^2}{r^2}\abs{\rho_r(y-x) +\dot \rho_r(y-x)}\leq C(m) \frac{\abs{y-x}^2}{r^2} e^{-R/2}\rho_{\sqrt 2 r} (y-x) \leq C(m) e^{-R/2} \rho_{2r}(y-x)\, ,
\end{gather}
and Remark \ref{r:d_LLr_LL2r_small} allows us to conclude.

\vspace{.3cm}
\subsubsection{Estimating \eqref{e:radial_energy:ddrtheta5} } This term may be apriori problematic, but is actually flat out zero.  This is because of the definition of $\LL_{x,r}$, which is an eigenspace for the quadratic form
\begin{gather}
 Q_{x,r}(v,w)=\int \rho_r(y-x) \ps{\nabla u(y)}{v}\ps{\nabla u(y)}{w}dy\, .
\end{gather}
In particular, this implies that $\LL_{x,r}$ is preserved by $Q_{x,r}$. That is, $\int \rho_r(y-x) \ps{\nabla_i u(y)}{v}\nabla^i u(y)dy\in \LL_{x,r}$ for all $v\in \LL_{x,r}$, and as a consequence
\begin{gather}
 \int \rho_r(y-x)\frac{d}{dr}\abs{\pi_{x,r} \nabla u(y)}^2=2\int \rho_r(y-x)\pi_{x,r}(\nabla u) \cdot \frac{d}{dr} \pi_{x,r}(\nabla u)=0\, .
\end{gather}

\vspace{.3cm}
\subsubsection{Estimating the $L$-energy in \eqref{e:radial_energy:ddrtheta4} }
We claim that
\begin{align}
 &(2-m)\vartheta_\cL(\T_r)+r^2\int_{\T_r}\psi_\T(x,r)\int \rho_r(y-x)\frac{\abs{y-x}^2}{r^2}\,\abs{\pi_{x,r}\nabla u(y)}^2\notag\\
 =&r^2\int_{\T_r}\psi_\T(x,r)\int \rho_r(y-x)\frac{\abs{\pi_{x,r}^\perp(y-x)}^2}{r^2}\,\abs{\pi_{x,r}\nabla u(y)}^2 + \ff_3(r)+\ers(r)\, .
\end{align}
In order to prove this, let us split $\abs{y-x}^2= \abs{\pi_{x,r}(y-x)}^2+\abs{\pi_{x,r}^\perp(y-x)}^2$. With this, the claim is equivalent to
\begin{align}
&(2-m)\vartheta_\cL(\T_r)+r^2\int_{\T_r}\psi_\T(x,r)\int \rho_r(y-x)\frac{\abs{\pi_{x,r}(y-x)}^2}{r^2}\,\abs{\pi_{x,r}\nabla u(y)}^2=\ff_3(r)+\ers(r)\, .
\end{align}

Proceeding as in Subsection \ref{sss:integrating_rho_vs_dot_rho}, we can exchange $\rho_r$ with $-\dot \rho_r$ up to an $\ers(r)$ error. In particular:
\begin{align}
 &r^2\int_{\T_r}\psi_\T(x,r)\int \rho_r(y-x)\frac{\abs{\pi_{x,r}(y-x)}^2}{r^2}\,\abs{\pi_{x,r}\nabla u(y)}^2\notag \\
 =&r^2\int_{\T_r}\psi_\T(x,r)\int (-\dot \rho_r(y-x))\frac{\abs{\pi_{x,r}(y-x)}^2}{r^2}\,\abs{\pi_{x,r}\nabla u(y)}^2+\ers(r)\, .
\end{align}

As in Subsection \ref{sss:exchange_pi}, by Lemma \ref{l:improved_comparison_LL_T} we can exchange $\pi_{x,r}$ with $\pi_{\T_r(x)}$ up to an $\ers(r)$ error, in particular:
\begin{align}
 &r^2\int_{\T_r}\psi_\T(x,r)\int (-\dot \rho_r(y-x))\frac{\abs{\pi_{x,r}(y-x)}^2}{r^2}\,\abs{\pi_{x,r}\nabla u(y)}^2\notag \\
 =&r^2\int_{\T_r}\psi_\T(x,r)\int (-\dot \rho_r(y-x))\frac{\abs{\pi_{\T_r(x)}(y-x)}^2}{r^2}\,\abs{\pi_{\T_r(x)}\nabla u(y)}^2+\ers(r)\notag\\
 =&r^2\int_{\T_r}\psi_\T(x,r)\int \ps{\pi_{\T_r(x)} \nabla^{(x)} \rho_r(y-x)}{\pi_{\T_r(x)}(y-x)}\,\abs{\pi_{\T_r(x)}\nabla u(y)}^2+\ers(r)\, .
\end{align}
Integrating by parts in $x\in \T_r$, and estimating in a similar way than in Subsection \ref{sss:mean_and_second}, we obtain
\begin{align}
 &r^2\int_{\T_r}\psi_\T(x,r)\int \ps{\pi_{\T_r(x)} \nabla^{(x)} \rho_r(y-x)}{\pi_{\T_r(x)}(y-x)}\,\abs{\pi_{\T_r(x)}\nabla u(y)}^2\notag\\
 = (m-2) \vartheta_\LL(\T_r)-2&r^2\int_{\T_r}\psi_\T(x,r)\int \rho_r(y-x)\ps{\nabla u(y)\otimes \nabla u(y)}{\nabla^{(x)}_{\T_r}(\pi_{\T_r(x)})(y-x)}  +\ers(r)\notag\\
 =(m-2) \vartheta_\LL(\T_r)+\ff_3&(r) +\ers(r)\, .
\end{align}

\vspace{.3cm}
\subsubsection{Finishing the Proof of Lemma \ref{l:radial_energy:radial_derivative}. }
Summing up, we have obtained that
\begin{gather}
 r\frac{d}{dr}\vartheta_\cL(\T_r)=\int_{\T_r}\psi_\T\int\rho_r|\pi^\perp_{x,r}(y-x)|^2|\pi_{x,r}\nabla u|^2+\ff_3(r)+\ers(r)
\end{gather}

Lemma \ref{l:radial_energy:radial_derivative} follows immediately from Lemma \ref{l:radial_energy:f2f3}.(2), and this concludes the proof.

	\vspace{.5cm}
\subsection{Proof of Proposition \ref{p:radial_energy:radial_energy}}\label{ss:radial_energy:proof_proposition}

Let us combine the various Lemmas of this Section to prove Proposition \ref{p:radial_energy:radial_energy}.  Beginning with Lemmas \ref{l:radial_energy:stationary_vector_field}, \ref{l:radial_energy:f1} and \ref{l:radial_energy:f2f3} we have that
\begin{align}
\hat\vartheta_n(\T_r) +2\vartheta_\cL(\T_r)	\leq \hat\vartheta_\alpha(\T_r)+\int_{\T_r}\psi_\T\int\rho_r|\pi^\perp_{x,r}(y-x)|^2|\pi_{x,r}\nabla u|^2+2(m-1)\int_{\T_r}\psi_{\T}\int\rho_r\langle A_{\T_r},\nabla u\rangle^2+\ers(r)\notag\\
\leq\hat\vartheta_\alpha(\T_r)+(m-1)\int_{\T_r}\psi_\T\int\rho_r|\pi^\perp_{x,r}(y-x)|^2|\pi_{x,r}\nabla u|^2+2(m-1)\int_{\T_r}\psi_{\T}\int\rho_r\langle A_{\T_r},\nabla u\rangle^2+\ers(r)\, .
\end{align}

If we now combine this with Lemma \ref{l:radial_energy:radial_derivative} we have that
\begin{align}
\hat\vartheta_n(\T_r) +2\vartheta_\cL(\T_r)	\leq  \hat\vartheta_\alpha(\T_r)+(m-1)r\frac{d}{dr}\vartheta_\cL(\T_r)+\ers(r)\, ,
\end{align}
which precisely proves Proposition \ref{p:radial_energy:radial_energy}. $\qed$\\

\bibliographystyle{aomalpha}
\bibliography{qstrat}

\end{document}